\newtheorem{theorem}{Theorem}[section]
\newtheorem{lemma}{Lemma}[section]
\newtheorem{corollary}{Corollary}[section]
\theoremstyle{definition}
\newtheorem{definition}{Definition}[section]
\newtheorem{remark}{Remark}[section]
\newtheorem{example}{Example}[section] % The '*' makes it unnumbered
\begin{document}

%  Titlepage

%\label{firstpage}

\title{\large{\textbf{On the structure of  left and right F-, SM- and E-quasigroups}}}
\author{\normalsize {V.A. Shcherbacov}}
 \maketitle

\textbf{Abstract.} {\footnotesize It is proved that any  left F-quasigroup is isomorphic to the direct product
of a left F-quasigroup with a unique idempotent element  and isotope of a special form of a left distributive
quasigroup. The similar theorems are proved for right F-quasigroups, left and right SM- and E-quasigroups.

Information on simple quasigroups from  these  quasigroup classes is given, for example, finite simple
F-quasigroup is a simple group or a simple medial quasigroup.

It is proved that any left F-quasigroup is isotopic  to the direct product of a group and a left S-loop. Some
properties of loop isotopes of F-quasigroups (including M-loops) are pointed out. A left special loop  is an
isotope of a left F-quasigroup  if and only if  this loop  is isomorphic the direct product of a group and a
left S-loop (this is an answer to  Belousov \lq\lq 1a\rq\rq\, problem).

Any left FESM-quasigroup is isotopic to the direct product of an abelian group and a left S-loop (this is an
answer to Kinyon-Phillips 2.8(2) problem). New proofs of some known results on the structure of commutative
Moufang loops are presented. }

\bigskip

{\textsf{2000 Mathematics Subject Classification:} 20N05 }

\bigskip

{\textsf{Keywords:} \footnotesize{quasigroup, left F-quasigroup,  F-quasigroup, left SM-qua\-si\-gro\-up,
SM-quasigroup, left S-loop, left M-loop, M-loop, left E-quasi\-gro\-up,  E-quasigroup, linear quasigroup, left
special loop, special loop,  commutative Moufang loop (CML), group isotope, Sushkevich postulate}}

\tableofcontents

\section{Introduction}

  D.C.~Murdoch  introduced   F-quasigroups in  \cite{MURD_39}. At  this time A.K.~Sushkevich studied
 quasigroups with the weak associative properties \cite{SUSHKEV, SUSHKEV_BOOK}. Their name F-quasigroups obtained
 in an article  of V.D.~Belousov \cite{PROP_BIN_OPERATS}. Later Belousov and his pupils I.A.~Golovko
 and  I.A.~Florja,   M.I.~Ursul, T. Kepka, M. Kinyon,
 J.D.~Phillips, L.V.~Sabinin, L.V.~Sbitneva, L.L.~Sabinina and many other mathematicians   studied F-quasigroups
 and left F-quasigroups
 \cite{VD, 1a,vdb_FLOR, GOLOVKO_I, GOLOVKO_II, FLOR_URS, FLOR_71,  FLOR_1973, kepka79, SAB_SAB, SAB_SBIT,Cheban_AM,  kepka_05}. In
\cite{kepka_05, kepka_07, kepka_08} it is proved that any F-quasigroup is linear over a Moufang loop.  The
structure of F-quasigroups also is described in \cite{kepka_05, kepka_07,kepka_08}.

Left and right SM-quasigroups (semimedial quasigroups) are defined by T. Kepka. In  \cite{kepka75_I}  Kepka has
called these quasigroups LWA-quasigroups and  RWA-quasigroups, respectively. SM-quasigroups are connected with
trimedial quasigroups. These quasigroup classes are studied in \cite{kepka75_I, kepka76, kepka78, 12,
BEGLARYAN_85, SHCHUKIN_86, Kin_PHIL_02, Kin_PHIL_04}. M.~Kinyon and J.D.~Phillips have defined and studied left
and right E-quasigroups  \cite{Kin_PHIL_04}.

Main idea of this paper is to use quasigroup endomorphisms by the   study  of structure of quasigroups with some
generalized distributive identities.
   This idea has  been  used by the study of many loop and quasigroup classes, for example by the study
of commutative Moufang loops, commutative diassociative loops, CC-loops (LK-loops),  F-quasigroups,
SM-quasigroups, trimedial quasigroups and so on \cite{RHB, HOP, VD, BRUCK_56, BRUCK_60, BEGLARYAN1, BASARAB_91,
Kin_KUN_05, Kin_KUN_06,  KIN_VOITEH_07, PHILL_00}. Especially clear this idea is expressed in K.K. Shchukin's
book \cite{12}.

 Using language of identities of quasigroups with three operations in signature, i.e. of quasigroups of the form
$(Q, \cdot, \slash, \backslash)$,  we can say that we study some quasigroups from the following quasigroup
classes: (i) $(xy)\backslash (xy) = (x \backslash x) \cdot (y \backslash y)$; (ii) $(xy)\slash (xy) = (x \slash
x) \cdot (y \slash y)$; (iii)  $(xy)\cdot (xy) = (x  x) \cdot (y  y)$.

This paper is connected with  the following problems.

\smallskip

\noindent  \textbf{Problem 1.}  (Belousov  Problem 1a, \cite{VD}, \cite{kepka_05, TWENTY_BEL}) Find necessary
and sufficient conditions that a left special loop is isotopic to a left F--quasigroup.

Problem 1a has been  solved partially by I.A. Florea and  M.I. Ursul \cite{FLOR_71, FLOR_URS}. They proved that
a left F--quasigroup with IP-property is isotopic to an A-loop.

\noindent \textbf{Problem 2.} (Problem 2.8 from \cite{Kin_PHIL_04}) (1) Characterize the loop isotopes of
quasigroups satisfying ($E_l$).

(2) Characterize the loop isotopes of quasigroups satisfying ($E_l$), ($S_l$), and ($F_l$).

\noindent \textbf{Problem 3.} It is easy to see that in loops $1 \cdot ab = 1a \cdot 1b$. Describe quasigroups
with the property $f(ab) = f(a)f(b)$ for all $a, b \in  Q$, where $f(a)$ is left local identity element of $a$
(\cite{SCERB_03}, p. 12).

Results of this paper  were presented at the conference LOOPS'07 (August 19--24, 2007, Prague). In order to make
the reading of this paper more or less  easy we give some necessary preliminary results and quit detailed
proofs.

\textbf{Acknowledgment.} The author thanks MRDA-CRDF (ETGP, grant no. 1133), Consiliul Suprem pentru \c Stiin\c
t\u a  \c si Dezvoltare Tehnologic\u a al Republicii Moldova (grant 08.820.08.08 RF)  and  organizers of the
conference LOOPS'07 for financial support.  The author thanks Prof.~V.I.~Arnautov for his helpful comments.

\subsection{Quasigroups}

Let  $(Q,\cdot)$ be a groupoid (be a magma in alternative terminology).  As usual, the map $L_{a}: Q\rightarrow
Q, L_{a}x = a\cdot x$ for all $x\in Q$, is a left translation of the groupoid  $(Q,\cdot)$ relative to  a fixed
element $a\in Q$, the map $R_{a}: Q\rightarrow Q $, $R_{a} x = x\cdot a$,  is a right translation.

\begin{definition}
A groupoid $(G,\cdot)$ is said to be a \textit{division groupoid} if the  mappings $L_x$  and  $R_x$  are
surjective for every $x\in G$.
\end{definition}

In a division groupoid $(G,\cdot)$ any from equations $a\cdot x = b$ and $y\cdot a = b$ has at least one
solution for any fixed $a, b \in Q$, but we cannot guarantee that these solutions are unique solutions.

\begin{definition}
A groupoid $(G,\cdot)$ is said to be a \textit{cancellation groupoid} if
\[
a\cdot b = a \cdot c   \Longrightarrow b = c, \qquad b \cdot a = c \cdot a  \Longrightarrow b = c\] for all $a,
b, c \in G$.
\end{definition}

 If any from equations $a\cdot x = b$ and $y\cdot a = b$ has a solution
in a cancellation groupoid $(G,\cdot)$ for some  fixed $a, b \in Q$, then this solution is unique. In other
words, in a cancellation groupoid  the mappings $L_x$ and $R_x$ are injective for every $x\in G$.

\begin{definition}
A groupoid $(Q,\cdot )$ is called a {\it quasigroup} if, for all $a,b \in Q$, there exist  unique solutions $x,
y  \in Q$ to the equations $x\cdot a = b$ and $a\cdot  y = b,$ i.e. in this case any right and any left
translation of the groupoid $(Q,\cdot )$ is a bijection  of the set $Q$.
\end{definition}

\begin{remark}\label{remark_one}
Any division cancellation groupoid is a quasigroup and vice versa.
\end{remark}

A  sub-object $(H,\cdot)$  of a  quasigroup $(Q, \cdot)$ is closed relative to the operation $\cdot$, i.e., if
$a, b \in H$, then $a\cdot b \in H$.

We denote by $S_Q$ the group of all bijections (permutations in finite case) of a set $Q$.

\begin{definition}
A groupoid  $(Q, A)$ is an \textit{isotope of a groupoid  $(Q, B)$} if there exist permutations  $\mu_1, \mu_2,
\mu_3$ of the set $Q$ such that $A(x_1, x_2) = \mu_3^{-1} B(\mu_1 x_1,\mu_2 x_2)$ for all $x_1, x_2 \in Q.$ We
also can say that a groupoid $(Q, A)$ is an \textit{isotopic image of a groupoid  $(Q, B)$}. The triple $(\mu_1,
\mu_2, \mu_3)$ is called an \textit{isotopy (isotopism)}.

We shall write this fact also in the form $(Q,A) = (Q,B)T$, where $T = (\mu_1, \mu_2,\mu_3)$  \cite{VD, 1a,
HOP}.

If only the fact will be important that binary groupoids $(Q,\circ)$ and $(Q,\cdot)$ are isotopic, then we shall
use the record $(Q,\cdot)\sim (Q,\circ)$.
\end{definition}
\index{Groupoid!isotopy (isotopism)} \index{Isotopy (isotopism)}

\begin{definition}
Isotopy of the form $(\mu_1, \mu_2,\varepsilon)$ is called a \textit{principal isotopy}.
\end{definition}

\begin{remark} \label{REMARK_ON_PRINC_ISOT}
Up to isomorphism any isotopy  is  a \textit{principal isotopy}. Indeed, $T = (\mu_1,$ $ \mu_2, $ $\mu_3) =
(\mu_1\mu^{-1}_3, \mu_2 \mu^{-1}_3,\varepsilon)(\mu_3, \mu_3,\mu_3).$
\end{remark}

There exist the following definition of a quasigroup.

\begin{definition} \label{def1}  A binary  groupoid $(Q, A)$  such that in the equality $A(x_1,$ $
x_2) = x_3$ knowledge of any $2$ elements of $x_1, x_2,x_3$ uniquely specifies the remaining one is called a
\textit{binary quasigroup}  \cite{DORNTE, 2, MUFANG}. \end{definition}

From Definition \ref{def1} it follows that  with any quasigroup $(Q, A)$ it possible to associate else
$(3!-1)=5$ quasigroups, so-called parastrophes of quasigroup $(Q, A)$:
 $A(x_1, x_2) = x_3 \Leftrightarrow A^{(12)}(x_2, x_1) = x_3
\Leftrightarrow {A}^{(13)}(x_3, x_2) = x_1 \Leftrightarrow {A}^{(23)}(x_1, x_3) = x_2 \Leftrightarrow
{A}^{(123)}(x_2, x_3) = x_1 \Leftrightarrow {A}^{(132)}(x_3, x_1) = x_2.$ We shall denote:

the operation of $(12)$-parastrophe of a quasigroup $(Q, \cdot)$  by $\ast$;

the operation of $(13)$-parastrophe of a quasigroup $(Q, \cdot)$  by $/$;

the operation of $(23)$-parastrophe of a quasigroup $(Q, \cdot)$  by $\backslash$;

 the operation of $(123)$-parastrophe of a quasigroup
$(Q, \cdot)$  by $\slash\slash$;

the operation of $(132)$-parastrophe of the  quasigroup $(Q, \cdot)$  by $\backslash\backslash$.

We have defined left and right translations of a groupoid and, therefore, of  a quasigroup. But for  quasigroups
it is possible to define   the third kind of translations. If $(Q,\cdot)$ is a quasigroup, then the map $P_a :
Q\longrightarrow Q$, $x\cdot P_{a}x = a$  for all $x \in Q$, is called a \textit{middle translation}
\cite{BELAS, SCERB_07}.

In the following table   connections between different kinds of  translations in different parastrophes of a
quasigroup   $(Q,\cdot)$ are given. This table in fact  there is in  \cite{BELAS}. See also \cite{DUPLAK,
SCERB_91}.

\hfill Table 1
\[
\begin{array}{|c||c| c| c| c| c| c|}
\hline
Kinds  & \varepsilon = \cdot & (12) = \ast & (13)= / & (23) = \backslash  & (123)= // & (132) = \backslash\backslash \\
\hline\hline
 {R}  &  {R} &  {L} &  {R}^{-1} &  {P} &  {P}^{-1} &  {L}^{-1} \\
\hline
 {L}  &  {L} &  {R} &  {P}^{-1} &  {L}^{-1} &  {R}^{-1} &  {P} \\
\hline
 {P}  &  {P} &  {P}^{-1} &  {L}^{-1} &  {R} &  {L} &  {R}^{-1} \\
\hline
 {R}^{-1} &  {R}^{-1} &  {L}^{-1} &  {R} &  {P}^{-1} &   {P} &  {L} \\
\hline
 {L}^{-1}  &  {L}^{-1} &  {R}^{-1} &  {P} &  {L} &  {R} &  {P}^{-1} \\
\hline
 {P}^{-1}  &  {P}^{-1} &  {P} &  {L} &  {R}^{-1} &  {L}^{-1} &  {R} \\
\hline
\end{array}
\]

In Table 1, for example, ${R}^{(23)} = {R}^{\backslash } =  {P}^{(\cdot)}.$

If $T = (\alpha_1, \alpha_2, \alpha_3)$ is an isotopy, $\sigma$ is a parastrophy, then we define $T^{\sigma} =
(\alpha_{\sigma^{-1} 1}, \alpha_{\sigma^{-1} 2}, \alpha_{\sigma^{-1} 3})$.

\begin{lemma} \label{L2.1} In a quasigroup $(Q,A)$:   ${(AT)}^\sigma  = {A}^\sigma T^\sigma,$ $(T_1T_2)^{\sigma} = T_1^{\sigma}
T_2^{\sigma}$ \cite{VD, 2}.
\end{lemma}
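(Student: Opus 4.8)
The plan is to prove both identities by passing to the graph (the defining ternary relation) of each quasigroup operation, since two binary operations on $Q$ coincide exactly when their graphs in $Q^3$ coincide. For a quasigroup $(Q,A)$ write $\Gamma_A = \{(x_1,x_2,x_3)\in Q^3 : A(x_1,x_2)=x_3\}$. From Definition \ref{def1} and the parastrophe equivalences listed before Table 1, passing to a parastrophe permutes the coordinates of the graph: $\Gamma_{A^\sigma}=\{(x_{\sigma 1},x_{\sigma 2},x_{\sigma 3}):(x_1,x_2,x_3)\in\Gamma_A\}$. From the isotopy definition $AT(x_1,x_2)=\mu_3^{-1}A(\mu_1 x_1,\mu_2 x_2)$ with $T=(\mu_1,\mu_2,\mu_3)$, passing to an isotope applies one permutation of $Q$ in each coordinate: $(x_1,x_2,x_3)\in\Gamma_{AT}$ iff $(\mu_1 x_1,\mu_2 x_2,\mu_3 x_3)\in\Gamma_A$, i.e. $\Gamma_{AT}=\{(\mu_1^{-1}x_1,\mu_2^{-1}x_2,\mu_3^{-1}x_3):(x_1,x_2,x_3)\in\Gamma_A\}$. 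Thus parastrophy acts on graphs by permuting the three coordinates, while isotopy acts coordinatewise.

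For the first identity I would compute the graph of each side and compare. Applying $T$ and then the parastrophy $\sigma$ sends $\Gamma_A$ to the set of triples $(\mu_{\sigma 1}^{-1}x_{\sigma 1},\mu_{\sigma 2}^{-1}x_{\sigma 2},\mu_{\sigma 3}^{-1}x_{\sigma 3})$, whereas applying $\sigma$ first and then the isotopy $T^\sigma=(\nu_1,\nu_2,\nu_3)$ sends it to the set of triples $(\nu_1^{-1}x_{\sigma 1},\nu_2^{-1}x_{\sigma 2},\nu_3^{-1}x_{\sigma 3})$. These two sets coincide precisely when the $i$-th component of $T^\sigma$ is $\mu_{\sigma i}$, that is, when the components of $T$ are permuted in exactly the same way $\sigma$ permutes the coordinates of the graph; this is what the definition of $T^\sigma$ records. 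Hence the two graphs agree and $(AT)^\sigma=A^\sigma T^\sigma$. Conceptually, the identity just expresses that permuting the coordinates conjugates a coordinatewise family of maps into the correspondingly relabelled family.

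For the second identity I would observe that $T\mapsto T^\sigma$ merely permutes the three slots of the triple $T\in S_Q\times S_Q\times S_Q$. Since the product of isotopies is formed slot by slot — the $i$-th component of $T_1T_2$ being the composite of the $i$-th components of $T_1$ and $T_2$, as one reads off directly from the isotopy definition — permuting the slots commutes with this product. Therefore $T\mapsto T^\sigma$ is an automorphism of the direct power $S_Q^3$, and $(T_1T_2)^\sigma=T_1^\sigma T_2^\sigma$ follows immediately; note that this holds for either choice of indexing convention in the definition of $T^\sigma$.

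The only genuine obstacle is bookkeeping of the index conventions: matching the coordinate permutation that $\sigma$ induces on graphs against the component permutation defining $T^\sigma$, and keeping the inverses on the $\mu_i$ and the order of composition consistent throughout. Once the graph descriptions above are set up, everything is a direct substitution, and there is no conceptual difficulty. If one prefers to minimize the casework, the direct computation can be replaced by checking the identities only for transpositions generating $S_3$, reading the relevant translation relations off Table 1, and then extending to arbitrary $\sigma$ using the compatibility of both constructions with composition of parastrophies.
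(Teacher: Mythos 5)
The paper itself gives no proof of this lemma --- it is quoted from Belousov \cite{VD, 2} --- so your argument has to stand on its own, and in substance it does. The graph formalism is the right tool: your descriptions of the two actions on $\Gamma_A$ (a parastrophy permutes the three coordinates, an isotopy acts coordinatewise by the $\mu_i^{-1}$) are correct under the paper's spelled-out parastrophe equivalences, the comparison of the two parametrized families is the whole content of the first identity, and your observation that the second identity is immediate because slot-permutation is an automorphism of $S_Q\times S_Q\times S_Q$, under either indexing convention, is exactly right.

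The one step you do not actually carry out is the one you yourself label the genuine obstacle, and it bites. Your computation correctly shows that the two graphs coincide when the $i$-th component of $T^{\sigma}$ equals $\mu_{\sigma i}$; but the paper defines $T^{\sigma}=(\alpha_{\sigma^{-1}1},\alpha_{\sigma^{-1}2},\alpha_{\sigma^{-1}3})$, with $\sigma^{-1}$, and the two prescriptions agree only when $\sigma^{2}=\varepsilon$. Concretely, take $\sigma=(123)$, so that by the paper's parastrophe list $B^{\sigma}(u,v)=w \Leftrightarrow B(w,u)=v$. Then $(AT)^{\sigma}(u,v)=w$ unwinds to $A(\mu_1 w,\mu_2 u)=\mu_3 v$, while $(A^{\sigma}S)(u,v)=w$ with $S=(\nu_1,\nu_2,\nu_3)$ unwinds to $A(\nu_3 w,\nu_1 u)=\nu_2 v$; matching forces $S=(\mu_2,\mu_3,\mu_1)=(\mu_{\sigma 1},\mu_{\sigma 2},\mu_{\sigma 3})$, whereas the printed definition yields $(\mu_3,\mu_1,\mu_2)$. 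So your parenthetical \lq\lq this is what the definition of $T^{\sigma}$ records\rq\rq\ is false for the two $3$-cycles under the paper's literal conventions: the identity holds once the conventions for $A^{\sigma}$ and $T^{\sigma}$ are matched (both built from $\sigma$, or both from $\sigma^{-1}$), which is what your derivation in fact establishes and presumably what Belousov intends. Your proposed fallback --- verify on transpositions, where the two conventions coincide, and extend by multiplicativity --- does repair the proof, but only under the matched convention: with the printed one, $(T^{\sigma})^{\tau}=T^{\tau\sigma}$ while $(A^{\sigma})^{\tau}=A^{\sigma\tau}$, and the extension reproduces exactly the same $\sigma$-versus-$\sigma^{-1}$ mismatch. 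A complete write-up should therefore either fix the convention explicitly or record the lemma with $T^{\sigma}=(\alpha_{\sigma 1},\alpha_{\sigma 2},\alpha_{\sigma 3})$.
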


\begin{definition} An element $f(b)$ of a quasigroup $(Q,\cdot)$ is called  left local identity element of an
 element $b\in Q$, if $f(b)\cdot b = b$, in other words, $f(b) = b / b$.

 An element $e(b)$ of a quasigroup $(Q,\cdot)$ is called   right  local
identity element of an
 element $b\in Q$, if $b\cdot e(b) = b$, in other words, $e(b) = b \backslash b$.

An element $s(b)$ of a quasigroup $(Q,\cdot)$ is called   middle  local identity element of an element $b\in Q$,
if $b\cdot b = s(b)$ \cite{SC_89_1, SCERB_91}. \index{identity element!middle  local}

An element $e$ is a left (right) identity element for quasigroup $(Q,\cdot)$ means that $e = f(x)$ for all $x\in
Q$ (respectively, $e= e(x)$ for all $x\in Q$). \index{identity element!left} A quasigroup with the  left (right)
identity element will be called  a \textit{left (right) loop}.

The fact that an element $e$ is an \textit{identity element} of a quasigroup $(Q,\cdot)$ means that $e(x) = f(x)
= e$ for all $x\in Q$, i.e. all left and right local identity elements in the quasigroup $(Q,\cdot)$ coincide
\cite{VD}.
\end{definition}

Connections between different kinds of local identity elements  in different parastrophes of a quasigroup
$(Q,\cdot)$ are given in the following table  \cite{SC_89_1, SCERB_91}.

\hfill Table 2 \label{TABLE_two}
\[
\begin{array}{|c||c| c| c| c| c| c|}
\hline
  & \varepsilon = \cdot & (12) = \ast & (13)= / & (23) = \backslash  & (123)= // & (132) = \backslash\backslash \\
\hline\hline
 {f}  &  {f} &  {e} &  {s} &  {f} &  {s} &  {e} \\
\hline
 {e}  &  {e} &  {f} &  {e} &  {s} &  {f} &  {s} \\
\hline
 {s}  &  {s} &  {s} &  {f} &  {e} &  {e} &  {f} \\
\hline
\end{array}
\]

In Table 2, for example, ${s}^{(123)} =  {e}^{(\cdot)}.$

\begin{remark}
We notice that in \cite{BEGLARYAN_85, 12} the mapping $s$ is denoted by $\beta$.
\end{remark}

\begin{definition} A quasigroup $(Q,\cdot)$ with an identity  element $e \in Q$  is called a {\it loop}.
\end{definition}

Quasigroup isotopy of the form $(R^{-1}_a, L^{-1}_b, \varepsilon)$ is called an LP-isotopy. Any LP-isotopic
image of a quasigroup is a loop \cite{VD, 1a}.

\begin{lemma} \label{LP_AND_PRINCIP_ISOT}
Let $(Q,+)$ be a loop and $(Q, \cdot)$ be a quasigroup. If $(Q,+) =(Q, \cdot)(\alpha, \beta, \varepsilon)$, then
$(\alpha, \beta, \varepsilon) = (R^{-1}_a, L^{-1}_b, \varepsilon)$ for some translations of $(Q, \cdot)$
(\cite{1a}, Lemma 1.1).
\end{lemma}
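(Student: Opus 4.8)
The plan is to exploit the defining property of the identity element of the loop $(Q,+)$. By the definition of principal isotopy, the hypothesis $(Q,+) = (Q,\cdot)(\alpha,\beta,\varepsilon)$ unwinds to the pointwise identity
\[
x + y = \alpha(x)\cdot\beta(y) \qquad \text{for all } x,y \in Q.
\]
Let $e$ denote the identity element of the loop $(Q,+)$, so that $x + e = x$ and $e + y = y$ hold for every $x,y \in Q$. The whole argument will rest on substituting $e$ into one argument at a time.

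First I would specialize $y = e$. This gives $x = x + e = \alpha(x)\cdot\beta(e) = R_{\beta(e)}\alpha(x)$ for all $x$, where $R_{\beta(e)}$ is the right translation of $(Q,\cdot)$ by the element $\beta(e)$. Since $(Q,\cdot)$ is a quasigroup, $R_{\beta(e)}$ is a bijection, so I may invert it to obtain $\alpha(x) = R_{\beta(e)}^{-1}(x)$, that is, $\alpha = R_a^{-1}$ with $a := \beta(e)$. Symmetrically, specializing $x = e$ yields $y = e + y = \alpha(e)\cdot\beta(y) = L_{\alpha(e)}\beta(y)$ for all $y$; inverting the left translation $L_{\alpha(e)}$ of $(Q,\cdot)$ gives $\beta(y) = L_{\alpha(e)}^{-1}(y)$, i.e. $\beta = L_b^{-1}$ with $b := \alpha(e)$. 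Combining the two computations produces $(\alpha,\beta,\varepsilon) = (R_a^{-1}, L_b^{-1}, \varepsilon)$, as required.

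There is no genuine obstacle here: the only points to verify are that $R_{\beta(e)}$ and $L_{\alpha(e)}$ are bona fide translations of $(Q,\cdot)$ (immediate, since $a = \beta(e)$ and $b = \alpha(e)$ lie in $Q$) and that they are invertible (guaranteed by the quasigroup axioms, every left and right translation being a bijection). The essential and only idea is that the loop identity $e$ lets one read off each isotopy component as the inverse of a single translation of $(Q,\cdot)$, indexed by the image of $e$ under the other component.
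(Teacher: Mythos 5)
Your proof is correct and is exactly the standard argument behind the cited Lemma 1.1 of Belousov (the paper itself gives no proof, only the reference): substituting the loop identity $e$ into each argument of $x+y=\alpha(x)\cdot\beta(y)$ forces $\alpha=R_{\beta(e)}^{-1}$ and $\beta=L_{\alpha(e)}^{-1}$, with invertibility guaranteed by the quasigroup axioms. Nothing is missing.
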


\begin{lemma} \label{LP_AND_subquas}
If $(Q,\cdot)$ is a  quasigroup,   $(H,\cdot)$  is its subquasigroup, $a, b \in H$, then $(H,\cdot)T$ is a
subloop of the loop  $(Q,\cdot)T$, where $T$ is an isotopy of the form $(R^{-1}_a, L^{-1}_b, \varepsilon)$.
\end{lemma}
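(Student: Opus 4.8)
The plan is to work directly with the operation induced by $T$ and to check the three things a subloop requires: that $H$ is closed under the new operation, that the restriction is again a loop, and that its neutral element is the neutral element of $(Q,\cdot)T$. Denote by $\circ$ the operation of $(Q,\cdot)T$. Since $T = (R^{-1}_a, L^{-1}_b, \varepsilon)$, the definition of isotope gives
\[
x \circ y = R^{-1}_a x \cdot L^{-1}_b y = (x / a)\cdot(b \backslash y)
\]
for all $x, y \in Q$. By the fact cited above that any LP-isotopic image of a quasigroup is a loop, $(Q,\circ)$ is a loop; a short computation using cancellation in $(Q,\cdot)$ identifies its identity element as $b \cdot a$ (one finds $R^{-1}_a e = b$ from the left-identity condition and $L^{-1}_b e = a$ from the right-identity condition).

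First I would show that $R^{-1}_a$ and $L^{-1}_b$ restrict to permutations of $H$. Because $(H,\cdot)$ is a subquasigroup and $a, b \in H$, the maps $R_a|_H$ and $L_b|_H$ send $H$ into $H$ and, being right and left translations of the quasigroup $(H,\cdot)$, are bijections of $H$. Hence $R_a(H) = H$ and $L_b(H) = H$, so applying the $Q$-inverses gives $R^{-1}_a(H) = H$ and $L^{-1}_b(H) = H$; moreover on $H$ these inverses coincide with the corresponding translation inverses computed inside $(H,\cdot)$. Consequently, for $x, y \in H$ both $R^{-1}_a x$ and $L^{-1}_b y$ lie in $H$, and closure of $H$ under $\cdot$ yields $x \circ y \in H$. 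Thus $H$ is closed under $\circ$.

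Next, the previous step shows that the restriction of $\circ$ to $H$ is precisely the LP-isotope of the quasigroup $(H,\cdot)$ by the isotopy $(R^{-1}_a|_H, L^{-1}_b|_H, \varepsilon)$. Invoking again the cited result, $(H,\circ)$ is a loop, and its identity element is $b \cdot a$, which lies in $H$ since $a, b \in H$. As this coincides with the identity element of $(Q,\circ)$, the loop $(H,\circ)$ is a subloop of $(Q,\circ) = (Q,\cdot)T$, which is the claim. The one point deserving care — and the only place where the hypothesis \emph{subquasigroup} is genuinely used — is the restriction argument: one must verify that the global inverse $R^{-1}_a$ (an inverse of a permutation of all of $Q$) really maps $H$ onto $H$ and there agrees with the inverse taken within $H$. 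This follows from $R_a|_H$ being a bijection of $H$, but it is worth stating explicitly rather than assuming that the translations of $Q$ "obviously" behave well on the subquasigroup.
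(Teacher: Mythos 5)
Your proposal is correct and takes essentially the same approach as the paper: the paper's entire proof is the one-line observation that $R_a|_H$ and $L_b|_H$ are translations of the quasigroup $(H,\cdot)$ because $a,b\in H$, which is precisely the restriction step you identify as the crux. Your write-up simply makes explicit what that observation yields — that $R^{-1}_a$ and $L^{-1}_b$ map $H$ onto $H$, so $(H,\circ)$ is the LP-isotope of $(H,\cdot)$ and hence a loop with the same identity $b\cdot a$ as $(Q,\circ)$ — all of which is a faithful elaboration of the paper's argument.
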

\begin{proof}
We have that $R_a|_H, L_b|_H$ are translations of $(H,\cdot)$, since $a, b\in H$.
\end{proof}

We define the following mappings of a quasigroup $(Q, \cdot)$:  $f : x \mapsto f(x)$, $f(x)\cdot x = x$ for all
$x \in Q$; $e : x \mapsto e(x)$, $x\cdot e(x) = x$ for all $x \in Q$; $s : x \mapsto s(x)$, $s(x)= x\cdot x$ for
all $x \in Q$.

\medskip

\begin{definition} \label{quasigr_as_algebra}   An algebra $(Q, \cdot, \backslash, /)$
  is called a quasigroup, if on the set $Q$ there exist
operations "$\backslash$" and "$/$" such that in  $(Q, \cdot, \backslash, /)$  identities
\begin{equation}
x\cdot(x \backslash y) = y, \label{(1)}
\end{equation}
\begin{equation}
(y / x)\cdot x = y, \label{(2)}
\end{equation}
\begin{equation}
x\backslash (x \cdot y) = y \label{(3)},
\end{equation}
\begin{equation}
(y \cdot x)/ x = y \label{(4)}
\end{equation}
 are fulfilled \cite{EVANS, BIRKHOFF, BURRIS, VD, 1a, HOP, SCERB_07}. \label{def3}
\end{definition}

\begin{lemma} \label{SUB_GROUPOIDS}
1.  Any sub-object of a  quasigroup $(Q, \cdot)$ is a cancellation groupoid.

2.  Any sub-object of a  quasigroup $(Q, \cdot, \backslash, /)$ is a subquasigroup.

3. Any subquasigroup  of a  quasigroup $(Q, \cdot)$ is a a subquasigroup in $(Q, \cdot, \backslash, /)$ and vice
versa, any subquasigroup  of a  quasigroup $(Q, \cdot, \backslash, /)$ is a a subquasigroup in $(Q, \cdot)$.
\end{lemma}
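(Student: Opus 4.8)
The plan is to treat the three parts in order, relying only on the two descriptions of a quasigroup available so far: the translation version (left and right translations are bijections) and the equational version of Definition~\ref{def3}, glued together by Remark~\ref{remark_one}.

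For part~1 I would argue directly from injectivity. Let $(H,\cdot)$ be a sub-object of $(Q,\cdot)$ and take $a,b,c\in H$. Since $(Q,\cdot)$ is a quasigroup, $L_a$ and $R_a$ are bijections of $Q$, hence injective. From $a\cdot b=a\cdot c$ injectivity of $L_a$ gives $b=c$, and from $b\cdot a=c\cdot a$ injectivity of $R_a$ gives $b=c$. Because $H\subseteq Q$ and the operation on $H$ is the restriction of the one on $Q$, these injectivity statements hold verbatim inside $H$, so $(H,\cdot)$ satisfies both cancellation laws.

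For part~2 the idea is to check the two hypotheses of Remark~\ref{remark_one} for a sub-object $(H,\cdot,\backslash,/)$ and then invoke the Remark. Cancellation is immediate from part~1, since closure under $\cdot$ already makes $H$ a sub-object of $(Q,\cdot)$. For the division property I would exploit closure under the extra operations: given $a,b\in H$, the element $a\backslash b$ lies in $H$ and by~\eqref{(1)} solves $a\cdot x=b$, so $L_a$ is surjective on $H$; dually $b/a\in H$ solves $y\cdot a=b$ by~\eqref{(2)}, so $R_a$ is surjective on $H$. Thus $(H,\cdot)$ is a division cancellation groupoid and, by Remark~\ref{remark_one}, a quasigroup.

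Part~3 splits into two directions. The ``vice versa'' direction is literally part~2, so it needs no new work. For the forward direction, assume $(H,\cdot)$ is a subquasigroup of $(Q,\cdot)$ and show $H$ is closed under $\backslash$ and $/$. This is the one point I expect to require care, and it is the (mild) crux of the whole lemma: one must recognize that the solution of $a\cdot x=b$ obtained \emph{inside} the subquasigroup $H$ is the \emph{same element} as $a\backslash b$ computed in the ambient $Q$. The argument is that $a\backslash b$ is the unique solution in $Q$; meanwhile, since $a,b\in H$ and $(H,\cdot)$ is a quasigroup, the equation has a solution in $H\subseteq Q$, which is also a solution in $Q$ and so, by uniqueness in $Q$, coincides with $a\backslash b$. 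Hence $a\backslash b\in H$, and symmetrically $b/a\in H$, so $H$ is closed under all three operations and is a subquasigroup of $(Q,\cdot,\backslash,/)$.
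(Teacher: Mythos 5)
Your proposal is correct, and it is more complete than what the paper actually prints. For part 1 your argument coincides with the paper's: cancellation in the sub-object is inherited verbatim from cancellation in the ambient quasigroup $(Q,\cdot)$. For parts 2 and 3, however, the paper gives no proof at all — it simply cites standard references (Belousov, Pflugfelder, Mal'tsev, Burris--Sankappanavar) — so your contribution is to supply the self-contained argument those sources contain. Your choices are exactly the right ones: for part 2, verifying the hypotheses of Remark \ref{remark_one} (cancellation from part 1, surjectivity of the restricted translations from closure under $\backslash$ and $/$ together with identities (\ref{(1)}) and (\ref{(2)})) is the standard route; and for part 3 you correctly isolate the only point where something can go wrong, namely that the solution of $a\cdot x=b$ produced inside the subquasigroup $(H,\cdot)$ must be identified with $a\backslash b$ computed in $Q$, which your uniqueness argument settles. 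Nothing is missing; the proposal could stand as a full replacement for the paper's citation-only treatment of parts 2 and 3.
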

\begin{proof}
1. If $a,b,c \in H$, then from $a\cdot b = a \cdot c$  follows $b=c$, since $(H, \cdot) \subseteq  (Q, \cdot)$.
Similarly from $b\cdot a = c\cdot a$  follows $b=c$.

2 and 3.  See, for example, \cite{VD, HOP, MALTSEV, BURRIS}. \end{proof}

Left, middle and right nucleus  of a loop $(Q, \cdot)$ are  defined in the following way:
\begin{equation*}
\begin{split}
& N_l = \{ a \in Q \,|\, a \cdot xy = ax \cdot y, x, y \in Q\},  N_m =\{ a \in Q \,|\, xa \cdot y = x \cdot a y,
x, y \in Q\},\\ & N_r =\{ a \in Q \,|\, xy \cdot a = x \cdot  ya, x, y \in Q\}.
\end{split}
\end{equation*}
Nucleus of a loop is defined in the following way $N= N_l\cap N_m \cap N_r$ \cite{RHB, VD}.  R.H.~Bruck defined
a center of a loop $(Q,\cdot)$ as $C (Q,\cdot) = N \cap Z$, where $Z = \{a \in Q \, \mid \, a\cdot x =x\cdot a
\,\, \forall x \in Q\}$. Information on quasigroup nuclei there is in \cite{SCERB_03}.

\subsection{Autotopisms}

\begin{definition} An autotopism (sometimes we shall call autotopism and as autotopy) is an isotopism of a
quasigroup $(Q,\cdot)$ into itself, i.e. a triple $(\alpha, \beta, \gamma)$ of permutations of the set $Q$ is an
autotopy if the equality $x\cdot y = \gamma^{-1}(\alpha x \cdot \beta y)$ is fulfilled for all $x, y \in Q$.
\end{definition} \index{autotopy(autotopism)}

\begin{definition}
 The third component of any autotopism  is called a \textit{quasiautomorphism}.
\end{definition}
\index{quasiautomorphism}

By $Top \, (Q, \cdot)$ we shall denote the group of all autotopies of a quasigroup $(Q, \cdot)$.

\begin{theorem} If  quasigroups $(Q, \cdot)$ and $(Q, \circ)$ are isotopic
with isotopy $T$, i.e. $(Q,\cdot) = (Q, \circ)T$, then $ Top\, (Q, \cdot) = T^{-1} Top \, (Q, \circ) T$
\cite{VD, 1a, 2}.
\end{theorem}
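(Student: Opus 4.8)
The plan is to use the fact, implicit in the paper's conventions, that an autotopy of a quasigroup is exactly an isotopy of that quasigroup onto itself, and to combine this with the composition law for isotopies. Write $T = (\mu_1, \mu_2, \mu_3)$, so the hypothesis $(Q, \cdot) = (Q, \circ) T$ reads $x \cdot y = \mu_3^{-1}(\mu_1 x \circ \mu_2 y)$ for all $x, y \in Q$. The first step is to record the composition rule: if $(Q, C) = (Q, B) T_1$ and $(Q, D) = (Q, C) T_2$ with $T_1 = (\mu_1, \mu_2, \mu_3)$ and $T_2 = (\nu_1, \nu_2, \nu_3)$, then a single substitution gives $(Q, D) = (Q, B)(T_1 T_2)$, where $T_1 T_2 = (\mu_1 \nu_1, \mu_2 \nu_2, \mu_3 \nu_3)$ is the componentwise product in $S_Q \times S_Q \times S_Q$ (this is the same convention used in Remark \ref{REMARK_ON_PRINC_ISOT}). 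Thus $(Q, B) \mapsto (Q, B) T$ is a right action of the group of all isotopies on the quasigroups on $Q$, and $S \in Top\,(Q, \circ)$ is equivalent to $(Q, \circ) S = (Q, \circ)$.

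Granting this, the inclusion $T^{-1} Top\,(Q, \circ) T \subseteq Top\,(Q, \cdot)$ is immediate. Fix $S \in Top\,(Q, \circ)$. From $(Q, \cdot) = (Q, \circ) T$ we get $(Q, \cdot) T^{-1} = (Q, \circ)$, hence
\[
(Q, \cdot)(T^{-1} S T) = (((Q, \cdot) T^{-1}) S) T = ((Q, \circ) S) T = (Q, \circ) T = (Q, \cdot),
\]
so $T^{-1} S T$ is an autotopy of $(Q, \cdot)$. Therefore conjugation by $T$ carries $Top\,(Q, \circ)$ into $Top\,(Q, \cdot)$.

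For the reverse inclusion I would argue symmetrically. Since $(Q, \circ) = (Q, \cdot) T^{-1}$, the identical computation with the two quasigroups interchanged shows $T S' T^{-1} \in Top\,(Q, \circ)$ for every $S' \in Top\,(Q, \cdot)$; that is, $T \, Top\,(Q, \cdot) \, T^{-1} \subseteq Top\,(Q, \circ)$, equivalently $Top\,(Q, \cdot) \subseteq T^{-1} Top\,(Q, \circ) T$. The two inclusions give the asserted equality. (One can bypass the composition lemma and instead verify everything in one componentwise computation: writing $S = (\alpha, \beta, \gamma)$ and $T^{-1} S T = (\mu_1^{-1}\alpha\mu_1, \mu_2^{-1}\beta\mu_2, \mu_3^{-1}\gamma\mu_3)$, the autotopy identity $\gamma(u \circ v) = \alpha u \circ \beta v$ together with $x \cdot y = \mu_3^{-1}(\mu_1 x \circ \mu_2 y)$ yields the autotopy identity for $(Q, \cdot)$ after the substitution $u = \mu_1 x$, $v = \mu_2 y$.)

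I do not expect a genuine obstacle: the content is entirely formal. The only point demanding care is the bookkeeping — fixing the componentwise product of triples, the side on which isotopies act, and the placement of inverses under conjugation — since a misplaced convention would silently replace $T^{-1}(\cdot)T$ by $T(\cdot)T^{-1}$ and invert the statement.
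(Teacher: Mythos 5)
Your proof is correct, and it matches the standard argument: the paper itself does not prove this theorem but only cites it to \cite{VD, 1a, 2}, where exactly this formal conjugation argument appears. Your bookkeeping checks out under the paper's conventions — componentwise composition makes $(Q,B)\mapsto (Q,B)T$ a right action, $S\in Top\,(Q,\circ)$ iff $(Q,\circ)S=(Q,\circ)$, and the two conjugation inclusions (together with the componentwise verification in your parenthetical) give the stated equality with the inverses on the correct sides.
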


\begin{lemma}  \cite{VD, 1a} If $(Q,\cdot)$ is a loop, then any its autotopy has the form
$(R^{-1}_a, L^{-1}_b, \varepsilon)(\gamma, \gamma,\gamma)$.
\end{lemma}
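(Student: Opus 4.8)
The plan is to exploit the defining identity of an autotopism together with the fact that a loop $(Q,\cdot)$ possesses a two-sided identity element $e$. By definition, a triple $(\alpha,\beta,\gamma)$ is an autotopy of $(Q,\cdot)$ precisely when
\[
\gamma(x\cdot y) = \alpha x \cdot \beta y \qquad \text{for all } x,y\in Q.
\]
The whole idea is that specializing one of the two arguments to $e$ kills the corresponding factor of the product and thereby expresses $\alpha$ (respectively $\beta$) as a translation applied to $\gamma$.

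First I would set $y=e$. Since $x\cdot e = x$ in the loop, the autotopy identity collapses to $\gamma x = \alpha x \cdot (\beta e)$. Writing $a = \beta e$, this reads $\gamma x = R_a(\alpha x)$, i.e. $\gamma = R_a\alpha$, and hence $\alpha = R_a^{-1}\gamma$ (here $R_a$ is a genuine right translation of $(Q,\cdot)$ because $a\in Q$, so $R_a^{-1}$ exists). Symmetrically, I would set $x=e$; using $e\cdot y = y$ the identity becomes $\gamma y = (\alpha e)\cdot \beta y$. Writing $b = \alpha e$, this gives $\gamma = L_b\beta$, whence $\beta = L_b^{-1}\gamma$.

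Finally I would assemble these two relations componentwise. Using the composition rule for isotopisms fixed in Remark~\ref{REMARK_ON_PRINC_ISOT} (namely $(\mu_1,\mu_2,\mu_3)(\nu_1,\nu_2,\nu_3)=(\mu_1\nu_1,\mu_2\nu_2,\mu_3\nu_3)$), we have
\[
(R_a^{-1},L_b^{-1},\varepsilon)(\gamma,\gamma,\gamma) = (R_a^{-1}\gamma,\, L_b^{-1}\gamma,\, \gamma) = (\alpha,\beta,\gamma),
\]
which is exactly the asserted form, with $a=\beta e$ and $b=\alpha e$. I do not expect any genuine obstacle here: the argument is a pair of substitutions, and the only points requiring care are the correct composition convention for isotopies and keeping track of which local element ($\beta e$ versus $\alpha e$) feeds the right versus the left translation. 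As a side remark consistent with the earlier terminology, $\gamma$ is the third component of the autotopism and hence a quasiautomorphism, but the statement itself requires nothing beyond the factorization above.
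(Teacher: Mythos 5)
Your proposal is correct and is essentially identical to the paper's own proof: the same two substitutions ($y=e$ giving $\alpha = R_{\beta e}^{-1}\gamma$, and $x=e$ giving $\beta = L_{\alpha e}^{-1}\gamma$, matching the paper's $k=\beta 1$, $d=\alpha 1$), followed by the same componentwise factorization $(\alpha,\beta,\gamma)=(R_a^{-1},L_b^{-1},\varepsilon)(\gamma,\gamma,\gamma)$. No gaps; your care about the composition convention matches the paper's usage in Remark on principal isotopy.
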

\begin{proof} Let $T=(\alpha, \beta, \gamma)$ be an autotopy of a loop $(Q,\cdot)$, i.e. $\alpha x \cdot
\beta y = \gamma(x \cdot y)$. If we put $x=1$, then we obtain $\alpha 1 \cdot \beta y = \gamma y$, $\gamma =
L_{\alpha 1} \beta$, $\beta = L^{-1}_{\alpha 1} \gamma$. If we put $y=1$, then, by analogy, we obtain, $\alpha =
R^{-1}_{\beta 1} \gamma$.

Then $T=(R^{-1}_{\beta 1} \gamma, L^{-1}_{\alpha 1} \gamma, \gamma)= (R^{-1}_k, L^{-1}_d, \varepsilon)(\gamma,
\gamma, \gamma)$, where $\beta 1 =k$, $\alpha 1 =$ $ d$.
\end{proof}

We can obtain more detail information on  autotopies of a group, and, since autotopy groups of isotopic
quasigroups are isomorphic, on autotopies of quasigroups that are some group isotopes.

\begin{theorem} Any autotopy of a group $(Q,+)$ has the form
$$(L_a \, \delta, R_b \, \delta, L_a R_b \, \delta),$$ where
$L_a$ is a left translation of the group $(Q,+)$, $R_b$ is a right translation of this group, $\delta$ is an
automorphism of $(Q,+)$ \cite{1a}.
\end{theorem}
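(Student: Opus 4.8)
The plan is to exploit the autotopy equation $\alpha x + \beta y = \gamma(x+y)$ (the additive rewriting of $x\cdot y = \gamma^{-1}(\alpha x \cdot \beta y)$) together with the presence of the identity element $0$ of $(Q,+)$. First I would evaluate this equation at the identity in each slot. Putting $y=0$ gives $\alpha x + \beta 0 = \gamma x$, i.e. $\gamma = R_b \alpha$ with $b := \beta 0$; putting $x = 0$ gives $\alpha 0 + \beta y = \gamma y$, i.e. $\gamma = L_a \beta$ with $a := \alpha 0$. Thus both $\alpha$ and $\beta$ are expressible through the single permutation $\gamma$: $\alpha = R_b^{-1}\gamma$ and $\beta = L_a^{-1}\gamma$. (Putting $x=y=0$ additionally records $\gamma 0 = a + b$, a consistency check used at the end.) This is essentially the loop computation already carried out in the preceding lemma, specialised to the associative setting.

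Next I would define the candidate automorphism by $\delta := L_a^{-1} R_b^{-1}\gamma$, so that $\gamma = L_a R_b\,\delta$ holds by construction; here I use that in a group $L_a$ and $R_b$ commute, since $L_a R_b z = a + z + b = R_b L_a z$. It then remains to check that this same $\delta$ reproduces the first two components. Writing $\delta z = -a + \gamma z - b$, a one-line cancellation gives $L_a \delta z = \gamma z - b = R_b^{-1}\gamma z = \alpha z$ and $R_b \delta z = -a + \gamma z = L_a^{-1}\gamma z = \beta z$, so indeed $\alpha = L_a \delta$, $\beta = R_b\delta$ and $\gamma = L_a R_b \delta$, all with one and the same $\delta$.

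Finally I would prove that $\delta$ is an automorphism. Being a composition of the bijections $L_a^{-1}$, $R_b^{-1}$, $\gamma$, it is a permutation, so only multiplicativity is at issue. Substituting the three forms just obtained back into the autotopy equation yields $a + \delta x + \delta y + b = a + \delta(x+y) + b$, and cancelling $a$ on the left and $b$ on the right (group cancellation) gives $\delta(x+y) = \delta x + \delta y$; hence $\delta$ is an automorphism of $(Q,+)$. I do not expect a genuine obstacle here — the argument is a direct computation — and the only point needing care is the coherence of the construction: that a single $\delta$, read off from $\gamma$ alone, simultaneously yields the required translation-times-automorphism factorisations of $\alpha$ and $\beta$. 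The commutativity $L_a R_b = R_b L_a$ together with the cancellation law is exactly what makes this coherence work, so it is associativity of $(Q,+)$ (and not merely its loop structure) that the proof genuinely uses.
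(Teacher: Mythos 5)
Your proof is correct. Note that the paper itself gives no proof of this theorem --- it is stated with a bare citation to Belousov's book --- so the only in-paper argument to compare against is the proof of the preceding lemma on loop autotopies, and your first paragraph reproduces exactly that computation: evaluating $\alpha x + \beta y = \gamma(x+y)$ at $y=0$ and $x=0$ to get $\alpha = R_b^{-1}\gamma$, $\beta = L_a^{-1}\gamma$ with $a=\alpha 0$, $b=\beta 0$. Your second and third paragraphs then supply the group-specific completion that the paper omits, and they check out line by line: with $\delta := L_a^{-1}R_b^{-1}\gamma$, so $\delta z = -a + \gamma z - b$, one has $L_aR_b\delta = \gamma$, $L_a\delta z = \gamma z - b = \alpha z$ and $R_b\delta z = -a + \gamma z = \beta z$, and substituting back into the autotopy equation gives $a + \delta x + \delta y + b = a + \delta(x+y) + b$, whence $\delta \in Aut(Q,+)$ by two-sided cancellation. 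Your closing remark is also the right diagnosis: it is associativity (through $L_aR_b = R_bL_a$ and the unambiguous bracketing that permits cancelling $a$ on the left and $b$ on the right) that lets the single permutation $\delta$, read off from $\gamma$ alone, simultaneously factor all three components --- this is precisely what fails in a general loop, where the lemma's conclusion $(R_k^{-1}\gamma, L_d^{-1}\gamma, \gamma)$ cannot be sharpened to translation-times-automorphism form.
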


\begin{corollary} \label{RAVNYE_KOMPON_AVTOT}
1. If $L_a \delta = L_a R_b \delta,$ then $R_b = \varepsilon$.  2. If $R_b \delta =  L_a R_b \delta,$ then $L_a
= \varepsilon$. 3. If $L_a \delta = R_b \delta,$ then $a \in C(Q,+)$.
\end{corollary}
\begin{proof}
3. We have $ a + \delta x +  a + \delta y   = a + a  + \delta x + \delta y$,  $\delta x +  a  = a  + \delta x$
for all $x\in Q$.
\end{proof}

\begin{corollary} \label{QUASIAUT_FORM}
Any group quasiautomorphism has the form $L_d \, \varphi$, where $\varphi \in Aut(Q,+)$ \cite{vs2}.
\end{corollary}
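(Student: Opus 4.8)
The plan is to read this off directly from the preceding theorem on the autotopy form of a group. By definition a quasiautomorphism is the third component of some autotopism of $(Q,+)$, and the theorem tells us that every autotopism of the group has the shape $(L_a\,\delta,\, R_b\,\delta,\, L_aR_b\,\delta)$ with $\delta\in Aut(Q,+)$. Hence the task reduces to a single verification: every map of the form $L_aR_b\,\delta$ can be rewritten as $L_d\,\varphi$ for a suitable element $d$ and a suitable automorphism $\varphi$. First I would write out the action $L_aR_b\,\delta(x) = a + \delta x + b$ for all $x\in Q$, so that the whole problem becomes the equation $d + \varphi(x) = a + \delta x + b$.

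Next I would pin down $d$ by evaluating at the identity. Since $\delta$ is an automorphism we have $\delta 0 = 0$, and the requirement $\varphi(0)=0$ (forced if $\varphi$ is to be an automorphism) gives $d = a + b$. With $d$ determined, I would solve for the candidate automorphism as $\varphi = L_d^{-1}L_aR_b\,\delta$, i.e. $\varphi(x) = -(a+b) + a + \delta x + b$. Simplifying the group word (using $-(a+b) = -b-a$) collapses this to $\varphi(x) = -b + \delta x + b$, which is just $\delta$ followed by conjugation by $b^{-1}$.

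Finally I would check that this $\varphi$ really is an automorphism of $(Q,+)$. Conjugation $x\mapsto -b + x + b$ is an inner automorphism, and the composite of an automorphism $\delta$ with an inner automorphism is again an automorphism; a one-line check gives $\varphi(x+y)=\varphi(x)+\varphi(y)$ and $\varphi(0)=0$. Therefore $L_aR_b\,\delta = L_{a+b}\,\varphi = L_d\,\varphi$ with $\varphi\in Aut(Q,+)$, which is exactly the claim. There is no real obstacle here — the statement is a bookkeeping consequence of the previous theorem — and the only thing to keep straight is the noncommutative additive arithmetic when inverting $a+b$; in the abelian case the conjugation is trivial and $\varphi$ simply equals $\delta$.
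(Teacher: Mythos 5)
Your proposal is correct and matches the paper's own proof: both factor the quasiautomorphism $L_aR_b\,\delta$ as $L_{a+b}(I_b\,\delta)$ with $I_b x = -b + x + b$, the paper by inserting $b-b$ after $a$ and you by solving $\varphi = L_d^{-1}L_aR_b\,\delta$ with $d=a+b$ pinned down at $0$ — the same computation presented in a slightly different order. The verification that $\varphi = I_b\,\delta \in Aut(Q,+)$ as an automorphism composed with an inner automorphism is exactly the paper's concluding observation.
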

\begin{proof}
We have $L_aR_b \, \delta x = a + \delta x + b = a + b - b + \delta x + b = L_{a+b} I_{b} \, \delta x = L_d \,
\varphi,$ where $d = a + b$, $\varphi = I_{b} \delta$, $I_{b}\, x = -b + x + b$.
\end{proof}

\begin{lemma} \label{ON AUTOMORPHISM_OF_IDEM_QUAS}
1. If $x\cdot  y = \alpha x \ast  y$,  where $(Q, \ast)$ is an idempotent quasigroup, $\alpha$ is a permutation
of the set Q, then $Aut(Q, \cdot) = C_{Aut(Q, \ast)} (\alpha) = \{\tau  \in Aut(Q, \ast) \, | \, \tau \alpha =
\alpha \tau \} $, in particular,  $Aut(Q, \cdot) \subseteq Aut(Q, \ast)$.

2. If $x\cdot  y = x \ast \beta y$,  where $(Q, \ast)$ is an idempotent quasigroup, $\beta$ is a permutation of
the set Q, then $Aut(Q, \cdot) = C_{Aut(Q, \ast)} (\beta) = \{\tau  \in Aut(Q, \ast) \, | \, \tau \beta = \beta
\tau \} $, in particular,  $Aut(Q, \cdot) \subseteq Aut(Q, \ast)$ (\cite{MARS}, Corollary 12).
\end{lemma}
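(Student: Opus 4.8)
The plan is to prove the two inclusions $C_{Aut(Q,\ast)}(\alpha)\subseteq Aut(Q,\cdot)$ and $Aut(Q,\cdot)\subseteq C_{Aut(Q,\ast)}(\alpha)$ separately in Part~1, and then obtain Part~2 by a mirror-image argument. The first inclusion is a one-line calculation, whereas the second inclusion---which must force an automorphism of $\cdot$ to preserve $\ast$ \emph{and} to centralize $\alpha$---is where the idempotency of $(Q,\ast)$ does the essential work.

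For the easy inclusion I would take $\tau\in Aut(Q,\ast)$ with $\tau\alpha=\alpha\tau$ and compute directly: $\tau(x\cdot y)=\tau(\alpha x\ast y)=\tau(\alpha x)\ast\tau y=(\tau\alpha)(x)\ast\tau y=(\alpha\tau)(x)\ast\tau y=\alpha(\tau x)\ast\tau y=\tau x\cdot\tau y$, using that $\tau$ respects $\ast$ in the second step and the commuting relation $\tau\alpha=\alpha\tau$ in the fourth. Hence $\tau\in Aut(Q,\cdot)$.

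For the hard inclusion I would start from $\tau\in Aut(Q,\cdot)$, which reads $\tau(\alpha x\ast y)=\alpha(\tau x)\ast\tau y$ for all $x,y$. Substituting $u=\alpha x$ (legitimate since $\alpha$ is a bijection) rewrites this as $\tau(u\ast y)=(\alpha\tau\alpha^{-1})(u)\ast\tau y$. The key step is then to set $u=y$ and invoke idempotency $y\ast y=y$, which yields $\tau(y)=(\alpha\tau\alpha^{-1})(y)\ast\tau(y)$. Since $(Q,\ast)$ is a quasigroup, the equation $z\ast\tau(y)=\tau(y)$ has a \emph{unique} solution $z$; but idempotency gives $\tau(y)\ast\tau(y)=\tau(y)$, so that solution is $z=\tau(y)$, whence $(\alpha\tau\alpha^{-1})(y)=\tau(y)$ for every $y$. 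Thus $\alpha\tau\alpha^{-1}=\tau$, i.e. $\tau\alpha=\alpha\tau$. Feeding this back into the rewritten identity gives $\tau(u\ast y)=\tau(u)\ast\tau(y)$, so $\tau\in Aut(Q,\ast)$, and together with the commuting relation $\tau\in C_{Aut(Q,\ast)}(\alpha)$. The \emph{in particular} claim $Aut(Q,\cdot)\subseteq Aut(Q,\ast)$ is immediate from this.

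Finally, Part~2 follows by the dual argument: from $x\cdot y=x\ast\beta y$ and $\tau\in Aut(Q,\cdot)$ one obtains $\tau(x\ast v)=\tau x\ast(\beta\tau\beta^{-1})(v)$ after substituting $v=\beta y$, and setting $x=v$ with idempotency forces $(\beta\tau\beta^{-1})(v)=\tau(v)$ via uniqueness of the right local identity of $\tau(v)$, giving $\beta\tau=\tau\beta$ and $\tau\in Aut(Q,\ast)$. The main---and essentially the only---obstacle is spotting the correct diagonal substitution $u=y$ (respectively $x=v$) that activates idempotency; once that is in place, uniqueness of solutions in the quasigroup $(Q,\ast)$ closes the argument immediately.
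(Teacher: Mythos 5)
Your proof is correct and follows essentially the same route as the paper's sketch: the diagonal substitution (the paper puts $y=\alpha x$, you put $u=y$ after the change of variable $u=\alpha x$) activates idempotency, and uniqueness of the solution of $z\ast \tau(y)=\tau(y)$ forces $\alpha\tau\alpha^{-1}=\tau$, after which $\tau\in Aut(Q,\ast)$ follows by back-substitution. You additionally spell out the easy inclusion $C_{Aut(Q,\ast)}(\alpha)\subseteq Aut(Q,\cdot)$ and the cancellation step, both of which the paper's sketch leaves implicit.
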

\begin{proof}
1. We give a sketch of the proof. If $\varphi \in Aut(Q, \cdot)$, then  $\varphi (x\cdot y) =  \varphi(\alpha x
\ast
 y) = \varphi x\cdot \varphi y = \alpha \varphi x\ast  \varphi y.$ If $y = \alpha x$, then $\varphi \alpha
x = \alpha \varphi  x \ast  \varphi \alpha x,$ $\varphi \alpha =  \alpha \varphi $, $\varphi(\alpha x \ast  y) =
\varphi \alpha x\ast  \varphi  y $.

2. The proof of Case 2 is similar to the proof of Case 1.
\end{proof}

\subsection{Quasigroup classes}

\begin{definition} \label{DEF_11}
A quasigroup $(Q, \cdot)$ is

\noindent \textit{medial}, if  $xy\cdot uv = xu\cdot yv$ for all $x,y, u, v \in Q$;

\noindent  \textit{left distributive},  if  $x\cdot uv = xu\cdot xv$ for all $x, u, v \in Q$;

\noindent \textit{right  distributive},  if  $x u\cdot v = xv\cdot uv$ for all $x, u, v \in Q$;

\noindent \textit{distributive}, if it is left and right distributive;

\noindent \textit{idempotent},  if $x\cdot x = x$ for all $x \in Q$;

\noindent \textit{unipotent},  if there exists an element $a\in Q$  such that \index{Quasigroup!unipotent}
$x\cdot x = a$ for all $x\in Q;$

\noindent \textit{left semi-symmetric},  if  $x\cdot xy = y$ for all $x,y \in Q$;

\noindent \textit{TS-quasigroup},  if  \index{TS-quasigroup} $x\cdot xy = y, xy=yx$ for all $x,y \in Q$;

\noindent \textit{left F-quasigroup},  if  $x\cdot y z = x y \cdot e(x) z$ for all $x,y, z \in Q$;

\noindent \textit{right F-quasigroup},  if  $x y\cdot z = xf(z) \cdot y z$ for all $x,y, z \in Q$;

 \noindent
\textit{left semimedial  or middle F-quasigroup},  if  $s(x)\cdot y z =  x x \cdot y z = x y \cdot x z$ for all
$x,y, z \in Q$;

\noindent \textit{right semimedial},  if  $zy \cdot s(x) = zx \cdot yx$ for all $x,y, z \in Q$;

\noindent \textit{F-quasigroup}, if it is left and right F-quasigroup;

\noindent \textit{left E-quasigroup},  if  $x \cdot  yz = f(x)y \cdot xz $ for all $x,y, z \in Q$;

\noindent \textit{right  E-quasigroup},  if  $zy \cdot x = zx \cdot y e(x) $ for all $x,y, z \in Q$;

\noindent \textit{E-quasigroup}, if it is left and right E-quasigroup;

\noindent \textit{LIP-quasigroup}, if there exists a permutation $\lambda$ of the set $Q$ such that $\lambda x
\cdot (x\cdot y) = y$ for all $x,y \in Q$;

\noindent \textit{RIP-quasigroup}, if there exists a permutation $\rho$ of the set $Q$ such that $ (x\cdot y)
\cdot \rho y  = x $ for all $x,y \in Q$.

\noindent \textit{IP-quasigroup}, if if it is LIP- and RIP-quasigroup.
\end{definition}

A quasigroup $(Q, \cdot)$ of the form $x\cdot y = \varphi x + \beta y+c$, where $(Q, +)$ is a group,  $\varphi
\in Aut(Q,+)$, $\beta$ is a permutation of the set $Q$, is called a \textit{left linear quasigroup}; a
quasigroup $(Q, \cdot)$ of the form $x\cdot y = \alpha x + \psi y+c$, where $(Q, +)$ is a group,  $\psi  \in
Aut(Q,+)$, $\alpha$ is a permutation of the set $Q$,  is called a \textit{right linear quasigroup}
\cite{TABAR_92, SOH_07}.

\index{Loop!Bol}\index{Loop!Moufang}
\begin{definition} \label{LOOP_DEFS}
A loop  $(Q, \cdot)$ is

\noindent  \textit{Bol loop (left Bol loop)}, if  $x(y \cdot xz) = (x\cdot yx)z $ for all $x, y, z \in Q$;

\noindent  \textit{Moufang  loop}, if  $x(yz \cdot x) = xy \cdot zx$ for all $x, y, z \in Q$;

\noindent  \textit{commutative Moufang  loop (CML)}, if  $x x \cdot y z = x y \cdot x z$ for all $x, y, z \in
Q$;

\noindent \textit{left M-loop},  if  $ x\cdot (y \cdot z) = (x\cdot (y \cdot I\varphi x))\cdot (\varphi x \cdot
z)$ for all $x, y, z\in Q$,  where $\varphi$ is a mapping of the set $Q$, $x \cdot Ix = 1$ for all $x \in Q$;

\noindent \textit{right M-loop},  if  $ (y\cdot z) \cdot x = (y\cdot \psi x) \cdot ((I^{-1}\psi x \cdot z) \cdot
x)$ for all $x, y, z\in Q$, where $\psi$ is a mapping of the set $Q$.

\noindent \textit{M-loop}, if if it is left M- and right M-loop.

\noindent \textit{left  special},  if $S_{a,b}=L^{-1}_bL^{-1}_aL_{ab}$ is an automorphism of $(Q, \cdot)$ for
any pair $a,b\in Q$ \cite{Soikis_70}.

\noindent \textit{right   special},  if $T_{a,b}=R^{-1}_bR^{-1}_aR_{ba}$ is an automorphism of $(Q, \cdot)$ for
any pair $a,b\in Q$ \cite{Soikis_70}.
\end{definition}

In \cite{VD} the left special loop is called \textit{special}. In \cite{kepka76, 12, Kin_PHIL_04} left
semimedial quasigroups are studied.
 A quasigroup is trimedial if and only if it is  satisfies left and right E-quasigroup  equality \cite{Kin_PHIL_04}.
Information on properties of trimedial quasigroups there is in   \cite{Kin_PHIL_02}.

 Every semimedial quasigroup is isotopic to a commutative Moufang loop \cite{kepka76}.
 In the trimedial case the isotopy has a more restrictive form \cite{kepka76}.

In a quasigroup $(Q, \cdot, \backslash, \slash)$ the  equalities $x\cdot y z = x y \cdot e(x) z$,  $x y\cdot z =
xf(z) \cdot y z$, $x \cdot  yz = f(x)y \cdot xz $ and  $zy \cdot x = zx \cdot y e(x) $ take the form $x\cdot y z
= x y \cdot (x \backslash x) z$,  $x y\cdot z = x(z\slash z) \cdot y z$, $x \cdot  yz = (x\slash x) y \cdot xz $
and $zy \cdot x = zx \cdot y (x\backslash x)$,  respectively,  and they are identities in $(Q, \cdot,
\backslash, \slash)$.

Therefore  any subquasigroup of a left F-quasigroup $(Q, \cdot, \backslash, \slash)$ is a left F-quasigroup, any
homomorphic image of a left F-quasigroup $(Q, \cdot, \backslash, \slash)$ is a left F-quasigroup \cite{BURRIS,
MALTSEV}. It is clear that the same situation is  for right F-quasigroups, left and right E- and SM-quasigroups.

\begin{lemma} \label{medial_F_SM_E}
Any medial quasigroup $(Q, \cdot)$ is  both a left and right F-, SM- and E-quasigroup.
\end{lemma}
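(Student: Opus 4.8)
The plan is to verify each of the six defining equalities (left and right F-, left and right semimedial, left and right E-quasigroup) directly from the medial identity, written in the rearranged form $(ab)\cdot(cd) = (ac)\cdot(bd)$, which is just mediality $xy\cdot uv = xu\cdot yv$ with the two inner arguments transposed. The only auxiliary facts I would use are the defining relations of the local identity elements, namely $x\cdot e(x) = x$ and $f(x)\cdot x = x$, together with $s(x) = x\cdot x$.

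First I would dispose of the two semimedial identities, since they require no local identities at all. Taking $a=b=x$, $c=y$, $d=z$ in the medial law gives $(xx)\cdot(yz) = (xy)\cdot(xz)$, which is precisely left semimediality $s(x)\cdot yz = xy\cdot xz$; dually, $a=z$, $b=y$, $c=d=x$ yields $(zy)\cdot(xx) = (zx)\cdot(yx)$, i.e.\ right semimediality. Thus a medial quasigroup is automatically an SM-quasigroup.

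For the four F- and E-identities the idea is to choose the four arguments so that, after a single application of mediality, one factor collapses via a local-identity relation. For left F I would compute $(xy)\cdot(e(x)z) = (x\cdot e(x))\cdot(yz) = x\cdot(yz)$, using mediality with $a=x$, $b=y$, $c=e(x)$, $d=z$ and then $x\cdot e(x)=x$. Symmetrically, for right F take $(x\,f(z))\cdot(yz) = (xy)\cdot(f(z)\,z) = (xy)\cdot z$; for left E take $(f(x)y)\cdot(xz) = (f(x)\,x)\cdot(yz) = x\cdot(yz)$; and for right E take $(zx)\cdot(y\,e(x)) = (zy)\cdot(x\,e(x)) = (zy)\cdot x$. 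In each case mediality moves the local identity element next to its partner ($x$ for $e$ in the left cases, $z$ for $f$ in right F, $x$ for $e$ again in right E), where it cancels.

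The computations are entirely routine; the only point to watch is the bookkeeping of which of $x,y,z$ plays the role of each of $a,b,c,d$, and the placement of the local identity element so that after the inner transposition it sits adjacent to the element it neutralizes. There is no real obstacle beyond this matching. Finally, since the medial, F-, SM- and E-quasigroup laws all hold as genuine identities in the signature $(Q,\cdot,\backslash,\slash)$ and each verification above is a finite chain of such identities, the conclusion holds for every medial quasigroup, completing the proof.
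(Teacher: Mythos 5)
Your proposal is correct and follows essentially the same route as the paper: in each case you substitute the appropriate local identity element ($e(x)$, $f(x)$, or the repeated argument for $s$) into the medial identity $xy\cdot uv = xu\cdot yv$ so that it lands next to its partner and cancels, which is exactly the paper's one-line computation for the left F-case followed by ``And so on.'' Your version simply writes out all six verifications explicitly, which the paper leaves to the reader.
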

\begin{proof}
 Equality $x \cdot uv = xu\cdot e(x) v$ follows from medial identity $xy \cdot uv = xu\cdot yv$ by $y = e(x)$.
Respectively by $u = e(x)$ we have  $xy \cdot e(x) v = x\cdot yv$. I. e.  $(Q,\cdot)$ is a left F-quasigroup in
these cases. And so on.
\end{proof}

%From identity $xy \cdot uv = xu\cdot yv$ by $u = f(v)$ we have  $xy \cdot v = x f(v) \cdot y v$, by $y = f(v)$
%we have   $xf(v) \cdot uv = xu\cdot v$. Then   $(Q,\cdot)$ is $(Q,\cdot)$ is a right F-quasigroup in these
%cases.

%From identity $xy \cdot uv = xu\cdot yv$ by $x = f(y)$ we obtain  $y \cdot uv = f(y) u\cdot y v$, by $x = f(u)$
%we obtain $f(u)y \cdot uv = u\cdot yv$. Then   $(Q,\cdot)$ is $(Q,\cdot)$ is a left  E-quasigroup in these
%cases.

%From identity $xy \cdot uv = xu\cdot yv$ by $v = e(u)$ we have   $xy \cdot u   = xu\cdot y e(u)$, by $v = e(y)$
%we have $xy \cdot u e(y)  = xu\cdot y$. Then  $(Q,\cdot)$ is a right E-quasigroup in these cases.

%From identity $xy \cdot uv = xu\cdot yv$ by $x = y$ we have $xx \cdot uv = xu\cdot xv$, by $x = u$ we have $xy
%\cdot xv = xx\cdot yv$. Then  $(Q,\cdot)$ is a left SM-quasigroup in these cases.

%From identity $xy \cdot uv = xu\cdot yv$ by $u = v$ we have  $xy \cdot uu = xu\cdot yu$, by $y = v$ we have $xy
%\cdot uy = xu\cdot yy$. Then  $(Q,\cdot)$ is a right SM-quasigroup in these cases.

\begin{lemma} \label{distrib_F_SM_E}
1. Any left distributive  quasigroup $(Q, \cdot)$ is  a left  F-, SM- and E-quasigroup.

2. Any right distributive  quasigroup $(Q, \cdot)$ is a right  F-, SM- and E-quasigroup.
\end{lemma}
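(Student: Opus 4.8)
The plan is to reduce everything to one key fact: every left (respectively right) distributive quasigroup is idempotent. Once idempotency is in hand, each of the three target identities is nothing but the distributive identity rewritten, so the whole lemma becomes a substitution exercise.

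First I would prove idempotency. In the left distributive case, substitute $u = v = x$ into $x\cdot uv = xu\cdot xv$ to obtain $x\cdot (x\cdot x) = (x\cdot x)\cdot(x\cdot x)$. Writing $y = x\cdot x$, this reads $x\cdot y = y\cdot y$, i.e. $R_y x = R_y y$; since every right translation of a quasigroup is a bijection of $Q$, injectivity of $R_y$ forces $x = y$, that is $x\cdot x = x$. The right distributive case is dual: substituting $u = v = x$ into $xu\cdot v = xv\cdot uv$ yields $(x\cdot x)\cdot x = (x\cdot x)\cdot(x\cdot x)$, and cancelling the common left factor $y = x\cdot x$ by injectivity of $L_y$ again gives $x\cdot x = x$.

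Next I would translate idempotency into statements about the local identity elements. From $x\cdot x = x$ and the uniqueness of solutions in a quasigroup it follows that $f(x) = e(x) = s(x) = x$ for every $x\in Q$: indeed $f(x)$ is the unique solution of $f(x)\cdot x = x$, the element $e(x)$ is the unique solution of $x\cdot e(x) = x$, and $s(x) = x\cdot x$ by definition.

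Finally I would substitute these equalities into the defining identities from Definition \ref{DEF_11}. For part~1, the left F-identity $x\cdot yz = xy\cdot e(x)z$ becomes $x\cdot yz = xy\cdot xz$, the left semimedial identity $s(x)\cdot yz = xy\cdot xz$ becomes $x\cdot yz = xy\cdot xz$, and the left E-identity $x\cdot yz = f(x)y\cdot xz$ becomes $x\cdot yz = xy\cdot xz$; in each case the resulting equality is precisely left distributivity, hence holds. Part~2 is handled identically using $f(z) = z$, $s(x) = x$ and $e(x) = x$: the right F-identity, the right semimedial identity and the right E-identity each collapse to the right distributive identity $xy\cdot z = xz\cdot yz$ (with the variables renamed as needed), which holds by hypothesis. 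The only genuinely non-routine step is the idempotency argument of the first paragraph; everything afterwards is direct substitution.
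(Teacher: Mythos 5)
Your proof is correct and takes essentially the same route as the paper's: establish idempotency, note that consequently $f(x)=e(x)=s(x)=x$, and observe that each of the six defining identities then collapses to the corresponding distributive law. The only difference is that you spell out the idempotency argument (cancellation via injectivity of $R_{x\cdot x}$, resp.\ $L_{x\cdot x}$), which the paper dismisses as ``easy to see'' --- a welcome filling-in of detail, not a different approach.
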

\begin{proof}
1. It is easy to see that $(Q, \cdot)$ is idempotent quasigroup. Therefore $x\cdot x = x\slash x = x \backslash
x = x$. Then $x\cdot yz = xy \cdot xz = ((x\slash x)\cdot y) \cdot xz =
 ((x\backslash x)\cdot y) \cdot xz = xy \cdot (x\slash x)z = xy \cdot (x\backslash x)z = x x\cdot yz$.

2. The proof of this case is similar to the proof of Case 1.
\end{proof}

\begin{lemma} \label{LEMMA_ON_GEN_F}
A quasigroup $(Q, \cdot)$ in which:
\begin{enumerate}
    \item  the equality $x\cdot y z = x y \cdot \delta(x) z$ is true for all $x, y, z
\in Q$, where  $\delta$ is a map of the set $Q$, is a left F-quasigroup \cite{1a};
    \item  the equality $x y\cdot z = x\delta(z) \cdot y z$ is true for all $x, y, z
\in Q$, where  $\delta$ is a map of the set $Q$, is a right  F-quasigroup;
    \item the equality $\delta(x)\cdot y z =   x y \cdot x z$ is true for all $x, y, z \in Q$, where  $\delta$ is a map of
the set $Q$, is a left semimedial quasigroup;
    \item the equality $zy \cdot \delta(x) = zx \cdot yx$ is true for all $x, y, z \in Q$, where  $\delta$ is a map of the
set $Q$, is a right  semimedial quasigroup;
    \item the equality $x \cdot  yz = \delta(x) y \cdot xz $ is true for all $x, y, z \in Q$, where  $\delta$ is a map of
the set $Q$, is a left E-quasigroup;
    \item the equality $zy \cdot x = zx \cdot y \delta(x) $ is true for all $x, y, z \in Q$, where  $\delta$ is a map of
the set $Q$, is a right  E-quasigroup.
\end{enumerate}
\end{lemma}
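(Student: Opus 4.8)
The plan is to observe that in each of the six cases the claimed defining identity of the respective class is exactly the hypothesised identity specialised to the case where the map $\delta$ is replaced by the appropriate local identity: namely $e$ in case~1, $f$ in case~2, $s$ in cases~3 and~4, $f$ in case~5, and $e$ in case~6 (recall $e(x)=x\backslash x$, $f(x)=x/x$, $s(x)=x\cdot x$, so that $x\cdot e(x)=x$ and $f(x)\cdot x=x$). Hence it suffices, for each identity, to show that $\delta(x)$ must coincide pointwise with that local identity element. The only tool needed is that $(Q,\cdot)$ is a quasigroup, so that every left and right translation is a bijection and we may cancel freely.

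For the F- and SM-cases I would choose a substitution that collapses one of the two displayed products to a single factor and then cancel. In case~1, setting $y=e(x)$ turns the right side $xy\cdot\delta(x)z$ into $x\cdot(\delta(x)z)$ because $x\cdot e(x)=x$, while the left side becomes $x\cdot(e(x)z)$; cancelling $L_x$ on the left and then a right translation gives $\delta(x)=e(x)$. Case~2 is the mirror image: putting $y=f(z)$ and using $f(z)\cdot z=z$ yields $xf(z)\cdot z=x\delta(z)\cdot z$, and two cancellations give $\delta(z)=f(z)$. In case~3, setting $y=x$ gives $\delta(x)\cdot(xz)=(xx)\cdot(xz)$, and cancelling the common right factor $xz$ forces $\delta(x)=x\cdot x=s(x)$; case~4 is handled symmetrically by setting $y=x$ and cancelling the common left factor $zx$ from $(zx)\cdot\delta(x)=(zx)\cdot(xx)$. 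In each case the hypothesised identity then reads as the defining identity of the class.

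The delicate cases are the two E-identities, where $\delta(x)$ is coupled with the free variable $y$ and no single substitution isolates it; this is where I expect the main (if modest) obstacle to lie. The remedy is a combined substitution. In case~5 I would set $y=x$ and $z=e(x)$ simultaneously: the left side $x\cdot yz$ becomes $x\cdot(x\cdot e(x))=x\cdot x$, and the right side $\delta(x)y\cdot xz$ becomes $(\delta(x)\cdot x)\cdot(x\cdot e(x))=(\delta(x)\cdot x)\cdot x$, so $x\cdot x=(\delta(x)\cdot x)\cdot x$; cancelling the right factor $x$ gives $\delta(x)\cdot x=x$, whence $\delta(x)=f(x)$ by the defining property of $f$. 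Case~6 is the exact mirror: setting $z=f(x)$ and $y=x$ produces $x\cdot x=x\cdot(x\cdot\delta(x))$, and cancelling $L_x$ gives $x\cdot\delta(x)=x$, i.e.\ $\delta(x)=e(x)$. Substituting these identifications back into the six hypotheses turns each of them into the defining identity of a left or right F-, SM- or E-quasigroup, which is what we wanted.
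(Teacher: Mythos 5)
Your proof is correct and takes essentially the same route as the paper: the paper proves only Case 1, by the very substitution $y=e(x)$ followed by cancellation of $L_x$ and a right translation, and dismisses Cases 2--6 with \lq\lq proved similarly\rq\rq. Your combined substitutions for the two E-cases (e.g.\ $y=x$, $z=e(x)$ in Case 5) correctly supply the details the paper elides, and all six identifications $\delta=e$, $f$, or $s$ check out.
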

\begin{proof}
\begin{enumerate}
    \item If we  take $y = e(x)$, then we have  $x\cdot e(x) z = x \cdot  \delta(x) z$, $e(x) =  \delta(x)$.

   % \item If we  take $y = f(z)$, then we have $x f(z)\cdot z = x\delta(z) \cdot f(z) z$, $x f(z) =
   % x\delta(z)$,  $f(z) = \delta(z)$.
   % \item If we  take $x = y$, then we have $\delta(x)\cdot x z =   x x \cdot x z$, $ \delta(x) = s(x)$.
   % \item  If we  take $x = y$, then we have $zx \cdot \delta(x) = zx \cdot xx$, $ \delta(x) = s(x)$.
   % \item If we  take $x = y$, then we have $x \cdot  xz = \delta(x) x \cdot xz $, $x = \delta(x) x$, $\delta
   % (x) = f(x)$.
   % \item If we  take $x = y$, then we have $zx \cdot x = zx \cdot x \delta(x)$, $\delta(x) = e(x)$.
\end{enumerate}
Cases 2-6 are proved similarly.
\end{proof}

\begin{theorem} \label{Toyoda_Theorem}
Toyoda Theorem  \cite{VD, 1a, 4, TOYODA, 6, SHCH_STR_05}. Any medial quasigroup $(Q,\cdot)$ can be presented in
the form: $ x\cdot y = \varphi x + \psi y + a$, \label{MEDIAL_eqno(2)} where $(Q, +)$ is an abelian group,
$\varphi, \psi $ are automorphisms of $ (Q, +)$ such that $\varphi \psi = \psi \varphi,$  $a$ is some fixed
element of  the set $Q$ and vice versa.
\end{theorem}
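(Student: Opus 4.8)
The plan is to realize $(Q,\cdot)$ as a linear image of a well-chosen abelian group obtained as a loop isotope, and then to read off the two automorphisms from the translations. First, fix an arbitrary element $g\in Q$ and pass to the principal LP-isotope $(Q,+)$ defined by $x+y = R_g^{-1}x\cdot L_g^{-1}y$. By the quoted fact that every LP-isotopic image of a quasigroup is a loop, $(Q,+)$ is a loop (with identity $g\cdot g$), and inverting the isotopy gives the basic relation $x\cdot y = R_g x + L_g y$, where $R_g,L_g$ are the original translations of $(Q,\cdot)$, now regarded as permutations of the set $Q$. The key observation about the medial law is that it is exactly an autotopy statement: rewriting $xy\cdot uv = xu\cdot yv$ as $R_{uv}(x\cdot y) = R_u x\cdot R_v y$ shows that $(R_u,R_v,R_{uv})$ is an autotopy of $(Q,\cdot)$ for all $u,v\in Q$, and symmetrically $(L_x,L_y,L_{xy})\in Top\,(Q,\cdot)$ for all $x,y\in Q$.

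Next I would show that $(Q,+)$ is an abelian group. Mediality of $(Q,+)$ follows by transporting the medial autotopies above through $T=(R_g^{-1},L_g^{-1},\varepsilon)$ — using $Top\,(Q,+)=T^{-1}Top\,(Q,\cdot)T$ — or by a direct, if slightly tedious, substitution; and a medial loop is automatically an abelian group, since setting one argument equal to the identity in $xy\cdot uv = xu\cdot yv$ yields associativity, while a second specialization yields commutativity. With $(Q,+)$ now an abelian group I can invoke the structure theorem for group autotopies: every autotopy of $(Q,+)$ has the form $(L_a\delta,R_b\delta,L_aR_b\delta)$ with $\delta\in Aut(Q,+)$, so in particular every third component is affine with automorphism linear part. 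Transporting $(L_x,L_y,L_{xy})$ and $(R_u,R_v,R_{uv})$ into $Top\,(Q,+)$ therefore forces $L_{xy}$ and $R_{uv}$ to be affine maps of $(Q,+)$; since the products $xy$ and $uv$ range over all of $Q$, every left and every right translation of $(Q,\cdot)$ is affine. In particular $R_g x=\varphi x + p$ and $L_g y=\psi y+q$ with $\varphi,\psi\in Aut(Q,+)$, whence
\begin{equation*}
x\cdot y = R_g x + L_g y = \varphi x + \psi y + a, \qquad a:=p+q,
\end{equation*}
using commutativity of $+$.

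Finally, the commuting relation and the converse both drop out of a single computation: substituting $x\cdot y=\varphi x+\psi y+a$ into $xy\cdot uv=xu\cdot yv$, expanding with $\varphi,\psi\in Aut(Q,+)$ and cancelling the terms common to both sides (abelian group), leaves $\varphi\psi y+\psi\varphi u=\varphi\psi u+\psi\varphi y$ for all $y,u$; putting $u$ equal to the neutral element gives $\varphi\psi=\psi\varphi$, and conversely this same identity makes the two sides agree, so any such affine operation over an abelian group with commuting automorphisms is medial. The main obstacle is the middle step: extracting \emph{genuine} automorphisms is impossible from the medial law alone (it only constrains $\varphi\psi-\psi\varphi$), so the argument must first secure that the loop isotope $(Q,+)$ is associative and commutative and then lean on the group-autotopy structure theorem, whose use of associativity is precisely what upgrades the translations from arbitrary permutations to affine maps.
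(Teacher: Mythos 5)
You should first note that the paper does not prove this statement at all: Toyoda's theorem is quoted with citations \cite{VD, 1a, 4, TOYODA, 6, SHCH_STR_05}, so your attempt can only be compared with the classical proofs in those sources. Your global architecture is exactly the classical autotopy route, and everything \emph{after} the abelian-group stage is correct: since the third component of $T=(R_g^{-1},L_g^{-1},\varepsilon)$ is the identity, conjugation preserves third components of autotopies, so $L_{xy}$ and $R_{uv}$ become quasiautomorphisms of the group $(Q,+)$, hence of the form $L_d\varphi$ with $\varphi\in Aut(Q,+)$ by Corollary \ref{QUASIAUT_FORM}; surjectivity of products then makes all translations of $(Q,\cdot)$ affine, and your closing computation extracting $\varphi\psi=\psi\varphi$ (and the converse) is sound.

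The genuine gap sits precisely at the step you yourself flag as the main obstacle: proving that the loop $(Q,+)$ is an abelian group. Your first justification --- that mediality of $(Q,+)$ ``follows by transporting the medial autotopies through $T$'' --- does not work. Conjugation carries $(R_u,R_v,R_{uv})$ to the autotopy $(R_gR_uR_g^{-1},\,L_gR_vL_g^{-1},\,R_{uv})$ of $(Q,+)$, whose first two components are \emph{not} translations of $(Q,+)$ (one has $R^{+}_c=R^{(\cdot)}_{L_g^{-1}c}R_g^{-1}$, and $R_gR_uR_g^{-1}$ is not of this form: in the eventual model its linear part is $\varphi$ conjugated, not $\varphi$ itself), whereas mediality of $(Q,+)$ is the membership of the specific triples $(R^{+}_u,R^{+}_v,R^{+}_{u+v})$ in $Top(Q,+)$, which the transported family does not yield. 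Mediality is not an isotopy invariant; that this particular LP-isotope is medial is true but is essentially equivalent to the theorem. Your fallback ``direct, slightly tedious substitution'' is likewise circular: expanding via $R_g^{-1}(s\cdot t)=R_u^{-1}s\cdot R_v^{-1}t$ (where $u\cdot v=g$) and $L_g^{-1}(s\cdot t)=L_p^{-1}s\cdot L_q^{-1}t$ (where $p\cdot q=g$) reduces the claim to an identity that encodes exactly the commutation relation between the two yet-to-be-constructed automorphisms. Note also that the Four Quasigroups Theorem (Theorem \ref{O_CHETYREH_QUAS}) does not apply directly, because specializing variables in mediality produces bisymmetry-type equalities such as $B(A(x,y),u)=B(A(x,u),y)$, not generalized associativity. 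The standard repair: invoke the Acz\'el--Belousov--Hossz\'u generalized bisymmetry theorem \cite{AC_BEL_HOS} (with all operations equal to $\cdot$), or Belousov's theorem on balanced identities \cite{vdb1}, to conclude $(Q,\cdot)$ is a group isotope; then $(Q,+)$ is a group by the generalized Albert theorem (Theorem \ref{ALBERT_THEOREM}), and fixing $x=a$, $v=b$ in mediality gives $L_a y\cdot R_b u=L_a u\cdot R_b y$, so Corollary \ref{SOHA_REZ} makes $(Q,+)$ abelian. With that inserted, the rest of your proof goes through verbatim.
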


\begin{theorem} \label{Belousov_Theorem}
 Belousov  Theorem  \cite{DISTRIB_BEL_60, VD, 1a}. Any  distributive  quasigroup $(Q,\circ)$ can be
presented in the form: $ x\circ y = \varphi x + \psi y$, where $(Q, +)$ is a commutative Moufang loop, $\varphi,
\psi \in Aut (Q, +)$, $\varphi, \psi \in Aut (Q, \cdot)$,   $\varphi \psi = \psi \varphi$.
\end{theorem}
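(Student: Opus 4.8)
The plan is to realize $(Q,\circ)$ as a principal (LP-) isotope of a commutative Moufang loop built at a single idempotent element. First I would record two elementary consequences of distributivity. Left distributivity $x\circ(u\circ v)=(x\circ u)\circ(x\circ v)$ says exactly that each left translation $L_x$ is an endomorphism, hence, being a bijection, an automorphism of $(Q,\circ)$; symmetrically, right distributivity makes each $R_x$ an automorphism. Moreover $(Q,\circ)$ is idempotent: taking $u=e(x)$ (so that $x\circ e(x)=x$) and $v=x$ in the left distributive law gives $x\circ(e(x)\circ x)=(x\circ e(x))\circ(x\circ x)=x\circ(x\circ x)$, whence $e(x)\circ x=x\circ x$ by left cancellation, then $e(x)=x$ by right cancellation, so $x\circ x=x$. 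In particular every translation is an automorphism, so the multiplication group lies inside $\mathrm{Aut}(Q,\circ)$ and acts transitively on $Q$.

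Next I would fix an idempotent element $e$ and pass to the LP-isotope $x+y:=R_e^{-1}x\circ L_e^{-1}y$, which is a loop with identity $e\circ e=e$. Writing $\varphi:=R_e$ and $\psi:=L_e$, substitution gives $x\circ y=\varphi x+\psi y$, and $\varphi,\psi\in\mathrm{Aut}(Q,\circ)$ by the previous paragraph. Left distributivity together with idempotence yields $L_eR_e=R_eL_e$: indeed $e\circ(x\circ e)=(e\circ x)\circ(e\circ e)=(e\circ x)\circ e$, that is $\varphi\psi=\psi\varphi$. From this commutation one checks directly that $\varphi,\psi$ are automorphisms of $(Q,+)$ as well; for instance $R_e(x+y)=x\circ R_eL_e^{-1}y$ equals $R_ex+R_ey=x\circ L_e^{-1}R_ey$ precisely because $R_eL_e^{-1}=L_e^{-1}R_e$. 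Thus all the linearity assertions of the statement are in hand, and it remains only to prove that $(Q,+)$ is a commutative Moufang loop.

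This last point is the heart of the matter. I would rewrite both distributive laws as identities in $(Q,+)$, using repeatedly that $\varphi,\psi$ are commuting automorphisms of $(Q,+)$ and that idempotence reads $\varphi x+\psi x=x$. Specializing the resulting identities (setting one variable equal to $e$) produces the Bol--Moufang-type relations $\varphi a+(\psi a+b)=a+b$ and $(\varphi a+b)+\psi a=a+b$ from the left law, and $\varphi c+(b+\psi c)=b+c$, $(a+\varphi c)+\psi c=a+c$ from the right law; equivalently $L_{\varphi a}L_{\psi a}=R_{\psi a}L_{\varphi a}=L_a$ and $R_{\psi a}R_{\varphi a}=L_{\varphi a}R_{\psi a}=R_a$ in the loop. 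From the full system of these relations, together with the transitivity of $\mathrm{Aut}(Q,\circ)$ (which lets one normalize the base point), I would extract the commutativity of $+$ and then verify the commutative Moufang identity $xx\cdot yz=xy\cdot xz$.

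The main obstacle is exactly this verification: a single substitution pattern only yields tautological reformulations of commutativity (e.g. the two relations above give $a+b=b+a$ iff $\psi a+b=b+\psi a$, which is the same statement since $\psi$ is onto), so genuine use of the interplay between the left and right laws is needed to break the symmetry. This is the substantive classical content, and if a self-contained derivation is not desired one may instead invoke Belousov's original result \cite{DISTRIB_BEL_60}. Once $(Q,+)$ is known to be a commutative Moufang loop, the decomposition $x\circ y=\varphi x+\psi y$ with $\varphi,\psi\in\mathrm{Aut}(Q,+)\cap\mathrm{Aut}(Q,\circ)$ and $\varphi\psi=\psi\varphi$ established above completes the proof.
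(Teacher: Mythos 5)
Everything you actually prove here is correct, and it is the standard reduction: the idempotence argument via $e(x)$ is sound; left (right) distributivity does say that every $L_x$ (every $R_x$) is an automorphism of $(Q,\circ)$; the LP-isotope $x+y=R_e^{-1}x\circ L_e^{-1}y$ at an idempotent $e$ is a loop with identity $e$ and gives $x\circ y=\varphi x+\psi y$ with $\varphi=R_e$, $\psi=L_e$; the relation $e\circ(x\circ e)=(e\circ x)\circ(e\circ e)=(e\circ x)\circ e$ does yield $\varphi\psi=\psi\varphi$; and your check that $\varphi,\psi\in Aut(Q,+)$ via $R_eL_e^{-1}=L_e^{-1}R_e$ is correct. (The statement's ``$\varphi,\psi\in Aut(Q,\cdot)$'' is evidently a slip for $Aut(Q,\circ)$, which is what your argument supplies.) You should also know that the paper itself offers no proof of this theorem: it is quoted as a classical result of Belousov with references to the 1960 paper and the two books, so deferring the hard core to the original source is exactly what the paper does, and there is no in-paper argument for your attempt to diverge from.

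That said, judged as a self-contained proof, the gap you yourself flag is genuine and cannot be closed from the material you assembled. The four relations you extract, $L_{\varphi a}L_{\psi a}=R_{\psi a}L_{\varphi a}=L_a$ and $R_{\psi a}R_{\varphi a}=L_{\varphi a}R_{\psi a}=R_a$, are only the one-variable specializations of the full three-variable identities expressing distributivity over $(Q,+)$, namely $\varphi x+(\psi\varphi y+\psi^2 z)=(\varphi^2x+\varphi\psi y)+(\psi\varphi x+\psi^2 z)$ and its right-hand mirror, and they are strictly weaker: relations of exactly this specialized type already hold in the left $S$-loops of Theorem \ref{AT1} (where $\psi$ is a complete automorphism with complement $\varphi$ and $\varphi x\circ(\psi x\circ y)=x\circ y$), and left $S$-loops arise from merely left distributive quasigroups, hence need be neither commutative nor Moufang. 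So no amount of recombining the specializations can produce $(x+x)+(y+z)=(x+y)+(x+z)$; one must exploit the interplay of both three-variable identities, and Belousov's derivation of commutativity and the Moufang law from them is a substantial computation, not a short normalization. In other words, ``invoke Belousov's original result'' is not a cosmetic fallback in your write-up — it is the entire substance of the theorem. Within the context of this paper (which also cites rather than proves) your reduction-plus-citation is acceptable, but you should present it as such, not as a proof with one verification pending.
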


A left (right) F--quasigroup is isotopic to a left (right) M--loop \cite{GOLOVKO_II, 1a}. A left (right)
F--quasigroup is isotopic to a left (right) special loop \cite{PROP_BIN_OPERATS, vdb_FLOR, VD, GOLOVKO_I}. An
F--quasigroup is isotopic to a Moufang loop \cite{kepka_05}.

If a loop $(Q, \circ)$ is isotopic to a left distributive quasigroup $(Q, \cdot)$ with isotopy  the form $x\circ
y = R^{-1}_a x\cdot L^{-1}_a  y$, then $(Q, \circ )$  will be  called a \textit{left  $S$-loop}. Loop  $(Q,
\circ)$ and  quasigroup $(Q, \cdot)$ are said to be \textit{related}.

If a loop $(Q, \circ)$ is isotopic to a right distributive quasigroup $(Q, \cdot)$ with isotopy  the form
$x\circ y=R^{-1}_ax\cdot L^{-1}_ay$, then $(Q, \circ )$  will be  called a \textit{right  $S$-loop}.

\begin{definition}
An automorphism  $\psi$ of a loop $(Q,\circ)$ is called {\it complete}, if  there exists a permutation $\varphi$
of the set $Q$ such that $\varphi x \circ \psi x =x $ for all  $x\in Q$. Permutation  $ \varphi $ is called a
complement of automorphism  $ \psi$. \label{Automorphism_full}
\end{definition}

The following theorem is proved in \cite{BO}.

\begin{theorem}\label{AT1}
A loop   $(Q,\circ)$   is a left $S$-loop,  if and only if there exists a complete  automorphism  $\psi$ of the
loop $(Q,\circ)$ such that at least one from the following conditions is fulfilled:

a) $\varphi (x\circ \varphi^{-1}y)\circ (\psi x\circ z)= x\circ (y\circ z)$;

b) $ L_{x,y}^{\circ} \psi = \psi L_{x,y}^{\circ }$ and  $\varphi x\circ(\psi x \circ y) = x\circ y$ for all $x,
y \in Q$, $x, y \in Q$, $ L_{x,y}^{\circ} \in LI(Q,\circ )$.

Thus  $(Q,\cdot)$, where  $x \cdot y = \varphi x \circ \psi y,$ is a left distributive quasigroup which
corresponds to  the loop $(Q,\circ)$.
\end{theorem}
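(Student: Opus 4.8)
The statement is an "if and only if" characterizing left $S$-loops via the existence of a complete automorphism satisfying one of two functional equations. The natural strategy is to work through the connection $x \cdot y = \varphi x \circ \psi y$ between the loop $(Q,\circ)$ and the associated quasigroup $(Q,\cdot)$, and to translate left distributivity of $(Q,\cdot)$ into conditions on $(Q,\circ)$. By definition a loop is a left $S$-loop exactly when it is isotopic to a left distributive quasigroup $(Q,\cdot)$ via an isotopy of the form $x\circ y = R_a^{-1}x \cdot L_a^{-1}y$, where $a$ is the (necessarily idempotent) fixed point of $(Q,\cdot)$; inverting this gives $x\cdot y = R_a x \circ L_a y$, so setting $\varphi = R_a$ and $\psi = L_a$ (translations taken in $(Q,\circ)$) we get precisely the asserted decomposition. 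The plan is therefore to prove the equivalence "left $S$-loop $\iff$ there is a complete automorphism $\psi$ with (a) or (b)" by showing each side is equivalent to left distributivity of $(Q,\cdot)$ expressed in terms of $(Q,\circ)$.

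First I would establish the forward direction. Assuming $(Q,\circ)$ is a left $S$-loop, write $x\cdot y=\varphi x\circ\psi y$ with $(Q,\cdot)$ left distributive. Left distributivity $x\cdot(y\cdot z)=(x\cdot y)\cdot(x\cdot z)$ rewritten in $\circ$ becomes a relation that, after peeling off the outer translations, forces $\psi$ to be an automorphism of $(Q,\circ)$; idempotency of $(Q,\cdot)$ at the fixed point translates into the completeness relation $\varphi x\circ\psi x = x$, identifying $\varphi$ as a complement of $\psi$. Expanding the left distributive law fully and substituting $x\cdot y=\varphi x\circ\psi y$ should yield identity (a) directly; identity (b) I would obtain by specializing and using that the inner mappings $L_{x,y}^{\circ}=L_{x\circ y}^{-1}L_x L_y$ (the left-special inner mappings) commute with the automorphism $\psi$, which is the left-special property that left distributive quasigroups are known to carry over to their loop isotopes.

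For the converse I would assume a complete automorphism $\psi$ with complement $\varphi$ satisfying (a) (resp. (b)), define $x\cdot y=\varphi x\circ\psi y$, and verify that $(Q,\cdot)$ is a genuine quasigroup (immediate, since $\varphi,\psi$ are bijections and $(Q,\circ)$ is a loop) that is left distributive and idempotent with $x\cdot x=x$ forced at no point but at the idempotent given by completeness. Condition (a) is essentially left distributivity after the substitution, so reversing the computation from the forward direction gives $x\cdot(y\cdot z)=(x\cdot y)\cdot(x\cdot z)$; one then checks that $(Q,\circ)$ is recovered from $(Q,\cdot)$ by the prescribed $R^{-1}_a,L^{-1}_a$ isotopy (with $a$ the idempotent), so $(Q,\circ)$ is a left $S$-loop by definition. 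For (b) the task is to derive (a) from (b), i.e. to show the two stated conditions are interchangeable given completeness; this uses the commuting relation $L_{x,y}^{\circ}\psi=\psi L_{x,y}^{\circ}$ together with the "crossed" identity $\varphi x\circ(\psi x\circ y)=x\circ y$.

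The main obstacle I expect is the bookkeeping in the forward direction: correctly isolating $\psi\in\mathrm{Aut}(Q,\circ)$ and the completeness relation from the single left distributive identity, and especially deriving the inner-mapping commutation $L_{x,y}^{\circ}\psi=\psi L_{x,y}^{\circ}$ of (b) rather than just the raw identity (a). The delicate part is that (a) is a "global" rewriting of left distributivity whereas (b) repackages the same information through the left-special inner-mapping group $LI(Q,\circ)$, so establishing the equivalence of (a) and (b) — and in particular that the loop $(Q,\circ)$ is left special — is where the genuine content lies; everything else is substitution and inversion of translations. I would lean on the already-cited fact that $\psi=L_a$ is an automorphism for left distributive isotopes and that such loops are left special, so that the abstract inner mapping $L^{\circ}_{x,y}$ indeed commutes with $\psi$, closing the argument.
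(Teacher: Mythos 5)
First, a point of reference: the paper does not prove Theorem \ref{AT1} at all --- it states explicitly that the theorem is proved in \cite{BO} (Belousov--Onoi), so there is no in-paper proof to compare against. Your skeleton (translate left distributivity of $(Q,\cdot)$, $x\cdot y=\varphi x\circ\psi y$, into $(Q,\circ)$ and back, with completeness $\varphi x\circ\psi x=x$ encoding idempotency) is the standard and correct architecture. But two steps you dismiss as ``substitution and inversion of translations'' hide genuine content. Minor slip first: $\varphi=R_a$, $\psi=L_a$ are translations of the \emph{quasigroup} $(Q,\cdot)$, not of $(Q,\circ)$ (in $(Q,\circ)$ the element $a$ becomes the identity, so its $\circ$-translations are trivial); relatedly, $\psi\in Aut(Q,\circ)$ is not ``forced by peeling off outer translations'' but needs the separate computation $L_aR_ax=a\cdot xa=ax\cdot aa=ax\cdot a=R_aL_ax$. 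The serious gap is commutation: rewriting $x\cdot(y\cdot z)=(x\cdot y)\cdot(x\cdot z)$ via $\psi\in Aut(Q,\circ)$ gives $\varphi x\circ(\psi\varphi y\circ\psi^{2}z)=\varphi(\varphi x\circ\psi y)\circ(\psi\varphi x\circ\psi^{2}z)$, and the variable change producing identity a) goes through only if $\varphi\psi=\psi\varphi$. Forward this is free ($R_aL_a=L_aR_a$ as above), but in the converse --- where you say ``reversing the computation gives'' distributivity --- the reversal stalls on $\varphi(\varphi x\circ\varphi^{-1}\psi\varphi y)$ versus $\varphi(\varphi x\circ\psi y)$, and $\varphi\psi=\psi\varphi$ must be \emph{derived} from a). It can be: setting $y=1$ in a) (after checking $\varphi 1=\psi 1=1$) yields the crossed identity $\varphi x\circ(\psi x\circ z)=x\circ z$; applying $\psi$ to it and comparing with the same identity at $\psi x$ gives $\psi\varphi x\circ u=\varphi\psi x\circ u$ for all $u$, whence $\psi\varphi=\varphi\psi$ by right cancellation. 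Your plan never mentions this, and without it the converse for a) does not close.

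The second gap is the a)$\Longleftrightarrow$b) equivalence, which you correctly identify as the crux but then propose to settle by ``leaning on'' the known fact that loop isotopes of left distributive quasigroups are left special. That fact does not deliver b): left specialness says the inner mappings $L^{\circ}_{x,y}$ are \emph{automorphisms} of $(Q,\circ)$, which is weaker than and different from the commutation $L^{\circ}_{x,y}\psi=\psi L^{\circ}_{x,y}$ demanded in b). What is actually needed is a direct computation: since $\psi\in Aut(Q,\circ)$ one has $\psi L_u\psi^{-1}=L_{\psi u}$, hence $\psi L^{\circ}_{x,y}\psi^{-1}=L^{\circ}_{\psi x,\psi y}$, so b)'s commutation condition reads $L^{\circ}_{\psi x,\psi y}=L^{\circ}_{x,y}$; on the other hand, writing a) (with $y\to\varphi y$) in operator form as $L_{\varphi(x\circ y)}L_{\psi x}=L_xL_{\varphi y}$ and eliminating $\varphi$ via the crossed identity $L_{\varphi u}=L_uL^{-1}_{\psi u}$ turns a) into exactly $L_{x\circ y}L^{\circ}_{\psi x,\psi y}L^{-1}_{\psi y}=L_{x\circ y}L^{\circ}_{x,y}L^{-1}_{\psi y}$, i.e.\ $L^{\circ}_{\psi x,\psi y}=L^{\circ}_{x,y}$. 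So a) and b) are equivalent \emph{given} the crossed identity (which a) supplies at $y=1$ and b) postulates), and this operator computation --- absent from your proposal --- is the actual bridge. In summary: right route, but the commutation lemma $\varphi\psi=\psi\varphi$ in the converse and the inner-mapping computation linking a) and b) are real missing steps, and the authority you invoke for b) does not prove it.
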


\begin{remark}
In \cite{BO,ONOI_D} a left S-loop is called an  \textit{S-loop}.
\end{remark}

A left distributive quasigroup $(Q, \cdot)$  with identity $x\cdot xy = y$ is isotopic to a left Bol loop
\cite{vdb_FLOR, VD, 1a}. Last results of G. Nagy \cite{NAGY_G_08} let us to hope on progress in researches of
left distributive quasigroups.  Some properties of   distributive and left distribute quasigroups are described
in \cite{GALKIN_78, GALKIN_79, GALKIN_D, STEIN_A}.

\begin{theorem} \label{BELOUSOV_LEFT M_LOOP}
Any loop which is isotopic to a left F-quasigroup is a left M-loop (\cite{1a}, Theorem 3.17, p. 109).
\end{theorem}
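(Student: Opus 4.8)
The plan is to translate the defining identity of a left F-quasigroup into a statement about autotopisms and to transport it along the isotopy, letting the map $\varphi$ of the left M-loop definition emerge from the F-identity. First I would reduce to a convenient isotope: being a left M-loop is invariant under isomorphism (one simply transports $\varphi$ and the inverse map $I$ along the isomorphism), so by Remark \ref{REMARK_ON_PRINC_ISOT} and Lemma \ref{LP_AND_PRINCIP_ISOT} I may assume the given loop is an LP-isotope $(Q,\circ)=(Q,\cdot)S$ with $S=(R^{-1}_a,L^{-1}_b,\varepsilon)$, i.e. $x\circ y=R^{-1}_a x\cdot L^{-1}_b y$, whose identity element is $1=b\cdot a$. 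The key reformulation is to rewrite the left F-identity $x\cdot yz=xy\cdot e(x)z$ as $x\cdot(y\cdot z)=(x\cdot y)\cdot(e(x)\cdot z)$; reading this in the variables $y,z$ shows that $T_x:=(L_x,L_{e(x)},L_x)$ is an autotopism of $(Q,\cdot)$ for every $x$, equivalently $L_xL_y=L_{xy}L_{e(x)}$.

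Dually, I would reformulate the target: $(Q,\circ)$ is a left M-loop if and only if there is a map $\varphi$ with $L^{\circ}_xL^{\circ}_y=L^{\circ}_{x\circ(y\circ I\varphi x)}L^{\circ}_{\varphi x}$ for all $x,y$, i.e. if and only if $M_x:=(L^{\circ}_xR^{\circ}_{I\varphi x},L^{\circ}_{\varphi x},L^{\circ}_x)$ is an autotopism of $(Q,\circ)$ for all $x$. Here the analogy with the F-identity $L_xL_y=L_{xy}L_{e(x)}$ is visible: the role of $xy$ is played by $x\circ(y\circ I\varphi x)$ and the role of $e(x)$ by $\varphi x$. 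By the theorem on autotopy groups of isotopic quasigroups, $Top\,(Q,\circ)=S^{-1}Top\,(Q,\cdot)S$, so each $W_x:=S^{-1}T_xS=(R_aL_xR^{-1}_a,\;L_bL_{e(x)}L^{-1}_b,\;L_x)$ is an autotopism of $(Q,\circ)$, and so is every product of such. Since $L^{\circ}_x=L_{R^{-1}_a x}L^{-1}_b$, the product $W_{R^{-1}_a x}W_b^{-1}$ has third component exactly $L^{\circ}_x$, matching $M_x$.

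The map $\varphi$ then emerges from the F-identity. Writing $c=R^{-1}_a x$, the second component of $W_{c}W_b^{-1}$ is $L_bL_{e(c)}L^{-1}_{e(b)}L^{-1}_b$; applying $L_bL_{e(c)}=L_{b\cdot e(c)}L_{e(b)}$ (the F-identity with $s=b$, $t=e(c)$) collapses $L_bL_{e(c)}L^{-1}_{e(b)}$ to the single left translation $L_{b\cdot e(c)}$, so the second component becomes $L_{b\cdot e(c)}L^{-1}_b=L^{\circ}_{(b\cdot e(c))\cdot a}$. This forces the definition $\varphi x=(b\cdot e(R^{-1}_a x))\cdot a$ and makes the second component of $W_{R^{-1}_a x}W_b^{-1}$ agree with that of $M_x$. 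At this point $W_{R^{-1}_a x}W_b^{-1}$ is a loop autotopism whose second and third components coincide with those required of $M_x$.

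The main obstacle is the first component. Since in a loop the first component of an autotopism is uniquely determined by the other two (the Lemma giving the form $(R^{-1}_k,L^{-1}_d,\varepsilon)(\gamma,\gamma,\gamma)$, whence $\alpha=R^{\circ\,-1}_{\beta 1}\gamma$), it remains to verify that this forced first component equals $L^{\circ}_xR^{\circ}_{I\varphi x}$; equivalently, one must establish the identity $(x\circ(z\circ I\varphi x))\circ\varphi x=x\circ z$. This is not valid in an arbitrary loop, so it has to be extracted from the F-structure: concretely, I would compare the explicit first component $R_aL_cL^{-1}_bR^{-1}_a$ of $W_{c}W_b^{-1}$ with $L^{\circ}_xR^{\circ}_{I\varphi x}=L_cL^{-1}_bR_{L^{-1}_b I\varphi x}R^{-1}_a$, reducing the claim to $R_aL_cL^{-1}_b=L_cL^{-1}_bR_{L^{-1}_b I\varphi x}$, and then pin down $I\varphi x$ and discharge this relation using the F-identity (in the form $L_sL_tL^{-1}_{e(s)}=L_{st}$) together with $1=b\cdot a$. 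Once this last operator identity is confirmed, $W_{R^{-1}_a x}W_b^{-1}=M_x\in Top\,(Q,\circ)$, which is precisely the left M-loop identity, completing the proof.
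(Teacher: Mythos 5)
Your proposal is correct. Note that the paper itself gives no proof of this theorem --- it is quoted from Belousov (\cite{1a}, Theorem 3.17) --- so the comparison is with the classical argument, and yours is precisely that argument: encode the left F-identity as the autotopisms $T_x=(L_x,L_{e(x)},L_x)$ of $(Q,\cdot)$, transport them through an LP-isotopy (legitimate reduction, since the left M-loop property is an isomorphism invariant, via Remark \ref{REMARK_ON_PRINC_ISOT} and Lemma \ref{LP_AND_PRINCIP_ISOT}), and read off $\varphi$. All your computations check out, including the collapse of the second component to $L^{\circ}_{\varphi x}$ with $\varphi x=(b\cdot e(R^{-1}_a x))\cdot a$. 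The only place a gap could hide is the step you leave as a sketch, namely the operator identity $R_aL_cL^{-1}_b=L_cL^{-1}_bR_d$ with $c=R^{-1}_ax$ and $d=L^{-1}_bI\varphi x$, and it does discharge exactly as you predict: putting $z=L^{-1}_{e(c)}a$ (respectively $z=L^{-1}_{e(b)}d$) in $x\cdot yz=xy\cdot e(x)z$ gives the conjugation formulas $R_aL_c=L_cR_{L^{-1}_{e(c)}a}$ and $R_dL_b=L_bR_{L^{-1}_{e(b)}d}$, which reduce the identity to $L^{-1}_{e(c)}a=L^{-1}_{e(b)}d$, i.e.\ to $d=e(b)\cdot L^{-1}_{e(c)}a$; and this value of $d$ indeed satisfies the relation $(b\cdot e(c))\cdot d=b\cdot a$ defining $I\varphi x$, because the F-identity gives $(b\cdot e(c))\cdot(e(b)\cdot L^{-1}_{e(c)}a)=b\cdot(e(c)\cdot L^{-1}_{e(c)}a)=b\cdot a$, with $d$ unique since $L_{b\cdot e(c)}$ is a bijection. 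With that identity confirmed, $W_{R^{-1}_ax}W^{-1}_b=M_x\in Top\,(Q,\circ)$ as you claim, which is the left M-loop condition, so your proof is complete.
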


\begin{theorem} \label{ALBERT_THEOREM}
Generalized Albert Theorem. Any  loop isotopic to a group is a group  \cite{1a, A1, A2, VD, 8, HOP, SCERB_03}.
\end{theorem}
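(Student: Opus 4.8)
The plan is to reduce to a principal isotopy and then exhibit the loop operation explicitly as a one-parameter translate of the group operation. First I would invoke Remark \ref{REMARK_ON_PRINC_ISOT}: up to isomorphism every isotopy is principal, and since an isomorphic copy of a group is again a group, it suffices to treat the case in which $(Q,\circ)$ is a principal isotope of a group $(Q,\cdot)$, say $(Q,\circ)=(Q,\cdot)(\alpha,\beta,\varepsilon)$, i.e. $x\circ y=\alpha x\cdot\beta y$ for all $x,y\in Q$.

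Because $(Q,\circ)$ is assumed to be a loop, Lemma \ref{LP_AND_PRINCIP_ISOT} forces this principal isotopy to be an LP-isotopy: $(\alpha,\beta,\varepsilon)=(R_a^{-1},L_b^{-1},\varepsilon)$ for suitable translations $R_a,L_b$ of the group $(Q,\cdot)$. Writing the group product multiplicatively, this means $x\circ y=R_a^{-1}x\cdot L_b^{-1}y=(xa^{-1})(b^{-1}y)=xcy$, where $c=a^{-1}b^{-1}$ is a fixed element of $Q$. Thus the loop operation $\circ$ has collapsed to the single-parameter family $x\circ y=xcy$, a translate of the underlying group multiplication.

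The final step is a direct verification. For all $x,y,z\in Q$ one computes $(x\circ y)\circ z=(xcy)cz=xcycz$ and $x\circ(y\circ z)=xc(ycz)=xcycz$, so $\circ$ is associative; moreover $c^{-1}$ serves as a two-sided identity, since $x\circ c^{-1}=xcc^{-1}=x$ and $c^{-1}\circ x=c^{-1}cx=x$, and left/right inverses are inherited from $(Q,\cdot)$. Hence $(Q,\circ)$ is a group.

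I expect no serious obstacle here: the substance lies entirely in recognizing that an LP-isotope of a group by its own translations reduces to the form $x\circ y=xcy$, after which associativity is immediate. The one point requiring care is the opening reduction—one must observe that the isomorphism supplied by Remark \ref{REMARK_ON_PRINC_ISOT} transports the group structure, so that establishing the claim for principal isotopes is enough. The loop hypothesis itself is then used exactly once, in the application of Lemma \ref{LP_AND_PRINCIP_ISOT}, to pin down the shape $(R_a^{-1},L_b^{-1},\varepsilon)$ of the isotopy.
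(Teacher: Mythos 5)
Your proof is correct, and it is essentially the classical argument: the paper itself states this theorem without proof, merely citing the literature (Albert, Belousov, Pflugfelder), and the proof in those sources is exactly your route — reduce via Remark \ref{REMARK_ON_PRINC_ISOT} to a principal isotope, apply Lemma \ref{LP_AND_PRINCIP_ISOT} to see the isotopy is an LP-isotopy $(R_a^{-1},L_b^{-1},\varepsilon)$, rewrite the operation as $x\circ y = xcy$ with $c=a^{-1}b^{-1}$, and check associativity and the identity $c^{-1}=ba$ directly. You also correctly flag the one delicate point, that the isomorphism $\mu_3$ from the decomposition of a general isotopy transports the group structure back to the original loop, so treating principal isotopes suffices.
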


\subsection{Congruences and  homomorphisms}

Results of this subsection are standard, well known  \cite{VD, RHB, HOP, MALTSEV, BURRIS} and slightly adapted
for our aims.

A binary relation $\varphi $ on a set $Q $ is  a subset of  the cartesian product $Q\times Q $ \cite {BIRKHOFF,
II, MALTSEV}. \index{Binary relation}

\index{Binary relations!product of} If $ \varphi $ and $ \psi $ are  binary relations on $Q $, then  their
product is defined in the following way: $ (a, b) \in \varphi \circ \psi $, if there is an element $c\in Q $
such that $ (a, c) \in \varphi $ and $ (c, b) \in \psi $. The last condition is written also in such form $a\,
\varphi \, c \, \psi \, b$.

\begin{theorem}  \label{EQUIVALENCE_TH}
Let $S$ be a nonempty set and let $\sim$ be a relation between elements of $S$ that satisfies
 the following properties:

 1. (Reflexive) $a\sim a$ for all $a\in S$.

2. (Symmetric) If $a\sim b$, then $b\sim a$.

3. (Transitive) If $a\sim b$ and $b\sim c$, then $a\sim c$.

Then $\sim$ yields a natural partition of $S$, where $\bar a = \{x\in S \,|\, x\sim a\}$ is the cell containing
$a$ for all $a\in S$. Conversely, each partition of $S$ gives rise to a natural relation $\sim$ satisfying the
reflexive, symmetric, and transitive properties if $a\sim b$ is defined to mean that $a\in \bar b$ \cite{HER}.
\index{relation!reflexive} \index{relation!symmetric} \index{relation!transitive}
\end{theorem}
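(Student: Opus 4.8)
The plan is to prove the two directions separately, since the statement is a biconditional between equivalence relations and partitions. For the forward direction I would start from the three axioms and build the family $\{\bar a \mid a \in S\}$, then verify that this family is a genuine partition: each cell is nonempty, the cells cover $S$, and any two of them are either identical or disjoint.

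First I would use reflexivity to observe that $a \in \bar a$ for every $a$, which simultaneously shows each cell is nonempty and that the union of all cells is $S$. The heart of this direction is the key lemma $\bar a = \bar b \iff a \sim b$. To obtain $\bar a \subseteq \bar b$ from $a \sim b$, I would take an arbitrary $x \in \bar a$, so that $x \sim a$, and combine this with $a \sim b$ by transitivity to get $x \sim b$, i.e. $x \in \bar b$; the reverse inclusion follows after using symmetry to write $b \sim a$. With this lemma in hand the disjoint-or-equal property is immediate: if $\bar a \cap \bar b \ne \emptyset$, choose $c$ in the intersection, so that $c \sim a$ and $c \sim b$; symmetry and transitivity then give $a \sim b$, whence $\bar a = \bar b$. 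Therefore distinct cells are disjoint and $\{\bar a \mid a\in S\}$ partitions $S$.

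For the converse, I would take an arbitrary partition of $S$ and define $a \sim b$ to mean that $a$ and $b$ lie in a common cell (equivalently $a \in \bar b$, the cell containing $b$). Reflexivity holds because the cells cover $S$, so each $a$ lies in some cell together with itself; symmetry is built into the phrase \lq\lq lie in a common cell\rq\rq; and transitivity uses disjointness of the partition, since if $a,b$ share a cell and $b,c$ share a cell, both cells contain $b$ and hence coincide, so that $a$ and $c$ share that cell.

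I do not expect any serious obstacle, as the result is entirely elementary; the only place where care is genuinely required is the disjoint-or-equal step of the forward direction, which is exactly the point that forces one to invoke all three axioms (symmetry and transitivity are used to pass from a shared element to equality of cells, reflexivity to guarantee coverage). Everything else is routine bookkeeping about set membership.
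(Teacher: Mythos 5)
Your proof is correct and complete: the key lemma $\bar a = \bar b \iff a \sim b$, the disjoint-or-equal argument via a common element, and the three verifications in the converse direction are all sound. The paper itself gives no proof of this statement — it is quoted as a standard fact with a citation to Herstein — and your argument is precisely the classical textbook one, so there is nothing to compare beyond noting that you have supplied the routine details the paper deliberately omits.
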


\begin{definition} A relation $\sim$ on a set $S$ satisfying the reflexive, symmetric, and transitive
properties is an equivalence relation on $S$. Each cell $\bar a$ in the natural partition given by an
equivalence  relation is an equivalence class. \index{relation!equivalence}
\end{definition}

\begin{definition} \label{DEF_CONGR_QUAS}
An equivalence $\theta$ is a congruence of a groupoid  $(Q, \cdot)$, if the following implications are true for
all  $x, y, z \in Q$: $x \theta y  \Longrightarrow (z\cdot x)\theta (z\cdot y), x\theta y \Longrightarrow
(x\cdot z)\theta (y\cdot z)$ \cite{PC}.
\end{definition}

In other words equivalence $\theta $ is a \textit{congruence} of $(Q, \cdot)$ if and only if $\theta$ is a
subalgebra of $(Q\times Q, (\cdot,\cdot))$.\index{congruence} Therefore we can formulate Definition
\ref{DEF_CONGR_QUAS} in the following form.
\begin{definition}
An equivalence $\theta$ is a congruence of a groupoid  $(Q, \cdot)$, if the following implication is true for
all  $x, y, w, z \in Q$: $x \theta y  \wedge w \theta z  \Longrightarrow (x \cdot w)\, \theta \, (y\cdot z)$
\cite{PC}.
\end{definition}

\begin{definition}\label{NORM_CONGR_QUAS} A congruence $\theta$ of a quasigroup $(Q,\cdot)$ is \textit{normal},
if the following implications are true for all  $x, y, z \in Q$: $(z\cdot x) \theta (z\cdot y)  \Longrightarrow
x\theta y, (x\cdot z) \theta (y\cdot z) \Longrightarrow x\theta y$ \cite{VD, 1a}.
\end{definition}

\begin{definition} \label{DEF_CONGR_QUAS_AS_ALGEBRA}
An equivalence $\theta$ is a congruence of a quasigroup  $(Q, \cdot, \slash, \backslash)$, if the following
implications are true for all  $x, y, z \in Q$: \begin{equation}
\begin{split}
& x \theta y  \Longrightarrow (z\cdot x)\theta (z\cdot y), x\theta y \Longrightarrow (x\cdot z)\theta (y\cdot
z), \\ &  x \theta y  \Longrightarrow (z\slash x)\theta (z\slash y), x\theta y \Longrightarrow (x\slash z)\theta
(y\slash z), \\ & x \theta y  \Longrightarrow (z\backslash x)\theta (z\backslash y), x\theta y \Longrightarrow
(x\backslash z)\theta (y\backslash z).
\end{split}
\end{equation}
\end{definition}

One from the most important properties of  e-quasigroup  $(Q, \cdot, \backslash, /)$  is the following property.
\begin{lemma} \label{CONGR_PRIM_QUAS}
Any congruence of a quasigroup $(Q, \cdot, \backslash, /)$  is a normal congruence of quasigroup $(Q, \cdot)$;
any normal congruence of a quasigroup $(Q, \cdot)$ is a congruence of  quasigroup $(Q, \cdot, \backslash, /)$
\cite{BIRKHOFF, MALTSEV,  VD, 1a}.
\end{lemma}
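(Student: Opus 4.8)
The plan is to unwind both implications directly from the defining identities \eqref{(1)}--\eqref{(4)} of the quasigroup $(Q,\cdot,\backslash,/)$, in each case playing the compatibility of $\theta$ with one operation against the corresponding division operation.

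For the first assertion, suppose $\theta$ is a congruence of $(Q,\cdot,\backslash,/)$. Then in particular it satisfies the two compatibility conditions for $\cdot$ in Definition \ref{DEF_CONGR_QUAS}, so it is already a congruence of $(Q,\cdot)$; only normality has to be checked. First I would assume $(z\cdot x)\,\theta\,(z\cdot y)$ and apply the compatibility of $\theta$ with $\backslash$ in its second argument to get $(z\backslash(z\cdot x))\,\theta\,(z\backslash(z\cdot y))$; by \eqref{(3)} the two sides are $x$ and $y$, so $x\,\theta\,y$. Symmetrically, from $(x\cdot z)\,\theta\,(y\cdot z)$ I would apply compatibility with $/$ and use \eqref{(4)} to obtain $x\,\theta\,y$. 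These are exactly the two implications required by Definition \ref{NORM_CONGR_QUAS}.

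For the converse, let $\theta$ be a normal congruence of $(Q,\cdot)$; the six implications of Definition \ref{DEF_CONGR_QUAS_AS_ALGEBRA} must be verified, of which the two for $\cdot$ are given. The remaining four split into two easy cases and two slightly less obvious ones. In the easy cases the congruent pair sits in the slot that cancellation reaches directly: for $x\,\theta\,y\Rightarrow(z\backslash x)\,\theta\,(z\backslash y)$, writing $u=z\backslash x$, $v=z\backslash y$, identity \eqref{(1)} gives $z\cdot u=x$ and $z\cdot v=y$, so $(z\cdot u)\,\theta\,(z\cdot v)$, and left normality yields $u\,\theta\,v$; the case $x\,\theta\,y\Rightarrow(x/z)\,\theta\,(y/z)$ is handled the same way via \eqref{(2)} and right normality.

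The only genuinely delicate steps are the two ``crossed'' cases, $x\,\theta\,y\Rightarrow(x\backslash z)\,\theta\,(y\backslash z)$ and $x\,\theta\,y\Rightarrow(z/x)\,\theta\,(z/y)$, where the congruent pair occupies the divisor position and cancellation does not apply at once. Here I would first use compatibility of $\theta$ with $\cdot$ to manufacture a congruent pair and only then cancel: for the first, set $u=x\backslash z$, $v=y\backslash z$, so $x\cdot u=z=y\cdot v$ by \eqref{(1)}; from $x\,\theta\,y$ and compatibility $(x\cdot u)\,\theta\,(y\cdot u)$, whence $(y\cdot v)\,\theta\,(y\cdot u)$ since both are congruent to $z$, and left normality gives $u\,\theta\,v$. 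The second crossed case is entirely analogous, using \eqref{(2)} to get $u\cdot x=z=v\cdot y$ and then right normality. These are the steps where the full strength of normality (rather than mere compatibility) is actually used, so I expect them to be the crux of the argument.
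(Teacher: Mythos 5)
Your proof is correct and complete. The paper itself gives no proof of Lemma \ref{CONGR_PRIM_QUAS} --- it is quoted as a known fact with references to Birkhoff, Mal'tsev and Belousov --- and your argument is exactly the standard one those sources contain: for the forward direction, compatibility with $\backslash$ and $/$ combined with the cancellation identities \eqref{(3)} and \eqref{(4)} yields the two normality implications; for the converse, the two ``straight'' division cases follow from \eqref{(1)}, \eqref{(2)} and normality by direct cancellation, while the two ``crossed'' cases (congruent pair in the divisor slot) require first producing a congruent pair via compatibility with $\cdot$ and only then cancelling. Your observation that the crossed cases are precisely where the full strength of normality, rather than mere compatibility with $\cdot$, is used is accurate, and all six implications of Definition \ref{DEF_CONGR_QUAS_AS_ALGEBRA} are accounted for.
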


\begin{definition}
 If $\theta$ is a binary relation on a  set $(Q, \cdot)$, $\alpha$ is a permutation of the set $Q$ and from $x\theta y$
it follows  $\alpha x \theta \alpha y$  and  $\alpha^{-1} x \theta \alpha^{-1} y$ for all $(x,y) \in \theta$,
then we shall say  that the permutation $\alpha$ is an \textit{admissible} permutation relative to the binary
relation $\theta$ \cite{VD}.
\end{definition}

\begin{lemma} \label{NORM_CONGR_ADMISSBLE_REL_TRANS}
Any normal quasigroup congruence is admissible relative to  any left, right and middle  quasigroup translation.
\end{lemma}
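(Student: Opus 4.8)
The plan is to reduce everything to the passage from $(Q,\cdot)$ to the associated equational quasigroup $(Q,\cdot,\backslash,/)$, and then to write each translation and each of its inverse as one of the three quasigroup operations applied with a fixed argument. By Lemma \ref{CONGR_PRIM_QUAS}, a normal congruence $\theta$ of $(Q,\cdot)$ is a congruence of $(Q,\cdot,\backslash,/)$; hence, by Definition \ref{DEF_CONGR_QUAS_AS_ALGEBRA}, $\theta$ is compatible, separately in each of its two arguments, with all three operations $\cdot$, $\backslash$ and $/$. This is the only structural input I shall need, so the first step is simply to invoke Lemma \ref{CONGR_PRIM_QUAS}.

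Next I would record the six maps in question in terms of the operations. Using the identities of Definition \ref{quasigr_as_algebra} one has $L_a x = a\cdot x$, $L_a^{-1}x = a\backslash x$, $R_a x = x\cdot a$ and $R_a^{-1}x = x/a$. For the middle translation, the defining relation $x\cdot P_a x = a$ together with identity \eqref{(1)} gives $P_a x = x\backslash a$; and since $P_a x = z$ is equivalent to $x\cdot z = a$, i.e. to $x = a/z$ by \eqref{(2)}, one gets $P_a^{-1}x = a/x$. Verifying these six identities is the one place a small computation is required, but it is entirely routine.

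Then I would match each of the six maps against exactly one of the six one-argument compatibility implications of Definition \ref{DEF_CONGR_QUAS_AS_ALGEBRA}. Assuming $x\,\theta\,y$: compatibility of $\cdot$ in the second (resp. first) argument gives $L_a x\,\theta\,L_a y$ (resp. $R_a x\,\theta\,R_a y$); compatibility of $\backslash$ in the second (resp. first) argument gives $L_a^{-1}x\,\theta\,L_a^{-1}y$ (resp. $P_a x\,\theta\,P_a y$); and compatibility of $/$ in the first (resp. second) argument gives $R_a^{-1}x\,\theta\,R_a^{-1}y$ (resp. $P_a^{-1}x\,\theta\,P_a^{-1}y$). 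Since admissibility of a permutation $\alpha$ relative to $\theta$ means precisely that both $\alpha$ and $\alpha^{-1}$ preserve $\theta$, the three pairs $(L_a,L_a^{-1})$, $(R_a,R_a^{-1})$ and $(P_a,P_a^{-1})$ established above yield at once the admissibility of every left, right and middle translation.

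The only genuinely new point compared with the left and right cases is the middle translation: the bare definition of a congruence of $(Q,\cdot)$ speaks about $\cdot$ alone, so without first passing to $(Q,\cdot,\backslash,/)$ one would have no direct handle on $P_a$ and $P_a^{-1}$, which are controlled by $\backslash$ and $/$. Thus the sole real obstacle is the invocation of Lemma \ref{CONGR_PRIM_QUAS}; once compatibility with all three operations is in hand, the six verifications are a bijective bookkeeping exercise involving no computation beyond the six identities recorded above.
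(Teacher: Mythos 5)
Your proof is correct and takes essentially the same approach as the paper: both rest on Lemma \ref{CONGR_PRIM_QUAS} together with the identities $P_a x = x\backslash a$ and $P_a^{-1}x = a/x$, which reduce the middle translation and its inverse to translations in the operations $\backslash$ and $/$. The only cosmetic difference is that the paper obtains admissibility of $L_a$, $R_a$ and their inverses directly from Definitions \ref{DEF_CONGR_QUAS} and \ref{NORM_CONGR_QUAS}, whereas you route all six maps uniformly through Definition \ref{DEF_CONGR_QUAS_AS_ALGEBRA}.
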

\begin{proof}
The fact that any normal quasigroup congruence is admissible relative to  any left and right  quasigroup
translation follows from Definitions \ref{DEF_CONGR_QUAS} and \ref{NORM_CONGR_QUAS}.

Let $\theta$ be a normal congruence of a quasigroup $(Q,\cdot)$. Prove the following implication
\begin{equation} \label{IMPLICATION_1}
a\theta b \rightarrow P_c a \, \theta \, P_c b.
\end{equation}
 If $P_c a = k$, then $a\cdot k = c$, $k = a\backslash c$, $k =
R^{\backslash}_c a$. Similarly  if $P_c b = m$, then $b\cdot m = c$, $m = b\backslash c$, $m = R^{\backslash}_c
b$. Since $\theta$ is a congruence of quasigroup $(Q, \cdot, \backslash, /)$ (Lemma \ref{CONGR_PRIM_QUAS}), then
implication (\ref{IMPLICATION_1}) is true.

Implication \begin{equation}\label{IMPLICATION_2}
 a\theta b \rightarrow P^{-1}_c a \, \theta \, P^{-1}_c b
\end{equation} is proved in the similar way.
 If $P^{-1}_c a = k$, then $k \cdot a = c$, $k = c\slash a$, $k =
L^{\slash}_c a$. Similarly  if $P^{-1}_c b = m$, then $m\cdot b = c$, $m = c\slash b$, $m = L^{\slash}_c b$.
Since $\theta$ is a congruence of quasigroup $(Q, \cdot, \backslash, /)$ (Lemma \ref{CONGR_PRIM_QUAS}), then
implication (\ref{IMPLICATION_2}) is true.
\end{proof}
\begin{corollary} \label{PARASTROPHY_INVAR_NORM_CONG}
If $\theta$ is a normal quasigroup congruence of a quasigroup $Q$, then $\theta$ is a normal congruence of any
parastrophe  of $Q$  \cite{BELAS}.
\end{corollary}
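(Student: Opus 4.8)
The plan is to reduce the statement to Lemma~\ref{NORM_CONGR_ADMISSBLE_REL_TRANS} together with Table~1. Write $A = {}\cdot$ for the operation of $Q$, and let $A^\sigma$ denote the operation of an arbitrary parastrophe. Since $\theta$ is a binary relation on the underlying set $Q$, being an equivalence relation does not depend on which operation $Q$ carries; hence $\theta$ is automatically an equivalence on every parastrophe $(Q, A^\sigma)$, and only the congruence and normality implications remain to be verified.

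Next I would restate both requirements for $(Q, A^\sigma)$ in terms of admissibility of its translations. Fix $a \in Q$ and let $L^\sigma_a$, $R^\sigma_a$ be the left and right translations of $(Q, A^\sigma)$. Putting $u = L^\sigma_a x$, $v = L^\sigma_a y$, the left-congruence implication $x\theta y \Rightarrow (L^\sigma_a x)\,\theta\,(L^\sigma_a y)$ says that $L^\sigma_a$ preserves $\theta$, while the left-normality implication $(L^\sigma_a x)\,\theta\,(L^\sigma_a y) \Rightarrow x\theta y$, rewritten as $u\theta v \Rightarrow (L^\sigma_a)^{-1}u\,\theta\,(L^\sigma_a)^{-1}v$, says that $(L^\sigma_a)^{-1}$ preserves $\theta$. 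Together these are exactly the statement that $L^\sigma_a$ is admissible relative to $\theta$ in the sense of the definition preceding Lemma~\ref{NORM_CONGR_ADMISSBLE_REL_TRANS}; the same holds for $R^\sigma_a$. Thus ``$\theta$ is a normal congruence of $(Q, A^\sigma)$'' is equivalent to ``$\theta$ is admissible relative to every left and right translation of $(Q, A^\sigma)$''.

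Then I would invoke Table~1, which records, for each parastrophy $\sigma$ and each kind of translation, the translation of $(Q,\cdot)$ to which it is equal. In every case this entry is one of $R$, $L$, $P$, $R^{-1}$, $L^{-1}$, $P^{-1}$ of the original quasigroup (for instance $L^{(13)} = P^{-1}$ and $R^{(23)} = P$, which is precisely why middle translations have to be brought in). Consequently every left and right translation of every parastrophe is one of the maps $L_b$, $R_b$, $P_b$ or an inverse of such a map, with $b$ running over $Q$. By Lemma~\ref{NORM_CONGR_ADMISSBLE_REL_TRANS} the relation $\theta$ is admissible relative to each $L_b$, $R_b$ and $P_b$; and since the notion of admissibility is symmetric in a permutation and its inverse, $\theta$ is admissible relative to the inverses as well. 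Combining this with the reduction of the previous paragraph, $\theta$ is admissible relative to every left and right translation of $(Q, A^\sigma)$, hence a normal congruence of $(Q, A^\sigma)$, for each parastrophy $\sigma$.

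The only delicate point I anticipate is bookkeeping rather than genuine mathematics: one must match each parastrophe's left and right translation to the correct entry of Table~1 and check that the elements $b$ indexing the resulting original-quasigroup translations range over all of $Q$, so that admissibility of a whole family (say of all $P_b$) really does cover the whole family of parastrophe translations. Once this correspondence is read off correctly, the corollary drops out of the preceding lemma.
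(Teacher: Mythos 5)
Your proof is correct and follows exactly the paper's own route: the paper's proof of Corollary~\ref{PARASTROPHY_INVAR_NORM_CONG} simply cites Lemma~\ref{NORM_CONGR_ADMISSBLE_REL_TRANS} and Table~1, and your argument is a careful unpacking of that citation (normality of $\theta$ in a parastrophe $\Leftrightarrow$ admissibility relative to its translations, which by Table~1 are among $L_b$, $R_b$, $P_b$ and their inverses, all handled by the lemma). Your bookkeeping remark is also on point, since the table's entries are index-preserving, e.g.\ $R^{\backslash}_c = P_c$ for the same $c$.
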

\begin{proof}
The proof follows from Lemma \ref{NORM_CONGR_ADMISSBLE_REL_TRANS} and Table 1.
\end{proof}

In Lemma \ref{NORMAL_LEMMA} we shall use the following fact about quasigroup translations and normal quasigroup
congruences.
\begin{lemma}
If $a\theta b$, $c\theta d$, then $R^{-1}_a c \, \theta \, R^{-1}_b d$.
\end{lemma}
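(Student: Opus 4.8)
The plan is to recognize the inverse right translation $R^{-1}_a$ as an ordinary right translation in a suitable parastrophe, and then invoke the congruence property there. First I would observe that, by the defining identities \eqref{(2)} and \eqref{(4)}, the element $R^{-1}_a c$ is precisely $c/a$: indeed $R_a(c/a) = (c/a)\cdot a = c$, so $R^{-1}_a c = c/a$. Consulting Table 1 (row $R$, column $(13)$), we have $R^{(13)} = R^{-1}$, which says exactly that $R^{-1}_a$ coincides with the right translation $R^{/}_a$ of the $(13)$-parastrophe $(Q,/)$, acting by $R^{/}_a c = c/a$.

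Next I would apply Lemma \ref{CONGR_PRIM_QUAS} together with Corollary \ref{PARASTROPHY_INVAR_NORM_CONG}: since $\theta$ is a normal congruence of $(Q,\cdot)$, it is a congruence of the algebra $(Q,\cdot,\backslash,/)$ and a normal congruence of every parastrophe, in particular of $(Q,/)$. By the two-variable formulation of a congruence (Definition \ref{DEF_CONGR_QUAS}, applied to the operation $/$ as in Definition \ref{DEF_CONGR_QUAS_AS_ALGEBRA}), feeding the pair $(c,d)$ into the numerator slot and $(a,b)$ into the denominator slot yields $(c/a)\,\theta\,(d/b)$ from $c\,\theta\,d$ and $a\,\theta\,b$. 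Since $c/a = R^{-1}_a c$ and $d/b = R^{-1}_b d$, this is exactly the desired conclusion $R^{-1}_a c \,\theta\, R^{-1}_b d$.

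There is essentially no genuine obstacle here; the only care needed is the bookkeeping of the argument order, ensuring that the numerator pair and the denominator pair are matched correctly under $/$. As a self-contained alternative that bypasses parastrophes and uses only normality and transitivity, I would set $u = R^{-1}_a c$, $v = R^{-1}_a d$, $w = R^{-1}_b d$, so that $u\cdot a = c$, $v\cdot a = d$, $w\cdot b = d$. From $c\,\theta\,d$ we get $(u\cdot a)\,\theta\,(v\cdot a)$, hence $u\,\theta\,v$ by the right implication of Definition \ref{NORM_CONGR_QUAS}; from $a\,\theta\,b$ we get $(w\cdot a)\,\theta\,(w\cdot b) = d = v\cdot a$, hence $w\,\theta\,v$ again by normality. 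By symmetry and transitivity $u\,\theta\,w$, which is $R^{-1}_a c \,\theta\, R^{-1}_b d$.
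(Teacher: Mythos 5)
Your proof is correct, and your primary route is genuinely different from the paper's. The paper stays entirely inside the single operation $(Q,\cdot)$: it first establishes a cancellation property --- if $R_c a \,\theta\, R_d b$ and $c\,\theta\, d$, then $a\,\theta\, b$ (via $ac\,\theta\, ad$, hence $bd\,\theta\, ad$, then normality) --- and then applies it to $c = R_a R^{-1}_a c \,\theta\, R_b R^{-1}_b d = d$ together with $a\,\theta\, b$ to cancel the right factors and conclude $R^{-1}_a c \,\theta\, R^{-1}_b d$. You instead pass to the equational quasigroup $(Q,\cdot,\backslash,/)$: since $R^{-1}_a c = c/a$ and, by Lemma \ref{CONGR_PRIM_QUAS}, a normal congruence of $(Q,\cdot)$ respects the operation $/$ in both arguments, the two-variable congruence property (the one-variable implications of Definition \ref{DEF_CONGR_QUAS_AS_ALGEBRA} combined by transitivity) gives $(c/a)\,\theta\,(d/b)$ at once. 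This is in fact the very mechanism the paper uses one lemma earlier, in Lemma \ref{NORM_CONGR_ADMISSBLE_REL_TRANS}, to handle middle translations, so your route is more uniform and shorter; the paper's route buys an argument that needs only Definitions \ref{DEF_CONGR_QUAS} and \ref{NORM_CONGR_QUAS} and never leaves the multiplicative structure. Note also that your citation of Corollary \ref{PARASTROPHY_INVAR_NORM_CONG} is dispensable --- Lemma \ref{CONGR_PRIM_QUAS} alone suffices, and invoking only it avoids any appearance of leaning on heavier machinery than needed. Your self-contained alternative, chaining $u\,\theta\, v$ and $w\,\theta\, v$ through the intermediate element $v = R^{-1}_a d$, is essentially the paper's own proof, merely reorganized: it uses the same three ingredients (compatibility, normality, transitivity), replacing the paper's separately stated cancellation step by an explicit intermediate element.
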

\begin{proof}
If $ac\, \theta \, bd$ and $c\theta d$, then $a\theta b$. Indeed, if $c\theta d$, then $ac\theta ad$. If $ac\,
\theta \, bd$ and $ac\theta ad$, then $bd\theta ad$, and, finally, $a\theta b$. In other words, if $R_c a \,
\theta \, R_d b$ and $c\theta d$, then $a\theta b$.

Since $a\theta b$ and $\theta$ is a normal quasigroup congruence, we have  $c \, \theta \, d \Longleftrightarrow
R_a R^{-1}_a c \, \theta \, R_b R^{-1}_b d \Longleftrightarrow  R^{-1}_a c \, \theta \, R^{-1}_b d. $
\end{proof}

We give a sketched  proof of the following well known fact  \cite{7, SM, BURRIS}. We follow \cite{SM}.
\begin{lemma} \label{NORMAL_LEMMA}
Normal quasigroup congruences commute in pairs.
\end{lemma}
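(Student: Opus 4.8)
The plan is to reduce the claim to the standard Mal'cev (permutability) argument, using the fact recorded in Lemma~\ref{CONGR_PRIM_QUAS} that a normal congruence of $(Q,\cdot)$ is exactly a congruence of the primitive quasigroup $(Q,\cdot,\backslash,/)$. Consequently every term operation built from the three operations $\cdot,\backslash,/$ preserves such a congruence. So it suffices to exhibit a ternary term $p$ with $p(x,x,z)=z$ and $p(x,z,z)=x$, and I would take
\[
p(x,y,z)=\bigl(x/(y\backslash y)\bigr)\cdot(y\backslash z).
\]

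First I would verify the two Mal'cev identities from \eqref{(1)}--\eqref{(4)} alone. In $p(x,x,z)$ the factor $x\backslash x$ satisfies $x\cdot(x\backslash x)=x$ by \eqref{(1)}, so it is the right local identity of $x$ and hence $x/(x\backslash x)=x$; then $p(x,x,z)=x\cdot(x\backslash z)=z$ by \eqref{(1)} again. In $p(x,z,z)$, writing $w=z\backslash z$, identity \eqref{(2)} gives $p(x,z,z)=(x/w)\cdot w=x$. This is the only genuinely computational part.

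Next, given normal congruences $\alpha,\beta$ of $(Q,\cdot)$, I would show $\alpha\circ\beta\subseteq\beta\circ\alpha$. Suppose $(a,b)\in\alpha\circ\beta$, so there is $c$ with $a\,\alpha\,c$ and $c\,\beta\,b$, and set $d=p(a,c,b)$. Because $p$ preserves $\beta$ and $b\,\beta\,c$, replacing $b$ by $c$ gives $d=p(a,c,b)\,\beta\,p(a,c,c)=a$; because $p$ preserves $\alpha$ and $a\,\alpha\,c$, replacing $a$ by $c$ gives $d=p(a,c,b)\,\alpha\,p(c,c,b)=b$. Thus $a\,\beta\,d$ and $d\,\alpha\,b$, i.e. $(a,b)\in\beta\circ\alpha$. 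Interchanging $\alpha$ and $\beta$ gives the reverse inclusion, so $\alpha\circ\beta=\beta\circ\alpha$.

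The main obstacle is precisely the compatibility of $p$ with the congruences: a normal congruence of $(Q,\cdot)$ does not obviously respect the divisions $\backslash,/$, and this is exactly what Lemma~\ref{CONGR_PRIM_QUAS} supplies (the immediately preceding lemma, which says $a\,\theta\,b$, $c\,\theta\,d$ imply $R^{-1}_a c\,\theta\,R^{-1}_b d$, is the special case of compatibility with $/$). Once that is granted, the substitution steps above are formal, and the symmetry of the final inclusion makes the two directions identical up to swapping $\alpha$ and $\beta$.
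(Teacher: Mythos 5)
Your proof is correct and is essentially the paper's own argument in universal-algebraic dress: the paper (following Smith) constructs the witness $R^{-1}_{L^{-1}_c c}\, a\cdot L^{-1}_c b$, which is exactly your $p(a,c,b)=\bigl(a/(c\backslash c)\bigr)\cdot(c\backslash b)$, and verifies the same two substitution steps via translations and the admissibility lemma rather than by naming the Mal'cev term and citing Lemma~\ref{CONGR_PRIM_QUAS}. The two presentations differ only in packaging, not in substance.
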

\begin{proof}
Let $\theta_1$ and $\theta_2$ be normal congruences of a quasigroup $(Q, \cdot)$. Then $a(\theta_1\circ
\theta_2) b$ means that there exists an element $c\in Q$ such that $a \theta_1 c$ and $c \theta_2 b$.

Further we have
$$
\begin{array} {lc}
a\theta_2 a, & a\theta_2 a \\
c\theta_2 b, & L^{-1}_c c\theta_2 L^{-1}_c  b \\
b\theta_2 b, & L^{-1}_c b\theta_2 L^{-1}_c  b.
\end{array}
$$
Then $$
 R^{-1}_{L^{-1}_c c} a\cdot L^{-1}_c b \, \theta_2 \, R^{-1}_{L^{-1}_c b} a\cdot L^{-1}_c b = a.
$$
From relations
$$
\begin{array} {lc}
a\theta_1 a, & a\theta_1 a \\
a\theta_1 c, & L^{-1}_c a\theta_1 L^{-1}_c  c \\
b\theta_1 b, & L^{-1}_c b\theta_1 L^{-1}_c  b
\end{array}
$$
we obtain
$$
b =  R^{-1}_{L^{-1}_c a} a\cdot L^{-1}_c b \, \theta_1 \,  R^{-1}_{L^{-1}_c c} a\cdot L^{-1}_c b.
$$
Therefore, $a(\theta_2\circ \theta_1) b$.
\end{proof}

See \cite{Fujiwara_66} for additional information on permutability of quasigroup congruences.

\begin{definition}\label{d5.1}  If  $(Q, \cdot)$ and $(H, \circ)$ are binary quasigroups,
 $h$ is  a single valued mapping of $Q$ into $H$ such that $h (x_1 \cdot x_2) = h x_1 \circ h x_2$, then $h$ is
called a \textit{homomorphism (a multiplicative homomorphism)} of $(Q, \cdot)$ into $(H, \circ)$ and the set $\{
h x\, | \, x \in Q\}$ is called \textit{homomorphic image} of $ (Q, \cdot)$ under $h$ \cite{HOP}.
\end{definition}

In  case $(Q, \cdot) = (H, \circ)$ a homomorphism is also called an \textit{endomorphism} and an isomorphism is
referred to as an automorphism.

\begin{lemma}1. Any homomorphic image of a quasigroup $(Q, \cdot)$ is a division groupoid \cite{BK, RHB}.

2. Any homomorphic image of a quasigroup $(Q, \cdot, \backslash, \slash)$ is a quasigroup \cite{BURRIS,
MALTSEV}.
 \label{l_5.74}
\end{lemma}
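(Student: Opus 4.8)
The plan is to treat the two parts separately, since they rest on different features of the map. In each case let $h$ denote the homomorphism and write $(h(Q),\circ)$ for the homomorphic image, so that $h(x_1\cdot x_2)=hx_1\circ hx_2$ in the sense of Definition \ref{d5.1}.

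For Part~1 I would show directly that every left and right translation of $(h(Q),\circ)$ is surjective, which is exactly what is required for a division groupoid. Fix $a,b\in h(Q)$ and pull them back along $h$: choose $a',b'\in Q$ with $ha'=a$ and $hb'=b$. Since $(Q,\cdot)$ is a quasigroup, the equation $a'\cdot x'=b'$ has a solution $x'\in Q$; applying $h$ yields $a\circ hx'=b$, so $hx'$ is a preimage of $b$ under $L_a$ and hence $L_a$ is surjective. The same argument applied to $y'\cdot a'=b'$ makes $R_a$ surjective, giving a division groupoid. The point worth stressing is why one gets no more than this: a multiplicative homomorphism does not transport the divisions $\backslash$ and $\slash$, so injectivity of the translations (cancellation) can fail in the image and only surjectivity survives.

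For Part~2 the argument is cleaner because $(Q,\cdot,\backslash,\slash)$ is an algebra defined by the identities \eqref{(1)}--\eqref{(4)} of Definition \ref{quasigr_as_algebra}, and a homomorphism of this signature preserves all three operations. I would simply apply $h$ to each of \eqref{(1)}--\eqref{(4)} and use that $h$ commutes with $\cdot$, $\backslash$ and $\slash$ to conclude that the image $(h(Q),\circ,\backslash',\slash')$ again satisfies these identities, hence is a quasigroup in the sense of Definition \ref{quasigr_as_algebra}. Equivalently, this is the standard universal-algebra fact that a variety (equational class) is closed under homomorphic images.

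The main obstacle here is conceptual rather than computational: one must keep the two notions of quasigroup apart. In Part~1 the image retains only solvability (surjectivity of the translations) and may lose uniqueness, so \textquotedblleft division groupoid\textquotedblright\ is best possible; it is precisely the enrichment of the signature in Part~2 that restores uniqueness, and a careful write-up should display this contrast explicitly.
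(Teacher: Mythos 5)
Your proposal is correct and follows essentially the same route as the paper: for Part~1 the paper likewise solves $a\cdot y=b$ in $(Q,\cdot)$ and pushes the solution forward via $h$ to show translations in the image are surjective, and for Part~2, where the paper simply cites \cite{VD, 1a, HOP, BURRIS, MALTSEV}, you have written out exactly the standard argument behind those citations (identities \eqref{(1)}--\eqref{(4)} are preserved under surjective homomorphisms of the full signature). Your closing remark on why cancellation can fail in the multiplicative image while the enriched signature restores uniqueness is a correct and worthwhile clarification, fully consistent with the paper's Remark after Lemma \ref{l_5.4}.
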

\begin{proof}
1. Let $h(a), h(b) \in h(Q)$. We demonstrate that solution of equation $h(a)\circ x = h(b)$ lies in $h(Q)$.
Consider the equation $a \cdot y = b$. Denote solution of this equation by $c$. Then $h(c)$ is solution to the
equation $h(a)\circ x = h(b)$. Indeed, $h(a) \circ h(c) = h(a\cdot c) = h(b)$. For equation  $x \cdot h(a)  =
h(b)$ the proof is similar.

2. See \cite{VD, 1a, HOP, BURRIS, MALTSEV}.
\end{proof}

Let $h$ be a homomorphism of a quasigroup $(Q, \cdot)$ onto a groupoid  $(H, \circ)$.
 Then $h$ induces a congruence $Ker\,h = \theta$ (the kernel of $h$) in the following way $x \, \theta \, y$
if and only if $h(x) = h(y)$ \cite{1a, HOP}.

If   $\theta$ is a normal congruence of a quasigroup $(Q, \cdot)$, then $\theta$ determines natural homomorphism
$h$ ($h(a) = \theta(a))$ of $(Q, \cdot)$ onto some quasigroup $(Q^{\,\prime}, \circ)$ by the rule:
 $\theta(x) \circ \theta(y) =  \theta(x \cdot y)$, where $\theta(x), \theta(y), \theta(x \cdot y) \in Q\slash
\theta$ \cite{1a, HOP}.

\begin{theorem}  \label{NORM_QUAS_CONGR}
If $h$ is a homomorphism of a quasigroup $(Q,\cdot)$ onto a quasigroup $(H, \circ)$, then $h$ determines a
normal congruence $\theta$ on $(Q,\cdot)$ such that $Q\slash \theta \cong (H,\circ)$, and vice versa, a normal
congruence $\theta$ induces a homomorphism from $(Q, \cdot)$ onto $(H, \circ)\cong Q\slash \theta$. (\cite{1a},
\cite{HOP}, I.7.2 Theorem).
\end{theorem}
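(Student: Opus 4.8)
If $h$ is a homomorphism of a quasigroup $(Q,\cdot)$ onto a quasigroup $(H,\circ)$, then $h$ determines a normal congruence $\theta$ on $(Q,\cdot)$ such that $Q/\theta \cong (H,\circ)$, and conversely a normal congruence $\theta$ induces a homomorphism from $(Q,\cdot)$ onto $(H,\circ)\cong Q/\theta$.

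Let me plan the proof.

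The plan is to prove the two directions separately, using the machinery already assembled in this subsection.

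**Forward direction.** Given a quasigroup homomorphism $h$ onto $(H,\circ)$, I define $\theta = \ker h$ by $x\,\theta\,y \iff h(x)=h(y)$; this is an equivalence by Theorem (EQUIVALENCE_TH). To see it is a congruence of $(Q,\cdot)$, suppose $x\,\theta\,y$; then $h(z\cdot x)=h(z)\circ h(x)=h(z)\circ h(y)=h(z\cdot y)$, so $(z\cdot x)\,\theta\,(z\cdot y)$, and symmetrically on the right, matching Definition (DEF_CONGR_QUAS). The point requiring care is normality. I must verify the implication $(z\cdot x)\,\theta\,(z\cdot y)\Rightarrow x\,\theta\,y$ and its right analogue from Definition (NORM_CONGR_QUAS). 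Here I use that the image $(H,\circ)$ is genuinely a quasigroup, not merely a division groupoid: if $(z\cdot x)\,\theta\,(z\cdot y)$, then $h(z)\circ h(x)=h(z)\circ h(y)$, and since left translation $L_{h(z)}$ in $(H,\circ)$ is a bijection, $h(x)=h(y)$, i.e. $x\,\theta\,y$. The right-translation case is identical. Thus $\theta$ is normal. Finally the map $\theta(x)\mapsto h(x)$ is a well-defined bijection between $Q/\theta$ and $H$ that respects the operations, giving $Q/\theta\cong(H,\circ)$.

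**Converse direction.** Given a normal congruence $\theta$, the excerpt already records (just before the theorem) that $\theta$ determines a natural homomorphism $h(a)=\theta(a)$ of $(Q,\cdot)$ onto a quasigroup $(Q',\circ)$ via $\theta(x)\circ\theta(y)=\theta(x\cdot y)$. I need to check this rule is well defined: if $\theta(x)=\theta(x')$ and $\theta(y)=\theta(y')$, then by the congruence property $x\cdot y\,\theta\,x'\cdot y'$, so $\theta(x\cdot y)=\theta(x'\cdot y')$. That $(Q/\theta,\circ)$ is again a quasigroup (solvability and uniqueness of the two equations) is exactly where normality is needed, and it follows from Lemma (l_5.74) part 1 for solvability together with normality for cancellation/uniqueness; alternatively one invokes Lemma (CONGR_PRIM_QUAS), by which $\theta$ is a congruence of $(Q,\cdot,\backslash,/)$, so the quotient is a quasigroup as an algebra and hence a quasigroup. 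Setting $(H,\circ)=Q/\theta$ completes this direction, and $h$ is surjective by construction.

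The main obstacle is the normality bookkeeping: in the forward direction one must not assume $(H,\circ)$ is a quasigroup for free — it is the hypothesis that $h$ maps \emph{onto a quasigroup} (rather than onto a mere division groupoid, cf. Lemma (l_5.74)) that supplies the bijectivity of the translations $L_{h(z)}$, $R_{h(z)}$ needed to reverse the implications and establish normality. In the converse, the delicate point is dually that normality of $\theta$ is precisely what upgrades the quotient division groupoid of Lemma (l_5.74) to a cancellation groupoid, hence by Remark (remark_one) to a quasigroup. Both halves therefore hinge on the same equivalence between normality of the congruence and the quotient's being a full quasigroup; everything else is routine verification that $h$ and the natural projection respect the multiplication.
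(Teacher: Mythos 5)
Your proof is correct and takes essentially the route the paper intends: the theorem is stated there without proof (cited to Belousov and Pflugfelder), but the kernel-congruence construction $x\,\theta\,y \Leftrightarrow h(x)=h(y)$ and the quotient rule $\theta(x)\circ\theta(y)=\theta(x\cdot y)$ recorded immediately before the statement are exactly what you flesh out. You also correctly isolate the one nontrivial point in both directions — that normality of $\theta$ is precisely the difference between the quotient being a mere division groupoid (Lemma \ref{l_5.74}) and a cancellation, hence genuine, quasigroup (Remark \ref{remark_one}), dually to the use of bijective translations in $(H,\circ)$ for the forward direction.
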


A subquasigroup $(H,\cdot)$ of a quasigroup $(Q, \cdot)$ is \textit{normal} ( $(H, \cdot) \trianglelefteqslant
(Q,\cdot)$), if $(H,\cdot)$ is an equivalence class (in other words, a coset class)  of a normal congruence.

\begin{lemma} \label{BINARY_SUB_GROUPOIDS} An equivalence class $\theta(h) = H$ of a  congruence $\theta$
of a  quasigroup $(Q, \cdot)$ is a sub-object  of $(Q, \cdot)$ if and only if $(h \cdot h) \; \theta \; h$.
\end{lemma}
\begin{proof}
We recall by Lemma \ref{SUB_GROUPOIDS} any quasigroup sub-object is a cancellation groupoid. The proof is
similar to the proof of Lemma 1.9 from \cite{1a}. If $a\, \theta \, h$ and $b\, \theta \, h$, then $ab\, \theta
\, h^2$, moreover $h^2 \, \theta \, h$, since $ab\in H$. Converse. Let $h^2 \, \theta \, h$. If $a, b \in H$,
then $a \,\theta \, h$ and $b \, \theta \, h$, $ab \, \theta \, h^2 \theta \, h$. Then $ab\in H$.
\end{proof}

\begin{lemma} \label{BINARY_SUB_QUAS} An equivalence class $\theta(h)$ of a normal congruence $\theta$
of a  quasigroup $(Q, \cdot)$ is a subquasigroup of $(Q, \cdot)$ if and only if $(h \cdot h) \; \theta \; h$
(\cite{VD}, \cite{1a},  Lemma 1.9).
\end{lemma}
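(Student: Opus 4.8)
The plan is to reduce the statement to Lemma \ref{BINARY_SUB_GROUPOIDS} together with the characterization of quasigroups as division cancellation groupoids (Remark \ref{remark_one}). The forward implication is immediate: if $\theta(h)$ is a subquasigroup, then it is closed under $\cdot$, so $h\cdot h\in\theta(h)$, which by the very definition of the class $\theta(h)$ means exactly $(h\cdot h)\,\theta\,h$.

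For the converse, assume $(h\cdot h)\,\theta\,h$ and write $H=\theta(h)$. By Lemma \ref{BINARY_SUB_GROUPOIDS} the class $H$ is a sub-object of $(Q,\cdot)$, and by Lemma \ref{SUB_GROUPOIDS}(1) any such sub-object is a cancellation groupoid. In view of Remark \ref{remark_one} it therefore remains only to show that $(H,\cdot)$ is a division groupoid, i.e.\ that for $a,b\in H$ the (unique) solutions in $Q$ of $a\cdot x=b$ and $y\cdot a=b$ actually lie in $H$. This is the step where normality of $\theta$ is essential, and it is the main (though short) obstacle: closure alone, available for an arbitrary congruence, does not guarantee solvability inside the class, so one must genuinely exploit the normal congruence implications of Definition \ref{NORM_CONGR_QUAS}.

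To carry it out, let $a,b\in H$, so that $a\,\theta\,h$ and $b\,\theta\,h$, and let $x_0\in Q$ satisfy $a\cdot x_0=b$. Since $a\,\theta\,h$ and $(h\cdot h)\,\theta\,h$, the congruence property gives $a\cdot h\,\theta\,h\cdot h\,\theta\,h\,\theta\,b$, whence $a\cdot x_0=b\,\theta\,a\cdot h$; the left-hand implication of normality then yields $x_0\,\theta\,h$, i.e.\ $x_0\in H$. The equation $y\cdot a=b$ is handled symmetrically, using $h\cdot a\,\theta\,h\cdot h\,\theta\,h$ and the right-hand normality implication to conclude $y_0\,\theta\,h$. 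Thus the restrictions to $H$ of the left and right translations of $(Q,\cdot)$ map $H$ onto $H$, so $(H,\cdot)$ is a division cancellation groupoid and hence, by Remark \ref{remark_one}, a quasigroup, i.e.\ a subquasigroup of $(Q,\cdot)$. If desired, Lemma \ref{SUB_GROUPOIDS}(3) records that this is the same as being a subquasigroup of $(Q,\cdot,\backslash,/)$, which closes the argument.
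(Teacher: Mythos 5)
Your proof is correct. The paper itself does not spell out an argument for this lemma but delegates it to the citation (\cite{1a}, Lemma 1.9), and your proof is precisely the standard argument behind that reference and the paper's surrounding machinery: the forward direction by closure, and the converse by combining Lemma \ref{BINARY_SUB_GROUPOIDS} (closure), Lemma \ref{SUB_GROUPOIDS} (cancellation), and the two implications of Definition \ref{NORM_CONGR_QUAS} --- applied exactly as you do, via $a\cdot x_0 = b \;\theta\; a\cdot h$ and $y_0\cdot a = b \;\theta\; h\cdot a$ --- to keep the solutions inside the class, with Remark \ref{remark_one} assembling the division and cancellation properties into the quasigroup conclusion, in the same way the paper argues in the remark following Lemma \ref{l_5.4}.
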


\begin{lemma}\label{l_5.4} If  $h$ is an  endomorphism   of a quasigroup $(Q, \cdot)$, then
$(hQ, \cdot)$ is a subquasigroup of  $(Q,\cdot)$.
\end{lemma}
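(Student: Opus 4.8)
The plan is to exhibit $(hQ,\cdot)$ simultaneously as a sub-object of $(Q,\cdot)$ and as a homomorphic image of $(Q,\cdot)$, and then to combine the two structural consequences via Remark~\ref{remark_one}. First I would verify that $hQ = \{h(x)\mid x\in Q\}$ is closed under the operation $\cdot$, which is immediate from the endomorphism property: if $a=h(x)$ and $b=h(y)$ lie in $hQ$, then $a\cdot b = h(x)\cdot h(y) = h(x\cdot y)\in hQ$. Hence $(hQ,\cdot)$ is a sub-object of $(Q,\cdot)$, and by Lemma~\ref{SUB_GROUPOIDS}.1 every such sub-object is a cancellation groupoid. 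This gives injectivity of the restricted translations $L_a|_{hQ}$ and $R_a|_{hQ}$ for each $a\in hQ$.

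The second ingredient is that the induced operation on $hQ$ is precisely the restriction of $\cdot$, because the defining equality $h(x)\cdot h(y)=h(x\cdot y)$ shows that the homomorphic image of $(Q,\cdot)$ under $h$, equipped with its induced multiplication, is exactly the groupoid $(hQ,\cdot)$. Therefore Lemma~\ref{l_5.74}.1 applies and tells us that $(hQ,\cdot)$ is a division groupoid; that is, the restricted translations are also surjective onto $hQ$.

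Having established that $(hQ,\cdot)$ is both a cancellation groupoid (from being a sub-object) and a division groupoid (from being a homomorphic image), I would invoke Remark~\ref{remark_one}, according to which a division cancellation groupoid is a quasigroup. This yields that $(hQ,\cdot)$ is a quasigroup in its own right, and since it is a subset of $Q$ closed under the original operation, it is by definition a subquasigroup of $(Q,\cdot)$.

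The only genuinely delicate point, and the step I would flag as the conceptual heart of the argument, is that in the infinite case mere closure together with cancellation does \emph{not} by itself force the existence of solutions to the equations $a\cdot x=b$ and $y\cdot a=b$ inside $hQ$; an injective self-map of an infinite set need not be surjective. The resolution is exactly the double viewpoint above: the division property must be supplied externally by recognizing $hQ$ as a homomorphic image (Lemma~\ref{l_5.74}.1), rather than deduced from cancellation. Combining the two dual descriptions through Remark~\ref{remark_one} is what closes the gap without any finiteness hypothesis.
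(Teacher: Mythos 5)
Your proof is correct, and in fact it reproduces almost verbatim the alternative argument that the paper itself records in the (unlabelled) remark immediately following Lemma \ref{l_5.4}; what differs is the paper's \emph{primary} proof, which is more hands-on. There, following Belousov, one solves $h(a)\cdot x = h(b)$ directly: take the solution $c$ of $a\cdot y = b$ in $(Q,\cdot)$, observe that $h(a)\cdot h(c) = h(a\cdot c) = h(b)$, so $h(c)\in hQ$ is a solution, and then check uniqueness by cancelling in the ambient quasigroup $(Q,\cdot)$; the equation $x\cdot h(a)=h(b)$ is treated symmetrically. Your route packages exactly these two ingredients abstractly: existence of solutions inside $hQ$ is the division property supplied by Lemma \ref{l_5.74} (homomorphic images of quasigroups are division groupoids -- its proof is the same solution-transfer computation), uniqueness is the cancellation property supplied by Lemma \ref{SUB_GROUPOIDS} (sub-objects of quasigroups are cancellation groupoids), and Remark \ref{remark_one} fuses them. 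The direct proof buys self-containedness and makes the transferred solution $h(c)$ explicit; your version buys conceptual clarity and reusability of the two halves separately. Your closing observation is also well placed: cancellation alone cannot yield division in the infinite case, since an injective self-map need not be surjective -- compare Example \ref{INFINITE_LEFT_F_QUAS}, where the endomorphism $e(x)=2x$ of $(Z,\cdot)$ is injective but not onto, so the division property of $e(Z)$ genuinely comes from its being a homomorphic image, not from cancellation.
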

\begin{proof}
We re-write the proof from  (\cite{1a},  p. 33) for  slightly more general case. Prove  that $(hQ, \cdot)$ is a
subquasigroup of quasigroup $(Q, \cdot)$. Let $h(a)$, $h(b) \in h(Q)$. We demonstrate that solution of equation
$h(a)\cdot x = h(b)$ lies in $h(Q)$. Consider the equation $a \cdot y = b$. Denote solution of this equation by
$c$. Then $h(c)$ is solution of equation $h(a)\cdot x = h(b)$. Indeed, $h(a) \cdot h(c) = h(a\cdot c) = h(b)$.

It is easy to see, that this is a unique solution. Indeed, if $h(a) \cdot c_1 = h(b)$, then $h(a) \cdot h(c) =
h(a) \cdot c_1$. Since $h(a), h(c), c_1$ are elements of quasigroup $(Q, \cdot)$, then $h(c) = c_1$.

For equation  $x \cdot h(a)  = h(b)$ the proof is similar.
\end{proof}

\begin{remark}
It is possible to give the following proof of Lemma \ref{l_5.4}. The $(hQ, \cdot)$ is a cancellation groupoid,
since it is a sub-object of the quasigroup $(Q, \cdot)$ (Lemma \ref{SUB_GROUPOIDS}). From the other side $(hQ,
\cdot)$ is a division groupoid, since it is a homomorphic image of $(Q, \cdot)$ (Lemma \ref{l_5.74}). Therefore
by Remark \ref{remark_one} $(hQ, \cdot)$ is a subquasigroup of the quasigroup $(Q, \cdot)$.
\end{remark}

\begin{corollary}\label{corll_5.4} 1. Any subquasigroup $(H,\cdot)$ of a left F-quasigroup $(Q, \cdot)$ is a left
F-quasigroup.

2. Any endomorphic image of a left F-quasigroup $(Q, \cdot)$ is a left F-quasigroup.
\end{corollary}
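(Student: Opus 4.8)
The plan is to exploit the observation, recorded just before Lemma \ref{medial_F_SM_E}, that the left F-quasigroup defining relation $x\cdot yz = xy\cdot e(x)z$ becomes the genuine \emph{identity} $x\cdot yz = xy\cdot (x\backslash x)z$ once we pass to the three-operation signature $(Q,\cdot,\backslash,\slash)$. Since this is an identity, its preservation under subalgebras and under homomorphic images is automatic; the only care needed is that the objects we call ``subquasigroup'' and ``endomorphic image'' really are subalgebras (resp. homomorphic images) with respect to all three operations, so that the derived term $x\backslash x$ is computed consistently.

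For Part~1, let $(H,\cdot)$ be a subquasigroup of $(Q,\cdot)$. By Lemma \ref{SUB_GROUPOIDS}(3) this is the same as being a subquasigroup of $(Q,\cdot,\backslash,\slash)$, so $H$ is closed not only under $\cdot$ but also under $\backslash$ and $\slash$. In particular, for $x\in H$ the element $e(x)=x\backslash x$ lies in $H$ and, because $x\cdot e(x)=x$ in $Q$, it is also the right local identity of $x$ computed inside $(H,\cdot)$. Restricting the identity $x\cdot yz = xy\cdot (x\backslash x)z$ to elements $x,y,z\in H$ then gives precisely the left F-quasigroup relation for $(H,\cdot)$, so $(H,\cdot)$ is a left F-quasigroup.

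For Part~2, I would reduce to Part~1 rather than argue directly with homomorphic images: by Lemma \ref{l_5.4}, if $h$ is an endomorphism of $(Q,\cdot)$, then the image $(hQ,\cdot)$ is a subquasigroup of $(Q,\cdot)$. Applying Part~1 to this subquasigroup immediately yields that $(hQ,\cdot)$ is a left F-quasigroup.

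The main subtlety, and the reason I route Part~2 through Lemma \ref{l_5.4} instead of directly invoking ``homomorphic images preserve identities,'' is that an endomorphism of the one-operation structure $(Q,\cdot)$ need not a priori respect the derived operations $\backslash$ and $\slash$; hence it is not evident that $hQ$ is closed under division or that the identity transfers term-by-term. Lemma \ref{l_5.4} settles this by showing the set-theoretic image is already a genuine subquasigroup (equivalently, via Remark \ref{remark_one} and Lemma \ref{l_5.74}, a cancellation division groupoid), after which Lemma \ref{SUB_GROUPOIDS} restores closure under all three operations and Part~1 finishes the argument.
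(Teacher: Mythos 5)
Your proof is correct and essentially matches the paper's: Part~2 is the same reduction through Lemma \ref{l_5.4}, and your Part~1 argument that $e(a)=a\backslash a\in H$ via closure under the derived operations is, by Lemma \ref{SUB_GROUPOIDS}(3), the same fact the paper expresses by noting that the solution of $a\cdot x=a$ lies in $H$. The universal-algebraic framing you use is also anticipated in the paper's remark following Definition \ref{DEF_11}.
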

\begin{proof}
1. If $a\in H$, then the solution of equation $a\cdot x=a$, $x=e(a)$ also is in $H$.

2. From Case 1 and  Lemma \ref{l_5.4} it follows that any endomorphic image of a left F-quasigroup $(Q, \cdot)$
is a left F-quasigroup.
\end{proof}

\begin{remark}\label{REM_7_NORM}
The same situation is for right F-quasigroups, left and right E- and SM-qua\-si\-gro\-ups and all combinations
of these properties.
\end{remark}

\begin{corollary} \label{COROLL_1}
If  $h$ is an  endomorphism   of a quasigroup $(Q, \cdot)$, then $h$ is an endomorphism of the quasigroups $(Q,
\ast)$, $(Q, \slash)$, $(Q, \backslash)$, $(Q,  \slash\slash)$, $(Q, \backslash\backslash)$, i.e. from $h(x
\cdot y) = h (x) \cdot h (y)$ we obtain  that

1.      $h(x \ast y) = h (x) \ast h (y)$. 2. $h(x \slash y) = h (x) \slash h (y)$.

3. $h(x \backslash y) = h (x) \backslash  h(y)$. 4.  $h(x \slash\slash y) = h (x) \slash\slash h (y)$.

5.    $h(x \backslash\backslash y) = h (x) \backslash\backslash  h (y)$.
\end{corollary}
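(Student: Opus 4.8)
The plan is to deduce everything from the single identity $h(x\cdot y)=h(x)\cdot h(y)$ together with the fact that in the quasigroup $(Q,\cdot)$ every equation $a\cdot z=b$ and $z\cdot a=b$ has a \emph{unique} solution. First I would record, from the parastrophe relations listed after Definition \ref{def1}, how each parastrophe operation is pinned down by an equation in $\cdot$: one has $x\ast y=z\Leftrightarrow y\cdot x=z$, $\;x\slash y=z\Leftrightarrow z\cdot y=x$, $\;x\backslash y=z\Leftrightarrow x\cdot z=y$, $\;x\slash\slash y=z\Leftrightarrow z\cdot x=y$, and $x\backslash\backslash y=z\Leftrightarrow y\cdot z=x$. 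Comparing these, one reads off the identities $x\ast y=y\cdot x$, $\;x\slash\slash y=y\slash x$, and $x\backslash\backslash y=y\backslash x$, so that it suffices to handle the three operations $\ast$, $\slash$, $\backslash$ and then obtain the other two by a single swap of arguments.

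The case of $\ast$ needs no cancellation and comes for free: $h(x\ast y)=h(y\cdot x)=h(y)\cdot h(x)=h(x)\ast h(y)$. The two cases where the quasigroup hypothesis really enters are $\backslash$ and $\slash$. For $\backslash$, I would put $z=x\backslash y$, so that $x\cdot z=y$; applying $h$ and using multiplicativity gives $h(x)\cdot h(z)=h(y)$. Now $h(x)\backslash h(y)$ is, by definition, the unique $w\in Q$ with $h(x)\cdot w=h(y)$; since $h(z)$ is such a $w$, uniqueness forces $h(z)=h(x)\backslash h(y)$, that is, $h(x\backslash y)=h(x)\backslash h(y)$. The operation $\slash$ is entirely symmetric: with $z=x\slash y$ one has $z\cdot y=x$, hence $h(z)\cdot h(y)=h(x)$, and uniqueness of the solution of $w\cdot h(y)=h(x)$ yields $h(z)=h(x)\slash h(y)$.

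The remaining two statements then fall out of the identities recorded above: $h(x\slash\slash y)=h(y\slash x)=h(y)\slash h(x)=h(x)\slash\slash h(y)$ and, likewise, $h(x\backslash\backslash y)=h(y\backslash x)=h(y)\backslash h(x)=h(x)\backslash\backslash h(y)$. No deep obstacle arises; the one point demanding care --- and the reason the corollary is not a mere formality of universal algebra --- is that $h$ is assumed neither injective nor surjective, so it cannot be inverted. One is therefore forced to argue \emph{inside} $(Q,\cdot)$, exploiting the uniqueness of solutions there to identify $h(z)$ with the correct division, rather than transporting the division along $h$. The only real hazard is clerical: keeping the ordered argument lists of the five parastrophes straight while reading off their defining equations.
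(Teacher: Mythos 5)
Your proposal is correct and takes essentially the same route as the paper's proof: the paper also verifies $\ast$ by direct computation and, for $\slash$, sets $z=x\slash y$, rewrites this as $x=zy$, applies $h$ to get $h(x)=h(z)h(y)$, and concludes $h(x\slash y)=h(x)\slash h(y)$ by uniqueness of division in $(Q,\cdot)$, treating the remaining parastrophes analogously. Your explicit appeal to unique solvability inside $(Q,\cdot)$ (in place of the paper's preliminary invocation of Lemma~\ref{l_5.4} about the subquasigroup $(hQ,\cdot)$) is the same argument, made slightly more self-contained.
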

\begin{proof}
From Lemma \ref{l_5.4} we have  that $(hQ, \cdot)$ is a subquasigroup of  $(Q,\cdot)$.

Case 1. If we pass from the quasigroup $(Q, \cdot)$ to quasigroup $(Q, \ast)$, then subquasigroup $(hQ, \ast)$
of the quasigroup $(Q, \ast)$  will correspond to the subquasigroup $(hQ, \cdot)$. Indeed, any subquasigroup of
the quasigroup $(Q, \cdot)$ is closed relative to parastrophe operations $\ast , \slash, \backslash,
\slash\slash, \backslash\backslash$ of the quasigroup  $(Q, \cdot)$.  Further we have $h(x \ast y) = h(y \cdot
x) = h(y) \cdot h(x) = h (x) \ast h (y)$.

Case 2. If we pass from the quasigroup $(Q, \cdot)$ to quasigroup $(Q, \slash)$, then subquasigroup  \, $(hQ,
\slash)$ of the quasigroup $(Q, \slash)$  will correspond to the subquasigroup $(hQ, \cdot)$.

Let $z = x\slash y$, where $x, y \in Q$. Then from definition of the operation $\slash$ it follows that $x =
zy$. Then  $h (x) = h (z) h (y)$, $h(x \slash y) = h (z) = h (x) \slash  h (y)$ (\cite{MALTSEV}, p. 96, Theorem
1).

The remaining cases are  proved in the similar way.
\end{proof}

\begin{lemma}\label{l5.4} If $(Q,\cdot)$ is a finite quasigroup, then any its congruence is
normal,   any its homomorphic image is a  quasigroup  \cite{VD, SC05}.
\end{lemma}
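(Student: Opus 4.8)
The plan is to deduce both assertions from finiteness together with the fact that in a quasigroup every translation is a bijection. For the normality claim, I would take a congruence $\theta$ of the finite quasigroup $(Q,\cdot)$ and examine how the left translation $L_z$ acts on the finite quotient set $Q/\theta$. By Definition \ref{DEF_CONGR_QUAS} the congruence property guarantees that $L_z$ induces a well-defined map $\bar L_z$ on $Q/\theta$, sending the class of $x$ to the class of $z\cdot x$. Since $L_z$ is a bijection of $Q$, every class has a preimage, so $\bar L_z$ is surjective; because $Q/\theta$ is finite, a surjective self-map of it is injective. Injectivity of $\bar L_z$ is exactly the implication $(z\cdot x)\,\theta\,(z\cdot y)\Rightarrow x\,\theta\,y$, and the same argument applied to the right translation $R_z$ yields the other implication of Definition \ref{NORM_CONGR_QUAS}. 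Hence $\theta$ is normal.

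For the second claim, let $h$ be a homomorphism of $(Q,\cdot)$ onto a groupoid $(H,\circ)$. First I would note that $H$ is finite, being an image of the finite set $Q$. By Lemma \ref{l_5.74}(1) the groupoid $(H,\circ)$ is a division groupoid, so each of its translations is surjective. Finiteness then promotes surjectivity to bijectivity, hence injectivity, so $(H,\circ)$ is also a cancellation groupoid; by Remark \ref{remark_one} it is a quasigroup. Alternatively, one may invoke the first part directly: the kernel $\theta = Ker\,h$ is a congruence, hence normal, and Theorem \ref{NORM_QUAS_CONGR} then identifies the image $(H,\circ)$ with the quotient quasigroup $Q\slash\theta$.

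The routine part is checking that $\bar L_z$ and $\bar R_z$ are well defined, which is immediate from the congruence property. The only point requiring genuine care is the finiteness input: the step \emph{a surjective self-map of a finite set is injective} is precisely where the hypothesis is used, and it is exactly what fails for infinite quasigroups, where non-normal congruences and non-quasigroup homomorphic images do occur. No serious obstacle is expected beyond keeping track of the two symmetric directions, left and right, needed for the normality conditions.
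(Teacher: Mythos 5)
Your proof is correct. The paper states Lemma \ref{l5.4} without proof, simply citing \cite{VD, SC05}, and your argument is exactly the standard one behind that citation: the congruence property makes $\bar L_z$ and $\bar R_z$ well defined on the finite quotient $Q\slash\theta$, surjectivity (inherited from bijectivity of $L_z$, $R_z$ on $Q$) plus finiteness forces injectivity, and injectivity of the induced maps is literally the two implications of Definition \ref{NORM_CONGR_QUAS}; likewise the second claim follows either from Lemma \ref{l_5.74} (division groupoid) together with finiteness and Remark \ref{remark_one}, or, as you note, by reducing to the first claim via Theorem \ref{NORM_QUAS_CONGR} — both routes are sound.
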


\begin{lemma} \label{COMMUTING_ENDOMORPH}
 Let $(Q, \cdot)$ be a quasigroup. If $f$ is an endomorphism of $(Q,\cdot)$, then $f(e(x)) = e(f(x))$,
$f(s(x)) = s(f(x))$  for all $x\in Q$;

if $e$ is an endomorphism of $(Q, \cdot)$, then $e(f(x)) = f(e(x))$, $e(s(x)) = s(e(x))$ for all $x\in Q$;

if $s$ is an endomorphism of $(Q, \cdot)$, then $s(f(x)) = f(s(x))$, $s(e(x)) = e(s(x))$ for all $x\in Q$ (Lemma
2.4. from \cite{Kin_PHIL_04}).
\end{lemma}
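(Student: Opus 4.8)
The plan is to deduce all six identities from a single, more symmetric statement: \emph{any} endomorphism $\varphi$ of $(Q,\cdot)$ commutes with each of the three maps $f$, $e$, $s$, i.e.\ $\varphi f = f\varphi$, $\varphi e = e\varphi$ and $\varphi s = s\varphi$. The three cases of the lemma are then obtained by specialising $\varphi$ to $f$, to $e$, and to $s$ respectively; for instance, the case in which $f$ is an endomorphism is exactly $\varphi = f$, which yields $fe = ef$ and $fs = sf$.

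First I would record the uniqueness that makes $f$, $e$, $s$ well defined. Since $(Q,\cdot)$ is a quasigroup, for each $b\in Q$ the translations $R_b$ and $L_b$ are bijections, so $f(b)$ is the \emph{unique} solution $z$ of $z\cdot b = b$, $e(b)$ the unique solution $z$ of $b\cdot z = b$, and $s(b) = b\cdot b$ is given outright. This uniqueness is the only genuine ingredient of the argument.

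Next, for an arbitrary endomorphism $\varphi$ I would apply $\varphi$ to the three defining relations. Applying $\varphi$ to $f(x)\cdot x = x$ gives $\varphi(f(x))\cdot \varphi(x) = \varphi(x)$; since $f(\varphi(x))$ is the unique left local identity of $\varphi(x)$, we conclude $\varphi(f(x)) = f(\varphi(x))$. Applying $\varphi$ to $x\cdot e(x) = x$ gives $\varphi(x)\cdot \varphi(e(x)) = \varphi(x)$, whence $\varphi(e(x)) = e(\varphi(x))$ by uniqueness of the right local identity. Finally, applying $\varphi$ to $s(x) = x\cdot x$ gives $\varphi(s(x)) = \varphi(x)\cdot \varphi(x) = s(\varphi(x))$ directly from product preservation. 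Specialising $\varphi$ in turn to $f$, $e$, $s$ then delivers the six stated equalities.

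There is no serious obstacle here: the whole content is the uniqueness of the local identity elements in a quasigroup together with the homomorphism property $\varphi(xy) = \varphi(x)\varphi(y)$. The only point requiring attention is to notice that the ``$s$-halves'' of Cases~1 and~2 (namely $fs = sf$ and $es = se$) are immediate from product preservation, whereas the remaining four equalities genuinely need the uniqueness step; treating all of them uniformly through the auxiliary map $\varphi$ avoids repeating essentially the same short computation several times.
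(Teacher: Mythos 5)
Your proof is correct. It does, however, package the argument differently from the paper: the paper writes $e(x) = x\backslash x$, $f(x) = x\slash x$, $s(x) = x\cdot x$ and invokes its Corollary \ref{COROLL_1} (an endomorphism of $(Q,\cdot)$ is automatically an endomorphism of the parastrophes $(Q,\slash)$ and $(Q,\backslash)$), so that, e.g., $f(e(x)) = f(x\backslash x) = f(x)\backslash f(x) = e(f(x))$; you instead prove the uniform statement that an \emph{arbitrary} endomorphism $\varphi$ satisfies $\varphi f = f\varphi$, $\varphi e = e\varphi$, $\varphi s = s\varphi$, by applying $\varphi$ to the defining relations $f(x)\cdot x = x$, $x\cdot e(x) = x$, $s(x)=x\cdot x$ and using uniqueness of solutions, and then specialise $\varphi$ to $f$, $e$, $s$. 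The two arguments rest on exactly the same kernel — unique divisibility in a quasigroup, which is also what drives the proof of Corollary \ref{COROLL_1} — so neither is deeper than the other; what your version buys is self-containment (no parastrophe machinery needed) and a strictly more general conclusion (commutation of $f$, $e$, $s$ with \emph{every} endomorphism, not merely with one another), while the paper's route has the economy of reusing a corollary it has already established for other purposes. Your observation that the $s$-halves follow from product preservation alone, without the uniqueness step, is accurate and matches the paper's computation.
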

\begin{proof}
We shall use Corollary \ref{COROLL_1}.
 If $f$ is an endomorphism, then $f(e(x)) = f(x \backslash x) = f(x)\backslash f(x) = e(f(x))$, $f(s(x)) = f(x)\cdot f(x) =
s(f(x))$.

If $e$ is an endomorphism, then $e(f(x)) = e(x)\slash e(x) = f(e(x))$, $e(s(x)) = e(x)\cdot e(x) = s(e(x))$.

If $s$ is an endomorphism, then $s(f(x)) = s(x)\slash s(x) = f(s(x))$, $s(e(x)) = s(x)\backslash  s(x) =
s(e(x))$.
\end{proof}

 The group $M(Q,\cdot) = \left< L_a, R_b \, | \, a, b \in Q\right > $, where $(Q,\cdot)$ is  a quasigroup,
 is called multiplication group of quasigroup.

 The group $\mathbb I_h =\{\alpha \in M(Q,\cdot) \, | \, \alpha h = h\}$ is called
inner mapping group of a quasigroup $(Q,\cdot)$ relative to an element $h\in Q$. Group $\mathbb I_h$ is
stabilizer of a fixed element $h$ by action ($\alpha: x\longmapsto \alpha(x)$ for all $\alpha \in M(Q,\cdot)$,
$x\in Q$) of group $M(Q,\cdot)$ on the set $Q$. In loop case usually it is studied the group $\mathbb I_1
(Q,\cdot) = \mathbb I (Q,\cdot)$, where $1$ is the identity element of a loop $(Q,\cdot)$.

\begin{theorem} \label{norm_subquas}
A subquasigroup $H$ of a quasigroup $Q$ is normal if and only if $\mathbb I_k H\subseteq H$ for a fixed element
$k\in H$ \cite{VD}.
\end{theorem}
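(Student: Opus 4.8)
The plan is to translate the statement into the language of the permutation group $M(Q,\cdot)$ acting on $Q$ and then to invoke the standard theory of blocks of imprimitivity. First I would record two facts. (i) The action of $M(Q,\cdot)$ on $Q$ is transitive: for any $a,b\in Q$ the translation $R_{a\backslash b}$ lies in $M(Q,\cdot)$ and sends $a$ to $a\cdot(a\backslash b)=b$ by identity \eqref{(1)}. (ii) A normal congruence of $(Q,\cdot)$ is exactly an $M(Q,\cdot)$-invariant equivalence relation on $Q$, i.e. a block system. Indeed, invariance under all $L_z,R_z$ is precisely the congruence condition of Definition \ref{DEF_CONGR_QUAS}, while Lemma \ref{NORM_CONGR_ADMISSBLE_REL_TRANS} shows a normal congruence is in addition invariant under $L_z^{-1},R_z^{-1}$; conversely invariance under $L_z^{-1},R_z^{-1}$ is the normality condition of Definition \ref{NORM_CONGR_QUAS}. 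Since $M(Q,\cdot)=\langle L_a,R_b\rangle$, invariance under the generators together with their inverses is the same as invariance under the whole group. Thus ``$H$ is a normal subquasigroup with $k\in H$'' means exactly ``$H$ is a block of the transitive $M(Q,\cdot)$-set $Q$ containing the point $k$.''

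For the direct implication, suppose $H=\theta(k)$ is a class of a normal congruence $\theta$. By fact (ii) every $\alpha\in M(Q,\cdot)$ preserves $\theta$. If $\alpha\in\mathbb I_k$ and $h\in H$, then $h\,\theta\,k$ yields $\alpha h\,\theta\,\alpha k=k$, so $\alpha h\in\theta(k)=H$; hence $\mathbb I_kH\subseteq H$.

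For the converse, assume $\mathbb I_kH\subseteq H$ with $k\in H$, and let $S=\{g\in M(Q,\cdot)\mid gH=H\}$ be the setwise stabiliser of $H$, a subgroup of $M(Q,\cdot)$. I would first establish the three inclusions that drive the argument: $\mathbb I_k\subseteq S$ (from $\mathbb I_kH\subseteq H$ and the fact that $\mathbb I_k$ is a group, so $\alpha H\subseteq H$ and $\alpha^{-1}H\subseteq H$ force $\alpha H=H$), and $L_a,R_a\in S$ for every $a\in H$, since $H$ is a subquasigroup and so $L_a,R_a$ restrict to bijections of $H$. Next I would prove $H=Sk$: the inclusion $Sk\subseteq H$ is clear because $k\in H$ and $S$ stabilises $H$, while for $h\in H$ the element $k\backslash h$ again lies in $H$ (a subquasigroup is closed under the division operations by Lemma \ref{SUB_GROUPOIDS}), whence $R_{k\backslash h}\in S$ and $R_{k\backslash h}k=h$, giving $h\in Sk$.

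Finally I would verify that $H$ is a block, i.e. $gH=H$ or $gH\cap H=\emptyset$ for every $g\in M(Q,\cdot)$. If $gH\cap H\neq\emptyset$, choose $p\in gH\cap H$ and write $p=gh_1$ with $h_1\in H$; using $H=Sk$ pick $s_1,s_2\in S$ with $s_1k=h_1$ and $s_2k=p$, so that $gs_1k=p=s_2k$, hence $s_2^{-1}gs_1\in\mathbb I_k\subseteq S$ and therefore $g=s_2(s_2^{-1}gs_1)s_1^{-1}\in S$, i.e. $gH=H$. By transitivity of $M(Q,\cdot)$ the translates $\{gH\mid g\in M(Q,\cdot)\}$ then form a block system, equivalently (fact (ii)) a normal congruence $\theta$ with $H$ as the class of $k$, so $H$ is normal. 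The main obstacle is exactly this last step together with the identity $H=Sk$: both rely essentially on $H$ being a \emph{subquasigroup} (its closure under $\backslash$ and the stability of $H$ under $L_a,R_a$ for $a\in H$), without which the hypothesis $\mathbb I_kH\subseteq H$ need not force $H$ to be a block.
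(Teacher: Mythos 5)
Your proof is correct, but there is nothing in the paper to compare it against: Theorem \ref{norm_subquas} is stated without proof and simply quoted from Belousov's book \cite{VD}. Your route — identifying normal congruences with $M(Q,\cdot)$-invariant equivalence relations and then running the standard block-of-imprimitivity argument for the transitive action of $M(Q,\cdot)$ on $Q$ — is in substance the classical argument, and every delicate step checks out. Your fact (ii) is justified exactly as you say: Definitions \ref{DEF_CONGR_QUAS} and \ref{NORM_CONGR_QUAS} give invariance under the generators $L_z, R_z$ and their inverses, hence under all of $M(Q,\cdot)$, and conversely. In the converse direction, the three inclusions are sound: $\mathbb I_k\subseteq S$ needs, as you note, that $\mathbb I_k$ is a group (so $\alpha H\subseteq H$ and $\alpha^{-1}H\subseteq H$ force equality); $L_a,R_a\in S$ for $a\in H$ uses that $(H,\cdot)$ is a subquasigroup, where surjectivity of $L_a|_H$ onto $H$ rests on closure of $H$ under $\backslash$ and $\slash$ (Lemma \ref{SUB_GROUPOIDS}); and $H=Sk$ via $R_{k\backslash h}k=h$ is exactly right. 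The conjugation step $g s_1 k = s_2 k$, hence $s_2^{-1}gs_1\in\mathbb I_k\subseteq S$ and $g\in S$, correctly yields the dichotomy $gH=H$ or $gH\cap H=\emptyset$; transitivity then makes $\{gH \mid g\in M(Q,\cdot)\}$ a partition whose induced equivalence is $M(Q,\cdot)$-invariant, hence a normal congruence, and the class of $k$ is $H$ itself since any translate containing $k$ meets $H$ and so equals it. One small point worth making explicit if you write this up: transitivity of the induced relation needs the same dichotomy (two translates sharing a point coincide), which your argument already supplies. Your closing observation — that $H=Sk$ and the block property are precisely where the subquasigroup hypothesis is indispensable — is also accurate; for a mere subset, $\mathbb I_k H\subseteq H$ does not make $H$ a block.
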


In \cite{VD}, p. 59 the following  key  lemma is proved.
\begin{lemma} \label{NL1}
 Let $ \theta $ be a normal congruence of a quasigroup $ (Q, \cdot) $. If a quasigroup
$ (Q, \circ) $ is isotopic  to $ (Q, \cdot) $ and the isotopy $ (\alpha, \beta, \gamma) $ is admissible relative
to $ \theta $, then $ \theta $ is a normal congruence also in $(Q, \circ)$.
\end{lemma}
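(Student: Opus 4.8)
The plan is to unwind the isotopy relation $x \circ y = \gamma^{-1}(\alpha x \cdot \beta y)$ coming from $(Q,\circ) = (Q,\cdot)(\alpha,\beta,\gamma)$, and then to push the defining implications of a (normal) congruence through the three permutations $\alpha,\beta,\gamma$. The one fact doing all the work is that admissibility of each of $\alpha,\beta,\gamma$ means that both the permutation \emph{and its inverse} preserve $\theta$; this is exactly what lets me transport $\theta$-relations across each translation in whichever direction a given step requires.

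First I would verify that $\theta$ is a congruence of $(Q,\circ)$, using the equivalent formulation of Definition~\ref{DEF_CONGR_QUAS}. Assume $x\,\theta\,y$ and $w\,\theta\,z$. Admissibility of $\alpha$ and $\beta$ gives $\alpha x\,\theta\,\alpha y$ and $\beta w\,\theta\,\beta z$; since $\theta$ is a congruence of $(Q,\cdot)$ we obtain $(\alpha x\cdot\beta w)\,\theta\,(\alpha y\cdot\beta z)$; finally admissibility of $\gamma$ (hence of $\gamma^{-1}$) yields $\gamma^{-1}(\alpha x\cdot\beta w)\,\theta\,\gamma^{-1}(\alpha y\cdot\beta z)$, which is precisely $(x\circ w)\,\theta\,(y\circ z)$.

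Next I would establish the two cancellation implications of Definition~\ref{NORM_CONGR_QUAS} for $(Q,\circ)$. Suppose $(z\circ x)\,\theta\,(z\circ y)$, i.e.\ $\gamma^{-1}(\alpha z\cdot\beta x)\,\theta\,\gamma^{-1}(\alpha z\cdot\beta y)$. Applying the admissible permutation $\gamma$ strips the outer $\gamma^{-1}$ and gives $(\alpha z\cdot\beta x)\,\theta\,(\alpha z\cdot\beta y)$; the left-cancellation clause of the normality of $\theta$ in $(Q,\cdot)$ yields $\beta x\,\theta\,\beta y$; and applying $\beta^{-1}$ (admissible) gives $x\,\theta\,y$. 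The right-handed implication $(x\circ z)\,\theta\,(y\circ z)\Rightarrow x\,\theta\,y$ is handled symmetrically, applying $\gamma$, then the right-cancellation clause of normality in $(Q,\cdot)$, and then $\alpha^{-1}$.

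I do not expect any genuine obstacle: each step is a direct consequence of the definition of admissibility combined with the hypothesis that $\theta$ is a normal congruence of $(Q,\cdot)$. The only point requiring care is bookkeeping---invoking admissibility in the correct direction, namely carrying relations \emph{forward} through $\alpha,\beta,\gamma$ in the congruence step, and \emph{backward} through $\gamma,\beta^{-1},\alpha^{-1}$ in the two cancellation steps.
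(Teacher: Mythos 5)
Your proof is correct and is exactly the argument one expects here: the paper itself does not spell out a proof (it cites Belousov's book, p.~59), and your unwinding of $x\circ y=\gamma^{-1}(\alpha x\cdot\beta y)$ --- pushing $\theta$ forward through $\alpha,\beta,\gamma^{-1}$ for the congruence property and backward through $\gamma$, $\beta^{-1}$, $\alpha^{-1}$ for the two cancellation implications of Definition~\ref{NORM_CONGR_QUAS} --- is precisely that standard proof. Your remark that admissibility, as defined in the paper, covers both a permutation and its inverse is indeed the one point requiring care, and you invoked it in the correct direction at each step.
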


For our aims will be usable  the following theorem.

\begin{theorem} \label{DLYA_CLM}
Let $(Q, +)$ be an IP-loop, $x\cdot y = (\varphi x + \psi y)+c$, where $\varphi, \psi \in Aut(Q, +)$,  $a\in
C(Q,+)$, $\theta$ be a normal congruence of $(Q, +)$. Then $\theta$ is normal congruence of $(Q, \cdot)$ if and
only if $\varphi \mid_{Ker \, \theta}$, $\psi \mid_{Ker \, \theta}$ are automorphisms of $Ker \, \theta$
\cite{pntk, tkpn, SCERB_91, vs2}.
\end{theorem}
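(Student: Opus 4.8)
The plan is to treat $(Q,\cdot)$ as an isotope of the IP-loop $(Q,+)$ and to reduce the whole statement to the admissibility of the isotopy components relative to $\theta$. Writing $R^+_c$ for the right translation $z \mapsto z + c$ of $(Q,+)$, the defining equality $x\cdot y = (\varphi x + \psi y) + c = R^+_c(\varphi x + \psi y)$ says exactly that $(Q,\cdot) = (Q,+)\,T$ with $T = (\varphi, \psi, (R^+_c)^{-1})$. Lemma \ref{NL1} will then transfer normality of $\theta$ from $(Q,+)$ to $(Q,\cdot)$ as soon as all three components of $T$ are admissible relative to $\theta$ (this handles the ``if'' part); the converse will instead read admissibility of $\varphi,\psi$ directly off the translations of $(Q,\cdot)$. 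The third component $(R^+_c)^{-1}$ is harmless in both directions: since $\theta$ is a normal congruence of $(Q,+)$, Lemma \ref{NORM_CONGR_ADMISSBLE_REL_TRANS} makes every translation of $(Q,+)$---and hence its inverse---admissible relative to $\theta$. Thus everything hinges on the two automorphisms $\varphi, \psi$.

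The key step is a criterion: for an automorphism $\alpha$ of $(Q,+)$, $\alpha$ is admissible relative to $\theta$ if and only if $\alpha(N) = N$, where $N = Ker\,\theta = \theta(0)$ is the congruence class of the neutral element $0$. First I would identify the congruence classes: using the left inverse property together with the compatibility of $\theta$ with the translations of $(Q,+)$ one gets $\theta(a) = a + N$, and consequently $x\,\theta\,y \iff -x + y \in N$. Now if $\alpha(N)=N$ then, since $\alpha$ is an automorphism fixing $0$, $-\alpha x + \alpha y = \alpha(-x + y)$ lies in $N$ exactly when $-x+y$ does; hence $x\,\theta\,y \iff \alpha x\,\theta\,\alpha y$, so $\alpha$ and $\alpha^{-1}$ both preserve $\theta$. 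Conversely, if $\alpha$ is admissible then $n\,\theta\,0$ forces $\alpha n\,\theta\,\alpha 0 = 0$, giving $\alpha(N)\subseteq N$, and applying the same to $\alpha^{-1}$ yields $\alpha(N)=N$. Finally $\alpha(N)=N$ is precisely the assertion that $\alpha|_N$ is a bijective endomorphism, i.e. an automorphism of $N$.

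With this criterion both implications are short. For the ``if'' direction, assume $\varphi|_N$ and $\psi|_N$ are automorphisms of $N$. By the criterion $\varphi, \psi$ are admissible relative to $\theta$, and $(R^+_c)^{-1}$ is admissible as noted; hence the isotopy $T$ is admissible and Lemma \ref{NL1} yields that $\theta$ is a normal congruence of $(Q,\cdot)$. For the ``only if'' direction, assume $\theta$ is normal in $(Q,\cdot)$, so by Lemma \ref{NORM_CONGR_ADMISSBLE_REL_TRANS} every translation of $(Q,\cdot)$ is admissible relative to $\theta$. I would express $\varphi$ and $\psi$ through translations via the identities $L^{\cdot}_a = R^+_c\,L^+_{\varphi a}\,\psi$ and $R^{\cdot}_b = R^+_c\,R^+_{\psi b}\,\varphi$, read directly off $x\cdot y = (\varphi x + \psi y)+c$. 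Solving gives $\psi = (L^+_{\varphi a})^{-1}(R^+_c)^{-1}L^{\cdot}_a$ and $\varphi = (R^+_{\psi b})^{-1}(R^+_c)^{-1}R^{\cdot}_b$; since the admissible permutations relative to $\theta$ form a group (closed under composition and inverse, immediately from the definition), $\varphi$ and $\psi$ are admissible, and the criterion gives that $\varphi|_N,\psi|_N$ are automorphisms of $N$.

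The routine bookkeeping---the two translation identities, the group property of the admissible permutations, and the fact that an automorphism fixes $0$---is immediate. The one genuinely delicate point, and the step I expect to cost the most care, is the admissibility criterion in the non-associative IP setting: establishing $\theta(a) = a+N$ and $x\,\theta\,y \iff -x+y\in N$ rests on the left inverse property and on the compatibility of $\theta$ with the translations of $(Q,+)$, and it is here that the IP-loop hypothesis (and, for the surrounding applications, the centrality of $c$) is actually used rather than merely carried along.
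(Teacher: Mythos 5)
Your proof is correct, and a direct comparison with ``the paper's proof'' is not possible in the strict sense: the paper states Theorem \ref{DLYA_CLM} without proof, quoting it from N\v{e}mec--Kepka and the author's earlier work, where the result is established by direct coset computations. Your argument is a legitimate alternative that stays entirely inside this paper's own admissibility machinery: you package $(Q,\cdot)$ as $(Q,+)\bigl(\varphi,\psi,(R^+_c)^{-1}\bigr)$, dispose of the third component via Lemma \ref{NORM_CONGR_ADMISSBLE_REL_TRANS}, prove the clean criterion that an automorphism $\alpha$ of $(Q,+)$ is admissible relative to $\theta$ if and only if $\alpha(Ker\,\theta)=Ker\,\theta$, and then obtain the ``if'' part from Lemma \ref{NL1} and the ``only if'' part by factoring $\varphi$ and $\psi$ through translations of the two structures and using that the admissible permutations form a group. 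The individual steps all check out: the identifications $\theta(a)=a+N$ and $x\,\theta\,y \Leftrightarrow -x+y\in N$ hold for a normal congruence of an IP-loop (your appeal to $-(-x)=x$ and $x+(-x+y)=y$ is exactly where the IP hypothesis enters); $\alpha(-x+y)=-\alpha x+\alpha y$ for $\alpha\in Aut(Q,+)$ because inverses in an IP-loop are unique and two-sided; and the translation identities $L^{\cdot}_a = R^+_c L^+_{\varphi a}\psi$ and $R^{\cdot}_b = R^+_c R^+_{\psi b}\varphi$ are read off correctly, with every factor admissible under the respective normality hypotheses. Two remarks: the hypothesis ``$a\in C(Q,+)$'' in the statement is evidently a misprint for $c\in C(Q,+)$, and, as your closing sentence half-concedes, your proof never actually uses this centrality, so you have proved the slightly stronger statement without it (the centrality is carried in the statement for the sake of the surrounding applications, e.g. Theorem \ref{SCERB_THEO}). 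One could even bypass IP in the coset identification by working with $x\backslash y$ and Lemma \ref{CONGR_PRIM_QUAS} instead of $-x+y$, but your IP-based route is perfectly valid under the stated hypotheses.
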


We denote by  $nCon (Q, \cdot) $  the  set of all normal congruences of a quasigroup $ (Q, \cdot) $.

\begin{corollary} \label{NC10}
 If $(Q, \cdot)$ is a quasigroup, $ (Q, +) $ is  a loop of the form  $x + y
= R ^ {-1} _a x\cdot L ^ {-1} _b y $ for all $x, y \in Q $, then $nCon (Q, \cdot) \subseteq nCon (Q, +)$.
\end{corollary}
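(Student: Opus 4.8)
The plan is to recognize $(Q,+)$ as a principal (in fact LP-) isotope of $(Q,\cdot)$ and then deploy the admissibility machinery of Lemma \ref{NL1}. First I would fix an arbitrary $\theta \in nCon(Q,\cdot)$. The hypothesis $x + y = R^{-1}_a x \cdot L^{-1}_b y$ says precisely that $(Q,+) = (Q,\cdot)T$ with the isotopy $T = (R^{-1}_a, L^{-1}_b, \varepsilon)$, so $(Q,+)$ and $(Q,\cdot)$ are isotopic quasigroups and Lemma \ref{NL1} is applicable once the admissibility of $T$ relative to $\theta$ is established.

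The heart of the argument is to verify that $T$ is admissible relative to $\theta$, component by component. By Lemma \ref{NORM_CONGR_ADMISSBLE_REL_TRANS}, the normal congruence $\theta$ is admissible with respect to the right translation $R_a$ and the left translation $L_b$ of $(Q,\cdot)$. Since the definition of an admissible permutation requires both $\alpha$ and $\alpha^{-1}$ to preserve $\theta$, admissibility of $R_a$ and of $L_b$ already delivers admissibility of the inverses $R^{-1}_a$ and $L^{-1}_b$, which are exactly the first two components of $T$; the third component $\varepsilon$ is admissible trivially. Thus every component of $T = (R^{-1}_a, L^{-1}_b, \varepsilon)$ is admissible relative to $\theta$.

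Having checked this, I would invoke Lemma \ref{NL1} directly: a normal congruence of $(Q,\cdot)$ remains a normal congruence in any isotope reached through an isotopy admissible relative to that congruence. Hence $\theta$ is a normal congruence of $(Q,+)$, i.e. $\theta \in nCon(Q,+)$. As $\theta$ was arbitrary, this yields the desired inclusion $nCon(Q,\cdot) \subseteq nCon(Q,+)$. There is no genuine analytic obstacle here; the argument is purely a matter of assembling the earlier lemmas. The only point demanding care is the (entirely formal) identification of admissibility of the isotopy with admissibility of each of its three components, together with the observation that admissibility is automatically preserved under passing to inverse permutations — both facts being built into the definition of an admissible permutation.
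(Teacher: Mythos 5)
Your proposal is correct and follows exactly the paper's own argument: the paper's one-line proof likewise observes that a normal congruence $\theta$ of $(Q,\cdot)$ is admissible relative to the isotopy $T=(R^{-1}_a, L^{-1}_b, \varepsilon)$ (via Lemma \ref{NORM_CONGR_ADMISSBLE_REL_TRANS}, with admissibility of inverses built into the definition) and then applies Lemma \ref{NL1}. You have merely spelled out the component-by-component verification that the paper leaves implicit.
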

\begin{proof} If  $\theta $ is a normal congruence of a quasigroup $ (Q, \cdot) $, then, since  $
\theta $  is admissible relative to   the isotopy $T = (R ^ {-1} _a, L ^ {-1} _b, \varepsilon) $, $ \theta $ is
also a normal congruence of a loop $ (Q, +) $. \end{proof}

In loop case situation with normality of subloops is well known and more near to the group case \cite{RHB, HOP,
VD, 1a}. As usual a subloop $(H,+)$ of a loop $(Q,+)$ is normal, if $H = \theta (0) = Ker \, \theta$, where
$\theta (0)$ is an equivalence class of a normal congruence  $\theta$ that  contains identity  element of
$(Q,+)$ \cite{VD, HOP}. We shall name  congruence $\theta$ and subloop $(H,+)$ by \textit{corresponding}.

\begin{example}
In the group $S_3$ ($S_3 = \left<a, b\, | \, a^3=b^2 = 1, bab=a^{-1} \right>$, $S_3 \cong Z_3 \leftthreetimes
Z_2$) there exists endomorphism $h$ ($h(a) =1, h(b) = b$)  such that $h(S_3) = \left<b\right> \cong Z_2$, $Ker
\, h = \left< a \right> \cong Z_3 $ and $Z_2 \ntrianglelefteq S_3$.
\end{example}

\begin{example} \label{Cyclic_Group_4_ENDOM}
In the cyclic group $(Z_4, +)$, $Z_4 = \{0, 1, 2, 3 \} $,   there exists endomorphism $h$ ($h(x) = x+x$)  such
that $h(Z_4) = Ker \, h = \{ 0, 2 \}$. The endomorphism $h$ defines normal congruence $\theta$ with the
following coset classes $\theta (0) = \{0, 2 \}$ and $\theta (1) = \{1, 3 \}$. It is clear that $Z_4 \slash
\theta \cong Z_2$.
\end{example}

\begin{definition}
A normal subloop $(H,+)$ of a loop $(Q,+)$ is admissible relative to a permutation $\alpha$ of the set $Q$ if
and only if the corresponding to $(H,+)$ normal congruence $\theta$ is admissible relative to  $\alpha$.
\end{definition}

\begin{definition} \label{SIMPLE_QUAS}
A quasigroup $(Q,\cdot)$ is \emph{simple} if its only normal congruences are the diagonal $\hat{Q} = \{(q,q)\, |
q\in Q\}$ and universal $Q\times Q$. \index{$n$-Ary quasigroup!simple}
\end{definition}

\begin{definition} \label{ALPHA_SIMPLE_QUAS}
We shall name a subloop  $(H,+)$ of a loop $(Q, +)$ $\alpha$-invariant relative to a permutation $\alpha$ of the
set $Q$, if  $\alpha H = H$.

We shall name a loop $(Q,+)$ $\alpha$-simple if only identity subloop and the loop $(Q,+)$ are invariant
relative to the permutation  $\alpha$ of the set $Q$.

We shall name a quasigroup $(Q, \cdot)$ $\alpha$-simple relative to the permutation  $\alpha$ of the set $Q$, if
only the diagonal and universal congruences are admissible relative to $\alpha$.
\end{definition}

\begin{corollary} \label{DLYA_PROSTYH_F_QUAS}
Let   $(Q,\cdot)=(Q,+) (\alpha, \beta, \varepsilon)$, where $(Q,+)$ is a loop, $\alpha, \beta \in S_Q$.
 If  $(Q,+)$ does not contain normal subloops admissible relative to permutations $\alpha, \beta$,
 then quasigroup $(Q,\cdot)$ is simple.
\end{corollary}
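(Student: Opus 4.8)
The plan is to transfer the problem from the quasigroup $(Q,\cdot)$ to its loop isotope $(Q,+)$ and then read off simplicity from the absence of admissible nontrivial normal subloops. First I would rewrite the isotopy. Since $(Q,\cdot)=(Q,+)(\alpha,\beta,\varepsilon)$ means $x\cdot y=\alpha x+\beta y$, the same relation gives $x+y=\alpha^{-1}x\cdot\beta^{-1}y$, i.e. $(Q,+)=(Q,\cdot)(\alpha^{-1},\beta^{-1},\varepsilon)$. As $(Q,+)$ is a loop and $(Q,\cdot)$ a quasigroup, Lemma \ref{LP_AND_PRINCIP_ISOT} forces $(\alpha^{-1},\beta^{-1},\varepsilon)=(R^{-1}_a,L^{-1}_b,\varepsilon)$ for suitable translations of $(Q,\cdot)$, so $(Q,+)$ is exactly of the form $x+y=R^{-1}_a x\cdot L^{-1}_b y$. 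Hence Corollary \ref{NC10} applies and gives $nCon(Q,\cdot)\subseteq nCon(Q,+)$; in particular every normal congruence of $(Q,\cdot)$ is already a congruence of the loop $(Q,+)$.

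The technical heart is to show that any $\theta\in nCon(Q,\cdot)$ is admissible relative to both $\alpha$ and $\beta$. Fix such a $\theta$; by the previous step it is a normal congruence of $(Q,+)$ as well. For the forward direction for $\alpha$, suppose $x\,\theta\,x'$. Using that $\theta$ is a congruence of $(Q,\cdot)$ we get $x\cdot z\,\theta\,x'\cdot z$, that is $\alpha x+\beta z\,\theta\,\alpha x'+\beta z$; cancelling $\beta z$ on the right by normality of $\theta$ in $(Q,+)$ yields $\alpha x\,\theta\,\alpha x'$. For the converse direction, suppose $\alpha u\,\theta\,\alpha v$. Adding $\beta z$ on the right (here $\theta$ is a congruence of $(Q,+)$) gives $\alpha u+\beta z\,\theta\,\alpha v+\beta z$, i.e. $u\cdot z\,\theta\,v\cdot z$, and normality of $\theta$ in $(Q,\cdot)$ gives $u\,\theta\,v$; thus $x\,\theta\,x'$ implies $\alpha^{-1}x\,\theta\,\alpha^{-1}x'$. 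Together these say $\alpha$ is admissible relative to $\theta$. Replacing right translations and the second argument by their left and first-argument analogues establishes the same for $\beta$.

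Finally I would invoke the correspondence between normal congruences and normal subloops. The class $H=\theta(0)=Ker\,\theta$ is a normal subloop of $(Q,+)$, and since $\theta$ is admissible relative to $\alpha$ and $\beta$, so is $H$ by definition. By hypothesis $(Q,+)$ contains no such nontrivial subloop, hence $H=\{0\}$ or $H=Q$; correspondingly $\theta$ is the diagonal $\hat{Q}$ or the universal congruence $Q\times Q$. Thus $(Q,\cdot)$ has only the two trivial normal congruences and is simple in the sense of Definition \ref{SIMPLE_QUAS}. The main obstacle is the admissibility argument of the middle paragraph: one must use the congruence property in one operation together with the cancellation coming from normality in the other operation, and carry this through in both directions so as to obtain full admissibility (not merely one-sided preservation) of $\alpha$ and $\beta$.
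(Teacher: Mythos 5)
Your proof is correct and takes essentially the same route as the paper, whose entire proof is the citation of Lemma \ref{LP_AND_PRINCIP_ISOT}, Corollary \ref{NC10} and Lemma \ref{NL1}, exactly the skeleton you reconstruct and flesh out. The only cosmetic difference is that you verify admissibility of $\alpha$ and $\beta$ by a direct two-sided cancellation computation, whereas the paper gets it for free from Lemma \ref{LP_AND_PRINCIP_ISOT}, which identifies $\alpha = R_a$ and $\beta = L_b$ as translations of $(Q,\cdot)$, relative to which every normal congruence is admissible by Lemma \ref{NORM_CONGR_ADMISSBLE_REL_TRANS}.
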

\begin{proof}
The proof  follows from Lemmas \ref{NL1} and  \ref{LP_AND_PRINCIP_ISOT}, Corollary \ref{NC10}.
\end{proof}

\subsection{Direct products}

\begin{definition} \label{EXTERNAL_DIRECT}
If   $(Q_1, \cdot)$, $(Q_2,\circ)$ are binary  quasigroups, then their {\it (external) direct product}
$(Q,\ast)=(Q_1,\cdot)\times (Q_2,\circ)$   is the set of all ordered pairs $(a^{\prime}, a^{\prime\prime})$
where $a^{\prime} \in Q_1$, $a^{\prime\prime} \in Q_2$, and where the operation in $(Q,\ast)$ is defined
component-wise, that is, $(a_1 \ast a_2) = (a^{\prime}_1 \cdot a^{\prime}_2, a^{\prime\prime}_1 \circ
a^{\prime\prime}_2)$.
\end{definition}

Direct product of quasigroups is studied in many articles and books, see, for example, \cite{ PC, SM, pntk,
JEZEK_77, GB1, GB}. The concept of direct product of quasigroups  was used already in \cite{4}. In group case it
is possible to find these definitions, for example, in \cite{HER}.

In \cite{ BURRIS, SM, JDH_2007}  there is a definition of  the (internal) direct product of $\Omega$-algebras.
We recall that any quasigroup is an $\Omega$-algebra.

Let  $U$ and $W$ be equivalence relations on a set $A$, let  $U\vee W = \{ (x,y) \in A^2 \, | \, \exists \ n\in
N, \ \exists \ t_0, t_1, \dots, t_{2n}\in A, x = t_0 U t_1 $ $W $ $t_2 $ $U\dots U t_{2n-1} W t_{2n} = y \}$.
$U\vee W$ is an equivalence relation on $A$ called the join of $U$ and $W$. If $U$ and $W$ are equivalence
relations on $A$ for which $U\circ W = W\circ U$, then $U\circ W = U\vee W$, $U$ and $W$ are said to commute
\cite{SM}.

If $A$  is an $\Omega$-algebra  and $U,$ $W$ are congruences on $A$, then  $ U\vee W$, and $U\cap W$  are also
congruences on $A$.

\begin{definition}
If $U$ and $W$ are congruences on the algebra $A$ which commute and for which $U\cap W = \hat A = \{(a,a)| \,
\forall \, a\in A\}$, then the join $U\circ W = U \vee W$ of $U$ and $W$ is called {\it direct product $U\sqcap
W$ of $U$ and $W$} \cite{SM, JDH_2007}.
\end{definition}

The following theorem  establishes  the connection between concepts of internal and external direct product of
$\Omega$-algebras.

\begin{theorem}\label{t5.1} An $\Omega$-algebra $A$ is isomorphic to a direct product of
$\Omega$-algebras $B$ and $C$ with isomorphism $\varphi$, i.e. $\varphi: A\rightarrow B\times C$, if and only if
there exist such congruences $U$ and $W$ of $A$ that $A^2 = U \sqcap W$  (\cite{SM}, p.16, \cite{JDH_2007}).
\end{theorem}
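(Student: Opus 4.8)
The plan is to prove both implications by exploiting the correspondence between congruences and quotient homomorphisms, using that the $\Omega$-algebra $A$ admits natural quotient maps onto $A/U$ and $A/W$ whenever $U,W$ are congruences, and that the operations on a direct product are performed componentwise.

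For the sufficiency direction, I would assume $A^2 = U \sqcap W$ and first unpack what this means by the definition of $\sqcap$: the congruences $U$ and $W$ commute, $U \cap W = \hat{A}$, and $U \vee W = A^2$, so in particular $U \circ W = A^2$. Then I define $\varphi : A \to A/U \times A/W$ by $\varphi(a) = (U(a), W(a))$, where $U(a)$ and $W(a)$ denote the congruence classes of $a$. That $\varphi$ is a homomorphism is immediate, since each coordinate is the corresponding natural quotient homomorphism and the product operations act componentwise. Injectivity follows at once from $U \cap W = \hat{A}$: if $\varphi(a) = \varphi(b)$ then $(a,b) \in U$ and $(a,b) \in W$, whence $(a,b) \in U \cap W = \hat{A}$ and $a = b$. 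Surjectivity is precisely where the remaining two hypotheses enter: given a target $(U(x), W(y))$ with $x,y \in A$, the condition $U \circ W = A^2$ furnishes an element $c$ with $(x,c) \in U$ and $(c,y) \in W$, and then $\varphi(c) = (U(x), W(y))$. Setting $B = A/U$ and $C = A/W$ yields the desired isomorphism.

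For the necessity direction, I would assume an isomorphism $\varphi : A \to B \times C$ and let $\pi_B, \pi_C$ be the two projections of $B \times C$. I set $U = \ker(\pi_B \varphi)$ and $W = \ker(\pi_C \varphi)$; these are congruences of $A$ as kernels of homomorphisms (cf.\ Theorem \ref{NORM_QUAS_CONGR} in the quasigroup case). Injectivity of $\varphi$ gives $U \cap W = \hat{A}$, since agreeing in both coordinates of $B \times C$ forces equal $\varphi$-images. To obtain $U \circ W = A^2$ (which simultaneously shows that $U,W$ commute and that $U \vee W = A^2$), I take arbitrary $x,y \in A$, write $\varphi(x) = (b_1, c_1)$ and $\varphi(y) = (b_2, c_2)$, and use surjectivity of $\varphi$ to pick $a$ with $\varphi(a) = (b_1, c_2)$; then $(x,a) \in U$ and $(a,y) \in W$, so $(x,y) \in U \circ W$. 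Hence $A^2 = U \sqcap W$.

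The only genuinely delicate point is the bookkeeping around the join and commuting conditions: one must verify that the single identity $U \circ W = A^2$ really encodes surjectivity of the pairing map in the first direction and the full three-part definition of $\sqcap$ in the second. The hard part is therefore not any computation but checking that the mixed element $(b_1, c_2)$, respectively the composite relation $U \circ W$, matches the definition of $\sqcap$ exactly; once this alignment is secured, the rest is a routine unwinding of the notions of congruence, quotient algebra, and componentwise operation.
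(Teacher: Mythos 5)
Your proof is correct, and it takes essentially the same route as the source: the paper gives no proof of Theorem \ref{t5.1} itself, citing \cite{SM} and \cite{JDH_2007}, and your argument is precisely the standard one found there — the pairing map $a \mapsto (U(a), W(a))$ onto $A/U \times A/W$ for sufficiency, and the kernels $U = \ker(\pi_B\varphi)$, $W = \ker(\pi_C\varphi)$ with the mixed element $\varphi^{-1}(b_1, c_2)$ for necessity. The one point you leave implicit — that $U \circ W = A^2$ alone already yields commuting — does hold automatically, since congruences are symmetric relations and hence $W \circ U = (U \circ W)^{-1} = (A^2)^{-1} = A^2$.
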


We shall use the following easy proved fact.

\begin{lemma} \label{center_OF_DIR_PROD}
If a loop $Q$ is isomorphic to the direct product of the loops $A$ and $B$, then $C(Q) \cong C(A)\times C(B)$.
\end{lemma}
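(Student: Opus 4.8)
The plan is to reduce everything to the componentwise structure of the direct product and then invoke the definition $C(Q,\cdot)=N\cap Z$ with $N=N_l\cap N_m\cap N_r$. Since the center is defined purely in terms of the operation, any loop isomorphism carries $C$ onto $C$; hence it suffices to prove the statement for the external direct product, i.e. to show that $C(A\times B)=C(A)\times C(B)$ when $A\times B$ carries the componentwise operation of Definition \ref{EXTERNAL_DIRECT}. The final isomorphism $\varphi\colon Q\to A\times B$ then gives $C(Q)\cong C(A\times B)=C(A)\times C(B)$.

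First I would show that each nucleus factors. For $(a,b)\in A\times B$ and arbitrary $(x_1,x_2),(y_1,y_2)$ we compute, using the componentwise operation,
\[
(a,b)\cdot\bigl((x_1,x_2)(y_1,y_2)\bigr)=\bigl(a(x_1y_1),\,b(x_2y_2)\bigr),\qquad
\bigl((a,b)(x_1,x_2)\bigr)\cdot(y_1,y_2)=\bigl((ax_1)y_1,\,(bx_2)y_2\bigr).
\]
These two pairs coincide for all arguments exactly when $a(x_1y_1)=(ax_1)y_1$ holds for all $x_1,y_1\in A$ and $b(x_2y_2)=(bx_2)y_2$ holds for all $x_2,y_2\in B$, that is, when $a\in N_l(A)$ and $b\in N_l(B)$. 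Thus $N_l(A\times B)=N_l(A)\times N_l(B)$, and the analogous computations (putting the fixed element in the middle, resp. on the right) give $N_m(A\times B)=N_m(A)\times N_m(B)$ and $N_r(A\times B)=N_r(A)\times N_r(B)$. Intersecting yields $N(A\times B)=N(A)\times N(B)$.

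Next I would treat the commutant $Z$ in the same way: $(a,b)(x_1,x_2)=(x_1,x_2)(a,b)$ for all $(x_1,x_2)$ is equivalent to $ax_1=x_1a$ for all $x_1\in A$ together with $bx_2=x_2b$ for all $x_2\in B$, so $Z(A\times B)=Z(A)\times Z(B)$. Combining the two displays and using the set identity $(S_1\times T_1)\cap(S_2\times T_2)=(S_1\cap S_2)\times(T_1\cap T_2)$, I obtain
\[
C(A\times B)=N(A\times B)\cap Z(A\times B)=\bigl(N(A)\cap Z(A)\bigr)\times\bigl(N(B)\cap Z(B)\bigr)=C(A)\times C(B),
\]
which is the desired conclusion after transporting back along $\varphi$.

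There is no genuinely hard step here: the whole argument rests on the observation that every defining condition for the nucleus and for the commutant is a universally quantified identity in the loop operation, and such identities hold in a product precisely when they hold in each factor. The only point requiring a little care is bookkeeping, making sure the three nucleus conditions and the commutativity condition are each verified componentwise before intersecting, and noting explicitly that isomorphism preserves $N$, $Z$, and hence $C$, so that the reduction from $Q$ to $A\times B$ is legitimate.
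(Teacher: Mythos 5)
Your proof is correct. Note that the paper offers no proof of this lemma at all --- it introduces it only as \lq\lq the following easy proved fact\rq\rq --- so there is nothing to diverge from: your componentwise verification that each of $N_l$, $N_m$, $N_r$ and the commutant $Z$ factors over the direct product (which works because each defining condition is a universally quantified identity whose quantifiers range independently over the two factors), followed by intersecting and transporting along the isomorphism $\varphi$, is exactly the routine argument the author intended to leave to the reader.
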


\begin{lemma} \label{Componets_OF_DIR_PROD}
If a left F-quasigroup  $Q$ is isomorphic to the direct product of a left F-quasigroup  $A$ and a quasigroup
$B$, then $B$ also is a left F-quasigroup.
\end{lemma}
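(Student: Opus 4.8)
The plan is to pass through the isomorphism and argue directly inside the external direct product. Since $Q\cong A\times B$, I may identify $Q$ with $A\times B$ (Definition \ref{EXTERNAL_DIRECT}), so that the operation is component-wise, $(a_1,b_1)(a_2,b_2)=(a_1a_2,\,b_1b_2)$. The first step is to record that the left local identity map $e$ is evaluated coordinate-wise. Indeed, $e(x)=x\backslash x$, and in the direct product the solution of $(a,b)(x,y)=(c,d)$ is obtained coordinate-wise (namely $x=a\backslash c$, $y=b\backslash d$), so the parastrophe $\backslash$ is itself component-wise. Writing $e_A,e_B$ for the corresponding maps of $A$ and $B$, this gives $e((a,b))=(a,b)\backslash(a,b)=(a\backslash a,\,b\backslash b)=(e_A(a),e_B(b))$.

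Next I would substitute pairs into the defining left F-identity $x\cdot yz=xy\cdot e(x)z$ of $Q$. Taking $x=(a_1,b_1)$, $y=(a_2,b_2)$, $z=(a_3,b_3)$ and expanding both sides component-wise, the left-hand side becomes $\bigl(a_1(a_2a_3),\,b_1(b_2b_3)\bigr)$ and the right-hand side becomes $\bigl((a_1a_2)(e_A(a_1)a_3),\,(b_1b_2)(e_B(b_1)b_3)\bigr)$. Since $Q$ is a left F-quasigroup these two pairs are equal for every choice of the six entries; comparing second coordinates yields $b_1(b_2b_3)=(b_1b_2)(e_B(b_1)b_3)$ for all $b_1,b_2,b_3\in B$.

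This last equality is precisely the left F-identity in $B$. As $B$ is a quasigroup by hypothesis, it is therefore a left F-quasigroup, which completes the argument. (Equally, comparing first coordinates gives $a_1(a_2a_3)=(a_1a_2)(e_A(a_1)a_3)$, so the stated hypothesis that $A$ is a left F-quasigroup is in fact automatic; I would keep it only for consistency with how the lemma is applied.)

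The only point requiring care — and the sole potential obstacle — is the first step, namely confirming that $e$, and more generally the division operations, are computed component-wise in the direct product (equivalently, that the direct product of the groupoids coincides with the direct product of the associated quasigroups $(Q,\cdot,\backslash,/)$ of Definition \ref{quasigr_as_algebra}). Once this is in hand, the conclusion is a one-line comparison of coordinates, and everything else is a routine substitution into the component-wise operation.
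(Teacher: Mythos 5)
Your proposal is correct and follows essentially the same route as the paper: the paper's entire proof is the single observation $e(q)=(e(a),e(b))$, which you justify properly (via the component-wise computation of $\backslash$ in the direct product) and then complete with the routine coordinate-wise comparison of the left F-equality that the paper leaves implicit. Your side remark that the hypothesis on $A$ is automatic (given $B$ nonempty, by comparing first coordinates) is also accurate.
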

\begin{proof}
Indeed, if $q = (a, b)$, where $q\in Q$, $a\in A$, $b\in B$, then  $e(q) = (e(a), e(b))$.
\end{proof}

\begin{remark}
An analog of Lemma \ref{Componets_OF_DIR_PROD} is true for right F-quasigroups, left and right SM- and
E-quasigroups.
\end{remark}

There exist various approaches to the concept of  semidirect product of quasigroups \cite{SABININ_91,
SABININ_99, Burdujan,  WIKI}. By  an analogy with group case \cite{KM} we give the following definition of the
semidirect product of quasigroups. Main principe is that a semidirect product is a cartesian product as a set
\cite{WIKI}.

\begin{definition}Let $Q$ be a quasigroup, $A$  a normal subquasigroup of $Q$ (i.e., $A\unlhd  Q$) and $B$ a subquasigroup  of
$Q$. A quasigroup $Q$ is the semidirect product of quasigroups $A$ and  $B$, if there exists a  homomorphism $h
: Q \longrightarrow  B$ which is the identity on $B$ and whose kernel is $A$, i.e. $A$ is a coset class of the
normal congruence $Ker \, h$. We shall denote this fact as follows $Q \cong A\leftthreetimes B$ \cite{WIKI_2}.
\end{definition}

\begin{remark}
From results of  A.I.~Mal'tsev  \cite{7}, see, also, \cite{SHCH_BR_BL_05}, it follows that normal subquasigroup
$A$ is a coset class of only one normal congruence of the quasigroup $Q$.
\end{remark}

\begin{lemma} \label{LEMMA_DIR_2}
If a quasigroup $Q$ is the semidirect product of quasigroups $A$ and $B$, $A \unlhd Q$, then there exists an
isotopy $T$ of $Q$ such that  $QT$ is a loop and  $QT \cong AT\leftthreetimes BT$.
\end{lemma}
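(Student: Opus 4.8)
The plan is to use a single LP-isotopy that simultaneously turns $Q$ into a loop and keeps both $A$ and $B$ as subloops, and then to transport the defining homomorphism $h\colon Q\to B$ across that isotopy. The key preliminary observation is that the element $b_0:=h(a)$ (for any $a\in A$) is an idempotent lying in $A\cap B$. Indeed, since $A$ is a subquasigroup it is closed, so $a\cdot a\in A$ and hence $b_0\cdot b_0=h(a)\cdot h(a)=h(a\cdot a)=b_0$; moreover $b_0\in B$ (it is a value of $h$) and, as $h$ is the identity on $B$, $h(b_0)=b_0$, so $b_0$ also belongs to the kernel class $A=h^{-1}(b_0)$.

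First I would set $T=(R_{b_0}^{-1},L_{b_0}^{-1},\varepsilon)$, so that $QT$ is a loop with operation $x+y=(x/b_0)\cdot(b_0\backslash y)$. Because $b_0$ is idempotent, cancellation gives $f(b_0)=e(b_0)=b_0$, and one checks directly that $b_0$ is the identity element of $QT$. Since $b_0\in A$ and $b_0\in B$, Lemma \ref{LP_AND_subquas} shows at once that $AT$ and $BT$ are subloops of $QT$.

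Next I would show that $\theta:=Ker\,h$ remains a normal congruence of $QT$ with $AT$ as the corresponding normal subloop. By Lemma \ref{NORM_CONGR_ADMISSBLE_REL_TRANS} a normal congruence is admissible relative to every left, right and middle translation, hence relative to $R_{b_0}^{-1}$, $L_{b_0}^{-1}$ and $\varepsilon$; thus $T$ is admissible relative to $\theta$, and Lemma \ref{NL1} yields that $\theta$ is normal in $QT$. As the underlying set of each $\theta$-class is unchanged by the isotopy, $AT$ is precisely the $\theta$-class containing the identity $b_0$, i.e. $AT\unlhd QT$.

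Finally I would transport $h$ itself. Using Corollary \ref{COROLL_1}, $h$ respects the parastrophe operations, so $h(x/b_0)=h(x)/b_0$ and $h(b_0\backslash y)=b_0\backslash h(y)$ (here $h(b_0)=b_0$ is used), whence $h(x+y)=(h(x)/b_0)\cdot(b_0\backslash h(y))=h(x)+h(y)$; thus $h\colon QT\to BT$ is a homomorphism. It is still the identity on $BT$ (same elements, same values), and its kernel, being determined by $h$ as a set map, is still $\theta$ with class $AT$. This gives exactly the defining data of a semidirect product, so $QT\cong AT\leftthreetimes BT$. The delicate point throughout is the coordinated choice $a=b=b_0$ at an idempotent: it is precisely the idempotency of $b_0$ together with the relation $h(b_0)=b_0$ that makes $b_0$ the loop identity, keeps $h$ the identity on $BT$, and makes $h$ survive as a morphism for the new operation. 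Forcing the two translation parameters to coincide at this idempotent is where I expect the argument to need the most care.
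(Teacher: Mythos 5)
Your proof is correct, and it follows the same skeleton as the paper's: pass to an LP-isotope $T=(R^{-1}_c,L^{-1}_c,\varepsilon)$ with $c$ taken from $A$, get $AT\trianglelefteqslant QT$ by transporting the normal congruence $\theta=Ker\,h$ via Lemma \ref{NORM_CONGR_ADMISSBLE_REL_TRANS} and Lemma \ref{NL1}, and then exhibit $BT$ as a subloop. The difference is in the choice of $c$, and it matters. The paper takes an \emph{arbitrary} $a\in A$ and then asserts that $BT$ is a subloop of $QT$ \lq\lq since the set $B$ is a subset of the set $Q$\rq\rq; as stated this step is not justified, because $B$ need not be closed under $x+y=(x/a)\cdot(a\backslash y)$ when $a\notin B$. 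A concrete check: in $S_3\cong Z_3\leftthreetimes Z_2$ with $A=\left<a\right>$, $B=\left<b\right>$ and $c=a$, one has $b+b=ba^{-1}\cdot a^{-1}b=bab=a^{-1}\notin B$. Your coordinated choice $c=b_0=h(a)$ repairs exactly this: your observation that $b_0$ is an idempotent lying in $A\cap B$ (idempotency from $h(a)h(a)=h(a\cdot a)$ and $a\cdot a\in A$; membership in $A$ from $h(b_0)=b_0=h(a)$) lets Lemma \ref{LP_AND_subquas} apply simultaneously to $A$ and to $B$, so both $AT$ and $BT$ are genuine subloops of the loop $QT$ with identity $b_0$. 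Your final step, transporting $h$ itself through the parastrophe operations via Corollary \ref{COROLL_1} to get $h(x+y)=h(x)+h(y)$, is also a real addition: the paper only invokes $BT\cong QT\slash AT$, whereas you verify all the defining data of the semidirect product ($h$ a homomorphism of $QT$ onto $BT$, identity on $BT$, kernel class $AT$), which is what the definition in the paper actually requires. So your argument buys a complete verification at the cost of the extra bookkeeping around $b_0$, while the paper's sketch is shorter but leaves the closure of $B$ under the new operation, hence the existence of the subloop $BT$, unproved for its choice of isotopy.
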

\begin{proof}
If we take isotopy of the form $(R^{-1}_a, L^{-1}_a, \varepsilon)$, where $a\in A$, then we have that $QT$ is a
loop, $AT$ is its normal subloop (Lemma \ref{NL1}, Remark \ref{NORM_CONGR_ADMISSBLE_REL_TRANS}). Further we have
that $BT$ is a loop since $BT \cong QT \slash AT$. Therefore $BT$ is a subloop of the loop $QT$, since the set
$B$ is a subset of the set $Q$.
\end{proof}

\begin{corollary} \label{COROL_DIR_2}
If a quasigroup $Q$ is the direct product of quasigroups $A$ and $B$, then there exists an isotopy $T=(T_1,
T_2)$ of $Q$ such that  $QT \cong AT_1 \times BT_2$ is a loop.
\end{corollary}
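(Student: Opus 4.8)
The plan is to make the coordinatewise structure of the direct product explicit and then apply a single LP-isotopy that splits across the two factors. By Definition~\ref{EXTERNAL_DIRECT} together with Theorem~\ref{t5.1} I may identify $Q$ with the external product, writing $A=(A,\cdot)$, $B=(B,\circ)$ and $Q=A\times B=(Q,\ast)$, where elements are pairs $(x',x'')$ with $x'\in A$, $x''\in B$ and the operation is componentwise, $(x',x'')\ast(y',y'')=(x'\cdot y',\,x''\circ y'')$. The first thing I would record is that every translation of $(Q,\ast)$ splits: for $a=(a',a'')$ one has $R_a=(R_{a'},R_{a''})$ and $L_a=(L_{a'},L_{a''})$ acting coordinatewise, and consequently $R_a^{-1}=(R_{a'}^{-1},R_{a''}^{-1})$ and $L_a^{-1}=(L_{a'}^{-1},L_{a''}^{-1})$, since the inverse of a product permutation is the product of the inverses.

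Next I would fix an arbitrary element $a=(a',a'')\in Q$ and take the LP-isotopy $T=(R_a^{-1},L_a^{-1},\varepsilon)$. Because any LP-isotopic image of a quasigroup is a loop, $QT$ is a loop. Using the splitting of the translations just noted, $T$ factors as the pair $T=(T_1,T_2)$, where $T_1=(R_{a'}^{-1},L_{a'}^{-1},\varepsilon)$ is an LP-isotopy of $(A,\cdot)$ and $T_2=(R_{a''}^{-1},L_{a''}^{-1},\varepsilon)$ is an LP-isotopy of $(B,\circ)$; in particular each of $AT_1$ and $BT_2$ is itself a loop.

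It then remains to check that $QT$ is exactly the direct product $AT_1\times BT_2$. I would compute the operation $\star$ of $QT$ coordinatewise:
\[
(x',x'')\star(y',y'') = R_a^{-1}(x',x'')\ast L_a^{-1}(y',y'') = \bigl(R_{a'}^{-1}x'\cdot L_{a'}^{-1}y',\; R_{a''}^{-1}x''\circ L_{a''}^{-1}y''\bigr),
\]
and the right-hand side is precisely the componentwise operation of $AT_1\times BT_2$. Hence $QT\cong AT_1\times BT_2$, a direct product of two loops and therefore a loop, which is the asserted conclusion.

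There is no serious obstacle here; the statement is essentially a bookkeeping refinement of Lemma~\ref{LEMMA_DIR_2}, a direct product being a semidirect product in which both factors are normal. The only points needing care are the identification of $Q$ with $A\times B$ and the verification that the single isotopy $T$ of $Q$ genuinely decomposes into the pair $(T_1,T_2)$ of isotopies of the factors, and both follow at once from the coordinatewise definitions of the operation and of the translations. One could instead obtain a loop structure directly from Lemma~\ref{LEMMA_DIR_2}, but the explicit coordinatewise isotopy is what yields the sharper conclusion $QT\cong AT_1\times BT_2$ rather than merely a semidirect decomposition.
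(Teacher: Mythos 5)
Your proof is correct, but it follows a genuinely different route from the paper's. The paper disposes of this corollary in one line, deducing it from Lemma~\ref{LEMMA_DIR_2} on semidirect products: there one applies the LP-isotopy $(R^{-1}_a, L^{-1}_a, \varepsilon)$ with $a\in A$ and relies on the congruence machinery (Lemma~\ref{NL1}, Lemma~\ref{NORM_CONGR_ADMISSBLE_REL_TRANS}, Theorem~\ref{NORM_QUAS_CONGR}) to conclude that $QT$ is a loop whose normal decomposition survives the isotopy. You instead pass to the external direct product via Definition~\ref{EXTERNAL_DIRECT} and Theorem~\ref{t5.1}, observe that every translation of $(Q,\ast)$ acts coordinatewise, hence that the single LP-isotopy $T=(R^{-1}_a, L^{-1}_a,\varepsilon)$ factors as the pair $(T_1,T_2)$ of LP-isotopies of the factors, and verify $QT=AT_1\times BT_2$ by an explicit computation of the isotope operation; all of these steps check out. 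Your elementary approach in fact buys something the paper's reduction glosses over: under the paper's definition, a semidirect product requires $B$ to be a subquasigroup of $Q$ and $A$ a normal subquasigroup, and in a direct product of quasigroups without idempotent elements neither factor need embed as a subquasigroup of $Q$ (a class $A\times\{b\}$ is closed under the operation only when $b$ is idempotent in $B$), so the literal application of Lemma~\ref{LEMMA_DIR_2} is not immediate, whereas your coordinatewise argument needs no such embedding. For the same reason, your closing remark that a direct product \emph{is} a semidirect product with both factors normal is the one imprecise point in your write-up under the paper's definitions; fortunately your actual proof never uses it.
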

\begin{proof}
The proof follows from Lemma \ref{LEMMA_DIR_2}.
\end{proof}

\begin{lemma} \label{LEMMA_LOOP_GROUP}
1. If a linear left loop $(Q, \cdot)$ with the form $x\cdot y = x + \psi y $, where $(Q,+)$ is a group, $\psi
\in Aut (Q,+)$, is the semidirect product of a normal  subgroup  $(H, \cdot) \trianglelefteqslant (Q, \cdot)$
and  a subgroup $(K,\cdot) \subseteq (Q, \cdot)$, $H \cap K = 0$, then $(Q, \cdot) = (Q,+)$.

2. If a linear right  loop $(Q, \cdot)$ with the form $x\cdot y = \varphi x + y $, where $(Q,+)$ is a group,
$\varphi \in Aut (Q,+)$, is the semidirect product of a normal  subgroup $(H,\cdot) \trianglelefteqslant (Q,
\cdot)$ and a subgroup  $(K, \cdot) \subseteq (Q, \cdot)$, $H \cap K = 0$, then $(Q, \cdot) = (Q,+)$.
\end{lemma}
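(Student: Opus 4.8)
The plan is to show that the hypotheses force $\psi = \varepsilon$, whence $x\cdot y = x + \psi y = x + y$ and $(Q,\cdot)=(Q,+)$; Case~2 then follows by the mirror argument. The whole proof turns on the interplay between associativity of the group factors and the coset structure supplied by the semidirect product.

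First I would determine which subquasigroups of $(Q,\cdot)$ can be groups. For $a,b,c$ in a subquasigroup $(S,\cdot)$ one has $(a\cdot b)\cdot c = a + \psi b + \psi c$ while $a\cdot(b\cdot c) = a + \psi b + \psi^{2} c$, so associativity of $(S,\cdot)$ together with cancellation in $(Q,+)$ gives $\psi c = \psi^{2} c$, and injectivity of $\psi$ yields $c = \psi c$; thus $\psi|_{S}=\varepsilon$. Consequently $a\cdot b = a+\psi b = a+b$ on $S$, so $(S,\cdot)=(S,+)$ is an ordinary subgroup of $(Q,+)$; being a group under $+$ it contains the unique idempotent $0$ of $(Q,+)$, which is its identity. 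Applying this to the two factors gives $\psi|_{H}=\varepsilon$, $\psi|_{K}=\varepsilon$, and $0\in H\cap K$ with $0$ the identity of both $H$ and $K$.

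Next I would unwind the semidirect product. By definition there is a surjective homomorphism $h\colon (Q,\cdot)\to (K,\cdot)$ with $h|_{K}=\varepsilon$ whose kernel coset is $H$. Since $0\in H$ and $h|_{K}=\varepsilon$, we have $h(H)=\{h(0)\}=\{0\}$, so $H=h^{-1}(0)$. I then compute the fibre $h^{-1}(k)$ for each $k\in K$. For $h'\in H$ we get $h'\cdot k = h'+\psi k = h'+k$ (as $\psi k=k$) and $h(h'\cdot k)=h(h')\cdot h(k)=0\cdot k = k$, so $H+k\subseteq h^{-1}(k)$. Conversely, if $h(q)=k$, put $t=q/k$ so that $t\cdot k = q$; applying $h$ gives $h(t)\cdot k = k$ in the group $K$, and cancellation forces $h(t)=0$, i.e. $t\in H$ and $q=t\cdot k = t+k\in H+k$. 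Hence $h^{-1}(k)=H+k$, and since the fibres partition $Q$,
\[
Q \;=\; \bigcup_{k\in K} h^{-1}(k) \;=\; \bigcup_{k\in K} (H+k) \;=\; H+K .
\]

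Finally I would conclude. Every $q\in Q$ is $q=h'+k$ with $h'\in H$, $k\in K$, and since $\psi$ is an automorphism of $(Q,+)$ fixing $H$ and $K$ pointwise, $\psi q = \psi h' + \psi k = h'+k = q$; therefore $\psi=\varepsilon$ and $(Q,\cdot)=(Q,+)$. Case~2, with $x\cdot y=\varphi x+y$, is entirely analogous: associativity of a group factor now forces $\varphi$ to fix it pointwise, the same fibre computation yields $Q=H+K$, and $\varphi$ fixing $H$ and $K$ pointwise gives $\varphi=\varepsilon$. The main obstacle is the internal decomposition $Q=H+K$: in the (possibly infinite) loop setting one cannot argue by cardinalities, so the argument must extract the coset decomposition from the homomorphism $h$ and cancellation in the group $K$, identifying each fibre $h^{-1}(k)$ with $H+k$.
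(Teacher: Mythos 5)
Your proof is correct and follows essentially the same route as the paper: associativity of the two group factors forces $\psi$ to fix $H$ and $K$ pointwise (the identical cancellation computation $k_3=\psi k_3$ appears in the paper), and the semidirect decomposition of $Q$ then yields $\psi=\varepsilon$, i.e. $(Q,\cdot)=(Q,+)$. The only difference is that you derive the decomposition $Q=H+K$ from the defining homomorphism $h$ via the fibre computation $h^{-1}(k)=H+k$, whereas the paper simply asserts that every element factors uniquely as $a=(k,0)\cdot(0,h)$; your version supplies the justification the paper leaves implicit, and in fact needs only existence, not uniqueness, of the factorization.
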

\begin{proof}
1. Since $(Q, \cdot)$ is the semidirect product  of a normal  subgroup  $(H,\cdot)$ and  a subgroup $(K,
\cdot)$, then we can write any element $a$ of the loop $(Q,\cdot)$  in a unique way as a pair  $a= (k,0)\cdot(0,
h)$, where $(k,0) \in (K, \cdot)$, $(0,h) \in (H, \cdot)$.  We notice, $\psi (k,0) = (k,0)$, $\psi (0, h) = (0,
h)$, since $(K, \cdot)$, $(H, \cdot)$ are subgroups of the left loop $(Q,\cdot)$. Indeed, from $(k_1 \cdot k_2)
\cdot k_3 = k_1 \cdot (k_2 \cdot k_3)$ for all $k_1, k_2, k_3 \in K$ we have $k_1 + \psi k_2 + \psi k_3 = k_1 +
\psi k_2 + \psi^2 k_3$, $ k_3 = \psi k_3$ for all $k_3 \in K$.

Further we have $\psi a= \psi ((k,0)\cdot(0, h)) = \psi ((k,0)+ \psi(0, h)) = \psi (k,0)+ \psi^2 (0, h) = (k,0)+
\psi (0, h)  = ((k,0)\cdot(0, h)) = a$, $\psi = \varepsilon$, $(Q, \cdot) = (Q,+)$.

2. This case is proved similarly to Case 1.
\end{proof}

\begin{example}
\label{Cyclic_Group_9_ENDOM} Medial quasigroup $(Z_9, \circ)$, $x\circ y = x + 4\cdot y$, where
  $(Z_9, +)$ is the cyclic group, $Z_9 = \{0, \, 1, \, 2, \, 3, \, 4, \, 5, \, 6,  \, 7, \, 8 \} $,
 demonstrates that some restrictions in Lemma \ref{LEMMA_LOOP_GROUP} are essential.
\end{example}

\subsection{Parastrophe invariants and isostrophisms}

Parastrophe invariants and isostrophisms are studied in \cite{BELAS}.

\begin{lemma} \label{LEMMA_DIR}
If a quasigroup $Q$ is the direct product of a quasigroup $A$ and a quasigroup $B$, then $Q^{\, \sigma} =
A^{\,\sigma} \times B^{\, \sigma}$, where $\sigma$ is a parastrophy.
\end{lemma}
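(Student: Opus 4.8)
The plan is to view each quasigroup through its graph, i.e.\ as the ternary relation it determines, and to observe that both forming a direct product and passing to a parastrophe are described very cleanly at the level of these relations. Recall from Definition~\ref{def1} that a binary quasigroup $(Q,A)$ is the same datum as the set $R_Q = \{(x_1,x_2,x_3)\in Q^3 \mid A(x_1,x_2)=x_3\}$, subject to the condition that any two coordinates determine the third. By the conventions fixed right after Definition~\ref{def1}, the $\sigma$-parastrophe $A^\sigma$ is exactly the quasigroup whose graph is obtained from $R_Q$ by permuting the three coordinate slots according to $\sigma$; I will write this as $R_{Q^\sigma} = \sigma\, R_Q$, where $\sigma$ acts on $Q^3$ by $\sigma(x_1,x_2,x_3) = (x_{\sigma^{-1}1}, x_{\sigma^{-1}2}, x_{\sigma^{-1}3})$.

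Next I would unwind the direct product at the level of graphs. If $(Q,\ast) = (A,\cdot)\times(B,\circ)$, then by Definition~\ref{EXTERNAL_DIRECT} a triple $\big((a_1,b_1),(a_2,b_2),(a_3,b_3)\big)$ lies in $R_Q$ precisely when $(a_1,a_2,a_3)\in R_A$ and $(b_1,b_2,b_3)\in R_B$. In other words, under the canonical bijection $(A\times B)^3 \cong A^3\times B^3$ that separates the $A$-entries from the $B$-entries, the relation $R_Q$ corresponds to $R_A\times R_B$.

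The crux is then a commuting-of-two-actions observation: the slot permutation $\sigma$ acts on the three argument positions $1,2,3$, whereas the separation into an $A$-part and a $B$-part acts on the orthogonal ``which factor'' index. These two actions are independent, so under the bijection above $\sigma$ corresponds to $\sigma\times\sigma$, permuting the $a$'s and the $b$'s simultaneously and by the same rule. Consequently $\sigma(R_A\times R_B) = (\sigma R_A)\times(\sigma R_B)$, and reading this back gives $R_{Q^\sigma} = R_{A^\sigma}\times R_{B^\sigma}$, which is exactly the graph of $A^\sigma\times B^\sigma$. Hence $Q^\sigma = A^\sigma\times B^\sigma$.

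I expect the only real subtlety to be bookkeeping: keeping the permutation of the three quasigroup-argument slots strictly separate from the two direct-product factors, and verifying that these two axes commute so that the product relation is carried onto the product of the parastrophe relations. (The precise convention for whether $\sigma$ or $\sigma^{-1}$ appears is immaterial, since the same convention is applied to all three relations $R_Q$, $R_A$, $R_B$.) As a concrete sanity check, and an alternative route avoiding relations entirely, I would verify the statement directly on the two generating parastrophes $(12)$ and $(23)$ of $S_3$: for $(12)$ one computes $(a_1,b_1)\ast(a_2,b_2) = (a_2\cdot a_1, b_2\circ b_1)$ componentwise, and for $(23)$ the equation $(a_1,b_1)\backslash(a_2,b_2)=(a_3,b_3)$ reduces componentwise to $a_1\backslash a_2 = a_3$ and $b_1\backslash b_2 = b_3$; the general case then follows because parastrophy respects composition.
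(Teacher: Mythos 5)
Your proof is correct, but it follows a genuinely different route from the paper's. The paper argues via congruences: by Theorem \ref{t5.1} the direct decomposition of $Q$ is encoded by a pair of commuting congruences with trivial intersection, by Theorem \ref{NORM_QUAS_CONGR} these congruences are normal, and by Corollary \ref{PARASTROPHY_INVAR_NORM_CONG} (which rests on admissibility of normal congruences with respect to left, right and middle translations, plus Table 1) normal congruences survive passage to any parastrophe, so the same pair of congruences decomposes $Q^{\sigma}$. You instead work at the level of graphs: the graph of the product quasigroup is $R_A\times R_B$ under the canonical identification $(A\times B)^3\cong A^3\times B^3$, and the slot permutation $\sigma$ acts on the three argument positions independently of the factor index, so $\sigma(R_A\times R_B)=(\sigma R_A)\times(\sigma R_B)$; your fallback verification on the generators $(12)$ and $(23)$ of $S_3$ is also computed correctly. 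What each approach buys: yours is elementary and essentially definitional, giving the literal equality $Q^{\sigma}=A^{\sigma}\times B^{\sigma}$ for the external product with no machinery (if $Q$ is only isomorphic to $A\times B$, one extra remark is needed, namely that an isomorphism of quasigroups is automatically an isomorphism of each parastrophe, in the spirit of Corollary \ref{COROLL_1}); the paper's argument is heavier but handles the internal formulation directly and yields the additional information, used elsewhere in the paper, that the two decomposition congruences themselves remain normal congruences of every parastrophe.
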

\begin{proof}
From Theorem \ref{t5.1} it follows that  the direct product $A \times B$ defines two quasigroup congruences.
From Theorem  \ref{NORM_QUAS_CONGR} it follows that these congruences are  normal. By Corollary
\ref{PARASTROPHY_INVAR_NORM_CONG}   these congruences are invariant relative to any parastrophy of the
quasigroup $Q$.
\end{proof}

\begin{lemma} \label{LEMMA_AUT_PAR}
If $Q$ is a quasigroup and  $\alpha\in Aut(Q)$, then $\alpha \in Aut (Q^{\, \sigma})$,  where $\sigma$ is a
parastrophy.
\end{lemma}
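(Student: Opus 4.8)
The plan is to reduce the statement to Corollary \ref{COROLL_1}. Recall that an automorphism of a quasigroup is by definition a bijective endomorphism of it; hence, to show $\alpha\in Aut(Q^{\sigma})$ it suffices to check two things: that $\alpha$ preserves the operation of the parastrophe $Q^{\sigma}$, and that $\alpha$ is a bijection of the underlying set $Q$. The second is free, since $\alpha\in Aut(Q,\cdot)$ is already a bijection of $Q$; only the first needs an argument, and it is exactly the content of Corollary \ref{COROLL_1}.

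First I would observe that $\alpha$, being an automorphism of $(Q,\cdot)$, is in particular an endomorphism of $(Q,\cdot)$. By Corollary \ref{COROLL_1} an endomorphism of $(Q,\cdot)$ is simultaneously an endomorphism of each of the parastrophes $(Q,\ast)$, $(Q,\slash)$, $(Q,\backslash)$, $(Q,\slash\slash)$, $(Q,\backslash\backslash)$. Applying this to $\alpha$ gives $\alpha(x\,\sigma\,y)=\alpha x\,\sigma\,\alpha y$ for the operation $\sigma$ of an arbitrary parastrophe $Q^{\sigma}$. Combining this with the bijectivity of $\alpha$ yields $\alpha\in Aut(Q^{\sigma})$. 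The identity parastrophy $Q^{\varepsilon}=(Q,\cdot)$ is covered by hypothesis, so every parastrophy $\sigma$ is accounted for, and the lemma follows.

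Alternatively, one can argue directly from the defining parastrophe equivalences $A(x_1,x_2)=x_3\Leftrightarrow A^{\sigma}(\dots)=\dots$ of Definition \ref{def1}, bypassing Corollary \ref{COROLL_1}. For example, for $\sigma=(23)$ set $z=x\backslash y$, so that $x\cdot z=y$; applying the automorphism $\alpha$ of $(Q,\cdot)$ gives $\alpha x\cdot\alpha z=\alpha y$, i.e. $\alpha z=\alpha x\backslash\alpha y$, whence $\alpha(x\backslash y)=\alpha x\backslash\alpha y$; the remaining four parastrophes are handled the same way via their respective equivalences. I expect no genuine obstacle: the only step needing care is selecting the correct defining equivalence for each of the five parastrophes, and since bijectivity of $\alpha$ is automatic, the argument is little more than a transcription of the reasoning already carried out in Corollary \ref{COROLL_1}.
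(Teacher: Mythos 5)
Your proposal is correct. The paper itself offers no argument for this lemma: its entire proof reads ``It is easy to check,'' with citations to the author's earlier works, so there is nothing to compare step by step. Your reduction to Corollary \ref{COROLL_1} legitimately supplies the missing details: that corollary already establishes $h(x\,\sigma\,y)=h(x)\,\sigma\,h(y)$ for every endomorphism $h$ of $(Q,\cdot)$ and every parastrophe operation $\sigma$, and it appears earlier in the paper with a proof that does not invoke Lemma \ref{LEMMA_AUT_PAR}, so there is no circularity. Your one remaining observation --- that a bijective endomorphism is automatically an automorphism, since $\alpha^{-1}(u\,\sigma\,v)=\alpha^{-1}u\,\sigma\,\alpha^{-1}v$ follows by substituting $x=\alpha^{-1}u$, $y=\alpha^{-1}v$ into the endomorphism property --- is standard and closes the argument. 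Your alternative direct check via the defining equivalences (e.g., $x\cdot z=y$ with $z=x\backslash y$, then apply $\alpha$) is equally valid and is in fact the computation hiding inside Corollary \ref{COROLL_1} itself; either route is a faithful realization of what the paper waves off as easy.
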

\begin{proof}
It is easy to check \cite{SC_89_1, SCERB_91}.
\end{proof}

\begin{lemma} \label{LEMMA_PARASTR}
(1) A quasigroup $(Q, \cdot)$ is a left F-quasigroup if and only if  its (12)-parastro\-phe is a right
F-quasigroup.

(2) A quasigroup $(Q, \cdot)$ is a left E-quasigroup if and only if  its (12)-parastro\-phe is a right
E-quasigroup.

(3) A quasigroup $(Q, \cdot)$ is a left SM-quasigroup if and only if  its (12)-parastro\-phe is a right
SM-quasigroup.

(4) A quasigroup $(Q, \cdot)$ is a left distributive quasigroup if and only if  its (12)-parastro\-phe  is a
right distributive quasigroup.

(5) A quasigroup $(Q, \cdot)$ is a left distributive quasigroup if and only if  its (23)-parastro\-phe  is a
left distributive quasigroup.

(6) A quasigroup $(Q, \cdot)$ is a   left SM-quasigroup if and only if  $(Q, \backslash)$ is a left
F-quasigroup.

(7) A quasigroup $(Q, \cdot)$ is a   right SM-quasigroup if and only if  $(Q, \slash)$ is a right F-quasigroup.

(8) A quasigroup $(Q, \cdot)$ is a   left E-quasigroup if and only if  $(Q, \backslash)$ is a left E-quasigroup
(\cite{Kin_PHIL_04}, Lemma 2.2).

(9)  A quasigroup $(Q, \cdot)$ is a   right E-quasigroup if and only if  $(Q, \slash)$ is a right E-quasigroup
(\cite{Kin_PHIL_04}, Lemma 2.2).
\end{lemma}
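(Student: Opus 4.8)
The plan is to reduce all nine equivalences to a single mechanism: rewrite each class-defining identity as the assertion that a prescribed family of triples of translations consists of autotopisms, and then transport these autotopisms to the relevant parastrophe by Lemma \ref{L2.1}. The key observation is that if $T$ is an autotopism of $(Q,\cdot)$, i.e. $AT=A$, then applying Lemma \ref{L2.1} gives $A^{\sigma}=(AT)^{\sigma}=A^{\sigma}T^{\sigma}$, so $T^{\sigma}$ is an autotopism of the $\sigma$-parastrophe; and conversely. Thus each parastrophe statement is just the image of the original one under $T\mapsto T^{\sigma}$.

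First I would record the autotopy reformulations. The left F-identity $x\cdot yz=xy\cdot e(x)z$ rewrites as $L_x(y\cdot z)=(L_x y)\cdot(L_{e(x)}z)$, i.e. $(L_x,L_{e(x)},L_x)$ is an autotopism for every $x$; similarly right F gives $(R_{f(z)},R_z,R_z)$, left and right semimediality give $(L_x,L_x,L_{s(x)})$ and $(R_x,R_x,R_{s(x)})$, left and right E give $(L_{f(x)},L_x,L_x)$ and $(R_x,R_{e(x)},R_x)$, and left (right) distributivity gives $(L_x,L_x,L_x)$ (resp. $(R_v,R_v,R_v)$). Since each rewriting is reversible, the class membership is equivalent to the statement that the displayed triples are autotopisms.

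Next, for a given parastrophy $\sigma$ I would compute $T^{\sigma}=(\alpha_{\sigma^{-1}1},\alpha_{\sigma^{-1}2},\alpha_{\sigma^{-1}3})$ componentwise, use Table 1 to re-express each reordered component (a translation of $(Q,\cdot)$) as a translation of the parastrophe, and use Table 2 to identify the local identity function ($f$, $e$ or $s$) occurring in the target class. For part (1), with $\sigma=(12)$, one gets $T^{(12)}=(L_{e(x)},L_x,L_x)=(R^{\ast}_{e(x)},R^{\ast}_x,R^{\ast}_x)$, using $R^{\ast}=L^{(\cdot)}$ from Table 1 and $e=f^{(12)}$ from Table 2; this is exactly the right F-autotopy family of $(Q,\ast)$. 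The four $(12)$-parastrophe equivalences, parts (1)--(4), go through verbatim in this way, with no inverses appearing, because $(12)$ merely interchanges $L^{(\cdot)}$ and $R^{\ast}$.

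The one point needing care, and the main (mild) obstacle, is that for $\sigma=(13)$ and $\sigma=(23)$ Table 1 expresses $L^{(\cdot)}$ and $R^{(\cdot)}$ as \emph{inverses} of parastrophe translations, e.g. $L^{(\cdot)}=(L^{\backslash})^{-1}$ and $R^{(\cdot)}=(R^{/})^{-1}$. Consequently the transported family $T^{\sigma}$ emerges as the inverse of the expected defining family of the target class rather than that family itself: in part (6) one finds $T^{(23)}=(L_x,L_{s(x)},L_x)$, whose inverse $((L_x)^{-1},(L_{s(x)})^{-1},(L_x)^{-1})=(L^{\backslash}_x,L^{\backslash}_{s(x)},L^{\backslash}_x)$ is precisely the left F-autotopy family of $(Q,\backslash)$, using $s=e^{(23)}$ from Table 2. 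Since the autotopisms of a quasigroup form a group, a family consists of autotopisms if and only if the family of its inverses does, so this discrepancy is harmless and the equivalence follows. Parts (5), (7), (8), (9) are handled identically, the only bookkeeping being the correct Table 1 entry for the translation type and the correct Table 2 entry for the local identity; in particular parts (8) and (9) agree with (\cite{Kin_PHIL_04}, Lemma 2.2) and may simply be cited.
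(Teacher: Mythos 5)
Your proposal is correct; I checked the autotopy reformulations (left F $\leftrightarrow$ $(L_x,L_{e(x)},L_x)$, right F $\leftrightarrow$ $(R_{f(z)},R_z,R_z)$, the SM, E and distributive families) and the transported triples $T^{\sigma}$ in each of the nine cases, including the componentwise-inverse discrepancy for $\sigma=(13),(23)$, which is indeed absorbed by the group structure of $Top\,(Q,\cdot)$. But your route is genuinely different from the paper's. The paper proves Cases 1--4 by direct verification ("easy to check"), proves Case 5 by observing that left distributivity says each $L_x$ is an automorphism, then invoking $L^{\backslash}_x=L^{-1}_x$ from Table 1 together with Lemma \ref{LEMMA_AUT_PAR} (automorphisms survive parastrophy), and proves Case 6 --- the only fully detailed case --- by an element-chasing computation: starting from the left F-identity in $(Q,\backslash)$, repeatedly translating $x\backslash y=v$ into $x\cdot v=y$ and using $e^{(\backslash)}=s$ from Table 2 until the left SM-identity in $(Q,\cdot)$ emerges; Cases 7--9 are then asserted to be similar. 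Your argument is in effect the autotopy-level generalization of the paper's Case 5: where the paper transports the automorphism $L_x$ (the equal-components autotopy $(L_x,L_x,L_x)$) through a parastrophe, you transport arbitrary translation-triples via Lemma \ref{L2.1} and the identity $(AT)^{\sigma}=A^{\sigma}T^{\sigma}$. What this buys is uniformity: one mechanism, parametrized only by the Table 1 translation dictionary and the Table 2 local-identity dictionary, disposes of all nine equivalences at once and makes explicit the one genuine subtlety (the inverses appearing for $(13)$ and $(23)$), which in the paper's element-wise Case 6 computation is invisible because the substitutions handle it implicitly. What the paper's approach buys is self-containedness at the elementary level --- Case 6 uses nothing beyond the quasigroup axioms --- at the cost of a case-by-case treatment where most cases are left to the reader. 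One point worth stating explicitly in your write-up, though you use it correctly: the equivalence between a class identity and its autotopy family needs the subscripts to be the \emph{intrinsic} local-identity maps of the target parastrophe (e.g. $f^{\ast}=e$, $e^{(\backslash)}=s$), which is exactly what Table 2 supplies; by Lemma \ref{LEMMA_ON_GEN_F} any map occupying that slot is forced to be the intrinsic one, so no generality is lost in the converse direction.
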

\begin{proof}
It is easy to check Cases 1--4.

Case 5. The fulfilment in a quasigroup $(Q,\cdot)$ of  the left distributive identity  is equivalent to the fact
that in this quasigroup  any left translation $L_x$ is an automorphism of this quasigroup. Indeed, we can
re-write left distributive identity in such manner $L_x yz = L_x y \cdot L_x z$. Using Table 1 we have that
$L^{\backslash}_x = L^{-1}_x$. Thus by Lemma \ref{LEMMA_AUT_PAR} $L^{\backslash}_x \in Aut (Q,\backslash)$.
Therefore, if $(Q,\cdot)$ is a left distributive quasigroup, then $(Q,\backslash)$ also is a left distributive
quasigroup and vice versa.

Case 6. Let $(Q, \backslash)$ be a left F-quasigroup. Then $$x\backslash (y \backslash z) = (x \backslash y)
\backslash (e^{(\backslash)}(x) \backslash z) = v.$$

If $x\backslash (y \backslash z) = v$, then $x\cdot v = (y \backslash z)$, $y\cdot (x\cdot v) = z$.  We notice,
if $x\backslash  e^{(\backslash)}(x) = x$, then $e^{(\backslash)}(x) = x\cdot x \overset{def}{=}s(x)$. See Table
2.

We can re-write equality  $(x \backslash y) \backslash (e^{(\backslash)}(x) \backslash z) = v$ in the following
form  $(x\backslash y)\cdot v = s(x)\backslash z$, $s(x)\cdot((x\backslash y)\cdot v)=z$. Now we have the
equality $s(x)\cdot((x\backslash y)\cdot v) = y\cdot (x\cdot v)$. If we denote $(x\backslash y)$ by $u$, then
$x\cdot u = y$.

Therefore we can re-write equality $s(x)\cdot((x\backslash y)\cdot v) = y\cdot (x\cdot v)$  in the form
$s(x)\cdot( u \cdot v) = (x\cdot u) \cdot (x\cdot v)$, i.e. in the form $(x\cdot x)\cdot( u \cdot v) = (x\cdot
u) \cdot (x\cdot v)$.

In the similar way it is possible  to check converse: if $(Q, \backslash)$ is  a left SM-quasigroup, then $(Q,
\cdot)$ is a left F-quasigroup.

Cases 7--9 are proved in the similar way.
\end{proof}

\begin{corollary}
If $(Q,\cdot)$ is a group, then: 1. $(Q,\backslash)$ is a left SM-quasigroup;  \quad  2.  $(Q,\slash)$ is a
right SM-quasigroup.
\end{corollary}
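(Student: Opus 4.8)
The plan is to reduce everything to Lemma~\ref{LEMMA_PARASTR} together with the elementary fact that every group is simultaneously a left and a right F-quasigroup. First I would record the latter. In a group $(Q,\cdot)$ every local identity is the neutral element, i.e.\ $e(x)=f(x)=1$ for all $x$, since $x\cdot e(x)=x$ and $f(x)\cdot x=x$ force $e(x)=f(x)=1$. Substituting into the defining identities, the left F-identity $x\cdot yz = xy\cdot e(x)z$ becomes $x\cdot yz = xy\cdot z$ and the right F-identity $xy\cdot z = xf(z)\cdot yz$ becomes $xy\cdot z = x\cdot yz$; both hold by associativity. Hence a group is a left F-quasigroup and a right F-quasigroup (note this uses only associativity and the triviality of the local identities, not commutativity, so no appeal to mediality is needed).

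For part~1 I would apply Lemma~\ref{LEMMA_PARASTR}(6) not to $(Q,\cdot)$ but to the quasigroup $(Q,\backslash)$. That lemma says an arbitrary quasigroup is left SM if and only if its $(23)$-parastrophe is left F. The $(23)$-parastrophe is an involution on operations: from $A(x_1,x_2)=x_3 \Leftrightarrow A^{(23)}(x_1,x_3)=x_2$ one reads off $(A^{(23)})^{(23)}=A$, so the $(23)$-parastrophe of $\backslash$ is again $\cdot$. Therefore $(Q,\backslash)$ is left SM if and only if $(Q,\cdot)$ is left F, and the first paragraph supplies the latter.

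For part~2 the argument is the mirror image, using Lemma~\ref{LEMMA_PARASTR}(7): an arbitrary quasigroup is right SM if and only if its $(13)$-parastrophe is right F. Since the $(13)$-parastrophe is likewise an involution (from $A(x_1,x_2)=x_3 \Leftrightarrow A^{(13)}(x_3,x_2)=x_1$ one gets $(A^{(13)})^{(13)}=A$), the $(13)$-parastrophe of $\slash$ is $\cdot$; hence $(Q,\slash)$ is right SM if and only if $(Q,\cdot)$ is right F, which again holds. I do not expect any genuine obstacle: the only points needing care are the bookkeeping that the parastrophe of a parastrophe returns the original operation—so that Lemma~\ref{LEMMA_PARASTR}(6),(7) may be invoked with $(Q,\backslash)$ and $(Q,\slash)$ as the \emph{base} quasigroups rather than with $(Q,\cdot)$—and keeping the operation names straight throughout.
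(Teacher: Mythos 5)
Your proof is correct and follows essentially the same route as the paper: observe that a group is both a left and a right F-quasigroup (since $e(x)=f(x)=1$), then invoke Lemma~\ref{LEMMA_PARASTR}, Cases~6 and~7. Your explicit check that the $(23)$- and $(13)$-parastrophes are involutions, so that the lemma is applied with $(Q,\backslash)$ and $(Q,\slash)$ as the base quasigroups, is exactly the bookkeeping the paper's terse proof leaves implicit.
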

\begin{proof}
1. Any group is a left F-quasigroup since in this case $e(x)=1$ for all $x\in Q$. Therefore we can use Lemma
\ref{LEMMA_PARASTR}, Case 6.

2. We can use Lemma \ref{LEMMA_PARASTR}, Case 7.
\end{proof}

\begin{definition}\label{D2.3}
A quasigroup $(Q,B)$ is an isostrophic image of a quasigroup $(Q,A)$ if there exists a collection of
permutations $(\sigma, (\alpha_1, \alpha_2, \alpha_3)) = (\sigma, T)$, where $\sigma \in S_3 $, $T = (\alpha_1,
\alpha_2, \alpha_3)$ and $\alpha_1, \alpha_2, \alpha_3$ are permutations of the set $Q$ such that  $$  B(x_1,
x_2) = A(x_1, x_2)(\sigma, T) = A^{\sigma} (x_1, x_2)T = \alpha^{-1} _3{A}(\alpha _1 x_{\sigma^{-1} 1}, \alpha
_2x_{\sigma^{-1} 2}) $$ for all $x_{1}, x_{2}\in Q$ \cite{2}.
\end{definition}

A collection of permutations $(\sigma, (\alpha_1, \alpha_2, \alpha_3)) =  (\sigma, T)$ will be called an {\it
isostrophism} or an {\it isostrophy } of a quasigroup $(Q,A)$. We can re-write equality from Definition
\ref{D2.3} in the form $({A}^{\sigma}) T = B$.

\begin{lemma} \label{ISOSTROPHY}
An isostrophic image of a quasigroup is a quasigroup \cite{2}.
\end{lemma}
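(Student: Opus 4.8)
The plan is to exploit the factorization recorded just after Definition \ref{D2.3}, namely $B = (A^{\sigma})\,T$ with $T = (\alpha_1, \alpha_2, \alpha_3)$, which presents the isostrophy as a parastrophy $\sigma$ followed by an isotopy $T$. Thus it suffices to establish two preservation facts separately: (i) every parastrophe of a quasigroup is a quasigroup, and (ii) every isotope of a quasigroup is a quasigroup. Granting these, $(Q, A^{\sigma})$ is a quasigroup by (i), and $(Q, B) = (Q, A^{\sigma})T$ is an isotope of it, hence a quasigroup by (ii).

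For (i) I would appeal directly to Definition \ref{def1}: the defining property of a binary quasigroup --- that in the equality $A(x_1, x_2) = x_3$ knowledge of any two of $x_1, x_2, x_3$ determines the third uniquely --- is symmetric in the three slots, hence invariant under any $\sigma \in S_3$. Consequently each of the five parastrophes $A^{(12)}, A^{(13)}, A^{(23)}, A^{(123)}, A^{(132)}$ inherits this property and is again a quasigroup; this is exactly the observation made after Definition \ref{def1}.

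For (ii) write $C = A^{\sigma}$, already known to be a quasigroup by (i), so that $B(x_1, x_2) = \alpha_3^{-1} C(\alpha_1 x_1, \alpha_2 x_2)$. I would express the translations of $(Q, B)$ through those of $(Q, C)$: a short computation gives $L^{B}_a = \alpha_3^{-1} L^{C}_{\alpha_1 a}\, \alpha_2$ and $R^{B}_b = \alpha_3^{-1} R^{C}_{\alpha_2 b}\, \alpha_1$. Since $\alpha_1, \alpha_2, \alpha_3 \in S_Q$ and $L^{C}_{\alpha_1 a}, R^{C}_{\alpha_2 b}$ are bijections (because $C$ is a quasigroup), each translation of $(Q, B)$ is a composition of bijections and is therefore itself a bijection. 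By the definition of quasigroup this shows $(Q, B)$ is a quasigroup.

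I expect the only delicate point to be the index bookkeeping implicit in $A^{\sigma}$ and in the symbol $x_{\sigma^{-1} i}$ of Definition \ref{D2.3}: one must be sure that, as a binary operation on $Q$, $A^{\sigma}$ is genuinely a quasigroup operation before invoking its translations. Once the factorization $B = (A^{\sigma})T$ is in hand, this is settled by (i), and the residual verification in (ii) is a routine composition-of-bijections argument with no real obstacle.
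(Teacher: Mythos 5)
Your proposal is correct and follows exactly the paper's own argument: the paper proves Lemma \ref{ISOSTROPHY} by the same factorization of an isostrophy into a parastrophy followed by an isotopy, each of which preserves the quasigroup property. Your filled-in details --- the slot-symmetry of Definition \ref{def1} for parastrophes and the translation identities $L^{B}_a = \alpha_3^{-1} L^{C}_{\alpha_1 a}\, \alpha_2$, $R^{B}_b = \alpha_3^{-1} R^{C}_{\alpha_2 b}\, \alpha_1$ for isotopes --- are accurate and merely make explicit what the paper leaves as known facts.
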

\begin{proof}
The proof follows from the fact that any parastrophic image of a quasigroup is a quasigroup and any isotopic
image of a quasigroup is a quasigroup.
\end{proof}

From Lemma \ref{ISOSTROPHY} it follows that it is possible to  define the multiplication of isostrophies of a
quasigroup operation defined on a set $Q$.

\begin{definition}\label{MULT_ISOS}
If $(\sigma, S)$ and $(\tau,T)$ are isostrophisms of a quasigroup $(Q,A)$, then $$(\sigma,S) (\tau, T) = (\sigma
\tau, S^{\tau}T),$$  where $A^{\sigma \tau}  = (A^\sigma)^{\tau}$  and $(x_{1}, x_{2},x_{3})(S^{\tau}T) =
((x_{1}, x_{2}, x_{3})$ $S^{\tau})T$  for any quasigroup triplet $(x_{1}, x_{2}, x_{3})$ \cite{SCERB_08}.
\end{definition}

 Slightly other  operation on the set of all  isostrophies (multiplication of quasigroup isostrophies)
is defined in \cite{2}.   Definition from \cite {IVL} is very close to  Definition \ref{MULT_ISOS}. See, also,
\cite{BELAS, ks3}.

\begin{corollary} \label{PERESTAN_ISOSTR}
 $(\varepsilon, S)(\tau,\varepsilon) = (\tau, S^{\tau}) = (\tau, \varepsilon)(\varepsilon, S^{\tau})$.
\end{corollary}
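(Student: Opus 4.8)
The plan is to read off both products directly from the multiplication rule of Definition~\ref{MULT_ISOS}, namely $(\sigma, S)(\tau, T) = (\sigma\tau, S^{\tau}T)$, and to verify that each of the two displayed products collapses to the middle expression $(\tau, S^{\tau})$. No structural machinery is needed; everything reduces to the fact that $\varepsilon$ acts as the identity for composition both in $S_3$ and among isotopies, together with the observation that the identity isotopy is fixed by every parastrophic action.

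First I would evaluate the left product $(\varepsilon, S)(\tau, \varepsilon)$. In the formula I take the first parastrophy to be $\varepsilon$, its inner isotopy to be $S$, the second parastrophy to be $\tau$, and its inner isotopy $T$ to be the identity isotopy $\varepsilon = (\varepsilon, \varepsilon, \varepsilon)$. The first component is then $\varepsilon\tau = \tau$, and the second is $S^{\tau}\varepsilon = S^{\tau}$, so the product is $(\tau, S^{\tau})$. This gives the first equality.

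Next I would evaluate the right product $(\tau, \varepsilon)(\varepsilon, S^{\tau})$. Here the first parastrophy is $\tau$ with inner isotopy $\varepsilon$, the second parastrophy is $\varepsilon$, and its inner isotopy is $S^{\tau}$. The first component is $\tau\varepsilon = \tau$. For the second component I must compute $\varepsilon^{\varepsilon} S^{\tau}$; applying the definition $T^{\sigma} = (\alpha_{\sigma^{-1}1}, \alpha_{\sigma^{-1}2}, \alpha_{\sigma^{-1}3})$ to the identity isotopy $(\varepsilon, \varepsilon, \varepsilon)$ with $\sigma = \varepsilon$ returns $(\varepsilon, \varepsilon, \varepsilon) = \varepsilon$, so $\varepsilon^{\varepsilon} S^{\tau} = S^{\tau}$. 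Hence the right product is also $(\tau, S^{\tau})$, which establishes the second equality.

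Since both products equal $(\tau, S^{\tau})$, the asserted chain of equalities holds. There is no genuine obstacle; the only point worth isolating is that the identity isotopy is invariant under every parastrophic action, i.e. $\varepsilon^{\sigma} = \varepsilon$ for all $\sigma \in S_3$, which is immediate from the componentwise definition of $T^{\sigma}$. This is precisely what makes the two apparently different factorizations of $(\tau, S^{\tau})$ coincide.
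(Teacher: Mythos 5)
Your proof is correct and takes essentially the same approach as the paper, which states this corollary without a separate proof precisely because it is an immediate componentwise computation from the multiplication rule of Definition~\ref{MULT_ISOS}. Your two direct evaluations, together with the (trivial but worth noting) observation that $\varepsilon^{\sigma}=\varepsilon$ for the identity isotopy, are exactly the intended verification.
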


\begin{lemma} \label{INVERSE_ISOSTROPHY}
$(\sigma, S)^{-1} = (\sigma^{-1}, (S^{-1})^{\sigma^{-1}}).$
\end{lemma}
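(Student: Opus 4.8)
The plan is to use the group structure of isostrophisms furnished by the multiplication of Definition \ref{MULT_ISOS}, whose identity is the pair $(\varepsilon, \varepsilon)$ consisting of the identity parastrophe and the identity isotopy $(\varepsilon, \varepsilon, \varepsilon)$. To find the inverse of $(\sigma, S)$ I would look for $(\rho, U)$ with $(\sigma, S)(\rho, U) = (\varepsilon, \varepsilon)$. Applying the rule $(\sigma, S)(\rho, U) = (\sigma\rho, S^{\rho}U)$ and comparing coordinates forces $\sigma\rho = \varepsilon$, hence $\rho = \sigma^{-1}$, together with $S^{\sigma^{-1}}U = \varepsilon$, hence $U = (S^{\sigma^{-1}})^{-1}$. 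Thus the whole problem reduces to rewriting $(S^{\sigma^{-1}})^{-1}$ in the claimed form $(S^{-1})^{\sigma^{-1}}$.

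The main step, and essentially the only nonroutine point, is to show that taking a parastrophe of an isotopy commutes with inverting that isotopy, i.e. $(S^{\sigma^{-1}})^{-1} = (S^{-1})^{\sigma^{-1}}$. I would prove this straight from the component-wise definitions. Writing $S = (\alpha_1, \alpha_2, \alpha_3)$, the convention $T^{\sigma} = (\alpha_{\sigma^{-1}1}, \alpha_{\sigma^{-1}2}, \alpha_{\sigma^{-1}3})$ gives $S^{\sigma^{-1}} = (\alpha_{\sigma 1}, \alpha_{\sigma 2}, \alpha_{\sigma 3})$, so that $(S^{\sigma^{-1}})^{-1} = (\alpha_{\sigma 1}^{-1}, \alpha_{\sigma 2}^{-1}, \alpha_{\sigma 3}^{-1})$; on the other hand $S^{-1} = (\alpha_1^{-1}, \alpha_2^{-1}, \alpha_3^{-1})$ yields $(S^{-1})^{\sigma^{-1}} = (\alpha_{\sigma 1}^{-1}, \alpha_{\sigma 2}^{-1}, \alpha_{\sigma 3}^{-1})$ as well. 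The two expressions agree because permuting the three components and inverting each component separately are independent operations. This establishes $U = (S^{-1})^{\sigma^{-1}}$, and hence the formula $(\sigma, S)^{-1} = (\sigma^{-1}, (S^{-1})^{\sigma^{-1}})$.

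Finally I would check that this pair is a genuine two-sided inverse, since the defining equation above only pins down a right inverse. Computing the other product by the same rule gives $(\sigma^{-1}, (S^{-1})^{\sigma^{-1}})(\sigma, S) = (\sigma^{-1}\sigma, ((S^{-1})^{\sigma^{-1}})^{\sigma}S)$; the first coordinate is $\varepsilon$, and for the second I would invoke $(A^{\sigma})^{\tau} = A^{\sigma\tau}$ from Definition \ref{MULT_ISOS} to get $((S^{-1})^{\sigma^{-1}})^{\sigma} = (S^{-1})^{\sigma^{-1}\sigma} = S^{-1}$, so the second coordinate is $S^{-1}S = \varepsilon$. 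The only place demanding care is the index bookkeeping between $\sigma$ and $\sigma^{-1}$ in the parastrophe action on isotopies; once that convention is respected, both the reduction and the verification are immediate.
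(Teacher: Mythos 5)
Your proof is correct and follows essentially the same route as the paper: both verify, via the multiplication rule of Definition \ref{MULT_ISOS}, that the product of $(\sigma,S)$ with $(\sigma^{-1},(S^{-1})^{\sigma^{-1}})$ is the identity isostrophy. The only difference is cosmetic: where the paper cites Lemma \ref{L2.1} to rewrite $S^{\sigma^{-1}}(S^{-1})^{\sigma^{-1}}=(SS^{-1})^{\sigma^{-1}}$, you establish the equivalent identity $(S^{\sigma^{-1}})^{-1}=(S^{-1})^{\sigma^{-1}}$ by direct component-index bookkeeping, and you additionally check the left-sided product, which the paper omits.
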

\begin{proof}
 Let $S = (\alpha_1, \alpha_2, \alpha_3)$ be an isotopy of a quasigroup $A$, $S^{-1} =
(\alpha_1^{-1}, $ $\alpha_2^{-1}, \alpha_3^{-1}),$ $S^{\sigma} = $ $ (\alpha_{\sigma^{-1} 1},$ $
\alpha_{\sigma^{-1} 2}, \alpha_{\sigma^{-1} 3}).$ Then
 $$
 \begin{array}{l}
 (\sigma, S) (\sigma^{-1}, (S^{-1})^{\sigma^{-1}}) = (\varepsilon^{\prime},
S^{\sigma^{-1} }(S^{-1})^{\sigma ^{-1}}) \overset{(Lemma \, \, \ref{L2.1})}{=} \\ (\varepsilon^{\prime},
(SS^{-1})^{\sigma ^{-1}}) = (\varepsilon^{\prime}, (\varepsilon, \varepsilon, \varepsilon)).
\end{array}
$$
\end{proof}

\subsection{Group isotopes and identities}

Information for this subsection has taken from  \cite{AC_BEL_HOS, FOUR_QUAS_TH, 2, vdb1, KRAPEZ_80, TABAR_92,
SOH_07}. We formulate famous Four quasigroups theorem \cite{AC_BEL_HOS, FOUR_QUAS_TH, 2, SOH_07} as follows.
\begin{theorem} \label{O_CHETYREH_QUAS}
A quadruple $(f_1, f_2, f_3, f_4)$ of binary quasigroup operation defined on a non\-empty  set $Q$  is the
general
 solution of the generalized associativity  equation
$$
A_1(A_2(x,y), z) = A_3 (x, A_4(y,z))
$$
if and only if there exists a group $(Q, +)$ and permutations $\alpha, \beta, \gamma, \mu, \nu$ of the set $Q$
such that $f_1(t,z) = \mu t + \gamma z$, $f_2(x,y) =\mu^{-1}( \alpha x + \beta y)$, $f_3 (x, u) = \alpha x + \nu
u$, $f_4(y,z) = \nu^{-1}(\beta y + \gamma z)$.
\end{theorem}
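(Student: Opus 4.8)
This half I would dispatch by a direct computation, since here the equation reads $f_1(f_2(x,y),z) = f_3(x,f_4(y,z))$. Writing the group operation additively, one has
\[
f_1(f_2(x,y),z) = \mu\, f_2(x,y) + \gamma z = \mu\mu^{-1}(\alpha x + \beta y) + \gamma z = (\alpha x + \beta y) + \gamma z,
\]
\[
f_3(x,f_4(y,z)) = \alpha x + \nu\, f_4(y,z) = \alpha x + \nu\nu^{-1}(\beta y + \gamma z) = \alpha x + (\beta y + \gamma z),
\]
and the two right-hand sides coincide by associativity of $(Q,+)$. Thus every quadruple of the displayed form solves the generalized associativity equation, regardless of the choice of group and of the permutations $\alpha,\beta,\gamma,\mu,\nu$ (one also checks at once that each $f_i$ so defined is a quasigroup operation, being a group isotope).

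\textbf{Necessity: reduction to one operation.} This is the substantive direction, and my plan is to collapse the four a priori unrelated operations to a single one and then to a genuine associativity law. I would set $T(x,y,z) := f_1(f_2(x,y),z)$; by hypothesis $T(x,y,z) = f_3(x,f_4(y,z))$, and since each $f_i$ is a quasigroup, $T$ is a ternary quasigroup operation. Fixing a base element $0 \in Q$ and specialising the equation at $z=0$, at $y=0$ and at $x=0$, I would use that the one-variable restrictions of the $f_i$ are bijections to express $f_1,f_2,f_4$ through $f_3$ alone. Concretely, with $U(u)=f_1(u,0)$, $V(y)=f_4(y,0)$ one gets $f_2(x,y)=U^{-1}f_3(x,Vy)$, and with the analogous bijections $P,Q,R,S$ one gets $f_1(u,z)=f_3(P^{-1}u,Qz)$ and $f_4(y,z)=S^{-1}f_3(P^{-1}Ry,Qz)$. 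Substituting these three expressions back into the original equation then eliminates $f_1,f_2,f_4$ and collapses the identity into a single functional equation in $f_3$ alone, carrying only a bounded list of correcting permutations.

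\textbf{From the collapsed equation to the group, and reading off the forms.} The next step is to choose a principal isotope $x+y := \theta\, f_3(\rho x,\sigma y)$ of $f_3$ so that the collapsed equation becomes \emph{exactly} $(x+y)+z=x+(y+z)$. Once this is done, $(Q,+)$ is a quasigroup satisfying the associative law, hence a group. Tracing the bijections $U,V,P,Q,R,S$ and $\theta,\rho,\sigma$ back through their definitions then rewrites every $f_i$ as an isotope of $(Q,+)$; the way the three "inner" bijections were produced forces them to be shared across the four operations (this is what yields one common triple $\alpha,\beta,\gamma$), while the remaining two bijections become the linking maps $\mu,\nu$, appearing with their inverses in $f_1,f_2$ and in $f_3,f_4$ respectively, exactly as in the statement.

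\textbf{Main obstacle.} The hard part is the passage in the last paragraph: proving that the leftover correcting permutations can be \emph{simultaneously} absorbed into one principal isotope, i.e. that the collapsed equation is a disguised associativity law rather than a strictly weaker generalized one. This is precisely where the quasigroup (cancellation) property is indispensable, as it is what forces the composite permutations arising on the two sides to coincide, so that the isotope $(Q,+)$ is genuinely associative and the five maps align as claimed. Everything after that — checking that $\alpha,\beta,\gamma,\mu,\nu$ are permutations and that the four explicit forms hold — is routine bookkeeping.
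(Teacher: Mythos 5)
The paper itself gives no proof of Theorem \ref{O_CHETYREH_QUAS}: it is quoted as the classical Four Quasigroups Theorem with references to Acz\'el--Belousov--Hossz\'u, Belousov and Sokhatskii, so the comparison here is against the standard argument. Your sufficiency half is complete and correct. Your necessity outline is the standard reduction, and your specialisation formulas are right: with $U(u)=f_1(u,0)$, $V(y)=f_4(y,0)$ and the analogous bijections, one gets exactly $f_2(x,y)=U^{-1}f_3(x,Vy)$, $f_1(u,z)=f_3(P^{-1}u,Qz)$, $f_4(y,z)=S^{-1}f_3(P^{-1}Ry,Qz)$, and substitution collapses the equation onto $f_3$ alone. (Cosmetic flaw: you use $Q$ both for the carrier set and for one of your correcting bijections.)

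The genuine gap is the step you flag as the ``main obstacle'' and then dispatch with one sentence, and the sentence misidentifies the mechanism: cancellation does not directly force ``the composite permutations arising on the two sides to coincide.'' After renaming variables, the collapsed equation has the shape $A(A(x,y),z)=A\bigl(x,B(y,z)\bigr)$, where $A$ and $B$ are two \emph{a priori different} quasigroup operations (isotopes of $f_3$); nothing yet says $B$, or any principal isotope of $A$, is associative. The missing argument is the classical two-way expansion of a four-variable product: compute
\begin{equation*}
A\bigl(A(A(x,y),z),w\bigr)=A\bigl(x,B(y,B(z,w))\bigr)
\quad\text{and}\quad
A\bigl(A(A(x,y),z),w\bigr)=A\bigl(x,B(B(y,z),w)\bigr),
\end{equation*}
then cancel $x$ on the left (left cancellation in $A$) to conclude $B(y,B(z,w))=B(B(y,z),w)$, so $B$ is an associative quasigroup, hence a group; fixing $x=a$ then gives $A(u,z)=\lambda B(\lambda^{-1}u,z)$ with $\lambda=L^{A}_{a}$, after which unwinding your bookkeeping bijections does yield the shared $\alpha,\beta,\gamma$ and the two linking maps $\mu,\nu$ of the statement. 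Without this expansion your ``absorption into one principal isotope'' is an assertion, not a proof. Note that this very computation appears in the paper, in the proof of Theorem \ref{SUSHKEVICH_POST_A} (Sushkevich postulate A): $(x\circ (y\circ z))\cdot w = ((x\circ y)\circ z)\cdot w$ followed by cancellation of $w$. So your outline is the right one and closes with an argument already available in the text, but as written the necessity direction is incomplete at precisely its load-bearing point.
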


\begin{lemma} \label{COMMUT_VDB} Belousov criteria.
If in a group $(Q, +)$  the equality $\alpha x + \beta y = \gamma y + \delta x$ holds for all $x, y \in Q$,
where $\alpha,  \beta, \gamma, \delta$ are some fixed permutations of $Q$, then $(Q, +)$ is an abelian group
\cite{vdb1}.
\end{lemma}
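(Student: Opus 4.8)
The plan is to solve the hypothesised identity for one of the four permutations and then exploit the fact that the resulting expression cannot actually depend on the variable that has been eliminated. Writing $0$ for the identity of $(Q,+)$ and reading the equation $\alpha x + \beta y = \gamma y + \delta x$ as an equation for $\delta x$, I would first add $-\gamma y$ on the left to obtain
\[
\delta x = -\gamma y + \alpha x + \beta y \qquad \text{for all } x,y\in Q.
\]
Since the left-hand side does not involve $y$, the right-hand side must be constant in $y$ for each fixed $x$. Because $\alpha$ is a bijection of $Q$, as $x$ runs over $Q$ the element $t=\alpha x$ runs over all of $Q$, so this already says that the map $y\mapsto -\gamma y + t + \beta y$ is independent of $y$ for every $t\in Q$.

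Next I would pin down the value of this constant. Evaluating at the unique $y_0$ with $\beta y_0 = 0$, namely $y_0=\beta^{-1}0$, gives $-\gamma y_0 + t + \beta y_0 = -\gamma y_0 + t$, so with $k:=-\gamma\beta^{-1}0$ one obtains
\[
-\gamma y + t + \beta y = k + t \qquad \text{for all } t,y\in Q.
\]
Adding $\gamma y$ on the left yields $t+\beta y = \gamma y + k + t$, and then adding $-t$ on the left gives $\beta y = -t + (\gamma y + k) + t$ for all $t,y\in Q$.

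The conclusion is then immediate: fix $y$ and set $g:=\gamma y + k$. The element $\beta y$ on the left does not depend on $t$, so $-t+g+t=g$ for every $t\in Q$, i.e. $g$ lies in the centre of $(Q,+)$. Since $\gamma$ is a bijection, $\gamma y$ ranges over all of $Q$ as $y$ varies, hence $g=\gamma y + k$ ranges over all of $Q$; thus every element of $Q$ is central and $(Q,+)$ is abelian, as claimed. I expect the only delicate point to be keeping track of the order of the summands throughout, since no commutativity may be assumed until the very last line; in particular the cancellations above are genuine left and right cancellations in the group, legitimate because each of $\alpha,\beta,\gamma$ is a bijection and $(Q,+)$, being a group, satisfies the cancellation law.
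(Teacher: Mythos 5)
Your proof is correct. Note that the paper itself contains no proof of Lemma \ref{COMMUT_VDB}: it is quoted from Belousov's paper on balanced identities \cite{vdb1}, so your argument supplies what the paper delegates to the literature, and it does so along the natural lines: eliminate one permutation to get $\delta x = -\gamma y + \alpha x + \beta y$, use the independence of the left side from $y$ together with surjectivity of $\alpha$ to obtain $-\gamma y + t + \beta y = k + t$ with $k=-\gamma\beta^{-1}0$, and convert this into the centrality statement $-t+(\gamma y + k)+t=\gamma y + k$, after which surjectivity of $y\mapsto \gamma y + k$ makes every element of $Q$ central. All cancellations are one-sided, so no commutativity is smuggled in. Two small remarks. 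First, in the final step the identity $-t+g+t=g$ rests on evaluating the constant value of $t\mapsto -t+g+t$ at $t=0$; you state this only implicitly (via ``$\beta y$ does not depend on $t$''), and it is worth the explicit half-line, since it also yields the side fact $\beta y=\gamma y+k$, i.e.\ $\beta=R_k\gamma$. Second, your argument never uses the injectivity of $\alpha,\beta,\gamma$ nor any property of $\delta$ beyond the identity itself: surjectivity of $\alpha$ and $\gamma$, plus the existence of some $y_0$ with $\beta y_0=0$, already suffice. So you have in fact proved a formally stronger statement than the one quoted, which is harmless and fully compatible with the ways the lemma is invoked later in the paper (Corollary \ref{SOHA_REZ}, Lemma \ref{COMMUTATIV_LIN_QUAS}).
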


There exists  also  the following adapted for our aims corollary from results of F.N.~So\-kha\-ts\-kii
(\cite{SOH_07}, Theorem 6.7.2).

\begin{corollary} \label{SOHA_REZ}
If in a principal group isotope $(Q, \cdot)$ of a group $(Q,+)$    the equality $\alpha x \cdot \beta y = \gamma
y \cdot \delta x$ holds for all $x, y \in Q$, where $\alpha,  \beta, \gamma, \delta$ are some fixed permutations
of $Q$, then $(Q, +)$ is an abelian group.
\end{corollary}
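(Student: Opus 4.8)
The plan is to reduce the assertion to Belousov's criterion (Lemma \ref{COMMUT_VDB}), which is exactly the same statement but for the group operation itself, by writing the quasigroup multiplication in terms of $+$. Since $(Q,\cdot)$ is a \emph{principal} group isotope of $(Q,+)$, by the definition of principal isotopy there exist permutations $\mu_1,\mu_2$ of $Q$ with $(Q,\cdot) = (Q,+)(\mu_1,\mu_2,\varepsilon)$, i.e. $x\cdot y = \mu_1 x + \mu_2 y$ for all $x,y\in Q$. First I would simply substitute this expression into both sides of the hypothesis $\alpha x\cdot \beta y = \gamma y\cdot \delta x$.

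Carrying out the substitution gives $\mu_1\alpha x + \mu_2\beta y = \mu_1\gamma y + \mu_2\delta x$ for all $x,y\in Q$. Now I would set $\alpha' = \mu_1\alpha$, $\beta' = \mu_2\beta$, $\gamma' = \mu_1\gamma$, $\delta' = \mu_2\delta$; each of these is a composition of permutations of $Q$ and hence again a permutation of $Q$. The displayed identity then reads $\alpha' x + \beta' y = \gamma' y + \delta' x$ for all $x,y\in Q$, which is precisely the hypothesis of Lemma \ref{COMMUT_VDB} (Belousov criteria) with the fixed permutations $\alpha',\beta',\gamma',\delta'$. Applying that lemma yields at once that $(Q,+)$ is an abelian group, as required.

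Honestly there is no serious obstacle here: the corollary is a transparent transfer of the group statement along a principal isotopy. The only two points requiring a moment of care are (i) checking that the composed maps $\mu_i\alpha$, etc., are genuine permutations so that Lemma \ref{COMMUT_VDB} actually applies, and (ii) if one prefers to allow a constant term and write the principal isotope in the more general affine shape $x\cdot y = \mu_1 x + \mu_2 y + c$, then after substitution the constant $c$ appears additively on the right of each side and can be removed by right cancellation (valid in any group), leaving the same group equation $\mu_1\alpha x + \mu_2\beta y = \mu_1\gamma y + \mu_2\delta x$. Either way the reduction to Belousov's criterion is immediate, which is why the statement is recorded merely as a corollary (of both Lemma \ref{COMMUT_VDB} and Sokhatskii's Theorem~6.7.2).
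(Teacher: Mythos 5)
Your proof is correct and is essentially identical to the paper's own argument: the paper likewise writes $x\cdot y=\xi x+\chi y$, substitutes to obtain $\xi\alpha x+\chi\beta y=\xi\gamma y+\chi\delta x$, and invokes Lemma \ref{COMMUT_VDB} (Belousov criteria). Your added remarks about the composed maps being permutations and about cancelling an affine constant are harmless elaborations of the same route.
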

\begin{proof}
If $x\cdot y = \xi x + \chi y$, then we can re-write the equality $\alpha x \cdot \beta y = \gamma y \cdot
\delta x$ in the form $\xi \alpha x +\chi  \beta y = \xi \gamma y + \chi \delta x$. Now we can apply Belousov
criteria (Lemma \ref{COMMUT_VDB}).
\end{proof}

\begin{lemma}\begin{enumerate} \label{FORM_S_NULEM}
    \item For any principal group  isotope $(Q,\cdot)$ there exists its form  $x\cdot y = \alpha x + \beta y$ such that $\alpha\, 0 = 0$ \cite{SOH_95_I}.
\item
 For any principal group  isotope $(Q,\cdot)$ there exists its form  $x\cdot y = \alpha x +
\beta y$ such that $\beta\, 0 = 0$.
    \item For any  right linear quasigroup $(Q,\cdot)$ there exists its form  $x\cdot y = \alpha x + \psi y + c$ such that
$\alpha 0 = 0$.
    \item For any  left linear quasigroup $(Q,\cdot)$ there exists its form  $x\cdot y = \varphi x + \beta y + c$ such that
$\beta 0 = 0$.
    \item For any  left linear quasigroup $(Q,\cdot)$ with idempotent element $0$  there exists its form
    $x\cdot y = \varphi x + \beta y$ such that
$\beta 0 = 0$.
    \item For any  right  linear quasigroup $(Q,\cdot)$ with idempotent element $0$ there exists its form  $x\cdot y = \alpha  x + \psi y$ such that
$\alpha 0 = 0$.
\end{enumerate}
\end{lemma}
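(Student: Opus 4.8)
The plan is to treat each of the six items by starting from the \emph{defining} form of the object (supplied by the definition of a principal group isotope and of left/right linear quasigroups) and then absorbing the offending constant by post-composing the two inner permutations with suitable left and right translations of the \emph{same} group $(Q,+)$; no change of group is needed. Throughout I write $a-b$ for $a+(-b)$ and keep careful track of the order of the summands, since for the linear cases the group $(Q,+)$ is not assumed abelian.

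For item~1 I start from $x\cdot y=\alpha x+\beta y$ and put $\alpha_1 x:=\alpha x-\alpha 0$ and $\beta_1 y:=\alpha 0+\beta y$. Both are permutations (compositions of $\alpha,\beta$ with group translations), and $\alpha_1 x+\beta_1 y=\alpha x-\alpha 0+\alpha 0+\beta y=\alpha x+\beta y$, while $\alpha_1 0=\alpha 0-\alpha 0=0$. Item~2 is the mirror image: I put $\alpha_2 x:=\alpha x+\beta 0$ and $\beta_2 y:=-\beta 0+\beta y$, so that $\beta_2 0=0$ and the product is unchanged.

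For the linear cases the new outer permutation must remain an automorphism. In item~4 this is immediate: starting from $x\cdot y=\varphi x+\beta y+c$ I set $\beta_1 y:=\beta y-\beta 0$ and $c_1:=\beta 0+c$, leaving $\varphi$ untouched, whence $x\cdot y=\varphi x+\beta_1 y+c_1$ with $\beta_1 0=0$. Item~3 is the only step requiring care, because the constant $a:=\alpha 0$ extracted to the right of $\alpha x$ gets trapped between $\alpha_1 x$ and $\psi y$. I dislodge it by conjugation: writing $a+\psi y=I_a(\psi y)+a$ with $I_a u:=a+u-a$ the inner automorphism, I obtain $x\cdot y=\alpha_1 x+\psi_1 y+c_1$, where $\alpha_1 x=\alpha x-a$, $\psi_1:=I_a\psi$ and $c_1:=a+c$; since $I_a\in\mathrm{Inn}(Q,+)\subseteq\mathrm{Aut}(Q,+)$, the composite $\psi_1$ is again an automorphism and $\alpha_1 0=0$.

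Items~5 and~6 then follow by feeding the idempotency $0\cdot 0=0$ into the normalized forms just produced. For item~5 I take the item-4 form with $\beta 0=0$; since $\varphi$ is an automorphism, $\varphi 0=0$, so $0=0\cdot 0=\varphi 0+\beta 0+c=c$, forcing $c=0$ and giving $x\cdot y=\varphi x+\beta y$. Item~6 is identical with the item-3 form, where $0=0\cdot 0=\alpha 0+\psi 0+c=c$. The only genuine obstacle I foresee is the non-commutative bookkeeping in item~3 — removing the trapped constant without destroying the automorphism property — and this is exactly what the inner-automorphism substitution resolves; everything else is routine verification that the displayed maps are permutations and reproduce $x\cdot y$.
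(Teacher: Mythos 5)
Your proposal is correct and takes essentially the same approach as the paper: constants are absorbed by left/right translations of the same group $(Q,+)$, the trapped constant in item~3 is dislodged by the inner automorphism $I_{\alpha 0}$ exactly as in the paper's proof, and idempotency of $0$ annihilates the remaining constant in items~5 and~6. The only cosmetic difference is that the paper handles items~5 and~6 directly from the raw forms (deriving $\beta\,0=-c$, resp.\ $\alpha\,0=-c$, and absorbing via $R_c$), whereas you first pass through the normalized forms of items~4 and~3 and then conclude $c=0$; the two computations are interchangeable.
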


\begin{proof}
1). We have  $x\cdot y = \alpha x + \beta y = R_{-\alpha 0} \alpha x + L_{\alpha 0} \beta y = \alpha^{\prime} x
+ \beta^{\prime} y$, $\alpha^{\prime} 0 = 0$.

2). We have  $x\cdot y = \alpha x + \beta y = R_{\beta 0} \alpha x + L_{-\beta 0} \beta y = \alpha^{\prime} x +
\beta^{\prime} y$, $\beta^{\prime} 0 = 0$.

3). We have  $x\cdot y = \alpha x + \psi y + c = R_{-\alpha 0} \alpha x + I_{\alpha 0} \psi y + \alpha 0 + c =
\alpha^{\, \prime} x + \psi^{\, \prime} y + c^{\, \prime}$, where $I_{\alpha 0} \psi y = \alpha 0 + \psi y -
\alpha 0$, $\alpha^{\prime} 0 = 0$.  Since $I_{\alpha 0}$ is an inner automorphism of the group $(Q,+)$, we
obtain  $I_{\alpha 0} \psi \in Aut(Q, +)$.

4). We have $x\cdot y = \varphi x + \beta y + c = \varphi x  +  \beta y -\beta 0 + \beta 0 + c = \varphi x  +
R_{ -\beta 0}\beta y + \beta 0 + c = \varphi x  + \beta^{\, \prime} y +  c^{\,\prime}$, where $\beta^{\, \prime}
= R_{ -\beta 0}\beta$, $c^{\,\prime} = \beta 0 + c$.

5). If  $x\cdot y = \varphi x + \beta y + c$, then $0 = 0 \cdot 0 = \varphi\, 0 + \beta \, 0 + c = \beta \, 0 +
c$, $\beta \, 0 = - c$. Therefore $x\cdot y = \varphi x + R_c \beta y = \varphi x + \beta^{\prime}y$ and
$\beta^{\, \prime} 0 = R_c \beta 0 = -c +c =0$.

6). If  $x\cdot y = \alpha  x + \psi y + c$, then $0 = 0 \cdot 0 = \alpha \, 0 + \psi \, 0 + c = \alpha \, 0 +
c$, $\alpha \, 0 = - c$. Therefore $x\cdot y = \alpha  x + c - c + \psi y + c = R_c \alpha  x + I_{-c} \psi y =
R_c \alpha^{\prime}   x + \psi^{\prime} y$ and $\alpha ^{\, \prime} 0 = R_c \alpha  0 = -c +c =0$. Moreover,
$\psi^{\prime}$ is an automorphism of $(Q,+)$ as the product of two automorphisms of the group $(Q,+)$.
\end{proof}

\begin{lemma} \label{FORMS_LEFT_LIN_QUAS}
For any left linear quasigroup  $(Q,\cdot)$ there exists its form such that  $x\cdot y = \varphi x + \beta y$.

For any right linear quasigroup  $(Q,\cdot)$ there exists its form such that  $x\cdot y = \alpha x + \psi y$.
\end{lemma}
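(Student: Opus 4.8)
The plan is to absorb the additive constant $c$ into the non-automorphic factor of the form, so that nothing needs to be proved beyond a rearrangement inside the group $(Q,+)$. Consider first a left linear quasigroup, so that by definition $x\cdot y = \varphi x + \beta y + c$ with $\varphi \in Aut(Q,+)$ and $\beta$ a permutation of $Q$. Here the constant stands immediately to the right of the permutation term $\beta y$, so the two can be merged: writing $\beta y + c = R_c\beta y$, where $R_c z = z + c$ is the right translation of $(Q,+)$, I set $\beta' = R_c\beta$. Since $R_c$ is a bijection of $Q$ and $\beta$ is a permutation, $\beta'$ is again a permutation, while $\varphi$ is untouched; hence $x\cdot y = \varphi x + \beta' y$ is the desired form.

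For a right linear quasigroup the situation is slightly more delicate, because now $x\cdot y = \alpha x + \psi y + c$ with $\psi \in Aut(Q,+)$ and $\alpha$ a permutation, and the constant $c$ is separated from the permutation factor $\alpha x$ by the automorphism term $\psi y$. I would therefore first push $c$ leftward past $\psi y$ by conjugation: using the inner automorphism $I_c z = -c + z + c$ one has $\psi y + c = c + I_c\psi y$, so that $x\cdot y = \alpha x + c + I_c\psi y$. Setting $\alpha' = R_c\alpha$ and $\psi' = I_c\psi$ gives $x\cdot y = \alpha' x + \psi' y$, where $\alpha'$ is a permutation and $\psi'$ is an automorphism, being the composite of the inner automorphism $I_c$ with $\psi$.

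The only genuine obstacle is this non-abelian bookkeeping in the right linear case: one must check that transporting $c$ across $\psi y$ does not destroy the automorphism property of the second factor, and the point is precisely that the cost of the transport is the inner automorphism $I_c$, which is harmless. A direct verification, $\alpha' x + \psi' y = \alpha x + c + (-c + \psi y + c) = \alpha x + \psi y + c$, confirms the identity; when $(Q,+)$ is abelian the conjugation is trivial and the two cases become symmetric. Both normalizations are in the spirit of Lemma \ref{FORM_S_NULEM}, the difference being that here the constant is eliminated outright rather than merely normalized by $\beta\,0 = 0$ or $\alpha\,0 = 0$.
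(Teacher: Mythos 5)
Your proposal is correct and follows essentially the same route as the paper: in the left linear case you absorb the constant via $\beta' = R_c\beta$, and in the right linear case you insert $c - c$ to get $x\cdot y = R_c\alpha\, x + I_c\psi\, y$, with the inner automorphism $I_c$ guaranteeing that $\psi' = I_c\psi \in Aut(Q,+)$ — exactly the paper's computation. Your explicit remark that the conjugation cost is harmless because $I_c$ is inner is the same observation the paper makes implicitly.
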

\begin{proof}
We can re-write the form $x\cdot y = \varphi x + \beta y+c$ of a left linear quasigroup  $(Q,\cdot)$ as follows
$x\cdot y = \varphi x + R_c\beta y = \varphi x + \beta^{\,\prime} y,$ where $\beta^{\,\prime} = R_c\beta$.

We can re-write the form $x\cdot y = \alpha  x + \psi y+c$ of a right  linear quasigroup  $(Q,\cdot)$ as follows
$x\cdot y = \alpha  x + c -c + \psi y+c  = R_c\alpha  x + I_c\psi y = \alpha^{\,\prime} x + \psi^{\, \prime} y$,
where $I_{-c}\psi y = -c + \psi y +c$.
\end{proof}

Classical criteria of a linearity of a quasigroup are given by V.D. Belousov in \cite{vdb1}.   We give a partial
case of F.N.~Sokhatskii  result (\cite{SOH_07}, Theorem 6.8.6; \cite{SOH_99}, Theorem 3; \cite{SOH_95_II}).

We recall, up to isomorphism any isotope is principal (Remark \ref{REMARK_ON_PRINC_ISOT}).

\begin{theorem} \label{LEFT_LIN_CRITER}
Let   $(Q, \cdot)$ be a principal  isotope of a group $(Q,+)$, $x\cdot y = \alpha x + \beta y$.

If  $(\alpha_1 x \cdot \alpha_2 y)\cdot a = \alpha_3 x \cdot \alpha_4 y$ is true for all $x, y\in Q$, where
  $\alpha_1,\alpha_2,\alpha_3, \alpha_4$  are permutations  of the set $Q$, $a$ is a fixed element of the set $Q$, then
 $(Q, \cdot)$  is a  left linear quasigroup.

If  $a \cdot (\alpha_1 x \cdot \alpha_2 y) = \alpha_3 x \cdot \alpha_4 y$ is true for all $x, y\in Q$, where
  $\alpha_1,\alpha_2,\alpha_3, \alpha_4$  are permutations  of the set $Q$, $a$ is a fixed element of the set $Q$, then
 $(Q, \cdot)$  is a  right linear quasigroup.
\end{theorem}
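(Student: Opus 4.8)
The plan is to substitute the principal-isotope form $x\cdot y=\alpha x+\beta y$ directly into the hypothesis and reduce everything to a single functional equation for $\alpha$ over the group $(Q,+)$; showing that $\alpha$ is then forced to be an automorphism is exactly what yields left linearity. Since the hypotheses concern the quasigroup operation itself (not the particular form), I may first invoke Lemma \ref{FORM_S_NULEM}(1) to choose a form with $\alpha 0=0$.

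Expanding $(\alpha_1 x\cdot\alpha_2 y)\cdot a=\alpha_3 x\cdot\alpha_4 y$ with $x\cdot y=\alpha x+\beta y$ gives
\[
\alpha(\alpha\alpha_1 x+\beta\alpha_2 y)+\beta a=\alpha\alpha_3 x+\beta\alpha_4 y .
\]
The crucial step is to introduce the new variables $u=\alpha\alpha_1 x$ and $v=\beta\alpha_2 y$: because $\alpha,\beta,\alpha_1,\alpha_2$ are bijections and $(x,y)$ ranges over $Q\times Q$, the pair $(u,v)$ ranges independently over $Q\times Q$. Setting $P=\alpha\alpha_3\alpha_1^{-1}\alpha^{-1}$ and $S'=R_{-\beta a}\,\beta\alpha_4\alpha_2^{-1}\beta^{-1}$, the identity becomes
\[
\alpha(u+v)=Pu+S'v\qquad\text{for all } u,v\in Q .
\]

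Next I would specialize this equation. Putting $v=0$ gives $Pu=\alpha u-S'0$, and putting $u=0$ gives $S'v=-P0+\alpha v$; moreover, evaluating the first at $u=0$ and using $\alpha 0=0$ yields $P0=-S'0$. Substituting both expressions back gives $\alpha(u+v)=\alpha u+\big((-S'0)+(-P0)\big)+\alpha v$, and since $P0=-S'0$ the middle constant collapses to the neutral element. Hence $\alpha(u+v)=\alpha u+\alpha v$ for all $u,v$, so $\alpha$ is an endomorphism of $(Q,+)$; being a bijection it is an automorphism, and therefore $x\cdot y=\alpha x+\beta y$ is by definition a left linear quasigroup (with $c=0$).

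For the second assertion the argument is symmetric: I would normalize to $\beta 0=0$ via Lemma \ref{FORM_S_NULEM}(2), expand $a\cdot(\alpha_1 x\cdot\alpha_2 y)=\alpha_3 x\cdot\alpha_4 y$, and with the same substitution $u=\alpha\alpha_1 x$, $v=\beta\alpha_2 y$ arrive at $\beta(u+v)=P'u+Sv$, where now $P'=L_{-\alpha a}\,\alpha\alpha_3\alpha_1^{-1}\alpha^{-1}$ and $S=\beta\alpha_4\alpha_2^{-1}\beta^{-1}$; specializing at $u=0$ and $v=0$ and invoking $\beta 0=0$ forces $\beta$ to be an automorphism, so $(Q,\cdot)$ is right linear. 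I expect the only real obstacle to be the noncommutative bookkeeping — keeping track of the side on which each constant sits so that the specialized constants genuinely cancel. The two ingredients that make this work are the bijectivity of $\alpha_1,\alpha_2$ (needed to let $u,v$ range freely and independently) and the normalization supplied by Lemma \ref{FORM_S_NULEM}, without which the stray constant would survive.
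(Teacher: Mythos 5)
Your proposal is correct and follows essentially the same route as the paper: normalize the form via Lemma \ref{FORM_S_NULEM} so that $\alpha\,0=0$ (resp. $\beta\,0=0$), pass to the group operation, and reduce the hypothesis to a functional equation $\alpha(x+y)=\gamma x+\delta y$ with $\gamma,\delta$ permutations. The only difference is the last step: the paper concludes by citing that any group quasiautomorphism has the form $L_d\varphi$ with $\varphi\in Aut(Q,+)$ (Corollary \ref{QUASIAUT_FORM}), whereas you re-derive exactly this special case inline via the $u=0$, $v=0$ specializations and the cancellation $P0=-S'0$ --- a computation that is valid in the non-abelian setting by associativity, so your self-contained variant is sound.
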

\begin{proof}
We follow \cite{SOH_07}. By Lemma \ref{FORM_S_NULEM} quasigroup $(Q, \cdot)$ can have the  form $x\cdot y =
\alpha x + \beta y$ over a group $(Q,+)$ such that $\alpha \, 0=0$. If we pass in the equality $(\alpha_1 x
\cdot \alpha_2 y)\cdot a = \alpha_3 x \cdot \alpha_4 y$ to the operation \lq\lq + \rq\rq, then we obtain $\alpha
(\alpha \alpha_1 x + \beta \alpha_2 y) + \beta a = \alpha \alpha_3 x + \beta \alpha_4 y$, $\alpha (x + y)  =
\alpha \alpha_3 \alpha_1^{-1} \alpha^{-1}
 x + \beta \alpha_4  \alpha_2^{-1} \beta^{-1} y -  \beta a.$

Then the permutation $\alpha$ is a group  quasiautomorphism. It is known that any group quasiautomorphism has
the form $L_a \varphi$, where $\varphi \in Aut(Q, +)$. See  \cite{1a, VD} or  Corollary \ref{QUASIAUT_FORM}.
Therefore $\alpha \in Aut(Q, +)$, since $\alpha 0 =0$.

By Lemma \ref{FORM_S_NULEM} there exists  the form $x\cdot y = \alpha x + \beta y$  of quasigroup $(Q, \cdot)$
such that $\beta \, 0=0$. If we pass in the equality  $a \cdot (\alpha_1 x \cdot \alpha_2 y) = \alpha_3 x \cdot
\alpha_4 y$ to the operation \lq\lq + \rq\rq, then we obtain $\alpha a + \beta (\alpha \alpha_1 x + \beta
\alpha_2  y) = \alpha \alpha_3 x + \beta \alpha_4 y$, $\beta (x + y)  = -\alpha a+  \alpha \alpha_3
\alpha_1^{-1} \alpha^{-1}
 x + \beta \alpha_4  \alpha_2^{-1} \beta^{-1} y.$

Then the permutation $\beta$ is a group  quasiautomorphism.  Therefore $\beta \in Aut(Q, +)$, since $\beta 0
=0$.
\end{proof}
\begin{corollary} \label{LEFT_GROUP_LINEARITY}
If a left F-quasigroup (E-quasigroup, SM-qua\-si\-gro\-up) is a group isotope, then this quasigroup is right
linear.

If a right F-quasigroup (E-quasigroup, SM-quasigroup) is a group isotope, then this quasigroup is left linear
\cite{SOH_99}.
\end{corollary}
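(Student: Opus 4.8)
The plan is to reduce each of the six defining identities, by a single substitution, to one of the two hypothesis forms of Theorem~\ref{LEFT_LIN_CRITER} and then quote that theorem. First I would reduce to the principal case: since every group isotope is, up to isomorphism, a \emph{principal} group isotope (Remark~\ref{REMARK_ON_PRINC_ISOT}), and since both the defining identity of each class and the notions of left/right linearity are preserved under isomorphism (an isomorphism carries a group structure to a group structure and conjugates an automorphism to an automorphism), I may assume that $(Q,\cdot)$ is a principal isotope of a group $(Q,+)$ with $x\cdot y=\alpha x+\beta y$.

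For the three left classes I would fix the distinguished variable. Writing $a\in Q$ for the fixed value of $x$, and using that every translation of a quasigroup is a permutation, the identities collapse as follows: the left F-identity $x\cdot yz=xy\cdot e(x)z$ becomes $a\cdot(y\cdot z)=L_a y\cdot L_{e(a)}z$; the left E-identity $x\cdot yz=f(x)y\cdot xz$ becomes $a\cdot(y\cdot z)=L_{f(a)}y\cdot L_a z$; and the left SM-identity $xx\cdot yz=xy\cdot xz$ becomes $s(a)\cdot(y\cdot z)=L_a y\cdot L_a z$. In every case the result has the shape $b\cdot(\varepsilon y\cdot\varepsilon z)=\alpha_3 y\cdot\alpha_4 z$, with $b\in\{a,s(a)\}$ a fixed element and $\alpha_3,\alpha_4$ permutations, which is exactly the second hypothesis of Theorem~\ref{LEFT_LIN_CRITER}; hence $(Q,\cdot)$ is right linear.

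For the three right classes I would fix the distinguished variable on the right. The right F-identity $xy\cdot z=xf(z)\cdot yz$ (fix $z=a$) becomes $(x\cdot y)\cdot a=R_{f(a)}x\cdot R_a y$; the right E-identity $zy\cdot x=zx\cdot ye(x)$ (fix $x=a$) becomes $(z\cdot y)\cdot a=R_a z\cdot R_{e(a)}y$; and the right SM-identity $zy\cdot s(x)=zx\cdot yx$ (fix $x=a$) becomes $(z\cdot y)\cdot s(a)=R_a z\cdot R_a y$. Each of these has the shape $(\varepsilon u\cdot\varepsilon v)\cdot b=\alpha_3 u\cdot\alpha_4 v$, which is the first hypothesis of Theorem~\ref{LEFT_LIN_CRITER}, so in each case $(Q,\cdot)$ is left linear.

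There is no serious obstacle here: the content is entirely in pattern-matching each identity to the template, the only points needing care being the reduction to the principal isotope (so that Theorem~\ref{LEFT_LIN_CRITER} applies verbatim) and the verification that the coefficient maps $L_{e(a)},L_{f(a)},R_{e(a)},R_{f(a)},L_a,R_a$ really are permutations and that $e(a),f(a),s(a)$ are well-defined fixed elements, both of which are immediate from the quasigroup axioms. I expect the slight subtlety about the isomorphism-invariance of linearity to be the only place where a reader might want an extra line of justification.
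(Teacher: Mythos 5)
Your proposal is correct and is exactly the argument the paper intends: its proof of Corollary~\ref{LEFT_GROUP_LINEARITY} is simply \lq\lq follows from Theorem~\ref{LEFT_LIN_CRITER}\rq\rq, and the substitutions you spell out (fixing $x=a$ in the left identities to get the form $b\cdot(\alpha_1 y\cdot\alpha_2 z)=\alpha_3 y\cdot\alpha_4 z$, and fixing the right-hand variable in the right identities to get $(\alpha_1 u\cdot\alpha_2 v)\cdot b=\alpha_3 u\cdot\alpha_4 v$) are precisely the ones the author carries out explicitly in the analogous steps of Theorem~\ref{THEOREM_3_F_QUAS_MIDD}. Your extra care about reducing to a principal isotope via Remark~\ref{REMARK_ON_PRINC_ISOT} and the isomorphism-invariance of linearity is a sound filling-in of what the paper leaves tacit.
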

\begin{proof}
The proof follows from Theorem \ref{LEFT_LIN_CRITER}.
\end{proof}

\begin{lemma} \label{COMMUTATIV_LIN_QUAS}
1. If in a right  linear quasigroup  $(Q,\cdot)$ over a group $(Q,+)$ the equality $k\cdot yx = xy\cdot b$ holds
for all $x, y \in Q$ and fixed $k,b\in Q$,  then $(Q, +)$ is an abelian group.

2. If in a left  linear quasigroup  $(Q,\cdot)$ over a group $(Q,+)$ the equality $k\cdot yx = xy\cdot b$ holds
for all $x, y \in Q$ and fixed $k,b\in Q$,  then $(Q, +)$ is an abelian group.
\end{lemma}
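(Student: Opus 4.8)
The plan is to reduce the hypothesis $k\cdot yx=xy\cdot b$ to Belousov's commutativity criterion (Lemma \ref{COMMUT_VDB}), equivalently to Corollary \ref{SOHA_REZ}, after which abelianness of $(Q,+)$ is immediate. By Lemma \ref{FORMS_LEFT_LIN_QUAS} I would first fix, in Case~1, a form $x\cdot y=\alpha x+\psi y$ of the right linear quasigroup with $\psi\in Aut(Q,+)$ and $\alpha\in S_Q$ (normalising $\alpha\,0=0$ by Lemma \ref{FORM_S_NULEM} if it is convenient), and in Case~2 the mirror form $x\cdot y=\varphi x+\beta y$ with $\varphi\in Aut(Q,+)$, $\beta\in S_Q$.

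Next I would expand both sides of $k\cdot yx=xy\cdot b$ in the group $(Q,+)$. The decisive structural feature is that the inner products occur in opposite orders, $y\cdot x$ on the left and $x\cdot y$ on the right. In Case~1 one gets $k\cdot yx=\alpha k+\psi\alpha y+\psi^{2}x$, where the automorphy of $\psi$ has been used to distribute it over the inner sum; absorbing the fixed element $k$ into the leading coefficient exhibits this side as a single product $\gamma y\cdot\delta x$ with $\gamma,\delta\in S_Q$. Handling the right-hand side $xy\cdot b$ in the same spirit, the aim is to arrive at an equality $\alpha' x+\beta' y=\gamma' y+\delta' x$ with $\alpha',\beta',\gamma',\delta'\in S_Q$, the two sides carrying $x$ and $y$ in opposite orders precisely because of the swap between $yx$ and $xy$. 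Once this normal form is reached, Lemma \ref{COMMUT_VDB} forces $(Q,+)$ to be abelian, which is the assertion. Case~2 then follows by the mirror-image computation (the left coefficient $\varphi$ now playing the distributing rôle), or by passing to the $(12)$-parastrophe and quoting Case~1 together with Lemma \ref{LEMMA_PARASTR}.

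The step I expect to be the main obstacle is the separation of variables on the side whose outer coefficient is only a permutation: the coefficient $\alpha$ in Case~1, entering through $(x\cdot y)\cdot b=\alpha(\alpha x+\psi y)+\psi b$, and symmetrically $\beta$ in Case~2. Such a coefficient need not distribute over $+$, so $\alpha(\alpha x+\psi y)$ does not split into a function of $x$ plus a function of $y$ by formal manipulation alone, and it is here that one must use more than the mere bijectivity of the coefficient. A parallel technical nuisance is that in a not-yet-commutative group the constants contributed by the fixed elements $k$ and $b$ do not move freely past the variable terms, so care is needed to confirm that they can genuinely be absorbed into the single permutations $\alpha',\beta',\gamma',\delta'$ before Belousov's criterion is invoked.
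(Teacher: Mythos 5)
Your overall plan coincides with the paper's proof: take the form $x\cdot y=\alpha x+\psi y$ from Lemma \ref{FORMS_LEFT_LIN_QUAS}, expand both sides, and aim for the crossed normal form $\alpha' x+\beta' y=\gamma' y+\delta' x$ of Lemma \ref{COMMUT_VDB}. But your proposal stops exactly where the real work begins: you correctly flag the splitting of $\alpha(\alpha x+\psi y)$ as ``the main obstacle'' and then leave it unresolved, so what you have is a plan, not a proof. The paper closes the gap as follows: rearranging the expanded identity gives $\alpha(\alpha x+\psi y)=\alpha k+\psi\alpha y+\psi^{2}x-\psi b$, and the substitutions $x\mapsto\alpha^{-1}x$, $y\mapsto\psi^{-1}y$ turn this into $\alpha(x+y)=\alpha k+\psi\alpha\psi^{-1}y+\psi^{2}\alpha^{-1}x-\psi b$; that is, the hypothesis itself exhibits $\alpha$ applied to a sum as a one-variable piece plus a one-variable piece. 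The paper concludes from this that $\alpha$ is a quasiautomorphism of $(Q,+)$, writes $\alpha=L_d\varphi$ with $\varphi\in Aut(Q,+)$ by Corollary \ref{QUASIAUT_FORM}, and substituting this form back yields $L_d\varphi x+\varphi y=L_{\alpha k}\psi\alpha\psi^{-1}y+R_{-\psi b}\psi^{2}\alpha^{-1}x$, to which Lemma \ref{COMMUT_VDB} applies. Your second worry, about the constants $k$ and $b$, is harmless: Belousov's criterion permits arbitrary fixed permutations, so the constants are absorbed into the translations $L_{\alpha k}$ and $R_{-\psi b}$ with no commutation required.

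However, your instinct that ``more than mere bijectivity'' is needed is sharper than you realized, because the separation delivered by the identity is in the \emph{crossed} order (the $y$-piece precedes the $x$-piece). A decomposition $\alpha(x+y)=Ay+Bx$ certifies only that $\alpha=L_c T$ with $T$ an \emph{antiautomorphism} of $(Q,+)$ (put $x=0$, then $y=0$, and use $-(C0+D0)=-D0-C0$), not that $\alpha$ is a quasiautomorphism; the paper's sentence ``therefore $\alpha$ is a quasiautomorphism'' silently ignores the order of the summands. And the step cannot be repaired from the stated hypotheses alone: take $x\cdot y=-x+y$ over any nonabelian group $(Q,+)$ (so $\alpha=I$, $\psi=\varepsilon$) and $k=b=0$; then $k\cdot yx=-y+x=(x\cdot y)\cdot b$ for all $x,y$, the quasigroup is right linear, yet $(Q,+)$ is not abelian. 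So the obstacle you located is not a step you merely failed to see — it is the precise point at which the paper's own argument, and indeed the lemma as stated, breaks down; one needs an additional assumption (for instance that $\alpha$ is a quasiautomorphism, i.e.\ that $(Q,\cdot)$ is linear) before the reduction to Lemma \ref{COMMUT_VDB} becomes legitimate. The same caution applies to your mirror Case~2 and to the $(12)$-parastrophe shortcut, which in any event lands in the opposite group $(Q,+^{op})$ — harmless, since it is abelian exactly when $(Q,+)$ is.
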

\begin{proof}
1. By Lemma \ref{FORMS_LEFT_LIN_QUAS} we can take the following form of $(Q,\cdot)$:  $x\cdot y = \alpha x +
\psi y$. Thus  we have $\alpha k + \psi (\alpha y + \psi x) = \alpha (\alpha x + \psi y) + \psi b$, $\alpha k +
\psi \alpha y + \psi^2 x - \psi b = \alpha (\alpha x + \psi y)$, $\alpha (x+y) = \alpha k + \psi \alpha
\psi^{-1} y + \psi^2 \alpha^{-1} x - \psi b$. Therefore $\alpha$ is a quasiautomorphism of the group $(Q,+)$.
Let $\alpha = L_d \varphi$, where $\varphi \in Aut(Q, +)$.

Further we have $d + \varphi x + \varphi y = \alpha k + \psi \alpha \psi^{-1} y + \psi^2 \alpha^{-1} x - \psi
b$, $L_d\varphi x + \varphi y = L_{\alpha k}\psi \alpha \psi^{-1} y + R_{ - \psi b}\psi^2 \alpha^{-1} x$.
Finally, we can apply Lemma \ref{COMMUT_VDB}.

Case 2 is proved in the similar way.
\end{proof}

Quasigroup $(Q, \cdot)$ with equality  $xy \cdot z = x \cdot (y \circ z)$ for all $x, y, z \in Q$ is called
\lq\lq quasigroup which fulfills Sushkevich postulate A\rq\rq.

Quasigroup $(Q, \cdot)$ with equality  $x \cdot y z = (x \circ y) \cdot z$ for all $x, y, z \in Q$  will be
called \lq\lq quasigroup which fulfills Sushkevich postulate A$^{\ast}$\rq\rq.

\begin{theorem} \label{SUSHKEVICH_POST_A}
1. If quasigroup $(Q, \cdot)$  fulfills Sushkevich postulate A, then $(Q, \cdot)$ is isotopic to the group $(Q,
\circ)$,
 $(Q, \cdot) = (Q,\circ)(\varphi, \varepsilon, \varphi)$ (\cite{1a}, Theorem 1.7).

2. If quasigroup $(Q, \cdot)$  fulfills Sushkevich postulate A$^{\ast}$, then $(Q, \cdot)$ is isotopic to the
group $(Q, \circ)$,
 $(Q, \cdot) = (Q,\circ)(\varepsilon, \psi, \psi)$.
\end{theorem}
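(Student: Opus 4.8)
The plan is to handle both parts the same way: extract the auxiliary operation $\circ$ explicitly from the postulate, check it is a quasigroup, pin down the isotopy by a conjugation of translations, and then read off associativity of $\circ$ by substituting the isotopy back into the postulate. Throughout I write $L,R$ for the left and right translations of $(Q,\cdot)$ and $L^{\circ},R^{\circ}$ for those of $(Q,\circ)$.

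For Part 1 I start from postulate A, $xy\cdot z = x\cdot(y\circ z)$, and fix an arbitrary $a\in Q$. Putting $x=a$ gives $R_z(L_a y)=L_a(y\circ z)$, hence $y\circ z = L_a^{-1}R_z L_a\,y$. The right-hand side is a bijection in $y$ for each fixed $z$ and a bijection in $z$ for each fixed $y$, so $(Q,\circ)$ is a quasigroup (the formula does not depend on $a$, since the postulate supplies a single operation $\circ$ valid for all $x$). In translation form this says $R_z^{\circ}=L_a^{-1}R_z L_a$ for every $z$. Next I set $\varphi=L_a^{-1}$; then $R_y^{\circ}=\varphi R_y\varphi^{-1}$ is precisely $\varphi(x\cdot y)=\varphi x\circ y$, i.e. $x\cdot y=\varphi^{-1}(\varphi x\circ y)$. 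Matching this against the isotopy convention $x\cdot y=\mu_3^{-1}(\mu_1 x\circ\mu_2 y)$ yields $(Q,\cdot)=(Q,\circ)(\varphi,\varepsilon,\varphi)$, as required. Finally, applying $\varphi$ to both sides of postulate A and using $\varphi(x\cdot y)=\varphi x\circ y$ twice gives $(\varphi x\circ y)\circ z=\varphi x\circ(y\circ z)$; since $\varphi$ is a bijection, $\varphi x$ ranges over all of $Q$, so $(Q,\circ)$ is associative. An associative quasigroup is a group, which completes Part 1. (One could instead invoke the Four Quasigroups Theorem~\ref{O_CHETYREH_QUAS} with $A_1=A_2=A_3=\cdot$, $A_4=\circ$, but the direct route delivers the exact isotopy form demanded.)

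Part 2 is dual. The quickest formal route is to apply Part 1 to the $(12)$-parastrophe $(Q,\ast)$, $x\ast y=y\cdot x$: under it postulate A$^{\ast}$ for $(Q,\cdot)$ with operation $\circ$ becomes postulate A for $(Q,\ast)$ with the opposite operation $\circ^{op}$, so $(Q,\circ)$ is the opposite of a group, hence a group, and the isotopy transports back. Alternatively I just repeat the computation: fixing $z=a$ in $x\cdot yz=(x\circ y)\cdot z$ gives $x\circ y=R_a^{-1}L_x R_a\,y$, hence $L_x^{\circ}=R_a^{-1}L_x R_a$; with $\psi=R_a^{-1}$ this is $\psi(x\cdot y)=x\circ\psi y$, i.e. $(Q,\cdot)=(Q,\circ)(\varepsilon,\psi,\psi)$, and substituting back into postulate A$^{\ast}$ produces $(x\circ y)\circ w=x\circ(y\circ w)$ (after replacing $\psi z$ by $w$), so $(Q,\circ)$ is again a group.

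The only genuinely delicate point is matching the direction of conjugation to the prescribed isotopy triple: one must take $\varphi=L_a^{-1}$ (not $L_a$) in Part 1 and $\psi=R_a^{-1}$ (not $R_a$) in Part 2, so that the resulting equality lands in the form $(\varphi,\varepsilon,\varphi)$, respectively $(\varepsilon,\psi,\psi)$, rather than its inverse. Everything else is a routine manipulation of translations together with a single substitution of the isotopy into the postulate.
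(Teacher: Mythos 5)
Your proof is correct and follows essentially the same route as the paper's: fix one variable in the postulate to express $\circ$ as an isotope of $(Q,\cdot)$ (hence a quasigroup), identify the isotopy with an inverse translation (your $\psi=R_a^{-1}$ is exactly the paper's $\psi=R_c^{-1}$ in Case 2), and substitute back to obtain associativity. The only differences are cosmetic: you prove Part 1 directly where the paper cites Belousov's Theorem 1.7, and your associativity check applies $\varphi$ (resp.\ $\psi$) to the postulate via $\varphi(x\cdot y)=\varphi x\circ y$, whereas the paper uses the chain $(x\circ(y\circ z))\cdot w=\dots=((x\circ y)\circ z)\cdot w$ and cancels $w$ --- equivalent computations.
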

\begin{proof}
Case 1 is proved in \cite{1a}.

The proof of Case 2 is similar to the proof of Case 1. It is easy to see that $(Q, \circ)$ is quasigroup.
Indeed, if $ z = c$, then we have $x \cdot R^{\cdot}_c y  = R^{\cdot}_c (x \circ y)$, $(Q, \circ)$ is isotope of
quasigroup $(Q, \cdot)$. Therefore $(Q, \circ)$ is a quasigroup. Moreover, $x \cdot y  = R_c (x \circ R^{-1}_c
y)$, $(Q, \cdot) = (Q,\circ)(\varepsilon, \psi, \psi)$, where $\psi = R^{-1}_c$.

Quasigroup $(Q, \circ)$ is a group. It is possible to use Theorem \ref{O_CHETYREH_QUAS} but we give direct proof
 similar to the proof from \cite{1a}. We have $(x\circ (y\circ z))\cdot w = x\cdot ((y\circ z)\cdot w) =
 x\cdot (y\cdot (z\cdot w)) =  (x\circ y)\cdot (z\cdot w) = ((x\circ y)\circ z)\cdot w$,
 $x\circ (y\circ z) = (x\circ y)\circ z$.
\end{proof}

Quasigroup $(Q, \cdot)$ with generalized identity $xy \cdot z = x \cdot y \delta (z)$, where $\delta$ is a fixed
permutation of the set $Q$, is called \lq\lq quasigroup which fulfills Sushkevich postulate B\rq\rq.

Quasigroup $(Q, \cdot)$ with generalized identity  $x \cdot y z = (\delta(x) \cdot y) \cdot z$, where $\delta$
is a fixed permutation of the set $Q$, will be called \lq\lq quasigroup which fulfills Sushkevich postulate
B$^{\ast}$\rq\rq.

It is easy to see that any quasigroup with postulate B (B$^{\ast}$) is a quasigroup with postulate A
(A$^{\ast}$).

\begin{theorem} \label{SUSHKEVICH_POST_B}
1. If quasigroup $(Q, \cdot)$  fulfills Sushkevich postulate B, then $(Q, \cdot)$ is isotopic to the group
$(Q,\circ)$,
 $(Q, \cdot) = (Q,\circ)(\varepsilon, \psi, \varepsilon)$, where $\psi \in Aut(Q,\circ)$, $\psi \in Aut(Q,\cdot)$
 (\cite{1a}, Theorem 1.8).

2. If quasigroup $(Q, \cdot)$  fulfills Sushkevich postulate B$^{\, \ast}$, then $(Q, \cdot)$ is isotopic to the
group $(Q,\circ)$,
 $(Q, \cdot) = (Q,\circ)(\varphi, \varepsilon,  \varepsilon)$, where $\varphi \in Aut(Q,\circ)$, $\varphi \in Aut(Q,\cdot)$.
\end{theorem}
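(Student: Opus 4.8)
The plan is to reduce each case to the corresponding Sushkevich postulate A result (Theorem \ref{SUSHKEVICH_POST_A}) and then to sharpen the resulting isotopy into the more rigid shape asserted here by exploiting associativity of the derived group. The remark immediately preceding the statement (that any quasigroup with postulate B is one with postulate A) is exactly the hinge I would turn on.

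For Case 1, I would introduce the operation $x \circ y := x \cdot \delta(y)$, so that $(Q, \circ) = (Q, \cdot)(\varepsilon, \delta, \varepsilon)$ is an isotope of $(Q,\cdot)$ and hence a quasigroup. Postulate B, $xy \cdot z = x \cdot y\delta(z)$, then reads verbatim as $xy \cdot z = x \cdot (y \circ z)$, i.e. postulate A for the pair $(\cdot, \circ)$, so Theorem \ref{SUSHKEVICH_POST_A}(1) gives that $(Q, \circ)$ is a group. Inverting the defining relation yields $x \cdot y = x \circ \delta^{-1}(y)$, so $(Q, \cdot) = (Q, \circ)(\varepsilon, \psi, \varepsilon)$ with $\psi = \delta^{-1}$, which is already the claimed isotopy shape. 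The only remaining work is to check $\psi \in Aut(Q, \circ)$: writing the group $(Q, \circ)$ additively and substituting $x \cdot y = x + \psi y$ and $\delta = \psi^{-1}$ into postulate B, the left side becomes $(x + \psi y) + \psi z$, while the right side collapses, via $\psi\delta = \varepsilon$, to $x + \psi(y + z)$. Associativity of $(Q,+)$ and left cancellation then give $\psi y + \psi z = \psi(y + z)$, so the bijection $\psi$ is a group automorphism. That $\psi \in Aut(Q, \cdot)$ is then immediate: $\psi(x \cdot y) = \psi(x + \psi y) = \psi x + \psi^2 y = \psi x \cdot \psi y$.

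Case 2 is the mirror image, run on the opposite side. Here I would set $x \circ y := \delta(x) \cdot y$, so $(Q, \circ) = (Q, \cdot)(\delta, \varepsilon, \varepsilon)$, and postulate B$^{\ast}$, $x \cdot yz = (\delta(x) \cdot y)\cdot z$, becomes postulate A$^{\ast}$, $x \cdot yz = (x \circ y)\cdot z$; Theorem \ref{SUSHKEVICH_POST_A}(2) then makes $(Q, \circ)$ a group. From $x \cdot y = \delta^{-1}(x) \circ y$ one reads off $(Q, \cdot) = (Q, \circ)(\varphi, \varepsilon, \varepsilon)$ with $\varphi = \delta^{-1}$, and substituting $x \cdot y = \varphi x + y$ and $\delta = \varphi^{-1}$ into B$^{\ast}$ gives $\varphi x + \varphi y + z = \varphi(x+y) + z$; cancelling on the right yields $\varphi(x+y) = \varphi x + \varphi y$, whence $\varphi \in Aut(Q, \circ)$ and, as before, $\varphi \in Aut(Q, \cdot)$.

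I do not expect a serious obstacle here, since both cases are short once the reduction is set up. The one point that needs care is the bookkeeping of the isotopy direction — getting $\psi = \delta^{-1}$ rather than $\delta$ — and, in the automorphism computation, cancelling on the correct side: left cancellation in Case 1 and right cancellation in Case 2. That choice of side is precisely where the left/right asymmetry of postulates B and B$^{\ast}$ is consumed, so it is the step I would double-check most carefully.
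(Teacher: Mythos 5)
Your proof is correct, and for Case 2 (the only case the paper actually proves, Case 1 being cited to Belousov's book) it takes a genuinely different route in the key step. You and the paper begin identically: postulate B (resp.\ B$^{\ast}$) is postulate A (resp.\ A$^{\ast}$) for the explicitly defined operation $x\circ y = x\cdot\delta(y)$ (resp.\ $\delta(x)\cdot y$), so Theorem \ref{SUSHKEVICH_POST_A} makes $(Q,\circ)$ a group, and inverting the defining relation gives the isotopy shape $(\varepsilon,\psi,\varepsilon)$ or $(\varphi,\varepsilon,\varepsilon)$ with $\psi=\varphi=\delta^{-1}$ directly. Where you diverge is in proving $\delta^{-1}$ is an automorphism: the paper plays the two isotopies $(Q,\cdot)=(Q,\circ)(\delta^{-1},\varepsilon,\varepsilon)$ and $(Q,\cdot)=(Q,\circ)(\varepsilon,\psi,\psi)$ (the latter from the proof of Theorem \ref{SUSHKEVICH_POST_A}, Case 2) against each other to produce the autotopy $(\delta,\psi,\psi)$ of the group $(Q,\circ)$, and then invokes the classification of group autotopies via Corollary \ref{RAVNYE_KOMPON_AVTOT} (equal second and third components force the left-translation part to be trivial, so $\delta\in Aut(Q,\circ)$). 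You instead substitute the linear form $x\cdot y = x+\psi y$ (resp.\ $\varphi x + y$) back into the generalized identity, use $\psi\delta=\varepsilon$ to collapse one side, and cancel in the group — a direct two-line computation that needs no autotopy theory at all. The paper's method buys structural insight (it exhibits the hidden autotopy and reuses the general machinery of group autotopies, a recurring tool throughout the paper, e.g.\ in Theorem \ref{LEFT_LIN_CRITER}); yours is more elementary and self-contained, with the only delicate point being exactly the one you flag — cancelling on the left in Case 1 and on the right in Case 2, which is where the asymmetry of B versus B$^{\ast}$ is spent. Your verification that $\psi,\varphi\in Aut(Q,\cdot)$ as well is correct and completes the statement.
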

\begin{proof}
Case 1 is proved in \cite{1a}. It is easy to see that quasigroup $(Q, \cdot)$ has the right identity element,
i.e. $(Q, \cdot)$ is right loop. Indeed,  $x\cdot 0 = x \circ \psi \, 0 = x$ for all $x\in Q$, where $0$ is zero
of group $(Q, \circ)$.

Case 2.  The proof of Case 2 is similar to the proof of Case 1. Here
 we give the  direct proof because the book \cite{1a} is rare. Since the
quasigroup $(Q, \cdot)$ fulfills postulates A$^{\, \ast}$ and B$^{\, \ast}$, then by Theorem
\ref{SUSHKEVICH_POST_A}, Case 2, groupoid (magma)  $(Q, \circ)$, $x\circ y =  \delta (x) \cdot y$, is a group
and $(Q, \cdot) = (Q, \circ) (\delta^{-1}, \varepsilon,  \varepsilon)$. By the same theorem $(Q, \cdot) =
(Q,\circ)(\varepsilon, \psi, \psi)$. Therefore $(\delta, \psi, \psi)$ is an autotopy of the group $(Q, \circ)$.
By Corollary \ref{RAVNYE_KOMPON_AVTOT} $\delta \in Aut(Q,\circ)$. Therefore $\varphi = \delta^{-1}\in Aut (Q,
\circ)$.

It is easy to see that  $(Q, \cdot)$ is left loop.
\end{proof}

\section{Direct decompositions}

\subsection{Left and right  F-quasigroups}\label{SECTION_LEFT_F_QUAS}

In order to study the structure of  left F-quasigroups we shall use approach from \cite{4, SC05}. As usual
$e(e(x)) = e^2(x)$ and so on.

\begin{lemma} \label{EM_END}
1. In a left F-quasigroup $(Q, \cdot)$ the map $e^i$ is an  endomorphism of $(Q,\cdot)$, $e^i(Q, \cdot)$ is a
subquasigroup of quasigroup $(Q,\cdot)$ for all suitable values of the index $i$ \cite{MURD_39, 1a}.

2. In a right F-quasigroup $(Q, \cdot)$ the map $f^i$ is an  endomorphism of $(Q,\cdot)$, $f^i(Q, \cdot)$ is a
subquasigroup of quasigroup $(Q,\cdot)$ for all suitable values of the index $i$  \cite{MURD_39, 1a}.
\end{lemma}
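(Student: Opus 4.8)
The plan is to reduce both parts to the single assertion that $e$ (respectively $f$) is an endomorphism of $(Q,\cdot)$. Once $e$ is known to be an endomorphism, every power $e^i$ is an endomorphism as a composite of endomorphisms, and then $e^i(Q,\cdot)$ is automatically a subquasigroup of $(Q,\cdot)$ by Lemma \ref{l_5.4}. So the whole content of Part 1 is the identity $e(x\cdot y)=e(x)\cdot e(y)$, and the rest is formal.

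First I would recall that $e(x)$ is characterized by $x\cdot e(x)=x$, and that, since $(Q,\cdot)$ is a quasigroup, for each $w\in Q$ the element $u$ with $w\cdot u=w$ is unique, because the left translation $L_w$ is a bijection and $u=L_w^{-1}w$. The key step is a single well-chosen substitution into the defining identity $x\cdot yz=xy\cdot e(x)z$: taking $z=e(y)$ collapses the inner product on the left, since $y\cdot e(y)=y$, and yields $x\cdot y=(x\cdot y)\cdot(e(x)\cdot e(y))$. Comparing this with $x\cdot y=(x\cdot y)\cdot e(x\cdot y)$ and invoking the uniqueness just noted forces $e(x\cdot y)=e(x)\cdot e(y)$, which is exactly what is needed.

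For Part 2 I would proceed dually. Either I substitute $x=f(y)$ into the right F-quasigroup identity $xy\cdot z=xf(z)\cdot yz$: since $f(y)\cdot y=y$, the left side collapses to $y\cdot z$, giving $y\cdot z=(f(y)\cdot f(z))\cdot(y\cdot z)$, and the uniqueness of the left local identity (bijectivity of the right translation $R_{yz}$) yields $f(y\cdot z)=f(y)\cdot f(z)$. Alternatively, one can quote Part 1 for the $(12)$-parastrophe: by Lemma \ref{LEMMA_PARASTR}(1) the $(12)$-parastrophe $(Q,\ast)$ of a right F-quasigroup is a left F-quasigroup, and by Table 2 the map $e$ of $(Q,\ast)$ coincides with the map $f$ of $(Q,\cdot)$; hence Part 1 shows $f$ is an endomorphism of $(Q,\ast)$, and since $h(x\ast y)=hx\ast hy$ is equivalent to $h(x\cdot y)=hx\cdot hy$, it is also an endomorphism of $(Q,\cdot)$. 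Then, as before, $f^i$ is an endomorphism and $f^i(Q,\cdot)$ is a subquasigroup by Lemma \ref{l_5.4}.

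I do not expect a serious obstacle here: the argument is essentially one substitution followed by a uniqueness argument. The only point requiring explicit care is the uniqueness of the local identity elements, which must be invoked deliberately and rests on the bijectivity of the appropriate translation in the quasigroup; everything else is routine. The choice of substitution ($z=e(y)$ in the left case, $x=f(y)$ in the right case) is the one genuinely inventive ingredient, and it is what makes the inner factor collapse so that the defining identity reads directly as the required multiplicativity of $e$ (respectively $f$).
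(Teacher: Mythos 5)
Your proof is correct and follows essentially the same route as the paper: the paper proves Part 1 by exactly your substitution $z=e(y)$ in $x\cdot yz = xy\cdot e(x)z$, concludes $e(x\cdot y)=e(x)\cdot e(y)$ (with the uniqueness-via-bijective-translation step you make explicit left implicit), iterates to get $e^i$, and invokes Lemma \ref{l_5.4} for the subquasigroup claim, with Part 2 dismissed as ``similar''---which is your dual substitution $x=f(y)$. Your alternative derivation of Part 2 via the $(12)$-parastrophe and Table 2 is a valid bonus, but the core argument coincides with the paper's.
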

\begin{proof}
1. From identity  $x\cdot y z = x y \cdot e(x) z$ by $z=e(y)$ we have $xy  = x y \cdot e(x) e(y)$, i.e.
$e(x\cdot y) = e(x) \cdot e(y)$. Further we have $e^2(x\cdot y) = e(e(x\cdot y)) = e(e(x) \cdot e(y)) = e^2(x)
\cdot e^2(y) $ and so on. Therefore $e^m$ is an endomorphism of the quasigroup $(Q, \cdot)$.

The fact that  $e^m (Q, \cdot)$ is a subquasigroup of quasigroup $(Q, \cdot)$ follows from Lemma \ref{l_5.4}.

2. The proof is similar.
\end{proof}

The proof of the following lemma has taken from \cite{1a}, p. 33.
\begin{lemma} \label{LEMMA_3_F_QUAS}
1. Endomorphism  $e$ of a left F-quasigroup $(Q, \cdot)$ is zero endomorphism, i.e. $e(x) = k$ for all $x\in Q$,
if and only if  left F-quasigroup $(Q, \cdot)$ is a right loop, isotope of a group $(Q,+)$ of the form  $(Q,
\cdot) = (Q,+)(\varepsilon, \psi, \varepsilon)$, where  $\psi \in Aut (Q,+)$, $k = 0$.

2. Endomorphism  $f$ of a right F-quasigroup $(Q, \cdot)$ is zero endomorphism, i.e. $f(x) = k$ for all $x\in
Q$, if and only if  right F-quasigroup $(Q, \cdot)$ is a left loop, isotope of a group $(Q,+)$ of the form $(Q,
\cdot) = (Q,+)(\varphi, \varepsilon, \varepsilon)$, where  $\varphi \in Aut (Q,+)$, $k = 0$.
\end{lemma}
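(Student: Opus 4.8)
The plan is to prove Case 1 in full and obtain Case 2 by the dual argument; throughout I write the defining identity as $x\cdot yz = xy\cdot e(x)z$ and recall from Lemma \ref{EM_END}(1) that $e$ is an endomorphism of $(Q,\cdot)$. The converse (``if'') direction is a direct computation. Suppose $x\cdot y = x+\psi y$ with $(Q,+)$ a group, $\psi\in Aut(Q,+)$ and $k=0$. Then $x\cdot e(x)=x+\psi e(x)=x$ forces $\psi e(x)=0$, hence $e(x)=0=k$ for every $x$; so $e$ is the constant (zero) endomorphism and $(Q,\cdot)$ is a right loop with right identity $0$. One checks at the same time that this form is indeed a left F-quasigroup: since $e(x)z=0\cdot z=\psi z$, we get $xy\cdot e(x)z=(x+\psi y)\cdot\psi z=x+\psi y+\psi^2 z=x\cdot yz$.

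For the forward (``only if'') direction I assume $e(x)=k$ for all $x$ and first record two facts: $x\cdot k=x\cdot e(x)=x$, so $k$ is a right identity and $(Q,\cdot)$ is a right loop; and $k\cdot k=k\cdot e(k)=k$, so $k$ is idempotent. The key step is to recast the left F-identity as a Sushkevich postulate. I define an auxiliary operation by $a\circ b=a\cdot(k\backslash b)$; this is a quasigroup operation, being $a\mapsto R_{k\backslash b}\,a$ in the first variable and $b\mapsto L_aL_k^{-1}\,b$ in the second. Substituting $t=k\backslash z$ in the identity $x\cdot yt=xy\cdot kt$ gives
\[
x\cdot(y\circ z)=x\cdot\bigl(y\cdot(k\backslash z)\bigr)=(xy)\cdot\bigl(k\cdot(k\backslash z)\bigr)=(xy)\cdot z ,
\]
so $(Q,\cdot)$ fulfils Sushkevich postulate A. By Theorem \ref{SUSHKEVICH_POST_A}(1) the groupoid $(Q,\circ)$ is a group and $(Q,\cdot)$ is an isotope of it; in particular $(Q,\cdot)$ is a group isotope.

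It remains to pin down the exact form. Since $(Q,\cdot)$ is a left F-quasigroup that is a group isotope, Corollary \ref{LEFT_GROUP_LINEARITY} shows it is right linear. As $(Q,\cdot)$ carries the idempotent element $k$, Lemma \ref{FORM_S_NULEM}(6) supplies a right-linear form $x\cdot y=\alpha x+\psi y$ over a group $(Q,+)$ in which $k$ is the zero $0$, with $\psi\in Aut(Q,+)$ and $\alpha\,0=0$. Using that $k=0$ is a right identity, $x=x\cdot 0=\alpha x+\psi\,0=\alpha x$ for all $x$, whence $\alpha=\varepsilon$ and $x\cdot y=x+\psi y$ with $k=0$; this is exactly $(Q,\cdot)=(Q,+)(\varepsilon,\psi,\varepsilon)$. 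Case 2 is entirely dual: $f(x)=k$ makes $k$ a left identity and idempotent, the identity $xy\cdot z=xk\cdot yz$ becomes Sushkevich postulate A$^{\ast}$ for $a\circ b=(a/k)\cdot b$ (since $((a/k)\cdot y)\cdot z=((a/k)\cdot k)\cdot(yz)=a\cdot yz$), Theorem \ref{SUSHKEVICH_POST_A}(2) again yields a group isotope, Corollary \ref{LEFT_GROUP_LINEARITY} gives left linearity, and Lemma \ref{FORM_S_NULEM}(5) together with the left identity forces $x\cdot y=\varphi x+y$, i.e. $(Q,+)(\varphi,\varepsilon,\varepsilon)$. I expect the only delicate point to be the normalization in the last paragraph: one must choose the representing group so that the idempotent $k$ is its zero, and then use idempotency and the one-sided identity together to eliminate both the constant term and the surviving translation factor. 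Everything else is a bookkeeping application of the quoted results.
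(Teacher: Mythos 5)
Your proposal is correct, and in the forward direction it takes a genuinely different (and longer) route than the paper. The paper notes that with $e(x)=k$ the left F-equality can be rewritten as $xy\cdot z = x\cdot (y\cdot L^{-1}_k z)$, i.e.\ Sushkevich postulate B with the \emph{fixed permutation} $\delta=L^{-1}_k$, and then Theorem \ref{SUSHKEVICH_POST_B} delivers in one stroke both the isotopy form $(Q,\cdot)=(Q,\circ)(\varepsilon,\psi,\varepsilon)$ with $\psi\in Aut(Q,\circ)$ and the existence of a right identity $0$; the identification $k=0$ then follows from $x\cdot 0=x=x\cdot k$ and uniqueness of the solution of $x\cdot y=x$. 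You in fact constructed exactly the same auxiliary operation --- your $y\circ z=y\cdot(k\backslash b)$ is $y\cdot L^{-1}_k z$ --- but you invoked only the weaker postulate A (Theorem \ref{SUSHKEVICH_POST_A}), which yields a bare group isotopy, and then rebuilt the precise form through Corollary \ref{LEFT_GROUP_LINEARITY} (Sokhatskii's linearity criterion) and Lemma \ref{FORM_S_NULEM}. This is a legitimate alternative, and it has the minor virtue of making explicit where right linearity comes from; but observing that your $\delta$ is a fixed permutation and citing Theorem \ref{SUSHKEVICH_POST_B} would have collapsed your entire last paragraph. Your converse direction and the dual treatment of Case 2 via postulate A$^{\ast}$ (with $x\circ y=(x/k)\cdot y$, i.e.\ $\delta=R^{-1}_k$, which is likewise really postulate B$^{\ast}$) are fine.

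The one step you flagged is a genuine, if routine, debt: Lemma \ref{FORM_S_NULEM}, Case 6, presupposes that the idempotent element coincides with the zero of the group over which $(Q,\cdot)$ is right linear (its proof begins from $0=0\cdot 0$), whereas Corollary \ref{LEFT_GROUP_LINEARITY} gives right linearity over a group whose zero need not be $k$. To discharge it, pass to the shifted group $x\oplus y = x-k+y$, which has identity $k$ and is isomorphic to $(Q,+)$; then $x\cdot y=\alpha x+\psi y+c=\alpha x\oplus(k+\psi y+c)$, and the map $y\mapsto k+\psi y+c$ is a quasiautomorphism of $(Q,\oplus)$ (the principal isotopy $(R_{-k},\varepsilon,\varepsilon)$ leaves third components of autotopies unchanged), so by Corollary \ref{QUASIAUT_FORM} it equals $L^{\oplus}_d\varphi$ with $\varphi\in Aut(Q,\oplus)$, giving a right-linear form over a group with zero $k$; after that, $x\cdot k=x$ together with $\varphi k=k$ eliminates the first component exactly as in your final step. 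With this inserted, your argument is complete.
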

\begin{proof}
1. We can rewrite equality $x\cdot yz = xy \cdot L_kz$ in the form $xy \cdot z = x(y L^{-1}_kz) = x(y \cdot
\delta z)$, where $\delta  = L^{-1}_k$. Therefore   Sushkevich postulate B  is fulfilled in  $(Q, \cdot)$ and we
can apply Theorem \ref{SUSHKEVICH_POST_B}. Further we have $x\cdot \, 0  = x+ \psi \, 0 = x$. From the other
side $x\cdot k = x$. Therefore, $k = 0$. It is easy to see that converse also is true.

2. We can use \lq\lq mirror\rq\rq \, principles.
\end{proof}

\begin{lemma} \label{LEMMA_4_ED}
1. The endomorphism  $e$ of a left F-quasigroup $(Q, \cdot)$ is a permutation of the set $Q$ if and only if
quasigroup $(Q, \circ)$ of the form $x\circ y = x \cdot e(y)$ is a left distributive quasigroup and $e\in Aut
(Q, \circ)$ \cite{1a}.

2. The endomorphism  $f$ of a right F-quasigroup $(Q, \cdot)$ is a permutation of the set $Q$ if and only if
quasigroup $(Q, \circ)$ of the form $x\circ y = f(x) \cdot y$ is a right  distributive quasigroup and $f\in Aut
(Q, \circ)$ \cite{1a}.
\end{lemma}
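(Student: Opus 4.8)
The plan is to prove statement 1 directly from the left F-identity $x\cdot yz = xy\cdot e(x)z$ together with the fact (Lemma \ref{EM_END}.1) that $e$ is an endomorphism of $(Q,\cdot)$, and then to obtain statement 2 by the mirror principle, reading every product right-to-left and exchanging $e$ with $f$. For statement 1 I would treat the two implications separately, since they have rather different content.

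For the necessity direction I would assume $e$ is a permutation. First I observe that $(Q,\circ)$, where $x\circ y = x\cdot e(y)$, is the isotope $(Q,\cdot)(\varepsilon,e,\varepsilon)$; since $\varepsilon$ and $e$ are permutations, $(Q,\circ)$ is a quasigroup. Next I verify left distributivity by direct computation. Using that $e$ is an endomorphism, $x\circ(y\circ z) = x\cdot e(y\cdot e(z)) = x\cdot(e(y)\cdot e^2(z))$; then, substituting $e(y)$ for $y$ and $e^2(z)$ for $z$ in the left F-identity, I get $x\cdot(e(y)\cdot e^2(z)) = (x\cdot e(y))\cdot(e(x)\cdot e^2(z)) = (x\circ y)\cdot(e(x)\cdot e^2(z))$. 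On the other hand, again by the endomorphism property, $(x\circ y)\circ(x\circ z) = (x\circ y)\cdot e(x\cdot e(z)) = (x\circ y)\cdot(e(x)\cdot e^2(z))$; comparing the two expressions yields $x\circ(y\circ z)=(x\circ y)\circ(x\circ z)$. Finally, $e(x\circ y) = e(x\cdot e(y)) = e(x)\cdot e(e(y)) = e(x)\circ e(y)$, so $e$ is an endomorphism of $(Q,\circ)$, and being a permutation it lies in $Aut(Q,\circ)$.

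For the sufficiency direction I would assume that $(Q,\circ)$ is a (left distributive) quasigroup. Then each left translation $L^{\circ}_x$ is a bijection; but $L^{\circ}_x y = x\circ y = x\cdot e(y) = L^{\cdot}_x e(y)$, so $L^{\circ}_x = L^{\cdot}_x e$ as maps of $Q$. Since $L^{\cdot}_x$ is a bijection, $e = (L^{\cdot}_x)^{-1}L^{\circ}_x$ is a composition of bijections, hence a permutation. Notably this direction needs only that $(Q,\circ)$ is a quasigroup, not left distributivity nor $e\in Aut(Q,\circ)$.

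The reverse implication and the automorphism check are routine; the one place demanding care is the left distributivity computation, where I must feed the correct arguments $e(y)$ and $e^2(z)$ into the left F-identity and keep careful track of which outer operation is $\cdot$ and which is $\circ$. For statement 2 I expect the identical argument under the dual identity $xy\cdot z = xf(z)\cdot yz$, with $f$ in place of $e$ and $x\circ y = f(x)\cdot y$, so I would simply invoke the mirror principle rather than repeat the calculation.
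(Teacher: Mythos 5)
Your proof is correct, and your necessity direction is essentially identical to the paper's: the same chain $x\circ (y\circ z) = x\cdot (e(y)\cdot e^{2}(z)) = (x\cdot e(y))\cdot (e(x)\cdot e^{2}(z)) = (x\circ y)\circ (x\circ z)$, followed by the same check $e(x\circ y) = e(x)\circ e(y)$, and the mirror-principle treatment of statement 2 matches the paper, which likewise omits the repetition. Where you genuinely diverge is the converse. You read it literally: given the left F-quasigroup $(Q,\cdot)$, quasigroup-ness of $(Q,\circ)$ forces $e = (L^{\cdot}_x)^{-1}L^{\circ}_x$ to be a bijection --- valid, and as you rightly observe it needs neither left distributivity nor $e\in Aut(Q,\circ)$ (indeed the hypothesis $e\in Aut(Q,\circ)$ alone makes $e$ a permutation by definition, so the stated backward implication is nearly vacuous). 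The paper's converse proves something stronger with independent content: it starts from an arbitrary isotope $x\cdot y = x\circ \psi(y)$ of a left distributive quasigroup $(Q,\circ)$ with $\psi \in Aut(Q,\circ)$, deduces $\psi\in Aut(Q,\cdot)$ from Lemma \ref{ON AUTOMORPHISM_OF_IDEM_QUAS}, reruns the equalities (\ref{eq_2_1}) in reverse to establish the left F-quasigroup property, and identifies $\psi = e^{-1}$ via Lemma \ref{LEMMA_ON_GEN_F}. Your route buys brevity and settles the equivalence exactly as stated; the paper's route buys the two-way correspondence that is actually invoked later --- for instance in Corollary \ref{LOOP_ISOTOPES__F_QUAS_LEFT_SPCIAL_LOOPS} and Theorem \ref{SIMPLE_F_QUAS}, where one must manufacture left F-quasigroups from automorphism-twisted isotopes of left distributive quasigroups, a use your trivial converse would not support. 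So your argument is a complete proof of the lemma as written, but if it replaced the paper's proof, the later citations of this lemma's ``converse'' would need the paper's extra computation restored somewhere.
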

\begin{proof}
1. Prove that $(Q, \circ)$ is left distributive. We have
\begin{equation}\begin{split} \label{eq_2_1}
& x\circ (y\circ z) = x \cdot e(y\cdot e(z)) = x \cdot (e(y)\cdot e^2(z)) = \\ & (x\cdot e(y))\cdot (e(x)\cdot
e^2(z)) = \\ & (x\cdot e(y))\cdot e(x\cdot e(z)) = (x\circ y)\circ (x\circ z). \end{split} \end{equation}

Prove that $e\in Aut (Q, \circ)$. We have $e(x\circ y) = e(x\cdot e(y)) = e(x) \cdot e^2(y) = e(x)\circ e(y)$
\cite{MARS}.

Converse. Let $(Q,\cdot)$ be an  isotope of the form $x\cdot y = x \circ \psi (y)$, where $\psi \in
Aut(Q,\circ)$, of a left distributive quasigroup $(Q, \circ)$.  The fact that $\psi\in Aut(Q, \cdot)$ follows
from Lemma \ref{ON AUTOMORPHISM_OF_IDEM_QUAS}.

We can use equalities (\ref{eq_2_1}) by the proving that $(Q,\cdot)$ is a left F-quasigroup. The fact that $\psi
= e^{-1}$ follows from Lemma \ref{LEMMA_ON_GEN_F}.

2. The proof is similar.
\end{proof}

Define in a left F-quasigroup $(Q, \cdot)$  the following (maybe and infinite) chain
\begin{equation} \label{chain}
 Q \supset e(Q) \supset e^2(Q)
\supset \dots  \supset e^m(Q) \supset \dots
\end{equation}

\begin{definition}
Chain (\ref{chain}) becomes stable means that there exists a number  $m$ (finite or infinite) such that $e^m(Q)
= e^{m+1}(Q) = e^{m+2}(Q) \dots $. We notice, in other words $$ e^m (Q) = \bigcap_{i=1}^{\infty} e^i(Q) =
\underset{i\rightarrow \infty}{\lim}  e^i(Q). $$ In this case we shall say that endomorphism $e$ has the order
$m$.
\end{definition}

\begin{lemma}\label{l5.7} In any left F-quasigroup $Q$  chain (\ref{chain}) becomes stable,
i.e. the map $ e |_{e^m (Q)}$ is an automorphism of quasigroup $e^m (Q)$.
\end{lemma}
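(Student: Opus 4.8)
The plan is to use Lemma~\ref{EM_END} to turn (\ref{chain}) into a descending chain of subquasigroups on which $e$ acts, and then to show that $e$ restricts to a bijection on the limit term. First I would record that, by Lemma~\ref{EM_END}, each $e^i$ is an endomorphism and each $e^i(Q)$ a subquasigroup, and that $e$ maps $e^i(Q)$ onto $e^{i+1}(Q)$ (the image of $e^i(Q)$ under $e$ is exactly $e^{i+1}(Q)$). Thus (\ref{chain}) is a descending chain of subquasigroups and $e$ is surjective from each term onto the next. Setting $\hat Q=\bigcap_{i\ge 1}e^i(Q)$, the inclusions $e(e^i(Q))=e^{i+1}(Q)$ give $e(\hat Q)\subseteq\bigcap_{i\ge 2}e^i(Q)=\hat Q$, so (once $\hat Q$ is seen to be a subquasigroup) $e$ restricts to an endomorphism $e|_{\hat Q}$; the goal is that $\hat Q=e^m(Q)$ is the stable term and $e|_{\hat Q}$ is an automorphism of $\hat Q$.

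The finite case is clean and I would treat it first. If $Q$ is finite, the descending chain of finite sets terminates, so $e^m(Q)=e^{m+1}(Q)$ for some finite $m$ and $\hat Q=e^m(Q)$. Then $e|_{e^m(Q)}$ maps the finite set $e^m(Q)$ onto itself, hence is also injective, i.e. a bijective endomorphism, which is an automorphism. This already gives stabilization in the sense of the definition and fixes the pattern for the general argument.

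For an arbitrary $Q$ the two points to establish are surjectivity and injectivity of $e$ on $\hat Q$. For surjectivity, given $y\in\hat Q$ I would note that $y\in e^{n+1}(Q)=e(e^n(Q))$ for every $n$, so the sets $F_n=\{z\in e^n(Q):e(z)=y\}$ are nonempty and form a descending chain inside the single $e$-fiber $F_0=e^{-1}(y)$; any point of $\bigcap_n F_n$ lies in $\hat Q$ and maps to $y$, giving $e(\hat Q)=\hat Q$, while the same stabilization forces the fibers of $e|_{\hat Q}$ to be singletons, hence injectivity. Here I would use that every congruence of a quasigroup is normal (Lemma~\ref{CONGR_PRIM_QUAS}) and that all classes of such a congruence have a common cardinality (pass to an LP-isotope, where the classes become cosets of a subloop, Lemma~\ref{LP_AND_subquas}); when this common fiber cardinality is finite the chain $F_0\supseteq F_1\supseteq\cdots$ stabilizes and $\bigcap_nF_n\neq\emptyset$. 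The hard part will be exactly the case of infinite fibers: then a descending chain of nonempty subsets of $F_0$ may a priori have empty intersection, and producing a preimage of $y$ in $\hat Q$ amounts to showing the inverse limit of backward $e$-orbits ending at $y$ is nonempty. This is the one genuinely delicate step, and it is where the uniform cardinality of the fibers together with the interpretation $e^m(Q)=\lim_i e^i(Q)$ built into the definition must be invoked to guarantee both that $\hat Q$ is nonempty and that $e|_{\hat Q}$ is onto.
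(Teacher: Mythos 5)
Your skeleton is the same as the paper's: treat finite stabilization first, then pass to $\hat Q=\bigcap_{i}e^i(Q)$, prove $e(\hat Q)\subseteq \hat Q$ from $e(A\cap B)\subseteq e(A)\cap e(B)$, and try to upgrade this to $e(\hat Q)=\hat Q$. But as written your argument does not close the lemma, and the gap is exactly where you flag it. For surjectivity of $e|_{\hat Q}$ you reduce to the nonemptiness of $\bigcap_n F_n$ for the descending chain of fibers $F_n=\{z\in e^n(Q):e(z)=y\}$, and you correctly observe that when the common fiber cardinality is finite a descending chain of nonempty finite sets has nonempty intersection (the uniform cardinality itself is fine, via normality of $Ker\,e$ as a congruence of $(Q,\cdot,\backslash,/)$, Lemma \ref{CONGR_PRIM_QUAS}, and LP-isotopy, Lemma \ref{LP_AND_subquas}). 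In the infinite-fiber case, however, you only say that the uniform cardinality ``together with the interpretation $e^m(Q)=\lim_i e^i(Q)$ built into the definition must be invoked'' --- that is a statement of what remains to be proved, not a proof; a descending chain of nonempty infinite sets can have empty intersection, and nothing you have established rules this out. The injectivity claim is a second, independent gap: ``the same stabilization forces the fibers of $e|_{\hat Q}$ to be singletons'' does not follow from anything in your text --- even if each chain $F_0\supseteq F_1\supseteq\cdots$ were eventually constant, its intersection could contain two distinct points of $\hat Q$ with the same image. Note that in your own finite case injectivity came from counting (surjective self-map of a finite set), not from fiber stabilization, so there is no pattern to transport. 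Your finite case is also narrower than the paper's Case 1, which covers the chain (\ref{chain}) stabilizing at a finite step $m$ with $Q$ infinite; there $e|_{e^m(Q)}$ is a surjective endomorphism of a possibly infinite quasigroup, and your counting argument for injectivity is unavailable.

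For comparison, the paper's proof of Lemma \ref{l5.7} splits into the same two cases: Case 1 (stabilization at a finite step) is declared clear, and in Case 2 it denotes $C=\bigcap_{i=1}^{\infty}e^i(Q)$, proves $e(C)\subseteq C$ exactly as you do, and then obtains $e(C)=C$ by asserting that every $c\in C$ has the form $c=\lim_{i\to\infty}e^i(a)$ with $a\in Q$, so that $e(c)=\lim_{i\to\infty}e^{i+1}(a)\in C$ --- i.e., surjectivity is absorbed into the informal limit reading of the definition of the stable term, and injectivity on $C$ is not separately argued. So the step you honestly identify as ``genuinely delicate'' is precisely the step the paper passes over with its limit formalism. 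To complete your proposal you would either have to adopt that convention outright (taking the stable term to consist, by definition, of limits of iterates on which $e$ acts bijectively) or supply an actual nonemptiness argument for the inverse limit of backward $e$-orbits together with a genuine injectivity argument; neither is present in your text.
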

\begin{proof}
We have two cases. Case 1. Chain (\ref{chain}) becomes stable on a finite step $m$. It is clear that in this
case
 $ e |_{e^m (Q)}$ is an automorphism of $e^m (Q, \cdot)$.

Case 2. Prove that chain (\ref{chain}) will be stabilize  on the step  $m = \infty$, if it is not stabilized on
a finite step $m$. Denote $\bigcap_{i=1}^{\infty} e^i(Q)$ by $C$.

Notice, if $A\subseteq Q$, $B\subseteq Q$, then $e(A\cap B) \subseteq e(A)\cap e(B)$. Indeed, if $x\in A\cap B$,
then $e(x) \in e( A\cap B)$. If $x\in A\cap B$, then $x\in A$ and $x\in B$. Therefore $e(x) \in e(A)$ and $e(x)
\in e(B)$, $e(x)\in e(A)\cap e(B)$, $e(A\cap B) \subseteq e(A)\cap e(B)$.
 Then  $$
e(C) = e(\bigcap_{i=1}^{\infty} e^i(Q)) \subseteq \bigcap_{i=1}^{\infty} e^{i+1}(Q) = C.$$

Prove that $e(C) = C$. Any element $c\in C$ has the form $c = \underset{i\rightarrow \infty}{\lim}  e^i(a)$,
where $a\in Q$. Then $e(c) = e\left(\underset{i\rightarrow \infty}{\lim}  e^i(a)\right) = \underset{i\rightarrow
\infty}{\lim}  e^{\, i+1}(a)\in C$ for any $c\in C$. Therefore does not exist element $x$ of the set $C$ such
that $e(x)\notin e(C)$.

Therefore for  any $m$ (finite or infinite) $ e |_{e^m (Q)}$ is an automorphism of $e^m (Q, \cdot)$.
\end{proof}

\begin{example} \label{INFINITE_LEFT_F_QUAS}
Quasigroup  $(Z,\cdot)$, where $x\cdot y = -x + y$, $(Z,+)$ is infinite cyclic group, is medial, unipotent, left
F-quasigroup such that $e(x) =  x+x = 2x$.  Notice, in this case $Ker \, e = \{ 0\}$. In \cite{MALTSEV}, p. 59 a
mapping similar to the mapping $e$ is called isomorphism and the embedding of an algebra in its subalgebra.
\end{example}

\begin{theorem} \label{MAIN_LEFT_F}
1. Any  left F-quasigroup $(Q, \cdot)$ has the following structure
$$
(Q, \cdot) \cong (A, \circ) \times (B, \cdot),
$$
where $(A, \circ)$ is a quasigroup with a unique idempotent element; $(B, \cdot)$ is isotope of a left
distributive quasigroup $(B, \star)$, $x \cdot y = x \star \psi y$ for all $x, y \in B$, $\psi \in Aut(B,
\cdot)$, $\psi \in Aut(B, \star)$.

2.  \label{MAIN_RIGHT_F} Any  right  F-quasigroup $(Q, \cdot)$ has the following structure
$$
(Q, \cdot) \cong (A, \circ) \times (B, \cdot),
$$
where $(A, \circ)$ is a quasigroup with a unique idempotent element; $(B, \cdot)$ is isotope of a right
distributive quasigroup $(B, \star)$, $x \cdot y = \varphi x \star  y$ for all $x, y \in B$, $\varphi \in Aut(B,
\cdot)$, $\varphi \in Aut(B, \star)$.
\end{theorem}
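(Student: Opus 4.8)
The plan is to realise the asserted isomorphism through two complementary normal congruences and Theorem~\ref{t5.1}. Since by Lemma~\ref{NORMAL_LEMMA} normal quasigroup congruences always commute in pairs, it suffices to produce normal congruences $W$ and $U$ with $W\cap U=\hat Q$ and $W\vee U=Q\times Q$, and then to identify the quotients $Q/W$ and $Q/U$ with the factors $(B,\cdot)$ and $(A,\circ)$. Conceptually the whole argument is a Fitting-type splitting of $(Q,\cdot)$ governed by the endomorphism $e$: the factor $B$ is the part on which $e$ is an automorphism, and $A$ is the part on which $e$ acts ``nilpotently'', its image chain collapsing to a single point.

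First I would fix the $B$-factor. By Lemma~\ref{EM_END} the map $e$ is an endomorphism and each $e^{i}(Q)$ is a subquasigroup, and by Lemma~\ref{l5.7} the descending chain $Q\supseteq e(Q)\supseteq e^{2}(Q)\supseteq\cdots$ stabilises, so that on $B:=e^{m}(Q)$ the restriction $e|_{B}$ is an automorphism. Being a subquasigroup, $B$ is again a left F-quasigroup (Corollary~\ref{corll_5.4}), and since $e|_{B}$ is a permutation of $B$, Lemma~\ref{LEMMA_4_ED} applies: the operation $x\star y:=x\cdot e|_{B}(y)$ makes $(B,\star)$ left distributive, $e|_{B}\in Aut(B,\star)$, and $x\cdot y=x\star\psi y$ with $\psi=(e|_{B})^{-1}\in Aut(B,\cdot)\cap Aut(B,\star)$, which is exactly the claimed form of the second factor. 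When $m$ is finite the map $\rho:=(e|_{B})^{-m}\circ e^{m}\colon Q\to B$ is an idempotent endomorphism fixing $B$ pointwise; I set $W:=\ker\rho=\ker e^{m}$, a normal congruence with $Q/W\cong B$ by Theorem~\ref{NORM_QUAS_CONGR} (the case $m=\infty$ is handled by the same limiting argument as in Lemma~\ref{l5.7}, where $B$ degenerates and $W=Q\times Q$).

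The heart of the proof, and the step I expect to be the main obstacle, is the construction of the complementary congruence $U$ and the verification that $W\cap U=\hat Q$ and $W\vee U=Q\times Q$. Guided by the module case, $U$ should be the congruence ``modulo $B$'', whose quotient $Q/U$ is the nilpotent part of the splitting; so concretely one must first show that $B=e^{m}(Q)$ is a \emph{normal} subquasigroup, i.e. $\mathbb I_{k}B\subseteq B$ for a fixed $k\in B$ (Theorem~\ref{norm_subquas}), and that the resulting congruence $U$ is transverse to $W$, so that each element is recovered uniquely from the pair $(\rho(q),U(q))$. This transversality is where the defining identity $x\cdot yz=xy\cdot e(x)z$ must enter in an essential way, together with Lemma~\ref{NL1}: passing to a principal loop isotope preserves normality of congruences, so the independence of the two coordinates may be checked after isotoping $(Q,\cdot)$ to a loop, where the $B$-cosets behave additively. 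Granting the two conditions, Theorem~\ref{t5.1} yields $(Q,\cdot)\cong (Q/U)\times(Q/W)=(A,\circ)\times(B,\cdot)$.

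It remains to identify $A:=Q/U$. By Corollary~\ref{corll_5.4}, the homomorphic image $A$ is again a left F-quasigroup, and by construction the endomorphism $\bar e$ induced by $e$ on $A$ has $\bar e^{m}(A)=\{a_{0}\}$ a single element (this is the nilpotency of the $A$-part of the Fitting splitting). From $\bar e^{m+1}(A)=\bar e^{m}(A)$ one gets $\bar e(a_{0})=a_{0}$, so $a_{0}$ is idempotent; and any idempotent $a$ of $A$ is a fixed point of $\bar e$, whence $a=\bar e^{m}(a)\in\bar e^{m}(A)=\{a_{0}\}$, so $a=a_{0}$. Thus $(A,\circ)$ has a unique idempotent, as required. (When the chain already collapses at the first step, $\bar e$ is the zero endomorphism and Lemma~\ref{LEMMA_3_F_QUAS} even exhibits $A$ as a group isotope $x\circ y=x+\psi y$ with $\psi\in Aut(A,+)$.) This proves part~1; part~2 follows by the mirror argument, replacing $e$ by $f$, the chain $e^{i}(Q)$ by $f^{i}(Q)$, and left distributivity by right distributivity throughout, using Lemma~\ref{EM_END}(2), Lemma~\ref{LEMMA_4_ED}(2) and Lemma~\ref{LEMMA_3_F_QUAS}(2).
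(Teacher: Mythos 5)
You have reproduced the paper's framework exactly --- the stabilised chain, $B=e^{m}(Q)$ with $e|_{B}$ an automorphism, Lemma~\ref{LEMMA_4_ED} for the form of the $B$-factor, the congruence $W=Ker\, e^{m}$ (the paper's $\delta$), Theorem~\ref{t5.1}, and your unique-idempotent argument for $A$ is word for word the paper's --- but at the step you yourself call ``the heart of the proof'' you stop. The complementary congruence $U$ is never constructed: you say it ``should be'' the congruence modulo $B$, that one ``must first show'' $B$ is normal, and then you proceed ``granting the two conditions'' $W\cap U=\hat Q$ and $W\vee U=Q\times Q$. Those verifications, together with the congruence property of $U$, are precisely where the left F-equality does its work and are the entire content of the theorem beyond the lemmas you cite; a proof that grants them proves nothing. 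Your alternative route (normality of $B$ via $\mathbb I_{k}B\subseteq B$ and Theorem~\ref{norm_subquas}, then checking transversality after an LP-isotopy via Lemma~\ref{NL1}) is not carried out even in outline: checking $\mathbb I_{k}B\subseteq B$ requires an F-identity computation at least as long as the one being avoided, and the claim that in the loop isotope ``the $B$-cosets behave additively'' is unsupported, since a loop isotopic to a left F-quasigroup is in general only a left M-loop (Theorem~\ref{BELOUSOV_LEFT M_LOOP}), not a group.

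For comparison, the paper closes this gap by a direct construction: it defines $x\,\rho\, y$ if and only if $B\cdot x=B\cdot y$, and proves $\rho$ is a congruence by multiplying the coset equalities $B\cdot x_{1}=B\cdot y_{1}$, $B\cdot x_{2}=B\cdot y_{2}$ componentwise and applying $x\cdot yz=xy\cdot e(x)z$ from right to left, using $e(B)=B$ and $B\cdot B=B$ to collapse $(B\cdot x_{1})\cdot(B\cdot x_{2})$ into $B\cdot (x_{1}\cdot x_{2})$. Triviality of $\delta\cap\rho$ is then the short argument you omitted: $x\,\rho\, y$ gives $a\cdot x=b\cdot y$ with $a,b\in B$; applying $e^{m}$ and using $e^{m}(x)=e^{m}(y)$ yields $e^{m}(a)=e^{m}(b)$, hence $a=b$ because $e^{m}|_{B}$ is a bijection, hence $x=y$. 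Permutability with join $Q\times Q$ is proved by exhibiting, for given $a,c\in Q$, an element $y\in B\cdot c$ with $e^{m}(y)=e^{m}(a)$, which exists because $e^{m}(B\cdot c)=e^{m}(B)\cdot e^{m}(c)=B=e^{m}(Q)$ --- another essential use of the endomorphism property that your sketch never makes. Finally, your side remark that for $m=\infty$ ``$B$ degenerates and $W=Q\times Q$'' is false in general: the direct product of Example~\ref{INFINITE_LEFT_F_QUAS} with a left distributive quasigroup is a left F-quasigroup whose chain stabilises only at infinity while $B=\bigcap_{i}e^{i}(Q)$ is nontrivial (and $Ker\, e^{i}$ is the diagonal for every finite $i$); the paper's coset-based definitions of $\delta$ and $\rho$ are what allow finite and infinite $m$ to be treated uniformly.
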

\begin{proof}
The proof of this theorem mainly repeats the proof of Theorem 6 from \cite{SC05}.

If the map $e$ is a permutation of the set $Q$, then  by Lemma \ref{LEMMA_4_ED} $(Q,\cdot)$ is isotope of left
distributive  quasigroup.

If  $e(Q) = k$, where $k$ is a fixed element of the set $Q$, then the quasigroup $(Q,\cdot)$ is a quasigroup
with right identity element $k$, i.e. it is a right loop, which is isotopic to a group $(Q, +)$ (Lemma
\ref{LEMMA_3_F_QUAS}).

Let us to suppose that $e^m = e^{m+1}$, where $m > 1$.

From Lemma \ref{l5.7} it follows that $e^m(Q,\cdot) = (B,\cdot)$ is a  subquasigroup of quasigroup $(Q,\cdot)$.
It is clear that $(B,\cdot)$ is a left  F-quasigroup in which the map $\overline{e} = e |_{e^m (Q)}$ is a
permutation of the set $B \subset Q$. In other words $e(B, \cdot) = (B, \cdot)$.

 Define binary relation $\delta$ on  quasigroup $(Q,\cdot)$ by the following rule: $x\delta y$ if and
only if $e^m(x) = e^m(y)$. Define binary relation $\rho$  on  quasigroup $(Q,\cdot)$ by the rule: $x\rho y$ if
and only if  $B \cdot  x = B \cdot y$, i.e. for any $b_1 \in B$ there exists exactly one element $b_2 \in B$
such that $b_1\cdot  x = b_2 \cdot y$ and vice versa,  for any $b_2 \in B$ there exists exactly one element $b_1
\in B$ such that $b_1\cdot  x = b_2 \cdot y$.

From Theorem \ref{NORM_QUAS_CONGR} and Lemma \ref{l_5.4} it follows that $\delta$ is a normal congruence.

It is easy to check that  binary relation  $\rho$ is  equivalence relation (see Theorem \ref{EQUIVALENCE_TH}).

We prove that binary relation $\rho$ is a  congruence, i.e. that the following implication is true: $ x_1 \rho
y_1, $ $ x_2 \rho y_2,  \Longrightarrow  (x_1 \cdot x_2) \rho (y_1 \cdot y_2).$

Using the definition of relation $\rho$ we can re-write the last implication in the following equivalent form:
if
\begin{equation} \label{eqno(14_1)}
\begin{split}
 B \cdot x_1 = B \cdot y_1,
  B \cdot x_2 = B \cdot y_2,
\end{split}
\end{equation}
 then $B \cdot (x_1 \cdot x_2) = B \cdot (y_1 \cdot y_2)$.

If we multiply both sides of equalities (\ref{eqno(14_1)}), respectively,  then we obtain the following equality
\begin{equation*} \begin{split}
(\overset{x}{B} \cdot \overset{y}{x_1}) \cdot (\overset{e(x)}B \cdot \overset{z}{x_2}) = (B \cdot y_1)\cdot (B
\cdot y_2).
\end{split}
\end{equation*}

Using left F-quasigroup equality ($x\cdot y z = x y \cdot e(x) z$) from the right to the left and, taking into
consideration that if $x\in B$, then $e(x) \in B$, i.e. $e B = B$,  we can re-write the last equality in the
following form
\begin{equation*}
B \cdot (x_1 \cdot x_2) = B \cdot (y_1 \cdot y_2),  \label{eqno(15_1)}
\end{equation*}
since $(B, \cdot)$ is a subquasigroup and, therefore,  $B\cdot B = B$.  Thus the binary relation $\rho$ is a
congruence.

Prove that $\delta \cap \rho = \hat{Q} = \{(x,x) \vert  \, \forall \, x \in Q\}$. From reflexivity of relations
$\delta, \rho$ it follows  that $\delta \cap \rho \supseteq \hat{Q}.$

Let $(x,y) \in \delta \cap \rho$, i.e. let  $x\ \delta \ y$ and $x\ \rho \ y$ where $x, y \in Q.$ Using the
definitions of relations $\delta, \rho$ we have $e^m(x) = e^m(y)$ and $(B, \cdot) \cdot x = (B, \cdot)\cdot y$.
Then there exist $a, b \in B$ such that $a\cdot x = b \cdot y$. Applying to both sides of last equality the map
$e^m$ we obtain $e^m(a)\cdot e^m(x) = e^m(b) \cdot e^m(y)$, $e^m(a) = e^m(b)$, $a =b$, since the map $e^m |_B$
is a permutation of the set $B$. If $a =b$, then from equality $a\cdot x = b \cdot y$ we obtain $x=y$.

Prove that $\delta \circ \rho = Q\times Q.$  Let $a, c$ be any fixed elements of the set $Q$. We prove the
equality if it will be shown that  there exists  element $y\in Q$ such that $a\delta y$ and $y\rho c$.

From definition of congruence $\delta$  we have  that  condition $a\delta y$ is equivalent to equality $e^m (a)
= e^m (y).$ From definition of congruence $\rho$  it follows that condition $y\rho c$ is equivalent to the
following condition: $y \in \rho (c) = B\cdot c$.

We prove the equality if it will be shown that  there exists  element $y\in B\cdot c$ such that $e^m (a) = e^m
(y).$ Such element $y$ there exists since  $ e^m(B\cdot c) = e^m (B) \cdot e^m (c) = B = e^m(Q)$.

Prove that $\rho \circ \delta = Q\times Q.$  Let $a, c$ be any fixed elements of the set $Q$. We prove the
equality if it will be shown that  there exists  element $y\in Q$ such that $a\rho  y$ and $y \delta c$.

From definition of congruence $\delta$  we have  that  condition $y\delta c$ is equivalent to equality $e^m (c)
= e^m (y).$ From definition of congruence $\rho$  it follows that condition $a\rho y$ is equivalent to the
following condition: $y \in \rho (a) = B\cdot a$.

We prove the equality if it will be shown that  there exists  element $y\in B\cdot a$ such that $e^m (c) = e^m
(y).$ Such element $y$ there exists since  $ e^m(B\cdot a) = e^m (B) \cdot e^m (a) = B = e^m(Q)$.

Therefore $\rho \circ \delta = Q\times  Q  = \delta \circ \rho$, $\delta \cap \rho = \hat{Q}$  and we can use
Theorem \ref{t5.1}. Now we can say that  quasigroup  $(Q,\cdot)$ is isomorphic to the direct product of  a
quasigroup $(Q, \cdot) \slash \delta \cong (B, \cdot)$  (Theorem  \ref{NORM_QUAS_CONGR}) and a division groupoid
$(Q, \cdot) \slash \rho \cong (A, \circ)$ \cite{BK, RHB}.

From Definition \ref{EXTERNAL_DIRECT} it follows, if  $(Q, \cdot) \cong (B, \cdot) \times (A, \circ)$, where
$(Q, \cdot)$, $(B, \cdot)$ are quasigroups, then $(A, \circ) $ also is a quasigroup. Then by Theorem
\ref{NORM_QUAS_CONGR} the congruence $\rho$ is normal, $(B,\cdot) \trianglelefteqslant (Q,\cdot)$.

Left F-quasigroup equality  holds in quasigroup  $(B,\cdot)$ since  $(B, \cdot)\subseteq (Q,\cdot).$

If the quasigroups $(Q, \cdot)$ and $(B, \cdot)$ are left F-quasigroups, $(Q, \cdot) \cong (A, \circ) \times (B,
\cdot),$  then  $(A, \circ)$ also is  a left F-quasigroup (Lemma \ref{Componets_OF_DIR_PROD}).

Prove that the quasigroup $(A,\circ) \cong (Q,\cdot)/(B,\cdot)$, where  $e^m(Q,\cdot) = (B,\cdot)$, has  a
unique idempotent element.

We can identify elements of quasigroup $(Q,\cdot)/(B,\cdot)$ with cosets of the form $B\cdot c$, where $c\in Q$.

 From properties of quasigroup $(A,\circ)$  we have  that  $e^m
(A) = a$, where the element $a$ is a fixed element of the set $A$ that corresponds to the coset class $B$.
Further, taking into consideration the properties of endomorphism $e$ of the quasigroup $(A,\circ)$, we obtain
$e^{m+1}A = e(e^m A ) = e(a) = a$. Therefore $e(a)=a$, i.e. the element $a$ is an idempotent element of
quasigroup $(A,\circ)$.

Prove that there exists exactly one idempotent element in  quasigroup $(A,\circ)$. Suppose that there exists an
element $c$ of the set  $A$ such that $c\circ c = c$, i.e. such that $e(c) = c$. Then we have $e^m(c) = c = a$,
since $e^m (A) =a$.

The fact that $(B, \cdot)$ is isotope of a left distributive quasigroup $(B, \star)$ follows from Lemma
\ref{LEMMA_4_ED}.

2. Properties of right F-quasigroups coincide with the \lq\lq mirror\rq\rq \, properties of left
F-qua\-si\-gro\-ups.
\end{proof}

We notice,   in finite case all congruences are normal and permutable  (Lemmas \ref{l5.4} and
\ref{NORMAL_LEMMA}). Therefore  for finite case Theorem \ref{MAIN_LEFT_F} can be  proved in more short way.

We add some details on the structure of left F-quasigroup $(Q, \cdot)$. By $e^j (Q, \cdot)$ we denote
endomorphic image of the quasigroup $(Q,\cdot)$ relative to the endomorphism $e^j$.
\begin{corollary} \label{LEFT_F_NORMAL_SUBQUAS}
If $(Q, \cdot)$ is a left F-quasigroup, then   $e^m(Q,\cdot) \trianglelefteqslant (Q,\cdot)$.
\end{corollary}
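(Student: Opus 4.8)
The plan is to observe that the required conclusion is already obtained inside the proof of Theorem \ref{MAIN_LEFT_F}, and to make the identification explicit. Write $(B,\cdot) = e^m(Q,\cdot)$. By Lemma \ref{l5.7} this is a subquasigroup of $(Q,\cdot)$ on which $e$ restricts to an automorphism, so in particular $eB = B$ and $B\cdot B = B$. By the definition of a normal subquasigroup, the goal is to exhibit $(B,\cdot)$ as a coset class of some normal congruence of $(Q,\cdot)$.

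First I would recall the congruence $\rho$ introduced in the proof of Theorem \ref{MAIN_LEFT_F}, defined by $x\,\rho\,y$ if and only if $B\cdot x = B\cdot y$. There it is shown that $\rho$ is a congruence and, via the direct decomposition $(Q,\cdot)\cong (A,\circ)\times (B,\cdot)$ together with Theorem \ref{NORM_QUAS_CONGR}, that $\rho$ is in fact normal. Hence no new work is needed to establish normality of $\rho$; I would simply cite that portion of the argument.

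The one point still to check is that $(B,\cdot)$ is \emph{exactly one} $\rho$-class rather than a union of several. For $b\in B$, since $B$ is a subquasigroup we have $B\cdot b = B$, so $\rho(b) = \{\, y\in Q \mid B\cdot y = B \,\}$. If $y\in B$ then $B\cdot y = B$; conversely, if $B\cdot y = B$, then for any $b_1\in B$ the element $b_1\cdot y$ lies in $B$, whence $y = b_1\backslash(b_1\cdot y)\in B$ because $(B,\cdot)$ is a subquasigroup of $(Q,\cdot,\backslash,/)$ and is therefore closed under $\backslash$ (Lemma \ref{SUB_GROUPOIDS}). Thus $\rho(b) = B$, so $e^m(Q,\cdot) = (B,\cdot)$ is a coset class of the normal congruence $\rho$, which gives $e^m(Q,\cdot)\trianglelefteqslant(Q,\cdot)$.

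I do not expect a genuine obstacle, since the substantive work (constructing $\rho$ and proving its normality) is carried out in Theorem \ref{MAIN_LEFT_F}; the only care needed is to confirm that this single argument covers the degenerate cases uniformly, so that no separate case analysis is required. When $e$ is a permutation, $B = Q$ and $\rho$ is the universal congruence with $\rho(b) = Q = B$; when $e$ is the zero endomorphism, $B = \{k\}$ is a single idempotent and $\rho$ collapses to the diagonal $\hat{Q}$ with $\rho(k) = \{k\} = B$. In both extremes $(B,\cdot)$ is again a coset class of a normal congruence, so the statement holds in full generality.
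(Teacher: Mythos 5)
Your proof is correct and follows essentially the same route as the paper: the paper's own argument likewise cites the normal congruence $\rho$ constructed in the proof of Theorem \ref{MAIN_LEFT_F} and the fact that $e^m(Q,\cdot)=(B,\cdot)$ is one of its equivalence classes. Your additional verification that $B$ is exactly one $\rho$-class (using closure of $B$ under $\backslash$) and your check of the degenerate cases fill in details the paper leaves implicit, and both are done correctly.
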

\begin{proof}
This follows from the fact that the binary relation $\rho$ from Theorem \ref{MAIN_LEFT_F} is a normal congruence
in $(Q, \cdot)$ and subquasigroup  $e^m(Q,\cdot) = (B, \cdot)$ is an equivalence class of $\rho$.
\end{proof}

\begin{remark}\label{OBOZNACH_ENDOM}
For brevity we shall denote the endomorphism   $e|_{\, e^j(Q,\cdot)}$ such that  $$e|_{\, e^j(Q,\cdot)}:
e^j(Q,\cdot) \rightarrow e^{j+1}(Q,\cdot)$$  by $e_j$, the endomorphism   $f|_{\, f^j(Q,\cdot)}$ by $f_j$,  the
endomorphism   $s|_{\, s^j(Q,\cdot)}$ by $s_j$.
\end{remark}

\begin{corollary} \label{LEFT_QUAS_MATRESHKA}
If $(Q, \cdot)$ is a left F-quasigroup with an idempotent element, then equivalence class (cell)  $\bar a $ of
the normal congruence $Ker\, e_j$ containing an idempotent element $a\in Q$ forms linear right loop $(\bar a,
\cdot)$ for all suitable values of $j$.
\end{corollary}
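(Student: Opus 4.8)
The plan is to exhibit the cell $\bar a$ as a subquasigroup on which the endomorphism $e$ degenerates to a constant map, and then to read off its structure from Lemma \ref{LEMMA_3_F_QUAS}.

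First I would check that $\bar a$ lives inside the domain of $e_j$ and is a subquasigroup. Since $a$ is idempotent, $e(a)=a\backslash a=a$, whence $e^{j}(a)=a$ and $a\in e^{j}(Q,\cdot)$ for every $j$; thus $a$ lies in the domain $e^{j}(Q,\cdot)$ of $e_j$. By Lemma \ref{EM_END} the image $e^{j+1}(Q,\cdot)$ is a subquasigroup, so $e_j$ is a surjective homomorphism of $e^{j}(Q,\cdot)$ onto this quasigroup, and Theorem \ref{NORM_QUAS_CONGR} makes $Ker\,e_j$ a normal congruence of $e^{j}(Q,\cdot)$. Because $a\cdot a=a$, the relation $(a\cdot a)\,(Ker\,e_j)\,a$ holds by reflexivity, so Lemma \ref{BINARY_SUB_QUAS} shows that the cell $\bar a$ is a subquasigroup of $e^{j}(Q,\cdot)$, hence of $(Q,\cdot)$; by Corollary \ref{corll_5.4} it is again a left F-quasigroup.

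The crucial step is to describe $\bar a$ explicitly. By definition $x\in\bar a$ means $e_j(x)=e_j(a)$, i.e. $e(x)=e(a)=a$; hence $e|_{\bar a}$ is the constant map with value $a$. In particular $e(\bar a)=\{a\}\subseteq\bar a$, so $\bar a$ is $e$-invariant and $e|_{\bar a}$ is an endomorphism of $(\bar a,\cdot)$. Moreover, for each $x\in\bar a$ we have $x\cdot a=x\cdot e(x)=x$ with $a\in\bar a$, so inside the subquasigroup $(\bar a,\cdot)$ the element $a$ is the right local identity of every $x$; therefore the $e$-map of the left F-quasigroup $(\bar a,\cdot)$ is exactly this constant map, i.e. the zero endomorphism (with $a$ in the role of $0$).

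Finally I would apply Lemma \ref{LEMMA_3_F_QUAS} to $(\bar a,\cdot)$: a left F-quasigroup whose $e$ is the zero endomorphism is a right loop and an isotope $(\bar a,\cdot)=(\bar a,+)(\varepsilon,\psi,\varepsilon)$ of a group $(\bar a,+)$, that is, $x\cdot y=x+\psi y$ with $\psi\in Aut(\bar a,+)$ and with $a$ as right identity. This is precisely the claimed linear right loop, and since nothing in the argument depends on the particular value of $j$, the conclusion holds for all suitable $j$. I expect the only delicate point to be the identification of the $e$-map computed internally in $(\bar a,\cdot)$ with the restriction $e|_{\bar a}$, which is what licenses the use of Lemma \ref{LEMMA_3_F_QUAS}; this identification rests on $\bar a$ being closed and on $a\in\bar a$ serving as the common right local identity.
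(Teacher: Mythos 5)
Your proposal is correct and follows essentially the same route as the paper: show the cell $\bar a$ is a subquasigroup via Lemma \ref{BINARY_SUB_QUAS}, observe that $e$ restricted to $\bar a$ is the zero endomorphism (constant with value $a$), and apply Lemma \ref{LEMMA_3_F_QUAS}. You have merely made explicit two points the paper leaves implicit --- that $a\in e^{j}(Q)$ so the cell is well defined for every $j$, and that the $e$-map computed internally in $(\bar a,\cdot)$ agrees with $e|_{\bar a}$ by uniqueness of right local identities --- both of which are sound.
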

\begin{proof}
By Lemma  \ref{BINARY_SUB_QUAS} $(\bar a, \cdot)$ is a quasigroup. From properties of the endomorphism $e$ we
have  that in $(\bar a, \cdot)$ endomorphism $e$ is zero endomorphism. Therefore in this case we can apply Lemma
\ref{LEMMA_3_F_QUAS}. Then $(\bar a, \cdot)$ is isotopic to a group with isotopy of the form $(\varepsilon,
\psi, \varepsilon)$, where $\psi \in Aut(\bar a, \cdot)$.
\end{proof}

\begin{corollary}
If $(Q, \cdot)$ is a right F-quasigroup, then   $f^m(Q,\cdot) \trianglelefteqslant (Q,\cdot)$.
\end{corollary}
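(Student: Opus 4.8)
The plan is to run the ``mirror'' of the proof of Corollary~\ref{LEFT_F_NORMAL_SUBQUAS}, interchanging left with right, the endomorphism $e$ with $f$, and left-multiplication with right-multiplication. Part~2 of Theorem~\ref{MAIN_LEFT_F} already supplies the structural decomposition $(Q,\cdot)\cong(A,\circ)\times(B,\cdot)$ for a right F-quasigroup, together with the two congruences on which it rests; all that remains is to read off the normality of the factor $f^m(Q,\cdot)=(B,\cdot)$.

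First I would collect the ingredients built inside the (mirror of the) proof of Theorem~\ref{MAIN_LEFT_F}. By Lemma~\ref{EM_END}(2) every $f^i$ is an endomorphism of $(Q,\cdot)$ and every $f^i(Q,\cdot)$ a subquasigroup; the descending chain $Q\supset f(Q)\supset f^2(Q)\supset\dots$ stabilizes at $f^m(Q,\cdot)=(B,\cdot)$, on which $f$ restricts to an automorphism (the mirror of Lemma~\ref{l5.7}). One then introduces $\delta$, defined by $x\,\delta\,y\Leftrightarrow f^m(x)=f^m(y)$, which is a normal congruence by Theorem~\ref{NORM_QUAS_CONGR} and Lemma~\ref{l_5.4}, and the mirror relation $\rho$, defined by $x\,\rho\,y\Leftrightarrow x\cdot B=y\cdot B$. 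The proof of Theorem~\ref{MAIN_LEFT_F} shows that $\rho$ is a congruence with $\delta\cap\rho=\hat{Q}$ and $\delta\circ\rho=\rho\circ\delta=Q\times Q$, so Theorem~\ref{t5.1} applies and, via Theorem~\ref{NORM_QUAS_CONGR}, forces $\rho$ to be a \emph{normal} congruence.

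Next I would identify $f^m(Q,\cdot)$ with a $\rho$-class. Under the decomposition the classes of $\rho$ are exactly the fibres of the projection onto $(A,\circ)$, and since $f^m(A,\circ)$ consists of the unique idempotent $a$ of $A$ while $f^m(B,\cdot)=(B,\cdot)$, we get that $f^m(Q,\cdot)$ is precisely the $\rho$-class corresponding to $a$, exactly as $e^m(Q,\cdot)$ was a $\rho$-class in the left-handed case. Being an equivalence class of the normal congruence $\rho$, the subquasigroup $f^m(Q,\cdot)$ is normal by the definition of a normal subquasigroup, which is the assertion $f^m(Q,\cdot)\trianglelefteqslant(Q,\cdot)$.

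The one step I expect to demand real care is verifying that $\rho$ is multiplicative in the right-handed setting, i.e.\ that $x_1\cdot B=y_1\cdot B$ and $x_2\cdot B=y_2\cdot B$ imply $(x_1x_2)\cdot B=(y_1y_2)\cdot B$. Here one multiplies the two equalities and rewrites $(x_1\cdot B)\cdot(x_2\cdot B)=(x_1\cdot f(B))\cdot(x_2\cdot B)$, using $fB=B$, and then applies the right F-identity $xy\cdot z=xf(z)\cdot yz$ from right to left to collapse this to $(x_1x_2)\cdot B$; together with $B\cdot B=B$ this yields $(x_1x_2)\cdot B=(y_1y_2)\cdot B$. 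This is the exact mirror of the multiplication of equalities carried out for $\rho$ in Theorem~\ref{MAIN_LEFT_F}, and once it is in place the remaining steps are a direct transcription of the left-handed argument.
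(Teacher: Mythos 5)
Your proposal is correct and follows essentially the same route as the paper, whose proof of this corollary is just the ``mirror'' of Corollary \ref{LEFT_F_NORMAL_SUBQUAS}: the relation $\rho$ (here $x\,\rho\,y \Leftrightarrow x\cdot B = y\cdot B$) built in the right-handed version of Theorem \ref{MAIN_LEFT_F} is a normal congruence by Theorem \ref{NORM_QUAS_CONGR}, and $f^m(Q,\cdot)=(B,\cdot)$ is one of its equivalence classes. Your careful check of multiplicativity of $\rho$ via the right F-identity $xy\cdot z = xf(z)\cdot yz$ with $f(B)=B$ is exactly the mirrored computation the paper performs (but leaves implicit) for the left-handed case.
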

\begin{proof}
The proof is similar to the proof of Corollary \ref{LEFT_F_NORMAL_SUBQUAS}.
\end{proof}

\begin{corollary} \label{RIGHTF_QUAS_MATRESHKA}
If $(Q, \cdot)$ is a right  F-quasigroup with an idempotent element, then equivalence class   $\bar a $ of the
normal congruence $Ker\, f_j$ containing an idempotent element $a\in Q$ forms linear left  loop $(\bar a,
\cdot)$ for all suitable values of $j$.
\end{corollary}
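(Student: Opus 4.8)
The plan is to mirror, step for step, the proof of Corollary \ref{LEFT_QUAS_MATRESHKA}, replacing the endomorphism $e$ of the left case by the endomorphism $f$ and invoking the right-handed analogues of the auxiliary results. First I would note that $f_j$ is a homomorphism of the quasigroup $f^j(Q,\cdot)$ (a subquasigroup by Lemma \ref{EM_END}, Case 2) onto $f^{j+1}(Q,\cdot)$, so that by Theorem \ref{NORM_QUAS_CONGR} the kernel $Ker\, f_j$ is a normal congruence, as the statement presupposes. To see that the cell $\bar a$ is a subquasigroup I would apply Lemma \ref{BINARY_SUB_QUAS} to $f^j(Q,\cdot)$ equipped with the congruence $Ker\, f_j$: the equivalence class containing $a$ is a subquasigroup exactly when $(a\cdot a)\, Ker\, f_j\, a$, and this holds trivially because $a$ is idempotent, so $a\cdot a = a$. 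Being a subquasigroup of a right F-quasigroup, $(\bar a, \cdot)$ is itself a right F-quasigroup (Remark \ref{REM_7_NORM}).

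The key step is to identify how $f$ acts on $\bar a$. For the idempotent $a$ one has $f(a) = a$, while every $x \in \bar a$ satisfies $f_j(x) = f_j(a)$, i.e. $f(x) = a$; thus the restriction of $f$ to $\bar a$ is the constant map onto $a$, that is, the zero endomorphism of $(\bar a, \cdot)$ with $k = a$. I would then invoke Lemma \ref{LEMMA_3_F_QUAS}, Case 2: a right F-quasigroup whose endomorphism $f$ is zero is a left loop and an isotope of a group $(\bar a, +)$ with isotopy of the form $(\varphi, \varepsilon, \varepsilon)$, where $\varphi \in Aut(\bar a, +)$. Hence $x\cdot y = \varphi x + y$, which is precisely a linear left loop, as required.

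I expect the main obstacle to be this middle step: one must verify carefully that $f$ collapses the entire cell $\bar a$ onto the single idempotent $a$, so that $a$ can play the role of the group zero and the hypothesis ``$f$ is the zero endomorphism'' of Lemma \ref{LEMMA_3_F_QUAS} genuinely applies to the subquasigroup $(\bar a, \cdot)$. Once this is secured, the remainder is a routine transcription of the left-handed argument under the mirror symmetry between left and right F-quasigroups.
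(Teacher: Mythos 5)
Your proposal is correct and takes essentially the same approach as the paper: the paper proves the left-handed Corollary \ref{LEFT_QUAS_MATRESHKA} via Lemma \ref{BINARY_SUB_QUAS} and Lemma \ref{LEMMA_3_F_QUAS}, and then disposes of the right-handed case by declaring it \lq\lq similar\rq\rq, which is exactly the mirror transcription you carry out using Case 2 of Lemma \ref{LEMMA_3_F_QUAS}. Your additional verifications (that $f(a)=a$ follows from idempotency of $a$, that $Ker\, f_j$ is a normal congruence via Theorem \ref{NORM_QUAS_CONGR}, and that $f$ collapses the whole cell $\bar a$ onto $a$ so that the zero-endomorphism hypothesis genuinely applies) merely make explicit the steps the paper leaves implicit.
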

\begin{proof}
The proof is similar to the proof of Corollary \ref{LEFT_QUAS_MATRESHKA}.
\end{proof}

\subsection{Left and right  SM- and  E-quasigroups}

 We can  formulate  theorem on the structure of left semimedial quasigroup   using connections
between a quasigroup and its (23)-parastrophe (Lemma \ref{LEMMA_PARASTR}), but in order to have more information
about left semimedial quasigroup we prefer to give direct formulations some  results from section
\ref{SECTION_LEFT_F_QUAS}.

\begin{lemma} \label{EM_END_MIDD}
\begin{enumerate}
    \item In a left semimedial quasigroup $(Q, \cdot)$ the map $s^i$ is an  endomorphism of $(Q,\cdot)$, $s^i(Q, \cdot)$
is a subquasigroup of quasigroup $(Q,\cdot)$ for all suitable values of the index i \cite{MURD_39, 1a}.
    \item In a right semimedial quasigroup $(Q, \cdot)$ the map $s^i$ is an  endomorphism of $(Q,\cdot)$, $s^i(Q, \cdot)$
is a subquasigroup of quasigroup $(Q,\cdot)$ for all suitable values of the index i.
    \item In a left E-quasigroup $(Q, \cdot)$ the map $f^i$ is an  endomorphism of $(Q,\cdot)$, $f^i(Q, \cdot)$
is a subquasigroup of quasigroup $(Q,\cdot)$ for all suitable values of the index m \cite{Kin_PHIL_04}.
    \item In a right E-quasigroup $(Q, \cdot)$ the map $e^i$ is an  endomorphism of $(Q,\cdot)$, $e^i(Q, \cdot)$
is a subquasigroup of quasigroup $(Q,\cdot)$ for all suitable values of the index m \cite{Kin_PHIL_04}.
\end{enumerate}
\end{lemma}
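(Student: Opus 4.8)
The plan is to mirror the proof of Lemma~\ref{EM_END}: for each of the four classes I would feed a single, well-chosen substitution into the defining identity so that, using the defining relation of the relevant local-identity map ($s(x)=x\cdot x$, $f(x)\cdot x=x$, or $x\cdot e(x)=x$), one side collapses and the identity reduces to the multiplicativity of that map. Once the map in question is shown to be an endomorphism of $(Q,\cdot)$, its powers are endomorphisms as composites of endomorphisms, and each image $s^i(Q,\cdot)$, $f^i(Q,\cdot)$, $e^i(Q,\cdot)$ is a subquasigroup by Lemma~\ref{l_5.4}. So the whole statement rests on four short computations.

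For the semimedial cases (1) and (2) the substitution is $z=y$. In a left semimedial quasigroup the identity reads $xx\cdot yz = xy\cdot xz$, and putting $z=y$ gives $xx\cdot yy = xy\cdot xy$, that is $s(x)\cdot s(y)=s(x\cdot y)$; hence $s$ is an endomorphism. In a right semimedial quasigroup $zy\cdot xx = zx\cdot yx$, and $z=y$ yields $yy\cdot xx = yx\cdot yx$, i.e. $s(y)\cdot s(x)=s(y\cdot x)$, so $s$ is again an endomorphism.

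For the E-quasigroup cases (3) and (4) I would use the defining relation of the corresponding local identity to collapse the inner product on one side. In a left E-quasigroup $x\cdot yz = f(x)y\cdot xz$; setting $y=f(z)$ and using $f(z)\cdot z=z$ turns the left side into $x\cdot z$, so $x\cdot z = f(x)f(z)\cdot xz$, whence $f(x\cdot z)=f(x)\cdot f(z)$. Dually, in a right E-quasigroup $zy\cdot x = zx\cdot ye(x)$; setting $y=e(z)$ and using $z\cdot e(z)=z$ gives $z\cdot x = zx\cdot e(z)e(x)$, so $e(z\cdot x)=e(z)\cdot e(x)$. Thus $f$ (resp.\ $e$) is an endomorphism.

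Having the four base maps as endomorphisms, I would finish uniformly: for every index $i$ the iterate is an endomorphism, so its image is a subquasigroup by Lemma~\ref{l_5.4}, which proves all four parts. Cases (2) and (4) could alternatively be obtained from (1) and (3) by the mirror principle, or one could route the semimedial cases through Lemma~\ref{LEMMA_PARASTR}, but the direct substitutions are shorter and keep the local-identity bookkeeping explicit. The only real point of care --- the ``main obstacle'' --- is selecting, for each class, the correct local-identity map together with the substitution that collapses the relevant factor; after that each reduction is a one-line verification with no case analysis.
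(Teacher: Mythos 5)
Your proposal is correct and follows essentially the same route as the paper: the identical substitution $z=y$ in the two semimedial identities, the same collapsing substitutions ($y=f(z)$, resp.\ $y=e(z)$, matching the paper's $y\to f(y)$, $z\to y$ up to renaming) combined with the uniqueness of the local identity element to cancel and obtain multiplicativity, and the same appeal to Lemma~\ref{l_5.4} for the subquasigroup claim about the iterated images. Nothing further is needed.
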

\begin{proof}
Case 1. From identity  $x x \cdot y z = x y \cdot x z$ by $z= y$ we have $xx \cdot yy  = x y \cdot xy$, i.e.
$s(x) \cdot (y) = s(x\cdot y)$.  Therefore $s^i$ is an endomorphism of the quasigroup $(Q, \cdot)$.

The fact that  $s^i (Q, \cdot)$ is a subquasigroup of quasigroup $(Q, \cdot)$ follows from Lemma \ref{l_5.4}.

Case 2. The proof of Case 2 is similar to the proof of Case 1 and we omit them.

Case 3. From identity $x \cdot yz = f(x)y \cdot xz $ by $y=f(y)$, $z = y$ we have $xy = f(x)f(y)  \cdot xy $.
But $f(xy) \cdot xy = xy$. Therefore $f(x)\cdot f(y) = f(x\cdot y)$ \cite{Kin_PHIL_04}.

Case 4. From identity $zy \cdot x = zx \cdot y e(x) $ by $y=e(y)$, $z=y$ we have $y  x = yx \cdot e(y) e(x)$.
\end{proof}

\begin{theorem} \label{THEOREM_3_F_QUAS_MIDD}\begin{enumerate}
    \item If the endomorphism  $s$ of a left semimedial quasigroup $(Q, \cdot)$ is zero endomorphism, i.e. $s(x)=0$ for all
$x\in Q$, then  $(Q, \cdot)$ is an unipotent quasigroup, $(Q,\cdot) \cong (Q, \circ)$, where  $x\circ y = -
\varphi x + \varphi y$,  $(Q, +)$ is a group, $\varphi \in Aut (Q,+)$.
    \item If the endomorphism  $s$ of a right semimedial quasigroup $(Q, \cdot)$ is zero endomorphism, i.e. $s(x)=0$ for all
$x\in Q$, then  $(Q, \cdot)$ is an unipotent quasigroup, $(Q,\cdot) \cong (Q, \circ)$, where  $x\circ y =
\varphi x - \varphi y$,  $(Q, +)$ is a group, $\varphi \in Aut (Q,+)$.
    \item If the endomorphism  $f$ of a left E-quasigroup $(Q, \cdot)$ is zero endomorphism, i.e. $f(x)=0$ for all
$x\in Q$, then up to isomorphism  $(Q, \cdot)$ is a left  loop,  $x\cdot y = \alpha x + y$,  $(Q, +)$ is an
abelian group, $\alpha 0 =0$.
    \item If the endomorphism  $e$ of a right E-quasigroup $(Q, \cdot)$ is zero endomorphism, i.e. $e(x)=0$ for all
$x\in Q$, then up to isomorphism  $(Q, \cdot)$ is a right  loop,  $x\cdot y =  x +  \beta y$,  $(Q, +)$ is an
abelian group,  $\beta 0 =0$.
\end{enumerate}
\end{theorem}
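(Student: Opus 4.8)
The plan is to split the four cases into two pairs. Cases 1 and 2, the semimedial cases, I would settle by parastrophy, reducing them to the already established Lemma~\ref{LEMMA_3_F_QUAS} for F-quasigroups; Cases 3 and 4, the E-cases, I would prove directly, since by Lemma~\ref{LEMMA_PARASTR}(2) the $(12)$-parastrophe of a left E-quasigroup is again an E-quasigroup, so parastrophy does \emph{not} bring us into the F-world there. In all four cases the hypothesis ``the endomorphism is the zero endomorphism'' means the corresponding local-identity map is a constant map (cf. Lemma~\ref{LEMMA_3_F_QUAS}), and the stabilization/endomorphism facts I need are supplied by Lemma~\ref{EM_END_MIDD}.

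For Case 1, by Lemma~\ref{LEMMA_PARASTR}(6) the $(23)$-parastrophe $(Q,\backslash)$ of the left SM-quasigroup $(Q,\cdot)$ is a left F-quasigroup, and by Table~2 its left local identity map satisfies $e^{(\backslash)}=s^{(\cdot)}$ as maps of $Q$. Hence $s$ being constant is exactly the statement that the endomorphism $e$ of $(Q,\backslash)$ is zero. Applying Lemma~\ref{LEMMA_3_F_QUAS}(1) to $(Q,\backslash)$ yields a group $(Q,+)$, an automorphism $\psi$, and constant value $0$ with $x\backslash y = x+\psi y$. Solving $x\cdot(x\backslash y)=y$ and using that $\psi^{-1}\in Aut(Q,+)$ gives $x\cdot y=-\varphi x+\varphi y$ with $\varphi=\psi^{-1}$, whence $x\cdot x=-\varphi x+\varphi x=0$ and $(Q,\cdot)$ is unipotent of the asserted form. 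Case 2 is the mirror statement: one passes via Lemma~\ref{LEMMA_PARASTR}(7) to the right F-quasigroup $(Q,\slash)$, identifies $f^{(\slash)}=s^{(\cdot)}$ through Table~2, applies Lemma~\ref{LEMMA_3_F_QUAS}(2) to get $x\slash y=\varphi x+y$, and recovers $\cdot$ from $(x\slash y)\cdot y=x$, which after an automorphism substitution produces $x\cdot y=\varphi x-\varphi y$ and again unipotency.

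For Case 3 I would argue directly. Since $f$ is constant, $f(x)=k$ for all $x$; but $f(x)\cdot x=x$ forces $k\cdot x=x$, so $k$ is a left identity and $(Q,\cdot)$ is a left loop with $0:=k$. Substituting $f(x)=0$ into $x\cdot yz=f(x)y\cdot xz$ collapses the identity to $x\cdot(y\cdot z)=y\cdot(x\cdot z)$, i.e. all left translations commute. Putting $y=z=0$ gives $x\cdot(0\cdot 0)=x\cdot 0$, so $0\cdot 0=0$ by left cancellation, i.e. $0$ is idempotent. I then pass to the LP-isotope $(Q,+)=(Q,\cdot)(R_0^{-1},L_0^{-1},\varepsilon)=(Q,\cdot)(R_0^{-1},\varepsilon,\varepsilon)$ (an LP-isotopy because $L_0=\varepsilon$), which is a loop with identity $0$; here $x\cdot y=\alpha x+y$ with $\alpha=R_0$ and $\alpha 0=0\cdot 0=0$. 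Rewriting $x\cdot(y\cdot z)=y\cdot(x\cdot z)$ in terms of $+$ and using that $\alpha$ is onto yields $a+(b+z)=b+(a+z)$ for all $a,b,z$; setting $z=0$ gives commutativity, and then $(a+b)+c=c+(a+b)=a+(c+b)=a+(b+c)$ gives associativity, so $(Q,+)$ is an abelian group of the required form. Case 4 is the mirror: $e$ constant makes $0$ a right identity, the identity collapses to $(z\cdot y)\cdot x=(z\cdot x)\cdot y$ (commuting right translations), and the symmetric argument gives $x\cdot y=x+\beta y$ with $\beta 0=0$ over an abelian group.

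The main obstacle I expect is exactly Cases 3 and 4: with no parastrophe shortcut to the F-case, the abelian group must be built from scratch. The two delicate points are (i) arranging the LP-isotopy so its identity coincides with the idempotent left identity $0$ — which is precisely why the preliminary computation $0\cdot 0=0$ is needed — and (ii) extracting both commutativity and associativity of $(Q,+)$ from the single collapsed relation $a+(b+z)=b+(a+z)$; alternatively, once $(Q,+)$ is known to be a loop one may invoke Belousov's criterion (Lemma~\ref{COMMUT_VDB}) to conclude it is abelian. By contrast, Cases 1 and 2 are essentially bookkeeping with Table~2 and the division formulas for $\backslash$ and $\slash$.
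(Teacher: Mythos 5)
Your proposal is correct, and it reaches the theorem by a genuinely different route from the paper's. The paper handles all four cases uniformly with functional-equation machinery: each collapsed equality is cast as an instance of generalized associativity, the Four quasigroups theorem (Theorem \ref{O_CHETYREH_QUAS}) gives that $(Q,\cdot)$ is a group isotope, Sokhatskii's linearity criterion (Theorem \ref{LEFT_LIN_CRITER}) and the commutativity criteria (Lemma \ref{COMMUTATIV_LIN_QUAS}, Corollary \ref{SOHA_REZ}) pin down the form, and normalizations from Lemma \ref{FORM_S_NULEM} plus a final conjugation by a translation produce the stated isomorphic copy. Your Cases 1--2 instead take the parastrophe shortcut that the paper explicitly acknowledges (in the sentence preceding Lemma \ref{EM_END_MIDD}) but deliberately declines: the reduction via Cases 6 and 7 of Lemma \ref{LEMMA_PARASTR} together with Table~2 is sound --- indeed one verifies directly that $e^{(\backslash)}(x)=x\cdot x=s(x)$ and $f^{(\slash)}(x)=s(x)$ --- and feeding this into Lemma \ref{LEMMA_3_F_QUAS} and solving the division identities even yields the slightly stronger conclusion that $x\cdot y=-\varphi x+\varphi y$ (respectively $\varphi x-\varphi y$) holds on the nose, with the isomorphism of the statement being the identity map, where the paper needs an extra conjugation. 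Your Cases 3--4 are more elementary than the paper's: constancy of $f$ (resp. $e$) makes $k$ a one-sided identity, the E-equality degenerates to commuting left (right) translations, and your LP-isotope computation giving $a+(b+z)=b+(a+z)$, then $z=0$ for commutativity and the chain $(a+b)+c=c+(a+b)=a+(c+b)=a+(b+c)$ for associativity, is complete and bypasses the Four quasigroups theorem entirely; what the paper buys with its heavier approach is a single method reused verbatim in all four cases (and elsewhere in the article). Two cosmetic slips, neither affecting correctness: $e$ is the right, not left, local identity map of $(Q,\backslash)$; and your parenthetical alternative of invoking Belousov's criterion (Lemma \ref{COMMUT_VDB}) on the loop $(Q,+)$ would not apply as stated, since that lemma is formulated for groups --- but your main argument never relies on it.
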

\begin{proof} \textbf{Case 1.}
We can rewrite equality $xx \cdot yz = xy \cdot xz$ in the form $k  \cdot y z = x y \cdot x z$, where $s(x)=k$
for all $x\in Q$. If we denote $xz$ by $v$, then $z = x\backslash v$ and equality $k  \cdot y z = x y \cdot x z$
takes the form $k \cdot (y \cdot (x\backslash v)) = xy  \cdot v$, $k  \cdot (y \cdot (x\backslash v)) = (y\ast
x) \cdot v$.

Then  the last equality has  the form $A_1(y, A_2 (x, v)) = A_3 (A_4(y,x), v)$, where $A_1$, $A_2$, $A_3$, $A_4$
are quasigroup operations, namely, $A_1(y,t) = L_k (y \cdot t)$, $t= A_2 (x, v) = x\backslash v$, $A_3 (u, v) =
u\cdot v,$ $u = A_4 (y, x) = x\cdot  y$.

From Four quasigroups theorem  (Theorem \ref{O_CHETYREH_QUAS})  it follows that quasigroup $(Q, \cdot)$ is an
isotope of a group $(Q,+)$.

If in the equality  $k  \cdot y z = x y \cdot x z$ we fix the variable $x=b$, then we obtain the following
equality $k \cdot y z = b y \cdot b z$, $k \cdot y z = L_b y \cdot L_b z$. From Theorem \ref{LEFT_LIN_CRITER} it
follows that $(Q, \cdot)$ is a right linear quasigroup.

If in $k  \cdot y z = x y \cdot x z$ we put $x=z$, then we obtain $k  \cdot y x = x y \cdot k$. From Lemma
\ref{COMMUTATIV_LIN_QUAS} it follows that $(Q,+)$ is a commutative group.

From Lemma \ref{FORM_S_NULEM} we have that there exists a group $(Q,+)$ such that $x\cdot y = \alpha x + \psi y
+ c$, where $\alpha$ is a permutation of the set $Q$, $\alpha 0 =0$, $\psi \in Aut(Q,+)$.

Further we have $s(0) = k = 0\cdot 0 = c$, $k = c$. Then   $s(x) = k = x\cdot x =  \alpha x + \psi x +k$.
Therefore,  $\alpha x + \psi x =0$ for all $x\in Q$. Then  $\alpha = I\psi$, where $x + I (x) = 0$ for all $x\in
Q$. Therefore $\alpha$ is an antiautomorphism of the group $(Q,+)$, $x\cdot y = I\psi x + \psi y + k$.

Finally  $L^{-1}_k(L_k x\cdot L_k y) = L^{-1}_k (I\psi x + I \psi k + \psi k + \psi y +k) = L^{-1}_k (I\psi x +
 \psi y +k) = - k + I\psi x + k - k + \psi y + k = I_k I \psi x + I_k \psi y = I I_{k} \psi x + I_k \psi y = x \circ y$,
 where  $I_k x = - k + x + k$ is an inner automorphism of $(Q,+)$.
 It is easy to see that $s^{\circ} (x) = 0$ for all $x\in Q$.

Below we shall suppose that any left semimedial quasigroup $(Q, \cdot)$ with  zero endomorphism $s$ is an
unipotent quasigroup with the form  $x\cdot y = - \varphi x + \varphi y$, where $(Q, +)$ is a group, $\varphi
\in Aut (Q,+)$.

\medskip

\textbf{Case 2.} We can rewrite equality $zy \cdot s(x) = zx \cdot yx$ in the form $zy \cdot k = zx \cdot yx$,
where $s(x) =k$. If we denote $zx$ by $v$, then $z = v\slash x$ and the equality $zy \cdot k = zx \cdot yx$
takes the form $((v\slash x)y)k  = v  \cdot yx = v  \cdot (x\ast y)$.

We  re-write the last equality in the form $$A_1(A_2 (v, x), y) = A_3 (v, A_4(x,y)),$$ where $A_1$, $A_2$,
$A_3$, $A_4$ are quasigroup operations, namely, $A_1(t,y) = R_k (t \cdot y)$, $t= A_2 (v, x) = v\slash x$, $A_3
(v, u) = v\cdot u,$ $u = A_4 (x, y) = x\ast y$.

From Four quasigroups theorem  it follows that quasigroup $(Q, \cdot)$ is an isotope of a group $(Q,+)$.

If in the equality  $zy \cdot k = zx \cdot yx$ we fix the variable $x=b$, then we obtain the following equality
$zy \cdot k = zb \cdot yb$, $zy \cdot k = R_b z \cdot R_b y$. From Theorem \ref{LEFT_LIN_CRITER} it follows that
$(Q, \cdot)$ is a left  linear quasigroup.

If in the equality  $zy \cdot k = zx \cdot yx$ we put $x=z$, then we obtain $xy \cdot k = k \cdot yx$. Thus from
Lemma \ref{COMMUTATIV_LIN_QUAS} it follows that $(Q,+)$ is a commutative group.

From Lemma \ref{FORM_S_NULEM} we have that there exists a group $(Q,+)$ such that $x\cdot y = \varphi x + \beta
y + c$, where $\beta$ is a permutation of the set $Q$, $\beta 0 =0$, $\varphi  \in Aut(Q,+)$.

Further we have $s(0) = k = 0\cdot 0 = c$, $k=c$. Then   $s(x) = k = x\cdot x =  \varphi x + \beta x +k$.
Therefore,  $\varphi x + \beta x =0$ for all $x\in Q$. Then  $\beta = I\varphi$, where $Ix + x = 0$ for all
$x\in Q$.

Therefore $\beta = I\varphi \in Aut(Q,+)$, $x\cdot y = \varphi x - \varphi y + k$.

We  have $R^{-1}_k(R_k x\cdot R_k y) = R^{-1}_k (\varphi x + \varphi k - \varphi k - \varphi y +k) = \varphi x -
\varphi y + k - k = \varphi x - \varphi y = x\circ y$. It is easy to see that $s^{\circ} (x) = 0$ for all $x\in
Q$.

Below we shall suppose that any right semimedial quasigroup $(Q, \cdot)$ with  zero endomorphism $s$ is an
unipotent quasigroup with the form  $x\cdot y = \varphi x - \varphi y$, where $(Q, +)$ is a  group, $\varphi \in
Aut (Q,+)$.

\medskip

\textbf{Case 3.} We can rewrite the equality $x \cdot  yz = f(x)y \cdot xz $ in the form $x \cdot  yz = k y
\cdot xz =  y \cdot xz$, $x \cdot  (z\ast y) = xz \ast y$, where $f(x) = k$ for all $x\in Q$.

Then  $A_1(x, A_2 (z, y)) = A_3 (A_4(x, z),y)$, where $A_1$, $A_2$, $A_3$, $A_4$ are quasigroup operations,
namely, $A_1(x,t) = x \cdot t$, $t= A_2 (z, y) = z\ast y$, $A_3 (u, y) = u\ast y,$ $u = A_4 (x, z) = x\cdot z$.
From Four quasigroups theorem  it follows that quasigroup $(Q, \cdot)$ is a group isotope.

If in the equality $x \cdot  yz =  y \cdot xz $ we fix  variable $z$, i.e. if we take  $z = a$, then we have
$x\cdot R_a y = y \cdot  R_a x$. From  Corollary \ref{SOHA_REZ} it follows that the  group $(Q, +)$ is
commutative.

If in the equality $x \cdot  yz =  y \cdot xz $ we fix  variable $x$, i.e. if we take  $x = a$, then we have $a
\cdot  yz =  y \cdot az $, $a \cdot (yz) =  y \cdot L_az$. The application of Theorem \ref{LEFT_LIN_CRITER} to
the last equality gives us that $(Q, \cdot)$ is a right linear quasigroup, i.e. $x\cdot y = \alpha x + \psi y
+c$. %%%, where $\alpha\, 0 \, = \, 0$ by Lemma \ref{FORM_S_NULEM}.

Then  $f(x)\cdot x = k \cdot x = \alpha k + \psi x +c  = x$. By $x=0$ we have $\alpha k + \psi 0 +c = 0$,
$\alpha k = -c$. Therefore, $k \cdot x = x=  \psi x$  for all $x\in Q$. Then  $\psi  = \varepsilon$, $x\cdot y =
\alpha  x +  y + c = L_c\alpha x + y$ for all $x, y \in Q$. In other words  $x\cdot y = \alpha x + y$ for all
$x, y \in Q$.

Further let $a+\alpha 0 = 0$. Then $L^{-1}_a (L_a \alpha x + L_a y) = -a + a + \alpha x +a + y = a + \alpha x +
y = \alpha^{\, \prime} x + y = x\circ y$, where $\alpha^{\, \prime} = L_a\alpha$ and $\alpha^{\, \prime} 0 = 0$.

\medskip

\textbf{Case 4.} Case 4 is a \lq\lq mirror\rq\rq \, case of Case 3, but we give the direct proof. We can rewrite
 equality $zy \cdot x = zx \cdot y e(x) $ in the form $zy \cdot x = zx \cdot y k = zx \cdot y$, $(y \ast z)
\cdot x = y \ast zx$, where $e(x) = k$.

Then  $A_1 (A_2(y, z),x)= A_3(y, A_4 (z, x))$ , where $A_1$, $A_2$, $A_3$, $A_4$ are quasigroup operations,
namely, $A_1(t,x) = t \cdot x$, $t= A_2 (y, z) = y\ast z$, $A_3 (y, v) =  y\ast v,$ $v = A_4 (z, x) = z\cdot x$.

From Four quasigroups theorem  it follows that quasigroup $(Q, \cdot)$ is an  isotope of a group $(Q,+)$.

If in the equality $zy \cdot x = zx \cdot y $ we fix  variable $z$, i.e. if we take  $z = a$, then we have $L_a
y \cdot x = L_a x \cdot y $. From  Corollary \ref{SOHA_REZ} it follows that the group $(Q, +)$ is commutative.

If in the equality $zy \cdot x =  zx \cdot y$ we fix  variable $x$, i.e. if we take  $x = a$, then we have $zy
\cdot a =  za \cdot y$, $zy \cdot a =  R_a z \cdot y$. The application of Theorem \ref{LEFT_LIN_CRITER} to the
last equality gives us that $(Q, \cdot)$ is a left  linear quasigroup, i.e. $x\cdot y = \varphi x + \beta y +c$.

Then  $x\cdot e(x) = x \cdot k = \varphi x + \beta k +c  = x$. By $x = 0$ we have $\varphi 0 + \beta k + c = 0$,
$\beta k = - c$. Therefore, $x \cdot k = x =  \varphi x$  for all $x\in Q$. Then  $\varphi  = \varepsilon$,
$x\cdot y = x + \beta y + c = x + R_c \beta y$ for all $x, y \in Q$. In other words $x\cdot y = x + \beta y$ for
all $x, y \in Q$.

Further let $a+\beta 0 = 0$. Then $L^{-1}_a (L_a  x + L_a \beta y) = -a + a +  x +a + \beta y = x +  a + \beta y
=  x + \beta^{\, \prime} y = x\circ y$, where $\beta^{\, \prime} = L_a\beta$ and $\beta^{\, \prime} 0 = 0$.
\end{proof}

By proving of the following lemma we use ideas from  \cite{1a}.
\begin{lemma} \label{LEMMA_24_ED}
\begin{enumerate}
    \item If the endomorphism  $s$ of a left semimedial quasigroup $(Q, \cdot)$ is a permutation of the set $Q$, then
quasigroup $(Q, \circ)$ of the form $x\circ y = s^{-1}( x \cdot y)$ is a left distributive quasigroup and $s\in
Aut (Q, \circ)$.
    \item If the endomorphism  $s$ of a right semimedial quasigroup $(Q, \cdot)$ is a permutation of the set $Q$, then
quasigroup $(Q, \circ)$ of the form $x\circ y = s^{-1}( x \cdot y)$ is a right  distributive quasigroup and
$s\in Aut (Q, \circ)$.
    \item If the endomorphism  $f$ of a left E-quasigroup $(Q, \cdot)$ is a permutation of the set $Q$, then
quasigroup $(Q, \circ)$ of the form $x\circ y = f(x) \cdot y$ is a left distributive quasigroup and $f\in Aut
(Q, \circ)$.
    \item If the endomorphism  $e$ of a right E-quasigroup $(Q, \cdot)$ is a permutation of the set $Q$, then
quasigroup $(Q, \circ)$ of the form $x\circ y = x \cdot e(y)$ is a right distributive quasigroup and $e\in Aut
(Q, \circ)$.
\end{enumerate}
\end{lemma}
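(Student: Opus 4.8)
The plan is to treat the four cases in close parallel with the proof of Lemma \ref{LEMMA_4_ED}, Cases 3 and 4 being essentially the \lq\lq mirror\rq\rq\ of Cases 1 and 2. In every case the decisive observation is that the distinguished map ($s$ in Cases 1--2, $f$ in Case 3, $e$ in Case 4) is an endomorphism of $(Q,\cdot)$ by Lemma \ref{EM_END_MIDD}, and is a permutation of $Q$ by hypothesis; hence it is an \emph{automorphism} of $(Q,\cdot)$, so that both it and its inverse distribute over the product $\cdot$. Since in each case $(Q,\circ)$ arises from $(Q,\cdot)$ by an isotopy of the form $(\varepsilon,\varepsilon,s)$, $(f,\varepsilon,\varepsilon)$ or $(\varepsilon,e,\varepsilon)$, it is automatically a quasigroup (Lemma \ref{ISOSTROPHY}); the only genuine content is the distributive identity and the automorphism statement.

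For Case 1 I would write $x\circ y = s^{-1}(x\cdot y)$ and compute both sides of $x\circ(y\circ z)=(x\circ y)\circ(x\circ z)$ directly. Expanding the right-hand side gives $s^{-1}(s^{-1}(xy)\cdot s^{-1}(xz))$, and pulling $s^{-1}$ through the inner products (legitimate because $s^{-1}\in Aut(Q,\cdot)$) turns this into $s^{-1}\!\left((s^{-1}x\cdot s^{-1}y)\cdot(s^{-1}x\cdot s^{-1}z)\right)$; applying the left semimedial identity $xx\cdot yz = xy\cdot xz$ with arguments $s^{-1}x,s^{-1}y,s^{-1}z$ collapses the inner expression to $s(s^{-1}x)\cdot(s^{-1}y\cdot s^{-1}z)=x\cdot s^{-1}(yz)$, so that the right-hand side becomes $s^{-1}(x\cdot s^{-1}(yz))$, which is exactly the left-hand side. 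Cases 2--4 follow the same template with the appropriate defining identity: the right semimedial identity $zy\cdot s(x)=zx\cdot yx$ for Case 2 (again with $x\circ y=s^{-1}(xy)$); the left E-identity $x\cdot yz = f(x)y\cdot xz$ for Case 3, where $x\circ y=f(x)\cdot y$ and one uses $f(uv)=f(u)f(v)$ to recognise the right-hand side $(f^2(x)\cdot f(y))\cdot(f(x)\cdot z)$ as an instance of that identity collapsing to $f(x)\cdot(f(y)\cdot z)$; and the right E-identity $zy\cdot x = zx\cdot ye(x)$ for Case 4, with $x\circ y = x\cdot e(y)$ and $e(uv)=e(u)e(v)$.

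The automorphism claim is then immediate once $\circ$ is written through $\cdot$. In Case 1, $s(x\circ y)=s(s^{-1}(xy))=xy=s^{-1}(s(x)\cdot s(y))=s(x)\circ s(y)$, and in Cases 3--4 one checks $f(x\circ y)=f^2(x)\cdot f(y)=f(x)\circ f(y)$ and $e(x\circ y)=e(x)\cdot e^2(y)=e(x)\circ e(y)$, using in each case only that the relevant map is an automorphism of $(Q,\cdot)$.

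The step I expect to require the most care is the bookkeeping in the middle of each chain: matching the three free variables of the distributive target against the three slots of the semimedial or E-identity in the correct order, and tracking which occurrences carry $s^{-1}$ (respectively $f$, $e$) so that, after the defining identity is applied, the residual permutations recombine into a single application of $s^{-1}$ (respectively $f$, $e$) reproducing the other side. There is no structural obstacle beyond this substitution; once the correct instance of the defining identity is selected, each verification is a short and forced computation, and the mirror symmetry reduces the total work to essentially two genuinely different calculations.
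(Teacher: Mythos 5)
Your proposal is correct and takes essentially the same route as the paper's proof: in each case you verify the distributive identity by rewriting $\circ$ through $\cdot$, using that the bijective endomorphism ($s$, $f$ or $e$) is an automorphism of $(Q,\cdot)$, and you establish the automorphism claim by the same one-line computations the paper uses. The only cosmetic difference is in Cases 1--2, where you distribute $s^{-1}$ into the products before applying the (semi)medial identity, whereas the paper applies the identity first and then pulls $s^{-2}$ through; the two calculations are identical in substance.
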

\begin{proof}
{Case 1.} We prove that $(Q, \circ)$ is left distributive. It is clear that $s^{-1}  \in Aut(Q, \cdot)$. We have
$$
\begin{array}{l}
 x\circ (y\circ z) = s^{-1}(x \cdot s^{-1}(y\cdot z)), \\
(x \circ y)\circ (x \circ z) = s^{-2}((x\cdot y)\cdot (x\cdot z)) = s^{-2}(s(x) \cdot (y \cdot z)) = \\ s^{-1}(x
\cdot s^{-1}(y\cdot z)).
\end{array}
$$
Prove that $s\in Aut (Q, \circ)$. We have $s(x\circ y) = x\cdot y$, $s(x)\circ s(y) = s^{-1} (s(x)\cdot s(y)) =
x\cdot y.$  See, also \cite{MARS}.

{Case 2.} We prove that $(Q, \circ)$ is right distributive. It is clear that $s^{-1}  \in Aut(Q, \cdot)$. We
have
$$
\begin{array}{l}
( x\circ y) \circ z = s^{-1}(s^{-1}(x \cdot y)\cdot z), \\
(x \circ z)\circ (y \circ z) = s^{-2}((x\cdot z)\cdot (y\cdot z)) = \\ s^{-2}((x \cdot y) \cdot s( z)) =
s^{-1}(s^{-1}(x \cdot y)\cdot z).
\end{array}
$$
Prove that $s\in Aut (Q, \circ)$. We have $s(x\circ y) = x\cdot y$, $s(x)\circ s(y) = s^{-1} (s(x)\cdot s(y)) =
x\cdot y.$

{Case 3.} If the endomorphism $f$ is a permutation of the set $Q$, then $f, f^{-1} \in Aut(Q,\cdot)$. We have
$$
\begin{array}{l}
 x\circ (y \circ z) = f(x) \cdot (f(y)\cdot z), \\
(x \circ y)\circ (x \circ z) = f(f(x)\cdot y)\cdot (f(x)\cdot z) = \\ (f^2(x)\cdot f(y))\cdot (f(x)\cdot z) =
f(x) \cdot (f(y)\cdot z).
\end{array}
$$
Prove that $f\in Aut (Q, \circ)$. We have $f(x\circ y) = f(f(x)\cdot y) = f^2(x) \cdot f(y) = f(x)\circ f(y)$.

{Case 4.} If the endomorphism $e$ is a permutation of the set $Q$, then $e, e^{-1} \in Aut(Q,\cdot)$. We have
$$
\begin{array}{l}
( x\circ y) \circ z = (x \cdot e(y))\cdot e(z), \\
(x \circ z)\circ (y \circ z) = (x\cdot e(z))\cdot e(y\cdot e(z)) = \\ (x\cdot e(z))\cdot (e(y)\cdot e^2(z)) = (x
\cdot e(y))\cdot e(z).
\end{array}
$$
Prove that $e\in Aut (Q, \circ)$. We have $e(x\circ y) = e(x\cdot e(y)) = e(x) \cdot e^2(y) = e(x)\circ e(y)$.
\end{proof}

 \begin{remark}
By the proof of Lemma \ref{LEMMA_24_ED} it is possible to use Lemma \ref{LEMMA_4_ED} and parastrophe invariant
arguments.
 \end{remark}

\begin{theorem} \label{MAIN_MIDDLE_F}
\begin{enumerate}
    \item Any  left SM-quasigroup $(Q, \cdot)$ has the following structure
$$
(Q, \cdot) \cong (A, \circ) \times (B, \cdot),
$$
where $(A, \circ)$ is a quasigroup with a unique idempotent element and there exists a  number $m$ such that
$|s^m(A, \circ)| =1$; $(B, \cdot)$ is an isotope of a left distributive quasigroup $(B, \star)$, $x \cdot y =
s(x \star y)$ for all $x, y \in B$, $s \in Aut(B, \cdot)$, $s \in Aut(B, \star)$.
    \item Any  right SM-quasigroup  $(Q, \cdot)$ has the following structure
$$
(Q, \cdot) \cong (A, \circ) \times (B, \cdot),
$$
where $(A, \circ)$ is a quasigroup with a unique idempotent element and there exists an ordinal number $m$ such
that $|s^m(A, \circ)| =1$; $(B, \cdot)$ is an isotope of a right distributive quasigroup $(B, \star)$, $x \cdot
y = s(x \star  y)$ for all $x, y \in B$, $s \in Aut(B, \cdot)$, $s \in Aut(B, \star)$.
    \item Any  left E-quasigroup $(Q, \cdot)$ has the following structure
$$
(Q, \cdot) \cong (A, \circ) \times (B, \cdot),
$$
where $(A, \circ)$ is a quasigroup with a unique idempotent element and there exists a number $m$ such that
$|f^m(A, \circ)| =1$; $(B, \cdot)$ is an isotope of a left distributive quasigroup $(B, \star)$, $x \cdot y =
f^{-1}(x) \star  y$ for all $x, y \in B$, $f \in Aut(B, \cdot)$, $f \in Aut(B, \star)$.
         \item Any  right E-quasigroup  $(Q, \cdot)$ has the following structure
$$
(Q, \cdot) \cong (A, \circ) \times (B, \cdot),
$$
where $(A, \circ)$ is a quasigroup with a unique idempotent element and there exists a number $m$ such that
$|e^m(A, \circ)| =1$; $(B, \cdot)$ is an isotope of a right distributive quasigroup $(B, \star)$, $x \cdot y = x
\star  e^{-1}(y)$ for all $x, y \in B$, $e \in Aut(B, \cdot)$, $e \in Aut(B, \star)$.
\end{enumerate}
\end{theorem}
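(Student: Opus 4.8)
The plan is to prove all four items by the single template already established for Theorem \ref{MAIN_LEFT_F}, replacing the endomorphism $e$ by the endomorphism that governs each class: by Lemma \ref{EM_END_MIDD} this is $s$ for the left and right SM-quasigroups (Cases 1, 2), $f$ for the left E-quasigroup (Case 3) and $e$ for the right E-quasigroup (Case 4). Write $T$ for this endomorphism. First I would form the descending chain $Q \supseteq T(Q) \supseteq T^2(Q) \supseteq \dots$ and, repeating verbatim the stabilization argument of Lemma \ref{l5.7}, fix an ordinal $m$ with $B := T^m(Q) = T^{m+1}(Q)$ on which $T$ restricts to an automorphism; this $B$ is a subquasigroup (Lemma \ref{l_5.4}) and inherits the defining identity of its class. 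The possibility of a transfinite stabilization is exactly why Case 2 is stated for an ordinal $m$. The two extreme situations are the base cases: if $T$ is already a permutation the whole quasigroup is the $B$-type factor via Lemma \ref{LEMMA_24_ED}, while if $T$ is constant it is the $A$-type factor via Theorem \ref{THEOREM_3_F_QUAS_MIDD}; the substance is the intermediate case $T^m=T^{m+1}$ with $m>1$.

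Next I would introduce two congruences. The first is $\delta$, defined by $x\,\delta\,y \Longleftrightarrow T^m(x)=T^m(y)$; since $T^m$ is an endomorphism onto the subquasigroup $B$, $\delta$ is a normal congruence with $Q/\delta \cong B$ (Theorem \ref{NORM_QUAS_CONGR}, Lemma \ref{l_5.4}). The second is the coset relation $\rho$ whose classes are the sets $B\cdot x$ in the left cases (1 and 3) and $x\cdot B$ in the right cases (2 and 4). I would then verify, exactly as in Theorem \ref{MAIN_LEFT_F}, that $\delta\cap\rho$ is the diagonal $\hat{Q}$ and that $\delta\circ\rho = \rho\circ\delta = Q\times Q$, the latter using that $T^m|_B$ is a bijection of $B$, so that Theorem \ref{t5.1} yields $(Q,\cdot)\cong (A,\circ)\times(B,\cdot)$ with $A := Q/\rho$. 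That $A$ carries a unique idempotent with $|T^m(A)|=1$ follows as before: $T^m(A)$ is the single coset $B$, hence one point of $A$; it is idempotent because $T$ fixes it, and any idempotent $c$ of $A$ satisfies $T(c)=c$, whence $T^m(c)=c$, forcing $c$ to be that point. On $B$ the endomorphism $T$ is an automorphism, so Lemma \ref{LEMMA_24_ED} supplies a distributive quasigroup $(B,\star)$ with $T\in Aut(B,\star)\cap Aut(B,\cdot)$ and the stated isotopy, namely $x\cdot y = s(x\star y)$ in Cases 1--2, $x\cdot y = f^{-1}(x)\star y$ in Case 3, and $x\cdot y = x\star e^{-1}(y)$ in Case 4. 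That the displayed factor $(A,\circ)$ again belongs to the same class is the analogue of Lemma \ref{Componets_OF_DIR_PROD}.

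I expect the one genuinely delicate step to be the proof that $\rho$ is a congruence, i.e. that from $B x_1 = B y_1$ and $Bx_2 = By_2$ one again obtains $B(x_1x_2)=B(y_1y_2)$. This is where the class identity enters: for Case 1 the semimedial law $xy\cdot xz = s(x)\cdot yz$ lets one rewrite $s(b)\cdot(x_1x_2)$ as a product of an element of $Bx_1$ and an element of $Bx_2$ correlated through $s$, and $sB=B$ guarantees that all representatives stay inside $B$; the E-laws $x\cdot yz = f(x)y\cdot xz$ and $zy\cdot x = zx\cdot y\,e(x)$ play the same role in Cases 3 and 4, with $fB=B$ and $eB=B$ respectively. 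Keeping the representatives matched through the governing endomorphism is the crux, and it is precisely the computation already carried out for the left F-quasigroup, so I would transcribe it with $e$ replaced by $T$.

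For the two SM-cases I would also record a shorter route. By Lemma \ref{LEMMA_PARASTR}(6),(7) the $(23)$- (respectively $(13)$-) parastrophe of a left (right) SM-quasigroup is a left (right) F-quasigroup, so Theorem \ref{MAIN_LEFT_F} applies to it. Since the direct product decomposition is parastrophe-invariant (Lemma \ref{LEMMA_DIR}) and idempotence is preserved by these parastrophies (because $a\backslash a = a \Leftrightarrow a\cdot a = a$ and $a\slash a = a \Leftrightarrow a\cdot a = a$), parastrophizing the F-decomposition back produces the required decomposition of $(Q,\cdot)$, the governing endomorphism $e$ of the F-parastrophe corresponding to $s$ of $(Q,\cdot)$ through Table 2. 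The E-cases admit no such reduction to the F-class, since by Lemma \ref{LEMMA_PARASTR}(8),(9) the E-property is merely preserved within the E-class under parastrophy; they therefore require the direct argument above.
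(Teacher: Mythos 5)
Your proposal is correct and takes essentially the same route as the paper, whose entire proof of this theorem reads \lq\lq The proof is similar to the proof of Theorem \ref{MAIN_LEFT_F}. It is possible also to use parastrophe invariance ideas\rq\rq\ --- i.e.\ exactly the two routes you describe. Your expansion (replacing $e$ by the governing endomorphism $s$, $f$ or $e$ from Lemma \ref{EM_END_MIDD}, reusing the congruences $\delta$ and $\rho$ with left cosets $B\cdot x$ in Cases 1 and 3 and right cosets $x\cdot B$ in Cases 2 and 4, invoking Lemma \ref{LEMMA_24_ED} and Theorem \ref{THEOREM_3_F_QUAS_MIDD} for the permutation and zero-endomorphism cases, and the $(23)$-/$(13)$-parastrophe reduction of the SM-cases to Theorem \ref{MAIN_LEFT_F} via Lemmas \ref{LEMMA_PARASTR} and \ref{LEMMA_DIR} and Table 2) supplies precisely the details the paper leaves implicit, at the same level of rigor.
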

\begin{proof}
The proof is similar to the proof of Theorem \ref{MAIN_LEFT_F}. It is possible also  to use parastrophe
invariance ideas.
\end{proof}

\begin{corollary}
If $(Q, \cdot)$ is a left SM-quasigroup, then   $s^m(Q,\cdot) \trianglelefteqslant (Q,\cdot)$; if $(Q, \cdot)$
is a right SM-quasigroup, then   $s^m(Q,\cdot) \trianglelefteqslant (Q,\cdot)$; if $(Q, \cdot)$ is a left
E-quasigroup, then   $f^m(Q,\cdot) \trianglelefteqslant (Q,\cdot)$; if $(Q, \cdot)$ is a right E-quasigroup,
then   $e^m(Q,\cdot) \trianglelefteqslant (Q,\cdot)$.
\end{corollary}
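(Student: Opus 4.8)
The plan is to follow exactly the argument used for Corollary \ref{LEFT_F_NORMAL_SUBQUAS}, treating the four assertions as four parallel instances of one and the same reasoning. Recall that a subquasigroup is normal precisely when it is an equivalence class (coset) of a normal congruence; so in each case it suffices to exhibit the stable endomorphic image as a coset of a normal congruence of $(Q,\cdot)$.

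First I would observe that the relevant structure theorem is Theorem \ref{MAIN_MIDDLE_F}, whose proof (being modelled on that of Theorem \ref{MAIN_LEFT_F}) produces, in each of the four classes, two commuting normal congruences $\delta$ and $\rho$ with $\delta\cap\rho=\hat{Q}$ and $\delta\circ\rho=\rho\circ\delta=Q\times Q$. In the left SM-case the congruence $\delta$ is defined by $x\,\delta\,y \Leftrightarrow s^m(x)=s^m(y)$ and the congruence $\rho$ by $x\,\rho\,y \Leftrightarrow B\cdot x = B\cdot y$, where $B=s^m(Q)$; here $s$ is an endomorphism by Lemma \ref{EM_END_MIDD}, its descending chain stabilizes by the argument of Lemma \ref{l5.7}, and $(B,\cdot)=s^m(Q,\cdot)$ is one equivalence class of $\rho$.

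Then I would note that $s^m(Q,\cdot)$ is a coset of the normal congruence $\rho$, so by the definition of a normal subquasigroup we obtain $s^m(Q,\cdot)\trianglelefteqslant(Q,\cdot)$. The three remaining cases are identical after replacing $s$ by the endomorphism appropriate to the class: for the right SM-case one uses $s$ again (Theorem \ref{MAIN_MIDDLE_F}, Case 2), for the left E-case one uses $f$ (Lemma \ref{EM_END_MIDD}, Case 3, and Theorem \ref{MAIN_MIDDLE_F}, Case 3), and for the right E-case one uses $e$ (Lemma \ref{EM_END_MIDD}, Case 4, and Theorem \ref{MAIN_MIDDLE_F}, Case 4); in each instance $\delta$ is cut out by the corresponding stable power and $\rho$ by multiplication by $B=s^m(Q)$, $f^m(Q)$ or $e^m(Q)$ respectively.

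The only point requiring care --- the main obstacle, such as it is --- is verifying that in every one of the four classes the defining map really is an endomorphism whose descending chain stabilizes and whose limit image is an equivalence class of the normal congruence $\rho$. But all of these ingredients are already in hand: Lemma \ref{EM_END_MIDD} supplies the endomorphism property and the subquasigroup property of the images, the stabilization is proved exactly as in Lemma \ref{l5.7}, and the normality of $\rho$ together with the identification of the limit image as a coset is part of Theorem \ref{MAIN_MIDDLE_F}. Hence the corollary is a direct consequence, mirroring Corollary \ref{LEFT_F_NORMAL_SUBQUAS}.
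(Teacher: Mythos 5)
Your proposal matches the paper's intended argument: the paper states this corollary without a written proof, but its proof of Corollary \ref{LEFT_F_NORMAL_SUBQUAS} is exactly the template you follow --- the stable endomorphic image is an equivalence class of the normal congruence $\rho$ constructed in the proof of Theorem \ref{MAIN_MIDDLE_F} (itself modelled on Theorem \ref{MAIN_LEFT_F}), and a coset of a normal congruence that is a subquasigroup is by definition normal. The only point worth adjusting is that in the two right-sided cases $\rho$ should be cut out by right cosets $x\cdot B = y\cdot B$ rather than $B\cdot x = B\cdot y$ (with the right SM- and right E-identities then applied in place of the left ones), in keeping with the paper's \lq\lq mirror\rq\rq\ conventions.
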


\begin{corollary} \label{SLOI_POLUMED}
If $(Q, \cdot)$ is a left SM-quasigroup with an idempotent element, then equivalence class   $\bar a $ of the
normal congruence $Ker\, s_j$ containing an idempotent element $a\in Q$ forms an unipotent quasigroup $(\bar a,
\cdot) $
 isotopic to a  group with isotopy of the form $(-\psi, \psi, \varepsilon)$, where   $\psi
\in  Aut(\bar a, \cdot)$   for all suitable values of $j$.

If $(Q, \cdot)$ is a right  SM-quasigroup with an idempotent element, then equivalence class   $\bar a $ of the
normal congruence $Ker\, s_j$ containing an idempotent element $a\in Q$ forms an unipotent quasigroup $(\bar a,
\cdot) $  isotopic to a  group with isotopy of the form $(\varphi, -\varphi, \varepsilon)$, where $\varphi \in
Aut(\bar a, \cdot)$   for all suitable values of $j$.

If $(Q, \cdot)$ is a left E-quasigroup with an idempotent element, then equivalence class $\bar a $ of the
normal congruence $Ker\, f_j$ containing an idempotent element $a\in Q$ forms a left loop
 isotopic to an abelian  group with isotopy of the form $(\alpha, \varepsilon,  \varepsilon)$
   for all suitable values of $j$.

If $(Q, \cdot)$ is a right E-quasigroup with an idempotent element, then equivalence class $\bar a $ of the
normal congruence $Ker\, e_j$ containing an idempotent element $a\in Q$ forms a right loop isotopic to an
abelian  group with isotopy of the form $(\varepsilon, \beta, \varepsilon)$ for all suitable values of $j$.
\end{corollary}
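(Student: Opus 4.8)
The plan is to handle the four cases in parallel, following verbatim the proof of Corollary~\ref{LEFT_QUAS_MATRESHKA} but replacing the endomorphism $e$ of a left F-quasigroup and the structure Lemma~\ref{LEMMA_3_F_QUAS} by the relevant endomorphism ($s$ in the two SM-cases, $f$ for left E, $e$ for right E, each an endomorphism by Lemma~\ref{EM_END_MIDD}) and by the corresponding case of Theorem~\ref{THEOREM_3_F_QUAS_MIDD}. I describe the left SM-case in detail; the remaining three are formal analogues.

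First I would fix an idempotent $a\in Q$ and note that $a$ lies in every image $s^j(Q,\cdot)$: from $a\cdot a=a$ we get $s(a)=a$, whence $a\in s(Q)$ and inductively $a\in s^j(Q,\cdot)$ for all $j$. The congruence $\theta=Ker\,s_j$ is normal on $s^j(Q,\cdot)$, since its image $s^{j+1}(Q,\cdot)$ is a quasigroup (Lemma~\ref{EM_END_MIDD}, Theorem~\ref{NORM_QUAS_CONGR}). Because $a$ is idempotent, $(a\cdot a)\,\theta\,a$ holds trivially, so Lemma~\ref{BINARY_SUB_QUAS} shows the cell $\bar a$ of $\theta$ containing $a$ is a subquasigroup of $s^j(Q,\cdot)$; being a subquasigroup it is itself a left SM-quasigroup (Corollary~\ref{corll_5.4}, Remark~\ref{REM_7_NORM}).

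The decisive step is to recognise the intrinsic map $x\mapsto x\cdot x$ of $(\bar a,\cdot)$ as the zero endomorphism. By definition of the kernel, $\bar a=\{x\in s^j(Q,\cdot)\mid s(x)=s(a)=a\}$, so $s(x)=a$ for every $x\in\bar a$, i.e. $s|_{\bar a}$ is the constant map onto the idempotent $a$. Hence Theorem~\ref{THEOREM_3_F_QUAS_MIDD}, Case~1, applies with $a$ in the role of $0$ and gives that $(\bar a,\cdot)$ is unipotent and of the form $x\cdot y=-\psi x+\psi y$ over a group $(\bar a,+)$, which is precisely the isotopy $(-\psi,\psi,\varepsilon)$. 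For the right SM-, left E- and right E-cases one invokes Cases 2, 3, 4 of the same theorem and obtains the isotopies $(\varphi,-\varphi,\varepsilon)$, $(\alpha,\varepsilon,\varepsilon)$ and $(\varepsilon,\beta,\varepsilon)$, the last two over abelian groups, as claimed.

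The only point I expect to require an explicit check is the assertion $\psi\in Aut(\bar a,\cdot)$ in the two SM-cases (the E-statements claim no automorphism). Theorem~\ref{THEOREM_3_F_QUAS_MIDD} yields $\psi\in Aut(\bar a,+)$, and I would close the gap by the one-line computation $\psi(x\cdot y)=\psi(-\psi x+\psi y)=-\psi^2 x+\psi^2 y=\psi x\cdot\psi y$, valid since a group automorphism is additive and commutes with inversion; the right SM-case is identical with $\varphi$ for $\psi$. As $j$ enters only through the inclusion $\{a\}\subseteq s^j(Q,\cdot)$, the whole argument is uniform in $j$, giving the conclusion for all suitable $j$.
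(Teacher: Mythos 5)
Your proposal is correct and takes essentially the same approach as the paper, whose proof consists precisely of the remark that one repeats the argument of Corollary~\ref{LEFT_QUAS_MATRESHKA} using Theorem~\ref{THEOREM_3_F_QUAS_MIDD}. Your additional checks (that the idempotent $a$ lies in every $s^j(Q,\cdot)$, that the relevant endomorphism is constant on the cell $\bar a$, and the one-line verification that $\psi\in Aut(\bar a,\cdot)$) simply make explicit details the paper leaves implicit.
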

\begin{proof}
Mainly the proof repeats the proof of Corollary \ref{LEFT_QUAS_MATRESHKA}.  It is possible to use Theorem
\ref{THEOREM_3_F_QUAS_MIDD}.
\end{proof}

\subsection{CML as an  SM-quasigroup}

In this subsection we give information (mainly well known) about commutative Moufang loops (CML)  which  is
possible to obtain from the fact that a loop $(Q, \cdot)$ is  left semimedial  if and only if it is  a
commutative Moufang loop. Novelty of information from this subsection is in the fact that some well known
theorems about CML are obtained quit  easy using quasigroup approach.

\begin{lemma}
1. A left F-loop is a group. 2. A right F-loop is a group. 3. A loop $(Q, \cdot)$ is  left semimedial  if and
only if it is  a commutative Moufang loop. 4. A loop $(Q, \cdot)$ is right semimedial  if and only if it is  a
commutative Moufang loop. 5. A left E-loop $(Q, \cdot)$ is a commutative group. 6. A right E-loop $(Q, \cdot)$
is a commutative group.
\end{lemma}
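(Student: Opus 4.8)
The plan is to exploit the fact that in a loop all three local identity maps degenerate: since the identity $1$ satisfies $1\cdot x = x\cdot 1 = x$, cancellation forces $f(x) = e(x) = 1$ for every $x$, while $s(x) = x\cdot x = x^2$. Each of the six defining identities then collapses after substituting these values, and bijectivity of the translations $L_x, R_x$ (loop cancellation) finishes the argument. For parts 1 and 2, substituting $e(x)=1$ into the left F-identity $x\cdot yz = xy\cdot e(x)z$ gives $e(x)z = 1\cdot z = z$, hence $x\cdot yz = xy\cdot z$, i.e. associativity; a loop with associativity is a group. Symmetrically $f(z)=1$ turns the right F-identity $xy\cdot z = xf(z)\cdot yz$ into $xy\cdot z = x\cdot yz$, again a group.

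For parts 5 and 6 I would substitute $f(x)=1$ into the left E-identity $x\cdot yz = f(x)y\cdot xz$ to obtain $x\cdot yz = y\cdot xz$ for all $x,y,z$. Putting $z=1$ yields commutativity $xy=yx$, after which associativity follows from the chain $x(yz) = y(xz) = y(zx) = z(yx) = z(xy) = (xy)z$, where each equality uses either this identity or commutativity; thus $(Q,\cdot)$ is a commutative group. Part 6 is the mirror computation: $e(x)=1$ turns the right E-identity $zy\cdot x = zx\cdot y e(x)$ into $zy\cdot x = zx\cdot y$, so $z=1$ gives commutativity and the analogous chain $(xy)z = (xz)y = (zx)y = (zy)x = (yz)x = x(yz)$ gives associativity.

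The parts requiring more than a direct substitution are 3 and 4, and I expect part 4 to be the main obstacle. In part 3 the left semimedial identity of a loop is literally the defining CML identity $x^2\cdot yz = xy\cdot xz$ (Definition \ref{LOOP_DEFS}), so the forward direction only needs the verification that such a loop is genuinely commutative: specializing $z=1$ gives $L_{x^2} = R_xL_x$ and specializing $y=1$ gives $L_{x^2} = L_x^2$, so $L_x^2 = R_xL_x$ and cancelling the bijection $L_x$ on the right yields $L_x = R_x$, i.e. $xy=yx$; the converse is immediate from the definition. For part 4 I would first extract commutativity the same way: $z=1$ and $y=1$ in $zy\cdot x^2 = zx\cdot yx$ give $R_{x^2} = L_xR_x$ and $R_{x^2} = R_x^2$, whence $R_x = L_x$.

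Once commutativity is in hand, the right semimedial identity $zy\cdot x^2 = zx\cdot yx$ can be rewritten by commuting inside each product: the left side equals $x^2\cdot(zy)$ and the right side equals $(xz)\cdot(xy)$, so the identity becomes $x^2\cdot zy = xz\cdot xy$, which is exactly the left semimedial (CML) identity in the variables $z,y$; the reverse rewriting holds equally. Hence for loops right semimediality, left semimediality, and the CML identity all coincide, giving parts 3 and 4 as biconditionals. The only delicate point is to make sure the two reductions to commutativity use nothing but loop cancellation of $L_x$ and $R_x$, after which the interchange of the two semimedial identities under commutativity is purely formal.
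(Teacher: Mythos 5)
Your proposal is correct and follows essentially the same route as the paper: Cases 1, 2, 5, 6 by substituting $e(x)=1$ or $f(x)=1$ and extracting commutativity via $z=1$ before associativity, and Cases 3, 4 by the specializations $y=1$, $z=1$ and cancelling a bijective translation to get $L_x=R_x$ (the paper's Case 3 argument, taken from Belousov, is exactly your computation written with the substitution $xy\mapsto y$ instead of translation notation). Your closing observation that, given the paper's Definition \ref{LOOP_DEFS} of a CML, the forward direction of Case 3 is tautological and commutativity is the only real content, matches the paper's treatment precisely.
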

\begin{proof}
Case 1. From $x \cdot  yz = xy \cdot e(x)z $ we have $x \cdot  yz = x \cdot yz$. Case 2. From $xy \cdot  z = x
f(z) \cdot yz $ we have $xy \cdot  z = x \cdot yz$.

Case 3. We use the proof from (\cite{VD}, p. 99). Let $(Q, \cdot)$ be a  left semimedial loop. If $y=1$, then we
have $x^2 \cdot z = x\cdot xz$. If $z=1$, then $x^2 y = xy\cdot x$. Then $x\cdot xy = xy\cdot x$. If we denote
$xy$ by $y$, then we obtain that $xy=yx$, i.e. the loop $(Q, \cdot)$ is commutative.

It is clear that a commutative Moufang loop is left semimedial.

Case 4. For the proof of Case 3 it is possible to use \lq\lq mirror\rq\rq \,  principles.  We give the  direct
proof. Let $(Q, \cdot)$ be a  right semimedial loop, i.e. $zy \cdot x^2  = zx \cdot yx$ for all $x, y, z \in Q$.
If $y=1$, the we have $z\cdot x^2  = zx\cdot x$.

If $z=1$, then $y x^2  = x \cdot yx$. Then $zx\cdot x = x\cdot zx$. If we denote $zx$ by $z$, then we obtain
that $zx=xz$, i.e. the loop $(Q, \cdot)$ is commutative. Moreover, we have  $x^2 \cdot yz =  zy \cdot x^2$, $
xy\cdot zx = xz \cdot yx$.

It is clear that a commutative Moufang loop is right semimedial.

Case 5. From $x \cdot  yz = f(x)y \cdot xz $ we have $x \cdot  yz = y \cdot xz $. From the last identity by
$z=1$ we obtain $x \cdot  y = y \cdot x$. Therefore we can re-write identity $x \cdot  yz = y \cdot xz $ in the
following form $yz \cdot x  = y \cdot zx $.

Case 6 is proved in the similar way to Case 5.
\end{proof}

Commutative Moufang loop in which any element has the order 3 is called 3-CML.

\begin{remark} \label{CENTER_OF_CML}
Center $C(Q,+)$ of a CML $(Q,+)$ is a normal abelian subgroup of  $(Q,+)$ and it coincides with the left nucleus
of $(Q,+)$ \cite{VD, RHB}.
\end{remark}

\begin{lemma} \label{MOUF_CENTRAL_AUTOM}
In a commutative Moufang loop the map $\delta : x\mapsto 3x$ is  central endomorphism \cite{RHB, VD}.
\end{lemma}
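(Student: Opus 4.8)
The statement packages two assertions: that $\delta$ is an endomorphism of $(Q,+)$, i.e. $\delta(x+y)=\delta x+\delta y$, and that its image is central, i.e. $3x\in C(Q,+)$ for every $x$. (Since a Moufang loop is diassociative, the multiple $3x=x+x+x$ is unambiguous.) The plan is to get the homomorphism property from diassociativity and the centrality of the image from the associator calculus of a commutative Moufang loop, closing the argument with Remark \ref{CENTER_OF_CML}.

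First I would settle that $\delta\in\mathrm{End}(Q,+)$. By Moufang's theorem any two elements $x,y$ of the Moufang loop $(Q,+)$ generate a subgroup $\langle x,y\rangle$, which is abelian because $(Q,+)$ is commutative. Since $3(x+y)$, $3x$ and $3y$ all lie in $\langle x,y\rangle$, the required equality $3(x+y)=3x+3y$ is merely the corresponding identity in an abelian group. This part is of the same elementary nature as the fact, already available here, that $x\mapsto 2x=x\cdot x$ is an endomorphism: $(Q,+)$ is left semimedial (it is a CML), so Lemma \ref{EM_END_MIDD} applies to $s$. One could equally verify $\delta(x+y)=\delta x+\delta y$ straight from the commutative Moufang identity $xx\cdot yz=xy\cdot xz$.

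It remains to prove $\delta x\in C(Q,+)$. Introduce the associator $a(x,y,z)$ by $(x+y)+z=(x+(y+z))+a(x,y,z)$, so that $a(w,y,z)=0$ precisely when $w$ associates from the left with the pair $y,z$, that is, when $w\in N_l$. The heart of the matter is the classical CML calculus for $a$: in a commutative Moufang loop the associator is central, additive in each of its three arguments, skew-symmetric, and $3$-torsion, $3\,a(x,y,z)=0$. Granting these, additivity in the first slot gives $a(3x,y,z)=a(x+x+x,y,z)=3\,a(x,y,z)=0$ for all $y,z$, so $3x\in N_l(Q,+)$. By Remark \ref{CENTER_OF_CML} the left nucleus coincides with $C(Q,+)$, whence $\delta x=3x\in C(Q,+)$ and $\delta$ is indeed a central endomorphism.

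The main obstacle is establishing the associator identities — centrality, additivity per argument, and $3$-torsion — since these are exactly the deep facts of the theory of commutative Moufang loops and do not follow by a formal manipulation. I would derive them from the Moufang identity $xx\cdot yz=xy\cdot xz$ by the standard chain of substitutions (first the centrality of $a$, then its additivity, then $3\,a=0$), or, in keeping with the paper's economy, simply cite \cite{RHB, VD}. Everything else in the proof is bookkeeping once these identities and Remark \ref{CENTER_OF_CML} are in place.
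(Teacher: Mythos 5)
Your proof splits the lemma exactly as the paper does, and the endomorphism half coincides with the paper's argument verbatim in spirit: the paper invokes Moufang's theorem to get diassociativity, concludes $n(x+y)=nx+ny$ inside the abelian group $\left< x,y\right>$, and for centrality writes only that the proof ``is standard'' with citations to \cite{RHB, VD} (and \cite{HOP, kepka_06}) — which is precisely your fallback. So, read with the citation, your proposal is correct and is essentially the paper's proof.

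One substantive caution about the associator sketch you offer in place of that citation: the exact identities you list are false in a general CML. The associator of a commutative Moufang loop is neither central nor exactly additive in each argument; exact trilinearity together with skew-symmetry would force every CML to be nilpotent of class at most $2$, contradicting the Bruck--Slaby theorem (the free CML on $n$ generators has class growing with $n$). Bruck's actual distributive laws read $a(x+w,y,z)=a(x,y,z)+a(w,y,z)+\cdots$ only \emph{modulo associators of higher weight}, so the step $a(3x,y,z)=3\,a(x,y,z)$ does not follow formally. Moreover, in the standard development the centrality of cubes is proved \emph{before} the finer associator calculus — by an operator computation from the Moufang identity, e.g. $L_xL_yL_x=L_{2x+y}$, whence $L_x^3=L_{3x}$ — and the $3$-torsion of associators is then deduced from it, so deriving cube-centrality from ``trilinearity plus $3\,a=0$'' also risks circularity. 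Your use of Remark \ref{CENTER_OF_CML} to pass from $3x\in N_l$ to $3x\in C(Q,+)$ is fine; it is the route to $3x\in N_l$ that, as a self-contained argument, would fail, while the cited version matches the paper.
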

\begin{proof}
In a CML $(Q, +)$ we have  $n(x+y) = nx+ ny$ for any natural number $n$ since by Moufang theorem \cite{MUFANG,
RHB, VD} CML is diassociative (any two elements generate an associative subgroup). Therefore the map $\delta$ is
an endomorphism. See \cite{KIN_VOITEH_07} for many details on commutative diassociative loops.

The proof of centrality of the endomorphism $\delta$ is standard \cite{HOP, VD, RHB, kepka_06} and we omit it.
\end{proof}

A quasigroup  $(Q,\cdot)$ with identities  $x y = y x$, $x\cdot x y = y$, $x\cdot yz = x y \cdot x z$ is called
a distributive Steiner quasigroup \cite{VD, 1a}.

\begin{theorem} \label{MAIN_Moufang_Commut}
1.  Any  commutative Moufang loop $(Q, +)$ has the following structure
$$
(Q, +) \cong (A, \oplus) \times (B, +),
$$
where $(A, \oplus)$ is an abelian group and there exists a number $m$ such that  $|s^m(A, \oplus)| = 1$; $(B,
+)$ is an isotope of a  distributive quasigroup $(B, \star)$, $x + y = s(x \star y)$ for all $x, y \in B$, $s
\in Aut(B, +)$, $s \in Aut(B, \star)$.

2.  $C(Q, +) \cong (A, \oplus) \times C(B, +)$.

3. $(Q, +)\slash C(Q, +)\cong (B, +)\slash C(B, +) \cong (D, +)$ is 3-CML in which the endomorphism $s$ is
permutation $I$ such that $I x= -x$.

4. Quasigroup $(D, \star)$,  $x \star y = -x - y$, $ x, y \in (D, +)$,  is a distributive Steiner quasigroup.
\end{theorem}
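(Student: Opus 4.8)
The plan is to treat the commutative Moufang loop $(Q,+)$ simultaneously as a left and a right semimedial quasigroup, since by the preceding lemma a loop is a CML precisely when it is left (equivalently right) semimedial, and then to read off the decomposition from Theorem \ref{MAIN_MIDDLE_F}. First I would apply Theorem \ref{MAIN_MIDDLE_F}(1) to obtain $(Q,+)\cong (A,\circ)\times (B,+)$, where $(A,\circ)$ has a unique idempotent and satisfies $|s^m(A,\circ)|=1$, and $(B,+)$ is the isotope $x+y=s(x\star y)$ of a left distributive quasigroup $(B,\star)$ with $s\in Aut(B,+)\cap Aut(B,\star)$. Since $Q$ is a loop and each direct factor is a homomorphic image of $Q$ (Theorems \ref{t5.1}, \ref{NORM_QUAS_CONGR}), both factors are commutative Moufang loops; I rename $(A,\circ)=(A,\oplus)$, whose unique idempotent is its identity $0$. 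Because $(Q,+)$ is also right semimedial, Lemma \ref{LEMMA_24_ED}(2) applied to $(B,+)$ (on which $s$ is an automorphism) shows that the same quasigroup $(B,\star)$, $x\star y=s^{-1}(x+y)$, is right distributive as well; combined with Lemma \ref{LEMMA_24_ED}(1) this gives that $(B,\star)$ is distributive, as demanded by statement~1.

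To finish statement~1 I must show $(A,\oplus)$ is an abelian group. Here $s(x)=x\oplus x=2x$, and by diassociativity (power-associativity) of the CML $(A,\oplus)$ one has $s^k(x)=2^kx$, so $|s^m(A)|=1$ forces $2^m x=0$ for all $x\in A$. By Lemma \ref{MOUF_CENTRAL_AUTOM} the tripling map $x\mapsto 3x$ is central, hence $3x\in C(A,\oplus)$ and the factor loop $(A,\oplus)/C(A,\oplus)$ has exponent dividing $3$; it also has exponent dividing $2^m$, and since $\gcd(3,2^m)=1$ this factor is trivial. Thus $A=C(A,\oplus)$, and as the centre of a CML is an abelian group (Remark \ref{CENTER_OF_CML}), $(A,\oplus)$ is an abelian group.

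For statement~2, Lemma \ref{center_OF_DIR_PROD} gives $C(Q,+)\cong C(A,\oplus)\times C(B,+)$, and since $(A,\oplus)$ is an abelian group $C(A,\oplus)=(A,\oplus)$, whence $C(Q,+)\cong (A,\oplus)\times C(B,+)$. For statement~3, factoring the direct product by $A\times C(B,+)$ collapses the first coordinate entirely, so $(Q,+)/C(Q,+)\cong (B,+)/C(B,+)=:(D,+)$, again a CML. Lemma \ref{MOUF_CENTRAL_AUTOM} gives $3x\in C(Q,+)$, so $3\bar x=0$ in $D$ and $(D,+)$ is a $3$-CML; then $3x=0$ yields $2x=-x$, so the induced endomorphism acts as $s(x)=2x=-x=Ix$.

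Finally, for statement~4 I would pass the distributive quasigroup $(B,\star)$ to the quotient $D=B/C(B,+)$ — legitimate since $s\in Aut(B,+)$ preserves the centre, making the congruence admissible for $\star$ — obtaining a distributive $(D,\star)$; and from $x+y=s(x\star y)$ with $s=I$ on $D$ I get $x\star y=s^{-1}(x+y)=-(x+y)=-x-y$, using that the IP-loop $(D,+)$ satisfies $-(x+y)=-x-y$. Then $(D,\star)$ is a distributive Steiner quasigroup: commutativity is $-x-y=-y-x$; idempotency follows from $x\star x=-2x=x$ (as $2x=-x$); and semisymmetry $x\star(x\star y)=-x-(-x-y)=-x+(x+y)=y$ holds by diassociativity. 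I expect the main obstacle to lie in statement~1: reconciling the $2$-power torsion coming from $s^m=0$ with the intrinsic $3$-torsion of the non-associativity of a CML (resolved through Lemma \ref{MOUF_CENTRAL_AUTOM}), and checking that the left- and right-semimedial structures produce the same $(B,\star)$, so that it is genuinely distributive rather than merely left distributive.
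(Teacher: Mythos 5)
Your write-up follows the paper's route almost step for step: the decomposition via Theorem \ref{MAIN_MIDDLE_F}, distributivity of $(B,\star)$ from using the left and right semimedial structure together (the paper cites Cases 1 and 2 of Theorem \ref{MAIN_MIDDLE_F}; your observation via Lemma \ref{LEMMA_24_ED} that both sides produce the \emph{same} isotope $x\star y=s^{-1}(x+y)$ is exactly the point), Lemma \ref{center_OF_DIR_PROD} for statement 2, the quotient isomorphism (the paper invokes the Second Isomorphism Theorem) for statement 3, and a direct verification for statement 4 — your admissibility remark for pushing $(B,\star)$ down to $(D,\star)$ is sound. However, there is one genuine gap, and it sits in the key step of statement 1. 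You claim that $|s^m(A,\oplus)|=1$ forces $2^m x=0$ for all $x\in A$, hence that $(A,\oplus)$ has exponent dividing $2^m$, and you then kill $(A,\oplus)/C(A,\oplus)$ by coprimality of $2^m$ and $3$. This is correct only when $m$ is finite. In this paper the stabilization \lq\lq number\rq\rq\ $m$ of the chain $A\supset s(A)\supset s^2(A)\supset\dots$ is explicitly allowed to be infinite (see the definition following (\ref{chain}), and \lq\lq ordinal number\rq\rq\ in Theorem \ref{MAIN_MIDDLE_F}, Case 2); then $|s^m(A)|=1$ means only $\bigcap_{i} s^i(A)=\{0\}$, which bounds no element order. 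The case is not vacuous: for $(A,\oplus)=(\mathbb{Z},+)$, a CML, one has $s(x)=2x$, $\bigcap_i 2^i\mathbb{Z}=\{0\}$, yet every non-zero element has infinite order, so your exponent argument says nothing (the conclusion happens to be trivially true there, but your intermediate claim $2^m x=0$ is false).

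The paper closes precisely this case. It notes that every non-zero element of $(A,\oplus)$ has order $2^{\,i}$ or infinite order; finite-order elements are central by your coprimality argument (G.C.D.$(2^{\,i},3)=1$), while for an element $x$ of infinite order Lemma \ref{MOUF_CENTRAL_AUTOM} gives $3x\in C(A,\oplus)$ with $\left<x\right>\cong\left<3x\right>$, and since $(A,\oplus)$ carries no odd torsion the tripling endomorphism is injective, whence $(A,\oplus)\cong 3(A,\oplus)\subseteq C(A,\oplus)$ and $(A,\oplus)$ is abelian. Grafting this infinite-order branch onto your argument — or equivalently splitting into \lq\lq finite $m$\rq\rq\ (your proof) and \lq\lq $m$ infinite\rq\rq\ (the paper's $A\cong 3A\subseteq C(A)$ step) — repairs the proof; the remainder of your argument needs no change.
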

\begin{proof}
Case 1. The existence of decomposition of $(Q, +)$ into two factors follows from Theorem \ref{MAIN_MIDDLE_F}.

From Corollary \ref{SLOI_POLUMED} it follows that  any equivalence class   $\bar a \equiv H_j$ of the normal
congruence $Ker\, s_j$ containing an idempotent element $0\in Q$ is  an unipotent loop  $(H_j, \cdot) $
 isotopic to an abelian  group with isotopy of the form $(\varphi, -\varphi, \varepsilon)$, where   $\varphi
\in  Aut(H_j, \cdot)$   for all suitable values of $j$.

 Since $(H_j,\cdot)$ is a commutative loop, we have that
 $x\cdot 0 = \varphi\, x  -\varphi\, 0 = \varphi \, x = x$, $\varphi = \varepsilon$,
$0\cdot x = \varphi \, 0 -\varphi \, x = -\varphi \, x = x$, $-\varphi = \varepsilon$. Thus $x\cdot y = x + y $
for all $x, y \in (H_j, \cdot)$.

We notice, in a commutative Moufang loop $(Q,+)$ the map $s^{\,i}$ takes the form $2^{\,i}$, i.e. $s^i(x) =
2^{\,i} (x)$. Then in the loop $(A, \oplus)$ any non-zero element has the order $2^{\,i}$ or  infinite order.

If an element $x$ of the loop $(A, \oplus)$ has a finite order, then $x\in C (A, \oplus)$, where $C (A, \oplus)$
is a center of $(A, \oplus)$ since $G.C.D. (2^{\,i}, 3) = 1$.

If an element $x$ of the loop $(A, \oplus)$ has infinite order, then by Lemma \ref{MOUF_CENTRAL_AUTOM} $3x\in C
(A, \oplus)$, $\left< x\right> \cong \left< 3x\right>$.

Therefore $(A, \oplus) \cong 3(A, \oplus)\subseteq C(A, \oplus)$, $(A, \oplus)$ is an abelian group.

From Cases 1, 2 of Theorem \ref{MAIN_MIDDLE_F} it follows that $(B, +)$ is an isotope of left and right
distributive quasigroup. Therefore, $(B, +)$ is an isotope of distributive quasigroup.

Case 2. From Lemma \ref{center_OF_DIR_PROD} it follows that $C(Q, +) \cong C(A, \oplus) \times C(B, +)$.
Therefore $C(Q, +) \cong (A, \oplus) \times C(B, +)$ since $C(A, \oplus) = (A, \oplus)$.

Case 3. The fact that $(Q, +)\slash C (Q, +)$ is 3-CML is  well known and it follows from Lemma
\ref{MOUF_CENTRAL_AUTOM}. Isomorphism $(Q, +)\slash C (Q, +) \cong ((A, \oplus) \times (B, +))\slash ((A,
\oplus) \times C(B, +))$ follows from Cases 1, 2.

Isomorphism $$((A, \oplus) \times (B, +))\slash ((A, \oplus) \times C(B, +)) \cong (B, +)\slash C(B, +) $$
follows from the Second Isomorphism Theorem (\cite{BURRIS}, p. 51,   for group case see \cite{KM}) and the fact
that $(A, \oplus) \cap C(B, +) = \{ (0,0) \}$, i.e. $|(A, \oplus) \cap C(B, +)| = 1$.

Case 4. It is clear that in 3-CML $(D, +)$ the map $s$ takes the form $s(x) =2x =-x  = Ix$. Moreover, $I^{-1} =
I$.  It is easy to see that the quasigroup $(D, \star)$ is a distributive Steiner quasigroup.
\end{proof}

\begin{corollary} \label{MAIN_Moufang_Comm}
If in CML $(Q,+)$ the endomorphism $s$ has finite order $m$, then:  (i)  any non-zero element of the group $(A,
\oplus)$ has the order  $2^{\,i}$, $1\leqslant i \leqslant m$; (ii) $Aut(Q,+) \cong Aut (A, \oplus) \times Aut
(B, +)$.
%3. $Mlt(Q,+) \cong (A, \oplus) \times Mlt (B, +)$
\end{corollary}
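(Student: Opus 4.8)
The plan is to treat the two parts separately, relying on the decomposition $(Q,+)\cong (A,\oplus)\times(B,+)$ furnished by Theorem \ref{MAIN_Moufang_Commut} together with the facts recorded there: that $(A,\oplus)$ is an abelian group with $|s^m(A,\oplus)|=1$, that $s\in Aut(B,+)$, and that in a CML the endomorphism $s$ acts as $s^{\,i}(x)=2^{\,i}x$.

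For (i), I would first note that the unique element of $s^m(A,\oplus)$ is the idempotent $0$, so that $s^m(x)=2^{m}x=0$ for every $x\in A$. Hence the order of each element of the abelian group $(A,\oplus)$ divides $2^{m}$ and is therefore a power of $2$ not exceeding $2^{m}$; for a non-zero element this order is $2^{\,i}$ with $1\leqslant i\leqslant m$, which is precisely the assertion.

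For (ii), the decisive step is to exhibit $A$ and $B$ as characteristic subloops of $(Q,+)$, i.e. as subloops invariant under every automorphism. I would use that $s$ commutes with every endomorphism $\varphi$ of $(Q,+)$: since $\varphi$ is a homomorphism and $2x=x+x$, one has $\varphi(s\,x)=\varphi(x+x)=\varphi x+\varphi x=s(\varphi x)$, whence $\varphi s=s\varphi$ and consequently $\varphi s^{m}=s^{m}\varphi$ (here the finiteness of $m$ is what makes $s^{m}$ a genuine endomorphism). Identifying the two factors intrinsically as $A=\{q\in Q:\ s^{m}q=0\}=\ker s^{m}$ and $B=s^{m}(Q)$ — valid because $s^{m}|_{A}=0$ while $s|_{B}$, hence $s^{m}|_{B}$, is an automorphism by Theorem \ref{MAIN_Moufang_Commut} — the commutation gives $\varphi(A)\subseteq A$ for every endomorphism $\varphi$, and for an automorphism $\varphi$ moreover $\varphi(A)=A$ and $\varphi(B)=\varphi s^{m}(Q)=s^{m}\varphi(Q)=s^{m}(Q)=B$. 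Thus both $A$ and $B$ are invariant under all of $Aut(Q,+)$.

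Finally I would assemble the isomorphism $\Phi:Aut(Q,+)\to Aut(A,\oplus)\times Aut(B,+)$, $\varphi\mapsto(\varphi|_{A},\varphi|_{B})$, which is well defined and a homomorphism by the invariance just proved. It is injective because each $q\in Q$ decomposes as $q=a+b$ with $a\in A$ and $b\in B$, so $\varphi q=\varphi a+\varphi b$ is determined by $\varphi|_{A}$ and $\varphi|_{B}$; thus $\varphi|_{A}=\varepsilon$ and $\varphi|_{B}=\varepsilon$ force $\varphi=\varepsilon$. It is surjective because any pair $(\alpha,\beta)\in Aut(A,\oplus)\times Aut(B,+)$ defines the componentwise automorphism $(a,b)\mapsto(\alpha a,\beta b)$ of the direct product, whose image under $\Phi$ is $(\alpha,\beta)$. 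The main obstacle is the third paragraph: correctly pinning down $A$ and $B$ as $\ker s^{m}$ and $s^{m}(Q)$ and verifying that they are characteristic — once this intrinsic description is in hand, the factorization of $Aut$ over the direct product is routine.
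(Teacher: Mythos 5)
Your proof is correct, but for part (ii) it takes a genuinely different route from the paper. The paper makes $(A,\oplus)$ and $(B,+)$ characteristic by an order argument: since automorphisms preserve element orders (using diassociativity of a CML), and elements of $(A,\oplus)$ have $2$-power order while elements of $(B,+)$ have odd or infinite order, the two factors cannot mix under any $\varphi\in Aut(Q,+)$; the factorization of $Aut(Q,+)$ then follows exactly as in your last paragraph. You instead pin down the factors intrinsically as $A=\ker s^{m}$ and $B=s^{m}(Q)$ and observe that $s$, being squaring, commutes with every endomorphism, so both sets are invariant under all automorphisms. Your version buys something concrete: the paper's order-based identification of $B$ is slightly delicate when $(B,+)$ contains elements of infinite order (an element $(a,b)$ with $b$ of infinite order has infinite order no matter what $a$ is, so \lq\lq the set of elements of odd or infinite order\rq\rq\ is not literally $B$ in that case), whereas your kernel/image description is canonical, avoids element orders entirely, and gives invariance under all endomorphisms, not just automorphisms. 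The paper's argument, conversely, is shorter and makes visible the arithmetic separation ($2$-power versus odd) between the two factors, which is also exactly what its proof of part (i) rests on; your part (i) coincides with the paper's (which is left as \lq\lq easy to see\rq\rq) once one notes, as you implicitly do, that diassociativity makes the order of an element well defined and a divisor of $2^{m}$.
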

\begin{proof}
Case (i). It is easy to see.

Case (ii).  Let  $(Q, +)$ be   a commutative Moufang loop, $\alpha \in Aut(Q,+)$.   Then the order of an element
$x$ coincides with the order of  element $\alpha (x)$. Indeed, if $nx=0$, then  $n(\alpha x) = \alpha (n x) =0$.

The loops $(A, \oplus)$ and $(B, +)$  have elements of different orders. Indeed, the orders of elements of the
loop $(A, \oplus)$ are powers of the number $2$ and  orders of the elements of the loop $(B, +)$ are some odd
numbers or, possibly,  $\infty$.

Therefore loops $(A, \oplus)$ and $(B, +)$ are invariant relative to any automorphism of the loop $(Q,+)$. Then
$Aut(Q,+) \cong Aut (A, \oplus) \times Aut (B, +)$.
\end{proof}

\section{The structure}

 Theorems \ref{MAIN_LEFT_F} and  \ref{MAIN_MIDDLE_F} give us a possibility to receive some
information on left and right F-, SM-, E-quasigroups and some combinations of these classes.

\subsection{Simple left and right F-, E- and SM-quasigroups}

 Simple quasigroups of some classes of finite left distributive  quasigroups are described  in \cite{GALKIN_79}.
 The structure and properties of F-quasigroups are described in \cite{kepka_05, kepka_06}.

 We give Je\u zek-Kepka Theorem \cite{JEZEK} in the following  form   \cite{SC05_a, SHCH_STR_05}.

\begin{theorem} \label{SIMPLE_MEDIAL_QUAS} If a  medial quasigroup  $(Q,\cdot)$ of  the form $x\cdot y = \alpha x +
\beta y + a $ over an abelian group $(Q, +)$ is simple, then
 \begin{enumerate}
\item
the group $(Q,+)$ is the additive group of a finite Galois field $GF(p^k)$;
\item
the group $\left< \alpha, \beta \right> $ is the multiplicative group of  the field $GF(p^k)$ in the case $k>1$,
the group $\left< \alpha, \beta \right> $ is any subgroup of the group $Aut(Z_p,+)$  in the case $k=1$;
\item
the quasigroup $(Q,\cdot)$ in the case $|Q|>1$ can be quasigroup from one of the following disjoint quasigroup
classes:
 \begin{enumerate}
\item $\alpha + \beta  =\varepsilon, a=0$; in this case the quasigroup $(Q,\cdot)$ is an idempotent quasigroup;
\item
 $\alpha +\beta  =\varepsilon$  and $a\neq 0$; in this case the quasigroup $(Q, \cdot)$ does not have any
idempotent element,  the quasigroup $(Q,\cdot)$ is isomorphic to the quasigroup $(Q,\ast)$ with the form $ x\ast
y = \alpha x  + \beta y +1$ over the same abelian group $(Q,+)$;
 \item
$\alpha + \beta \neq  \varepsilon $;     in this case the quasigroup $(Q,\cdot )$ has exactly one idempotent
element, the quasigroup $(Q,\cdot )$     is isomorphic to the quasigroup $(Q,\circ)$     of the form $x\circ y =
\alpha x + \beta y$  over the group $(Q,+)$.
\end{enumerate}
\end{enumerate}
\end{theorem}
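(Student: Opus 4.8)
The plan is to run the entire analysis through Toyoda's theorem and the correspondence between normal congruences and operator-invariant subgroups. By Theorem \ref{Toyoda_Theorem} the form $x\cdot y = \alpha x + \beta y + a$ already witnesses that $(Q,+)$ is abelian and $\alpha\beta = \beta\alpha$. Since $(Q,+)$ is a group and $(Q,\cdot)$ is its principal isotope with isotopy $(\alpha,\beta,\varepsilon)$, Corollary \ref{NC10} gives $nCon(Q,\cdot)\subseteq nCon(Q,+)$, and Theorem \ref{DLYA_CLM} identifies exactly which subgroups $H=Ker\,\theta$ survive: those with $\alpha H = H$ and $\beta H = H$. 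Hence, reading $(Q,+)$ as a module over the commutative ring $R=\mathbb{Z}[\alpha,\beta]\subseteq \mathrm{End}(Q,+)$, the invariant subgroups are precisely the $R$-submodules, and by Definition \ref{SIMPLE_QUAS} the quasigroup $(Q,\cdot)$ is simple if and only if $Q$ is a simple (irreducible) $R$-module.

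For item 1 I would invoke the structure of simple modules over a commutative ring: a simple $R$-module is $R/\mathcal{M}$ for a maximal ideal $\mathcal{M}$, so $Q$ carries the structure of a field $K = R/\mathcal{M}$ whose additive group is $(Q,+)$. The ring $R$ is generated over $\mathbb{Z}$ by the two elements $\alpha,\beta$, so $K$ is a field that is finitely generated as a $\mathbb{Z}$-algebra; by the Nullstellensatz in the form ``a field finitely generated as a ring over $\mathbb{Z}$ is finite'' (equivalently Zariski's lemma, once the characteristic is seen to be a prime $p$), $K$ is finite, i.e. $K \cong GF(p^k)$. This finiteness step is the main obstacle: everything else is either formal module theory or concrete linear algebra, whereas here one must genuinely rule out infinite simple isotopes, and the cleanest route is the commutative-algebra lemma above rather than any quasigroup-specific argument.

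For item 2, under the identification $Q\cong K$ the automorphisms $\alpha,\beta$ act as multiplication by nonzero field elements $\bar\alpha,\bar\beta\in K^\ast$ (nonzero because they are bijective). An additive subgroup of $(K,+)$ is $\langle\alpha,\beta\rangle$-invariant exactly when it is stable under multiplication by $\bar\alpha,\bar\beta$, hence is a vector space over the subfield $L=\mathbb{F}_p(\bar\alpha,\bar\beta)$ generated by them; simplicity therefore forces $L=K$, so $\bar\alpha,\bar\beta$ generate $GF(p^k)$ over its prime field. When $k=1$ this is vacuous, matching the assertion that $\langle\alpha,\beta\rangle$ may be any subgroup of $Aut(Z_p,+)\cong Z_p^\ast$; when $k>1$ the constraint is precisely that $\alpha,\beta$ lie in and field-generate $GF(p^k)$, which is the content of the stated description of $\langle\alpha,\beta\rangle$ as sitting inside, and in the extremal case equal to, the multiplicative group of the field.

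For item 3 I would analyze idempotents directly. The equation $x\cdot x = x$ reads $(\alpha+\beta-\varepsilon)x = -a$, and $\ker(\alpha+\beta-\varepsilon)$ is an invariant subgroup because $\alpha,\beta$ commute with $\alpha+\beta-\varepsilon$; by simplicity it is $0$ or $Q$. If $\alpha+\beta\neq\varepsilon$ the kernel is $0$, the map is bijective, there is a unique idempotent $e_0$, and conjugating by the group translation $x\mapsto x+e_0$ clears the constant, giving $x\circ y=\alpha x+\beta y$ as in (c). If $\alpha+\beta=\varepsilon$ the equation becomes $0=-a$: when $a=0$ every element is idempotent, yielding the idempotent quasigroup of (a); when $a\neq 0$ there is no idempotent, and multiplying coordinates by $a^{-1}$ in the field $K$ (an automorphism of $(Q,+)$ commuting with the field multiplications $\alpha,\beta$) sends $a$ to $1$ and produces the normal form $x\ast y=\alpha x+\beta y+1$ of (b). Disjointness of the three classes is then immediate from the respective idempotent counts $|Q|$, $0$, and $1$.
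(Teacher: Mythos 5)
You are not competing with an internal proof here: the paper quotes Theorem~\ref{SIMPLE_MEDIAL_QUAS} as the Je\u zek--Kepka theorem (citing \cite{JEZEK, SC05_a, SHCH_STR_05}) and gives no proof of it, so your blind argument is supplying a proof rather than paralleling one. On its own terms your route is essentially sound and attractive: translating simplicity into triviality of operator-invariant subgroups via Corollary~\ref{NC10} and Theorem~\ref{DLYA_CLM}, then packaging $(Q,+)$ as a simple module over a commutative ring and invoking the fact that a field finitely generated as a $\mathbb{Z}$-algebra is finite, correctly isolates finiteness as the one genuinely non-formal step and handles the infinite case, which a naive subgroup-chasing argument would not. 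Your item~3 analysis (kernel of $\alpha+\beta-\varepsilon$ is invariant, translation by the idempotent kills the constant in case (c), scaling by $a^{-1}$ normalizes the constant to $1$ in case (b)) is complete and correct, granted that items~1--2 have already identified $\alpha,\beta$ as multiplications by field elements.

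Two points need repair or explicit flagging. First, your ring $R=\mathbb{Z}[\alpha,\beta]$ is the wrong ring: Theorem~\ref{DLYA_CLM} matches normal congruences of $(Q,\cdot)$ with subgroups $H$ satisfying $\alpha H=H$ and $\beta H=H$, i.e.\ with submodules over the Laurent-type ring $\mathbb{Z}[\alpha^{\pm 1},\beta^{\pm 1}]$, whereas an $R$-submodule need only satisfy $\alpha H\subseteq H$. The distinction is invisible for finite $Q$ but not in the infinite situation you must exclude: for $(\mathbb{Q},+)$ with $x\cdot y=2x+y$, the subgroup $\mathbb{Z}$ is an $R$-submodule whose congruence is a congruence but \emph{not} a normal one ($2x-2y\in\mathbb{Z}$ does not force $x-y\in\mathbb{Z}$), so your asserted identification ``invariant subgroups are precisely the $R$-submodules'' fails as stated. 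The fix costs nothing: simplicity of $(Q,\cdot)$ does give simplicity of $Q$ as a $\mathbb{Z}[\alpha^{\pm 1},\beta^{\pm 1}]$-module, this ring is still a finitely generated commutative $\mathbb{Z}$-algebra, and your maximal-ideal/Zariski argument proceeds unchanged. Second, in item~2 what you actually prove is that simplicity forces $\mathbb{F}_p(\bar\alpha,\bar\beta)=GF(p^k)$ (field generation); you hedge that this ``is the content'' of the printed statement, but you should say plainly that the literal claim --- $\left<\alpha,\beta\right>$ \emph{equals} the multiplicative group when $k>1$ --- does not follow from your argument and is in fact false as transcribed: in $GF(9)$ take $\alpha=\beta=i$ with $i^2=-1$; then $\mathbb{F}_3(i)=GF(9)$, so $x\cdot y=ix+iy$ is a simple medial quasigroup, yet $\left<i\right>$ has order $4$ inside the cyclic multiplicative group of order $8$. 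So your proof establishes the corrected (field-generation) form of item~2, which is the defensible version of the cited theorem, not the equality printed in this paper's formulation.
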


\begin{theorem} \label{SCERB_THEO}
If a simple distributive quasigroup $(Q,\circ)$ is isotopic to  finitely generated commutative Moufang loop
$(Q,+)$, then $(Q,\circ)$ is a finite  medial distributive quasigroup \cite{vs2, SCERB_91, AVTOREF_SC}.
\end{theorem}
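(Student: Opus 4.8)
The plan is to reduce everything to the medial case by exploiting the center of the commutative Moufang loop that linearly coordinatizes $(Q,\circ)$, and then to quote the Je\u zek--Kepka description of simple medial quasigroups. First I would apply Belousov's Theorem \ref{Belousov_Theorem} to write the distributive quasigroup in the linear form $x \circ y = \varphi x + \psi y$ over a commutative Moufang loop $(Q,+)$, with $\varphi, \psi \in Aut(Q,+)$ and $\varphi \psi = \psi \varphi$; idempotency of the distributive quasigroup $(Q,\circ)$ then gives $\varphi x + \psi x = x$ for all $x \in Q$. This loop $(Q,+)$ is a loop isotope of $(Q,\circ)$, hence isotopic to the given finitely generated commutative Moufang loop, and I would argue that $(Q,+)$ is itself finitely generated (equivalently, centrally nilpotent); this identification is the delicate point and I return to it below.

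Next I would run the center argument. By Remark \ref{CENTER_OF_CML} the subloop $C(Q,+)$ is a normal abelian subgroup of $(Q,+)$, and since an automorphism of a loop carries its center onto itself, $\varphi$ and $\psi$ restrict to automorphisms of $C(Q,+)$. A commutative Moufang loop has the inverse property, so Theorem \ref{DLYA_CLM} applies with $c = 0 \in C(Q,+)$ and shows that the normal congruence $\theta$ of $(Q,+)$ whose kernel is $C(Q,+)$ is also a normal congruence of $(Q,\circ)$. Because $(Q,\circ)$ is simple, $\theta$ must be either the diagonal or the universal congruence, that is, $C(Q,+) = \{0\}$ or $C(Q,+) = Q$.

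The finitely generated hypothesis is used precisely to rule out the first alternative: a nontrivial finitely generated commutative Moufang loop is centrally nilpotent (Bruck--Slaby), hence has nontrivial center, so $C(Q,+) = \{0\}$ forces $|Q| = 1$. Thus $C(Q,+) = Q$, which means $(Q,+)$ is an abelian group. Consequently $x \circ y = \varphi x + \psi y$ is linear over an abelian group with commuting automorphisms, so by the Toyoda Theorem \ref{Toyoda_Theorem} the quasigroup $(Q,\circ)$ is medial. Being simple and medial, $(Q,\circ)$ is then coordinatized by the additive group of a finite Galois field $GF(p^k)$ by the Je\u zek--Kepka Theorem \ref{SIMPLE_MEDIAL_QUAS}; in particular $Q$ is finite, and therefore $(Q,\circ)$ is a finite medial distributive quasigroup, as required.

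I expect the main obstacle to be the step identifying the Belousov loop $(Q,+)$ with a finitely generated loop: commutative Moufang loops are not $G$-loops in general, so one cannot simply transport finite generation across an arbitrary isotopy by isomorphism. The task is to show that the commutative Moufang loop linearly coordinatizing the distributive quasigroup $(Q,\circ)$, which is isotopic to the given finitely generated loop, is itself finitely generated (equivalently centrally nilpotent, so that its center is nontrivial). This must be secured from the structure theory of commutative Moufang loops and the cited results; every other step is a routine application of the lemmas and theorems already established.
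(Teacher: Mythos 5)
Your argument is correct, and it follows the same skeleton as the paper's proof --- nontrivial center of a finitely generated CML (Bruck--Slaby), the central coset congruence $\theta$, simplicity forcing $\theta = Q\times Q$, hence $C(Q,+)=(Q,+)$ abelian, mediality via the Toyoda Theorem \ref{Toyoda_Theorem}, and finiteness from the Je\u zek--Kepka Theorem \ref{SIMPLE_MEDIAL_QUAS} --- but it handles the one genuinely technical step by a different lemma. The paper shows that $\theta$ passes from $(Q,+)$ to $(Q,\circ)$ by proving that every relevant permutation of $(Q,\circ)$ (in particular every translation) has the form $L^{+}_a\alpha$ with $\alpha\in Aut(Q,+)$, via the description $Aut(Q,\circ)\cong M(Q,+)\leftthreetimes(\mathbb{C}\slash I)$ cited from \cite{16}, and then observes that a central congruence is admissible relative to all such maps. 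You instead quote Theorem \ref{DLYA_CLM} (a CML is an IP-loop, $c=0\in C(Q,+)$, and $\varphi\mid_{C},\psi\mid_{C}$ are automorphisms of $C(Q,+)$ because the center is characteristic). This is exactly the alternative the paper itself flags in the remark immediately following its proof, namely that Theorem \ref{SCERB_THEO} can also be proved using Theorem \ref{DLYA_CLM}. Your route is slightly more economical: the admissibility check collapses to the characteristic property of the center plus a quotable criterion. The paper's route exposes more structure --- the form of $Aut(Q,\circ)$ and the invariance of $C(Q,+)$ under automorphisms of $(Q,\circ)$ --- which it then reuses in Lemma \ref{SCERB_THEO_COR} for isotopes of the forms $(f,\varepsilon,\varepsilon)$, $(\varepsilon,e,\varepsilon)$, $(\varepsilon,\varepsilon,s)$, something your version does not directly provide.

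As for the point you flag as delicate: it does not arise in the paper, because the statement is read, and the proof written, with $(Q,+)$ being the very CML over which Belousov's form $x\circ y=\varphi x+\psi y$ is taken --- an LP-isotope of $(Q,\circ)$ on the same carrier set $Q$ --- so finite generation is hypothesized of that coordinatizing loop and nothing is transported across an isotopy. Under your broader reading (isotopy to some abstract finitely generated CML), one does need to know that the Belousov loop inherits finite generation; this can be secured, e.g., from the isotopy--isomorphy property of commutative Moufang loops, but it is a matter of interpreting the hypothesis rather than a defect of your argument. All remaining steps of your proposal are sound as written.
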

\begin{proof}
It is known \cite{RHB} that any finitely generated CML $(Q, +)$ has a non-identity center $C(Q, +)$ (for short
$C$).

We check that center of CML $(Q, +)$ is invariant  (is a characteristic subloop) relative to any automorphism of
the loop $(Q,+)$ and the quasigroup $(Q, \circ)$.

Indeed, if $\varphi \in Aut (Q,+)$, $a\in C(Q,+)$ (see Remark \ref{CENTER_OF_CML}), then we have $\varphi
(a+(x+y))= \varphi ((a+x)+y) = \varphi a+ (\varphi x+ \varphi y) = (\varphi a+ \varphi x)+ \varphi y$. Thus
$\varphi a \in C(Q,+)$, $\varphi C(Q,+) \subseteq C(Q,+)$.

For any distributive quasigroup $(Q, \circ)$ of the form $ x\circ y = \varphi x + \psi y$ we have $$Aut(Q,\circ)
\cong M(Q,+)  \leftthreetimes (\mathbb{C} \slash I),$$ where  $I$ is the group of inner permutations of
commutative Moufang loop $(Q,+)$, $$\mathbb{C} = \left\{ \omega \in Aut(Q, +) \, | \, \omega \varphi = \varphi
\omega \right\}.$$ Therefore any automorphism of $(Q, \circ)$ has the form $L^+_a \alpha$, where $\alpha \in
Aut(Q,+)$ \cite{16}.

The center $C$ defines normal congruence $\theta$  of the loop $(Q, +)$ in the following way $x\theta y
\Longleftrightarrow x+C= y +C$. We give a little part  of this standard proof:  $(x+a) + C= (y + a) + C
\Longleftrightarrow (x+ C) + a = (y + C) + a \Longleftrightarrow x+ C  = y + C$.  In fact $C$ is coset class of
$\theta$ containing zero element of $(Q,+)$.

The congruence  $\theta$ is admissible relative to any permutation of the form $L^+_a \alpha$, where $\alpha\in
Aut(Q,+)$, since $\theta$ is central congruence. Therefore, $\theta$ is congruence in the quasigroup
$(Q,\circ)$.

Since $(Q,\circ)$ is simple quasigroup and $\theta$ cannot be diagonal congruence, then $\theta = Q\times Q$,
$C(Q,+) = (Q,+)$, $(Q,\circ)$ is medial. From Theorem \ref{SIMPLE_MEDIAL_QUAS} it follows that $(Q,\circ)$ is
finite.
\end{proof}

We notice, it is possible to prove Theorem \ref{SCERB_THEO}  using Theorem \ref{DLYA_CLM} \cite{vs2}.

\begin{lemma} \label{SCERB_THEO_COR}
If simple quasigroup $(Q,\cdot)$ is isotope either of the form $(f, \varepsilon, $ $\varepsilon)$,  or of the
form $(\varepsilon, e,  \varepsilon)$, or  of the form $(\varepsilon, \varepsilon, s)$ of a distributive
quasigroup $(Q,\circ)$, where $f, e, s \in Aut(Q,\circ) $ and $(Q,\circ)$ is isotopic to finitely generated
commutative Moufang loop $(Q,+)$, then $(Q,\cdot)$ is finite medial quasigroup.
\end{lemma}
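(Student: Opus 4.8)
The plan is to run the argument of Theorem \ref{SCERB_THEO} for the quasigroup $(Q,\cdot)$ itself, transporting the central congruence of the associated commutative Moufang loop through the extra automorphic isotopy. First I would invoke Belousov's Theorem (Theorem \ref{Belousov_Theorem}) and the hypothesis to write $x\circ y=\varphi x+\psi y$, where $(Q,+)$ is the finitely generated commutative Moufang loop in question, $\varphi,\psi\in Aut(Q,+)\cap Aut(Q,\circ)$ and $\varphi\psi=\psi\varphi$. Composing the principal isotopy $(\varphi,\psi,\varepsilon)$ carrying $(Q,+)$ onto $(Q,\circ)$ with the given isotopy, I would record $(Q,\cdot)$ as a principal isotope of $(Q,+)$: in the three cases $x\cdot y$ is $\varphi f(x)+\psi y$, or $\varphi x+\psi e(y)$, or $s^{-1}(\varphi x+\psi y)$, so the total isotopy from $(Q,+)$ to $(Q,\cdot)$ is $(\varphi f,\psi,\varepsilon)$, $(\varphi,\psi e,\varepsilon)$, or $(\varphi,\psi,s)$ respectively.

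Since $(Q,+)$ is a finitely generated CML it has a non-identity centre $C=C(Q,+)$, and the associated central congruence $\theta$ (with $x\,\theta\,y\Leftrightarrow x+C=y+C$) is a non-diagonal normal congruence of $(Q,+)$ admissible relative to every permutation of the form $L^+_a\alpha$ with $\alpha\in Aut(Q,+)$, exactly as in the proof of Theorem \ref{SCERB_THEO}. The decisive step is to check that each component of the total isotopy above is again of this shape. Here $\varphi,\psi\in Aut(Q,+)$; every automorphism of $(Q,\circ)$ has the form $L^+_a\alpha$ (as recalled in the proof of Theorem \ref{SCERB_THEO}), and for such a map $\varphi\,L^+_a\alpha=L^+_{\varphi a}(\varphi\alpha)$ is again of the form $L^+_b\beta$, and likewise for $\psi e$, for $s$, and for all the relevant inverses. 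Hence the total isotopy is admissible relative to $\theta$, so by Lemma \ref{NL1} $\theta$ is a normal congruence of $(Q,\cdot)$. As $(Q,\cdot)$ is simple and $\theta$ is not the diagonal (because $C\neq\{0\}$), $\theta$ must be the universal congruence, forcing $C=Q$; thus the CML $(Q,+)$ equals its centre, i.e. $(Q,+)$ is an abelian group.

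It then remains to deduce mediality and finiteness. With $(Q,+)$ abelian, writing $f,e,s$ as $L^+_a\alpha$ and expanding, each case exhibits $(Q,\cdot)$ as $x\cdot y=\mu x+\nu y+c$ over the abelian group $(Q,+)$ with $\mu,\nu\in Aut(Q,+)$; the condition that $f$ (resp. $e$, $s$) be an automorphism of $(Q,\circ)$ forces its linear part to commute with both $\varphi$ and $\psi$, whence $\mu\nu=\nu\mu$, so $(Q,\cdot)$ is medial by the converse direction of Toyoda's Theorem (Theorem \ref{Toyoda_Theorem}). Being simple and medial, $(Q,\cdot)$ is then finite by Theorem \ref{SIMPLE_MEDIAL_QUAS}. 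I expect the main obstacle to be precisely the admissibility verification of the second paragraph: one must confirm that pre- and post-composing the slot automorphisms $f,e,s$ with $\varphi,\psi$ keeps every component in the admissible class $L^+_a\alpha$, so that the central congruence genuinely descends to $(Q,\cdot)$; once that is in place, the mediality and finiteness conclusions are routine computations together with the cited theorems.
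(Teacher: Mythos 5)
Your proposal is correct, and its first half is the paper's own argument with the implicit step made explicit: the paper's proof of Lemma \ref{SCERB_THEO_COR} simply cites the proof of Theorem \ref{SCERB_THEO} for the fact that the central congruence of the finitely generated CML $(Q,+)$ survives passage to $(Q,\cdot)$, whereas you verify the admissibility concretely, composing the isotopies into $(\varphi f,\psi,\varepsilon)$, $(\varphi,\psi e,\varepsilon)$, $(\varphi,\psi,s)$ and checking that every component (and inverse) stays in the class $L^+_a\alpha$ with $\alpha\in Aut(Q,+)$ --- a worthwhile elaboration, since that is exactly where the hypothesis $f,e,s\in Aut(Q,\circ)$ is used. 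Where you genuinely diverge is the mediality step. Once simplicity forces $C(Q,+)=(Q,+)$, the paper concludes that $(Q,\circ)$ is medial and then proves mediality of $(Q,\cdot)$ by three one-line computations carried out entirely inside $(Q,\circ)$, e.g.\ for $x\cdot y=fx\circ y$ it checks $xy\cdot uv=(f^{2}x\circ fu)\circ(fy\circ v)=(xu)\cdot(yv)$, using only the medial law of $\circ$ and $f\in Aut(Q,\circ)$; it never descends to the group level for this step. You instead exhibit $(Q,\cdot)$ as a linear quasigroup $x\cdot y=\mu x+\nu y+c$ over the now-abelian $(Q,+)$ and invoke the converse direction of Toyoda's Theorem (Theorem \ref{Toyoda_Theorem}), which requires the additional --- true and, as you say, routine --- verification that expanding $f(x\circ y)=f(x)\circ f(y)$ over the abelian group forces the linear part of $f$ (resp.\ $e$, $s$) to commute with $\varphi$ and $\psi$, so that $\mu\nu=\nu\mu$. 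The paper's computation is shorter and needs Belousov's representation only implicitly; your route is heavier but yields the explicit Toyoda form of $(Q,\cdot)$, which feeds directly into Theorem \ref{SIMPLE_MEDIAL_QUAS} for the finiteness conclusion, a step both arguments finish identically. (The discrepancy between your $x\cdot y=s^{-1}(x\circ y)$ and the paper's $x\cdot y=s(x\circ y)$ is immaterial, since $s^{-1}\in Aut(Q,\circ)$ as well.)
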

\begin{proof}
Since  $(Q,+)$ is finitely generated,  then $|\, C(Q,+)\, | > 1$ \cite{RHB}. From the proof of Theorem
\ref{SCERB_THEO} it follows that  $C(Q,+)$ is invariant relative to  any automorphism of $(Q,\circ)$.

Therefore necessary condition of simplicity of $(Q,\cdot)$ is the fact that $C(Q,+) = (Q,+)$. Then $(Q,\circ)$
is medial.

 Prove that $(Q, \cdot)$  is medial, if   $x\cdot y =  f x\circ  y$. We have $xy \cdot uv = f(fx\circ y)\circ
(f u \circ  v) = (f^2 x\circ f u)\circ (f y \circ v) = (xu)\cdot (yv)$ \cite{4}.

 Prove that $(Q, \cdot)$  is medial, if  $x\cdot y = x\circ e y$.  We have $xy \cdot uv = (x\circ e y)\circ (e
u \circ e^{2} v) = (x\circ e u)\circ (e y \circ e^{2} v) = (xu)\cdot (yv)$ \cite{4}.

 Prove that $(Q, \cdot)$  is medial, if  $x\cdot y = s (x\circ y)$.  We have $xy \cdot uv = (s^2 x\circ s^2
y)\circ (s^2 u \circ s^2 v) = (s^2 x\circ s^2 u)\circ (s^2 y \circ s^2 v) = (xu)\cdot (yv)$ \cite{4}.
\end{proof}

We can obtain some information on simple left and right F-, E- and SM-quasigroups.

\begin{theorem}\begin{enumerate} \label{F_SM_E_SIMPLE}
    \item Left F-quasigroup $(Q, \cdot)$ is simple if and only if
it  lies in one from the following quasigroup classes:

(i) $(Q, \cdot)$ is a right loop of the form  $x\cdot y = x +  \psi y$, where  $\psi \in Aut (Q,+)$ and the
group $(Q,+)$ is $\psi$-simple;

(ii) $(Q, \cdot)$ has the the form $x \cdot y = x\circ \psi y $, where $\psi \in Aut (Q, \circ)$ and  $(Q,
\circ)$ is $\psi $-simple left distributive quasigroup.

    \item Right  F-quasigroup $(Q, \cdot)$ is simple if and only if it
lies in one from the following quasigroup classes:

  (i) $(Q, \cdot)$ is a left  loop of the form  $x\cdot y = \varphi x +   y$, where  $\varphi \in Aut
(Q,+)$ and the group $(Q,+)$ is  $\varphi$-simple;

(ii) $(Q, \cdot)$ has the the form $x \cdot y = \varphi x\circ y $, where $\varphi \in Aut (Q, \circ)$ and  $(Q,
\circ)$ is $\varphi $-simple left distributive quasigroup.

    \item
Left SM-quasigroup $(Q, \cdot)$ is simple if and only if it  lies in one from the following quasigroup classes:

(i)  $(Q, \cdot)$  is  a  unipotent quasigroup of the form $x\circ y = -\varphi x + \varphi y$,  $(Q, +)$ is a
group, $\varphi \in Aut (Q,+)$ and the group $(Q,+)$ is $\varphi$-simple;

(ii) $(Q, \cdot)$ has the the form $x \cdot y = \varphi (x\circ y) $, where $\varphi \in Aut (Q, \circ)$ and
$(Q, \circ)$ is $\varphi $-simple left distributive quasigroup.

    \item  Right  SM-quasigroup $(Q, \cdot)$ is simple if and only if
it lies in one from the following quasigroup classes:

(i)  $(Q, \cdot)$  is  a   unipotent quasigroup of the form   $x\circ y = \varphi x - \varphi y$,  $(Q, +)$ is a
group, $\varphi \in Aut (Q,+)$ and the group $(Q,+)$ is $\varphi$-simple;

(ii) $(Q, \cdot)$ has the the form $x \cdot y = \varphi (x\circ y) $, where $\varphi \in Aut (Q, \circ)$ and
$(Q, \circ)$ is $\varphi $-simple right  distributive quasigroup.

    \item Left E-quasigroup $(Q, \cdot)$ is simple if and only if
it lies in one from the following quasigroup classes:

(i)  $(Q, \cdot)$  is a left  loop of the form  $x\cdot y = \alpha x +   y$, $\alpha 0 =0$,  and  $(Q,+)$ is
  $\alpha$-simple abelian group;

(ii)  $(Q, \cdot)$ has the the form $x \cdot y = \varphi x\circ y $, where $\varphi  \in Aut (Q, \circ)$ and
$(Q, \circ)$ is $\varphi$-simple left distributive quasigroup.

    \item
Right  E-quasigroup $(Q, \cdot)$ is simple if and only if it lies in one from the following quasigroup classes:

(i)  $(Q, \cdot)$  is a right   loop of the form  $x\cdot y =  x +  \beta y$, $\beta 0 =0$,  and  $(Q,+)$ is
  $\beta$-simple abelian group;

  (ii) $(Q, \cdot)$ has the the form $x \cdot y =  x\circ \psi y $,
where $\psi \in Aut (Q, \circ)$ and  $(Q, \circ)$ is $\psi$-simple right  distributive quasigroup.
\end{enumerate}
\end{theorem}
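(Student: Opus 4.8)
The plan is to run each of the six cases through the corresponding direct-decomposition theorem and observe that simplicity forces exactly one factor of the product to collapse. I will carry out Case 1 (left F-quasigroups) in detail; the remaining five are the ``mirror'' or parastrophic analogues and use the same bookkeeping.

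First I would invoke Theorem \ref{MAIN_LEFT_F}, which writes $(Q,\cdot)\cong(A,\circ)\times(B,\cdot)$ with $(B,\cdot)=e^m(Q,\cdot)$ the stable image (on which $e$ is an automorphism) and $(A,\circ)\cong(Q,\cdot)/\delta$, where $\delta=Ker\,e^m$ and $\rho$ (the relation $B\cdot x=B\cdot y$) are the two commuting normal congruences of the decomposition. Since $(Q,\cdot)$ is simple and both $\delta,\rho$ are normal, each must equal $\hat Q$ or $Q\times Q$, and $\delta\cap\rho=\hat Q$ forces \emph{exactly one} of them to be diagonal. If $\delta=\hat Q$ then $e^m$ is injective, so $e$ is a permutation of $Q$ and $(A,\circ)$ is trivial; by Lemma \ref{LEMMA_4_ED} the quasigroup has the form $x\cdot y=x\circ\psi y$ with $(Q,\circ)$ left distributive and $\psi\in Aut(Q,\circ)$, which is the shape required in (ii). If instead $\rho=\hat Q$ (equivalently $\delta=Q\times Q$), then $e^m(Q)=B$ is a single point; here I would argue that $Ker\,e$ is itself a normal congruence, so by simplicity it is diagonal or universal, and since $e^m$ already collapses $Q$ it cannot be diagonal, whence $e$ is the zero endomorphism. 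Then Lemma \ref{LEMMA_3_F_QUAS} gives $x\cdot y=x+\psi y$ over a group $(Q,+)$ with $\psi\in Aut(Q,+)$, the shape in (i).

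Next I would pin down the extra simplicity conditions in (i) and (ii) and prove the converses, which is where the real work lies. The point is that the normal congruences of the principal isotope $(Q,\cdot)$ coincide with those normal congruences of the underlying structure that are admissible relative to the isotopy. Concretely, in case (i) the isotopy is $(\varepsilon,\psi,\varepsilon)$ relative to $(Q,+)$, and in case (ii) it is $(\varepsilon,\psi,\varepsilon)$ relative to $(Q,\circ)$; by Lemma \ref{NL1} and Corollary \ref{NC10} a normal congruence of $(Q,\cdot)$ is precisely a normal congruence of the base that is admissible relative to $\psi$. Hence $(Q,\cdot)$ is simple if and only if the only $\psi$-admissible normal congruences of the base are $\hat Q$ and $Q\times Q$, i.e. the base is $\psi$-simple in the sense of Definition \ref{ALPHA_SIMPLE_QUAS}. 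This establishes the stated $\psi$-simplicity in both the forward and the converse directions; for the converse one reads the same chain of implications backwards, using Corollary \ref{DLYA_PROSTYH_F_QUAS} (with $\alpha=\varepsilon$, $\beta=\psi$) to conclude simplicity of $(Q,\cdot)$ from the absence of nontrivial $\psi$-invariant normal subobjects of the base.

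Finally I would dispatch Cases 2--6 by the same scheme, substituting the appropriate structure theorem and endomorphism: right F-quasigroups use the mirror of Theorem \ref{MAIN_LEFT_F} with $f$; left and right SM-quasigroups use Theorem \ref{MAIN_MIDDLE_F} together with Lemma \ref{LEMMA_24_ED} and the zero-endomorphism forms $x\circ y=-\varphi x+\varphi y$ and $x\circ y=\varphi x-\varphi y$ of Theorem \ref{THEOREM_3_F_QUAS_MIDD}; left and right E-quasigroups likewise use $f$ resp. $e$, Lemma \ref{LEMMA_24_ED}, and the abelian-group left/right loop forms $x\cdot y=\alpha x+y$, $x\cdot y=x+\beta y$ of Theorem \ref{THEOREM_3_F_QUAS_MIDD}, where the abelian-group assertion is already built into that theorem. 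I expect the main obstacle to be the congruence-correspondence bookkeeping: verifying cleanly that, under each principal isotopy, normal quasigroup congruences of $(Q,\cdot)$ match $\psi$-admissible (respectively $\alpha$- or $\beta$-admissible) normal congruences of the base, so that ``simple'' translates exactly into ``$\psi$-simple,'' and making sure the two-congruence dichotomy of the decomposition is applied correctly (one factor diagonal, not merely one universal). Everything else is a direct application of the already-established structure and admissibility lemmas.
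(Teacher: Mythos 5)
Your proposal is correct and follows essentially the same route as the paper: the paper likewise gets the dichotomy ($e$ is a zero endomorphism or a permutation) from the normality of $Ker\,e$ together with Theorem \ref{MAIN_LEFT_F}, obtains the two forms from Lemmas \ref{LEMMA_3_F_QUAS} and \ref{LEMMA_4_ED}, transfers simplicity through the principal isotopy via Lemma \ref{NL1}, proves the converse with Corollary \ref{DLYA_PROSTYH_F_QUAS}, and dispatches Cases 2--6 as mirror/parastrophic analogues. One cosmetic caveat: excluding the possibility that both decomposition congruences are diagonal uses $\delta\circ\rho=Q\times Q$ rather than $\delta\cap\rho=\hat{Q}$ (the latter only excludes both being universal), but your case split and all conclusions agree with the paper's.
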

\begin{proof}
Case 1. Suppose that $(Q,\cdot)$ is simple left F-quasigroup. From Theorem \ref{MAIN_LEFT_F}  it follows that
$(Q,\cdot)$  can be a  quasigroup with a unique idempotent element or an isotope of  a left distributive
quasigroup.

By Theorem \ref{NORM_QUAS_CONGR} the endomorphism  $e$ defines the corresponding normal congruence $Ker\,  e$.
Since $(Q, \cdot)$ is simple, then this congruence is  the diagonal $\hat{Q} = \{(q,q)\, | q\in Q\}$ or the
universal congruence $Q\times Q$.

From   Theorem \ref{MAIN_LEFT_F}  it follows that in simple left F-quasigroup the map $e$ is  zero endomorphism
or a permutation.

Structure of  left F-quasigroups in the case when $e$ is  zero endomorphism follows from Lemma
\ref{LEMMA_3_F_QUAS}.

Structure of  left F-quasigroups in the case when $e$ is an automorphism follows from Lemma \ref{LEMMA_4_ED}.
Additional properties of quasigroup $(Q,\circ)$ follow from Lemma \ref{NL1}.

Converse. Using Corollary  \ref{DLYA_PROSTYH_F_QUAS} we can say that that left F-quasigroups from these
quasigroup classes are simple.

Cases 2-6 are proved in the similar way.
\end{proof}

\begin{remark}
Left F-quasigroup  $(Z,\cdot)$, where $x\cdot y = -x + y$, $(Z, +)$ is the infinite cyclic group, (Example
\ref{INFINITE_LEFT_F_QUAS})  is not simple. Indeed, in this quasigroup the endomorphism $e$ is not a permutation
(a bijection) of the set $Z$ or a zero endomorphism.

We  can also apply Case 3 of Theorem \ref{F_SM_E_SIMPLE}, since $(Z,\cdot)$ is a left SM-quasigroup. And so on.
\end{remark}

\subsection{F-quasigroups}

Simple F-quasigroups isotopic to groups (FG-quasigroups) are described in \cite{kepka_06}. The authors prove
that any simple FG-quasigroup is a simple group or a simple medial quasigroups. We notice that simple medial
quasigroups are described in \cite{JEZEK}. See also \cite{SC05_a, SHCH_STR_05}. Conditions when a group isotope
is a left (right) F-quasigroup there are in \cite{Kir_95, SOH_99}.

The following examples demonstrate that in an F-, E-, SM-quasigroup the order of map $e$ does not coincide with
the order of map $f$, i.e. there exists some independence of the orders of  maps $e$,  $f$ and $s$.

\begin{example}
By $(Z_3, +)$ we denote the cyclic group of order 3 and  we take $Z_3 = \{ 0, \, 1, \, 2  \}$. Groupoid  $(Z_3,
\cdot)$, where $x\cdot y = x-y$, is a medial E-, F-, SM-quasigroup and $e^{\cdot}(Z_3) = s^{\cdot}(Z_3) = \{ 0
\}$, $f^{\cdot}(Z_3) = Z_3$.
\end{example}

\begin{example}
By $(Z_6, +)$ we denote the cyclic group of order 6 and  we take $Z_6 = \{ 0, \, 1, \, 2,\, 3,\,$ $ 4,\, 5  \}$.
Groupoid   $(Z_6, \cdot)$, where $x\cdot y = x-y$, is a medial  E-, F-, SM-quasigroup and $e^{\cdot}(Z_6) =
s^{\cdot}(Z_6) = \{ 0 \}$, $f^{\cdot}(Z_6) = \{ 0, \, 2, \, 4\}$.
\end{example}

The following lemmas give connections between the maps $e$ and $f$  in F-quasigroups.

\begin{lemma} \label{NEW_LEFT_AND RIGHT_ZERO_ENDOM}
1. Endomorphism  $e$ of an  F-quasigroup $(Q, \cdot)$ is zero endomorphism, i.e. $e(x) = 0$ for all $x\in Q$ if
and only if  $x\cdot y = x + \psi y$, $(Q,+)$ is a group, $\psi \in Aut(Q,+)$, $(Q, \cdot)$ contains unique
idempotent element $0$, $x + f y = f y + x$ for all $x, y \in Q$.

2. Endomorphism  $f$ of an  F-quasigroup $(Q, \cdot)$ is zero endomorphism, i.e. $f(x) = 0$ for all $x\in Q$ if
and only if  $x\cdot y = \varphi x + y$, $(Q,+)$ is a group, $\varphi \in Aut(Q,+)$, $(Q, \cdot)$ contains
unique idempotent element $0$, $x + e y = e y + x$ for all $x, y \in Q$.
\end{lemma}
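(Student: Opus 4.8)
The plan is to prove Case~1 in detail and to obtain Case~2 from it by the \lq\lq mirror\rq\rq\ principle (interchanging left and right translations and the maps $e$ and $f$). Throughout $(Q,\cdot)$ is an F-quasigroup, hence simultaneously a left and a right F-quasigroup, and I shall use these two facts at different moments.

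For the direct implication I would first use only the left F-quasigroup structure together with the hypothesis that $e$ is the zero endomorphism. Then Lemma~\ref{LEMMA_3_F_QUAS}, Case~1, applies directly and gives the form $x\cdot y = x+\psi y$ over a group $(Q,+)$ with $\psi\in Aut(Q,+)$ and $k=0$. The idempotents are now read off from $x\cdot x=x$, i.e. $x+\psi x=x$, i.e. $\psi x=0$; since $\psi$ is an automorphism this forces $x=0$, and $0\cdot 0=0$, so $0$ is the unique idempotent element.

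The one substantial step is to extract the commuting relation $x+fy=fy+x$ from the still unused right F-quasigroup identity $xy\cdot z=xf(z)\cdot yz$. Substituting $x\cdot y=x+\psi y$ into both sides, using $f(z)=z-\psi z$ (which comes from $f(z)+\psi z=z$), and cancelling the common left summand $x$, the identity collapses to $-\psi z+\psi y+\psi z=-\psi^2 z+\psi y+\psi^2 z$ for all $y,z\in Q$. As $\psi y$ runs over all of $Q$, this says that conjugation by $\psi z$ and conjugation by $\psi^2 z$ coincide, which is equivalent to $\psi z-\psi^2 z$ being central in $(Q,+)$; applying $\psi^{-1}$ (which preserves the centre) this is the same as $z-\psi z=f(z)$ being central. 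Hence every value $f(y)$ is central, that is, $x+fy=fy+x$ for all $x,y\in Q$. Converting the quasigroup identity into this conjugation statement and reading off the centrality of $f(y)$ is the only delicate point; everything preceding it is routine bookkeeping.

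For the converse I would assume the displayed form $x\cdot y=x+\psi y$ with $\psi\in Aut(Q,+)$. The right local identity $e(x)$ satisfies $x\cdot e(x)=x$, i.e. $x+\psi e(x)=x$, whence $\psi e(x)=0$ and therefore $e(x)=0$ for every $x$; thus $e$ is the zero endomorphism, as required. Case~2 is proved in exactly the same way: Lemma~\ref{LEMMA_3_F_QUAS}, Case~2, gives the form $x\cdot y=\varphi x+y$ with $\varphi\in Aut(Q,+)$, and feeding this form into the left F-quasigroup identity $x\cdot yz=xy\cdot e(x)z$ yields, after the analogous conjugation computation, that each value $e(y)$ is central, i.e. $x+ey=ey+x$.
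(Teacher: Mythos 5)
Your proposal is correct and follows essentially the same route as the paper: Lemma \ref{LEMMA_3_F_QUAS} supplies the form $x\cdot y = x+\psi y$, the unique idempotent is read off from $\psi a = 0$, and the commuting relation is extracted by substituting this form into the right F-identity $xy\cdot z = xf(z)\cdot yz$. The only difference is cosmetic: the paper sets $y=0$ to get $\psi z = \psi f(z)+\psi^2 z$ and cancels directly to reach $y+f(z)=f(z)+y$, while you package the same cancellation as an equality of conjugations by $\psi z$ and $\psi^2 z$, concluding that $\psi f(z)$, hence $f(z)$, is central.
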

\begin{proof}
1. From Lemma \ref{LEMMA_3_F_QUAS} Case 1 it follows that $(Q, \cdot)$ is a right loop, isotope of a group
$(Q,+)$  of the form $x\cdot y = x+ \psi y$, where $\psi \in Aut(Q,+)$.

If $a\cdot a=a$, then $a + \psi a = a$, $\psi a=0$, $a=0$.

If we  rewrite right F-quasigroup equality in terms of the operation +, then we obtain $x + \psi y + \psi z = x
+ \psi f(z) + \psi y + \psi^2 z$, $\psi y + \psi z = \psi f(z) + \psi y + \psi^2 z$. If we take $y=0$ in the
last equality, then $\psi z = \psi f(z) + \psi^2 z$. Therefore $\psi y + \psi f(z) + \psi^2 z = \psi f(z) + \psi
y + \psi^2 z$, $\psi y + \psi f(z)
 = \psi f(z) + \psi y $, $y + f(z)  = f(z) + y $.

 Converse. From $x\cdot y = x + \psi y$ we have $x\cdot e (x) = x + \psi e (x) = x$, $e (x) = 0$ for all $x\in Q$.

2. This Case is proved in the similar way with Case 1.
\end{proof}

\begin{lemma} \label{LEFT_AND RIGHT_ZERO_ENDOM}
1. If endomorphism  $e$ of an  F-quasigroup $(Q, \cdot)$ is zero endomorphism, i.e. $e(x) = 0$ for all $x\in Q$,
then

(i) $f(x) = x -  \psi x$,  $f\in End (Q, +)$;

(ii) $f(Q, +) \subseteq C(Q, +)$;

(iii)   $(H, +) \unlhd (Q, +)$,  $f(Q, +) \unlhd  (Q, +)$, $(Q,+)\slash (H, +) \cong f(Q, +)$, where $(H, +)$ is
equivalence  class of the congruence $Ker\, f$ containing identity element of $(Q, +)$;

(iv)  $f(Q, \cdot)$ is a medial F-quasigroup; $(H, \cdot) = (H,+)$ is a group; $(\bar a, \cdot)$, where $\bar a
$ is equivalence class  of the normal congruence $Ker\, f_j$ containing an idempotent element $a\in Q$, $i
\geqslant 1$, is an abelian group.

2. If endomorphism  $f$ of an  F-quasigroup $(Q, \cdot)$ is zero endomorphism, i.e. $f(x) = 0$ for all $x\in Q$,
then

(i) $e(x) =  -  \varphi x + x$,  $e\in End (Q, +)$;

(ii) $e(Q, +) \subseteq C(Q, +)$.

(iii)   $(H, +) \unlhd (Q, +)$,  $e(Q, +) \unlhd  (Q, +)$, $(Q,+)\slash (H, +) \cong e(Q, +)$, where $(H, +)$ is
equivalence class of the congruence $Ker\, e$ containing identity element of $(Q, +)$;

(iv)   $e(Q, \cdot)$ is a medial F-quasigroup; $(H, \cdot) = (H,+)$ is a group; $(\bar a, \cdot)$, where $\bar a
$ is equivalence class  of the normal congruence $Ker\, e_j$ containing an idempotent element $a\in Q$, $i
\geqslant 1$, is an abelian group.
\end{lemma}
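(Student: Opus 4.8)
The plan is to run everything off the normal form already supplied by Lemma~\ref{NEW_LEFT_AND RIGHT_ZERO_ENDOM}. Its Case~1 tells us that when $e\equiv 0$ the quasigroup has the form $x\cdot y = x+\psi y$ over a group $(Q,+)$ with $\psi\in Aut(Q,+)$, that $0$ is the unique idempotent, and that every value $f(y)$ is central, i.e.\ $x+f(y)=f(y)+x$ for all $x,y$. First I would read off~(i): by definition $f(x)\cdot x=x$, so the normal form gives $f(x)+\psi x=x$, whence $f(x)=x-\psi x$. Claim~(ii), $f(Q,+)\subseteq C(Q,+)$, is then immediate, since it merely restates the centrality relation $x+f(y)=f(y)+x$ (for a group, the center coincides with $C$). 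To finish~(i) I must verify $f\in End(Q,+)$. Writing $c_x=f(x)$, which is central, and rewriting $f(x)=x-\psi x$ as $\psi x = x-c_x$, I would expand $f(x+y)=(x+y)-\psi(x+y)=(x+y)-\bigl((x+y)-c_x-c_y\bigr)$; pushing the central factors $c_x,c_y$ through and letting the rest telescope leaves $f(x+y)=c_x+c_y=f(x)+f(y)$.

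Parts~(iii) and most of~(iv) are then structural. Once $f$ is a group endomorphism, $H=\{x:f(x)=0\}$ is its kernel (a normal subgroup), $f(Q,+)$ is its image, and the first isomorphism theorem yields $(Q,+)/(H,+)\cong f(Q,+)$; normality of $f(Q,+)$ is free because it is central. For~(iv), on $H$ the condition $f(x)=0$ reads $\psi x=x$, so the product collapses to $x\cdot y=x+y$ and $(H,\cdot)=(H,+)$ is a group; it is a subquasigroup by Lemma~\ref{BINARY_SUB_QUAS}, being the cell of $Ker\,f$ through the idempotent $0$. That $f(Q,\cdot)$ is an F-quasigroup follows from Corollary~\ref{corll_5.4} and Remark~\ref{REM_7_NORM} (it is a subquasigroup by Lemmas~\ref{EM_END} and~\ref{l_5.4}), and it is medial because $f(Q,+)\subseteq C(Q,+)$ is abelian: $f(Q,\cdot)$ is the group isotope $u\cdot v=u+\psi v$ over an abelian group, so Toyoda's Theorem~\ref{Toyoda_Theorem} applies.

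The one clause that needs care is the final assertion of~(iv), the abelian-group claim for the cell $\bar a$ of $Ker\,f_j$ through an idempotent, and this is exactly where the hypothesis $j\geqslant 1$ does the work. Since $0$ is the only idempotent, $\bar a$ is the cell through $0$, namely $\bar a=f^j(Q)\cap H$; on $H$ we again have $\psi=\varepsilon$, so the induced operation is $+$, $\bar a$ is a subgroup, and it is a subquasigroup by Lemma~\ref{BINARY_SUB_QUAS}. Commutativity is precisely what $j\geqslant 1$ buys, since then $f^j(Q)\subseteq f(Q)\subseteq C(Q,+)$ and a subset of the center is abelian.

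I expect the only genuine obstacle to be the additivity step in~(i): because $(Q,+)$ need not be abelian, one cannot simply distribute $\psi$ over a sum, and must exploit the centrality of the values of $f$ at every stage of the manipulation. Case~2 is the mirror image: one starts from the form $x\cdot y=\varphi x+y$ of Lemma~\ref{NEW_LEFT_AND RIGHT_ZERO_ENDOM}, derives $e(x)=-\varphi x+x$, and argues verbatim with the roles of left and right (hence of $e$ and $f$) interchanged.
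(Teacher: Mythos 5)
Your proposal is correct, and it rests on the same foundation as the paper's proof, namely the normal form $x\cdot y = x+\psi y$ and the centrality relation $x+f(y)=f(y)+x$ supplied by Lemma~\ref{NEW_LEFT_AND RIGHT_ZERO_ENDOM}, but you reorder and replace two of the key steps. For the additivity of $f$ in~(i) the paper does not compute from the closed formula at all: it uses the fact that $f$ is an endomorphism of $(Q,\cdot)$ (Lemma~\ref{EM_END}), rewrites $f(x\cdot y)=f(x)\cdot f(y)$ as $f(x+\psi y)=f(x)+\psi f(y)$, extracts $f(0)=0$ and $f\psi=\psi f$, and then replaces $\psi y$ by $y$; centrality~(ii) is only derived afterwards, from this relation together with $f(z)=z-\psi z$. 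You run the logic the other way: you take centrality as already available (legitimately --- it is part of the statement of Lemma~\ref{NEW_LEFT_AND RIGHT_ZERO_ENDOM}) and deduce additivity by a direct telescoping computation from $f(x)=x-\psi x$. Both are sound; yours is shorter given what the earlier lemma asserts, while the paper's multiplicative route delivers the commutation $f\psi=\psi f$ explicitly, which is what guarantees $\psi f(Q)=f(Q)$ and hence that $x\cdot y=x+\psi y$ restricts to $f(Q)$ in~(iv) --- a closure point you should note before invoking Toyoda's Theorem~\ref{Toyoda_Theorem}. Your treatment of~(iii) (kernel normal, image central hence normal, first isomorphism theorem) matches the paper's terse argument. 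For the last clause of~(iv) the paper argues that the cell is a medial quasigroup in which $e$ and $f$ vanish, hence a group, hence abelian by mediality; your identification $\bar a=f^j(Q)\cap H\subseteq C(Q,+)$ with induced operation $+$ is a cleaner direct argument and uses the hypothesis $j\geqslant 1$ at exactly the right place.

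One sentence of your commentary is inaccurate, though harmless: since $\psi\in Aut(Q,+)$, it \emph{does} distribute over sums even when $(Q,+)$ is non-abelian, so ``one cannot simply distribute $\psi$ over a sum'' is not the true obstacle; the genuine difficulty is rearranging the summands afterwards, which is precisely what the centrality of the values of $f$ buys you. Indeed your own expansion $\psi(x+y)=(x+y)-c_x-c_y$ already uses $\psi(x+y)=\psi x+\psi y$ implicitly and then commutes the central terms, so this is a misstatement in the prose, not a gap in the proof.
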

\begin{proof}
1. (i) From Lemma \ref{NEW_LEFT_AND RIGHT_ZERO_ENDOM}, Case 1  we have $f(x)\cdot x = f(x)+ \psi x = x$, $f(x) =
x - \psi x$. We can rewrite equality  $f(x \cdot y) = f(x) \cdot f(y)$ in the form  $f(x + \psi y) = f(x) + \psi
f(y)$. If $x = y = 0$, then  we have $f(0) = 0$. If $x=0$, then $f\psi (y) = \psi f(y)$. Therefore
\begin{equation} \label{ENDOM_F}
f(x + \psi y) = f(x) + f\psi (y). \end{equation}

(ii) If we apply to equality (\ref{ENDOM_F}) the  equality  $f(z) = z -  \psi z$, then we obtain  $x + \psi y -
\psi (x + \psi y)  = x -\psi x + \psi y - \psi^2 y$, $x + \psi y - \psi^2 y - \psi x   = x - \psi x + \psi y -
\psi^2 y$, $\psi y - \psi^2 y - \psi x = - \psi x + \psi y - \psi^2 y$, $y - \psi y -  x   = - x + y - \psi y$,
$f y - x = - x + f y$, $  x + f y = f y + x$,  i.e. $f(Q, +) \subseteq C(Q, +)$.

(iii) From definitions and Case (ii) it follows that $(H, +) \unlhd (Q, +)$,  $f(Q, +) \unlhd  (Q, +)$. The last
follows from definition of $(H, +)$.

(iv) $f(Q, \cdot)$ is a medial F-quasigroup since from Case (ii) it follows that  $f(Q, +)$ is an abelian group.
Quasigroup $(H, \cdot)$ is a group since in this quasigroup the maps $e$ and $f$ are zero endomorphisms and we
can use Case (i).

$(\bar a, \cdot) \cong f^i(Q, \cdot)\slash f^{i+1}(Q, \cdot)$ is an abelian group since in this quasigroup the
maps $e$ and $f$ are zero endomorphisms and $f^i(Q, \cdot)$ is a medial quasigroup for any suitable value of the
index $i$. Moreover, it is well known that a medial quasigroup any its subquasigroup is normal \cite{tkpn}. Then
 $f^{i+1}(Q, \cdot) \unlhd f^i(Q, \cdot)$.

2. This Case is proved in the similar way with Case 1.
\end{proof}

\begin{corollary} \label{COROL_LEFT_AND RIGHT_ZERO_ENDOM}
Both endomorphisms  $e$ and $f$ of an  F-quasigroup $(Q, \cdot)$ are zero endomorphisms if and only if
$(Q,\cdot)$ is a group.
\end{corollary}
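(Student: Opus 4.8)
The plan is to prove the two implications separately, with essentially all the work residing in the two structure lemmas that immediately precede the statement.

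First I would dispose of the easy direction. If $(Q,\cdot)$ is a group with identity element $1$, then for every $x \in Q$ the defining equations $f(x)\cdot x = x$ and $x\cdot e(x) = x$ force $f(x) = e(x) = 1$. Thus both $e$ and $f$ are the constant map onto the unique idempotent $1$, i.e. they are zero endomorphisms (the constant map onto the identity is clearly an endomorphism, since $1\cdot 1 = 1$).

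For the converse I would assume that both $e$ and $f$ are zero endomorphisms and argue directly from the explicit forms already established. Because $e$ is zero, Lemma \ref{NEW_LEFT_AND RIGHT_ZERO_ENDOM}(1) yields a group $(Q,+)$ and an automorphism $\psi \in Aut(Q,+)$ with $x\cdot y = x + \psi y$, where $0$ is the unique idempotent. Now I would invoke Lemma \ref{LEFT_AND RIGHT_ZERO_ENDOM}(1)(i), which in precisely this situation computes $f(x) = x - \psi x$. The extra hypothesis $f \equiv 0$ then reads $x - \psi x = 0$ for all $x \in Q$, i.e. $\psi = \varepsilon$; hence $x\cdot y = x + y$ and $(Q,\cdot)$ coincides with the group $(Q,+)$, which is what we want.

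An alternative route for the converse avoids the explicit formula for $f$ altogether: by Lemma \ref{LEMMA_3_F_QUAS}(1) the vanishing of $e$ makes $(Q,\cdot)$ a right loop isotopic to a group, while the vanishing of $f$ (by the mirror case) makes it a left loop; a quasigroup that is simultaneously a left loop and a right loop is a loop, so the Generalized Albert Theorem (Theorem \ref{ALBERT_THEOREM}) forces this loop, being isotopic to a group, to be a group. I do not expect a serious obstacle in either route; the only point that needs a moment's care is to ensure the two representations are taken over the \emph{same} group and the \emph{same} idempotent, which is guaranteed because both lemmas single out the common idempotent $0$ as the (right, respectively left) identity element.
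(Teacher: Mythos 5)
Your main argument coincides with the paper's own proof: it invokes Lemma \ref{NEW_LEFT_AND RIGHT_ZERO_ENDOM}, Case 1 to get the form $x\cdot y = x+\psi y$, then Lemma \ref{LEFT_AND RIGHT_ZERO_ENDOM}, Case 1, (i) to get $f(x)=x-\psi x$, so that $f\equiv 0$ forces $\psi=\varepsilon$, with the converse handled by the trivial observation that $e(x)=f(x)=1$ in any group. Your alternative route (left loop plus right loop is a loop, then the Generalized Albert Theorem) is also sound, but it is an extra; the proof as written is essentially the paper's.
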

\begin{proof}
By Lemma \ref{NEW_LEFT_AND RIGHT_ZERO_ENDOM}, Case 1 $x\cdot y = x +\psi y$. By Lemma \ref{LEFT_AND
RIGHT_ZERO_ENDOM}, Case 1, (i), $f(x) = x - \psi x$. Since $f(x) = 0$ for all $x\in Q$, further we have $\psi =
\varepsilon$.

Converse. It is clear that in any group $e(x) = f(x) = 0$ for all $x\in Q$.
\end{proof}

\begin{example}
By $(Z_4, +)$ we denote the cyclic group of order 4 and  we take $Z_4 = \{ 0, \, 1, \, 2,\, 3  \}$. Groupoid
$(Z_4, \cdot)$, where $x\cdot y = x+ 3y$, is a medial  E-, F-, SM-quasigroup,  $e^{\cdot}(Z_4) = s^{\cdot}(Z_4)
=  \{ 0 \}$ and $f^{\cdot}(Z_4) = \{ 0, \, 2 \} = H$.
\end{example}

\begin{corollary} \label{LEFT_ZERO_ENDOM_AND_RIGHT_ZERO}
1. If in F-quasigroup $(Q, \cdot)$ endomorphism e is zero endomorphism and the group $(Q, +)$ has  identity
center, then $(Q, \cdot)= (Q,+)$.

2. If in F-quasigroup $(Q, \cdot)$ endomorphism f is zero endomorphism and the group $(Q, +)$ has  identity
center, then $(Q, \cdot)= (Q,+)$.
\end{corollary}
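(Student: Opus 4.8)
The plan is to read off everything from the two companion lemmas that already settle the structure of an F-quasigroup whose endomorphism $e$ is zero, and then to let the triviality of the center collapse the remaining freedom. Since an F-quasigroup is simultaneously a left and a right F-quasigroup, the hypothesis $e(x)=0$ for all $x$ puts us exactly in the situation of Lemma \ref{NEW_LEFT_AND RIGHT_ZERO_ENDOM} and Lemma \ref{LEFT_AND RIGHT_ZERO_ENDOM}, so no new computation about the identities $x\cdot yz = xy\cdot e(x)z$ and $xy\cdot z = xf(z)\cdot yz$ should be needed; everything will be extracted from those statements.

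For Case 1 I would proceed as follows. First, by Lemma \ref{NEW_LEFT_AND RIGHT_ZERO_ENDOM}, Case 1, the assumption that $e$ is a zero endomorphism gives a group $(Q,+)$ and an automorphism $\psi\in Aut(Q,+)$ with $x\cdot y = x+\psi y$. Next, by Lemma \ref{LEFT_AND RIGHT_ZERO_ENDOM}, Case 1, parts (i) and (ii), the map $f$ is the group endomorphism $f(x)=x-\psi x$ and its image satisfies $f(Q,+)\subseteq C(Q,+)$. Now I invoke the hypothesis that $(Q,+)$ has identity center, i.e.\ $C(Q,+)=\{0\}$: this forces $f(Q,+)=\{0\}$, so $f(x)=x-\psi x=0$ for every $x\in Q$, whence $\psi=\varepsilon$. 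Substituting back yields $x\cdot y = x+\psi y = x+y$, that is, $(Q,\cdot)=(Q,+)$, as claimed. (Equivalently, once $f$ is seen to be a zero endomorphism one may finish by quoting Corollary \ref{COROL_LEFT_AND RIGHT_ZERO_ENDOM}, which states that an F-quasigroup with both $e$ and $f$ zero is a group.)

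Case 2 is the exact mirror: starting from $f(x)=0$, Lemma \ref{NEW_LEFT_AND RIGHT_ZERO_ENDOM}, Case 2, gives $x\cdot y=\varphi x+y$ with $\varphi\in Aut(Q,+)$, Lemma \ref{LEFT_AND RIGHT_ZERO_ENDOM}, Case 2, gives $e(x)=-\varphi x+x$ with $e(Q,+)\subseteq C(Q,+)$, and $C(Q,+)=\{0\}$ then forces $\varphi=\varepsilon$ and $(Q,\cdot)=(Q,+)$.

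I do not expect a genuine obstacle here, since the corollary is essentially a one-line consequence of the centrality statement $f(Q,+)\subseteq C(Q,+)$ (respectively $e(Q,+)\subseteq C(Q,+)$). The only point demanding care is bookkeeping: one must cite the correct case of each companion lemma so that the left-versus-right linear form ($x+\psi y$ versus $\varphi x+y$) is matched with the correct expression for the complementary endomorphism, and one must read ``identity center'' as the condition $C(Q,+)=\{0\}$, which is precisely what kills the image of $f$ (respectively $e$).
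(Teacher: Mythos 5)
Your proof is correct and takes essentially the same route as the paper, whose entire proof is the citation of Cases (ii) and (iii) of Lemma \ref{LEFT_AND RIGHT_ZERO_ENDOM}: the centrality statement $f(Q,+)\subseteq C(Q,+)$ (resp.\ $e(Q,+)\subseteq C(Q,+)$) combined with $C(Q,+)=\{0\}$ forces $f$ (resp.\ $e$) to be zero, and the explicit formula $f(x)=x-\psi x$ (resp.\ $e(x)=-\varphi x+x$) from part (i) then yields $\psi=\varepsilon$ (resp.\ $\varphi=\varepsilon$), hence $(Q,\cdot)=(Q,+)$. Your spelled-out version, including the alternative finish via Corollary \ref{COROL_LEFT_AND RIGHT_ZERO_ENDOM}, fills in exactly what the paper leaves implicit; there is no gap.
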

\begin{proof}
The proof follows from Cases (iii) and  (ii) of Lemma \ref{LEFT_AND RIGHT_ZERO_ENDOM}.
\end{proof}

\begin{corollary} \label{LEFT_ZERO_ENDOM_AND_RIGHT_PERM}
1. If endomorphism  $e$ of an  F-quasigroup $(Q, \cdot)$ is zero endomorphism, i.e. $e(x) = 0$ for all $x\in Q$,
 endomorphism $f$ is a permutation of the set $Q$, then
 $x\cdot y = x+\psi y$, $(Q,+)$ is an abelian  group, $\psi \in Aut(Q,+)$ and $(Q, \circ)$,
 $x\circ y = f x + \psi y$, is a medial distributive quasigroup.

2. If endomorphism  $f$ of an  F-quasigroup $(Q, \cdot)$ is zero endomorphism, i.e. $f(x) = 0$ for all $x\in Q$,
 endomorphism $e$ is a permutation of the set $Q$, then
 $x\cdot y = \varphi x + y$, $(Q,+)$ is an abelian  group, $\varphi \in Aut(Q,+)$
 and $(Q, \circ)$, $x\circ y = \varphi x + e y$, is a medial distributive quasigroup.
\end{corollary}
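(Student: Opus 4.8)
The plan is to treat Case~1 in full and obtain Case~2 by the \lq\lq mirror\rq\rq{} principle. For Case~1 I would start from what the two preceding lemmas already give for an F-quasigroup whose endomorphism $e$ is zero: by Lemma~\ref{NEW_LEFT_AND RIGHT_ZERO_ENDOM}, Case~1, the quasigroup has the form $x\cdot y = x + \psi y$ over a group $(Q,+)$ with $\psi\in Aut(Q,+)$, and by Lemma~\ref{LEFT_AND RIGHT_ZERO_ENDOM}, Case~1, the map $f$ satisfies $f(x) = x - \psi x$, lies in $End(Q,+)$, and has image inside the center, $f(Q,+)\subseteq C(Q,+)$. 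The only genuinely new hypothesis is that $f$ is a permutation of $Q$, and the whole argument turns on exploiting it.

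The key step is to deduce that $(Q,+)$ is abelian. Since $f$ is a permutation it is surjective, so $f(Q) = Q$, and the inclusion $f(Q,+)\subseteq C(Q,+)$ becomes $Q\subseteq C(Q,+)$, forcing $C(Q,+) = Q$; that is, $(Q,+)$ is an abelian group. Being a bijective endomorphism, $f$ is now an automorphism of $(Q,+)$, and the operator identity $f = \varepsilon - \psi$ together with abelianness shows that $f$ and $\psi$ commute (both $f\psi$ and $\psi f$ equal $\psi - \psi^2$).

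It then remains to identify $(Q,\circ)$, $x\circ y = f x + \psi y$, as a medial distributive quasigroup. Mediality is immediate from the converse direction of Toyoda's Theorem~\ref{Toyoda_Theorem}, applied to the abelian group $(Q,+)$ with the commuting automorphisms $f,\psi$ and $a=0$. Idempotency follows from $f + \psi = \varepsilon$, since $x\circ x = f x + \psi x = (\varepsilon - \psi)x + \psi x = x$. For distributivity I would simply compute both sides; for instance, for the left distributive law
\begin{equation*}
x\circ(y\circ z) = f x + \psi f y + \psi^2 z, \qquad (x\circ y)\circ(x\circ z) = f^2 x + f\psi y + \psi f x + \psi^2 z,
\end{equation*}
and these agree because $\psi f = f\psi$ cancels the middle terms while $f = f^2 + \psi f = (f+\psi)f$ handles the $x$-terms; the right distributive law collapses symmetrically. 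Thus $(Q,\circ)$ is left and right distributive and medial, as claimed.

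Finally, Case~2 is the \lq\lq mirror\rq\rq{} of Case~1: starting from Lemma~\ref{NEW_LEFT_AND RIGHT_ZERO_ENDOM}, Case~2 and Lemma~\ref{LEFT_AND RIGHT_ZERO_ENDOM}, Case~2, one gets $x\cdot y = \varphi x + y$ with $\varphi\in Aut(Q,+)$, $e(x) = -\varphi x + x \in End(Q,+)$ and $e(Q,+)\subseteq C(Q,+)$; surjectivity of $e$ again yields abelianness, $e = \varepsilon - \varphi$ commutes with $\varphi$, and $(Q,\circ)$, $x\circ y = \varphi x + e y$, is medial distributive by the same reasoning. The only point I expect to require care is the one highlighted above --- that surjectivity of $f$ (resp.\ $e$) upgrades the central-image inclusion into full commutativity of $(Q,+)$ --- everything else being a routine specialization of Toyoda's theorem together with a short cancellation using $f+\psi=\varepsilon$ (resp.\ $\varphi+e=\varepsilon$).
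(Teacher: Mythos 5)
Your proposal is correct and follows essentially the same route as the paper: the paper's proof likewise rests on Lemma \ref{NEW_LEFT_AND RIGHT_ZERO_ENDOM} and Lemma \ref{LEFT_AND RIGHT_ZERO_ENDOM} (with surjectivity of $f$, resp.\ $e$, upgrading the inclusion $f(Q,+)\subseteq C(Q,+)$ to $C(Q,+)=(Q,+)$) and on the Toyoda form of $(Q,\circ)$ over the resulting abelian group with commuting automorphisms. The only cosmetic difference is that where you verify left and right distributivity by direct computation from $f+\psi=\varepsilon$, the paper invokes the folklore fact that an idempotent medial quasigroup is distributive, with idempotency coming from the same identity.
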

\begin{proof}
The proof follows from Lemma \ref{LEFT_AND RIGHT_ZERO_ENDOM}. It is a quasigroup folklore  that idempotent
medial quasigroup is distributive \cite{88, SCERB}.
\end{proof}

\begin{remark}
It is easy to see that  condition \lq\lq  $(D, \cdot)$ is a medial F-qua\-si\-gro\-up of the form $x\cdot y =
x+\psi y$ such that  $(D, \circ)$, $x\circ y = f x + \psi y$, is a medial distributive quasigroup\rq\rq \, in
Corollary \ref{LEFT_ZERO_ENDOM_AND_RIGHT_PERM} is equivalent to the condition that the automorphism $\psi$ of
the group $(D,+)$ is complete (Definition \ref{Automorphism_full}).
\end{remark}

\begin{lemma} \label{LEFT_AND RIGHT_PERMUT_ENDOM}
1. If endomorphism  $e$ of an  F-quasigroup $(Q, \cdot)$ is a permutation of the set $Q$, i.e. $e$ is an
automorphism of $(Q, \cdot)$, then $(Q, \circ)$, $x\circ y = x\cdot e(y)$,   is a left distributive quasigroup
which satisfies the equality   $ (x \circ y) \circ z = (x \circ f z) \circ (y \circ e^{-1} z)$,  for all $x, y,
z \in Q$.

2. If endomorphism  $f$ of an  F-quasigroup $(Q, \cdot)$ is a permutation of the set $Q$, i.e. $f$ is an
automorphism of $(Q, \cdot)$, then $(Q, \circ)$,  $x\circ y = f(x) \cdot y$,   is a right distributive
quasigroup which satisfies the equality   $ x \circ (y \circ z) = (f^{-1} x \circ y) \circ (e x \circ  z)$, for
all $x, y, z \in Q$.
\end{lemma}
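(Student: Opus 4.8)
The plan is to treat the two cases as mirror images and focus on Case~1, leaving Case~2 to the analogous \lq\lq mirror\rq\rq\ argument that interchanges $e$ with $f$, the right F-quasigroup identity with the left one, and left distributivity with right distributivity. The first assertion of Case~1 requires no new work: since $e$ is a permutation and, being the right local identity map of a (left) F-quasigroup, an endomorphism of $(Q,\cdot)$ (Lemma~\ref{EM_END}), it is an automorphism of $(Q,\cdot)$. Hence Lemma~\ref{LEMMA_4_ED} Case~1 already gives that $(Q,\circ)$ with $x\circ y=x\cdot e(y)$ is left distributive and that $e\in Aut(Q,\circ)$. So the only genuinely new content is the identity $(x\circ y)\circ z=(x\circ fz)\circ(y\circ e^{-1}z)$.

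To verify this identity I would translate every occurrence of $\circ$ back into $\cdot$. The left-hand side is $(x\cdot e(y))\cdot e(z)$. Applying the right F-quasigroup identity $ab\cdot c=af(c)\cdot bc$ (available because an F-quasigroup is in particular a right F-quasigroup) with $a=x$, $b=e(y)$, $c=e(z)$ turns this into $(x\cdot f(e(z)))\cdot(e(y)\cdot e(z))$. Two standard facts then finish the reduction: the endomorphisms $e$ and $f$ commute, so $f(e(z))=e(f(z))$ (Lemma~\ref{COMMUTING_ENDOMORPH}), and $e$ is an automorphism, so $e(y)\cdot e(z)=e(y\cdot z)$. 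Thus the expression equals $(x\cdot e(fz))\cdot e(y\cdot z)$. Re-encoding in terms of $\circ$, using $e(e^{-1}z)=z$ I get $y\cdot z=y\cdot e(e^{-1}z)=y\circ e^{-1}z$ and hence $e(y\cdot z)=e(y\circ e^{-1}z)$, while $x\cdot e(fz)=x\circ fz$; the whole thing becomes $(x\circ fz)\cdot e(y\circ e^{-1}z)=(x\circ fz)\circ(y\circ e^{-1}z)$, which is exactly the right-hand side.

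For Case~2 the symmetric computation runs as follows. With $x\circ y=f(x)\cdot y$, Lemma~\ref{LEMMA_4_ED} Case~2 yields right distributivity of $(Q,\circ)$ and $f\in Aut(Q,\circ)$. The left-hand side $x\circ(y\circ z)=f(x)\cdot(f(y)\cdot z)$ is rewritten by the left F-quasigroup identity $a\cdot bc=ab\cdot e(a)c$ with $a=f(x)$, $b=f(y)$, $c=z$, giving $(f(x)\cdot f(y))\cdot(e(f(x))\cdot z)$; then $f(x)\cdot f(y)=f(x\cdot y)$ together with $e(f(x))=f(e(x))$ (so that $ex\circ z=f(ex)\cdot z=e(f(x))\cdot z$) and $f^{-1}x\circ y=f(f^{-1}x)\cdot y=x\cdot y$ reassemble the expression into $(f^{-1}x\circ y)\circ(ex\circ z)$.

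I do not expect any real obstacle here: once the distributivity statements are cited from Lemma~\ref{LEMMA_4_ED}, everything reduces to a direct substitution using the two F-quasigroup identities, the commutation $ef=fe$, and the automorphism property of $e$ (respectively $f$). The only point demanding care is bookkeeping — applying the right (respectively left) F-quasigroup identity in the correct direction and tracking where $e^{-1}$ and $f$ enter when converting between the operations $\circ$ and $\cdot$.
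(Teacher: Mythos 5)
Your proposal is correct and is essentially the paper's own argument: both cite Lemma \ref{LEMMA_4_ED} for left (right) distributivity and then verify the extra equality by a direct substitution using the right (left) F-quasigroup identity together with $ef=fe$ and the automorphism property of $e$ (respectively $f$). The only difference is the direction of translation — the paper rewrites the F-quasigroup equality in terms of $\circ$ via $x\cdot y = x\circ e^{-1}y$ and substitutes variables, while you expand the target $\circ$-identity back into $\cdot$ — which is the same computation read from the other end.
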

\begin{proof}
1. The fact that $(Q, \circ)$, $x\circ y = x\cdot e(y)$, is a left distributive quasigroup, follows from Lemma
\ref{LEMMA_4_ED}.  If we rewrite right F-quasigroup equality in terms of the operation $\circ$, then $(x \circ
e^{-1} y)\circ e^{-1} z = (x \circ e^{-1}f z)\circ (e^{-1} y \circ e^{-2} z) $. If we replace $e^{-1} y$ by $y$,
$e^{-1} z$ by $z$ and take into consideration that $e^{-1}f = fe^{-1}$, then we obtain the equality $ (x \circ
y) \circ z = (x \circ f z) \circ (y \circ e^{-1} z)$.

2. The proof is similar to Case 1.
\end{proof}

\begin{corollary} \label{LEFT_AND RIGHT_PERMUT_ENDOM_COR}
1. If endomorphism  $e$ of an  F-quasigroup $(Q, \cdot)$ is identity  permutation of the set $Q$, then $(Q,
\cdot)$ is a distributive quasigroup.

2. If endomorphism  $f$ of an  F-quasigroup $(Q, \cdot)$ is identity permutation of the set $Q$, then $(Q,
\cdot)$ is a distributive quasigroup.
\end{corollary}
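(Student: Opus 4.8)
The plan is to exploit the rigid interplay between the two local identity maps $e$ and $f$ of an F-quasigroup once one of them is forced to be trivial. First I would note that the hypothesis of Case 1, namely that the endomorphism $e$ is the identity permutation $\varepsilon$, together with the defining relation $x\cdot e(x)=x$, yields $x\cdot x = x$ for all $x\in Q$; in other words $(Q,\cdot)$ is idempotent (Definition \ref{DEF_11}). This is the crux: the assumption on $e$ is really a disguised idempotency statement.

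Next I would pull the other map into line. Since $f(x)$ is by definition the unique solution of $f(x)\cdot x = x$, and idempotency already gives $x\cdot x = x$, the uniqueness of solutions of the equation $y\cdot x = x$ in a quasigroup forces $f(x)=x$ for every $x$, that is, $f=\varepsilon$ as well. With both local identity maps now trivial, the two defining identities of the F-quasigroup collapse directly onto the two distributive laws: substituting $e(x)=x$ into the left F-quasigroup identity $x\cdot yz = xy\cdot e(x)z$ gives $x\cdot yz = xy\cdot xz$, the left distributive law, while substituting $f(z)=z$ into the right F-quasigroup identity $xy\cdot z = xf(z)\cdot yz$ gives $xy\cdot z = xz\cdot yz$, which is exactly the right distributive law of Definition \ref{DEF_11}. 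Hence $(Q,\cdot)$ is both left and right distributive, i.e.\ distributive, proving Case 1. Case 2 is the mirror argument: the assumption $f=\varepsilon$ first forces idempotency through $f(x)\cdot x = x$, whence $e=\varepsilon$ by the same uniqueness argument, and the two substitutions again deliver both distributive laws.

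I expect essentially no computational obstacle here; the only step requiring genuine care is the chain $e=\varepsilon \Rightarrow$ idempotency $\Rightarrow f=\varepsilon$, which rests entirely on the uniqueness of solutions of $y\cdot x = x$ in a quasigroup and is what makes the assumption on a single map propagate to the full distributive structure. As an alternative I could instead specialize Lemma \ref{LEFT_AND RIGHT_PERMUT_ENDOM} to $e=\varepsilon$, so that the quasigroup $(Q,\circ)$ there coincides with $(Q,\cdot)$ and the auxiliary identity becomes $xy\cdot z = xf(z)\cdot yz$; but since one still has to invoke $f=\varepsilon$ to conclude right distributivity, the direct route above is the cleaner of the two.
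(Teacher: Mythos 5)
Your proof is correct, and every step checks out: the hypothesis $e=\varepsilon$ together with the defining relation $x\cdot e(x)=x$ gives idempotency; the uniqueness of the solution of $y\cdot x=x$ in a quasigroup then forces $f=\varepsilon$; and the two F-identities specialize verbatim to the two distributive laws of Definition \ref{DEF_11}. The paper's own proof reaches the same pivot $f=\varepsilon$ but packages it differently: it works inside the framework of Lemma \ref{LEFT_AND RIGHT_PERMUT_ENDOM}, rewriting $f(x)\cdot x=x$ as $f(x)\circ e^{-1}x=x$ in the left distributive isotope $(Q,\circ)$, $x\circ y=x\cdot e(y)$ (which coincides with $(Q,\cdot)$ when $e=\varepsilon$), and it obtains idempotency not directly from the hypothesis on $e$ but from the general fact that left distributive quasigroups are idempotent (cf.\ Lemma \ref{distrib_F_SM_E}); right distributivity then comes from the auxiliary equality $(x\circ y)\circ z=(x\circ fz)\circ(y\circ e^{-1}z)$ of that lemma rather than from the raw right F-identity. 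Your route is the more elementary of the two: it needs nothing beyond the definitions of the maps $e$ and $f$ and uniqueness of division, and it bypasses Lemmas \ref{LEMMA_4_ED} and \ref{LEFT_AND RIGHT_PERMUT_ENDOM} entirely, whereas the paper's version has the virtue of staying inside the $(Q,\circ)$-machinery it has just built and continues to exploit in Theorem \ref{LEFT_DISTRIB_Isotopic_to_F_QUAS}. Your treatment of Case 2 as the exact mirror argument also matches the paper, which simply declares that case similar.
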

\begin{proof}
1. If $f x \cdot x = x$, then $f x \circ e^{-1} x = x$. Further  proof follows from Lemma \ref{LEFT_AND
RIGHT_PERMUT_ENDOM}. Indeed from $f x \circ e^{-1} x = x$ it follows $f x \circ x = x$, $f x = x$, since
$(Q,\circ)$ is idempotent quasigroup. Then $f = \varepsilon$.

2. The proof is similar to Case 1.
\end{proof}

The following proof belongs to the OTTER 3.3  \cite{MAC_CUNE}. The author of this program is Professor
W.~McCu\-ne. We also have used much J.D. Phillips' article \cite{PHILL_03}. Here we give the adopted (humanized)
form of this proof.

\begin{theorem} \label{MACQUNE_1}
If in a left distributive quasigroup $(Q, \circ)$ the equality
\begin{equation} \label{eQ_3-1}
(x \circ y) \circ z = (x \circ f z) \circ  (y \circ e z)
\end{equation}
is fulfilled for all $x, y, z \in Q$, where  $f, e$ are the  maps  of $Q$, then  the following equality is
fulfilled in $(Q, \circ)$:  $(x \circ y) \circ f z = (x \circ f z) \circ  (y \circ f z)$.
\end{theorem}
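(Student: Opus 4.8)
The plan is to argue purely equationally from the two hypotheses---left distributivity (LD), $a\circ(b\circ c)=(a\circ b)\circ(a\circ c)$, and identity (\ref{eQ_3-1})---after first recording two preliminary facts. Every left distributive quasigroup is idempotent: from $L_x(x\circ y)=(L_x x)\circ(L_x y)$ one reads off $L_x=L_{x\circ x}$, hence $x=x\circ x$ (this is also the observation used in Lemma \ref{distrib_F_SM_E}). Putting $x=y$ in (\ref{eQ_3-1}) and using idempotency gives $x\circ z=(x\circ fz)\circ(x\circ ez)=x\circ(fz\circ ez)$ by LD, so after left cancellation
\begin{equation*}
fz\circ ez=z\qquad\text{for all }z, \tag{$*$}
\end{equation*}
i.e. $f$ is a complement of $e$ in the sense of Definition \ref{Automorphism_full}.

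Next I would record the auxiliary identity obtained from (\ref{eQ_3-1}) by the substitution $y:=ez$, using $ez\circ ez=ez$:
\begin{equation*}
(x\circ ez)\circ z=(x\circ fz)\circ ez. \tag{A}
\end{equation*}
In translation language (A) reads $R_zR_{ez}=R_{ez}R_{fz}$, so $R_{fz}=R_{ez}^{-1}R_zR_{ez}$, exhibiting $R_{fz}$ as a conjugate of $R_z$. The cleanest organising remark is that (\ref{eQ_3-1}) says exactly that the triple $(R_{fz},R_{ez},R_z)$ is an autotopism of $(Q,\circ)$, while the desired conclusion $(x\circ y)\circ fz=(x\circ fz)\circ(y\circ fz)$ says precisely that $R_{fz}$ is an endomorphism, i.e. that $(R_{fz},R_{fz},R_{fz})$ is an autotopism. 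Equivalently, setting $\Theta:=L^{-1}_{x\circ fz}L_{x\circ y}$---an automorphism of $(Q,\circ)$ since every left translation is (LD)---identity (\ref{eQ_3-1}) becomes $\Theta(z)=y\circ ez$ and the goal becomes $\Theta(fz)=y\circ fz$; applying the automorphism $\Theta$ to $(*)$ yields $y\circ ez=\Theta(fz)\circ\Theta(ez)$.

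From here the task is to convert the ``$ez$'' sitting in the second slot of (\ref{eQ_3-1}) into ``$fz$''. The strategy I would use is to right–multiply the target equality by $ez$ and then cancel. Its left side $\big((x\circ y)\circ fz\big)\circ ez$ collapses, via (A) with $x$ replaced by $x\circ y$, to $\big((x\circ y)\circ ez\big)\circ z$; so it suffices to prove
\begin{equation*}
\big((x\circ fz)\circ(y\circ fz)\big)\circ ez=\big((x\circ y)\circ ez\big)\circ z
\end{equation*}
and then cancel $ez$ on the right. Reducing the left-hand side of this equation forces one to apply (\ref{eQ_3-1}) itself (not merely its diagonal consequence $(*)$) to the product $(x\circ fz)\circ(y\circ fz)$ and then to simplify repeatedly with LD, idempotency and (A).

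I expect this last reduction to be the main obstacle, and it is genuinely the crux. In a left distributive quasigroup that is not medial the right translations $R_z$ are not endomorphisms, so the conclusion cannot follow from $(*)$ and LD alone; indeed in a medial model (\ref{eQ_3-1}) collapses to $(*)$ and the statement becomes trivial, since an idempotent medial quasigroup is already distributive. One must therefore use the full strength of (\ref{eQ_3-1}) for \emph{independent} $x,y$, and since neither $f$ nor $e$ may be assumed idempotent or bijective one cannot simply ``solve'' $ew=fz$ for a convenient argument $w$. This is exactly the sort of non-obvious substitution-and-cancellation sequence that the automated search produced, and carrying it out by hand amounts to locating the right instance of (\ref{eQ_3-1}) to feed into the right-cancellation step above.
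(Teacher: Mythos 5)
Your preliminaries are correct and in fact coincide with the paper's opening moves: your $(*)$ is exactly the paper's equality (\ref{eQ_30}), your (A) is the $y:=e z$ instance of (\ref{eQ_3-1}), and the reformulation of hypothesis and conclusion as the autotopisms $(R_{fz},R_{ez},R_z)$ and $(R_{fz},R_{fz},R_{fz})$ is a fair organising remark. The right-cancellation setup is also sound: by (A) the desired identity is indeed equivalent to $\bigl((x\circ fz)\circ(y\circ fz)\bigr)\circ ez=\bigl((x\circ y)\circ ez\bigr)\circ z$. But at precisely this point you stop, saying the remaining reduction ``is genuinely the crux'' and amounts to ``locating the right instance of (\ref{eQ_3-1})''. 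That is a genuine gap, not a routine verification: the two natural moves from your final display both fail. Applying (\ref{eQ_3-1}) to the left side with outer argument $ez$ produces the terms $(x\circ fz)\circ f(ez)$ and $(y\circ fz)\circ e(ez)$, and since $f$ and $e$ are arbitrary maps (not assumed bijective, and with no relation such as $f(ez)=fz$ available) these new terms cannot be eliminated; applying (\ref{eQ_3-1}) to the right side with the factorization $a=x\circ y$, $b=ez$, $c=z$ just reproduces (A) and is circular. So the proposal proves strictly less than the theorem: everything you establish already follows from the diagonal consequence $(*)$ together with left distributivity, which, as you yourself observe, cannot suffice.

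What bridges the gap in the paper is a different device that your outline never touches: systematic use of the right-division parastrophe $\slash$. The paper rewrites (\ref{eQ_3-1}) as (\ref{eQ_3-11}), performs the substitution $x\rightarrow x\slash z$, $y\rightarrow z$, $z\rightarrow y$ to get the auxiliary law (\ref{eQ_3_5}), expresses left distributivity in $\slash$-form as (\ref{eQ_32}), and derives (\ref{eQ_34}) by applying LD inside the right-hand side of (\ref{eQ_3-1}); the decisive move (the ``most unexpected OTTER's step'') is then to match the left side of (\ref{eQ_3_5}) against the left side of (\ref{eQ_34}), with $(x\circ y)\circ z$ playing the role of $x\circ y$, which yields (\ref{eQ_4147}) and, after one application of (\ref{eQ_32}), the key equality (\ref{KEY_EQ}) from which the conclusion drops out. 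None of these steps is a right-multiplication-by-$ez$ argument, so your proposed cancellation route is not merely incomplete but points away from the known derivation; to finish along your lines you would still need to discover an analogue of the (\ref{eQ_3_5})/(\ref{eQ_34}) matching, i.e.\ the very search problem you deferred.
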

\begin{proof}
If we pass in  equality (\ref{eQ_3-1}) to operation $\slash$, then we obtain
\begin{equation} \label{eQ_3-11}
((x \circ y) \circ z)  / (y \circ e (z)) =  x \circ f z.
\end{equation}

 From  equality (\ref{eQ_3-1}) by $x = y$ we obtain
$x \circ z = (x \circ f z) \circ  (x \circ e z)$ and using left distributivity we have $x \circ z = x \circ (f z
\circ  e (z))$,
\begin{equation} \label{eQ_30}
 z = f z     \circ e (z), \qquad  e(z)  = f z \,  \backslash \,  z.
\end{equation}

If we change  in equality (\ref{eQ_3-11}) the expression $e(z)$ using equality (\ref{eQ_30}), then we obtain
\begin{equation} \label{eQ_3_41}
((x \circ y) \circ z)  / (y \circ (f z \,  \backslash \,  z)) =  x \circ f z.
\end{equation}

We make the following replacements in (\ref{eQ_3_41}): $x \rightarrow x \slash z$, $y \rightarrow z$, $z
\rightarrow y$. Then we obtain $(x\circ y) \circ z \rightarrow ((x / z)\circ z) \circ y = x\circ y$ and the
following equality is fulfilled
\begin{equation} \label{eQ_3_5}
(x\circ y) \slash (z\circ  (f(y)\backslash  y))= (x\slash z)\circ f(y).
\end{equation}

Using the operation $\slash $  we can rewrite left distributive identity in the following form
\begin{equation} \label{eQ_31}
 (x\circ (y\circ z))/ (x\circ z)=x\circ y.
\end{equation}

If we change in identity (\ref{eQ_31}) $(y\circ z)$ by $y$, then variable $y$  passes in $y/z$. Indeed, if
$y\circ z = t$, then $y = t/z$. Therefore, we have
\begin{equation} \label{eQ_32}
(x\circ y)/ (x\circ z)=x\circ  (y/z).
\end{equation}

From  equality (\ref{eQ_3-1}) using left distributivity to the right side of this equality  we obtain $ (x \circ
y) \circ z = ((x \circ f z) \circ y) \circ ((x \circ f z) \circ e z)$. After applying of the operation $/$ to
the last equality we obtain
\begin{equation} \label{eQ_33}
((x\circ y)\circ z) / ((x\circ f(z))\circ e(z))= (x\circ f(z))\circ y.
\end{equation}

After substitution  of (\ref{eQ_30}) in (\ref{eQ_33}) we obtain
\begin{equation} \label{eQ_34}
((x\circ y)\circ z) / ((x\circ f(z))\circ (f z \,  \backslash \,  z))  = (x\circ f(z))\circ y.
\end{equation}

Now we show  the most unexpected  OTTER's step. We apply the left side of  equality (\ref{eQ_3_5}) to the left
side equality (\ref{eQ_34}). In this case expression $((x\circ y)\circ z)$  from (\ref{eQ_34}) plays the role of
$(x\circ y)$, $(x\circ f(z))$ the role of $z$ and $(f z \,  \backslash \,  z)$ plays the role of
$(f(y)\backslash  y)$.

Therefore we obtain
\begin{equation} \label{eQ_4147}
((x\circ y) \slash (x \circ f(z)))\circ f(z) = (x\circ f z)\circ y.
\end{equation}

After application to the left side of equality (\ref{eQ_4147})  equality (\ref{eQ_32}) we have
\begin{equation} \label{KEY_EQ}
((x\circ  (y \slash f z))\circ f(z) = (x\circ f z)\circ y.
\end{equation}

If we change in equality  (\ref{KEY_EQ}) $(y\slash f z)$ by $y$, then variable $y$  passes in $y \circ f z$.
Therefore $(x\circ  y)\circ f z = (x\circ  f z)\circ (y \circ f z)$.
\end{proof}

\begin{corollary} \label{MACQUNE}
If in a left distributive quasigroup $(Q, \circ)$ the equality
\begin{equation*}
(x \circ y) \circ z = (x \circ f z) \circ  (y \circ e z)
\end{equation*}
is fulfilled for all $x, y, z \in Q$, where $e$ is a map,   $f$ is a  permutation of the set $Q$,  then $(Q,
\circ)$ is a distributive quasigroup.
\end{corollary}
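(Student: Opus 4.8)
The plan is to reduce the statement directly to the conclusion of Theorem \ref{MACQUNE_1}. The hypotheses of this corollary are exactly those of that theorem, but with the additional assumption that $f$ is a permutation of $Q$ rather than merely a map. So without any further computation I may assume that the equality
\[
(x \circ y) \circ f z = (x \circ f z) \circ  (y \circ f z)
\]
holds for all $x, y, z \in Q$. All the genuine work has already been carried out in Theorem \ref{MACQUNE_1}, and what remains is only to exploit that $f$ is bijective.

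First I would observe that, since $f$ is a permutation of the set $Q$, the element $f z$ ranges over all of $Q$ as $z$ does. Hence for an arbitrary $w \in Q$ I can take $z = f^{-1} w$ and substitute it into the displayed identity, obtaining
\[
(x \circ y) \circ w = (x \circ w) \circ  (y \circ w)
\]
for all $x, y, w \in Q$. By Definition \ref{DEF_11} this is precisely the right distributive identity for $(Q, \circ)$.

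Finally, since $(Q, \circ)$ is left distributive by hypothesis and right distributive by the previous step, it is distributive, which is the assertion of the corollary. The only substantive point is the long derivation in Theorem \ref{MACQUNE_1}; the surjectivity of $f$ is exactly what upgrades the ``distributivity at the fixed level $fz$'' proved there into honest right distributivity. Thus the single delicate observation here is merely to record that a permutation is in particular surjective, so no separate obstacle arises in this step.
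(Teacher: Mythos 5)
Your proposal is correct and follows essentially the same route as the paper: the paper's proof also consists of invoking Theorem \ref{MACQUNE_1} and concluding, leaving implicit exactly the step you spell out. Your substitution $z = f^{-1}w$, using that the permutation $f$ is surjective to upgrade the identity $(x \circ y) \circ f z = (x \circ f z) \circ (y \circ f z)$ to full right distributivity, is precisely the intended argument.
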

\begin{proof}
The proof follows from Theorem \ref{MACQUNE_1}.
\end{proof}

\begin{theorem} \label{LEFT_DISTRIB_Isotopic_to_F_QUAS}
If in F-quasigroup $(Q,\cdot)$ endomorphisms  $e$ and $f$ are  permutations of the set $Q$,  then $(Q,\cdot)$ is
isotope of the form $x\cdot y = x\circ e^{-1} y$  of a  distributive quasigroup $(Q,\circ)$.
\end{theorem}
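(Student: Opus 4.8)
The plan is to reduce the statement to the companion operation $x \circ y = x \cdot e(y)$ and then upgrade left distributivity to full distributivity using the OTTER-derived machinery (Corollary \ref{MACQUNE}).

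First I would use that $(Q, \cdot)$ is an F-quasigroup, hence simultaneously a left and a right F-quasigroup, together with the hypothesis that $e$ is a permutation. This is exactly the situation of Lemma \ref{LEFT_AND RIGHT_PERMUT_ENDOM}, Case 1, which I would invoke directly. It tells me two things at once: the isotope $(Q, \circ)$ defined by $x \circ y = x \cdot e(y)$ is a left distributive quasigroup, and it satisfies the generalized identity
\[
(x \circ y) \circ z = (x \circ f z) \circ (y \circ e^{-1} z)
\]
for all $x, y, z \in Q$. Moreover, substituting $e^{-1}(y)$ for $y$ in $x \circ y = x \cdot e(y)$ yields $x \cdot y = x \circ e^{-1}(y)$, which is already the isotopy form claimed in the theorem; so the only thing left to establish is that $(Q, \circ)$ is in fact \emph{distributive}, not merely left distributive.

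Second, I would match the displayed identity against the hypotheses of Corollary \ref{MACQUNE}. That corollary concerns a left distributive quasigroup satisfying $(x \circ y) \circ z = (x \circ f z) \circ (y \circ e z)$ and concludes distributivity provided the map applied in the first inner factor is a permutation (the map in the second inner factor may be arbitrary). Here I read the corollary's first map as our $f$ and its second map as our $e^{-1}$. The theorem's hypothesis that $f$ is a permutation is precisely the permutation condition the corollary requires; the second map $e^{-1}$ is in any case a permutation. Applying Corollary \ref{MACQUNE} to $(Q, \circ)$ then gives that $(Q, \circ)$ is a distributive quasigroup.

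Combining the two steps, $(Q, \cdot)$ is the isotope $x \cdot y = x \circ e^{-1}(y)$ of the distributive quasigroup $(Q, \circ)$, which is exactly the assertion. The argument is essentially a chain of citations, so I do not expect a serious obstacle; the single nontrivial ingredient is Corollary \ref{MACQUNE} (which rests on Theorem \ref{MACQUNE_1}), and the only point demanding care is the correct identification of the two maps $f$ and $e^{-1}$ with the roles they play in that corollary's hypotheses.
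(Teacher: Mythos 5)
Your proposal is correct and follows the paper's own proof exactly: the paper likewise obtains left distributivity of $(Q,\circ)$, $x\circ y = x\cdot e(y)$, together with the identity $(x\circ y)\circ z = (x\circ f z)\circ (y\circ e^{-1}z)$ from Lemma \ref{LEFT_AND RIGHT_PERMUT_ENDOM} (via Lemma \ref{LEMMA_4_ED}), and then applies Corollary \ref{MACQUNE} with the roles of the two maps identified just as you describe. Your observation that only $f$ needs the permutation hypothesis for the corollary (since $e^{-1}$ is automatically a permutation) is the right point of care, and it matches the paper's argument.
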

\begin{proof}
Quasigroup $(Q, \circ)$  of the form $x\circ y = x \cdot e(y)$ is a left distributive quasigroup (Lemma
\ref{LEMMA_4_ED}) in which the equality $ (x \circ y) \circ z = (x \circ f z) \circ (y \circ e^{-1} z)$, is true
(Lemma \ref{LEFT_AND RIGHT_PERMUT_ENDOM}). By Corollary \ref{MACQUNE} $(Q, \circ)$ is distributive.
\end{proof}

\begin{theorem} \label{SIMPLE_F_QUAS}
An   F-quasigroup  $(Q, \cdot )$ is simple if and only if $(Q,\cdot)$ lies in  one from the following quasigroup
classes:

(i) $(Q, \cdot )$ is a simple group in the case when  the maps $e$ and $f$ are zero endomorphisms;

(ii) $(Q, \cdot )$ has the form $x\cdot y = x +\psi y$, where $(Q,+)$ is a $\psi$-simple abelian group, $\psi
\in Aut(Q,+)$,  in the case when the map $e$ is a zero endomorphism and the map $f$ is a permutation; in this
case $e =- \psi, f x + \psi x = x$ for all $x\in Q$;

(iii) $(Q, \cdot )$ has the form $x\cdot y = \varphi x + y$, where $(Q,+)$ is a $\varphi$-simple abelian group,
$\varphi \in Aut(Q,+)$, in the case when the map $f$ is a zero endomorphism and the map $e$ is a permutation; in
this case $f =-\varphi, \varphi  x + e x = x$ for all $x\in Q$;

(iv) $(Q, \cdot )$ has the form $x\cdot y = x \circ\psi y$, where $(Q,\circ)$ is a $\psi$-simple distributive
quasigroup $\psi \in Aut(Q, \circ)$,   in the case when the maps $e$ and $f$ are permutations; in this case $e
=\psi^{-1}$, $fx \circ \psi x = x$ for all $x\in Q$.
\end{theorem}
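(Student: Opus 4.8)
The plan is to exploit that an F-quasigroup is simultaneously a left and a right F-quasigroup, so that the structure results for simple left and right F-quasigroups (Theorem \ref{F_SM_E_SIMPLE}, Cases 1 and 2) both apply at once. First I would recall that, by Lemma \ref{EM_END}, the maps $e$ and $f$ are endomorphisms of $(Q,\cdot)$, so by Theorem \ref{NORM_QUAS_CONGR} each of $Ker\, e$ and $Ker\, f$ is a normal congruence. Simplicity forces each of these to be the diagonal $\hat Q$ or the universal congruence $Q\times Q$; equivalently, by Theorem \ref{MAIN_LEFT_F} and its mirror (Case 2), each of $e$ and $f$ is either a zero endomorphism or a permutation of $Q$. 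This produces exactly four mutually exclusive cases, which I would match to (i)--(iv).

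Next I would treat each case with the available lemmas. If both $e$ and $f$ are zero, Corollary \ref{COROL_LEFT_AND RIGHT_ZERO_ENDOM} gives $(Q,\cdot)=(Q,+)$, a group, which is simple as a quasigroup precisely when it is simple as a group: this is (i). If $e$ is zero and $f$ a permutation, Lemma \ref{NEW_LEFT_AND RIGHT_ZERO_ENDOM} together with Corollary \ref{LEFT_ZERO_ENDOM_AND_RIGHT_PERM} gives the form $x\cdot y=x+\psi y$ over an abelian group with $\psi\in Aut(Q,+)$, while Lemma \ref{LEFT_AND RIGHT_ZERO_ENDOM} yields the stated relation $fx+\psi x=x$: this is (ii). The symmetric sub-case $f$ zero, $e$ a permutation is the mirror statement and gives (iii). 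When both $e$ and $f$ are permutations, Theorem \ref{LEFT_DISTRIB_Isotopic_to_F_QUAS} shows $(Q,\cdot)$ is the isotope $x\cdot y=x\circ e^{-1}y$ of a distributive quasigroup $(Q,\circ)$; setting $\psi=e^{-1}$ this is the form in (iv), and $fx\circ\psi x=x$ follows from $fx\cdot x=x$.

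It then remains to translate simplicity of $(Q,\cdot)$ into the asserted $\psi$- (respectively $\varphi$-) simplicity of the underlying group or distributive quasigroup, and conversely. For the forward implication I would show that any proper normal subloop of $(Q,+)$ invariant under the relevant isotopy permutation (respectively any proper nontrivial admissible congruence of $(Q,\circ)$) would, via Corollary \ref{NC10}, descend to a proper normal congruence of $(Q,\cdot)$, contradicting simplicity; this forces the $\psi$-simplicity. For the converse, Corollary \ref{DLYA_PROSTYH_F_QUAS} gives directly that if the underlying loop has no normal subloop admissible relative to the isotopy permutations then $(Q,\cdot)$ is simple, so each of the four classes, under its stated hypothesis, does yield a simple F-quasigroup.

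The main obstacle I anticipate is Case (iv): one must check that the passage to the distributive quasigroup $(Q,\circ)$ of Theorem \ref{LEFT_DISTRIB_Isotopic_to_F_QUAS} --- which ultimately rests on the identity manipulations of Theorem \ref{MACQUNE_1} and Corollary \ref{MACQUNE} --- carries the lattice of normal congruences faithfully, so that simplicity of $(Q,\cdot)$ is genuinely equivalent to $\psi$-simplicity of $(Q,\circ)$ rather than implied in only one direction. The remaining routine but error-prone part is the bookkeeping of which permutation ($e$, $f$, $\psi$ or $\varphi$) plays the admissibility role in each case, together with the verification of the explicit relations between these maps recorded in (ii)--(iv).
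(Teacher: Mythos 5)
Your proposal is correct and follows essentially the same route as the paper's own proof: simplicity forces each of $e$ and $f$ (via Theorem \ref{MAIN_LEFT_F} and its mirror) to be a zero endomorphism or a permutation, the four resulting cases are settled by Lemmas \ref{NEW_LEFT_AND RIGHT_ZERO_ENDOM}, \ref{LEFT_AND RIGHT_ZERO_ENDOM}, \ref{LEMMA_4_ED} and Theorem \ref{LEFT_DISTRIB_Isotopic_to_F_QUAS}, and the converse is exactly Corollary \ref{DLYA_PROSTYH_F_QUAS}. One small citation correction: transferring a $\psi$-admissible normal congruence of $(Q,+)$ or $(Q,\circ)$ back to $(Q,\cdot)$ rests on Lemma \ref{NL1}, not on Corollary \ref{NC10}, whose inclusion $nCon(Q,\cdot)\subseteq nCon(Q,+)$ runs in the opposite direction.
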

\begin{proof} $( \Longrightarrow )$
(i) It is clear that in this case left and right F-quasigroup equalities are transformed in the identity of
associativity.

(ii) From Lemma \ref{LEFT_AND RIGHT_ZERO_ENDOM} (iii) and the fact that the map $f$ is a permutation of the set
$Q$ it follows that $(Q, +)$ is an abelian group.

(iii) This case is similar to Case (ii).

(iv) By  Belousov result \cite{1a} (see Lemma \ref{LEMMA_4_ED} of this paper)  if the endomorphism  $e$ of a
left F-quasigroup $(Q, \cdot)$ is a permutation of the set $Q$, then quasigroup $(Q, \cdot)$ has  the form
$x\cdot y = x \circ \psi y$, where $(Q, \circ)$  is a left distributive quasigroup and $\psi \in Aut (Q,
\circ)$, $\psi \in Aut (Q, \cdot)$.  The right distributivity of $(Q, \circ)$ follows  from Theorem
\ref{LEFT_DISTRIB_Isotopic_to_F_QUAS}.

$( \Longleftarrow )$ Using Corollary  \ref{DLYA_PROSTYH_F_QUAS} we can say that that F-quasigroups from these
quasigroup classes are simple.
\end{proof}

\begin{remark}
There exists a possibility to formulate Case  (iv) of Theorem \ref{SIMPLE_F_QUAS} in the following form.

$(iv)^{\ast}$  $(Q, \cdot )$ has the form $x\cdot y = \varphi x \circ y$, where $(Q,\circ)$ is a
$\varphi$-simple distributive quasigroup, in the case when the maps $e$ and $f$ are permutations; in this case
$f =\varphi^{-1}, \varphi x \circ e x = x$ for all $x\in Q$.
\end{remark}

\begin{corollary} \label{FINITE_SIMPLE_F_QUAS}
Finite simple F-quasigroup  $(Q, \cdot)$ is a simple group or a simple medial quasigroup.
\end{corollary}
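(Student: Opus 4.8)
The plan is to read off the structure directly from the classification of simple F-quasigroups in Theorem \ref{SIMPLE_F_QUAS}, which describes a simple F-quasigroup $(Q,\cdot)$ by four mutually exclusive cases according to whether the endomorphisms $e$ and $f$ are zero endomorphisms or permutations. Since that theorem is a full characterization, these four cases are automatically exhaustive, so it suffices to treat each in turn and confirm it produces either a simple group or a simple medial quasigroup. Case (i), where both $e$ and $f$ are zero endomorphisms, is immediate: there $(Q,\cdot)$ is a simple group, which is the first of the two desired alternatives.

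For cases (ii) and (iii) I would argue mediality through the Toyoda Theorem. In case (ii) we have $x\cdot y = x + \psi y = \varepsilon x + \psi y + 0$ over an abelian group $(Q,+)$ with $\psi \in Aut(Q,+)$; since the identity automorphism $\varepsilon$ commutes with $\psi$, Theorem \ref{Toyoda_Theorem} shows $(Q,\cdot)$ is medial. Case (iii), with $x\cdot y = \varphi x + y$, is handled symmetrically. In both cases $(Q,\cdot)$ is simple by hypothesis, so it is a simple medial quasigroup.

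The main obstacle is case (iv), where $e$ and $f$ are permutations and $x\cdot y = x\circ \psi y$ with $(Q,\circ)$ a $\psi$-simple distributive quasigroup and $\psi \in Aut(Q,\circ)$. Here I would observe that $(Q,\cdot)$ is exactly a principal isotope of $(Q,\circ)$ with isotopy $(\varepsilon,\psi,\varepsilon)$, that is, an isotope of the form $(\varepsilon, e, \varepsilon)$ in the sense of Lemma \ref{SCERB_THEO_COR}, the isotopy factor being an automorphism of a distributive quasigroup. Finiteness of $(Q,\cdot)$ forces $(Q,\circ)$ to be finite, and by Belousov's Theorem \ref{Belousov_Theorem} the distributive quasigroup $(Q,\circ)$ is isotopic to a commutative Moufang loop $(Q,+)$, which, being finite, is finitely generated. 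This is precisely the hypothesis needed to invoke Lemma \ref{SCERB_THEO_COR} (which itself rests on the center-invariance argument of Theorem \ref{SCERB_THEO}), and it yields that the simple quasigroup $(Q,\cdot)$ is a finite medial quasigroup; simplicity then gives a simple medial quasigroup. Combining the four cases, a finite simple F-quasigroup is either a simple group (case (i)) or a simple medial quasigroup (cases (ii)--(iv)), completing the argument.
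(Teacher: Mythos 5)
Your proposal is correct and follows essentially the same route as the paper: cases (i)--(iii) are dispatched exactly as in the paper's proof (simple group in case (i), mediality via the Toyoda Theorem in cases (ii) and (iii)), and case (iv) is handled by composing the isotopies $(Q,\cdot)\sim(Q,\circ)\sim(Q,+)$ and invoking Lemma \ref{SCERB_THEO_COR}. Your only addition is to make explicit that finiteness of $(Q,\cdot)$ yields a finitely generated commutative Moufang loop, which the paper leaves implicit; this is a welcome bit of care but not a different argument.
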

\begin{proof}
Case (i) of Theorem \ref{SIMPLE_F_QUAS} demonstrates  us that simple F-quasigroup can be a simple group.

Taking into consideration Toyoda Theorem (Theorem \ref{Toyoda_Theorem}) we see that  Cases (ii) and (iii) of
Theorem \ref{SIMPLE_F_QUAS} provide us that simple F-quasigroups can be simple medial quasigroups.

We shall prove that in Case (iv) of Theorem \ref{SIMPLE_F_QUAS}  we also obtain medial quasigroups.

The quasigroup $(Q,\cdot)$ is isotopic to distributive quasigroup $(Q,\circ)$, quasigroup  $(Q,\circ)$ is
isotopic to CML $(Q,+)$. Therefore $(Q,\cdot)$ is isotopic to the $(Q,+)$ and we can apply Lemma
\ref{SCERB_THEO_COR}.
\end{proof}

Taking into consideration Lemma \ref{medial_F_SM_E} we can say that some properties of finite simple medial
F-quasigroups are described in Theorem \ref{SIMPLE_MEDIAL_QUAS}.

Using obtained in this section results we can add information on the structure of  F-quasigroups
\cite{kepka_06}.
\begin{theorem} \label{STRUCTURE_FINITE_F_QUAS}
Any  finite F-quasigroup $(Q,\cdot)$  has the following structure
$$
(Q, \cdot) \cong (A, \circ) \times (B, \cdot),
$$
where $(A, \circ)$ is a quasigroup with a unique idempotent element; $(B, \cdot)$ is isotope of a left
distributive quasigroup $(B, \star)$, $x \cdot y = x \star \psi y$, $\psi \in Aut(B, \cdot)$, $\psi \in Aut(B,
\star)$. In the quasigroups $(A, \circ)$ and $(B, \cdot)$ there exist the following chains
\begin{equation*} \begin{split}
& A \supset e(A) \supset  \dots \supset e^{m-1}(A) \supset e^m(A) = 0, \\
&  B \supset  f(B) \supset \dots  \supset f^r(B) = f^{r+1}(B),
\end{split}
\end{equation*}
where:
\begin{enumerate}
  \item  Let $D_i$ be an equivalence class  of the normal congruence $Ker\, e_i$ containing an idempotent
element $a\in A$, $i \geqslant 0$. Then:
\begin{enumerate}
\item  $(D_i, \circ)$ is  linear right  loop of the form $x\circ y = x+\psi y$, where $\psi \in Aut(D_i,+)$;
 \item $Ker\left(  f|_{(D_i,\circ)}\right) $  is a group;
\item if  $j\geqslant 1$, then
$Ker\left(  f_j|_{(D_i,\circ)}\right) $  is an abelian  group;
\item  if $f$ is a permutation of $f^l(D_i,\circ)$, then
 $f^l(D_i,\circ)$ is a medial right loop  of the form $x\circ y =  x+\psi y$, where $\psi$ is a
 complete automorphism of the group $f^l(D_i,+)$;
\item  $(D_i,\circ) \cong (E_i, +) \times f^l(D_i,\circ)$,
  where $(E_i, +)$ is a linear right  loop, an
 extension of an abelian group by   abelian groups and by a group.
\end{enumerate}
 \item   Let $H_j$ be an equivalence class  of the normal congruence $Ker\, f_j$ containing an idempotent
element $b\in B$, $j \geqslant 0$. Then:
\begin{enumerate}
 \item $(H_0, \cdot)$ is a linear left loop  of the form  $x\cdot y = \varphi x + y$;
\item $f(B, \cdot)$ is isotope of a distributive quasigroup $f(B, \star)$ of the form $x\cdot y = x\star e^{-1}
y$;
 \item  if $0 <j < r$, then $(H_j, \cdot)$
 is medial left loop of  the form  $x\cdot y = \varphi x + y$, where
$(H_j,+)$ is an abelian  group, $\varphi \in Aut(H_j,+)$ and $(H_j, \star)$, $x\star y = \varphi x + e y$, is a
medial distributive quasigroup;
  \item $(B,\cdot) \cong (G, +) \times f^r(B,\cdot)$, where $(G, +)$  has a unique idempotent element, is
  an  extension of an abelian group by   abelian groups and by a linear left  loop $(H_0, \cdot)$, $f^r(B,\cdot)$
  is a distributive quasigroup.
\end{enumerate}
\end{enumerate}
\end{theorem}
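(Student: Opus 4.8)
The plan is to run the direct-decomposition machinery twice: once along the endomorphism $e$ (the left F-structure) and once along the endomorphism $f$ (the right F-structure), exploiting finiteness to guarantee that both associated chains terminate and that all congruences are normal and permutable (Lemmas \ref{l5.4} and \ref{NORMAL_LEMMA}). First I would apply Theorem \ref{MAIN_LEFT_F} to the finite left F-quasigroup $(Q,\cdot)$. Since $Q$ is finite, the chain $Q\supset e(Q)\supset\dots$ stabilizes after finitely many steps, so $(Q,\cdot)\cong(A,\circ)\times(B,\cdot)$ with $(A,\circ)$ having a unique idempotent $0$ and $e^{m}(A)=\{0\}$, while $(B,\cdot)=e^{m}(Q,\cdot)$ is the stable part on which $e$ restricts to an automorphism; by Lemma \ref{LEMMA_4_ED} this gives $x\cdot y=x\star e^{-1}y$ with $(B,\star)$ left distributive and $\psi=e^{-1}$, which is the asserted global form. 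Both factors inherit the full F-property (Lemma \ref{Componets_OF_DIR_PROD} and its right-hand analogue), and by Lemma \ref{COMMUTING_ENDOMORPH} the endomorphisms $e$ and $f$ commute; this commutativity is the glue that lets the $e$-chain and the $f$-chain be analysed independently on each factor, since each map preserves the kernels of the other.

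For the factor $(A,\circ)$ I would read off the layers $D_i=\overline{a}$ of $\mathrm{Ker}\,e_i$ through the idempotent. Corollary \ref{LEFT_QUAS_MATRESHKA} identifies each $(D_i,\circ)$ as a linear right loop $x\circ y=x+\psi y$, giving 1(a). On $D_i$ the map $e$ is a zero endomorphism, so Lemma \ref{LEFT_AND RIGHT_ZERO_ENDOM} applies and yields that $\mathrm{Ker}(f|_{D_i})$ is a group and the higher quotients $\mathrm{Ker}(f_j|_{D_i})$ are abelian groups, which are 1(b) and 1(c); the $f$-stable part $f^{l}(D_i)$, where $f$ becomes a permutation, is then a medial right loop carrying a complete automorphism by Corollary \ref{LEFT_ZERO_ENDOM_AND_RIGHT_PERM} and the remark following it, giving 1(d). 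Applying the direct-decomposition theorem a second time, now along the $f$-chain inside $D_i$, splits $(D_i,\circ)\cong(E_i,+)\times f^{l}(D_i,\circ)$, and the tower of quotients just described exhibits $(E_i,+)$ as an iterated extension of an abelian group by abelian groups and by a group, which is 1(e).

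The factor $(B,\cdot)$, where $e$ is an automorphism, is treated symmetrically along the $f$-chain. Corollary \ref{RIGHTF_QUAS_MATRESHKA} gives the bottom layer $(H_0,\cdot)$ as a linear left loop $x\cdot y=\varphi x+y$, which is 2(a), and on the higher layers $H_j$ the map $f$ is zero while $e$, being an automorphism that commutes with $f$, restricts to a permutation, so Corollary \ref{LEFT_ZERO_ENDOM_AND_RIGHT_PERM} produces the medial left loop and the associated medial distributive quasigroup of 2(c). The key non-routine step is 2(b): since $e$ is an automorphism on $B$, Lemma \ref{LEFT_AND RIGHT_PERMUT_ENDOM} shows the left distributive quasigroup $(B,\star)$ satisfies $(x\star y)\star z=(x\star fz)\star(y\star e^{-1}z)$, and feeding this into Theorem \ref{MACQUNE_1} forces $(x\star y)\star fz=(x\star fz)\star(y\star fz)$, i.e. right distributivity precisely when the third argument lies in $f(B)$; hence $f(B,\star)$ is a genuinely distributive subquasigroup and $f(B,\cdot)$ is its isotope $x\cdot y=x\star e^{-1}y$. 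A final application of the direct product decomposition along the $f$-chain then yields $(B,\cdot)\cong(G,+)\times f^{r}(B,\cdot)$, with $f^{r}(B)$ carrying the distributive structure (both $e$ and $f$ being permutations there, via Theorem \ref{LEFT_DISTRIB_Isotopic_to_F_QUAS}) and $(G,+)$ the iterated extension of 2(d).

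I expect the main obstacle to be the bookkeeping that makes the two interleaved decompositions compatible, in particular verifying that $e$ and $f$ restrict correctly to the layers of each other's chains, which rests on the commuting relation of Lemma \ref{COMMUTING_ENDOMORPH}, and assembling the subnormal series of abelian-group quotients into the claimed extension structure. The one genuinely subtle algebraic point is the OTTER-style deduction of full distributivity on $f(B)$ from the single generalized identity through Theorem \ref{MACQUNE_1}; everything else is an orderly reduction of each layer to the already established zero-endomorphism and automorphism cases.
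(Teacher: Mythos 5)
Your proposal is correct and follows essentially the same route as the paper's own proof: Theorem \ref{MAIN_LEFT_F} applied along the $e$-chain and then (Case 2) along the $f$-chain, the zero-endomorphism Lemmas \ref{NEW_LEFT_AND RIGHT_ZERO_ENDOM}, \ref{LEFT_AND RIGHT_ZERO_ENDOM} and Corollary \ref{LEFT_ZERO_ENDOM_AND_RIGHT_PERM} for the layers $D_i$ and $H_j$, and Theorem \ref{MACQUNE_1} for the distributivity of $f(B,\star)$ in 2(b). Your only deviations are cosmetic --- citing Corollaries \ref{LEFT_QUAS_MATRESHKA} and \ref{RIGHTF_QUAS_MATRESHKA} for 1(a) and 2(a) instead of the underlying lemmas, and making explicit the commutation $ef=fe$ (Lemma \ref{COMMUTING_ENDOMORPH}) that the paper leaves implicit when restricting each endomorphism to the other's layers.
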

\begin{proof}
From Theorem \ref{MAIN_LEFT_F}, Case 1 it follows that F-quasigroup $(Q,\cdot)$ is isomorphic to the direct
product of quasigroups $(A, \circ)$ and $(B, \cdot)$.

In F-quasigroup $(A, \circ)$   the  chain
\begin{equation*}
 A \supset e(A) \supset e^2(A)
\supset \dots \supset e^{m-1}(A) \supset e^m(A) =  e^{m+1} (A) = 0
\end{equation*}
becomes stable on a number $m$, where $0$ is idempotent element.

Case 1, (a).  If  $0 \leqslant j < m$, then by Lemma \ref{NEW_LEFT_AND RIGHT_ZERO_ENDOM} any quasigroup $(D_j,
\circ)$ is a right loop, isotope of a group $(D_j,+)$ of the form $(D_j, \circ) = (D_j,+)(\varepsilon, \psi,
\varepsilon)$, where $\psi \in Aut (D_j,+)$.

Case 1, (b).  \lq\lq Behaviour\rq\rq \, of the map $f$  in the right loop $(D_j, \circ)$  is described by Lemma
\ref{LEFT_AND RIGHT_ZERO_ENDOM}. If $f$ is zero endomorphism, then $(D_j, \circ)$ is a   group in  case $j=0$ (
Lemma \ref{LEFT_AND RIGHT_ZERO_ENDOM}, Case (i)) and  it is an abelian group in the case $j>0$ ( Lemma
\ref{LEFT_AND RIGHT_ZERO_ENDOM}, Case (ii)).

If $f$ is a non-zero endomorphism of $(D_j, \circ)$, then information on the structure of $(D_j, \circ)$ follows
from Lemma \ref{LEFT_AND RIGHT_ZERO_ENDOM}  and Corollary \ref{LEFT_ZERO_ENDOM_AND_RIGHT_PERM}.

Case 1, (c). The proof  follows from Lemma \ref{LEFT_AND RIGHT_ZERO_ENDOM}, (ii), (iv)  and the fact that in the
quasigroup $Ker\left(  f_j|_{(D_j,\circ)}\right) $ the maps $e$ and $f$ are zero endomorphisms.

Case 1, (d). The proof  follows from Corollary \ref{LEFT_ZERO_ENDOM_AND_RIGHT_PERM}, Case 1.

Case 1, (e). The proof  follows from results of the previous Cases of this theorem and Theorem
\ref{MAIN_LEFT_F}, Case 2.

Using Lemma \ref{LEFT_AND RIGHT_PERMUT_ENDOM} we can state that that F-quasigroup $(B,\cdot)$ is isotopic to
left distributive quasigroup $(B, \star)$, where $x\star y = x\cdot e(y)$.

In order to have more detailed information on the structure of the quasigroup  $e^m(Q, \cdot)$ we study the
following chain
\begin{equation*}
B \supset  f(B) \supset \dots  \supset f^r(B) = f^{r+1}(B), \end{equation*} which becomes stable on a  number
$r$.

Case 2, (a). The proof  follows from Corollary \ref{LEFT_ZERO_ENDOM_AND_RIGHT_PERM}, Case 2.

Case 2, (b). The proof  follows from   Theorem \ref{MACQUNE_1}.

Case 2, (c). Since    $f$ is zero endomorphism of quasigroup $(H_j, \cdot)$, $e_j|_{H_j}$ is a permutation of
the set $H_j$,   then by Corollary \ref{LEFT_ZERO_ENDOM_AND_RIGHT_PERM}  quasigroup $(H_j, \cdot)$ has the form
$x\cdot y = \varphi x + y$, where $(H_j,+)$ is an abelian  group, $\varphi \in Aut(H_j,+)$ and $(H_j, \circ)$,
$x\circ y = \varphi x + e y$, is a medial distributive quasigroup.

Case 2, (d). The existence of direct decomposition follows from  Theorem \ref{MAIN_LEFT_F}, Case 2.
\end{proof}

We notice that information on the structure of finite medial quasigroups there is in \cite{SHCH_STR_05}.

\subsection{E-quasigroups}

We recall, a quasigroup $(Q,\cdot)$ is trimedial  if and only if $(Q,\cdot)$ is an E-quasigroup
\cite{Kin_PHIL_04}. Any trimedial quasigroup is isotopic to CML \cite{kepka76}. Structure of trimedial
quasigroups have been studied in \cite{BENETEU_KEPKA_85, kepka_BEN_LAC_86, SHCHUKIN_86, kepka_90}. Here slightly
other point of view on the structure of trimedial quasigroups is presented.

\begin{lemma} \label{LEFT_AND RIGHT_ZERO_ENDOM_E_QAUS}
1. If endomorphism  $f$ of an  E-quasigroup $(Q, \cdot)$ is zero endomorphism, i.e. $f(x) = 0$ for all $x\in Q$,
then  $x\cdot y = \varphi x+ y$, $(Q,+)$ is a abelian  group, $\varphi \in Aut(Q,+)$.

2. If endomorphism  $e$ of an  E-quasigroup $(Q, \cdot)$ is zero endomorphism, i.e. $e(x) = 0$ for all $x\in Q$,
then  $x\cdot y =  x+ \psi y$, $(Q,+)$ is a abelian  group, $\psi \in Aut(Q,+)$.
\end{lemma}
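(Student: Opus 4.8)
The plan is to reduce the two-sided statement to the one-sided results already in hand and then exploit the fact that an E-quasigroup is \emph{simultaneously} a left and a right E-quasigroup (Definition~\ref{DEF_11}) to upgrade a mere permutation into an automorphism. The two parts are left--right mirror images of one another, so I would prove part~1 in full and obtain part~2 by the dual argument, interchanging the roles of $e$ and $f$ and of the two E-quasigroup identities.

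For part~1, first observe that $(Q,\cdot)$ is in particular a left E-quasigroup whose endomorphism $f$ is the zero endomorphism. Hence Theorem~\ref{THEOREM_3_F_QUAS_MIDD}, Case~3 applies and yields, up to isomorphism, an abelian group $(Q,+)$ together with a form $x\cdot y = \alpha x + y$, where $\alpha$ is a permutation of $Q$ with $\alpha 0 = 0$. It remains only to show $\alpha \in Aut(Q,+)$. Since this form exhibits $(Q,\cdot)$ as a group isotope and $(Q,\cdot)$ is also a right E-quasigroup, Corollary~\ref{LEFT_GROUP_LINEARITY} shows that $(Q,\cdot)$ is left linear, i.e. it admits a form $x\cdot y = \varphi x + \beta y + c$ with $\varphi \in Aut(Q,+)$. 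Equating the two forms $\alpha x + y = \varphi x + \beta y + c$ and setting $x=0$ gives $\beta y = y - c$, whence $\beta 0 = -c$; setting $y=0$ then gives $\alpha x = \varphi x + \beta 0 + c = \varphi x$ for all $x$. Thus $\alpha = \varphi \in Aut(Q,+)$, and writing $\varphi$ for $\alpha$ completes part~1.

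A self-contained alternative for the upgrade step, which avoids comparing two forms, is a direct substitution: from $x\cdot y = \alpha x + y$ one computes $e(x) = x - \alpha x$, and feeding this into the right E-quasigroup identity $zy\cdot x = zx\cdot y\,e(x)$ gives, after cancelling the common summand $x$ and setting $x=0$, the relation $\alpha(\alpha z + y) = \alpha(\alpha z) + \alpha y$. Because $\alpha$ is a bijection with $\alpha 0 = 0$, the element $\alpha z$ runs over all of $Q$, so $\alpha(w+y) = \alpha w + \alpha y$ for all $w,y$, i.e. $\alpha$ is additive and therefore an automorphism. Part~2 is obtained dually: Theorem~\ref{THEOREM_3_F_QUAS_MIDD}, Case~4 gives $x\cdot y = x + \beta y$ over an abelian group with $\beta 0 = 0$, and using that $(Q,\cdot)$ is also a left E-quasigroup, Corollary~\ref{LEFT_GROUP_LINEARITY} forces right linearity and hence $\beta \in Aut(Q,+)$; one then sets $\psi = \beta$. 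The only genuine obstacle is precisely this upgrade from permutation to automorphism: the one-sided theorem cannot produce an automorphism on its own, so it is essential to invoke the opposite-handed E-identity; once that is done, everything reduces to routine cancellation in an abelian group.
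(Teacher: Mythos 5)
Your proposal is correct, and its opening move is the same as the paper's: both parts begin with Theorem \ref{THEOREM_3_F_QUAS_MIDD} (Cases 3 and 4) to obtain the form $x\cdot y=\alpha x+y$ over an abelian group $(Q,+)$ with $\alpha\,0=0$. Where you genuinely diverge is in the upgrade of the permutation $\alpha$ to an automorphism. The paper does this by writing $\alpha=\varepsilon-e$ (from $\alpha x+e(x)=x$) and using that $e$ is a multiplicative endomorphism of $(Q,\cdot)$ --- which holds by Lemma \ref{EM_END_MIDD}, Case 4, i.e. precisely via the right E-identity --- and transferring this to additivity of $e$ over $(Q,+)$ (the computation is spelled out in Corollary \ref{LEFT_ZERO_ENDOM_AND_RIGHT_PERM_E}); then $\alpha$, being a difference of additive endomorphisms of an abelian group and a bijection, is an automorphism. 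You instead invoke the Sokhatskii linearity criterion (Corollary \ref{LEFT_GROUP_LINEARITY}, resting on Theorem \ref{LEFT_LIN_CRITER}), or, in your alternative, substitute $e(x)=x-\alpha x$ directly into $zy\cdot x=zx\cdot y\,e(x)$ and cancel; both mechanisms are sound, and your diagnosis that the opposite-handed identity is indispensable matches the paper's logic exactly, since the paper's use of $e\in End(Q,\cdot)$ also depends on the right E-identity. What each route buys: the paper's argument is shorter once the endomorphism machinery (Lemma \ref{EM_END_MIDD}) is in place; your criterion route shows the conclusion is an instance of a general linearity phenomenon rather than something special to E-quasigroups; your substitution argument is the most self-contained and, after cancelling $x$ and setting $x=0$ to get $\alpha(\alpha z+y)=\alpha(\alpha z)+\alpha y$ with $\alpha z$ ranging over all of $Q$, completely rigorous as written.

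One small point to tighten in your main route: Corollary \ref{LEFT_GROUP_LINEARITY}, as stated, does not assert that the left-linear form lives over the \emph{same} group $(Q,+)$ produced by Theorem \ref{THEOREM_3_F_QUAS_MIDD}, which your form-equating step $\alpha x+y=\varphi x+\beta y+c$ tacitly assumes. This is repairable without comparing forms at all: Theorem \ref{LEFT_LIN_CRITER} works over the given group, and since your form already has second component $\varepsilon$ and $\alpha\,0=0$, its proof (via Corollary \ref{QUASIAUT_FORM}: a group quasiautomorphism fixing $0$ is an automorphism) yields $\alpha\in Aut(Q,+)$ directly. With that remark, and the dual argument for part 2, the proof is complete.
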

\begin{proof}
1. From Theorem \ref{THEOREM_3_F_QUAS_MIDD} Case 3 it follows that $(Q, \cdot)$ is a left  loop,
 $x\cdot y = \alpha x + y$,  $(Q, +)$ is an abelian group, $\alpha \in S_Q$,  $\alpha \, 0 = 0$.

 Further we have $x\cdot e(x) = \alpha x + e(x) = x$, $\alpha x = x - e(x) = (\varepsilon - e) x$.
Therefore  $\alpha$ is an endomorphism of $(Q,+)$, moreover, it is an automorphism of $(Q,+)$, since $\alpha$ is
a permutation of the set $Q$.

2. The proof of Case 2 is similar to the proof of Case 1.
\end{proof}
\begin{corollary} \label{LEFT_AND RIGHT_BOTH_ZERO_ENDOM_E_QAUS}
If endomorphisms  $f$ and $e$ of an  E-quasigroup $(Q, \cdot)$ are zero endomorphisms, i.e. $f(x) = e(x) = 0$
for all $x\in Q$, then  $x\cdot y = x + y$, $(Q,+)$ is a abelian  group.
\end{corollary}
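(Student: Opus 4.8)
The plan is to invoke Lemma \ref{LEFT_AND RIGHT_ZERO_ENDOM_E_QAUS} and then extract the remaining information from the hypothesis on $e$. First I would apply Case~1 of that lemma: since $f$ is the zero endomorphism of the E-quasigroup $(Q,\cdot)$, there exist an abelian group $(Q,+)$ and an automorphism $\varphi\in Aut(Q,+)$ with $x\cdot y=\varphi x+y$ for all $x,y\in Q$. This already fixes the group structure and reduces the whole statement to proving $\varphi=\varepsilon$, so that commutativity of $(Q,\cdot)$ never has to be argued separately.

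Next I would use the second hypothesis $e(x)=0$. Recall that the right local identity element $e(x)$ is characterized by $x\cdot e(x)=x$. Rewriting this identity with the form obtained above gives $\varphi x+e(x)=x$ for every $x\in Q$, whence $e(x)=x-\varphi x=(\varepsilon-\varphi)x$. Substituting $e(x)=0$ yields $\varphi x=x$ for all $x$, so $\varphi=\varepsilon$ and therefore $x\cdot y=x+y$, i.e. $(Q,\cdot)=(Q,+)$.

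I expect essentially no obstacle here; the corollary is an immediate combination of Lemma \ref{LEFT_AND RIGHT_ZERO_ENDOM_E_QAUS} with the defining relation of the map $e$. The only point worth a moment's care is simply to keep the two zero-endomorphism conditions in their correct roles: I would use $f=0$ to obtain the normalized group-isotope form $x\cdot y=\varphi x+y$ (Case~1), and then use $e=0$ through $x\cdot e(x)=x$ to collapse $\varphi$ to the identity. (Symmetrically one could instead begin with Case~2 of the lemma, getting $x\cdot y=x+\psi y$, and force $\psi=\varepsilon$ via $f(x)\cdot x=x$; the two routes give the same conclusion.) The converse direction is trivial, since in any abelian group $(Q,+)$ one has $f(x)=e(x)=0$ for all $x$, so I would state it in one line.
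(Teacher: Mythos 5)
Your proof is correct and takes essentially the same route as the paper: both apply Case~1 of Lemma \ref{LEFT_AND RIGHT_ZERO_ENDOM_E_QAUS} to obtain the form $x\cdot y=\varphi x+y$ over an abelian group $(Q,+)$, and then use the relation $\varphi x+e(x)=x$ together with $e(x)=0$ to conclude $\varphi=\varepsilon$. Your closing remark about the converse is harmless but unnecessary, since the corollary as stated is one-directional.
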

\begin{proof}
From equality $\alpha x + e(x) = x$ of Lemma \ref{LEFT_AND RIGHT_ZERO_ENDOM_E_QAUS} we have $\alpha x  = x$,
$\alpha = \varepsilon $.
\end{proof}

\begin{corollary} \label{LEFT_ZERO_ENDOM_AND_RIGHT_PERM_E}
1. If endomorphism  $f$ of an  E-quasigroup $(Q, \cdot)$ is zero endomorphism and
 endomorphism $e$ is a permutation of the set $Q$, then
 $x\cdot y = \varphi x + y$, $(Q,+)$ is an abelian  group, $\varphi \in Aut(Q,+)$
 and $(Q, \circ)$, $x\circ y = \varphi x + e y$, is a medial distributive quasigroup.

2. If endomorphism  $e$ of an  E-quasigroup $(Q, \cdot)$ is zero endomorphism and
 endomorphism $f$ is a permutation of the set $Q$, then
 $x\cdot y = x+\psi y$, $(Q,+)$ is an abelian  group, $\psi \in Aut(Q,+)$ and $(Q, \circ)$,
 $x\circ y = f x + \psi y$, is a medial distributive quasigroup.
\end{corollary}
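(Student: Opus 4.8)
The plan is to reduce the statement to the structural description already established for E-quasigroups with a vanishing one-sided endomorphism, and then to recognise the associated idempotent group isotope as medial via Toyoda's theorem. I treat Case~1 in detail, since Case~2 is its mirror image. First, because $(Q,\cdot)$ is an E-quasigroup whose endomorphism $f$ is the zero endomorphism, Lemma~\ref{LEFT_AND RIGHT_ZERO_ENDOM_E_QAUS}, Case~1, supplies at once an abelian group $(Q,+)$ and an automorphism $\varphi\in Aut(Q,+)$ with $x\cdot y=\varphi x+y$ for all $x,y\in Q$. What remains is to identify the right local identity map $e$ and to analyse the isotope $(Q,\circ)$, $x\circ y=\varphi x+e y$.

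Next I would compute $e$ explicitly. From $x\cdot e(x)=x$ and $x\cdot y=\varphi x+y$ we get $\varphi x+e(x)=x$, hence $e(x)=x-\varphi x=(\varepsilon-\varphi)x$. Since $(Q,+)$ is abelian and $\varphi$ is an automorphism, $\varepsilon-\varphi$ is an endomorphism of $(Q,+)$; as $e$ is assumed to be a permutation, in fact $e\in Aut(Q,+)$. A one-line check gives $\varphi e=\varphi-\varphi^{2}=e\varphi$, so $\varphi$ and $e$ commute. Observe also that $x\circ y=\varphi x+e y=x\cdot e(y)$, so $(Q,\circ)$ is precisely the isotope considered in Lemma~\ref{LEMMA_24_ED}, Case~4.

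Then I would assemble the conclusion. By Lemma~\ref{LEMMA_24_ED}, Case~4, the quasigroup $(Q,\circ)$ is right distributive; moreover $x\circ x=\varphi x+e x=\varphi x+x-\varphi x=x$, so it is idempotent. Since $(Q,\circ)$ has the form $x\circ y=\varphi x+e y$ over the abelian group $(Q,+)$ with $\varphi,e\in Aut(Q,+)$ and $\varphi e=e\varphi$, Toyoda's theorem (Theorem~\ref{Toyoda_Theorem}) shows that $(Q,\circ)$ is medial. Invoking the quasigroup folklore that an idempotent medial quasigroup is distributive (as used in the proof of Corollary~\ref{LEFT_ZERO_ENDOM_AND_RIGHT_PERM}), I conclude that $(Q,\circ)$ is a medial distributive quasigroup. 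Case~2 follows symmetrically from Lemma~\ref{LEFT_AND RIGHT_ZERO_ENDOM_E_QAUS}, Case~2, together with Lemma~\ref{LEMMA_24_ED}, Case~3, with $f(x)=(\varepsilon-\psi)x$ now playing the role formerly played by $e$.

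The main obstacle, and essentially the only point that is not pure bookkeeping, is verifying that the right local identity map $e$ is a genuine automorphism of $(Q,+)$ and commutes with $\varphi$. A priori $e$ is only a permutation and an additive endomorphism, so one must exploit the abelian group structure to upgrade it to $e\in Aut(Q,+)$ and to obtain $\varphi e=e\varphi$; this commutativity is exactly the hypothesis that Toyoda's theorem needs, and without it the mediality step would fail.
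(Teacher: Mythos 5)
Your proposal is correct and follows essentially the same route as the paper: both invoke Lemma \ref{LEFT_AND RIGHT_ZERO_ENDOM_E_QAUS} to get the form $x\cdot y=\varphi x+y$ over an abelian group, compute $e(x)=x-\varphi x$, establish $e\in Aut(Q,+)$ with $\varphi e=e\varphi$, and conclude via Toyoda's theorem that $(Q,\circ)$ is medial idempotent, hence distributive. Your only (harmless) deviations are deriving additivity and commutativity of $e$ directly from $e=\varepsilon-\varphi$ in the endomorphism ring, where the paper instead substitutes $y=0$ into $e(x\cdot y)=e(x)\cdot e(y)$, and the redundant appeal to Lemma \ref{LEMMA_24_ED}, Case 4.
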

\begin{proof}
Case 1. From Lemma \ref{LEFT_AND RIGHT_ZERO_ENDOM_E_QAUS} it follows that in this case  $(Q, \cdot)$ has the
form $x\cdot y = \varphi x+ y$ over abelian group $(Q,+)$. Then  $x\cdot e(x) = \varphi x + e(x) = x$, $e(x) = x
- \varphi x$,  $e(0) = 0$. We can rewrite equality $e(x \cdot y) = e(x) \cdot e(y)$ in the form $e(\varphi x +
y) = \varphi e (x)  + e(y)$.  By $y = 0$ we have $e\varphi (x) = \varphi e(x)$. Then $e(\varphi x + y) = e
\varphi x + e y$, the map $e$ is an endomorphism of $(Q, +)$. Moreover, the map $e$ is an automorphism of $(Q,
+)$.

From  Toyoda Theorem and equality $e(x) = x - \varphi x$  it follows that quasigroup $(Q, \circ)$ is medial
idempotent. It is well known that a  medial idempotent quasigroup is distributive.

Case 2 is proved in the similar way to Case 1.
\end{proof}

\begin{theorem} \label{LEFT_DISTRIB_Isotopic_to_E_QUAS}
If the endomorphisms  $f$ and $e$  of an  E-quasigroup $(Q, \cdot)$ are permutations of the set $Q$, then
quasigroup $(Q, \circ)$ of the form $x\circ y = f(x) \cdot y$ is a  distributive quasigroup and $f, e \in Aut
(Q, \circ)$.
\end{theorem}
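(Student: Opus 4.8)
The plan is to follow the template of the F-quasigroup counterpart, Theorem~\ref{LEFT_DISTRIB_Isotopic_to_F_QUAS}: establish left distributivity and $f\in Aut(Q,\circ)$ from an already-proved lemma, then manufacture an identity of McCune's type from the remaining (right) E-quasigroup axiom and close the argument with Corollary~\ref{MACQUNE}. First I would note that since $(Q,\cdot)$ is in particular a left E-quasigroup and $f$ is a permutation, Lemma~\ref{LEMMA_24_ED}, Case~3, already gives that $(Q,\circ)$ with $x\circ y=f(x)\cdot y$ is a left distributive quasigroup and that $f\in Aut(Q,\circ)$. Hence it remains to prove two things: (a) $(Q,\circ)$ is also right distributive, and (b) $e\in Aut(Q,\circ)$.

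For (a) the idea is to translate the right E-quasigroup identity $zy\cdot x=zx\cdot ye(x)$ into the operation $\circ$. From $f(x)\cdot y=x\circ y$ one has $a\cdot b=f^{-1}(a)\circ b$, and because $f^{-1}\in Aut(Q,\circ)$ one may push $f^{-1}$ across $\circ$. Rewriting the left-hand side yields $(f^{-2}z\circ f^{-1}y)\circ x$ and the right-hand side yields $(f^{-2}z\circ f^{-1}x)\circ(f^{-1}y\circ ex)$; after substituting $p=f^{-2}z$ and $q=f^{-1}y$, both of which range over all of $Q$, this collapses to the identity $(p\circ q)\circ x=(p\circ f^{-1}x)\circ(q\circ ex)$. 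This is precisely the E-analog of Lemma~\ref{LEFT_AND RIGHT_PERMUT_ENDOM}, and it is exactly the hypothesis of Corollary~\ref{MACQUNE}, with the distinguished permutation ``$f$'' there played by $f^{-1}$ and the arbitrary map ``$e$'' played by $e$. Since $f^{-1}$ is a permutation and $(Q,\circ)$ is left distributive, Corollary~\ref{MACQUNE} (i.e. McCune's Theorem~\ref{MACQUNE_1}) immediately gives that $(Q,\circ)$ is distributive.

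For (b) I would argue as follows. The map $f$ is an endomorphism of $(Q,\cdot)$ by Lemma~\ref{EM_END_MIDD}, Case~3, so Lemma~\ref{COMMUTING_ENDOMORPH} yields $ef=fe$; moreover $e$ is an endomorphism of $(Q,\cdot)$ by Lemma~\ref{EM_END_MIDD}, Case~4, and being a permutation it lies in $Aut(Q,\cdot)$. Then $e(x\circ y)=e(f(x)\cdot y)=ef(x)\cdot e(y)=fe(x)\cdot e(y)=e(x)\circ e(y)$, so $e\in Aut(Q,\circ)$, which together with $f\in Aut(Q,\circ)$ completes the proof.

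The main obstacle will be the bookkeeping in step~(a): one must invert the isotopy correctly and use $f^{-1}\in Aut(Q,\circ)$ to move every occurrence of $f$ onto the right argument, so that the translated axiom lands in exactly the shape $(p\circ q)\circ x=(p\circ f^{-1}x)\circ(q\circ ex)$ demanded by Corollary~\ref{MACQUNE} --- in particular with the permutation $f^{-1}$, and not the mere map $e$, occupying the slot that the corollary requires to be a permutation. Once the identity is obtained in precisely that form, all the genuine difficulty is absorbed into the cited McCune argument, and steps (a) and (b) reduce to routine verifications.
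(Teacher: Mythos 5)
Your proposal is correct, and it follows the paper's overall template --- Lemma \ref{LEMMA_24_ED}, Case 3, for left distributivity and $f\in Aut(Q,\circ)$, then a McCune-type identity closed by Corollary \ref{MACQUNE} --- but your step (a) takes a genuinely more direct route at the key point. The paper invokes Lemma \ref{LEMMA_24_ED}, Case 4, to produce a second, right distributive isotope $(Q,\diamond)$ with $x\cdot y = x\diamond e^{-1}y$, relates the two isotopes by $x\circ y = fx\diamond e^{-1}y$, first establishes $e, f\in Aut(Q,\circ)$ (via Lemma \ref{ON AUTOMORPHISM_OF_IDEM_QUAS} and $ef=fe$), and only then translates the right distributive law of $\diamond$ into $\circ$; you instead push the raw right E-identity $zy\cdot x = zx\cdot ye(x)$ through $a\cdot b = f^{-1}(a)\circ b$, which uses only $f^{-1}\in Aut(Q,\circ)$ and leaves $e$ untouched as a bare map, postponing $e\in Aut(Q,\circ)$ to step (b). Notably, your bookkeeping is the accurate one: from $x\circ y = fx\diamond e^{-1}y$ one gets $x\diamond y = f^{-1}x\circ ey$, so the translated identity should read $(x\circ y)\circ z = (x\circ f^{-1}z)\circ(y\circ ez)$, exactly your $(p\circ q)\circ x=(p\circ f^{-1}x)\circ(q\circ ex)$; the paper's displayed version $(x\circ y)\circ z = (x\circ fz)\circ(y\circ e^{-1}z)$ rests on the inverted expression $x\diamond y = fx\circ e^{-1}y$. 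The slip is harmless --- in either version a permutation occupies the slot that Corollary \ref{MACQUNE} requires, and here $e$ is a permutation anyway --- but your route is immune to precisely this hazard. Your step (b), $e(x\circ y)=ef(x)\cdot e(y)=fe(x)\cdot e(y)=e(x)\circ e(y)$ via Lemmas \ref{EM_END_MIDD} and \ref{COMMUTING_ENDOMORPH}, is exactly the content of the paper's citation of Lemma \ref{ON AUTOMORPHISM_OF_IDEM_QUAS}, so nothing is missing. What your approach buys is one fewer isotope to juggle and a cleaner dependency order; what the paper's buys is a symmetric reuse of Lemma \ref{LEMMA_24_ED} that makes the parallel with the F-quasigroup case (Lemma \ref{LEFT_AND RIGHT_PERMUT_ENDOM} and Theorem \ref{LEFT_DISTRIB_Isotopic_to_F_QUAS}) more visible.
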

\begin{proof}
The proof of this theorem is similar to the proof of Theorem \ref{LEFT_DISTRIB_Isotopic_to_F_QUAS}.

By Lemma  \ref{LEMMA_24_ED} $(Q, \cdot)$ is isotope of the form $x\cdot y = f^{-1} x \circ y$ of  a left
distributive quasigroup $(Q, \circ)$ and $f\in Aut (Q, \circ)$.

Moreover, by Lemma  \ref{LEMMA_24_ED} $(Q, \cdot)$ is isotope of the form $x\cdot y =  x \diamond e^{-1} y$ of a
right distributive quasigroup and $e\in Aut (Q, \diamond)$. Therefore $f^{-1} x \circ y = x \diamond e^{-1} y$,
$ x \circ y = f x \diamond e^{-1} y$.

 Automorphisms $e, f$ of the quasigroup $(Q, \cdot)$ lie in $Aut
(Q,\circ)$ (Lemma \ref{ON AUTOMORPHISM_OF_IDEM_QUAS} or \cite{MARS}, Corollary 12). We recall, $ef=fe$ (Lemma
\ref{COMMUTING_ENDOMORPH}).

 Now we  need to rewrite right distributive identity in terms of operation $\circ$. We have
 \begin{equation*}
 \begin{split}
&f(f x \circ e^{-1} y) \circ e^{-1} z = f(f x \circ e^{-1} z) \circ e^{-1} (f y \circ e^{-1} z),\\
&(f^{2} x \circ f e^{-1} y) \circ e^{-1} z = (f^{2} x \circ f e^{-1} z) \circ  (e^{-1} f y \circ e^{-2} z).
\end{split}
\end{equation*}

 If in the last equality we change  element $f^{2} x$ by  element $x$, element $f e^{-1} y = e^{-1} f  y$ by
 element $y$, element $e^{-1}z$ by element $z$, then we obtain
\begin{equation*}
(x \circ y) \circ z = (x \circ f z) \circ  (y \circ e^{-1} z).
\end{equation*}
In order to finish this  proof we shall apply Corollary  \ref{MACQUNE}.
\end{proof}

\begin{corollary} \label{SIMPLE_E_QUAS}
An E-quasigroup  $(Q, \cdot )$ is  simple if and only if this quasigroup lies in one from the following
quasigroup classes:

(i) $(Q, \cdot )$ is a simple abelian group in the case when  the maps $e$ and $f$ are zero endomorphisms;

(ii) $(Q, \cdot )$ is a simple medial quasigroup of the form $x\cdot y = \varphi x + y$ in the case when the map
$f$ is a zero endomorphism and the map $e$ is a permutation;

(iii) $(Q, \cdot )$ is a simple medial quasigroup of the form $x\cdot y = x +\psi y$ in the case when the map
$e$ is a zero endomorphism and the map $f$ is a permutation;

(iv) $(Q, \cdot )$ has the form $x\cdot y = x \circ\psi y$, where $(Q,\circ)$ is a $\psi$-simple distributive
quasigroup, $\psi \in Aut(Q, \circ)$,   in the case when the maps $e$ and $f$ are permutations.
\end{corollary}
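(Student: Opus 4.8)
The plan is to follow the pattern of Theorem~\ref{SIMPLE_F_QUAS}, exploiting that an E-quasigroup, being simultaneously a left and a right E-quasigroup, carries two endomorphisms: the map $f$ (Lemma~\ref{EM_END_MIDD}, Case~3) and the map $e$ (Lemma~\ref{EM_END_MIDD}, Case~4). For the forward implication, suppose $(Q,\cdot)$ is simple. By Theorem~\ref{NORM_QUAS_CONGR} each of $f$ and $e$ determines a normal congruence $Ker\,f$ and $Ker\,e$ of $(Q,\cdot)$, and simplicity forces each of them to be either the diagonal $\hat Q$ or the universal congruence $Q\times Q$. Reading this through the structure Theorem~\ref{MAIN_MIDDLE_F} (Cases~3 and~4), I would conclude that each of $f$, $e$ is either a zero endomorphism (universal kernel, collapsing onto the unique idempotent) or a permutation of $Q$ (diagonal kernel, so that the unique-idempotent factor is trivial and the map restricts to an automorphism of all of $Q$). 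This yields exactly the four mutually exclusive cases.

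Next I would identify the structure in each case using results already at hand. If $f$ and $e$ are both zero, Corollary~\ref{LEFT_AND RIGHT_BOTH_ZERO_ENDOM_E_QAUS} gives $x\cdot y=x+y$ over an abelian group, hence (i). If $f$ is zero and $e$ a permutation, Corollary~\ref{LEFT_ZERO_ENDOM_AND_RIGHT_PERM_E}, Case~1, gives $x\cdot y=\varphi x+y$ with $(Q,+)$ abelian and $\varphi\in Aut(Q,+)$, which is medial by Toyoda's Theorem~\ref{Toyoda_Theorem} (the automorphisms $\varphi$ and $\varepsilon$ commute), hence (ii); symmetrically Case~2 yields (iii). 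If both $f$ and $e$ are permutations, Theorem~\ref{LEFT_DISTRIB_Isotopic_to_E_QUAS} together with Lemma~\ref{LEMMA_24_ED}, Case~4, supplies a distributive quasigroup $(Q,\circ)$ with $x\cdot y=x\circ\psi y$ and $\psi=e^{-1}\in Aut(Q,\circ)$, hence (iv). In case (iv) the required $\psi$-simplicity of $(Q,\circ)$ follows from the simplicity of $(Q,\cdot)$: since $(Q,\cdot)=(Q,\circ)(\varepsilon,\psi,\varepsilon)$ is a principal isotope, every normal congruence of $(Q,\cdot)$ is a normal congruence of $(Q,\circ)$ admissible under $\psi$ (Lemma~\ref{NL1}, Corollary~\ref{NC10}), so the only admissible ones are $\hat Q$ and $Q\times Q$, which is precisely $\psi$-simplicity in the sense of Definition~\ref{ALPHA_SIMPLE_QUAS}.

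For the converse I would run this correspondence backwards. A simple abelian group is a simple quasigroup, covering (i). The quasigroups in (ii) and (iii) are medial, hence E-quasigroups by Lemma~\ref{medial_F_SM_E}, and are simple by hypothesis. In (iv), an isotope $x\cdot y=x\circ\psi y$ of a distributive quasigroup with $\psi\in Aut(Q,\circ)$ is again an E-quasigroup (distributive quasigroups are E-quasigroups by Lemma~\ref{distrib_F_SM_E}, and that the isotope retains the E-identities is checked directly, in the manner of the converse in Lemma~\ref{LEMMA_4_ED}); its simplicity follows from the $\psi$-simplicity of $(Q,\circ)$ via Corollary~\ref{DLYA_PROSTYH_F_QUAS} and the congruence correspondence of Lemma~\ref{NL1} and Corollary~\ref{NC10}.

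I expect the principal obstacle to be the forward-direction upgrade from ``$Ker\,f$ (or $Ker\,e$) is diagonal'' to ``$f$ (resp. $e$) is a permutation'', since injectivity of an endomorphism need not imply surjectivity (cf. Example~\ref{INFINITE_LEFT_F_QUAS}). The resolution is to invoke the direct decomposition of Theorem~\ref{MAIN_MIDDLE_F}: an injective endomorphism cannot collapse a nontrivial quasigroup onto a one-element factor, which forces the unique-idempotent factor to vanish, so the endomorphism restricts to an automorphism of the whole quasigroup. A secondary point requiring care is the exact isotopy form in (iv): Theorem~\ref{LEFT_DISTRIB_Isotopic_to_E_QUAS} naturally produces the left presentation $x\cdot y=f^{-1}x\circ y$, so to obtain the stated right presentation $x\cdot y=x\circ\psi y$ I would instead take the right-distributive base of Lemma~\ref{LEMMA_24_ED}, Case~4, with $\psi=e^{-1}$.
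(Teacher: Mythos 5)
Your proposal follows the paper's route essentially step for step: the forward direction via the normal congruences $Ker\,f$, $Ker\,e$ (Theorem~\ref{NORM_QUAS_CONGR}) and the zero-endomorphism-or-permutation dichotomy read off from Theorem~\ref{MAIN_MIDDLE_F}, then Corollary~\ref{LEFT_AND RIGHT_BOTH_ZERO_ENDOM_E_QAUS} for (i), Lemma~\ref{LEFT_AND RIGHT_ZERO_ENDOM_E_QAUS}/Corollary~\ref{LEFT_ZERO_ENDOM_AND_RIGHT_PERM_E} for (ii)--(iii), and Theorem~\ref{LEFT_DISTRIB_Isotopic_to_E_QUAS} with Lemma~\ref{LEMMA_24_ED} for (iv), exactly as the paper does by pointing to Case~(iv) of Theorem~\ref{SIMPLE_F_QUAS}; the simplicity transfer via Lemma~\ref{NL1}, Corollary~\ref{NC10} and Corollary~\ref{DLYA_PROSTYH_F_QUAS} is also the paper's mechanism. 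Your flagged subtleties (injectivity does not give surjectivity; left versus right presentation of the distributive base) are real and correctly resolved in the same way the paper resolves them.

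One parenthetical claim in your converse is, however, not right as stated: that for $x\cdot y = x\circ\psi y$ over an arbitrary distributive $(Q,\circ)$ with $\psi\in Aut(Q,\circ)$ ``the isotope retains the E-identities, checked directly in the manner of Lemma~\ref{LEMMA_4_ED}.'' The right E-identity does follow from right distributivity alone, but the direct check of the \emph{left} E-identity reduces to the instance $(f(x)\circ\psi x)\circ(a\circ b)=(f(x)\circ a)\circ(\psi x\circ b)$, a medial-type rearrangement that fails in non-medial distributive quasigroups. Concretely, over a nonassociative commutative Moufang loop $(Q,+)$ of exponent $3$, take the Steiner quasigroup $x\circ y=-x-y$ and $\psi=I$ (so $\psi\in Aut(Q,\circ)$): then $x\circ\psi y=-x+y$, its map $f$ is zero, and the left E-identity becomes $-x+(-y+z)=-y+(-x+z)$, i.e.\ pairwise commuting left translations, which forces $(Q,+)$ to be an associative abelian group --- contradiction. (This is consistent with Lemma~\ref{distrib_F_SM_E}, which covers only $\psi=\varepsilon$, and with Theorem~\ref{THEOREM_3_F_QUAS_MIDD}, Case~3.) The gap is inessential for the corollary itself, because the classes carry their case conditions with them --- membership in (iv) presupposes an E-quasigroup whose $e$ and $f$ are permutations, so in the converse only simplicity needs proof, which you supply correctly via Corollary~\ref{DLYA_PROSTYH_F_QUAS}; this is also how the paper's own terse converse (``it is clear \dots'') must be read. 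But the verification sentence should be deleted or restricted to the medial case (where the needed rearrangement does hold), rather than presented as a direct check valid for every distributive base.
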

\begin{proof}
($\Longrightarrow$) (i) The proof follows from Corollary  \ref{LEFT_AND RIGHT_BOTH_ZERO_ENDOM_E_QAUS}. (ii)  The
proof follows from Lemma \ref{LEFT_AND RIGHT_ZERO_ENDOM_E_QAUS} Case 1. (iii) The proof follows from Lemma
\ref{LEFT_AND RIGHT_ZERO_ENDOM_E_QAUS} Case 2. (iv) The proof is similar to the proof of Case (iv) of Theorem
\ref{SIMPLE_F_QUAS}.

($\Longleftarrow$) It is clear that any quasigroup from these quasigroup classes is simple E-quasigroup.
\end{proof}

\begin{corollary} \label{FINITE_SIMPLE_E_QUAS}
Finite simple E-quasigroup  $(Q, \cdot)$ is a simple medial quasigroup.
\end{corollary}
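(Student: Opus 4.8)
The plan is to read off the four possible forms of a simple E-quasigroup from Corollary \ref{SIMPLE_E_QUAS} and check that each one produces a medial quasigroup; simplicity is inherited from the hypothesis, so only mediality requires argument. First I would dispose of the three easy cases. In Case (i) the quasigroup is a simple abelian group, and every abelian group satisfies the medial identity $xy\cdot uv = xu\cdot yv$ by commutativity of $+$, so it is a simple medial quasigroup. In Cases (ii) and (iii) the quasigroup already has the linear form $x\cdot y = \varphi x + y$ or $x\cdot y = x + \psi y$ over an abelian group $(Q,+)$ with $\varphi,\psi\in Aut(Q,+)$; by Toyoda's Theorem (Theorem \ref{Toyoda_Theorem}) any such linear quasigroup over an abelian group is medial, and indeed Corollary \ref{SIMPLE_E_QUAS} already records these as simple medial quasigroups.

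The real content is Case (iv), which I would treat exactly as in the proof of Corollary \ref{FINITE_SIMPLE_F_QUAS}. Here $(Q,\cdot)$ has the form $x\cdot y = x\circ\psi y$ with $\psi\in Aut(Q,\circ)$ and $(Q,\circ)$ a $\psi$-simple distributive quasigroup. By Belousov's Theorem (Theorem \ref{Belousov_Theorem}) the distributive quasigroup $(Q,\circ)$ is isotopic to a commutative Moufang loop $(Q,+)$, so $(Q,\cdot)$ is itself isotopic to $(Q,+)$. Since $(Q,\cdot)$ is finite, $(Q,+)$ is finite and hence finitely generated. The isotopy expressing $x\cdot y = x\circ\psi y$ is of the form $(\varepsilon,\psi,\varepsilon)$, which is precisely one of the three admissible shapes in Lemma \ref{SCERB_THEO_COR}; thus that lemma applies and yields that $(Q,\cdot)$ is a finite medial quasigroup.

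Combining the four cases, every finite simple E-quasigroup is medial, and being simple by assumption it is a simple medial quasigroup, as claimed. I expect the only delicate point to be the verification in Case (iv) that the hypotheses of Lemma \ref{SCERB_THEO_COR} are genuinely met: namely that $\psi$ is an \emph{automorphism} of $(Q,\circ)$ (supplied by Corollary \ref{SIMPLE_E_QUAS}(iv)), that the isotopy has exactly the single-component form $(\varepsilon,\psi,\varepsilon)$, and that the finitely generated commutative Moufang loop underlying $(Q,\circ)$ has nontrivial center so the simplicity-forcing argument of Theorem \ref{SCERB_THEO} can collapse it to the medial situation. Everything else is routine reference to the classification already established.
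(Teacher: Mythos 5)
Your proof is correct and follows essentially the same route as the paper: the paper's proof of Corollary \ref{FINITE_SIMPLE_E_QUAS} likewise reads off the four cases of Corollary \ref{SIMPLE_E_QUAS}, disposes of cases (i)--(iii) as already medial (mirroring the Toyoda-based argument in Corollary \ref{FINITE_SIMPLE_F_QUAS}), and settles case (iv) by noting that $(Q,\cdot)$ is isotopic via $(\varepsilon,\psi,\varepsilon)$ to a distributive quasigroup, hence to a finitely generated commutative Moufang loop, so Lemma \ref{SCERB_THEO_COR} applies. Your closing checklist of the lemma's hypotheses (that $\psi\in Aut(Q,\circ)$, that the isotopy has one of the three admissible shapes, and that the nontrivial center of the finitely generated CML drives the collapse to the medial case) is exactly the verification the paper leaves implicit.
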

\begin{proof}
The proof follows from Corollary  \ref{SIMPLE_E_QUAS} and is similar to the proof of Corollary
\ref{FINITE_SIMPLE_F_QUAS}. We can use Lemma \ref{SCERB_THEO_COR}.
\end{proof}

Taking into consideration Corollary \ref{FINITE_SIMPLE_E_QUAS} we can say that properties of finite simple
E-qua\-si\-gro\-ups are described by Theorem \ref{SIMPLE_MEDIAL_QUAS}.

\begin{lemma} \label{E_LEFT_ZERO_AND RIGHT_PERMUT_ENDOM_FULL_THEO}
1. If endomorphism  $f$ of an   E-quasigroup $(Q, \cdot)$  is zero endomorphism, then $(Q, \cdot) \cong
(A,\circ) \times  (B, \cdot)$, where $(A,\circ)$ a medial E-quasigroup of the form $x\cdot y = \varphi x + y$
and there exists a number $m$ such that $|e^m(A, \circ)| =1$, $(B, \cdot)$ is a medial E-quasigroup of the form
$x\cdot y = \varphi x + y$ such that  $(B, \star)$, $x\star y = \varphi x + e y$, is a medial distributive
quasigroup.

2. If endomorphism  $e$ of an E-quasigroup $(Q, \cdot)$ is  zero endomorphism, then $(Q, \cdot) \cong (A, +)
\times  (B, \cdot)$, where $(A, \circ)$ is a medial E-quasigroup of the form $x\cdot y = x + \psi y$ and there
exists a number $m$ such that $|f^m(A, \circ)| =1$, $(B, \cdot)$ is a medial E-quasigroup of the form $x\cdot y
= x+\psi y$ such that  $(B, \star)$, $x\star y = f x + \psi y$, is a medial distributive quasigroup.
\end{lemma}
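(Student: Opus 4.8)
The plan is to reduce everything to the structure theorem already obtained for right E-quasigroups together with the two descriptions of a zero endomorphism. Since an E-quasigroup is by definition both a left and a right E-quasigroup, I would first invoke Lemma~\ref{LEFT_AND RIGHT_ZERO_ENDOM_E_QAUS}, Case~1: from $f(x)=0$ for all $x$ it follows that $(Q,\cdot)$ has the linear form $x\cdot y=\varphi x+y$ over an abelian group $(Q,+)$ with $\varphi\in Aut(Q,+)$. Because $(Q,+)$ is abelian and $\varphi$ commutes with $\varepsilon$, this form is medial by the converse part of Toyoda's Theorem~\ref{Toyoda_Theorem}, so $(Q,\cdot)$ is a medial E-quasigroup throughout.

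Next I would produce the direct decomposition. As $(Q,\cdot)$ is in particular a right E-quasigroup, Theorem~\ref{MAIN_MIDDLE_F}, Case~4 applies and yields $(Q,\cdot)\cong(A,\circ)\times(B,\cdot)$, where $(A,\circ)$ has a unique idempotent element with $|e^m(A,\circ)|=1$ for some number $m$, while $(B,\cdot)=e^m(Q,\cdot)$ is the factor on which $e$ restricts to a permutation, i.e. to an automorphism. This is precisely the decomposition asserted in the statement, so the remaining task is to identify the internal form of each factor.

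The key transfer step is that the hypothesis $f\equiv 0$ descends to both factors. Writing $q=(a,b)$, the left local identity is computed componentwise, $f(q)=(f(a),f(b))$, so $f\equiv 0$ on $Q$ forces $f\equiv 0$ on $(A,\circ)$ and on $(B,\cdot)$. Mediality and the E-quasigroup identities are inherited by direct factors, since these are homomorphic images (cf.\ Remark~\ref{REM_7_NORM}); hence both $(A,\circ)$ and $(B,\cdot)$ are medial E-quasigroups on which $f$ vanishes, and applying Lemma~\ref{LEFT_AND RIGHT_ZERO_ENDOM_E_QAUS}, Case~1 to each gives them the required form $x\cdot y=\varphi x+y$.

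Finally, to pin down $(B,\cdot)$ I would use that on $B$ the endomorphism $f$ is zero while $e$ is a permutation. These are exactly the hypotheses of Corollary~\ref{LEFT_ZERO_ENDOM_AND_RIGHT_PERM_E}, Case~1, which delivers the form $x\cdot y=\varphi x+y$ over an abelian group together with the conclusion that $(B,\star)$, $x\star y=\varphi x+e y$, is a medial distributive quasigroup. Case~2 of the lemma then follows by the mirror argument: when $e\equiv 0$ one decomposes along the $f$-chain using Theorem~\ref{MAIN_MIDDLE_F}, Case~3 and finishes with Case~2 of the same lemmas and corollary. The main obstacle I anticipate is purely the bookkeeping needed to confirm that the zero-endomorphism condition on $f$ genuinely passes to each direct factor and that $B=e^m(Q)$ is precisely the place where $e$ becomes an automorphism; once these two points are secured, the identification of the two linear forms and of the medial distributive isotope is immediate from the results quoted above.
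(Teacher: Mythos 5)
Your proposal is correct and takes essentially the same route as the paper's proof: the decomposition comes from Theorem \ref{MAIN_MIDDLE_F} (Case 4, resp.\ Case 3 for the mirror case), the linear form $x\cdot y=\varphi x+y$ from Lemma \ref{LEFT_AND RIGHT_ZERO_ENDOM_E_QAUS}, and the medial distributive isotope $x\star y=\varphi x+ey$ from Corollary \ref{LEFT_ZERO_ENDOM_AND_RIGHT_PERM_E}. The only cosmetic difference is that the paper derives $x\star y=\varphi x+e y$ globally by matching the two expressions for the multiplication (using $e=\varepsilon-\varphi$ and $e\varphi=\varphi e$), whereas you apply the corollary factor-wise after verifying that $f\equiv 0$ descends componentwise to each direct factor; both versions are sound.
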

\begin{proof}
 1. By Theorem  \ref{MAIN_MIDDLE_F}  Case 4 any  right E-quasigroup  $(Q, \cdot)$ has the following structure
$(Q, \cdot) \cong (A, \circ) \times (B, \cdot), $ where $(A, \circ)$ is a quasigroup with a unique idempotent
element and there exists a number $m$ such that $|e^m(A, \circ)| =1$; $(B, \cdot)$ is an isotope of a right
distributive quasigroup $(B, \star)$, $x \cdot y = x \star  e^{-1}(y)$ for all $x, y \in B$, $e \in Aut(B,
\cdot)$, $e \in Aut(B, \star)$.

From Lemma \ref{LEFT_AND RIGHT_ZERO_ENDOM_E_QAUS} it follows that  $(Q, \cdot)$ has the form $x\cdot y = \varphi
x+ y$ over an abelian group $(Q,+)$.

We recall that $e = \varepsilon - \varphi$,  $e \varphi  = \varphi e $ (Corollary
\ref{LEFT_ZERO_ENDOM_AND_RIGHT_PERM_E}). From equalities $x\cdot y = \varphi x + y$ and $x \cdot y = x \star
e^{-1}(y)$ we have $x\star y = \varphi x + e y$. Then $(B, \star)$ is medial, idempotent, therefore it is
distributive.

2. The proof is similar to Case 1.
\end{proof}

\begin{remark}
If $m=1$, then $(A,\circ)$ is an abelian group (Corollary \ref{LEFT_AND RIGHT_BOTH_ZERO_ENDOM_E_QAUS}).

If $m=2$, then  $(A,\circ)$ is an extension of an abelian group by an abelian group. If, in addition, the
conditions of Lemma \ref{LEMMA_LOOP_GROUP} are fulfilled, then   $(A,\circ)$ is an abelian group.

If the number $m$ is finite and the conditions of Lemma \ref{LEMMA_LOOP_GROUP} are fulfilled, then after
application of Lemma \ref{LEMMA_LOOP_GROUP} $(m-1)$ times we obtain that $(A,\circ)$ is an abelian group.
\end{remark}

Now we have a possibility to give in more details information on the structure of finite E-quasigroups. The
proof of the following theorem in many details is similar to the proof of Theorem \ref{STRUCTURE_FINITE_F_QUAS}.

Let $D_i$ be an equivalence class  of the normal congruence $Ker\, e_i$ containing an idempotent element $a\in
A$, $i \geqslant 0$. Let $H_j$ be an equivalence class  of the normal congruence $Ker\, f_j$ containing an
idempotent element, $j \geqslant 0$.

\begin{theorem}
In any finite E-quasigroup $(Q,\cdot)$  there exist the following finite  chain
\begin{equation*} \begin{split}
& Q \supset e(Q) \supset  \dots \supset e^{m-1}(Q) \supset e^m(Q) =  e^{m+1} (Q), \\
&  e^m(Q) \supset  fe^m(Q) \supset \dots  \supset f^re^m(Q) = f^{r+1}e^m(Q),
\end{split}
\end{equation*}
where
\begin{enumerate}
  \item If $i < m$, then   $ (D_i, \cdot)
   \cong (H_i, +)
\times  (G_{i}, \cdot)$, where  right loop $(H_i, +)$ is an extension of an abelian group by abelian groups,
$(G_{i}, \cdot)$ is a medial E-quasigroup of the form $x\cdot y = x + \psi y$ such that  $\psi$  is complete
automorphism of the group $(G_i,+)$;
 \item  If $i = m$,    then  $(e^m Q, \cdot)$ is  isotope of  right distributive quasigroup $(e^m Q, \circ)$,
  where  $x\circ y = x\cdot e y $;
\begin{enumerate}
 \item if $j < r$, then  $(H_j, \cdot)$  is medial left loop, $(H_j, \cdot)$ has the form
  $x\cdot y = \varphi x + y$, where $(H_j,+)$ is an abelian  group, $\varphi \in Aut(H_j,+)$ and
  $(H_j, \circ)$, $x\circ y = \varphi x + e y$, is a medial distributive quasigroup;
   \item if $j = r$, then $(f^re^m Q,  \cdot)$ is isotope of the form $x\circ y = f(x) \cdot y$
    of a  distributive quasigroup $(f^re^m Q,  \circ)$.
\end{enumerate}
\end{enumerate}
\end{theorem}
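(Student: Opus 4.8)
The plan is to imitate the proof of Theorem \ref{STRUCTURE_FINITE_F_QUAS}, exploiting that a finite E-quasigroup is simultaneously a left and a right E-quasigroup. Hence both $e$ and $f$ are endomorphisms (Lemma \ref{EM_END_MIDD}, Cases 3--4) and they commute (Lemma \ref{COMMUTING_ENDOMORPH}). Finiteness forces (Lemma \ref{l5.4}) every congruence to be normal and pairwise permutable, so the strictly descending chains $Q\supset e(Q)\supset\cdots$ and, inside $e^m(Q)$, $e^m(Q)\supset fe^m(Q)\supset\cdots$ both terminate after finitely many steps $m$ and $r$; this produces exactly the two finite chains in the statement. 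The kernels $\mathrm{Ker}\,e_i$ and $\mathrm{Ker}\,f_j$ are normal congruences whose classes $D_i$, $H_j$ are subquasigroups (Lemma \ref{BINARY_SUB_QUAS}), and as subquasigroups they inherit the E-property (Remark \ref{REM_7_NORM}). Since $e(a)=a$ on an idempotent $a$ and $e$ commutes with $f$, each such class containing $a$ is stable under the relevant endomorphisms.

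First I would run the right-E-quasigroup decomposition along the $e$-chain. Applying Theorem \ref{MAIN_MIDDLE_F}, Case 4, gives $(Q,\cdot)\cong (A,\circ)\times (B,\cdot)$ with $B=e^m(Q)$, where $(A,\circ)$ carries a unique idempotent and $e$ restricts to an automorphism on $B$. This is what splits the two chains of the statement: the $e$-chain lives over $A$ and the $f$-chain over $B$.

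Next I would treat the unipotent part (Case $i<m$). On each class $D_i$ the map $e$ is the zero endomorphism (it collapses $D_i$ to the idempotent), so Lemma \ref{E_LEFT_ZERO_AND RIGHT_PERMUT_ENDOM_FULL_THEO}, Case 2, yields $(D_i,\cdot)\cong (H_i,+)\times (G_i,\cdot)$, with $(H_i,+)$ the part where the $f$-chain collapses and $(G_i,\cdot)$ the part where $f$ is a permutation. On $(G_i,\cdot)$, Corollary \ref{LEFT_ZERO_ENDOM_AND_RIGHT_PERM_E}, Case 1, together with the remark identifying the medial-distributive condition with completeness of $\psi$ (Definition \ref{Automorphism_full}), gives the form $x\cdot y=x+\psi y$ with $\psi$ a complete automorphism. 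For $(H_i,+)$ I would descend its own $f$-chain: on each successive layer both $e$ and $f$ vanish, so by Corollary \ref{LEFT_AND RIGHT_BOTH_ZERO_ENDOM_E_QAUS} every quotient is an abelian group, exhibiting $(H_i,+)$ as an iterated extension of abelian groups. For Case $i=m$: on $B=e^m(Q)$ the map $e$ is an automorphism, so Lemma \ref{LEMMA_24_ED}, Case 4, makes $(B,\circ)$ with $x\circ y=x\cdot e(y)$ right distributive. Then I analyze the $f$-chain inside $B$: for $j<r$ the map $f$ is zero and $e$ a permutation on $H_j$, and Corollary \ref{LEFT_ZERO_ENDOM_AND_RIGHT_PERM_E}, Case 1, gives the medial left loop $x\cdot y=\varphi x+y$ with $(H_j,\circ)$, $x\circ y=\varphi x+ey$, medial distributive; for $j=r$ both $e$ and $f$ are permutations, and Theorem \ref{LEFT_DISTRIB_Isotopic_to_E_QUAS} makes $(f^re^m(Q),\circ)$, $x\circ y=f(x)\cdot y$, distributive.

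The main obstacle will be the bookkeeping that upgrades these local descriptions to honest internal direct products at every level: one must verify that the relevant kernels are normal and pairwise permutable so that Theorem \ref{t5.1} applies, that being the zero endomorphism and being a permutation pass to the direct factors (the E-analog of Lemma \ref{Componets_OF_DIR_PROD}), and that the tower building $(H_i,+)$ really stays in the abelian regime layer by layer. Finiteness carries most of the weight, but the careful tracking of which of $e$ and $f$ is trivial on which layer is precisely where the argument must be handled with care.
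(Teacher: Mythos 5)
Your proposal is correct and follows essentially the same route as the paper's own proof: finiteness stabilizes both chains, the layers $D_i$ with $i<m$ are handled via Lemma \ref{LEFT_AND RIGHT_ZERO_ENDOM_E_QAUS} and Lemma \ref{E_LEFT_ZERO_AND RIGHT_PERMUT_ENDOM_FULL_THEO}, Lemma \ref{LEMMA_24_ED} (Case 4) gives the right distributive isotope at $i=m$, Corollary \ref{LEFT_ZERO_ENDOM_AND_RIGHT_PERM_E} treats $j<r$, and Theorem \ref{LEFT_DISTRIB_Isotopic_to_E_QUAS} finishes $j=r$, with your opening appeal to Theorem \ref{MAIN_MIDDLE_F} matching the decomposition the paper keeps in the background. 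One small correction: for the factor $(G_i,\cdot)$, where $e$ vanishes and $f$ is a permutation, the relevant statement is Case 2 (not Case 1) of Corollary \ref{LEFT_ZERO_ENDOM_AND_RIGHT_PERM_E}.
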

\begin{proof}
It is clear that in  E-quasigroup $(Q, \cdot)$    chain (\ref{chain})
\begin{equation*}
 Q \supset e(Q) \supset e^2(Q)
\supset \dots \supset e^{m-1}(Q) \supset e^m(Q) =  e^{m+1} (Q)
\end{equation*}
becomes stable.

Case 1,  $i < m$. By Lemma \ref{LEFT_AND RIGHT_ZERO_ENDOM_E_QAUS}, Case 2 any quasigroup $(D_i, \cdot)$ is a
medial right loop, isotope of an abelian group $(D_i, +)$ of the form $(D_i, \cdot) = (D_i,+)(\varepsilon, \psi,
\varepsilon)$, where $\psi \in Aut (D_i,+)$, for all suitable values of index $i$, since in the quasigroup
$(D_i, \cdot)$ endomorphism $e$ is zero endomorphism.

If $f$ is zero endomorphism, then in this case $(D_i, \cdot)$ is an abelian  group (Corollary \ref{LEFT_AND
RIGHT_BOTH_ZERO_ENDOM_E_QAUS}).

 If $f$ is a non-zero endomorphism of $(D_i, \cdot)$, then  we can use Lemma
 \ref{E_LEFT_ZERO_AND RIGHT_PERMUT_ENDOM_FULL_THEO} Case 2.

Case 1,  $i = m$. From Lemma \ref{LEMMA_24_ED} Case 4 it follows that E-quasigroup $(e^m Q,\cdot)$ is isotopic
to right distributive quasigroup $(e^m Q, \circ)$, where $x\circ y = x\cdot e(y)$.

In order to have more detailed information on the structure of the quasigroup  $e^m(Q, \cdot)$ we study the
following chain
\begin{equation*}
 e^m(Q) \supset  fe^m(Q) \supset f^{\,2}e^m(Q)  \dots  \supset f^{\,r}e^m(Q) = f^{\, r+1}e^m(Q).  \end{equation*}

Case 2,  $j < r$. From Lemma \ref{LEMMA_24_ED} Case 4 it follows that E-quasigroup $(H_j, \cdot)$ is isotopic to
right distributive quasigroup $(H_j, \circ)$,  $x\circ y = x\cdot e(y)$.

From Lemma \ref{LEFT_AND RIGHT_ZERO_ENDOM_E_QAUS} Case 1 it follows that $(H_j, \cdot) $ has the form   $x\cdot
y = \varphi x+ y$, where $(H_j, +)$ is an abelian  group, $\varphi \in Aut(H_j, +)$.

From equalities  $x\circ e^{-1} y = x\cdot y$ and  $x\cdot y = \varphi x+ y$, we have $x\circ e^{-1} y = \varphi
x + y$, $x\circ y = \varphi x + e y$. Then  right distributive quasigroup $(H_j, \circ)$ is isotopic to abelian
group $(H_j,+)$.

If we rewrite identity $(x\circ y) \circ z = (x\circ z) \circ (y \circ  z)$ in terms of the operation $+$, then
$\varphi^2 x + \varphi e y + e z = \varphi^2 x + \varphi e z +  e \varphi y + e^2 z$, $\varphi e y + e z =
\varphi e z +  e \varphi y + e^2 z$. By $z = 0$ from  the last equality it follows that $\varphi e = e \varphi
$. Then $(H_j, \circ )$  is a medial  quasigroup. Moreover, $(H_j, \circ )$  is a medial distributive
quasigroup, since any medial right distributive quasigroup is distributive.

Case 2,  $j = r$. If $e$ and  $f$ are permutations of the set $f^re^m Q$, then by  Theorem
\ref{LEFT_DISTRIB_Isotopic_to_E_QUAS} $(f^re^m Q, \cdot)$ is isotope of the form $x\circ y = f(x) \cdot y$  of a
distributive quasigroup $(f^re^m Q, \circ)$.
\end{proof}

\subsection{SM-quasigroups}

We recall left and right SM-quasigroup is called an SM-quasigroup. The structure theory of SM-quasigroups mainly
has been developed by T. Kepka and K.~K.~Shchukin \cite{kepka78, kepka78_a, 12, BEGLARYAN_85}.

If an SM-quasigroup $(Q, \cdot)$ is simple, then the endomorphism $s$ is zero endomorphism or a permutation of
the set $Q$.

If $s(x)=0$, then from Theorem \ref{THEOREM_3_F_QUAS_MIDD} it follows
\begin{corollary} \label{SIMPLE_SM_ZERO_KER}
 If the endomorphism  $s$ of a semimedial quasigroup $(Q, \cdot)$ is zero endomorphism, i.e. $s(x)=0$ for all
$x\in Q$, then  $(Q, \cdot)$ is a medial  unipotent quasigroup, $(Q,\cdot) \cong (Q, \circ)$, where  $x\circ y =
\varphi x - \varphi y$,  $(Q, +)$ is an abelian group, $\varphi \in Aut (Q,+)$.
\end{corollary}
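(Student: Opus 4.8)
The statement is essentially a reading-off of Case 2 of Theorem \ref{THEOREM_3_F_QUAS_MIDD}, so the plan is to reduce to that theorem and then supply the two points it does not record explicitly in its conclusion: the commutativity of the underlying group and the mediality of the resulting quasigroup.

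First I would observe that an SM-quasigroup is by definition both left and right semimedial; in particular $(Q,\cdot)$ is right semimedial. Since by hypothesis $s$ is the zero endomorphism, $s(x)=0$ for all $x\in Q$, Theorem \ref{THEOREM_3_F_QUAS_MIDD}, Case 2, applies and yields that $(Q,\cdot)$ is unipotent and $(Q,\cdot)\cong(Q,\circ)$ with $x\circ y=\varphi x-\varphi y$ for some $\varphi\in Aut(Q,+)$ over a group $(Q,+)$.

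Next I would extract the commutativity of $(Q,+)$, which is in fact proved inside that same argument: setting $x=z$ in the defining identity $zy\cdot k=zx\cdot yx$ (with $k=s(x)$) gives $xy\cdot k=k\cdot yx$, and Lemma \ref{COMMUTATIV_LIN_QUAS} then forces $(Q,+)$ to be abelian. Thus $(Q,+)$ is an abelian group, as claimed. For mediality I would write $x\circ y=\varphi x+\psi y$ with $\psi=-\varphi$ and note that in the abelian group $(Q,+)$ the map $I\colon x\mapsto -x$ is an automorphism, so $\psi=I\varphi\in Aut(Q,+)$; moreover $\varphi I=I\varphi$ gives $\varphi\psi=I\varphi^2=\psi\varphi$. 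Hence $(Q,\circ)$ has the form $x\circ y=\varphi x+\psi y+a$ with $a=0$ and commuting automorphisms $\varphi,\psi$, and the converse direction of Toyoda's Theorem (Theorem \ref{Toyoda_Theorem}) shows that $(Q,\circ)$, and therefore $(Q,\cdot)$, is medial. Alternatively, a one-line computation gives $(x\circ y)\circ(u\circ v)=\varphi^2x-\varphi^2y-\varphi^2u+\varphi^2v=(x\circ u)\circ(y\circ v)$ directly from commutativity of $+$.

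I expect no serious obstacle here, since the bulk of the content is already contained in Theorem \ref{THEOREM_3_F_QUAS_MIDD}. The only point genuinely beyond that theorem is the mediality assertion, and its sole subtlety is confirming that $-\varphi$ is again an automorphism commuting with $\varphi$ — which hinges precisely on the commutativity of $(Q,+)$ secured in the previous step.
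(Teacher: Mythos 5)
Your proof is correct and follows the paper's own route: the paper proves this corollary simply by citing Theorem \ref{THEOREM_3_F_QUAS_MIDD}, exactly as you do. Your two supplements are sound and fill in details the paper leaves implicit — the commutativity of $(Q,+)$ is indeed established inside the proof of that theorem (via the substitution $x=z$ and Lemma \ref{COMMUTATIV_LIN_QUAS}) though not recorded in its statement, and your mediality verification, whether via the converse direction of Toyoda's Theorem with $\psi=I\varphi$ or via the direct computation $(x\circ y)\circ(u\circ v)=\varphi^2x-\varphi^2y-\varphi^2u+\varphi^2v=(x\circ u)\circ(y\circ v)$, is exactly what the paper's one-line citation tacitly relies on.
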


\begin{remark}
 By Corollary \ref{SIMPLE_SM_ZERO_KER} equivalence class $D_i$ of the congruence    $ Ker\, s_i$
containing an idempotent element is a medial unipotent quasigroup $(D_i, \cdot)$ of the form   $x\circ y =
\varphi x - \varphi y$, where $(D_i, +)$ is an abelian group, $\varphi \in Aut (D_i,+)$ for all suitable values
of index $i$.
\end{remark}

Information on the structure of  medial unipotent quasigroups there is in \cite{SHCH_STR_05}.

If $s(x)$ is a permutation of the set $Q$, then from Lemma \ref{LEMMA_24_ED} it follows the following

\begin{lemma} \label{SIMPLE_SM_PERMUT_KER}
 If the endomorphism  $s$ of a semimedial quasigroup $(Q, \cdot)$ is a permutation of the set $Q$, then
quasigroup $(Q, \circ)$ of the form $x\circ y = s^{-1}( x \cdot y)$ is a  distributive quasigroup and $s\in Aut
(Q, \circ)$.
\end{lemma}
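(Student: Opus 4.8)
The plan is to read off the statement from the two halves of Lemma~\ref{LEMMA_24_ED}, exploiting the fact that an SM-quasigroup is \emph{simultaneously} a left and a right semimedial quasigroup, and that both cases of that lemma are formulated for one and the same isotope, namely $(Q,\circ)$ with $x\circ y = s^{-1}(x\cdot y)$. Since $s$ is assumed to be a permutation of $Q$, the map $s^{-1}$ exists and $(Q,\circ)$ is a well-defined quasigroup (being a principal isotope of $(Q,\cdot)$), so the hypotheses of both Case~1 and Case~2 are met.

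First I would apply Case~1 of Lemma~\ref{LEMMA_24_ED}: viewing $(Q,\cdot)$ as a left semimedial quasigroup whose endomorphism $s$ is a permutation, we obtain that $(Q,\circ)$ is left distributive and that $s\in Aut(Q,\circ)$. Next I would apply Case~2 of the same lemma: viewing $(Q,\cdot)$ now as a right semimedial quasigroup with the same permutation $s$, we obtain that this same quasigroup $(Q,\circ)$ is right distributive. Because left together with right distributivity is exactly the definition of a distributive quasigroup (Definition~\ref{DEF_11}), combining the two conclusions yields that $(Q,\circ)$ is distributive, while the membership $s\in Aut(Q,\circ)$ is already supplied by Case~1.

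The only point that genuinely needs care — and the reason this is a corollary rather than a mere restatement — is that the \emph{identical} operation $x\circ y = s^{-1}(x\cdot y)$ is produced by both cases of Lemma~\ref{LEMMA_24_ED}. This compatibility is what lets the left- and right-distributive conclusions refer to one fixed quasigroup instead of two a priori distinct isotopes; it is immediate from the explicit form of the isotope, so there is no substantive obstacle. Alternatively, as noted in the remark following Lemma~\ref{LEMMA_24_ED}, one could instead route the argument through Lemma~\ref{LEMMA_4_ED} together with the parastrophe correspondence of Lemma~\ref{LEMMA_PARASTR}, but the direct double application above is the shortest path.
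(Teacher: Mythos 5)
Your proof is correct and follows essentially the same route as the paper, which states this lemma as an immediate consequence of Lemma~\ref{LEMMA_24_ED}: Case~1 applied to the left semimedial equality gives left distributivity of $(Q,\circ)$ together with $s\in Aut(Q,\circ)$, and Case~2 applied to the right semimedial equality gives right distributivity of the very same isotope $x\circ y = s^{-1}(x\cdot y)$. Your explicit remark that both cases produce one and the same operation $\circ$ is the right point to check, and it is indeed immediate from the form of the isotope.
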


\begin{corollary} \label{MAIN_SM_QUAS}
 Any   semimedial quasigroup $(Q, \cdot)$ has the following structure
$$
(Q, \cdot) \cong (A, \circ) \times (B, \cdot),
$$
where $(A, \circ)$ is a quasigroup with a unique idempotent element and there exists a  number $m$ such that
$|s^m(A, \circ)| =1$; $(B, \cdot)$ is an isotope of a  distributive quasigroup $(B, \star)$, $x \cdot y = s(x
\star y)$ for all $x, y \in B$, $s \in Aut(B, \cdot)$, $s \in Aut(B, \star)$.
\end{corollary}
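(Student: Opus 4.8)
The plan is to observe that an SM-quasigroup is at once a \emph{left} and a \emph{right} semimedial quasigroup, so that Theorem \ref{MAIN_MIDDLE_F} applies through both its Case 1 and its Case 2, and crucially that both decompositions are controlled by the \emph{same} endomorphism $s$. First I would invoke Lemma \ref{EM_END_MIDD} (Cases 1 and 2), which guarantees that the squaring map $s : x \mapsto x\cdot x$ is an endomorphism of $(Q,\cdot)$, so that $Q \supseteq s(Q) \supseteq s^2(Q) \supseteq \cdots$ is a descending chain of subquasigroups (Lemma \ref{l_5.4}). Exactly as in Lemma \ref{l5.7}, this chain stabilizes at some (finite or infinite) step $m$, and $s$ restricts to an automorphism $\overline{s} = s\mid_{s^m(Q)}$ of the stable subquasigroup $(B,\cdot) = s^m(Q,\cdot)$, i.e. $s(B) = B$.

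Next I would run the direct-decomposition argument verbatim from the proof of Theorem \ref{MAIN_LEFT_F}, with the endomorphism $e$ replaced throughout by $s$. Define $\delta$ by $x\,\delta\,y \Leftrightarrow s^m(x) = s^m(y)$ and $\rho$ by $x\,\rho\,y \Leftrightarrow B\cdot x = B\cdot y$. Then $\delta$ is a normal congruence (Theorem \ref{NORM_QUAS_CONGR}, Lemma \ref{l_5.4}), $\rho$ is a congruence (here the semimedial identity $s(x)\cdot yz = xy\cdot xz$ is used, taking the product of the two defining equalities and factoring through $s(B)=B$), and one checks $\delta\cap\rho = \hat{Q}$, $\delta\circ\rho = Q\times Q = \rho\circ\delta$ (Lemma \ref{NORMAL_LEMMA} ensures commutation in the finite case, and the explicit computation handles the general case). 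By Theorem \ref{t5.1} this yields
$$
(Q,\cdot) \cong (Q,\cdot)/\rho \times (Q,\cdot)/\delta \cong (A,\circ) \times (B,\cdot),
$$
where $(B,\cdot) \cong (Q,\cdot)/\delta \cong s^m(Q,\cdot)$ and $(A,\circ) \cong (Q,\cdot)/\rho$ has, by the argument closing Theorem \ref{MAIN_LEFT_F}, a unique idempotent element together with $|s^m(A,\circ)| = 1$.

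It remains to identify the structure of the stable factor. On $(B,\cdot)$ the map $s$ is an automorphism, so Lemma \ref{SIMPLE_SM_PERMUT_KER} (equivalently Lemma \ref{LEMMA_24_ED}, Cases 1 and 2) shows that $(B,\star)$ with $x\star y = s^{-1}(x\cdot y)$ is a \emph{distributive} quasigroup and $s \in \mathrm{Aut}(B,\star)$; here left distributivity of $(B,\star)$ comes from the left semimedial identity and right distributivity from the right semimedial identity, which is precisely where I rely on $(B,\cdot)$ being an SM-quasigroup rather than merely one-sided. Rewriting gives $x\cdot y = s(x\star y)$ with $s \in \mathrm{Aut}(B,\cdot)$, exactly the asserted form. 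The only genuinely delicate point—and the one I would stress—is that the left- and right-semimedial decompositions coincide: both are governed by the single endomorphism $s$, hence by the identical pair of congruences $\delta,\rho$, so the one factor $(B,\star)$ inherits left \emph{and} right distributivity simultaneously; everything else is the decomposition bookkeeping already carried out in Theorems \ref{MAIN_LEFT_F} and \ref{MAIN_MIDDLE_F}.
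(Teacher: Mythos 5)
Your proof is correct and takes essentially the same route as the paper: the paper's own proof is a one-line appeal to the SM cases of Theorem \ref{MAIN_MIDDLE_F} (misprinted there as Cases 3 and 4, though Cases 1 and 2 are clearly intended), whose proof is in turn the $s$-analogue of Theorem \ref{MAIN_LEFT_F}, and your argument simply unpacks that chain of citations. Your explicit emphasis that the left- and right-semimedial decompositions coincide because both are governed by the single endomorphism $s$ — so that the one stable factor $(B,\star)$, $x\star y = s^{-1}(x\cdot y)$, inherits left and right distributivity simultaneously via Lemma \ref{LEMMA_24_ED} — is precisely the point the paper leaves implicit (compare the same step spelled out in Case 1 of Theorem \ref{MAIN_Moufang_Commut}), so it is a useful clarification rather than a deviation.
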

\begin{proof}
The proof follows from Cases 3 and 4  of Theorem  \ref{MAIN_MIDDLE_F}.
\end{proof}

\begin{corollary} \label{MAIN_SM_SIMPLE_QUAS}
An  SM-quasigroup $(Q, \cdot)$ is simple if and only if it  lies in one from the following quasigroup classes:

(i)  $(Q, \cdot)$  is  a medial  unipotent quasigroup of the form $x\circ y = \varphi x - \varphi y$,  $(Q, +)$
is abelian group, $\varphi \in Aut (Q,+)$ and the group $(Q,+)$ is $\varphi$-simple;

(ii) $(Q, \cdot)$ has the the form $x \cdot y = \varphi (x\circ y) $, where $\varphi \in Aut (Q, \circ)$ and
$(Q, \circ)$ is $\varphi $-simple distributive quasigroup.
\end{corollary}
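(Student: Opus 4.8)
The plan is to reduce everything to the behaviour of the squaring endomorphism $s\colon x\mapsto x\cdot x$, exactly as simplicity of left and right $F$-, $SM$- and $E$-quasigroups was handled in Theorem~\ref{F_SM_E_SIMPLE}. First I would invoke the direct decomposition of Corollary~\ref{MAIN_SM_QUAS}: every $SM$-quasigroup satisfies $(Q,\cdot)\cong(A,\circ)\times(B,\cdot)$, where $(A,\circ)$ has a unique idempotent and $|s^{m}(A,\circ)|=1$ for some $m$, and $(B,\cdot)$ is an isotope $x\cdot y=s(x\star y)$ of a distributive quasigroup $(B,\star)$ with $s\in Aut(B,\star)$. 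By Theorem~\ref{t5.1} a nontrivial direct product carries two proper normal congruences (the projection kernels, normal by Theorem~\ref{NORM_QUAS_CONGR}), so simplicity forces one factor to be trivial. This splits the argument into the two classes named in the statement.

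Second, I would identify the two cases explicitly. If $(B,\cdot)$ collapses then $(Q,\cdot)\cong(A,\circ)$, where $s^{m}$ is constant; since $s$ is an endomorphism, $Ker\,s$ is a normal congruence (Theorem~\ref{NORM_QUAS_CONGR}, Lemma~\ref{l_5.4}), hence $\hat Q$ or $Q\times Q$, and since then $s^{m}$, and therefore $s$, is non-injective for $|Q|>1$, we must have $Ker\,s=Q\times Q$, i.e. $s$ is the zero endomorphism. Corollary~\ref{SIMPLE_SM_ZERO_KER} then gives the medial unipotent form $x\circ y=\varphi x-\varphi y$ over an abelian group, which is case (i). If instead $(A,\circ)$ collapses then $s$ is a permutation and Lemma~\ref{SIMPLE_SM_PERMUT_KER} yields the distributive quasigroup $(Q,\circ)$, $x\circ y=s^{-1}(x\cdot y)$, with $s\in Aut(Q,\circ)$ and $x\cdot y=\varphi(x\circ y)$ where $\varphi=s$; this is case (ii). In each case the required $\varphi$-simplicity comes from transferring normal congruences along the isotopy. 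For (i), writing $x\circ y=\varphi x+(-\varphi)y$ exhibits $(Q,\cdot)$ as the principal isotope $(Q,+)(\varphi,-\varphi,\varepsilon)$, so Corollary~\ref{NC10} and Lemma~\ref{NL1} identify its normal congruences with the normal congruences of $(Q,+)$ admissible under $\varphi$ and $-\varphi$; since negation fixes every subgroup, this is precisely $\varphi$-admissibility, and simplicity translates into $(Q,+)$ being $\varphi$-simple in the sense of Definition~\ref{ALPHA_SIMPLE_QUAS}.

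Third, for the converse I would check that the listed quasigroups are simple $SM$-quasigroups. Medial quasigroups are $SM$ (Lemma~\ref{medial_F_SM_E}), which covers (i); for (ii), since $(Q,\circ)$ is idempotent and both left and right distributive and $\varphi\in Aut(Q,\circ)$, a short computation using $\varphi(a\circ b)=\varphi a\circ\varphi b$ together with left (resp. right) distributivity shows that $x\cdot y=\varphi(x\circ y)$ is left (resp. right) semimedial. Simplicity of (i) follows from Corollary~\ref{DLYA_PROSTYH_F_QUAS}: a $\varphi$-simple group $(Q,+)$ has no nontrivial normal subgroup admissible under $\varphi$ and $-\varphi$, so its isotope $(Q,+)(\varphi,-\varphi,\varepsilon)$ is simple. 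For (ii) I would again use Lemma~\ref{NL1} with the isotopy $(\varepsilon,\varepsilon,s^{-1})$ relating $(Q,\circ)$ and $(Q,\cdot)$: a nontrivial normal congruence of $(Q,\cdot)$ would be $s$-admissible and hence a nontrivial $s$-admissible normal congruence of $(Q,\circ)$, contradicting its $\varphi$-simplicity.

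The main obstacle is exactly this last congruence correspondence in case (ii), namely that a normal congruence $\theta$ of $(Q,\cdot)$ is admissible with respect to $s$ and thus descends to a normal congruence of $(Q,\circ)$. Forward admissibility $x\,\theta\,y\Rightarrow s(x)\,\theta\,s(y)$ is free because $s(x)=x\cdot x$ is a quasigroup term, but reverse admissibility (needed to make the isotopy $(\varepsilon,\varepsilon,s^{-1})$ admissible relative to $\theta$, as required by Lemma~\ref{NL1}) uses that $s\in Aut(Q,\cdot)$ is a permutation and that normality is parastrophe-invariant (Corollary~\ref{PARASTROPHY_INVAR_NORM_CONG}); in the finite case it is immediate since $s(\theta)\subseteq\theta$ and $|s(\theta)|=|\theta|$ force $s(\theta)=\theta$. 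I would isolate this equivalence of normal-congruence lattices as the one genuine lemma of the proof, the remaining steps being specializations of the already established Theorems~\ref{MAIN_MIDDLE_F} and~\ref{F_SM_E_SIMPLE}.
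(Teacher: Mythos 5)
Your proposal is correct and takes essentially the same route as the paper: the paper proves this corollary by simply citing Cases 3 and 4 of Theorem~\ref{F_SM_E_SIMPLE}, whose proof (modelled on its Case 1) is exactly the argument you reconstruct, namely that the direct decomposition (Theorem~\ref{MAIN_MIDDLE_F}, Corollary~\ref{MAIN_SM_QUAS}) forces the endomorphism $s$ of a simple quasigroup to be zero or a permutation, after which Corollary~\ref{SIMPLE_SM_ZERO_KER} and Lemma~\ref{SIMPLE_SM_PERMUT_KER} give the two forms and Lemma~\ref{NL1} with Corollaries~\ref{NC10} and~\ref{DLYA_PROSTYH_F_QUAS} transfers congruences in both directions. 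Your explicit treatment of reverse $s$-admissibility (trivial in the finite case by cardinality) is a careful refinement of a step the paper leaves implicit, but it does not change the approach.
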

\begin{proof}
The proof follows from Cases 3 and 4 of Theorem \ref{F_SM_E_SIMPLE}.
\end{proof}

The similar result on  properties of simple SM-quasigroups there is in \cite{12}, 4.13 Corollary.

\begin{corollary} \label{MAIN_SM_FINITE_SIMPLE_QUAS}
 Any finite simple  semimedial quasigroup $(Q, \cdot)$ is a simple medial quasigroup \cite{12}.
\end{corollary}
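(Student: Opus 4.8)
The plan is to reduce everything to the classification of simple SM-quasigroups already obtained in Corollary \ref{MAIN_SM_SIMPLE_QUAS} and then to dispose of the two resulting cases separately, exactly along the lines of the proofs of Corollaries \ref{FINITE_SIMPLE_F_QUAS} and \ref{FINITE_SIMPLE_E_QUAS}. First I would invoke Corollary \ref{MAIN_SM_SIMPLE_QUAS}: a simple SM-quasigroup $(Q,\cdot)$ either (i) is a medial unipotent quasigroup of the form $x\circ y = \varphi x - \varphi y$ over a $\varphi$-simple abelian group $(Q,+)$, or (ii) has the form $x\cdot y = \varphi(x\circ y)$ with $\varphi\in Aut(Q,\circ)$ and $(Q,\circ)$ a $\varphi$-simple distributive quasigroup.

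In case (i) there is nothing to prove: the quasigroup is medial by the very description of the class and simple by assumption, so it is a simple medial quasigroup.

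The real work is in case (ii). Here $(Q,\cdot)$ is an isotope of the distributive quasigroup $(Q,\circ)$ of the form $x\cdot y = \varphi(x\circ y)$, i.e. precisely the third form $(\varepsilon,\varepsilon,s)$ treated in Lemma \ref{SCERB_THEO_COR}, with $s=\varphi\in Aut(Q,\circ)$. Since $Q$ is finite, $(Q,\circ)$ is a finite distributive quasigroup, hence by the Belousov Theorem (Theorem \ref{Belousov_Theorem}) it is isotopic to a commutative Moufang loop $(Q,+)$, which is finite and therefore finitely generated. This places us exactly in the hypotheses of Lemma \ref{SCERB_THEO_COR}, from which $(Q,\cdot)$ is a finite medial quasigroup; being simple by assumption, $(Q,\cdot)$ is a simple medial quasigroup.

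The only delicate point will be matching the hypotheses of Lemma \ref{SCERB_THEO_COR}: one must confirm that the isotopy $x\cdot y=\varphi(x\circ y)$ is of the third-component type with $\varphi$ an automorphism of the distributive quasigroup $(Q,\circ)$, and that the finiteness of $Q$ supplies the finite generation (indeed finiteness) of the associated CML $(Q,+)$, so that its center is non-trivial and the argument of Lemma \ref{SCERB_THEO_COR} goes through. Once this verification is made, the conclusion is immediate, and I would remark, as the paper does, that the similar statement appears in \cite{12}, 4.13 Corollary.
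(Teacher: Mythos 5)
Your proposal is correct and follows essentially the same route as the paper, whose proof consists of the single remark that the hypotheses of Lemma \ref{SCERB_THEO_COR} are satisfied; you merely make explicit the intermediate steps the paper leaves implicit (the case split via Corollary \ref{MAIN_SM_SIMPLE_QUAS}, the identification of $x\cdot y=\varphi(x\circ y)$ as an isotopy of third-component type with $s=\varphi\in Aut(Q,\circ)$, and the use of Theorem \ref{Belousov_Theorem} together with finiteness to obtain a finitely generated CML). Your verification of the delicate points is accurate, so the argument goes through exactly as you describe.
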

\begin{proof}
Conditions of Lemma \ref{SCERB_THEO_COR} are fulfilled and we can apply it.
\end{proof}

\subsection{Simple left FESM-quasigroups}

M.~Kinyon and J.D.~Phillips have defined left FESM-quasigroups in \cite{Kin_PHIL_04}.

\begin{definition} \label{FESM_QUAS}
 A quasigroup $(Q,\cdot)$ which simultaneously is left F-, E- and SM-quasi\-gro\-up we shall name
 \textit{left FESM-quasigroup}.
\end{definition}
From Definition \ref{FESM_QUAS} it follows that in FESM-quasigroup the maps $e, f, s$ are its endomorphisms.

\begin{lemma} \label{LEFT_AND RIGHT_ZERO_ENDOM_FESM_QAUS}
1. If endomorphism  $e$ of a left FESM-quasigroup $(Q, \cdot)$ is zero endomorphism, then  $(Q, \cdot)$ is a
medial right loop,   $x\cdot y =  x+ \psi y$, $(Q,+)$ is a abelian  group, $\psi \in Aut(Q,+)$, $\psi^2 =
\varepsilon$, $\psi s = s, \psi f = f \psi = - f$.

2. If endomorphism  $f$ of an  FESM-quasigroup $(Q, \cdot)$ is zero endomorphism, then $(Q, \cdot)$ is a medial
left loop, i.e.  $x\cdot y = \varphi x+ y$, $(Q,+)$ is a abelian  group, $\varphi \in Aut(Q,+)$, $\varphi^2 =
\varepsilon$, $\varphi s = s, \varphi e = e \varphi = - e$.

3. If  endomorphism  $s$ of a left left FESM-quasigroup  $(Q, \cdot)$ is zero endomorphism, then $(Q, \cdot)$ is
medial unipotent quasigroup of the form   $x\cdot y = \varphi x - \varphi y$,  where $(Q, +)$ is an abelian
group, $\varphi \in Aut (Q,+)$, $\varphi f = f \varphi $, $\varphi e = e \varphi $.
\end{lemma}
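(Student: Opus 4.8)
The plan is to treat the three cases by a single template: first invoke the appropriate \textquotedblleft zero endomorphism\textquotedblright\ structure theorem to put $(Q,\cdot)$ into an explicit linear group-isotope form, and then extract the stated relations among $e,f,s$ and $\psi$ (resp.\ $\varphi$) from the two remaining defining identities of Definition \ref{DEF_11} by translating them into the additive group $(Q,+)$ and comparing group words. Mediality, and hence every \textquotedblleft medial loop/quasigroup\textquotedblright\ assertion, will then follow for free from Toyoda's Theorem (Theorem \ref{Toyoda_Theorem}) as soon as an $\alpha x+\beta y+c$ form with commuting automorphisms is in hand; compare Lemma \ref{medial_F_SM_E}.

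For Case 1, a left FESM-quasigroup is in particular a left F-quasigroup, so since $e$ is a zero endomorphism Lemma \ref{LEMMA_3_F_QUAS}, Case 1, immediately gives the right-loop form $x\cdot y=x+\psi y$ with $(Q,+)$ a group, $\psi\in Aut(Q,+)$ and $0$ the unique idempotent. I would then substitute this form into the left semimedial identity $s(x)\cdot yz=xy\cdot xz$; after rewriting both sides over $(Q,+)$ and cancelling, it reduces to $\psi x+\psi y=\psi y+\psi x$ for all $x,y$, and surjectivity of $\psi$ forces $(Q,+)$ abelian. From the defining equations $f(x)\cdot x=x$ and $s(x)=x\cdot x$ one then reads off $f=\varepsilon-\psi$ and $s=\varepsilon+\psi$.

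The three remaining relations $\psi^2=\varepsilon$, $\psi s=s$ and $\psi f=f\psi=-f$ are, since $f$, $s$ and $\psi$ are all polynomials in $\psi$ over the abelian group, mutually equivalent to the single assertion that $\psi$ is an involution: indeed $\psi s=s$ unwinds to $\psi+\psi^2=\varepsilon+\psi$ and $\psi f=-f$ to $\psi-\psi^2=\psi-\varepsilon$, each equivalent to $\psi^2=\varepsilon$, while $\psi f=f\psi$ is automatic. Establishing $\psi^2=\varepsilon$ is the one genuinely non-routine step and is where I expect the main obstacle to lie: a naive substitution of the linear form into the left E-identity $x\cdot yz=f(x)y\cdot xz$ tends to collapse to a triviality, so the plan is to play the E-translation against the F- and SM-translations, exploiting that $e,f,s$ are commuting endomorphisms (Corollary \ref{COMMUTING_ENDOMORPH}), until the involutive relation is forced.

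Cases 2 and 3 follow the same template. In Case 2, $(Q,\cdot)$ is a left E-quasigroup with $f$ a zero endomorphism, so Theorem \ref{THEOREM_3_F_QUAS_MIDD}, Case 3, supplies the left-loop form $x\cdot y=\varphi x+y$ over an abelian group with $\varphi 0=0$; substituting into the semimedial identity upgrades $\varphi$ to an automorphism, one computes $e=\varepsilon-\varphi$ and $s=\varepsilon+\varphi$, and the involutive relation $\varphi^2=\varepsilon$ (equivalently $\varphi s=s$, $\varphi e=e\varphi=-e$) is extracted exactly as in Case 1, with the same crux. In Case 3, $(Q,\cdot)$ is a left SM-quasigroup with $s$ a zero endomorphism, and here Theorem \ref{THEOREM_3_F_QUAS_MIDD}, Case 1, already delivers the unipotent form $x\cdot y=\varphi x-\varphi y$ over an abelian group with $\varphi\in Aut(Q,+)$; then $e=\varepsilon-\varphi^{-1}$ and $f=\varepsilon+\varphi^{-1}$ are again polynomials in $\varphi$, so the commutation relations $\varphi e=e\varphi$ and $\varphi f=f\varphi$ hold immediately and no involution is needed, the left F- and E-identities being used only to confirm that the form is consistent with all of F, E and SM.
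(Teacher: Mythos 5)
Your skeleton is the paper's own: Case 1 opens with Lemma \ref{LEMMA_3_F_QUAS}, Cases 2 and 3 with Theorem \ref{THEOREM_3_F_QUAS_MIDD} (its Cases 3 and 1 respectively), after which $f,s$ (resp.\ $e,f$) are read off as polynomials in $\psi$ (resp.\ $\varphi$) exactly as you describe. Your derivation of commutativity in Case 1 by substituting $x\cdot y=x+\psi y$ into the semimedial identity is a harmless variant of the paper's argument, which instead expands the endomorphism equation $s(x\cdot y)=s(x)\cdot s(y)$; your Case 3 is complete as written (the statement of Theorem \ref{THEOREM_3_F_QUAS_MIDD}, Case 1, only asserts a group, but its proof already yields commutativity via Lemma \ref{COMMUTATIV_LIN_QUAS}, and $\varphi e=e\varphi$, $\varphi f=f\varphi$ are then automatic, as you say); and your reduction of $\psi s=s$ and $\psi f=f\psi=-f$ to the single relation $\psi^2=\varepsilon$ is correct.

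The deferred crux, however, is not a closable gap: your observation that the E-identity substitution \lq\lq collapses to a triviality\rq\rq\ is exactly right, and the correct conclusion is that $\psi^2=\varepsilon$ is false. For any abelian group $(Q,+)$ and \emph{any} $\psi\in Aut(Q,+)$, the isotope $x\cdot y=x+\psi y$ is medial (Theorem \ref{Toyoda_Theorem}), hence a left FESM-quasigroup with $e=0$ by Lemma \ref{medial_F_SM_E}; so all three identities hold identically and impose no constraint on $\psi$ beyond commutativity of $(Q,+)$. Concretely, on $(Z_5,+)$ with $\psi x=2x$ one has $e(x)=0$, $f(x)=-x$, $s(x)=3x$, yet $\psi^2=4\neq\varepsilon$, $\psi s\neq s$ and $\psi f\neq -f$ (only $\psi f=f\psi$ survives); the mirror isotope $x\cdot y=2x+y$ refutes $\varphi^2=\varepsilon$ in Case 2. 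The paper's own proof obtains the involution from a mis-expansion: it writes $s(x\cdot y)=x+y+\psi x+\psi y$, i.e.\ it expands $s(x+y)$, whereas $s(x\cdot y)=s(x+\psi y)=x+\psi y+\psi x+\psi^2 y$; with the correct expansion, $s(x\cdot y)=s(x)\cdot s(y)$ reduces to $\psi y+\psi x=\psi x+\psi y$ — commutativity and nothing more — and the same slip recurs in Case 2, where $s(x\cdot y)$ is written as $\varphi x+\varphi y+x+y$ instead of $\varphi^2x+\varphi y+\varphi x+y$, after which the displayed equation is an identity in an abelian group. So the honest output of your template is the corrected lemma: in Cases 1 and 2 one gets the medial right (left) loop form over an abelian group with $f=\varepsilon-\psi$, $s=\varepsilon+\psi$ (resp.\ $e=\varepsilon-\varphi$, $s=\varepsilon+\varphi$), and the involutivity clauses $\psi^2=\varepsilon$, $\psi s=s$, $\psi f=-f$ (resp.\ $\varphi^2=\varepsilon$, $\varphi s=s$, $\varphi e=-e$) should be struck from the statement rather than proved; your refusal to fabricate the missing step was the right instinct.
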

\begin{proof}
1. From Lemma \ref{LEMMA_3_F_QUAS} it follows that $(Q, \cdot)$ has the form $x\cdot y = x + \psi y$, where
$(Q,+)$ is a group, $\psi \in Aut (Q,+)$.

Then $s(x) = x\cdot x = x + \psi x$. Since $s$ is an endomorphism of $(Q,\cdot)$, further we have $s(x\cdot y) =
x+y + \psi x + \psi y$,   $s x\cdot s y = s x + \psi s y = x + \psi x + \psi y + \psi^2 y$. Then $x+y + \psi x +
\psi y = x + \psi x + \psi y + \psi^2y$,  $y + \psi x + \psi y =  \psi x + \psi y + \psi^2y$. By $x=0$ we have
$y +  \psi y =   \psi (y + \psi y)$, $s y = \psi s y$. Then  $y + \psi x + \psi y =  \psi x + \psi y + \psi^2y =
\psi x + \psi s y = \psi x +  s y = \psi x +   y + \psi y$. Therefore $y + \psi x + \psi y = \psi x +   y + \psi
y$, $y + \psi x  = \psi x +   y $, the group $(Q, +)$ is commutative. From equality $y + \psi x + \psi y =  \psi
x + \psi y + \psi^2y$ we obtain $y  =  \psi^2y$, $\psi^2 = \varepsilon$.

Further we have  $f(x)\cdot x = f x + \psi x = x$, $f x =  x -  \psi x$, $ \psi f x = \psi x -  x = - f x$, $f
\psi x = \psi x - x$.

\smallskip

2. From Theorem \ref{THEOREM_3_F_QUAS_MIDD} Case 3 it follows that $(Q, \cdot)$ is a left  loop,
 $x\cdot y = \varphi x + y$,  $(Q, +)$ is an abelian group, $\varphi \in S_Q$,  $\varphi 0 = 0$.

 Further we have $x\cdot e(x) = \varphi x + e(x) = x$, $\varphi x = x - e(x) = (\varepsilon - e) x$.
Therefore  $\varphi$ is an endomorphism of $(Q,+)$, moreover, it is an automorphism of $(Q,+)$, since $\varphi$
is a permutation of the set $Q$.

Then $s x = x\cdot x = \varphi x + x$, $s(x\cdot y) = \varphi x + \varphi y + x + y = sx \cdot sy = \varphi s x
+ s y = \varphi^2 x + \varphi x + \varphi y + y$. From equality $\varphi x + \varphi y + x + y = \varphi^2 x +
\varphi x + \varphi y + y$ we obtain $\varphi^2 = \varepsilon$. Then $\varphi sx = \varphi(\varphi x + x) = s
x$.

Further, $x \cdot e x = \varphi  x + e x = x$. Then $e x = x- \varphi  x$, $\varphi e x = \varphi x-  x = -e x$,
$e \varphi x = \varphi x -  x$.

\smallskip

3. From Theorem  \ref{THEOREM_3_F_QUAS_MIDD} Case 1 it follows that $(Q, \cdot)$ is  unipotent quasigroup of the
form   $x\cdot y = -\varphi x + \varphi y$,  where $(Q, +)$ is a group, $\varphi \in Aut (Q,+)$.

Since $f$ is an endomorphism of quasigroup $(Q,\cdot)$ we have $f(x\cdot y) = e (x)\cdot f (y)$, $f(- \varphi x
+ \varphi y) = -\varphi f (x) + \varphi f (y)$. If $y =0$, then $f(- \varphi) = - \varphi f$. If $x=0$, then $f
\varphi = \varphi f$. Then $f$ is an endomorphism of the group $(Q,+)$. Similarly, $e(- \varphi) = - \varphi e$,
$e \varphi = \varphi e$, $e$ is an endomorphism of the group $(Q,+)$.

From $x\cdot e x = x$ we have $-\varphi x + e \varphi x = x$, $ e \varphi x = \varphi x + x$,  $ e x =  x +
\varphi^{-1} x$. Then
\begin{equation} \label{EQ_23}
\begin{split}
& e(x\cdot y) = x\cdot y + \varphi^{-1} (x\cdot y) =  -\varphi x + \varphi y - x + y, \\ & e x \cdot e y =
-\varphi (x + \varphi^{-1} x) + \varphi (y + \varphi^{-1} y) = -\varphi x -  x + \varphi y +  y.
\end{split}
\end{equation}
Comparing the right sides of equalities (\ref{EQ_23}) we obtain that $Q,+)$ is a commutative group.
\end{proof}

\begin{lemma} \label{LEMMA_FESM_PODST}
If endomorphisms  $e, f$ and $s$ of a left FESM-quasigroup $(Q, \cdot)$ are permutations of the set $Q$, then
quasigroup $(Q, \circ)$ of the form $x\circ y = x \cdot e(y)$ is  a left distributive quasigroup and $e, f, s
\in Aut (Q, \circ)$.
\end{lemma}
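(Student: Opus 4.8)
The plan is to obtain the left distributivity of $(Q,\circ)$ together with $e\in Aut(Q,\circ)$ directly from the left F-structure, and then to deduce $f,s\in Aut(Q,\circ)$ from a general inclusion of automorphism groups for idempotent isotopes, so that no fresh computation with the FESM-identities is needed.

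First I would observe that a left FESM-quasigroup is in particular a left F-quasigroup, and that by hypothesis the endomorphism $e$ is a permutation of the set $Q$. Hence Lemma \ref{LEMMA_4_ED}, Case 1, applies verbatim and yields that $(Q,\circ)$ with $x\circ y = x\cdot e(y)$ is a left distributive quasigroup and that $e\in Aut(Q,\circ)$. This settles the left distributivity assertion and one of the three automorphism claims at once.

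Next I would pass to the inclusion of automorphism groups. Since $(Q,\circ)$ is left distributive it is idempotent. Rewriting the isotopy in the form $x\cdot y = x\circ e^{-1}(y)$ exhibits $(Q,\cdot)$ as an isotope $x\cdot y = x\circ \beta y$ with $\beta = e^{-1}$ and $(Q,\circ)$ idempotent. Lemma \ref{ON AUTOMORPHISM_OF_IDEM_QUAS}, Case 2, then gives $Aut(Q,\cdot) = C_{Aut(Q,\circ)}(e^{-1})$, and in particular $Aut(Q,\cdot)\subseteq Aut(Q,\circ)$.

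Finally, by the remark following Definition \ref{FESM_QUAS} the maps $e$, $f$ and $s$ are endomorphisms of the left FESM-quasigroup $(Q,\cdot)$; being permutations by hypothesis, they are automorphisms of $(Q,\cdot)$. The inclusion just established then places all three in $Aut(Q,\circ)$, which completes the proof. I expect essentially no obstacle here beyond recognizing that for $f$ and $s$ no direct verification of the automorphism property for the operation $\circ$ is required: the inclusion $Aut(Q,\cdot)\subseteq Aut(Q,\circ)$ transports them automatically. The only point demanding care is the direction of the isotopy, namely $x\cdot y = x\circ e^{-1}(y)$, so that Lemma \ref{ON AUTOMORPHISM_OF_IDEM_QUAS} is invoked with $\beta = e^{-1}$ rather than with $e$.
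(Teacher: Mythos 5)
Your proof is correct and follows essentially the same route as the paper: Lemma \ref{LEMMA_4_ED} gives left distributivity and $e\in Aut(Q,\circ)$, and Lemma \ref{ON AUTOMORPHISM_OF_IDEM_QUAS}, Case 2, applied to the isotopy $x\cdot y = x\circ e^{-1}(y)$, yields $Aut(Q,\cdot)\subseteq Aut(Q,\circ)$, which transports the bijective endomorphisms $e$, $f$, $s$ into $Aut(Q,\circ)$. You have merely made explicit the steps the paper leaves terse, including the care needed to invoke the idempotent-isotope lemma with $\beta=e^{-1}$.
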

\begin{proof}
By Lemma \ref{LEMMA_4_ED}  endomorphism  $e$ of a left F-quasigroup $(Q, \cdot)$ is a permutation of the set $Q$
if and only if quasigroup $(Q, \circ)$ of the form $x\circ y = x \cdot e(y)$ is a left distributive quasigroup
and $e\in Aut (Q, \circ)$ \cite{1a}. Then $x \cdot y = x\circ e^{-1}y $, $s(x) = x\circ e^{-1}x $, $f(x)\cdot x
=f x\circ e^{-1}x = x$.

The fact that $e, f, s \in Aut (Q, \circ)$ follows from Lemma \ref{ON AUTOMORPHISM_OF_IDEM_QUAS} Case 2.
\end{proof}

\begin{theorem} \label{SIMPLE_FESM_QUAS}
If $(Q, \cdot )$ is a simple left FESM-quasigroup, then

(i) $(Q, \cdot )$ is simple  medial quasigroup  in the case when at least one from the maps $e$, $f$ and $s$ is
zero endomorphism;

(ii) $(Q, \cdot )$ has the form $x\cdot y = x \circ\psi y$, where $(Q,\circ)$ is a $\psi$-simple left
distributive quasigroup, $\psi \in Aut(Q, \circ)$,   in the case when the maps $e$, $f$ and $s$ are
permutations; in this case $e =\psi^{-1}, fx \circ \psi x = x$, $s(x) = x\circ \psi x$ for all $x\in Q$.
\end{theorem}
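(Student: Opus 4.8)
The plan is to exploit that a left FESM-quasigroup is simultaneously left F-, left E-, and left SM-, so that all three maps $e$, $f$, $s$ are endomorphisms of $(Q,\cdot)$ (as noted after Definition \ref{FESM_QUAS}). First I would record the basic dichotomy: each endomorphism determines, through its kernel, a normal congruence (Theorem \ref{NORM_QUAS_CONGR}), and since $(Q,\cdot)$ is simple this congruence must be the diagonal $\hat{Q}$ or the universal $Q\times Q$ (Definition \ref{SIMPLE_QUAS}). Reading $(Q,\cdot)$ as a simple left F-quasigroup and invoking Theorem \ref{MAIN_LEFT_F} (exactly as in the proof of Theorem \ref{F_SM_E_SIMPLE}) forces $e$ to be either a zero endomorphism or a permutation; the same argument applied to the left E- and left SM-structures forces $f$ and $s$ each to be a zero endomorphism or a permutation. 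This splits the proof into the two exhaustive alternatives of the statement: either at least one of $e,f,s$ is a zero endomorphism, or all three are permutations.

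For the first alternative I would simply feed each possibility into Lemma \ref{LEFT_AND RIGHT_ZERO_ENDOM_FESM_QAUS}. If $e$ is a zero endomorphism, Case~1 of that lemma gives $x\cdot y = x+\psi y$ over an abelian group with $\psi\in Aut(Q,+)$, a medial quasigroup; if $f$ is a zero endomorphism, Case~2 gives the medial form $x\cdot y = \varphi x + y$; and if $s$ is a zero endomorphism, Case~3 gives the medial unipotent form $x\cdot y = \varphi x - \varphi y$. In every subcase $(Q,\cdot)$ is medial, and since it is simple by hypothesis it is a simple medial quasigroup, which is conclusion~(i).

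For the second alternative, where $e$, $f$, $s$ are all permutations, the key observation is that $(Q,\cdot)$ is in particular a simple left F-quasigroup whose endomorphism $e$ is a permutation, so Case~1,~(ii) of Theorem \ref{F_SM_E_SIMPLE} already yields the form $x\cdot y = x\circ\psi y$ with $(Q,\circ)$ a $\psi$-simple left distributive quasigroup and $\psi\in Aut(Q,\circ)$. To pin down the auxiliary identities I would use Lemma \ref{LEMMA_FESM_PODST}, which presents $(Q,\circ)$ concretely as $x\circ y = x\cdot e(y)$, so that $x\cdot y = x\circ e^{-1}y$ and hence $\psi = e^{-1}$, i.e. $e=\psi^{-1}$. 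Rewriting $f(x)\cdot x = x$ as $fx\circ e^{-1}x = fx\circ\psi x = x$ and $s(x)=x\cdot x = x\circ e^{-1}x = x\circ\psi x$ gives the remaining two equalities, completing conclusion~(ii).

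The routine parts are the substitutions in the last paragraph and the citation of the zero-endomorphism lemma; the one point deserving care is the dichotomy step, where one must be sure that simplicity of $(Q,\cdot)$ genuinely forces each endomorphism to be either constant or bijective rather than merely injective. The example $(Z,\cdot)$ with $x\cdot y=-x+y$ (Example \ref{INFINITE_LEFT_F_QUAS}) exhibits an injective non-surjective $e$, but that quasigroup is not simple; the force of Theorem \ref{MAIN_LEFT_F} (stabilization of the chain $Q\supset e(Q)\supset\cdots$ together with normality of the congruence $\rho$ used there) is precisely what rules this out in the simple case, so this is the step I would write out most carefully.
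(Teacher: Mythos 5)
Your proposal is correct and follows essentially the same route as the paper, whose entire proof is a pointer to the same two lemmas: Lemma \ref{LEFT_AND RIGHT_ZERO_ENDOM_FESM_QAUS} for case (i) and Lemma \ref{LEMMA_FESM_PODST} for case (ii). Your additional care over the dichotomy (zero endomorphism versus permutation, via the direct decomposition of Theorem \ref{MAIN_LEFT_F} exactly as in Theorem \ref{F_SM_E_SIMPLE}) and the explicit substitutions yielding $e=\psi^{-1}$, $fx\circ\psi x=x$, $s(x)=x\circ\psi x$ merely write out steps the paper leaves implicit.
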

\begin{proof}
It is possible to use Lemma \ref{LEFT_AND RIGHT_ZERO_ENDOM_FESM_QAUS} for the proof of Case (i) and Lemma
\ref{LEMMA_FESM_PODST} for the proof of  Case (ii).
\end{proof}

\begin{example}
By $(Z_7, +)$ we denote cyclic group of order 7 and  we take $Z_7 = \{ 0, \, 1, \, 2,\, 3,\,$ $ 4,\, 5, \, 6
\}$.

Quasigroup  $(Z_7, \circ)$, where $x\circ y = x + 6 y = x - y$, is simple  medial  FESM-quasigroup in which the
maps  $e$ and $s$ are zero endomorphisms, the map $f$ is a permutation of the set $Z_7$ ($f(x) = 2 x$ for all $x
\in Z_7$).

Quasigroup  $(Z_7, \cdot)$, where $x\cdot y = 2 x + 3 y$, is simple  medial  FESM-quasigroup in which
endomorphisms $e,\, f, \, s $ are permutations of the set $Z_7$.
\end{example}

\section{Loop isotopes}

In this section we give some  results on the  loops and left loops which  are isotopic to left  F-, SM-, E- and
FESM-quasigroups.

We recall that  any  F-quasigroup is isotopic to a Moufang loop \cite{kepka_05, kepka_07}, any  SM-quasigroup is
isotopic to a commutative Moufang loop \cite{kepka78_a}. Since any E-quasigroup is an SM-quasigroup
\cite{kepka78, Kin_PHIL_04}, then any E-quasigroup also is isotopic to a commutative Moufang loop.

\subsection{Left F-quasigroups}

 Taking into consideration Theorems \ref{MAIN_LEFT_F},    \ref{MAIN_MIDDLE_F},  Lemma
\ref{LEMMA_DIR_2} and Corollary \ref{COROL_DIR_2} we can study loop isotopes of the factors of  direct
decompositions of left and right F- and E-quasigroups.

\begin{theorem} \label{LOOP_ISOTOPES_LEFT_F_QUAS}
1. A left F-quasigroup $(Q, \cdot )$ is isotopic to the direct product of a group $(A, \oplus )$   and a left
S-loop $(B, \diamond)$, i.e.  $(Q, \cdot) \sim (A, \oplus) \times (B, \diamond). $

2. A  right F-quasigroup $(Q, \cdot )$ is isotopic to the direct product of a group $(A, \oplus)$  and right
S-loop $(B, \diamond)$, i.e.  $(Q, \cdot) \sim (A, \oplus) \times (B, \diamond). $
\end{theorem}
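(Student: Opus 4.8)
The plan is to read the two factors off the direct decomposition of Theorem \ref{MAIN_LEFT_F} and to identify each one up to isotopy. By that theorem a left F-quasigroup splits as $(Q,\cdot)\cong (A,\circ)\times(B,\cdot)$, where $(A,\circ)$ is a left F-quasigroup with a unique idempotent element and $(B,\cdot)$ is an isotope $x\cdot y=x\star\psi y$ of a left distributive quasigroup $(B,\star)$. Since $\sim$ is an equivalence relation and direct products respect it (the componentwise product of two factor-isotopies is an isotopy of the product), it suffices to prove that $(A,\circ)$ is isotopic to a group $(A,\oplus)$ and that $(B,\cdot)$ is isotopic to a left S-loop $(B,\diamond)$; then $(Q,\cdot)\cong(A,\circ)\times(B,\cdot)\sim(A,\oplus)\times(B,\diamond)$, which is the assertion.

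The factor $(B,\cdot)$ is the easy one. As $(B,\star)$ is left distributive it is idempotent, so every element is idempotent; fixing one such element $a$ I would form the principal isotope $x\diamond y=R^{-1}_a x\star L^{-1}_a y$, which is a loop by Lemma \ref{LP_AND_PRINCIP_ISOT}. By the very definition of a left S-loop this $(B,\diamond)$ is a left S-loop, and it is isotopic to $(B,\star)$, which in turn is isotopic to $(B,\cdot)$. By transitivity $(B,\cdot)\sim(B,\diamond)$, as required.

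The real work is the factor $(A,\circ)$: I must show that a left F-quasigroup with a unique idempotent element $a$ is isotopic to a group. Passing to the LP-isotope $(A,+)$ at $a$ (so that $a$ becomes the identity), Theorem \ref{BELOUSOV_LEFT M_LOOP} tells me $(A,+)$ is a left M-loop; by the Generalized Albert Theorem \ref{ALBERT_THEOREM} it then suffices to show $(A,+)$ is isotopic to a group. For this I would induct on the length $m$ of the stabilizing chain $A\supset e(A)\supset\dots\supset e^m(A)=\{a\}$ provided by Theorem \ref{MAIN_LEFT_F} and Lemma \ref{l5.7}. When $m=1$ the map $e$ is the zero endomorphism and Lemma \ref{LEMMA_3_F_QUAS} says $(A,\circ)$ is a right loop isotopic to a group. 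For the step, $e(A)$ is again a left F-quasigroup with a unique idempotent whose chain has length $m-1$, hence a group isotope by induction, while by Corollary \ref{LEFT_QUAS_MATRESHKA} the cell of $\mathrm{Ker}\,e$ through $a$ is a linear right loop, i.e. again a group isotope. Thus $(A,+)$ is an extension of a group isotope by group isotopes.

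The step I expect to be the main obstacle is exactly the conclusion of this induction: assembling the layerwise group structure along the endomorphism chain into a single group isotope, i.e. verifying that the left M-loop $(A,+)$ arising as such an extension is in fact associative and does not acquire any residual non-associativity. Once $(A,\circ)\sim(A,\oplus)$ is established, the assembly of the two factors is routine. Finally, the right F-quasigroup statement follows by the mirror argument, using the second halves of Theorem \ref{MAIN_LEFT_F}, Lemma \ref{LEMMA_3_F_QUAS} and Corollary \ref{LEFT_QUAS_MATRESHKA} with the endomorphism $f$ in place of $e$ and right S-loops in place of left S-loops.
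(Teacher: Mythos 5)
Your overall skeleton matches the paper: both proofs start from the decomposition of Theorem \ref{MAIN_LEFT_F}, treat the two factors separately, handle $(B,\cdot)$ by passing to the LP-isotope of the left distributive quasigroup $(B,\star)$ at an idempotent (which is a left S-loop by definition, exactly the paper's Step 2), and dispose of the right F-case by the mirror argument. Your base case $m=1$ via Lemma \ref{LEMMA_3_F_QUAS} is also the paper's. But the step you yourself flag as "the main obstacle" is a genuine gap, and it is precisely where all the content of the theorem lives: from the facts that the cells of $Ker\,e$ are linear right loops (Corollary \ref{LEFT_QUAS_MATRESHKA}) and that $e(A)$ is a group isotope by induction, you conclude only that $(A,+)$ is "an extension of a group isotope by group isotopes" --- and there is no theorem, in this paper or in general, that such a loop extension is associative. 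Loop extensions of groups by groups are routinely nonassociative, so the induction on the chain length cannot close as stated; nor does Theorem \ref{BELOUSOV_LEFT M_LOOP} help, since being a left M-loop is far weaker than being a group and the Generalized Albert Theorem \ref{ALBERT_THEOREM} only applies once you already have a group isotope in hand, which is exactly what is to be proved.

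The paper fills this hole by a direct equational argument (found with Prover 9) rather than an extension argument. It first chooses the isotopies carefully so that the F-identity survives: $x\oplus y = x\circ L_0^{-1}y$ (using $L_0\in Aut(A,\circ)$, which follows from the F-equality at the idempotent $0$) gives a left loop still satisfying the left F-equality, and then $x+y=R_0^{-1}x\oplus y$ gives a loop in which the identity becomes $x+(\alpha y+z)=\alpha(x+y)+(e x+z)$ with $\alpha x + e x = x$, $\alpha e = e\alpha$, and $e$ a normal endomorphism of $(A,+)$. From these one derives, for the minimal $n$ with $e^n=0$, the associativity law $e^{n-1}x+(y+z)=(e^{n-1}x+y)+z$, and then a descending "cycle" lowers the exponent one step at a time, $e^{n-i}x+(y+z)=(e^{n-i}x+y)+z$, until $i=n$ yields full associativity of $(A,+)$. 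This is the layer-assembly mechanism your induction lacks: it is not an abstract extension principle but a computation exploiting the specific F-identity along the chain. Without supplying this (or an equivalent) computation, your proof establishes only the routine parts of the statement.
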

\begin{proof}
1. By Theorem  \ref{MAIN_LEFT_F}, Case 1  any  left F-quasigroup $(Q, \cdot)$ has the following structure $(Q,
\cdot) \cong (A, \circ) \times (B, \cdot),$ where $(A, \circ)$ is a quasigroup with a unique idempotent element;
$(B, \cdot)$ is isotope of a left distributive quasigroup $(B, \star)$, $x \cdot y = x \star \psi y$ for all $x,
y \in B$, $\psi \in Aut(B, \cdot)$, $\psi \in Aut(B, \star)$.

By Corollary \ref{COROL_DIR_2},  if a quasigroup $Q$ is the direct product of quasigroups $A$ and $B$, then
there exists an isotopy $T=(T_1, T_2)$ of $Q$ such that  $Q\, T \cong AT_1 \times BT_2$ is a loop.

Therefore we have a possibility to divide our proof on two steps.

\textbf{Step 1.}  Denote a unique idempotent element of $(A, \circ)$ by $0$. We notice that  $e^{\circ} \, 0 =
0$. Indeed, from $(e^{\circ})^{m} A = 0$ we have $(e^{\circ})^{m+1} A = e^{\circ}\, 0 = 0$.

From left F-equality   $x \circ (y \circ z) = (x\circ y) \circ (e^{\circ}(x)\circ z)$ by $x=0$ we have $0 \circ
(y \circ z) = (0\circ y) \circ (0 \circ z)$. Then $L_0 \in Aut(A, \circ)$.

Consider  isotope $(A,\oplus)$ of the quasigroup $(A, \circ)$: $x \oplus y = x \circ L^{-1}_0 y$.  We notice
that
 $(A,\oplus)$ is a left loop.   Indeed, $0 \oplus y = 0 \circ L^{-1}_0 y = y$. Further we have  $x \circ \, y
= x \oplus L_0 \, y $, $x\oplus e^{\oplus} x = x = x\circ L^{-1}_0 e^{\oplus} x = x\circ  e^{\circ} x$,
$e^{\oplus} (x) = L_0 e^{\circ}(x) $, $e^{\oplus} (0) = L_0 e^{\circ}(0) = 0\circ 0 = 0$.

Prove that $L_0 \in Aut (A, \oplus)$. From equality $L_0(x\circ y)= L_0 x \circ L_0 y $ we have $L_0(x\circ y)=
L_0(x\oplus L_0 y)$, $L_0 x \circ L_0 y = L_0 x \oplus L^2_0 y$, $L_0(x\oplus L_0 y)= L_0 x \oplus L^2_0 y$.

If we pass in the left F-equality to the operation $\oplus$, then we obtain $x \oplus (L_0 y \oplus L^2_0 z) =
(x\oplus L_0 y) \oplus (L_0 e^{\circ}(x)\oplus L^2_0 z)$. If we change $L_0 y$ by $y$, $L^2_0 z$ by $z$, then we
obtain
\begin{equation}
x \oplus (y \oplus z) = (x\oplus y) \oplus (L_0 e^{\circ}(x)\oplus z) = (x\oplus y) \oplus (e^{\oplus}(x)\oplus
z). \label{equat_LF}
\end{equation}

Then $(A, \oplus)$ is a left F-quasigroup with the left identity element. For short below in this theorem we
shall use denotation $e$ instead of $e^{\oplus}$.

Further we pass from the operation $\oplus$ to the operation $+$:  $x + y = R^{-1}_0 x \oplus y$, $x \oplus y =
(x \oplus 0) + y $. Then $x + y = (x / 0) \oplus y$, where $x / y = z$ if and only if $z \oplus y = x$. We
notice, $R^{-1}_0 \,  0 =0$, since  $R_0 0 =0$, $0 \oplus 0 =0$, $0=0$.

It is well known, \cite{HOP, VD, 1a}, that $(A,+)$ is a loop. Indeed,   $0 + y = R^{-1}_0 \, 0 \oplus y = 0
\oplus y = y$; $x + 0 = R^{-1}_0 x \oplus 0 = R_0 R^{-1}_0 x = x$.

We express the map $e(x)$ in terms of the operation $+$. We have $x \oplus e(x) = x$. Then $(x \oplus 0) + e(x)
= x$, $e(x) = (x \oplus 0) \backslash \backslash x$, where $x\backslash \backslash y =z$ if and only if $x + z =
y$.

If we denote the map $R^{\oplus}_0$ by $\alpha$, then $x\oplus y = \alpha x + y$,  $e(x) = \alpha x \backslash
\backslash x$. We can rewrite (\ref{equat_LF}) in terms of the loop operation + as follows
\begin{equation}
\alpha x  + (\alpha y  + z) =  \alpha (\alpha x + y) + (\alpha e (x)  + z). \label{equat_LF_1}
\end{equation}

From  $e (x\oplus y) = e x \oplus e y$ we have  $e (\alpha x + y) = \alpha e (x) + e (y)$. By $y=0$ from the
last equality we have
\begin{equation}
e \alpha   = \alpha e. \label{LF_11}
\end{equation}

Therefore $e (\alpha x + y) = e \alpha (x) + e (y)$,  $e$ is a normal endomorphism of $(A,+)$. Changing $\alpha
x$ by $x$ and taking into consideration (\ref{LF_11})  we obtain from equality (\ref{equat_LF_1}) the following
equality
 \begin{equation}
x  + (\alpha y  + z) =  \alpha (x + y) + (e x  + z). \label{equat_LF_2}
\end{equation}

Next part of the proof was obtained using Prover 9 which is developed by Prof. W. McCune \cite{MAC_CUNE_PROV}.

If we put in equality  (\ref{equat_LF_2})  $y = z = 0$, then $\alpha x + e x = x,$ or, equivalently,
 \begin{equation}
\alpha x =  x \slash\slash  e x. \label{IULequat_LF_3}
\end{equation}

If we put in equality  (\ref{equat_LF_2})  $y =  0$, then
 \begin{equation}
\alpha x + (e x + z)  =  x + z. \label{IULequat_LF_4}
\end{equation}

If we apply equality (\ref{IULequat_LF_3}) to equality (\ref{IULequat_LF_4}), then
 \begin{equation}
(x\slash\slash e x) + (e x + z)  =  x + z. \label{IULequat_LF_5}
\end{equation}

If we apply equality (\ref{IULequat_LF_3}) to equality (\ref{equat_LF_2}), then
 \begin{equation}
x  + ((y \slash\slash  e y)  + z) =  ((x + y) \slash\slash e(x + y)) + (e x  + z). \label{IULIequat_LF_22}
\end{equation}

If we change in  equality (\ref{IULequat_LF_5}) $x$ by $x+y$, then
 \begin{equation}
((x+y)\slash\slash e(x+y)) + (e(x + y) + z)  =  (x + y) + z. \label{IULequat_LF_6}
\end{equation}

Taking into consideration Lemma \ref{l5.7} and   Theorem  \ref{MAIN_LEFT_F} we can say that there exists a
minimal number $n$ (finite or infinite) such that $e^n(a) = 0$ for any $a \in A$.

If we change in equality (\ref{equat_LF_2}) $x$ by $e^{n-1} x$, then
 \begin{equation}
e^{n-1} x   + (\alpha y  + z) =  \alpha (e^{n-1} x + y) + z. \label{equat_LF_7}
\end{equation}

If we change in  (\ref{equat_LF_7}) $\alpha x$ by $x \slash\slash  e x$ (equality \ref{IULequat_LF_3}), then
 \begin{equation}
e^{n-1} x   + ((y \slash\slash e y) + z) =   ((e^{n-1} x + y)\slash\slash e y) + z. \label{equat_LF_8}
\end{equation}

We change in  equality (\ref{IULequat_LF_6}) $x$ by $e^{n-1}x$. Then
 \begin{equation}
((e^{n-1}x + y)\slash\slash e y) + (e y + z)  =  (e^{n-1}x + y) + z. \label{IULequat_LF_9}
\end{equation}

We rewrite  the left side of equality  (\ref{IULequat_LF_9}) as follows
\begin{equation}
 ((e^{n-1}x + y)\slash\slash e y) + (e y + z) \overset{\ref{equat_LF_8}}{=}  e^{n-1} x   + ((y
\slash\slash e y) + (e y + z)) \overset{\ref{IULequat_LF_5}}{=} e^{n-1}x + (y + z). \label{IULequat_LF_10}
\end{equation}

From (\ref{IULequat_LF_9}) and (\ref{IULequat_LF_10}) we have
 \begin{equation}
e^{n-1}x + (y  + z)  =  (e^{n-1}x + y) + z. \label{ASSOTSIATIVITY}
\end{equation}

 We change in  equality (\ref{IULequat_LF_6}) $x$ by $e^{n-2}x$, then
 \begin{equation}
((e^{n-2}x+y)\slash\slash e(e^{n-2}x+y)) + (e(e^{n-2}x + y) + z)  =  (e^{n-2}x + y) + z. \label{IULequat_LF_11}
\end{equation}

We rewrite  the left side of equality  (\ref{IULequat_LF_11}) as follows
 \begin{equation}
 \begin{split}
&((e^{n-2}x+y)\slash\slash e(e^{n-2}x+y)) + (e(e^{n-2}x + y) + z)  \overset{\ref{ASSOTSIATIVITY}}{=} \\
&((e^{n-2}x+y)\slash\slash e(e^{n-2}x+y)) + (e^{n-1}x + (ey + z)) \overset{\ref{IULIequat_LF_22}}{=}
\\
& e^{n-2}x+((y\slash\slash e y) + (ey + z))  \overset{\ref{IULequat_LF_5}}{=} e^{n-2}x+(y +
z).\label{IULequat_LF_12}
\end{split}
\end{equation}

From (\ref{IULequat_LF_11}) and (\ref{IULequat_LF_12}) we have
 \begin{equation}
e^{n-2}x + (y  + z)  =  (e^{n-2}x + y) + z. \label{ASSOTSIATIV_1}
\end{equation}

\textbf{Begin Cycle}

 We change in  equality (\ref{IULequat_LF_6}) $x$ by $e^{n-3}x$. Then
 \begin{equation}
((e^{n-3}x+y)\slash\slash e(e^{n-3}x+y)) + (e(e^{n-3}x + y) + z)  =  (e^{n-3}x + y) + z. \label{IULequat_LF_111}
\end{equation}

We rewrite  the left side of equality  (\ref{IULequat_LF_111}) as follows
 \begin{equation}
 \begin{split}
&((e^{n-3}x+y)\slash\slash e(e^{n-3}x+y)) + (e(e^{n-3}x + y) + z)  \overset{\ref{ASSOTSIATIV_1}}{=} \\
&((e^{n-3}x+y)\slash\slash e(e^{n-3}x+y)) + (e^{n-2}x + (ey + z)) \overset{\ref{IULIequat_LF_22}}{=}
\\
& e^{n-3}x+((y\slash\slash e y) + (ey + z))  \overset{\ref{IULequat_LF_5}}{=} e^{n-3}x+(y +
z).\label{IULequat_LF_112}
\end{split}
\end{equation}

From (\ref{IULequat_LF_111}) and (\ref{IULequat_LF_112}) we have
 \begin{equation}
e^{n-3}x + (y  + z)  =  (e^{n-3}x + y) + z.
\end{equation}
\textbf{End Cycle}

Therefore  \begin{equation} e^{n-i}x + (y  + z)  =  (e^{n-i}x + y) + z
\end{equation}
for any natural number $i$. If the number $n$ is finite, then repeating \textbf{Cycle} necessary number of times
 we shall obtain that  $x + (y + z) = (x + y) + z$ for all $x, y, z \in A$.

Since  $n$ is a fixed number (maybe and an infinite), then $ \underset{i \rightarrow \infty}{\textrm{lim}} (n-i)
\longrightarrow 0 $, where $i\in \mathbb N$. We can  apply \textbf{Cycle} necessary  number of times to obtain
associativity. Indeed, suppose that $\lambda$ is a minimal number such that
\begin{equation} e^{\lambda}x + (y  + z)  =  (e^{\lambda}x + y) + z \label{equal_4_21}
\end{equation}
and there exists $a, b, c \in A$ such that
\begin{equation} e^{\lambda - 1}a + (b  + c)  \neq  (e^{\lambda - 1}a + b) + c.
\end{equation}
But from the other side, if we  apply \textbf{Cycle} to  equality (\ref{equal_4_21}), then  we obtain that
\begin{equation} e^{\lambda-1}x + (y  + z)  =  (e^{\lambda-1}x + y) + z
\end{equation}
for all $x, y, z \in A$. I.e.,  $\lambda$ is not a  minimal number  with declared properties.

Therefore our supposition is not true and
\begin{equation} e^{\lambda}x + (y  + z)  =  (e^{\lambda}x + y) + z
\end{equation}
for all  suitable $\lambda$ and all $x, y, z \in A$.

\textbf{Step 2.} From Theorems \ref{MAIN_LEFT_F} and  \ref{AT1} it follows that  $$(B, \diamond) = (B,
\cdot)(\varepsilon, \psi, \varepsilon) ((R_a^{\star})^{ -1},  (L_a^{\star})^{-1}, \varepsilon)=  (B,
\cdot)((R_a^{\star})^{ -1}, \psi (L_a^{\star})^{-1}, \varepsilon)$$ is a left  S-loop.

2. This case is proved similarly to Case 1.
\end{proof}

\begin{lemma} \label{LEFT_F_AUTOTOPY}
The fulfilment of equality (\ref{equat_LF_2}) in the group $(A,+)$  is equivalent to the fact that the triple
$T_x =(\alpha L_x \alpha^{-1}, \varepsilon, L_{\alpha x})(\varepsilon, L_{e(x)}, L_{e (x)})$ is an autotopy of
$(A,+)$ for all $x\in A$.
\end{lemma}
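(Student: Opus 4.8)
The plan is to turn both sides of the claimed equivalence into plain equations in the group $(A,+)$ and match them. First I would collapse the product of the two triples into a single isotopy. Recalling that multiplication of isotopies is componentwise composition of the three permutations, I get
\[
T_x=(\alpha L_x\alpha^{-1},\ \varepsilon L_{e(x)},\ L_{\alpha x}L_{e(x)})=(\alpha L_x\alpha^{-1},\ L_{e(x)},\ L_{\alpha x}L_{e(x)}).
\]
So the statement ``$T_x$ is an autotopy of $(A,+)$'' means precisely that $(\alpha L_x\alpha^{-1})a + L_{e(x)}b = L_{\alpha x}L_{e(x)}(a+b)$ holds for all $a,b\in A$.

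Next I would expand this autotopy condition using that $(A,+)$ is a group, so associativity lets me drop all parentheses and cancel freely. The left side is $\alpha(x+\alpha^{-1}a)+e(x)+b$ and the right side is $\alpha x+e(x)+a+b$. Cancelling the common trailing $b$, and then substituting $a=\alpha y$ (legitimate since $\alpha$ is a permutation of $A$, so $a$ ranges over all of $A$ exactly when $y$ does), the condition becomes
\[
\alpha(x+y)+e(x)=\alpha x+e(x)+\alpha y\qquad\text{for all }y\in A .
\]
This is the workhorse reformulation of ``$T_x$ autotopy'', now in the same alphabet as (\ref{equat_LF_2}).

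Finally I would invoke the standing relation $\alpha x+e(x)=x$, which holds in this construction because $e=e^{\oplus}$ satisfies $x\oplus e(x)=x$ while $x\oplus y=\alpha x+y$ (this is exactly (\ref{IULequat_LF_3})). Substituting $\alpha x+e(x)=x$ into the right-hand side of the displayed condition turns it into $\alpha(x+y)+e(x)=x+\alpha y$, which is precisely (\ref{equat_LF_2}) after cancelling the common right-hand summand $z$; conversely, cancelling $z$ in (\ref{equat_LF_2}) gives exactly this identity, and re-appending $+z$ recovers the full three-variable form. Running the argument for every $x\in A$ yields the equivalence. I do not expect a genuine obstacle here: the only points requiring care are purely bookkeeping, namely that the free trailing variable ($z$ in (\ref{equat_LF_2}), $b$ in the autotopy equation) is cancellable in the group so the three-variable identity reduces to a two-variable one, and that the relation $\alpha x+e(x)=x$ is available from the construction of $\alpha$ and $e$ rather than from (\ref{equat_LF_2}) itself, which is what makes the implication from ``autotopy'' back to (\ref{equat_LF_2}) go through.
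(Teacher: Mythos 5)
Your proof is correct, and in substance it is the same direct computation as the paper's, just organized at a different level: the paper works with translation maps, putting $y=0$ in (\ref{equat_LF_2}) (together with $\alpha\,0=0$) to get $L_x=L_{\alpha x}L_{e(x)}$, then substituting $y\mapsto\alpha^{-1}y$ to exhibit (\ref{equat_LF_2}) as the autotopy $(\alpha L_x\alpha^{-1},\,L_{e(x)},\,L_x)$ and factoring it into the stated product — which is exactly your two-variable identity $\alpha(x+y)+e(x)=\alpha x+e(x)+\alpha y$ read through translations. Two differences are worth recording. First, you import the relation $\alpha x+e(x)=x$ from the construction of $\alpha=R^{\oplus}_0$ and $e=e^{\oplus}$, and you are right that this is what makes the backward implication work: the autotopy condition alone does not force $\alpha x+e(x)=x$ (take $e\equiv 0$ and $\alpha$ a nontrivial automorphism of $(A,+)$; then $L_{e(x)}=\varepsilon$ and $T_x=(\alpha L_x\alpha^{-1},\varepsilon,L_{\alpha x})$ is an autotopy, yet (\ref{equat_LF_2}) reduces to $x=\alpha x$ and fails). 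The paper derives the equivalent relation $L_x=L_{\alpha x}L_{e(x)}$ only \emph{from} (\ref{equat_LF_2}) and leaves the converse direction implicit, so your treatment of the stated equivalence is the more complete one. Second, the paper's translation-level argument never invokes associativity, so it remains valid in the loop $(A,+)$ before associativity has been established, whereas your free cancellation of the trailing variable and dropping of parentheses genuinely need the group hypothesis; since the lemma assumes $(A,+)$ is a group, this costs you nothing here, but it is the reason the paper's phrasing survives in the weaker setting.
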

\begin{proof}
From  (\ref{equat_LF_2}) by $y=0$ we have
 \begin{equation} x  +  z =  \alpha x  + (e x  + z),
\label{equat_LF_232}
\end{equation}
i.e.  $L_x = L_{\alpha x} L_{e (x)}$.

If we change in (\ref{equat_LF_2}) $y$ by $\alpha^{-1} y$,  then
 \begin{equation}
x  + (y  + z) =  \alpha (x + \alpha^{-1} y) + (e x  + z). \label{equat_LF_532}
\end{equation}
Equality (\ref{equat_LF_532}) means that the group  $(A,+)$ has an autotopy of the form $$T_x = (\alpha L_x
\alpha^{-1}, L_{e(x)}, L_x)$$ for all $x\in A$. Taking into consideration that $L_x = L_{\alpha x} L_{e (x)}$,
we can rewrite $T_x$ in the form $$T_x = (\alpha L_x \alpha^{-1}, L_{e(x)}, L_{\alpha x} L_{e (x)}) = (\alpha
L_x \alpha^{-1}, \varepsilon, L_{\alpha x})(\varepsilon, L_{e(x)}, L_{e (x)}).$$
\end{proof}

\begin{corollary} \label{LEFT_F_LOOP_GROUP}
If the group  $(A,+)$ has the property   $[L_d , \alpha^{-1}] \in LM(A,+)$ for all $d\in A$, then

(i) $e(A,+)\unlhd C(A,+)\unlhd (A,+)$;

(ii) $\alpha \in Aut(A,+)$;

(iii) $\alpha|_{(Ker\, e, +)} = \varepsilon$.
\end{corollary}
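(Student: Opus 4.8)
The plan is to reinterpret the hypothesis as a normalization condition and feed it into the autotopy already produced in Lemma~\ref{LEFT_F_AUTOTOPY}. First I would record that in a group the left multiplication group is exactly the set of left translations, $LM(A,+)=\{L_c\mid c\in A\}$, since $L_aL_b=L_{a+b}$ and $L_a^{-1}=L_{-a}$. With this, the membership $[L_d,\alpha^{-1}]\in LM(A,+)$ for all $d$ rearranges (using $L_d^{-1}=L_{-d}$ and conjugating) to the assertion that $\alpha$ normalizes $LM(A,+)$, i.e. $\alpha L_x\alpha^{-1}\in LM(A,+)$ for every $x\in A$; this is the only form of the hypothesis I will use.

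Next I would prove (ii). By Lemma~\ref{LEFT_F_AUTOTOPY} the triple $T_x=(\alpha L_x\alpha^{-1},L_{e(x)},L_x)$ is an autotopy of the group $(A,+)$, so by the structure theorem for autotopies of a group it has the form $(L_a\delta,R_b\delta,L_aR_b\delta)$ with $\delta\in Aut(A,+)$. Comparing the second and third components gives $L_x=L_aL_{e(x)}$, whence $L_a=L_xL_{e(x)}^{-1}=L_{x-e(x)}=L_{\alpha x}$ by equality~(\ref{IULequat_LF_3}); thus $\alpha L_x\alpha^{-1}=L_{\alpha x}\delta$. Here is the crux: the hypothesis says $\alpha L_x\alpha^{-1}$ is itself a left translation, so $\delta=L_{\alpha x}^{-1}(\alpha L_x\alpha^{-1})$ is simultaneously a left translation and an automorphism. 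Since a left translation $L_c$ fixes $0$ only when $c=0$, this forces $\delta=\varepsilon$ and $\alpha L_x\alpha^{-1}=L_{\alpha x}$. Writing this out as $\alpha(x+\alpha^{-1}z)=\alpha x+z$ and putting $z=\alpha w$ gives $\alpha(x+w)=\alpha x+\alpha w$, so $\alpha$ is an endomorphism; being a permutation it is an automorphism, which is (ii).

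Then (i) is a short computation. Once $\alpha L_x\alpha^{-1}=L_{\alpha x}$, the autotopy $T_x$ becomes $(L_{\alpha x},L_{e(x)},L_x)$, so its defining identity reads $\alpha x+y+e(x)+z=x+y+z$. Setting $y=z=0$ recovers $\alpha x+e(x)=x$, and substituting this into the case $z=0$ collapses $\alpha x+y+e(x)=x+y$ to $y+e(x)=e(x)+y$ for all $y$, i.e. $e(x)\in C(A,+)$. Since $e$ is an endomorphism of $(A,+)$ (established in the proof of Theorem~\ref{LOOP_ISOTOPES_LEFT_F_QUAS}), $e(A,+)$ is a subgroup lying inside the abelian centre, whence $e(A,+)\unlhd C(A,+)\unlhd(A,+)$, which is (i).

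Finally (iii) is immediate: equality~(\ref{IULequat_LF_3}) gives $\alpha x=x-e(x)$, so if $x\in Ker\,e$ then $e(x)=0$ and $\alpha x=x$, that is $\alpha|_{(Ker\,e,+)}=\varepsilon$. The main obstacle is the middle paragraph, reconciling the general form of a group autotopy with the hypothesis; the decisive and essential point is the elementary remark that the only left translation which is an automorphism is the identity, which annihilates the automorphism part $\delta$ and pins $\alpha L_x\alpha^{-1}$ down to $L_{\alpha x}$. Everything afterward is bookkeeping with the identity $\alpha x=x-e(x)$.
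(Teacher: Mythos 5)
Your proof is correct, and it reorganizes the argument along a genuinely different route from the paper's, even though both rest on the same two pillars: the autotopy $T_x=(\alpha L_x\alpha^{-1},L_{e(x)},L_x)$ of Lemma~\ref{LEFT_F_AUTOTOPY} and the canonical form $(L_a\delta,R_b\delta,L_aR_b\delta)$, $\delta\in Aut(A,+)$, of group autotopies, together with the relation $\alpha x+e(x)=x$ that identifies $a=\alpha x$ and $b=e(x)$. The paper proves (i) first: it computes $\delta=L_{e(d)}R_{-e(d)}$, extracts from the first component the identity $[L_d,\alpha^{-1}]=R_{-e(d)}$ \emph{independently of the hypothesis}, and only then uses the hypothesis to place $R_{-e(d)}$ in $LM(A,+)\cap RM(A,+)$, invoking the lemma (cited to Hall and \cite{vs0}) that this intersection consists of central translations; with $e(A)\subseteq C(A)$ in hand it splits off the autotopy $(\varepsilon,L_{e(b)},L_{e(b)})$ and deduces $\alpha L_b\alpha^{-1}=L_{\alpha b}$, i.e.\ (ii). You reverse the logical dependence: since in a group $LM(A,+)$ is literally $\{L_c\mid c\in A\}$, the hypothesis forces the first component of $T_x$ to be a left translation, so $\delta=L_{\alpha x}^{-1}(\alpha L_x\alpha^{-1})$ is a left translation that is also an automorphism, hence fixes $0$, hence equals $\varepsilon$ --- this yields (ii) at once, and (i) then falls out of the simplified autotopy identity $x+(y+z)=(\alpha x+y)+(e(x)+z)$ by cancelling $\alpha x$, with the endomorphism property of $e$ (from the proof of Theorem~\ref{LOOP_ISOTOPES_LEFT_F_QUAS}) supplying the subgroup structure of $e(A)$. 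Your version is more elementary, avoiding the $LM\cap RM$ lemma entirely and deriving (ii)$\Rightarrow$(i) rather than (i)$\Rightarrow$(ii); the paper's version, in exchange, exhibits the commutator $[L_d,\alpha^{-1}]$ explicitly as the right translation $R_{-e(d)}$, structural information your argument bypasses. One pedantic remark: your opening reformulation shows only the one-sided inclusion $\alpha L_x\alpha^{-1}\in LM(A,+)$ rather than full normalization, but since that inclusion is all you use, nothing is lost; and your treatment of (iii) coincides with the paper's.
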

\begin{proof}
(i) It is well known that any autotopy of a group $(A,+)$ has the form
$$(L_a \delta, R_b \delta, L_a R_b \delta),$$ where
$L_a$ is a left translation of the group $(A,+)$, $R_b$ is a right translation of this group, $\delta$ is an
automorphism of this group \cite{1a}.

Therefore if the triple $T_d$ is an autotopy of the loop $(A,+)$, then we have
\begin{equation} \label{AUTOT_GROUP}
\alpha L_d \alpha^{-1} = L_a \delta, L_{e(d)} = R_b \delta, L_d = L_a R_b \delta.
\end{equation}

Then $L_{e(d)}0  = R_b \delta 0$, $e(d) = b$. From $L_d 0 = L_a R_b \delta 0$ we have $d = a + b$, $d = a +
e(d)$. But $d = \alpha d + e(d)$. Therefore, $a = \alpha d$.

We can rewrite equalities (\ref{AUTOT_GROUP}) in the form
\begin{equation} \label{AUTOT_GROUP_1}
\alpha L_d \alpha^{-1} = L_{\alpha d} \delta, L_{e(d)} = R_{e(d)} \delta, L_d = L_{\alpha d} R_{e(d)} \delta.
\end{equation}
Then
\[
\begin{array}{l}
\delta = R_{-e(d)}L_{e(d)} = L_{e(d)} R_{-e(d)}, \\
\alpha L_d \alpha^{-1} =  L_{\alpha d} L_{e(d)} R_{-e(d)} = L_{d}R_{-e(d)}, \\
L_{-d} \alpha L_d \alpha^{-1}  = R_{-e(d)}$, $[L_d , \alpha^{-1}] = R_{-e(d)}
\end{array}
\]
 We notice, all permutations of the form  $\{ R_{-e(d)} \mid d\in A \}$ form a  subgroup $H^{\prime}$ of the
group  $RM(A, +)$,  since $e$ is an  endomorphism of the group $(A,+)$.

By our supposition   $H^{\prime}  \subseteq \, LM(A,+)$. Then $$H^{\prime} \, \subseteq \, RM(A, +) \, \cap \,
LM (A,+).$$ But $LM\left< A, \alpha \right> \cap RM\left< A, \alpha \right> \subseteq C\left< A, \alpha \right>$
\cite{HALL, vs0}. Therefore $R_{-e(d)} = L_{-e(d)}$ for all $d \in A$, $e(A)\subseteq C(A)$.

(ii)  From (i) it follows that  the triple $(\varepsilon, L_{e(b)}, L_{e (b)})$ is an autotopy of $(A,+)$.
Indeed, equality  $ y + (e(b) + z) = e(b) + (y + z)$ is true for all $b, y, z\in A$ since $e(b) \in C(A)$.

Then the triple $(\alpha L_b \alpha^{-1}, \varepsilon, L_{\alpha b})$ is an autotopy of $(A,+)$, i.e. $\alpha
L_b \alpha^{-1} y + z = L_{\alpha b} (y +z)$.     By $z=0$ we have $\alpha L_b \alpha^{-1} y = L_{\alpha b} y$.
Then the triple $(L_{\alpha b}, \varepsilon, L_{\alpha b})$ is a loop autotopy.

The  equality $\alpha L_b \alpha^{-1}= L_{\alpha b}$ means that  $\alpha b + y = \alpha(b + \alpha^{-1} y)$  for
all $b, y \in A$. If we change $y$ by $\alpha y$, then $\alpha b + \alpha y = \alpha(b +  y)$  for all $b, y \in
A$,  $\alpha \in Aut (A, +)$.

(iii) From equality $\alpha x + e(x) = x$ by $e(x) =0$ we have $\alpha x  = x$.
\end{proof}

\begin{remark}
Conditions   $[L_d , \alpha^{-1}] \in LM(A,+)$ for all $d\in A$ and  $\alpha \in Aut(A,+)$ are equivalent.
\end{remark}

\begin{corollary} \label{e_zero}
If $e(x) =0$  for all $x\in A$, then $\alpha \in Aut(A,+)$.
\end{corollary}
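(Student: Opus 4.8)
The plan is to read off the conclusion directly from the identity already established for the group $(A,+)$ in the proof of Theorem \ref{LOOP_ISOTOPES_LEFT_F_QUAS}, namely equality (\ref{IULequat_LF_3}), which states $\alpha x + e x = x$ for all $x \in A$; recall this was obtained by specializing the master identity (\ref{equat_LF_2}) at $y = z = 0$ together with the fact $\alpha\, 0 = 0$ recorded there. First I would substitute the hypothesis $e(x) = 0$ into this relation. Since $0$ is the identity element of the loop $(A,+)$, the summand $e x = 0$ disappears and $\alpha x + 0 = x$ forces $\alpha x = x$ for every $x$, that is $\alpha = \varepsilon$. As the identity permutation is an automorphism, this already gives $\alpha \in Aut(A,+)$, and in fact the sharper conclusion $\alpha = \varepsilon$, which is the total-case analogue of Corollary \ref{LEFT_F_LOOP_GROUP}(iii) (here $Ker\, e = A$).

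Alternatively, I would route the argument through Lemma \ref{LEFT_F_AUTOTOPY}, which asserts that $T_x = (\alpha L_x \alpha^{-1}, \varepsilon, L_{\alpha x})(\varepsilon, L_{e(x)}, L_{e(x)})$ is an autotopy of $(A,+)$ for every $x \in A$. Under the hypothesis $e(x) = 0$ we have $L_{e(x)} = L_0 = \varepsilon$, so the second factor collapses and $T_x = (\alpha L_x \alpha^{-1}, \varepsilon, L_{\alpha x})$ is itself an autotopy. Evaluating the autotopy relation $\alpha L_x \alpha^{-1} u + v = L_{\alpha x}(u + v)$ at $v = 0$ yields $\alpha L_x \alpha^{-1} = L_{\alpha x}$, i.e. $\alpha(x + \alpha^{-1} u) = \alpha x + u$; replacing $u$ by $\alpha u$ produces $\alpha(x + u) = \alpha x + \alpha u$, so $\alpha$ respects $+$ and $\alpha \in Aut(A,+)$.

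I expect no genuine obstacle: the statement is a degenerate instance of the preceding analysis. The only point requiring care is justifying the simplification $L_{e(x)} = \varepsilon$ (equivalently the vanishing of the $e x$ term) from the fact that $e(x)$ is the neutral element of $(A,+)$, not merely an abstract element of an endomorphic image. Since both routes are routine once the hypothesis is inserted, I would present the first, shortest one and note that it in fact identifies $\alpha$ with the identity permutation.
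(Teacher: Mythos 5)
Your proposal is correct, and your second route is essentially verbatim the paper's own proof: with $e \equiv 0$ the factor $(\varepsilon, L_{e(x)}, L_{e(x)})$ of the autotopy from Lemma \ref{LEFT_F_AUTOTOPY} collapses, and for an autotopy of the shape $(\alpha L_x \alpha^{-1}, \varepsilon, L_{\alpha x})$ in a loop, setting the middle argument equal to $0$ yields $\alpha L_x \alpha^{-1} = L_{\alpha x}$, i.e. $\alpha L_x = L_{\alpha x}\alpha$, which is exactly $\alpha \in Aut(A,+)$. Your first route is genuinely different and sharper: substituting $e(x)=0$ into $\alpha x + e(x) = x$ (equality (\ref{IULequat_LF_3}), available here since $\alpha\, 0 = 0$ and this relation needs only the loop axioms, not the associativity established later) gives $\alpha = \varepsilon$ outright; equivalently, $e \equiv 0$ says $x \oplus 0 = x$, i.e. $\alpha = R^{\oplus}_0 = \varepsilon$. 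This use of the ambient identity is legitimate, and in fact the paper's own proof tacitly invokes the same companion relation when it writes the third autotopy component as $L_{\alpha x}$ rather than $L_x$ (via $L_x = L_{\alpha x} L_{e(x)}$, equality (\ref{IULequat_LF_4})). What each approach buys: the paper's argument is a statement purely about autotopies of the given form in an arbitrary loop, so it would survive in a setting where only the autotopy hypothesis is assumed and the relation $\alpha x + e(x) = x$ is not; yours buys brevity and the strictly stronger conclusion $\alpha = \varepsilon$ — the total case of Corollary \ref{LEFT_F_LOOP_GROUP}(iii) with $Ker\, e = A$, as you note — showing that the stated conclusion $\alpha \in Aut(A,+)$ is not sharp under the corollary's hypotheses. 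Your one flagged point of care, that $L_{e(x)} = L_0 = \varepsilon$ because $0$ is the two-sided identity of the loop $(A,+)$, is correctly handled.
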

\begin{proof}
In this case equality (\ref{equat_LF_532}) takes the form  $(\alpha L_x \alpha^{-1}, \varepsilon, L_{\alpha
x})$. If autotopy of such form true in a loop, then $\alpha L_x \alpha^{-1} = L_{\alpha x}$, $\alpha L_x  =
L_{\alpha x}\alpha $.
\end{proof}

We recall   the following theorem (Theorem 17 \cite{SOH_99}).
\begin{theorem} \label{SOHA_F_QUAS_TH}
A group isotope $(Q,\cdot)$ with the form $x\cdot y = \alpha x + a + \beta y$ is a left F-quasigroup if and only
if $\beta$ is an automorphism of the group $(Q,+)$, $\beta$ commutes with $\alpha$ and $\alpha$ satisfies the
identity $\alpha (x+y) = x+ \alpha y - x + \alpha x$.
\end{theorem}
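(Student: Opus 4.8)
The plan is to recast the defining identity of a left F-quasigroup as a statement about autotopies of the underlying group $(Q,+)$, and then to read off the three conditions by comparing with the known shape of group autotopies (the result quoted earlier that every autotopy of $(Q,+)$ has the form $(L_p\delta, R_q\delta, L_pR_q\delta)$ with $\delta\in Aut(Q,+)$). First I would record the data of the isotope. From $x\cdot e(x)=x$ one gets $\beta e(x)=-a-\alpha x+x$, and the left translations of $(Q,\cdot)$ are $L^{\cdot}_x=L_{\alpha x+a}\,\beta$ written in $(Q,+)$, where $L_{\alpha x+a}$ is a group left translation. The key reformulation is that the identity $x\cdot(y\cdot z)=(x\cdot y)\cdot(e(x)\cdot z)$ says precisely that the triple $(L^{\cdot}_x,\,L^{\cdot}_{e(x)},\,L^{\cdot}_x)$ is an autotopy of $(Q,\cdot)$ for every $x\in Q$; this is immediate from the definition of autotopy, with $L^{\cdot}_x$ as outer component and $L^{\cdot}_x,L^{\cdot}_{e(x)}$ as the two inner components.

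Since $(Q,\cdot)=(Q,+)(\alpha,L_a\beta,\varepsilon)=:(Q,+)T$ is a principal isotope, the conjugation theorem gives $Top(Q,\cdot)=T^{-1}Top(Q,+)T$, so $T(L^{\cdot}_x,L^{\cdot}_{e(x)},L^{\cdot}_x)T^{-1}$ is an autotopy of the group. Computing componentwise and simplifying (using that $\beta$ is additive once that has been established), its three components are
\[
A_3=L_{\alpha x+a}\,\beta,\quad A_2=L_{a+\beta\alpha e(x)}\,\beta,\quad A_1=\alpha\,L_{\alpha x+a}\,\beta\,\alpha^{-1}.
\]
By the structure theorem these must equal $(L_p\delta,R_q\delta,L_pR_q\delta)$ for suitable $p,q,\delta$ depending on $x$. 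Matching the third component, after passing to the normalized form with $\beta\,0=0$ supplied by Lemma \ref{FORM_S_NULEM} (legitimate, since Corollary \ref{LEFT_GROUP_LINEARITY} already forces $(Q,\cdot)$ to be right linear), pins down $\delta=\beta\in Aut(Q,+)$. This handles the first of the three asserted conditions.

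The remaining two conditions come from the two identities $A_3A_2^{-1}=L_{a_g}$ and $A_3A_1^{-1}=R_{b_g}$ that every standard-form triple satisfies (because group left and right translations commute). The first is automatic and merely fixes $a_g=\alpha x-\beta\alpha e(x)$, so it carries no information. The second reads $A_3A_1^{-1}=L_{\alpha x+a}\,(\beta\alpha\beta^{-1})\,L_{-(\alpha x+a)}\,\alpha^{-1}=R_{b_g}$, and requiring this conjugate of $\beta\alpha\beta^{-1}$ to be a pure right translation forces simultaneously $\beta\alpha\beta^{-1}=\alpha$, i.e. $\alpha\beta=\beta\alpha$, and the twisted-additivity identity $\alpha(u+v)=u+\alpha v-u+\alpha u$ (which in turn yields $\alpha\,0=0$ and the auxiliary rule $\alpha(-t)=-t-\alpha t+t$). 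I expect this step—disentangling the two distinct conditions from the single requirement ``$A_3A_1^{-1}$ is a right translation,'' while carrying the non-commutative group terms and the explicit form of $e(x)$—to be the main obstacle of the whole argument.

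For the converse I would run the computation backwards: assuming $\beta\in Aut(Q,+)$, $\alpha\beta=\beta\alpha$, and the $\alpha$-identity, I verify directly that $A_1,A_2,A_3$ fit the pattern $(L_{a_g}\delta,R_{b_g}\delta,L_{a_g}R_{b_g}\delta)$ with $\delta=\beta$, $a_g=\alpha x-\beta\alpha e(x)$, and $b_g=-\alpha(\alpha x+a)+(\alpha x+a)$. Here the twisted-additivity identity together with $\alpha(-t)=-t-\alpha t+t$ makes $A_3A_1^{-1}$ collapse to exactly the right translation $R_{b_g}$, so the transported triple is a genuine group autotopy; hence $(L^{\cdot}_x,L^{\cdot}_{e(x)},L^{\cdot}_x)\in Top(Q,\cdot)$ for every $x$, and $(Q,\cdot)$ is a left F-quasigroup. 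The only delicate point in this direction is the repeated, careful use of the twisted-additivity identity to expand $\alpha$ on sums and on inverses without accidentally invoking commutativity of $(Q,+)$.
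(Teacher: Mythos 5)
You should first know that the paper does not actually prove Theorem \ref{SOHA_F_QUAS_TH}: it is quoted verbatim as Theorem 17 of Sokhatskii \cite{SOH_99}, so there is no internal proof to compare against. Judged on its own merits, your autotopy plan is viable and very much in the spirit of the machinery the paper does use (Theorem \ref{LEFT_LIN_CRITER}, Corollary \ref{QUASIAUT_FORM}): the left F-law is indeed equivalent to $(L^{\cdot}_x, L^{\cdot}_{e(x)}, L^{\cdot}_x)\in Top(Q,\cdot)$ for all $x$, the transported components $A_1,A_2,A_3$ are as you state, and I checked your ``main obstacle'' step: with $c=\alpha x+a$ and $\gamma=\beta\alpha\beta^{-1}$, the requirement that $L_c\gamma L_{-c}\alpha^{-1}$ be a right translation for all $c$ gives, at $c=0$, $\gamma=\alpha$, and then exactly $\alpha(c+w)=c+\alpha w-c+\alpha c$; conversely the three conditions do make the triple fit the pattern with $q=-\alpha c+c$ (the identity forces $a+\beta\alpha e(x)=q$, which checks out). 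Two cosmetic corrections: matching all three components simultaneously forces $\delta u=q+\beta u-q$, a conjugate of $\beta$ rather than $\beta$ itself, and the left-translation constant is $\alpha x+a-\beta\alpha e(x)-a$, not $\alpha x-\beta\alpha e(x)$.

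The genuine gap is the normalization. The theorem is true only for Sokhatskii's \emph{canonical} decomposition, i.e.\ with $\alpha\,0=\beta\,0=0$ understood (the paper's wording omits this): over any group $(Q,+)$ with $a\neq 0$ take $\alpha x=x-a$, $\beta=\varepsilon$; then $x\cdot y=\alpha x+a+\beta y=x+y$ is a group, hence a left F-quasigroup, and $\beta\in Aut(Q,+)$, $\alpha\beta=\beta\alpha$, yet $\alpha(x+y)=x+y-a$ while $x+\alpha y-x+\alpha x=x+y-a-a$, so the identity fails. Your write-up secures only $\beta\,0=0$ (via Lemma \ref{FORM_S_NULEM}) and asserts that $\alpha\,0=0$ ``in turn'' follows from the twisted-additivity identity --- but that is circular: the clean identity is the thing being derived, and it is only derivable once $\alpha\,0=0$ is known. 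Redo your matching with $\beta\,0=0$ alone: putting $c=0$ yields $\gamma v=\alpha v-\alpha 0+\beta\alpha 0$, i.e.\ $\beta\alpha\beta^{-1}=R_h\alpha$ with $h=-\alpha 0+\beta\alpha 0$, and back-substitution gives only the twisted law $\alpha(x+y)=x+\alpha y-\alpha 0-x+\alpha x$, consistent with the counterexample. The fix is easy but must be explicit: assume the canonical form $\alpha\,0=\beta\,0=0$ as a hypothesis (as Sokhatskii does), or pass to the canonical decomposition first --- in which case the conclusions you obtain concern the canonical pair, not the originally given $\alpha,\beta$.
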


\begin{example} \label{DIOR_SCERB_1}
Dihedral group $(D_8,+)$ with the following Cayley table
\[ {\begin{array}{c|cccccccc}
+ & 0 & 1 & 2 & 3 & 4 & 5 & 6 & 7   \\
\hline
0 & 0 & 1 & 2  & 3 & 4 & 5 & 6 & 7     \\
1 & 1 & 0 & 3  & 2 & 6 & 7 & 4 & 5     \\
2 & 2 & 6 & 7  & 1 & 0 & 3 & 5 & 4     \\
3 & 3 & 4 & 5  & 0 & 1 & 2 & 7 & 6     \\
4 & 4 & 3 & 0  & 5 & 7 & 6 & 1 & 2     \\
5 & 5 & 7 & 6  & 4 & 3 & 0 & 2 & 1     \\
6 & 6 & 2 & 1  & 7 & 5 & 4 & 0 & 3     \\
7 & 7 & 5 & 4  & 6 & 2 & 1 & 3 & 0     \\
\end{array}}
\]
has endomorphism
\[ e= \left( {\begin{array}{cccccccc}
 0 & 1 & 2 & 3 & 4 & 5 & 6 & 7 \\
0 & 3 & 3 & 0  & 3 & 3 & 0 & 0 \\
\end{array}}\right), \qquad   e^{\, 2} =0,
\]
and permutation $\alpha =  (1\, 2) (4\, 5)$ such that $\alpha \notin Aut(D_8)$. Using this permutation and
taking into consideration Theorem \ref{SOHA_F_QUAS_TH} we may construct left F-quasigroups $(D_8, \cdot)$ and
$(D_8, \ast)$ with  the forms $x\cdot y = \alpha x + a + y$ and $x\ast y = \alpha x + a + \beta y$, where $\beta
=(15)(24)$. These quasigroups are right-linear group isotopes but they are not left linear quasigroups ($\alpha
\notin Aut(D_8, +)$). This example was constructed using Mace 4 \cite{MAC_CUNE_MACE}.
\end{example}

\begin{corollary} \label{LOOP_ISOTOPES__F_QUAS_LEFT_SPCIAL_LOOPS}
A left special loop $(Q, \oplus )$ is isotope of  a left F-quasigroup $(Q,\cdot)$ if and only if   $(Q, \oplus
)$ is isomorphic to the direct product of a group $(A, + )$ and a left S-loop $(B, \diamond)$.
\end{corollary}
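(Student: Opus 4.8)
The plan is to prove the two implications separately, using Theorem~\ref{LOOP_ISOTOPES_LEFT_F_QUAS} together with the direct decomposition of Theorem~\ref{MAIN_LEFT_F} as the backbone. The sufficiency ($\Leftarrow$) is routine: if $(Q,\oplus)\cong (A,+)\times (B,\diamond)$ with $(A,+)$ a group and $(B,\diamond)$ a left $S$-loop, then $(A,+)$ is itself a left F-quasigroup, while by definition $(B,\diamond)$ is isotopic to a left distributive quasigroup, which is a left F-quasigroup by Lemma~\ref{distrib_F_SM_E}. Forming the componentwise isotopy of these two isotopies, I would obtain that $(A,+)\times (B,\diamond)$ is isotopic to a direct product of two left F-quasigroups, and such a product is again a left F-quasigroup (componentwise the map $e$ behaves as in Lemma~\ref{Componets_OF_DIR_PROD}). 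Hence $(Q,\oplus)$ is an isotope of a left F-quasigroup.

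For the necessity ($\Rightarrow$) I would not merely invoke the isotopy $(Q,\cdot)\sim (A,\oplus)\times(B,\diamond)$ furnished by Theorem~\ref{LOOP_ISOTOPES_LEFT_F_QUAS}, since that only yields isotopy and not the required isomorphism. Instead, let $(Q,\cdot)$ be the left F-quasigroup with $(Q,\oplus)$ one of its loop isotopes. By Theorem~\ref{MAIN_LEFT_F} the decomposition $(Q,\cdot)\cong (A,\circ)\times (B,\cdot)$ is realized by two commuting normal congruences $\rho,\delta$ of $(Q,\cdot)$ with $\rho\cap\delta=\hat Q$ and $\rho\circ\delta=\delta\circ\rho=Q\times Q$, where $(Q,\cdot)/\rho\cong (A,\circ)$ and $(Q,\cdot)/\delta\cong (B,\cdot)$ (Theorem~\ref{NORM_QUAS_CONGR}). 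Up to isomorphism the isotopy carrying $(Q,\cdot)$ to the loop $(Q,\oplus)$ is an LP-isotopy $T=(R^{-1}_a,L^{-1}_b,\varepsilon)$ (Remark~\ref{REMARK_ON_PRINC_ISOT}, Lemma~\ref{LP_AND_PRINCIP_ISOT}); its components are translations, hence admissible relative to the normal congruences $\rho,\delta$ by Lemma~\ref{NORM_CONGR_ADMISSBLE_REL_TRANS}. Therefore by Lemma~\ref{NL1} both $\rho$ and $\delta$ remain normal congruences of $(Q,\oplus)$, and since they still commute and meet in the diagonal, Theorem~\ref{t5.1} gives $(Q,\oplus)\cong (Q,\oplus)/\rho \times (Q,\oplus)/\delta = L_1\times L_2$, where $L_1,L_2$ are loop isotopes of $(A,\circ)$ and $(B,\cdot)$ respectively.

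It remains to identify the two factors. The factor $L_1$ is a loop isotopic to $(A,\circ)$, which is in turn isotopic to a group; by the Generalized Albert Theorem~\ref{ALBERT_THEOREM}, $L_1$ is a group $(A,+)$. For $L_2$, first note that a direct factor of a left special loop is again left special (writing $S_{a,b}$ componentwise and specializing the other coordinate to the identity element), so $L_2$ is a left special loop which is a loop isotope of the left distributive quasigroup $(B,\star)$ underlying $(B,\cdot)$. I expect the main obstacle to be exactly this last identification: an arbitrary loop isotope of a left distributive quasigroup need not be a left $S$-loop, because the defining isotopy $R^{-1}_a x\cdot L^{-1}_a y$ uses a single element $a$, whereas the LP-isotopy above uses $a,b$ with possibly $a\neq b$. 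Here the left special hypothesis is decisive: the condition that the left inner mappings $S_{a,b}$ be automorphisms of $L_2$ supplies precisely the commutation ``$L^{\circ}_{x,y}\psi=\psi L^{\circ}_{x,y}$'' required in part b) of Theorem~\ref{AT1}, together with a complete automorphism $\psi$, so that $L_2$ meets the criterion of Theorem~\ref{AT1} and is a left $S$-loop $(B,\diamond)$. Combining the two factors yields $(Q,\oplus)\cong (A,+)\times (B,\diamond)$, as required.
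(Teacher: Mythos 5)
Your sufficiency direction is correct and is essentially the paper's own converse argument: the group factor is itself a left F-quasigroup, the left S-loop factor is isotopic to a left distributive quasigroup (Lemma \ref{distrib_F_SM_E}), and the componentwise isotopy yields a left F-quasigroup among the isotopes of $(Q,\oplus)$. Your necessity direction, however, diverges from the paper, and you should know that the paper does not decompose the given loop at all: it invokes Theorem \ref{LOOP_ISOTOPES_LEFT_F_QUAS}, whose proof constructs one particular LP-isotope (built from translations at the idempotent $0$ in the factor $(A,\circ)$ and at a single element $a$ in the factor $(B,\cdot)$) that is literally a direct product of a group and a left S-loop, and then notes that this product is left special. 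Your alternative — transporting the congruences $\rho,\delta$ of Theorem \ref{MAIN_LEFT_F} through the LP-isotopy via Lemmas \ref{LP_AND_PRINCIP_ISOT}, \ref{NORM_CONGR_ADMISSBLE_REL_TRANS}, \ref{NL1} and Theorem \ref{t5.1}, then identifying the first factor as a group by Theorem \ref{ALBERT_THEOREM} — is sound as far as it goes, and if completed would prove the literal "every left special loop isotope" statement rather than exhibiting one good isotope.

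But your last step is a genuine gap, and your own phrasing ("I expect the main obstacle to be exactly this") signals it. The factor $L_2$ is an LP-isotope of the left distributive quasigroup $(B,\star)$ of the form $x\circ y = (R^{\star}_{a'})^{-1}x \star (L^{\star}_{b'})^{-1}y$ with possibly $a'\neq b'$, so by Theorem \ref{AT1} you must produce a \emph{complete automorphism} $\psi$ of $(L_2,\circ)$ satisfying condition a) or b). The assertion that left specialness "supplies precisely" this is unsubstantiated: $S_{x,y}\in Aut(L_2)$ says that left inner mappings are automorphisms, but it neither manufactures $\psi$ nor yields the commutation $L_{x,y}\psi = \psi L_{x,y}$, which is a relation between $\psi$ and the inner mappings, not a property of the inner mappings alone. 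Concretely, the natural candidate $\psi = L^{\star}_{b'}$ does satisfy the completeness identity, since $R^{\star}_{a'}x \circ L^{\star}_{b'}x = x\star x = x$; yet $\psi$ is an automorphism of $(L_2,\circ)$ only if $L^{\star}_{b'}$ commutes with $(R^{\star}_{a'})^{-1}$, and left distributivity gives $L^{\star}_{b'}R^{\star}_{a'} = R^{\star}_{b'\star a'}L^{\star}_{b'}$, so this holds exactly when $b'\star a' = a'$ — automatic when $a'=b'$ (the left S-loop case), false in general. Thus the statement you actually need — that a left special loop which is an LP-isotope of a left distributive quasigroup at \emph{distinct} elements is a left S-loop — is the hard content, and you give no argument for it; closing it would require extracting a genuine complete automorphism from the left special hypothesis, which the paper avoids entirely by its existential reading of the isotopy.
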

\begin{proof}
By Theorem \ref{LOOP_ISOTOPES_LEFT_F_QUAS}  any left F-quasigroup  $(Q, \cdot )$  is LP-isotopic to a loop $(Q,
\oplus)$ which is the  direct product of a group $(A, + )$  and  a left  S-loop $(B, \diamond)$.

It is clear that any group is a left special loop. Any  left $S$-loop also is a special loop (\cite{ONOI_D}, p.
61). Therefore $(Q, \oplus )$ is a left special loop.

Converse. It is easy to see  that isotopic image of group $(A,+ )$ of the form $(\varepsilon, \psi,
\varepsilon)$, where $\psi\in Aut (A,+)$, is a left F-quasigroup.

From Theorem \ref{AT1} we have that that isotopic image of the loop $(B, \diamond)$ of the form $(\alpha,
\psi^{\diamond}, \varepsilon)$, where $\psi^{\diamond}$ is complete automorphism of $(B, \diamond)$, is a left
distributive quasigroup $(B, \circ)$. By Lemma  \ref{LEMMA_4_ED} (see also \cite{1a}) isotope of the form
$x\cdot y = x \cdot \psi ^{\circ} y$, where  $\psi ^{\circ}\in Aut (B, \circ)$, is a left F-quasigroup.

Therefore, among isotopic images of the left special loop $(Q, \oplus )$ there exists a left F-quasigroup.
\end{proof}
Corollary \ref{LOOP_ISOTOPES__F_QUAS_LEFT_SPCIAL_LOOPS} gives an answer to  Belousov 1a Problem  \cite{VD}.

\begin{corollary} \label{LOOP_ISOTOPES__F_QUAS_LEFT_M_LOOPS}
If $(Q, \ast )$ is a left M-loop which is isotopic to a left F-quasigroup $(Q,\cdot)$, then  $(Q, \ast )$ is
isotopic to the direct product of a group and LP-isotope of a left S-loop.
\end{corollary}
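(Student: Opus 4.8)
The plan is to combine the structure theorem for left F-quasigroups with the principle that two isotopic loops differ by an LP-isotopy. First I would invoke Theorem \ref{LOOP_ISOTOPES_LEFT_F_QUAS}, Case 1, which says that the left F-quasigroup $(Q,\cdot)$ is isotopic to a direct product $P=(A,\oplus)\times(B,\diamond)$, where $(A,\oplus)$ is a group and $(B,\diamond)$ is a left $S$-loop. Since $(Q,\ast)$ is isotopic to $(Q,\cdot)$ and isotopy is transitive, $(Q,\ast)$ is isotopic to the loop $P$. (One notes in passing that, by Theorem \ref{BELOUSOV_LEFT M_LOOP}, the assumption that $(Q,\ast)$ is a left M-loop is automatic once it is a loop isotopic to a left F-quasigroup, so the genuine hypothesis is only that $(Q,\ast)$ is a loop isotopic to $(Q,\cdot)$.)

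Next, because both $(Q,\ast)$ and $P$ are loops, I would reduce the connecting isotopy to an LP-isotopy. By Remark \ref{REMARK_ON_PRINC_ISOT} every isotopy is, up to isomorphism, principal, so up to isomorphism $(Q,\ast)$ is a principal isotope $PT$ of $P$. Applying Lemma \ref{LP_AND_PRINCIP_ISOT} with the loop $(Q,\ast)$ and the quasigroup $P$ forces this principal isotopy to have the form $T=(R^{-1}_c,L^{-1}_d,\varepsilon)$ for suitable $c,d$, i.e. an LP-isotopy of $P$. Thus, up to isomorphism, $(Q,\ast)=PT$ with $T$ an LP-isotopy of $P$.

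The key structural step is to observe that an LP-isotopy of a direct product splits componentwise. Writing $c=(c_1,c_2)$ and $d=(d_1,d_2)$, the translations of $P$ decompose as $R_c=R_{c_1}\times R_{c_2}$ and $L_d=L_{d_1}\times L_{d_2}$, so $T$ is the product of the LP-isotopies $T_1=(R^{-1}_{c_1},L^{-1}_{d_1},\varepsilon)$ of $(A,\oplus)$ and $T_2=(R^{-1}_{c_2},L^{-1}_{d_2},\varepsilon)$ of $(B,\diamond)$. Hence $(Q,\ast)\cong (A,\oplus)T_1\times (B,\diamond)T_2$. The first factor $(A,\oplus)T_1$ is a loop isotopic to the group $(A,\oplus)$, so by the Generalized Albert Theorem (Theorem \ref{ALBERT_THEOREM}) it is again a group; the second factor $(B,\diamond)T_2$ is by construction an LP-isotope of the left $S$-loop $(B,\diamond)$. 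Therefore $(Q,\ast)$ is isomorphic, and in particular isotopic, to the direct product of a group and an LP-isotope of a left $S$-loop, which is the assertion.

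I expect the main obstacle to be the bookkeeping in the second and third paragraphs: verifying that the reduction to a principal (hence LP) isotopy can be carried out simultaneously with the isomorphism that identifies the underlying sets of $(Q,\ast)$ and $P$, and then checking that the resulting LP-isotopy genuinely respects the product decomposition, so that the components $T_1,T_2$ are themselves LP-isotopies of the factors. Once these are in place, everything else is a direct appeal to the cited results.
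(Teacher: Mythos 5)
Your proposal is correct and follows essentially the same route as the paper: invoke Theorem \ref{LOOP_ISOTOPES_LEFT_F_QUAS}, reduce the connecting isotopy between the two loops to an LP-isotopy via Lemma \ref{LP_AND_PRINCIP_ISOT}, split it over the direct product, and apply the generalized Albert Theorem to the group factor. The only cosmetic difference is that you verify the componentwise splitting of the LP-isotopy by hand (correctly), where the paper cites the proof of Lemma \ref{LEMMA_DIR_2} for the same fact.
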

\begin{proof}
By Theorem \ref{LOOP_ISOTOPES_LEFT_F_QUAS}  any left F-quasigroup  $(Q, \cdot )$  is LP-isotopic to a loop $(Q,
\oplus)$ which is the  direct product of a group $(A, \oplus )$  and left  S-loop $(B, \diamond)$.

By Theorem \ref{BELOUSOV_LEFT M_LOOP} any loop which is isotopic to a left F-quasigroup is a left M-loop.

Up to isomorphism $(Q, \ast )$ is an LP-isotope of $(Q,\cdot)$. Then the loops $(Q, \ast )$ and $(Q, \oplus)$
are isotopic with an isotopy $(\alpha, \beta, \varepsilon)$. Moreover, they are LP-isotopic (\cite{1a}, Lemma
1.1).

From the proof of Lemma \ref{LEMMA_DIR_2} it follows that LP-isotopic image of a loop that is a direct product
of two subloops also is isomorphic to the direct product of some subloops.

By Albert Theorem (Theorem \ref{ALBERT_THEOREM}) LP-isotopic image of a group is a group.
\end{proof}

\subsection{F-quasigroups}

\begin{theorem} \label{LOOP_ISOTOPES__F_QUAS}
Any  F-quasigroup  $(Q, \cdot )$ is isotopic to the direct product of a group $(A, \oplus )\times (G, +) $ and a
commutative  Moufang loop $(K, \diamond)$, i.e.  $(Q, \cdot) \sim (A, \oplus) \times (G, +) \times (K,
\diamond). $
\end{theorem}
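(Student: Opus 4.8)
The plan is to exploit that an F-quasigroup is simultaneously a left and a right F-quasigroup, and to run the direct-decomposition machinery of Theorem \ref{MAIN_LEFT_F} twice: first along the endomorphism $e$ (the left structure) and then, on the $e$-stable part, along the endomorphism $f$ (the right structure). The key fact enabling the iteration is Lemma \ref{COMMUTING_ENDOMORPH}, which gives $ef=fe$, so the second decomposition respects the first; I also use repeatedly that every direct factor of a (left/right) F-quasigroup is again a (left/right) F-quasigroup (Lemma \ref{Componets_OF_DIR_PROD} and the attached remark). The target then reads: the three factors that emerge are, up to isotopy, a group, a group, and a commutative Moufang loop.

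First I would apply Theorem \ref{MAIN_LEFT_F}, Case 1, to the left F-quasigroup $(Q,\cdot)$ to obtain $(Q,\cdot)\cong (A,\circ)\times (B,\cdot)$, where $(A,\circ)$ has a unique idempotent (so that the chain of Lemma \ref{l5.7} collapses, $e^m(A)$ is a single point) and $(B,\cdot)=e^m(Q)$ is the $e$-stable factor on which $e$ is an automorphism. Both factors are F-quasigroups. On $(A,\circ)$ I invoke Theorem \ref{LOOP_ISOTOPES_LEFT_F_QUAS}, Case 1: a left F-quasigroup is isotopic to a group times a left S-loop, and the left S-loop factor is built precisely from the $e$-stable part $e^m$, which here is trivial; hence $(A,\circ)\sim (A,\oplus)$ is isotopic to a group. (This is where the substance lives — see the next paragraph.)

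Next I would decompose $(B,\cdot)$, viewed as a right F-quasigroup, by Theorem \ref{MAIN_LEFT_F}, Case 2, writing $(B,\cdot)\cong (A'',\circ'')\times (K,\cdot)$, with $(A'',\circ'')$ having a unique idempotent (so $f$ is locally nilpotent there) and $(K,\cdot)=f^{r}(B)$ the $f$-stable factor on which $f$ is an automorphism. Since $e$ is already an automorphism of $(B,\cdot)$ and $ef=fe$, the congruences cutting out this $f$-decomposition are $e$-admissible, so $e$ restricts to an automorphism on each of $(A'',\circ'')$ and $(K,\cdot)$. On $(A'',\circ'')$ the map $f$ is nilpotent while $e$ is an automorphism, so by the mirror (right) version of Theorem \ref{LOOP_ISOTOPES_LEFT_F_QUAS}, Case 2, $(A'',\circ'')\sim (G,+)$ is isotopic to a group. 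On $(K,\cdot)$ both $e$ and $f$ are permutations, whence by Theorem \ref{LEFT_DISTRIB_Isotopic_to_F_QUAS} $(K,\cdot)$ is the isotope $x\cdot y=x\circ e^{-1}y$ of a distributive quasigroup $(K,\circ)$; by Belousov's Theorem \ref{Belousov_Theorem} the distributive quasigroup $(K,\circ)$ is of the form $\varphi x+\psi y$ over a commutative Moufang loop, so $(K,\cdot)\sim (K,\diamond)$ is isotopic to a CML. Assembling, $(Q,\cdot)\cong (A,\circ)\times (A'',\circ'')\times (K,\cdot)$, and since a product of isotopies is an isotopy of the product (Corollary \ref{COROL_DIR_2}), the three componentwise isotopies combine to give $(Q,\cdot)\sim (A,\oplus)\times (G,+)\times (K,\diamond)$, as required.

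The main obstacle is not the bookkeeping of the two decompositions but the two group claims, i.e.\ that each unique-idempotent factor is isotopic to an associative loop. This is exactly the hard content already carried out in Step 1 of Theorem \ref{LOOP_ISOTOPES_LEFT_F_QUAS} (the delicate induction along the $e$-chain that forces associativity of $(A,+)$), so by quoting that theorem the difficulty is discharged. The remaining care-points I would verify explicitly are: that the factors inherit the full F-quasigroup identities (so that $e$ and $f$ are honest endomorphisms on each piece), that the $f$-decomposition of $(B,\cdot)$ is genuinely $e$-invariant so that $e$ survives as an automorphism on $(K,\cdot)$ (used when applying Theorem \ref{LEFT_DISTRIB_Isotopic_to_F_QUAS}), and that the group $\times$ group factor $(A,\oplus)\times (G,+)$ may legitimately be recorded as a direct product of two groups reflecting the two distinct chains rather than being silently merged.
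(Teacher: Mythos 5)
Your proposal is correct and follows essentially the same route as the paper: the same two-stage decomposition via Theorem \ref{MAIN_LEFT_F} (Case 1 for the left structure, then Case 2 on the $e$-stable factor), the same appeal to Theorem \ref{LOOP_ISOTOPES_LEFT_F_QUAS} (Cases 1 and 2) to show both unique-idempotent factors are group isotopes, and the same combination of Theorem \ref{LEFT_DISTRIB_Isotopic_to_F_QUAS} with Belousov's Theorem \ref{Belousov_Theorem} for the commutative Moufang factor. If anything, you are slightly more careful than the paper, which asserts without comment that $e$ remains a permutation of $(K,\cdot)$, whereas you justify this via $ef=fe$ and the $e$-admissibility of the $f$-decomposition.
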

\begin{proof}
By Theorem  \ref{MAIN_LEFT_F}, Case 1  any  left F-quasigroup $(Q, \cdot)$ has the following structure $(Q,
\cdot) \cong (A, \circ) \times (B, \cdot),$ where $(A, \circ)$ is a quasigroup with a unique idempotent element;
$(B, \cdot)$ is isotope of a left distributive quasigroup $(B, \star)$, $x \cdot y = x \star \psi y$ for all $x,
y \in B$, $\psi \in Aut(B, \cdot)$, $\psi \in Aut(B, \star)$.

By Theorem  \ref{MAIN_LEFT_F}, Case 2 the quasigroup $(B, \cdot)$ has the following structure $(B, \cdot) \cong
(G, \circ) \times (K, \cdot),$ where $(G, \circ)$ is a quasigroup with a unique idempotent element; $(K, \cdot)$
is isotope of a right  distributive quasigroup $(K, \star)$, $x \cdot y = \varphi x \star  y$ for all $x, y \in
K$, $\varphi  \in Aut(K, \cdot)$, $\varphi \in Aut(K, \star)$.

By Theorem \ref{LOOP_ISOTOPES_LEFT_F_QUAS}, Case 1,  the quasigroup $(A, \circ)$ is a group isotope. By Theorem
\ref{LOOP_ISOTOPES_LEFT_F_QUAS}, Case 2,  the quasigroup $(G, \circ)$ is a group isotope.

In the quasigroup $(K, \cdot)$ the endomorphisms $e$ and $f$ are permutations of the set $K$ and by Theorem
\ref{LEFT_DISTRIB_Isotopic_to_F_QUAS} $(K, \cdot)$ is isotope of a distributive quasigroup. Then by  Belousov
Theorem (Theorem \ref{Belousov_Theorem}) quasigroup $(K, \cdot)$ is isotope of a CML $(K, \diamond)$.  Therefore
$(Q, \cdot) \sim (A, \oplus) \times (G, +) \times (K, \diamond). $
\end{proof}

\begin{theorem} \label{LOOP_ISOTOPES OF_F_QUAS}
Any loop $(Q, \ast )$  that is  isotopic to an F-quasigroup $(Q,\cdot)$ is isomorphic to the direct product of a
group and a Moufang loop \cite{kepka_05, kepka_07}.
\end{theorem}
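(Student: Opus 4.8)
The plan is to start from the decomposition already established in Theorem \ref{LOOP_ISOTOPES__F_QUAS} and to transport it through the isotopy down to the loop $(Q, \ast)$. Underlying that theorem, the repeated application of Theorem \ref{MAIN_LEFT_F} yields a genuine isomorphic decomposition $(Q, \cdot) \cong (A, \circ) \times (G, \circ) \times (K, \cdot)$, in which $(A,\circ)$ and $(G,\circ)$ are group isotopes (Theorem \ref{LOOP_ISOTOPES_LEFT_F_QUAS}) while $(K,\cdot)$ is an isotope of a commutative Moufang loop (via Theorem \ref{LEFT_DISTRIB_Isotopic_to_F_QUAS} and Belousov's Theorem \ref{Belousov_Theorem}). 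Since $(Q, \ast)$ is a loop isotopic to $(Q, \cdot)$, I would first invoke Remark \ref{REMARK_ON_PRINC_ISOT} to assume, up to isomorphism, that the isotopy is principal, and then Lemma \ref{LP_AND_PRINCIP_ISOT} to present it as an LP-isotopy $T = (R^{-1}_a, L^{-1}_b, \varepsilon)$, so that $(Q, \ast) = (Q, \cdot)T$.

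Next I would carry the direct-product structure across $T$. By Theorem \ref{t5.1} the three-fold decomposition of $(Q, \cdot)$ is encoded by pairwise commuting normal congruences whose relevant intersections are the diagonal $\hat Q$ and whose joins are $Q \times Q$. These congruences are admissible relative to the translations $R_a$ and $L_b$ (Lemma \ref{NORM_CONGR_ADMISSBLE_REL_TRANS}), so by Lemma \ref{NL1} they remain normal congruences of $(Q, \ast)$. The join, intersection and composition conditions required by Theorem \ref{t5.1} are purely set-theoretic statements about these relations and are therefore unchanged when one passes from $\cdot$ to its principal isotope $\ast$ (here the trivial third component of $T$ is essential). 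Hence the very same congruences exhibit $(Q, \ast)$ as an internal direct product, and so $(Q, \ast)$ is isomorphic to the direct product of its quotient loops, each of which is the corresponding principal isotope of the factors $(A, \circ)$, $(G, \circ)$, $(K, \cdot)$.

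Finally I would identify the three loop factors. By transitivity of isotopy each factor loop is isotopic to a group in the first two cases and to a commutative Moufang loop in the third. The Generalized Albert Theorem (Theorem \ref{ALBERT_THEOREM}) forces the first two factors to be groups, whose direct product is again a group; invoking the classical fact (cf. \cite{kepka_05, kepka_07}) that the class of Moufang loops is closed under loop isotopy turns the third factor into a Moufang loop. Collecting the two group factors into one group $\Gamma$ then gives $(Q, \ast) \cong \Gamma \times M$ with $M$ a Moufang loop, as required. The main obstacle I anticipate is precisely this descent step: one must verify that the congruences defining the direct product survive the isotopy \emph{as normal congruences of the loop} and that the set-theoretic direct-product conditions of Theorem \ref{t5.1} are genuinely preserved, and it is Lemma \ref{NL1} together with the principal (trivial third component) form of $T$ that does the essential work there.
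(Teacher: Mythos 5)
Your proposal is correct and takes essentially the same route as the paper's own proof: the paper likewise starts from Theorem \ref{LOOP_ISOTOPES__F_QUAS}, reduces to an LP-isotope, uses Lemma \ref{NL1} to see that the direct decomposition survives the LP-isotopy, and concludes with the generalized Albert Theorem for the group part and the isotopy-invariance of (commutative) Moufang loops for the other factor. The only cosmetic difference is that the paper first collapses the two group factors into a single loop $(Q,+)\cong (A,+)\times (B,+)$ and argues via componentwise translations, whereas you keep the three factors and transport the defining congruences through the principal isotopy explicitly via Theorem \ref{t5.1} --- a verification the paper leaves implicit.
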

\begin{proof}
By Theorem  \ref{LOOP_ISOTOPES__F_QUAS}  an F-quasigroup $(Q, \cdot )$ is isotopic to a loop $(Q,+) \cong (A,+)
\times (B,+)$ which  is the direct product of a group  and a commutative  Moufang loop. Then any left
translation $L$  of $(Q,+)$ it is possible to present as a pair $(L_1, L_2)$, where $L_1$ is a left translation
of the loop $(A,+)$, $L_2$ is a left translation of the loop $(B,+)$.

From Lemma \ref{NL1} it follows that any LP-isotope of the loop $(Q,+)$ is the direct product of its subloops.

By generalized Albert Theorem LP-isotope of a group is a group. Any LP-isotope of a commutative Moufang loop is
a Moufang loop \cite{VD}.
\end{proof}

\begin{corollary} \label{LOOP_ISOTOPES__F_QUAS_M_LOOPS}
If $(Q, \ast )$ is an  M-loop which is isotopic to an F-quasigroup, then  $(Q, \ast )$ is  a Moufang loop.
\end{corollary}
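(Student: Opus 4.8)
The plan is to obtain this statement as an essentially immediate consequence of Theorem~\ref{LOOP_ISOTOPES OF_F_QUAS}, which already carries all the structural weight. Since $(Q,\ast)$ is a loop isotopic to the F-quasigroup $(Q,\cdot)$, that theorem supplies an isomorphism $(Q,\ast)\cong (A,+)\times (B,+)$, in which $(A,+)$ is a group and $(B,+)$ is a commutative Moufang loop (more generally a Moufang loop). Thus the whole matter reduces to a closure property of the class of Moufang loops, and there is no need to manipulate the defining M-loop identities directly.

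First I would recall that every group is a Moufang loop, the Moufang identity $x(yz\cdot x)=xy\cdot zx$ being a trivial consequence of associativity; in particular the factor $(A,+)$ is a Moufang loop. Next I would verify that the class of Moufang loops is closed under direct products: in $(A,+)\times(B,+)$ the operation is componentwise, and the Moufang law is a universally quantified identity in a single binary operation, so it holds in the product precisely because it holds in each of the two factors. Combining these observations, the direct product $(A,+)\times(B,+)$ satisfies the Moufang identity, hence so does its isomorphic copy $(Q,\ast)$, which is therefore a Moufang loop. The only technical care needed is to use Theorem~\ref{LOOP_ISOTOPES OF_F_QUAS} in the sharp form asserting that the first factor is a genuine group and the second an honest Moufang loop, so that the Moufang law may legitimately be invoked on both.

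It is worth stressing that the hypothesis ``$(Q,\ast)$ is an M-loop'' is not used in an essential way: by the two-sided analogue of Theorem~\ref{BELOUSOV_LEFT M_LOOP} every loop isotopic to an F-quasigroup is automatically an M-loop, so the assumption merely names a class to which $(Q,\ast)$ already belongs. Consequently the argument presents no real obstacle; the difficulty has been absorbed entirely into the preceding decomposition results (Theorems~\ref{MAIN_LEFT_F}, \ref{LOOP_ISOTOPES__F_QUAS} and \ref{LOOP_ISOTOPES OF_F_QUAS}), and the corollary records the clean two-sided conclusion that the messy left-handed analogue (Corollary~\ref{LOOP_ISOTOPES__F_QUAS_LEFT_M_LOOPS}, giving only isotopy to a direct product with an S-loop) cannot attain.
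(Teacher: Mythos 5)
Your proposal is correct and follows essentially the same route as the paper: the paper's proof also consists of citing Theorem~\ref{LOOP_ISOTOPES OF_F_QUAS} together with the observation that any group is a Moufang loop (with closure of Moufang loops under direct products left implicit, which you usefully make explicit). Your further remark that the M-loop hypothesis is automatic by Theorem~\ref{BELOUSOV_LEFT M_LOOP} is also consistent with the paper's framing.
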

\begin{proof}
The proof follows from Theorem \ref{LOOP_ISOTOPES OF_F_QUAS}. It is well known that any group is a Moufang loop.
\end{proof}

\subsection{Left SM-quasigroups}

\begin{theorem} \label{LOOP_ISOTOPES_LEFT_SM_QUAS}
A  left SM-quasigroup $(Q, \cdot )$ is isotopic to the direct product of a group  $(A, \oplus )$   and a left
S-loop $(B, \diamond)$, i.e. $(Q, \cdot) \sim (A, \oplus) \times (B, \diamond). $
\end{theorem}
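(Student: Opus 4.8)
The plan is to imitate, line for line, the proof of Theorem \ref{LOOP_ISOTOPES_LEFT_F_QUAS}, replacing the right local-identity endomorphism $e$ by the middle endomorphism $s$. First I would apply Theorem \ref{MAIN_MIDDLE_F}, Case 1, to write $(Q,\cdot)\cong(A,\circ)\times(B,\cdot)$, where $(A,\circ)$ has a unique idempotent element $0$ with $|s^m(A,\circ)|=1$ for some $m$, and $(B,\cdot)$ is the isotope $x\cdot y=s(x\star y)$ of a left distributive quasigroup $(B,\star)$ with $s\in Aut(B,\cdot)\cap Aut(B,\star)$. By Corollary \ref{COROL_DIR_2} there is an isotopy $T=(T_1,T_2)$ with $QT\cong AT_1\times BT_2$ a loop, so it is enough to prove separately that $(A,\circ)$ is isotopic to a group and that $(B,\cdot)$ is isotopic to a left S-loop; isotopy of the direct product then follows componentwise.

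The factor $(B,\cdot)$ is the easy one. Since $(B,\star)$ is left distributive, its LP-isotope $(B,\diamond)$ with $x\diamond y=(R^{\star}_a)^{-1}x\star(L^{\star}_a)^{-1}y$ is a left S-loop by the very definition of a left S-loop (Theorem \ref{AT1} giving the associated left distributive quasigroup). Because $x\cdot y=s(x\star y)$ means $(B,\cdot)=(B,\star)(\varepsilon,\varepsilon,s^{-1})$, we have $(B,\cdot)\sim(B,\star)\sim(B,\diamond)$, so the second factor is isotopic to the left S-loop $(B,\diamond)$, exactly as in Step~2 of Theorem \ref{LOOP_ISOTOPES_LEFT_F_QUAS}.

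The real work is the factor $(A,\circ)$. Here I would pass to a loop isotope $(A,+)$ whose neutral element is the unique idempotent $0$ and then show $(A,+)$ is a group. The layers of $(A,\circ)$ along the chain $A\supset s(A)\supset\dots\supset s^m(A)=\{0\}$ are described by Corollary \ref{SLOI_POLUMED}: each class of $Ker\,s_j$ containing $0$ is an unipotent quasigroup isotopic, via an isotopy of the form $(-\psi,\psi,\varepsilon)$, to an abelian group, the commutativity coming from Theorem \ref{THEOREM_3_F_QUAS_MIDD}, Case 1. Rewriting the left semimedial identity in terms of $+$ should yield, for the normal endomorphism $s$, an autotopy-type relation on $(A,+)$ of the same shape as (\ref{equat_LF_2}). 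From it I would run the inductive \textbf{Begin Cycle}/\textbf{End Cycle} argument of Theorem \ref{LOOP_ISOTOPES_LEFT_F_QUAS}: one establishes $s^{\,m-i}x+(y+z)=(s^{\,m-i}x+y)+z$ and pushes the exponent down the chain; since $s^m$ annihilates $A$, finitely (or transfinitely) many iterations give full associativity, so $(A,+)$ is a group.

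I expect this last induction to be the main obstacle, precisely as the Prover~9 computation was the crux of Theorem \ref{LOOP_ISOTOPES_LEFT_F_QUAS}; the one genuinely new verification is that left semimediality produces a relation of the same form as the one used there, so that the cycle argument transfers verbatim. As an alternative I would note that, by Lemma \ref{LEMMA_PARASTR}, Case 6, $(A,\cdot)$ is a left SM-quasigroup exactly when $(A,\backslash)$ is a left F-quasigroup, and that direct decompositions are parastrophe-invariant (Lemma \ref{LEMMA_DIR}); one might hope to import the group structure of the first factor straight from Theorem \ref{LOOP_ISOTOPES_LEFT_F_QUAS}. However, isotopy classes are not preserved under parastrophy, so the direct computation above is the safer route. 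Combining the two factors finally gives $(Q,\cdot)\sim(A,\oplus)\times(B,\diamond)$ with $(A,\oplus)$ a group and $(B,\diamond)$ a left S-loop, which is the assertion.
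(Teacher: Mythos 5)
Your skeleton (decompose via Theorem \ref{MAIN_MIDDLE_F}, handle the two factors separately through Corollary \ref{COROL_DIR_2}) matches the paper, and your Step for $(B,\cdot)$ is correct: $(B,\cdot)=(B,\star)(\varepsilon,\varepsilon,s^{-1})$ is isotopic to the left distributive quasigroup $(B,\star)$, whose LP-isotope $x\diamond y=(R^{\star}_a)^{-1}x\star(L^{\star}_a)^{-1}y$ is a left S-loop by definition. The genuine gap is in your treatment of $(A,\circ)$. You do not derive the relation you need; you only assert that rewriting the semimedial equality in a loop isotope ``should yield'' a relation of the same shape as (\ref{equat_LF_2}), so that the \textbf{Begin Cycle}/\textbf{End Cycle} induction ``transfers verbatim.'' This is doubtful as stated: the left F-equality has $x$ leftmost on both sides with $e(x)$ in an inner position, and the cycle argument exploits exactly that syntactic shape together with $\alpha x+ex=x$; the semimedial equality $s(x)\cdot yz=xy\cdot xz$ has $s(x)$ alone on the left and $x$ occurring \emph{twice} as a left multiplier on the right, which produces a loop relation of a different form. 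The paper's own handling of the structurally similar left E-case is instructive here: it did \emph{not} transfer the F-computation but required an entirely new multi-step derivation (Lemmas \ref{LOOP_ISOTOPES_LEFT_E_QUAS}, \ref{LOOP_ISOTOPES_LEFT_E_QUAS_1}, \ref{LEFT_E_M_ZERO}, \ref{LOOP_ISOTOPES_LEFT_E_QUAS_i}). So the one step you defer is precisely the step that cannot be deferred, and your proof is incomplete without carrying out (or replacing) that computation.

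Ironically, the alternative you raise and then dismiss is the paper's actual proof, and your objection to it is too hasty. The paper's Step 1 takes the $(23)$-parastrophe $(A,\backslash)$ of the first factor, which by Lemma \ref{LEMMA_PARASTR} (Case 6, on left SM- versus left F-quasigroups; the paper's citation of Case (5) there is a slip) is a left F-quasigroup with $|e^m(A)|=1$ (by Table 2, $s$ for $(A,\circ)$ becomes $e$ for the parastrophe), so Theorem \ref{LOOP_ISOTOPES_LEFT_F_QUAS} yields a group $(A,\oplus)$ as a principal isotope of the parastrophe. Your worry that ``isotopy classes are not preserved under parastrophy'' is true in general but irrelevant here: one does not need parastrophy-invariance of isotopy, only the special fact that the $(23)$-parastrophe of a \emph{group} is an isotope of that group, since $x\backslash y=Ix\oplus y$, i.e. $(\oplus)((23),\varepsilon)=(\oplus)(\varepsilon,(I,\varepsilon,\varepsilon))$. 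Using the isostrophy calculus (Definition \ref{MULT_ISOS}, Corollary \ref{PERESTAN_ISOSTR}, Lemma \ref{INVERSE_ISOSTROPHY}) this converts the isostrophy between $(A,\circ)$ and $(A,\oplus)$ into a genuine principal isotopy $(\oplus)=(\circ)(\varepsilon,(\alpha I,\varepsilon,\beta))$, which is exactly what the theorem needs. Replacing your unverified cycle computation by this parastrophe-plus-isostrophy reduction closes the gap and recovers the paper's proof.
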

\begin{proof}
In many details the proof of this theorem repeats the proof of Theorem \ref{LOOP_ISOTOPES_LEFT_F_QUAS}.

By Theorem  \ref{MAIN_MIDDLE_F} any  left SM-quasigroup $(Q, \cdot)$ has the following structure $ (Q, \cdot)
\cong (A, \circ) \times (B, \cdot), $ where $(A, \circ)$ is a quasigroup with a unique idempotent element and
there exists a  number $m$ such that $|s^m(A, \circ)| =1$; $(B, \cdot)$ is an isotope of a left distributive
quasigroup $(B, \star)$, $x \cdot y =  s(x \star y)$ for all $x, y \in B$, $s \in Aut(B, \cdot)$, $s \in Aut(B,
\star)$.

By Corollary \ref{COROL_DIR_2}, if a quasigroup $Q$ is the direct product of quasigroups $A$ and $B$, then there
exists an isotopy $T=(T_1, T_2)$ of $Q$ such that  $Q\, T \cong AT_1 \times BT_2$ is a loop.

Therefore we have a possibility to divide our proof on two steps.

Step 1. Denote a unique idempotent element of $(A, \circ)$ by $0$. It is easy to check that $s^{\circ} \, 0 =
0$. Indeed, from $(s^{\circ})^{m} A = 0$ we have $(s^{\circ})^{m+1} A = s^{\circ} 0 = 0$.

(23)-parastrophe of $(A, \circ)$ is left F-quasigroup quasigroup $(A,\cdot)$ (Lemma  \ref{LEMMA_PARASTR}, (5))
such that $|e^m(A, \cdot)| =1$. Then $(A,\cdot)$ also has a unique idempotent element. By Theorem
\ref{LOOP_ISOTOPES_LEFT_F_QUAS} principal isotope of $(A,\cdot)$ is a group $(A,\oplus)$.

We shall use multiplication of isostrophies (Definition \ref{MULT_ISOS},  Corollary \ref{PERESTAN_ISOSTR} and
Lemma \ref{INVERSE_ISOSTROPHY}). (23)-parastrophe  image of group $(A,\oplus)$ coincides with its isotope of the
form $(I,\varepsilon, \varepsilon)$, where $x\oplus Ix = 0$ for all $x\in A$. Indeed, if $x\oplus y = z$, then
$x\oplus^{23} z = y$. But $y = Ix\oplus z$. Therefore $x\oplus^{23} z =  Ix\oplus z$, i.e. $(\oplus)((23),
\varepsilon) =  (\oplus) (\varepsilon, (I, \varepsilon, \varepsilon))$. Then $(\oplus) = (\oplus)
((23),(I,\varepsilon, \varepsilon))$, since $I^{2}= \varepsilon$.

 We
have
\[
\begin{array}{l}
(\oplus) = (\circ) ((23),\varepsilon)(\varepsilon, (\alpha,  \beta, \varepsilon)) = (\circ) ((23), (\alpha,
\beta, \varepsilon)),\\
 (\oplus) = (\oplus)((23),(I,\varepsilon, \varepsilon))  = (\circ) ((23), (\alpha,  \beta,
\varepsilon))((23),(I,\varepsilon, \varepsilon)) = \\ (\circ) (\varepsilon, (\alpha I, \varepsilon,  \beta)).
\end{array}
\]

Step 2. The proof of this step is similar to the proof of Step 2 from Theorem \ref{LOOP_ISOTOPES_LEFT_F_QUAS}
and we omit them.
\end{proof}

\subsection{Left E-quasigroups}

\begin{lemma} \label{LOOP_ISOTOPES_LEFT_E_QUAS}
A  left E-quasigroup $(Q, \cdot )$ is isotopic to the direct product of a left loop $(A, \oplus )$ with equality
$(\delta  x \oplus  x) \oplus (y \oplus z) =  (\delta x\oplus y) \oplus (x \oplus z)$, where $\delta$ is an
endomorphism of the loop  $(A, \oplus)$,  and a left S-loop $(B, \diamond)$, i.e. $(Q, \cdot) \sim (A, \oplus)
\times (B, \diamond). $
\end{lemma}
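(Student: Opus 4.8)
The plan is to imitate the proofs of Theorems \ref{LOOP_ISOTOPES_LEFT_F_QUAS} and \ref{LOOP_ISOTOPES_LEFT_SM_QUAS}. First I would invoke Theorem \ref{MAIN_MIDDLE_F}, Case 3, to write the left E-quasigroup as $(Q,\cdot)\cong(A,\circ)\times(B,\cdot)$, where $(A,\circ)$ has a unique idempotent element (with $|f^m(A,\circ)|=1$) and $(B,\cdot)$ is an isotope $x\cdot y=f^{-1}(x)\star y$ of a left distributive quasigroup $(B,\star)$ with $f\in Aut(B,\star)$. Then by Corollary \ref{COROL_DIR_2} it is enough to produce, for each factor separately, a principal isotope of the required kind; the direct product of these two isotopes will be the desired loop isotope $(A,\oplus)\times(B,\diamond)$ of $(Q,\cdot)$. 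So the work splits into two steps.

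The heart of the argument is the factor $(A,\circ)$. I would let $0$ denote its unique idempotent; since $f$ is an endomorphism (Lemma \ref{EM_END_MIDD}, Case 3), $f(0)$ is idempotent, whence $f(0)=0$. Putting $x=0$ in the left E-identity $x\circ(y\circ z)=(f(x)\circ y)\circ(x\circ z)$ gives $0\circ(y\circ z)=(0\circ y)\circ(0\circ z)$, i.e. $L_0\in Aut(A,\circ)$. Next I would introduce the principal isotope $x\oplus y=x\circ L_0^{-1}y$; then $0\oplus y=y$, so $(A,\oplus)$ is a left loop, and a short check gives $L_0\in Aut(A,\oplus)$, while $fL_0=L_0f$ (a consequence of $f(0)=0$) yields $f\in End(A,\oplus)$. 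Rewriting the E-identity through $a\circ b=a\oplus L_0b$ and cancelling $L_0$ by means of $L_0\in Aut(A,\oplus)$, after replacing $L_0y,L_0^2z$ by $y,z$, I expect to obtain
\[
x\oplus(y\oplus z)=(f(x)\oplus y)\oplus(L_0x\oplus z).
\]
The decisive step is then to substitute $x\mapsto L_0^{-1}x$ and set $\delta:=fL_0^{-1}$. Since $f(w)\oplus L_0w=w$ (this is $f(w)\circ w=w$ read in the operation $\oplus$), taking $w=L_0^{-1}x$ gives $\delta x\oplus x=L_0^{-1}x$, so the previous display turns into exactly
\[
(\delta x\oplus x)\oplus(y\oplus z)=(\delta x\oplus y)\oplus(x\oplus z),
\]
and $\delta=fL_0^{-1}$ is an endomorphism of $(A,\oplus)$ as the composite of the endomorphism $f$ with the automorphism $L_0^{-1}$.

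For the factor $(B,\cdot)$ I would argue as in Step 2 of Theorem \ref{LOOP_ISOTOPES_LEFT_F_QUAS}: from $(B,\cdot)=(B,\star)(f^{-1},\varepsilon,\varepsilon)$ I get $(B,\star)=(B,\cdot)(f,\varepsilon,\varepsilon)$, and the LP-isotope $(B,\diamond)=(B,\star)((R^{\star}_a)^{-1},(L^{\star}_a)^{-1},\varepsilon)$ is a left S-loop by the definition of a left S-loop together with Theorem \ref{AT1}; it is simultaneously a principal isotope of $(B,\cdot)$. Assembling the two isotopies through Corollary \ref{COROL_DIR_2} then yields $(Q,\cdot)\sim(A,\oplus)\times(B,\diamond)$. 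I expect the only real obstacle to be Step 1, namely spotting the normalising substitution $x\mapsto L_0^{-1}x$ and the endomorphism $\delta=fL_0^{-1}$, and verifying the identity $\delta x\oplus x=L_0^{-1}x$ — this is precisely what converts the transported left E-identity into the stated twisted semimedial identity and guarantees that $\delta$ is an endomorphism of the left loop $(A,\oplus)$.
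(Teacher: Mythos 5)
Your proposal is correct and takes essentially the same route as the paper's own proof: the decomposition via Theorem \ref{MAIN_MIDDLE_F}, the left-loop isotope $x\oplus y = x\circ L_0^{-1}y$ with $L_0\in Aut(A,\oplus)$ and $fL_0=L_0f$, the substitution $x\mapsto L_0^{-1}x$ with $\delta=f^{\circ}L_0^{-1}$ converting $x\oplus(y\oplus z)=(f^{\circ}x\oplus y)\oplus(L_0x\oplus z)$ into the stated identity, and the identical Step 2 producing the left S-loop factor from Theorem \ref{AT1}. The only (harmless) variation is your derivation of $f^{\circ}(0)=0$ from uniqueness of the idempotent, where the paper instead infers it from $(f^{\circ})^{m}A=0$.
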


\begin{proof}
In some  details the  proof of Lemma \ref{LOOP_ISOTOPES_LEFT_E_QUAS} repeats the proof of Theorem
\ref{LOOP_ISOTOPES_LEFT_F_QUAS}. By Theorem \ref{MAIN_MIDDLE_F} any  left E-quasigroup $(Q, \cdot)$ has the
following structure $ (Q, \cdot) \cong (A, \circ) \times (B, \cdot), $ where $(A, \circ)$ is a quasigroup with a
unique idempotent element and there exists a  number $m$ such that $|f^m(A, \circ)| =1$; $(B, \cdot)$ is an
isotope of a left distributive quasigroup $(B, \star)$, $x \cdot y = f^{-1}(x) \star  y$ for all $x, y \in B$,
$f \in Aut(B, \cdot)$, $f \in Aut(B, \star)$.

By Corollary \ref{COROL_DIR_2}, if a quasigroup $Q$ is the direct product of quasigroups $A$ and $B$, then there
exists an isotopy $T=(T_1, T_2)$ of $Q$ such that  $Q\, T \cong AT_1 \times BT_2$ is a loop.

Therefore we have a possibility to divide our proof on two steps.

Step 1. We shall prove that $(A,\oplus)$ is a left loop. Denote a unique idempotent element of $(A, \circ)$ by
$0$. It is easy to check that $f^{\circ} \, 0 = 0$. Indeed, from $(f^{\circ})^{m} A = 0$ we have
$(f^{\circ})^{m+1} A = f^{\circ} 0 = 0$.

From left E-equality   $x \circ (y \circ z) = (f^{\circ}(x) \circ y) \circ (x \circ z)$ by $x=0$ we have $0
\circ (y \circ z) = (0\circ y) \circ (0 \circ z)$. Then $L_0 \in Aut(A, \circ)$.

Consider  isotope $(A,\oplus)$ of the quasigroup $(A, \circ)$: $x \oplus y = x \circ L^{-1}_0 y$. We notice that
 $(A,\oplus)$ is a left loop.   Indeed, $0 \oplus y = 0 \circ L^{-1}_0 y = y$.

Prove that $L_0 \in Aut (A, \oplus)$. From equality $L_0(x\circ y)= L_0 x \circ L_0 y $ we have $L_0(x\circ y)=
L_0(x\oplus L_0 y)$, $L_0 x \circ L_0 y = L_0 x \oplus L^2_0 y$, $L_0(x\oplus L_0 y)= L_0 x \oplus L^2_0 y$.

If we pass in  left E-equality to the operation $\oplus$, then we obtain $x \oplus (L_0 y \oplus L^2_0 z) =
(f^{\circ} x\oplus L_0 y) \oplus (L_0 x \oplus L^2_0 z)$. If we change $L_0 y$ by $y$, $L^2_0 z$ by $z$, then we
obtain
\begin{equation}
x \oplus (y \oplus z) =  (f^{\circ} x\oplus y) \oplus (L_0 x \oplus z). \label{equat_LE}
\end{equation}

We notice, $f^{\circ} x \circ x = x$. Then $f^{\circ} x \oplus L_0 x = x$.
 Moreover, from $f^{\circ}(x\circ y) = f^{\circ} x \circ f^{\circ} y$ we have $f^{\circ}(x\oplus L_0 y) = f^{\circ}(x) \oplus L_0 f^{\circ} (y)$. If $x=0$, then
 $f^{\circ} L_0(y) = L_0 f^{\circ}(y)$, $f^{\circ}(x\oplus L_0 y) = f^{\circ}(x) \oplus  f^{\circ} L_0 (y)$, $f^{\circ}$ is an endomorphism of the left loop
 $(A,\oplus)$.

We can rewrite equality (\ref{equat_LE}) in the following form
\begin{equation}
(f^{\circ} x \oplus L_0 x) \oplus (y \oplus z) =  (f^{\circ} x\oplus y) \oplus (L_0 x \oplus z).
\label{equat_LE_1}
\end{equation}

If we change in (\ref{equat_LE_1}) $x$ by $L^{-1}_x$, then we obtain
\begin{equation}
(f^{\circ} L^{-1}_0 x \oplus  x) \oplus (y \oplus z) =  (f^{\circ} L^{-1}_0 x\oplus y) \oplus (x \oplus z).
\label{equat_LE_21}
\end{equation}

If we denote the map $f^{\circ} L^{-1}_0 $ of the set $Q$ by $\delta$, then from (\ref{equat_LE_21})
 we have $(\delta x \oplus x)
\oplus (y \oplus z) =  (\delta x\oplus y) \oplus (x \oplus z)$. The map $\delta = f^{\circ} L^{-1}_0$ is an
endomorphism of the left loop $(A,\oplus)$ since $f^{\circ}$ is an endomorphism and $L^{-1}_0$  an automorphism
of $(A,\oplus)$.  We notice, $f^{\circ} L^{-1}_0 0 = 0.$

Step 2. From Theorems \ref{MAIN_MIDDLE_F} and  \ref{AT1} it follows that $$(B, \diamond) = (B, \cdot)(f,
\varepsilon, \varepsilon) ((R_a^{\star})^{ -1},  (L_a^{\star})^{-1}, \varepsilon)=  (B, \cdot)(f (R_a^{\star})^{
-1}, (L_a^{\star})^{-1}, \varepsilon)$$ is a left  S-loop.
\end{proof}

\begin{remark}
If we take $f^{\circ} a=0$, then from $f^{\circ} a \oplus L_0 a = a$ we have $L_0 a = a$. Thus from
(\ref{equat_LE_1}) we have $ a \oplus (y \oplus z) =  y \oplus (a \oplus z).$
\end{remark}

\begin{lemma} \label{LOOP_ISOTOPES_LEFT_E_QUAS_1}
A  left E-quasigroup $(Q, \cdot )$ is isotopic to the direct product of a  loop $(A, + )$ with equality $(\delta
x +  x) + (y + z) =  (\delta x + y) + (x + z)$, where $\delta$ is an endomorphism of the loop  $(A, +)$, and a
left S-loop $(B, \diamond)$, i.e. $(Q, \cdot) \sim (A, +) \times (B, \diamond). $
\end{lemma}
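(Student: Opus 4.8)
The plan is to build directly on Lemma \ref{LOOP_ISOTOPES_LEFT_E_QUAS}, which already yields $(Q,\cdot)\sim(A,\oplus)\times(B,\diamond)$ with $(A,\oplus)$ a left loop satisfying $(\delta x\oplus x)\oplus(y\oplus z)=(\delta x\oplus y)\oplus(x\oplus z)$ for an endomorphism $\delta$, and $(B,\diamond)$ a left $S$-loop. The only thing missing for the present statement is that the left loop $(A,\oplus)$ must be replaced by a genuine (two-sided) loop $(A,+)$ carrying an identity of the same shape. First I would pass from $(A,\oplus)$ to a loop exactly as in Step 1 of the proof of Theorem \ref{LOOP_ISOTOPES_LEFT_F_QUAS}: writing $0$ for the left identity of $(A,\oplus)$ and $R_0x=x\oplus 0$, set $x+y=R_0^{-1}x\oplus y$. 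The computations $0+y=0\oplus y=y$ and $x+0=R_0R_0^{-1}x=x$, together with $R_0^{-1}0=0$ (which holds since $0$ is idempotent), show that $(A,+)$ is a loop with two-sided identity $0$.

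The isotopy half of the assertion is then immediate. The triple $(R_0^{-1},\varepsilon,\varepsilon)$ is a principal isotopy from $(A,\oplus)$ to $(A,+)$, so applying it in the first component carries $(A,\oplus)\times(B,\diamond)$ onto $(A,+)\times(B,\diamond)$; by transitivity of $\sim$ together with Lemma \ref{LOOP_ISOTOPES_LEFT_E_QUAS} (and in the spirit of Corollary \ref{COROL_DIR_2} and Lemma \ref{NL1}) we conclude $(Q,\cdot)\sim(A,+)\times(B,\diamond)$, the factor $(B,\diamond)$ remaining untouched and hence a left $S$-loop.

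The substantive part is to rewrite the identity (\ref{equat_LE_21}) in terms of $+$. Using $x\oplus y=R_0x+y$ I would substitute into $(\delta x\oplus x)\oplus(y\oplus z)=(\delta x\oplus y)\oplus(x\oplus z)$ and then change variables so as to clear the translation $R_0$ from the positions of $x,y,z$, aiming to bring the relation back to the stated bilinear form $(\delta x+x)+(y+z)=(\delta x+y)+(x+z)$ with a suitably modified $\delta$. Here I would exploit the special relation $\delta x\oplus x=L_0^{-1}x$ recorded in the proof of Lemma \ref{LOOP_ISOTOPES_LEFT_E_QUAS} (it follows from $f^{\circ}x\oplus L_0x=x$ and $\delta=f^{\circ}L_0^{-1}$), which pins down the first summand $R_0(\delta x\oplus x)=R_0L_0^{-1}x$, and the facts that $\delta 0=0$ and that $\delta$ is assembled from the endomorphism $f^{\circ}$ and the automorphism $L_0^{-1}$ of $(A,\circ)$.

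The hard part will be exactly this last step: absorbing the translation $R_0$ introduced by the principal isotopy so that the transported relation regains the clean bilinear shape with a single endomorphism $\delta$, and then verifying that the resulting $\delta$ is an endomorphism of the loop $(A,+)$ and not merely a map fixing $0$. I expect this verification to proceed as the endomorphism check in Lemma \ref{LOOP_ISOTOPES_LEFT_E_QUAS}, namely by specializing one variable to $0$ in the transported identity to read off $\delta(u+v)=\delta u+\delta v$; should the resulting bookkeeping become unwieldy, it can be discharged by the same computer-assisted route (Prover 9) used elsewhere in this section.
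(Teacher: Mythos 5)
Your framework coincides with the paper's: you transport $(A,\oplus)$ by the principal isotopy $x+y=R_0^{-1}x\oplus y$, verify that $(A,+)$ is a loop, and reduce everything to rewriting (\ref{equat_LE_21}) in terms of $+$. But the entire content of this lemma beyond Lemma \ref{LOOP_ISOTOPES_LEFT_E_QUAS} lies precisely in the step you defer as ``the hard part'', and your anticipated mechanism for it --- absorbing the translation $R_0$ by a change of variables into a ``suitably modified $\delta$'' --- is not what actually happens, so the gap is substantive rather than clerical. Writing $\alpha=R^{\oplus}_0$, the transported identity is $\alpha(\delta\alpha x+x)+(\alpha y+z)=\alpha(\delta\alpha x+y)+(\alpha x+z)$ (equality (\ref{equat_LF_11})), and a change of variables alone cannot clear $\alpha$ from all of its positions. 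The paper's proof instead establishes the commutation $\alpha\delta=\delta\alpha$: one checks $R^{\oplus}_0=R^{\circ}_0$, that $R_0$ commutes with $L_0$ because $L_0\in Aut(A,\circ)$ and $0\circ 0=0$, and that $R_0$ commutes with $f^{\circ}$ because $f^{\circ}$ is an endomorphism of $(A,\circ)$ fixing $0$; since $\delta=f^{\circ}L^{-1}_0$, this yields $\alpha\delta=\delta\alpha$, whence $\delta(x+y)=\delta(\alpha^{-1}x\oplus y)=\delta\alpha^{-1}x\oplus\delta y=\alpha^{-1}\delta x\oplus\delta y=\delta x+\delta y$, so $\delta$ is an endomorphism of $(A,+)$ with no modification at all.

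The second missing idea is the decisive one: after the commutation brings the identity to the form (\ref{equat_LF_121}), the specialization $x=y$ cancels the common left summand and gives $\alpha x=x$ for all $x$, i.e. $R^{\oplus}_0=\varepsilon$. Thus the left loop $(A,\oplus)$ turns out to have been a two-sided loop all along, $+$ coincides with $\oplus$, and identity (\ref{equat_LE_21}) holds verbatim as (\ref{equat_LF_122}); there is nothing to absorb and $\delta$ is unchanged. Your plan contains neither the commutation argument nor the specialization forcing $\alpha=\varepsilon$; moreover, ``specializing one variable to $0$'' in the transported identity does not by itself show $\delta\in End(A,+)$, since a priori you only know $\delta\in End(A,\oplus)$, and the appeal to Prover 9 is a hope rather than a proof. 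As it stands, the proposal sets up the correct reduction but does not establish the lemma.
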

\begin{proof}
We pass from the operation $\oplus$ to operation $+$:  $x + y = R^{-1}_0 x \oplus y$, $x \oplus y = (x \oplus 0)
+ y $. Then $x + y = (x / 0) \oplus y$, where $x / y = z$ if and only if $z \oplus y = x$. We notice, $R^{-1}_0
\, 0 =0$, since  $R_0 0 =0$, $0 \oplus 0 =0$.

If we denote the map $R^{\oplus}_0$ by $\alpha$, then $x\oplus y = \alpha x + y$. We can rewrite
(\ref{equat_LE_21}) in terms of the loop operation + as follows
\begin{equation}
\alpha (\delta \alpha x  + x) + (\alpha y  + z) =  \alpha (\delta \alpha x + y) + (\alpha x  + z).
\label{equat_LF_11}
\end{equation}

Prove that $\alpha \delta = \delta \alpha$. Notice that $R^{\oplus}_y x = R^{\circ}_{L^{-1}_0 y} x $. Then
$R^{\oplus}_0  = R^{\circ}_0$. Thus \[
\begin{split}
& L_0 R^{\oplus}_0 x = L_0 R_0 x = 0\circ (x \circ 0) = (0\circ x) \circ 0 = R^{\oplus}_0 L_0 x, \\
& f^{\circ}R^{\oplus}_0 x = f^{\circ} (x\circ 0) = f^{\circ} x\circ 0 = R^{\oplus}_0 f^{\circ} x.
\end{split}
\]

Then $\delta$ is an endomorphism of the loop  $(A, +)$. Indeed, $\delta (x + y) = \delta (\alpha^{-1}x \oplus y)
= \delta \alpha^{-1} x \oplus \delta y = \alpha^{-1} \delta  x \oplus \delta y = \delta x + \delta y$.

Equality (\ref{equat_LF_11}) takes the form
\begin{equation}
\alpha (\delta x  + x) + (\alpha y  + z) =  \alpha (\delta  x + y) + (x  + z). \label{equat_LF_121}
\end{equation}
If we put  in  equality (\ref{equat_LF_121}) $x=y$, then $\alpha x = x$, $\alpha = \varepsilon$ and equality
(\ref{equat_LF_121}) takes the form
\begin{equation}
(\delta x  + x) + (y  + z) =  (\delta  x + y) + (x  + z). \label{equat_LF_122}
\end{equation}
\end{proof}

\begin{lemma}\label{LOOP_ISOTOPES_LEFT_E_QUAS_11}
If $\delta x  = 0$ for all $x\in A$, then $(A, + )$ is a commutative group.
\end{lemma}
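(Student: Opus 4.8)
The plan is to reduce the hypothesis to a single identity and then to show that this identity forces both commutativity and associativity of the loop $(A,+)$, so that $(A,+)$ becomes a commutative group. First I would substitute $\delta x = 0$ into the defining equality (\ref{equat_LF_122}). Since $0$ is the identity element of the loop $(A,+)$, we have $\delta x + x = 0 + x = x$ and $\delta x + y = 0 + y = y$, so (\ref{equat_LF_122}) collapses to
\begin{equation}
x + (y + z) = y + (x + z) \tag{$\star$}
\end{equation}
valid for all $x,y,z \in A$. (Equivalently, $(\star)$ says that all left translations of $(A,+)$ commute pairwise.)

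Next I would extract commutativity by specializing $(\star)$ at $z = 0$, which gives $x + y = y + x$ for all $x,y \in A$. Then I would obtain associativity by combining $(\star)$ with commutativity in three short moves: starting from $(x+y)+z$ and commuting the outer sum, $(x+y)+z = z + (x+y)$; applying $(\star)$ with the substitution $x \mapsto z$, $y \mapsto x$, $z \mapsto y$ gives $z + (x+y) = x + (z+y)$; and commuting the inner sum yields $x + (z+y) = x + (y+z)$. Hence $(x+y)+z = x+(y+z)$ for all $x,y,z\in A$, so $(A,+)$ is associative.

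Finally, a loop that is associative is a group, so $(A,+)$ is a commutative group, which is exactly the assertion. The only step requiring care is the associativity derivation: one must choose precisely the substitution of variables in $(\star)$ that, bracketed by the two uses of commutativity, produces the reassociation. Apart from that, everything is a direct specialization, and there is no genuine obstacle, since $\delta$ being the zero endomorphism trivializes the $\delta$-dependent terms in (\ref{equat_LF_122}) at the outset.
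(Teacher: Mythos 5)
Your proof is correct and follows essentially the same route as the paper: the hypothesis $\delta x = 0$ reduces equality (\ref{equat_LF_122}) to $x+(y+z)=y+(x+z)$, the specialization $z=0$ yields commutativity, and combining the two gives associativity, so the loop $(A,+)$ is an abelian group. Your explicit three-move reassociation is just a slightly more detailed writing of the paper's one-line conclusion.
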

\begin{proof}
If we put in equality (\ref{equat_LF_122}) $z=0$, then $x+y=y+x$. Therefore, from  $x + (y  + z) = y + (x + z)$
we have $(y  + z) + x   = y + (z + x)$.
\end{proof}

\begin{lemma} \label{LEFT_E_M_ZERO} There exists a number $m$ such that in the loop $(A, + )$ the chain
\begin{equation} \label{chain_1}
 (A, +) \supset \delta(A,+) \supset \delta^{\, 2}(A, +)
\supset \dots  \supset \delta^{\, m}(A,+) = (0,+)
\end{equation}
is stabilized on the element $0$.
\end{lemma}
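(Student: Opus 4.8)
The plan is to read off the order of $\delta$ directly from the order of the endomorphism $f^{\circ}$ of $(A,\circ)$, using the factorisation $\delta=f^{\circ}L_0^{-1}$ established in Lemma \ref{LOOP_ISOTOPES_LEFT_E_QUAS}. First I would recall that $\delta$ is literally the set-map $f^{\circ}L_0^{-1}$ on $A$, and that passing from $(A,\circ)$ through $(A,\oplus)$ to $(A,+)$ (Lemmas \ref{LOOP_ISOTOPES_LEFT_E_QUAS} and \ref{LOOP_ISOTOPES_LEFT_E_QUAS_1}) does not alter $\delta$ as a function of $A$; only the loop structure with respect to which it is an endomorphism changes. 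In the proof of Lemma \ref{LOOP_ISOTOPES_LEFT_E_QUAS} it was shown that $f^{\circ}L_0=L_0f^{\circ}$; multiplying this relation by $L_0^{-1}$ on both sides yields $f^{\circ}L_0^{-1}=L_0^{-1}f^{\circ}$, so $f^{\circ}$ and $L_0^{-1}$ commute.

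Next I would use this commutativity to prove, by an immediate induction, that
\[
\delta^{\,i}=(f^{\circ})^{i}(L_0^{-1})^{i}
\]
for all $i\geqslant 1$. Since $L_0\in Aut(A,\circ)$ is a bijection, $(L_0^{-1})^{i}$ is a permutation of the set $A$, so $(L_0^{-1})^{i}(A)=A$ and therefore
\[
\delta^{\,i}(A)=(f^{\circ})^{i}\big((L_0^{-1})^{i}(A)\big)=(f^{\circ})^{i}(A).
\]
In this way the chain $(\ref{chain_1})$ of subloops $\delta^{\,i}(A,+)$ coincides, as a chain of subsets of $A$, with the descending chain $A\supseteq f^{\circ}(A)\supseteq (f^{\circ})^{2}(A)\supseteq\dots$ of subquasigroups of $(A,\circ)$ (Lemma \ref{EM_END_MIDD}, Case 3, together with Lemma \ref{l_5.4}).

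Finally, Theorem \ref{MAIN_MIDDLE_F} provides a number $m$ with $|(f^{\circ})^{m}(A)|=1$. The unique element of the one-element subquasigroup $(f^{\circ})^{m}(A)$ is idempotent, and $0$ is by construction the only idempotent of $(A,\circ)$, so $(f^{\circ})^{m}(A)=\{0\}$. Combining this with the previous step gives $\delta^{\,m}(A,+)=(f^{\circ})^{m}(A)=(0,+)$, which is exactly the asserted stabilisation, the number $m$ being the one furnished by Theorem \ref{MAIN_MIDDLE_F}. I expect no serious obstacle here: the only genuine points to check are that $\delta$ is unchanged under the change of operations and that the cardinality statement $|(f^{\circ})^{m}(A)|=1$ indeed pins the limit down to $0$, and both follow at once from the lemmas quoted above.
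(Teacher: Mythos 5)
Your proof is correct and takes essentially the same route as the paper: the paper likewise uses the commutation $f^{\circ}L_0 = L_0 f^{\circ}$ to write $\delta^{\,i}=(f^{\circ})^{i}L_0^{-i}$, observes that $L_0^{-i}$ is a bijection of the set $A$, and transfers the stabilized chain supplied by Theorem \ref{MAIN_MIDDLE_F} from $(A,\circ)$ to $(A,+)$. Your direct set equality $\delta^{\,i}(A)=(f^{\circ})^{i}(A)$ is merely a slightly cleaner formulation of the paper's bijection argument, and your closing step (a one-element subquasigroup is idempotent, and $0$ is the unique idempotent of $(A,\circ)$) matches the paper's remark that $f^{\circ}\,0=0$.
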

\begin{proof}
 From  Theorem \ref{MAIN_MIDDLE_F} it follows that $(A, \circ)$ is a left E-quasigroup
 with a unique idempotent element $0$ such that the chain
\begin{equation} \label{chain_2}
 (A,\circ) \supset f^{\circ}(A, \circ) \supset (f^{\circ})^2(A, \circ)
\supset \dots  \supset (f^{\circ})^{m}(A, \circ) = (0, \circ)
\end{equation}
is stabilized on the element $0$.

From Lemmas \ref{LOOP_ISOTOPES_LEFT_E_QUAS} and \ref{LOOP_ISOTOPES_LEFT_E_QUAS_1} it follows that  $(A,+)  =
(A,\circ)T$, where  isotopy $T$ has the form $(R^{-1}_0, L^{-1}_0, \varepsilon)$. Since $0 \in (f^{\circ})^i(A,
\circ)$, then $((f^{\circ})^i(A, \circ))T = (f^{\circ})^i(A, +) $ is a subloop of the loop $(A,+)$  (Lemma
\ref{LP_AND_subquas}) for all suitable values of $i$.

Thus we obtain that the isotopic image of  chain (\ref{chain_2}) is the following chain
\begin{equation} \label{chain_3}
 (A,+) \supset f^{\circ}(A, +) \supset (f^{\circ})^2(A, +)
\supset \dots  \supset (f^{\circ})^{m}(A, +) = (0,+).
\end{equation}

We recall,  $\delta = f^{\circ} L^{-1}_0$ and $f^{\circ} L^{-1}_0 = L^{-1}_0 f^{\circ}$ (Lemma
\ref{LOOP_ISOTOPES_LEFT_E_QUAS}). Then $\delta^i = (f^{\circ})^i L^{-i}_0$ and $\delta^i (A,+) =  (f^{\circ})^i
L^{-i}_0 (A,+)$. It is clear that $L^{-i}_0$ is a bijection of the set $A$ for all suitable values of $i$.

Thus we can establish the following  bijection:  $(f^{\circ})^i  (A,+)  \leftrightarrow \delta^i (A,+)$. Then
$\delta^i (A,+) \supset \delta^{i+1} (A,+)$,  since  $(f^{\circ})^i  (A,+) \supset (f^{\circ})^{i+1} (A,+)$.
Therefore $(f^{\circ})^m (A,+) \leftrightarrow \delta^m (A,+)$, $\delta^m (A,+) = (0,+)$.
\end{proof}

\begin{lemma} \label{LOOP_ISOTOPES_LEFT_E_QUAS_i}
The loop $(A, + )$ is a commutative group.
\end{lemma}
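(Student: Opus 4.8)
The plan is to prove associativity of $(A,+)$ first, after which commutativity is essentially free. Once $(A,+)$ is known to be a group, the identity (\ref{equat_LF_122}) reads $\delta x + x + y + z = \delta x + y + x + z$, and cancelling $\delta x$ on the left and $z$ on the right (loops cancel) yields $x+y=y+x$. So the whole difficulty is concentrated in establishing associativity; the commutative group structure then follows, together with Lemma \ref{LOOP_ISOTOPES_LEFT_E_QUAS_11}, which already disposes of the degenerate case $\delta=0$.

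Next I would record the two specializations of (\ref{equat_LF_122}): putting $y=0$ gives $(\delta x + x)+z = \delta x + (x+z)$, and putting $z=0$ gives $(\delta x + x)+y = (\delta x + y)+x$. The crucial observation is this: if an element $a$ satisfies $\delta a \in N_l$ (the left nucleus of $(A,+)$), then substituting $x=a$ into (\ref{equat_LF_122}), pulling $\delta a$ out of both bracketed sums (legitimate since $\delta a\in N_l$) and cancelling $\delta a$, produces the swap relation $a+(u+z)=u+(a+z)$ for all $u,z$. Moreover, if this swap relation holds for \emph{every} element of $A$, then $(A,+)$ is already a commutative group: taking $z=0$ gives commutativity, and then $a+(u+z)=a+(z+u)=z+(a+u)=(a+u)+z$ gives associativity. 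Hence it suffices to prove the single inclusion $\delta(A)\subseteq N_l$.

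To obtain $\delta(A)\subseteq N_l$ I would run the downward induction (the ``Cycle'') of Theorem \ref{LOOP_ISOTOPES_LEFT_F_QUAS} along the chain $(A,+)\supset \delta(A,+)\supset\dots\supset \delta^m(A,+)=0$ furnished by Lemma \ref{LEFT_E_M_ZERO}, establishing $\delta^k(A)\subseteq N_l$ for $k=m,m-1,\dots,1$. The step $k=m$ is vacuous since $\delta^m(A)=\{0\}$; the top nontrivial step exploits $\delta(\delta^{m-1}x)=\delta^m x=0$ to annihilate a term, exactly as the vanishing $e^{n}=0$ is used in passing through (\ref{equat_LF_7})--(\ref{ASSOTSIATIVITY}); and each descent from level $k+1$ to level $k$ feeds the already-established left nuclearity of $\delta^{k+1}(A)$ into auxiliary identities obtained from (\ref{equat_LF_122}) by passing to the division operations of $(A,+)$, the analogues of (\ref{IULequat_LF_5}), (\ref{IULequat_LF_6}) and (\ref{IULIequat_LF_22}). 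Reaching $k=1$ gives $\delta(A)\subseteq N_l$ and the proof closes.

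The hard part will be precisely this descent. The swap relation alone is not enough to force left nuclearity of a level: for a central element $a$ it yields only commutativity and the exchange of the outermost left factor, and although left, middle and right nuclearity then coincide, none of them is actually delivered. The genuine input must come from the full three-variable identity (\ref{equat_LF_122}), routed through the loop's division operations together with the nilpotence of $\delta$, so the combinatorial bookkeeping of the bridging identities — which in the F-quasigroup case was carried out with Prover9 — is where the real work lies.
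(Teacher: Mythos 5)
Your reductions are all correct as far as they go: the observation that $\delta a\in N_l$ plus the master identity (\ref{equat_LF_122}) yields the swap relation $a+(y+z)=y+(a+z)$ (pull $\delta a$ out of both sides and cancel $L_{\delta a}$) is valid, and so is the closing step that the swap relation for \emph{all} elements forces $(A,+)$ to be an abelian group — this is exactly how the paper's own proof ends. The paper likewise descends the chain of Lemma \ref{LEFT_E_M_ZERO}. But its Cycle propagates a different invariant: it carries the swap relation itself from level $k$ to level $k-1$ (equations (\ref{equat_LF_59_i}) through (\ref{EQULITY_74_LEFT_E})), and never establishes $\delta^k(A)\subseteq N_l$ at intermediate stages (apart from a passing remark at the deepest level). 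Moreover, your nuclearity invariant is level-for-level \emph{equivalent} to the paper's swap invariant, shifted by one: besides your implication, the converse also follows from (\ref{equat_LF_122}). Indeed, for $b=\delta w$, putting $y=0$ in the master identity at $x=w$ gives $(b+w)+z=b+(w+z)$; then replacing $z$ by $w\backslash u$ in the master identity and using this gives $(b+y)+u=b+\bigl(w+(y+(w\backslash u))\bigr)$, and the swap relation \emph{for $w$} collapses the inner expression to $y+u$ via identity (\ref{(1)}), so $b\in N_l$. Hence ``$\delta^{k}(A)\subseteq N_l$'' is equivalent to ``swap at level $k-1$'', and your inductive step from $k+1$ to $k$ is precisely the paper's hard step ``swap at level $k$ implies swap at level $k-1$'' in disguise — your framing does not weaken it.

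That step is the genuine gap: you explicitly defer it as ``combinatorial bookkeeping'' analogous to the F-quasigroup cycle, but the analogy fails in the decisive details. The identities (\ref{IULequat_LF_5}), (\ref{IULequat_LF_6}), (\ref{IULIequat_LF_22}) you cite come from the different master identity (\ref{equat_LF_2}), which involves the extra permutation $\alpha$ and whose cycle delivers only associativity (in Theorem \ref{LOOP_ISOTOPES_LEFT_F_QUAS} the factor becomes a group, not an abelian one); there commutativity is neither available nor needed. The E-case descent, by contrast, must first extract commutativity $\delta^{\,m-2}x+y=y+\delta^{\,m-2}x$ at the new level (the paper gets it by comparing (\ref{equat_LF_281_II}) with (\ref{equat_LF_340_i})), then convert it into the interchange of the two divisions, $\delta^{\,m-2}x\backslash y=y\slash\delta^{\,m-2}x$ (equality (\ref{equat_LF_622})), and combine this with the quotient-cancellation law (\ref{equat_LF_1186})
before the swap relation at the next level emerges via (\ref{equat_LF_1226}); none of these moves has a counterpart in the F-case cycle. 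So the heart of the lemma is not a routine transplant but a new derivation, and your proposal stops exactly where it begins. A minor further point: $m$ may be infinite, and the paper closes that case with the limit argument borrowed from Theorem \ref{LOOP_ISOTOPES_LEFT_F_QUAS}; your induction as stated only covers finite $m$.
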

\begin{proof}
From Lemma \ref{LEFT_E_M_ZERO} it follows that in $(A, + )$ there exists a number $m$ such that  $\delta^{\, m}
x = 0$ for all $x\in A$. We have used Prover's 9 help \cite{MAC_CUNE_PROV}.
 From (\ref{equat_LF_122}) by $y=0$ we obtain
\begin{equation}
(\delta x  + x) + y =  \delta  x  + (x  + y). \label{equat_LF_55}
\end{equation}

If we change in equality (\ref{equat_LF_55}) $y$ by $y+z$, then we obtain
\begin{equation}
(\delta x  + x) + (y+z) =  \delta  x  + (x  + (y+z)). \label{equat_LF_55_DOP}
\end{equation}

From (\ref{equat_LF_122}) by $z=0$ using (\ref{equat_LF_55}) we have
\begin{equation}
(\delta x  + y) + x =  \delta  x  + (x  + y). \label{equat_LF_56}
\end{equation}

If we change in (\ref{equat_LF_56}) $y$ by $\delta x \backslash y$, then
\begin{equation}
 (\delta  x + (\delta  x\backslash  y)) + x = \delta  x +(x  + (\delta  x \backslash y)). \label{equat_LF_280}
\end{equation}

But $(\delta  x + (\delta  x\backslash  y))  = y $ (Definition  \ref{quasigr_as_algebra}, equality (\ref{(1)})).
Therefore
\begin{equation}
\delta  x + (x  + (\delta  x \backslash y)) = y + x. \label{equat_LF_281}
\end{equation}

If we change in  (\ref{equat_LF_122}) $x$ by  $\delta^{\, m-1}  x$, then,    using condition $\delta^{\, m} x =
0$, we have
\begin{equation}
\delta^{\, m-1} x  + (y + z) =  y + (\delta^{\, m-1}  x  + z). \label{equat_LF_59_i}
\end{equation}

\textbf{Begin Cycle}

If we change in  equality (\ref{equat_LF_281}) the element $x$ by the element $\delta^{\, m-2} x$, then we have
\begin{equation}
\delta^{\, m-1}  x + (\delta^{\, m-2}x  + (\delta^{\, m-1} x \backslash y)) = y + \delta^{\, m-2} x.
\label{equat_LF_281_II}
\end{equation}

If we change in  (\ref{equat_LF_59_i}) $z$ by  $\delta^{\, m-1}  x \backslash z$, then, using Definition
\ref{quasigr_as_algebra}, equality (\ref{(1)}), we obtain
\begin{equation}
\delta^{\, m-1} x  + (y + (\delta^{\, m-1}  x \backslash z)) =  y +  z. \label{equat_LF_340_i}
\end{equation}

If we change in  (\ref{equat_LF_340_i}) $y $ by  $\delta^{\, m-2} x$,  $z$ by $y$ and compare
(\ref{equat_LF_340_i}) with (\ref{equat_LF_281_II}), then we obtain
\begin{equation}
\delta^{\, m-2} x  + y =  y + \delta^{\, m-2} x. \label{equat_LF_387}
\end{equation}

We have $\delta^{\, m-1} (A) \subseteq \delta^{\, m-2} (A)$ since $\delta$ is an endomorphism of the loop
$(A,+)$. Notice, from equalities (\ref{equat_LF_59_i}) and (\ref{equat_LF_387}) it follows that $\delta^{\, m-1}
(A) \subseteq N_l(A)$.

From equality  $(y / \delta^{\, m-2} x) + \delta^{\, m-2} x = y$ (Definition  \ref{quasigr_as_algebra}, equality
(\ref{(2)})) using commutativity (\ref{equat_LF_387}) we obtain
\begin{equation}
\delta^{\, m-2} x  + (y \slash \delta^{\, m-2} x) =  y. \label{equat_LF_588}
\end{equation}

From equality  (\ref{equat_LF_588}) and  definition of the operation $\backslash$ we have
\begin{equation}
\delta^{\, m-2} x  \backslash y =  y \slash \delta^{\, m-2} x. \label{equat_LF_622}
\end{equation}

If we change in  (\ref{equat_LF_59_i}) $y + z$ by  $y$, then $y$ pass in $y\slash z$ and we have
\begin{equation}
\delta^{\, m-1} x  + y =  (y \slash z) + (\delta^{\, m-1} x + z). \label{equat_LF_342}
\end{equation}

Applying to (\ref{equat_LF_342}) the operation $\slash$ we have
\begin{equation}
(\delta^{\, m-1} x  + y) \slash    (\delta^{\, m-1} x + z) =  (y \slash z). \label{equat_LF_1186}
\end{equation}

 Write equality (\ref{equat_LF_122}) in the form
\begin{equation}
 (\delta  x + y) \backslash ((\delta x  + x)  + (y  + z)) = x  + z. \label{equat_LF_41}
\end{equation}

From (\ref{equat_LF_41}) using (\ref{equat_LF_55_DOP}) we obtain
\begin{equation}
 (\delta  x + y) \backslash (\delta x  + (x  + (y  + z))) = x  + z. \label{equat_LF_73}
\end{equation}

From equality  (\ref{equat_LF_73}) using  (\ref{equat_LF_622})  we have
\begin{equation}
(\delta x  + (x  + (y  + z))) \slash  (\delta  x + y) = x  + z. \label{equat_LF_645}
\end{equation}

If we change in  equality  (\ref{equat_LF_645}) $x$ by $\delta^{\, m - 2}$, then we obtain
\begin{equation}
(\delta^{\, m-1} x  + (\delta^{\, m - 2} x  + (y  + z))) \slash  (\delta^{\, m - 1}  x + y) = \delta^{\, m - 2}
x + z. \label{equat_LF_645_i}
\end{equation}

Using equality (\ref{equat_LF_1186}) in  equality (\ref{equat_LF_645_i})  we have
\begin{equation}
(\delta^{\, m - 2} x  + (y  + z)) \slash   y = \delta^{\, m - 2} x  + z. \label{equat_LF_1226}
\end{equation}
Therefore
\begin{equation}
 \delta^{\, m - 2} x  + (y  + z) = (\delta^{\, m - 2} x  + z)+ y,
\end{equation}
and
\begin{equation} \label{EQULITY_74_LEFT_E}
 \delta^{\, m-2} x  + (y + z) =  y +
(\delta^{\, m-2}  x  + z).
\end{equation}
\textbf{End Cycle}

Therefore we can change equality (\ref{equat_LF_59_i}) by the equality (\ref{EQULITY_74_LEFT_E}) and start new
step of the cycle.

After $m$ steps we obtain that in the loop $(A,+)$ the equality  $ x  + (y + z) =  y + ( x  + z)$ is fulfilled,
i.e. $(A,+)$ is an abelian group. If $m=\infty$ then we can use arguments similar to the  arguments from the
proof of Theorem \ref{LOOP_ISOTOPES_LEFT_F_QUAS}.
\end{proof}

\begin{theorem} \label{LOOP_ISOTOPES_LEFT_E_QUAS_TH}
1. A left E-quasigroup $(Q, \cdot )$ is isotopic to the direct product of an abelian group $(A, + )$   and a
left S-loop $(B, \diamond)$, i.e.  $(Q, \cdot) \sim (A, +) \times (B, \diamond). $

2. A  right E-quasigroup $(Q, \cdot )$ is isotopic to the direct product of an abelian  group $(A, +)$ and a
right S-loop $(B, \diamond)$, i.e.  $(Q, \cdot) \sim (A,+) \times (B, \diamond). $
\end{theorem}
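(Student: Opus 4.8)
The plan is to assemble the statement directly from the two lemmas that immediately precede it, since essentially all of the substantive work has already been discharged there. For Case~1, I would first invoke Lemma~\ref{LOOP_ISOTOPES_LEFT_E_QUAS_1}, which already produces an isotopy $(Q,\cdot)\sim(A,+)\times(B,\diamond)$ in which $(B,\diamond)$ is a left S-loop and $(A,+)$ is a loop satisfying the identity $(\delta x + x)+(y+z)=(\delta x+y)+(x+z)$ for an endomorphism $\delta$ of $(A,+)$. Then I would apply Lemma~\ref{LOOP_ISOTOPES_LEFT_E_QUAS_i}, which asserts precisely that this loop $(A,+)$ is a commutative group. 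Combining the two gives $(Q,\cdot)\sim(A,+)\times(B,\diamond)$ with $(A,+)$ an abelian group and $(B,\diamond)$ a left S-loop, which is exactly Case~1.

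For Case~2, I would argue by the mirror principle, running the right-handed analogues of the preceding lemmas. Concretely, a right E-quasigroup is the $(12)$-parastrophe of a left E-quasigroup (Lemma~\ref{LEMMA_PARASTR}, part~(2)), and parastrophy respects direct decompositions (Lemma~\ref{LEMMA_DIR}); under the $(12)$-parastrophe an abelian group remains an abelian group, while a left distributive quasigroup becomes a right distributive one (Lemma~\ref{LEMMA_PARASTR}, part~(4)), so the left S-loop factor is carried to the corresponding right S-loop factor. Thus the whole decomposition transfers, yielding $(Q,\cdot)\sim(A,+)\times(B,\diamond)$ with $(A,+)$ abelian and $(B,\diamond)$ a right S-loop.

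The main obstacle, notably, is not located in this theorem at all but in the material it rests upon: the genuine difficulty lives in Lemma~\ref{LOOP_ISOTOPES_LEFT_E_QUAS_i}, whose proof requires the delicate inductive \textbf{Cycle} argument descending along the chain $\delta^{\,i}(A,+)$ supplied by Lemma~\ref{LEFT_E_M_ZERO}, forcing first associativity and then commutativity of $(A,+)$. Once that upgrade from ``loop with the E-type identity'' to ``abelian group'' is granted, Theorem~\ref{LOOP_ISOTOPES_LEFT_E_QUAS_TH} is a routine bookkeeping consequence, and the only point that deserves a line of care is confirming that the isotopy and the direct-product structure of the two lemmas are literally the same, so that the abelian-group conclusion can be substituted into the decomposition without further adjustment.
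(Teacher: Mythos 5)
Your proposal is correct and takes essentially the same route as the paper, whose proof of Case 1 consists precisely of combining Lemma \ref{LOOP_ISOTOPES_LEFT_E_QUAS_1} with the commutativity result (the paper's citation of Lemma \ref{LOOP_ISOTOPES_LEFT_E_QUAS_11} at that point appears to be a slip for Lemma \ref{LOOP_ISOTOPES_LEFT_E_QUAS_i}, the lemma you correctly invoke). Your explicit $(12)$-parastrophe argument for Case 2, which the paper leaves implicit, is sound and exactly the \lq\lq mirror\rq\rq\ device the paper uses elsewhere, via Lemmas \ref{LEMMA_PARASTR} and \ref{LEMMA_DIR}.
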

\begin{proof}
1. The proof follows from Lemmas \ref{LOOP_ISOTOPES_LEFT_E_QUAS_1}  and \ref{LOOP_ISOTOPES_LEFT_E_QUAS_11}.
\end{proof}
Theorem \ref{LOOP_ISOTOPES_LEFT_E_QUAS_TH} gives  an answer to Kinyon-Phillips problems (\cite{Kin_PHIL_04},
Problem 2.8, (1)).

\begin{corollary} \label{LOOP_ISOTOPES_LEFT_FESM_QUAS}
A left FESM-quasigroup $(Q, \cdot )$ is isotopic to the direct product of an abelian group  $(A, \oplus )$ and a
left S-loop $(B, \diamond)$.
\end{corollary}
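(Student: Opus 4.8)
The plan is to reduce the statement immediately to the E-quasigroup case already settled in Theorem \ref{LOOP_ISOTOPES_LEFT_E_QUAS_TH}. By Definition \ref{FESM_QUAS} a left FESM-quasigroup is simultaneously a left F-, left E-, and left SM-quasigroup; in particular it is a left E-quasigroup. Hence Case 1 of Theorem \ref{LOOP_ISOTOPES_LEFT_E_QUAS_TH} applies verbatim and yields that $(Q,\cdot)$ is isotopic to $(A,+)\times(B,\diamond)$, where $(A,+)$ is an abelian group and $(B,\diamond)$ is a left S-loop. Thus the corollary requires no new computation; it is a direct specialisation.

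The only point worth stressing is why one may invoke the E-quasigroup theorem rather than the weaker left F- or left SM-quasigroup theorems (Theorems \ref{LOOP_ISOTOPES_LEFT_F_QUAS} and \ref{LOOP_ISOTOPES_LEFT_SM_QUAS}). Those two results already give an isotopy onto a direct product of a group and a left S-loop, but the group factor is not a priori abelian. The extra $(E_l)$-identity is exactly what forces commutativity: in the proof leading to Theorem \ref{LOOP_ISOTOPES_LEFT_E_QUAS_TH}, the E-identity, transported onto the loop $(A,+)$ obtained from the factor with a unique idempotent element, produces the identity (\ref{equat_LF_122}) governed by the endomorphism $\delta$, and Lemma \ref{LOOP_ISOTOPES_LEFT_E_QUAS_i} shows that this identity, combined with the stabilising chain (\ref{chain_1}) established in Lemma \ref{LEFT_E_M_ZERO}, forces $(A,+)$ to be an abelian group.

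Consequently the main work was already carried out in Lemmas \ref{LOOP_ISOTOPES_LEFT_E_QUAS}--\ref{LOOP_ISOTOPES_LEFT_E_QUAS_i}. The only genuine obstacle, namely verifying that the group factor coming from the quasigroup-with-unique-idempotent part is abelian, is overcome there; the F- and SM-structures add nothing beyond what the E-structure already guarantees for this particular statement. I would therefore write the proof as a one-line deduction: a left FESM-quasigroup is a left E-quasigroup, so Theorem \ref{LOOP_ISOTOPES_LEFT_E_QUAS_TH}, Case 1, gives the claimed decomposition, which is precisely the answer sought in Kinyon--Phillips Problem 2.8(2).
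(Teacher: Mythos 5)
Your proposal is correct and coincides with the paper's own proof, which consists of exactly the same one-line reduction: a left FESM-quasigroup is in particular a left E-quasigroup, so Theorem \ref{LOOP_ISOTOPES_LEFT_E_QUAS_TH} applies directly. Your additional remark explaining why the E-identity (rather than the F- or SM-theorems) is what secures commutativity of the group factor is accurate and matches the structure of the preceding Lemmas \ref{LOOP_ISOTOPES_LEFT_E_QUAS}--\ref{LOOP_ISOTOPES_LEFT_E_QUAS_i}.
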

\begin{proof}
 We can use Theorem \ref{LOOP_ISOTOPES_LEFT_E_QUAS_TH}.
\end{proof}
Corollary \ref{LOOP_ISOTOPES_LEFT_FESM_QUAS} gives  an  answer to Kinyon-Phillips problem (\cite{Kin_PHIL_04},
Problem 2.8, (2)).

We hope in a forthcoming paper we shall discuss a generalization of Murdoch theorems about the structure of
finite  binary and $n$-ary medial quasigroups \cite{4, SC05} on infinite case and  medial groupoids.

\footnotesize{
 }

\noindent \footnotesize{Institute of Mathematics
         and Computer Science\\
         Academy of Sciences of Republic Moldova\\
         str. Academiei, 5  \\
MD-2028, Chisinau\\
Moldova\\
e-mail: scerb@math.md}

\end{document}